\newcommand{\subjclass}[2][2010]{%
  \let\@oldtitle\@title%
  \gdef\@title{\@oldtitle\footnotetext{#1 \emph{Mathematics subject classification.} #2}}%
}
\newcommand{\ep}{\epsilon}
\newcommand{\eps}{\epsilon}
\newcommand{\grad}{\nabla}
\newcommand{\norm}[1]{\left|\left| #1 \right|\right|}
\newcommand{\abs}[1]{\left| #1 \right|}
\newcommand{\set}[1]{\left\{ #1 \right\}}
\newcommand{\brak}[1]{\left\langle #1 \right\rangle} 
\newcommand{\R}{\mathbb{R}}
\newcommand{\dee}{\mathrm{d}}
\DeclareMathOperator{\supp}{\mathrm{supp}}
\DeclareMathOperator{\curl}{\mathrm{curl}}
\newcommand{\p}{\partial}
\newtheorem{theorem}{Theorem}[section]
\newtheorem{proposition}[theorem]{Proposition}
\newtheorem{corollary}[theorem]{Corollary}
\newtheorem{lemma}[theorem]{Lemma}
\newtheorem*{lemma*}{Lemma}
\newtheorem{claim}[theorem]{Claim}
\theoremstyle{definition}
\newtheorem{definition}[theorem]{Definition}
\newtheorem{remark}[theorem]{Remark}
\def\jacob #1{\textcolor{red}{#1}}
\def\nn{\nonumber\\}
\def\paren#1{\left(#1\right)}
\def\abs#1{\left|#1\right|}
\newcommand{\n}{\ensuremath{\nonumber}}
\newcommand{\pa}{\ensuremath{\partial}}
\newcommand{\siming}[1]{\textcolor{blue}{#1}}
\def\bd{\Gamma_{0}}
\def\bb{\bold{a}}
\newcommand{\nnorm}[1]{\left\lVert #1\right\rVert}
\newcommand{\enorm}[1]{\left\lVert #1\right\rVert_{L^2}}
\newcommand{\brk}[1]{\left\langle #1\right\rangle}
\def\ss{{s^{-1}}}
\def\sss{{s}}
\newcommand{\vyi}{{\frac{1}{v_y}}}
\def\gm{{(\gamma)}}
\def\stf{\phi}
\def\stm{\phi}
\newcommand{\pav}{\overline{\partial_v}}
\newcommand{\myb}[1]{\textcolor{blue}{ #1 }}
\newcommand{\myr}[1]{\textcolor{red}{#1}}
\newcommand{\wt}{\widetilde}
\newcommand{\al}{\alpha}
\newcommand{\de}{\Delta}
\newcommand{\lan}{\langle}
\newcommand{\ran}{\rangle}
\newcommand{\lf}{\left}
\newcommand{\rg}{\right}
\def\cd{\mathcal{D}}
\newcommand{\phe}{\phi^{(E)}}
\newcommand{\mf}{\mathfrak}
\def\ztp#1{\mathring{#1}}
\def\bhqn{\ztp{\overline{H}}_{n}}
\def\bhqnn{\ztp{\overline{H}}_{1}}
\def\hqn{\ztp{H}_{n}}
\def\gqn{\ztp{G}_{n}}
\newcommand{\ww}{w}
\numberwithin{equation}{section}
\begin{document}

\title{Uniform Inviscid Damping and Inviscid Limit \\ of the 2D Navier-Stokes equation with \\ Navier Boundary Conditions}
\author{ Jacob Bedrossian\thanks{\footnotesize Department of Mathematics, University of California, Los Angeles, CA 90095, USA \href{mailto:jacob@math.ucla.edu}{\texttt{jacob@math.ucla.edu}}} \and Siming He\thanks{Department of Mathematics, University of South Carolina, Columbia, SC 29208, USA \href{mailto:siming@mailbox.sc.edu}{\texttt{siming@mailbox.sc.edu}}} \and Sameer Iyer\thanks{Department of Mathematics, University of California, Davis, Davis, CA 95616, USA \href{mailto:sameer@math.ucdavis.edu}{\texttt{sameer@math.ucdavis.edu}}} \and Fei Wang\thanks{School of Mathematical Sciences, CMA-Shanghai, Shanghai Jiao Tong University, Shanghai, China \href{mailto:fwang256@sjtu.edu.cn}{\texttt{fwang256@sjtu.edu.cn}}}}

\maketitle

\begin{abstract}
We consider the 2D, incompressible Navier-Stokes equations near the Couette flow, $\omega^{(NS)} = 1 + \eps \omega$, set on the channel $\mathbb{T} \times [-1, 1]$, supplemented with Navier boundary conditions on the perturbation, $\omega|_{y = \pm 1} = 0$. We are simultaneously interested in two asymptotic regimes that are classical in hydrodynamic stability: the long time, $t \rightarrow \infty$, stability of background shear flows, and the inviscid limit, $\nu \rightarrow 0$ in the presence of boundaries. Given small ($\eps \ll 1$, but independent of $\nu$) Gevrey 2- datum, $\omega_0^{(\nu)}(x, y)$, that is supported away from the boundaries $y = \pm 1$, we prove the following results: 
\begin{align*}
&  \|\omega^{(\nu)}(t) - \fint \omega^{(\nu)}(t) \dee x \|_{L^2} \lesssim \eps e^{-\delta \nu^{1/3} t}, & \text{(Enhanced Dissipation)} \\
& \brak{t}\norm{u_1^{(\nu)}(t) - \fint u_1^{(\nu)}(t) \dee x}_{L^2} + \brak{t}^2\norm{u_2^{(\nu)}(t)}_{L^2}  \lesssim \eps e^{-\delta \nu^{1/3} t}, & \text{(Inviscid Damping)} \\ 
&\| \omega^{(\nu)} - \omega^{(0)} \|_{L^\infty} \lesssim \eps \nu t^{3+\eta}, \quad\quad t \lesssim \nu^{-1/(3+\eta)} & \text{(Long-time Inviscid Limit)}
\end{align*}
This is the first nonlinear asymptotic stability result of its type, which combines three important physical phenomena at the nonlinear level: inviscid damping, enhanced dissipation, and long-time inviscid limit in the presence of boundaries. The techniques we develop represent a major departure from prior works on nonlinear inviscid damping as physical space techniques necessarily play a central role. In this paper, we focus on the primary nonlinear result, while tools for handling the linearized parabolic and elliptic equations are developed in our separate, companion work. 
\end{abstract}

\setcounter{tocdepth}{2}
{\small\tableofcontents}


\section{Introduction}

We are considering the 2D Navier-Stokes equations set on the spatial domain $\mathbb{T} \times (-1,1)$:
\begin{subequations} \label{eq:FullNSIntro}
\begin{align} 
  & \partial_t v + (v \cdot \grad)v + \grad p = \nu \Delta v, \\
  & \grad \cdot v = 0,\quad  v_2(t,x,\pm 1) = 0, \quad
   \partial_y v_1(t,x,\pm 1) = 1,\\ 
  &v(t=0,x,y)=v_{in}(x,y). 
\end{align}
\end{subequations}
This problem corresponds to studying a fluid with Navier-type boundary conditions and applying a fixed force (rather than a fixed velocity) which slides the top and bottom boundaries in opposite directions.
See for example \cite{MasmoudiStRaymond03} for derivations of Navier-type boundary conditions from kinetic theory.
Writing the vorticity $\Omega = \partial_y v_1 - \partial_x v_2 = 1 + \omega$, we obtain the perturbation equations
\begin{subequations}
\begin{align} \label{eq1}
&\pa_t \omega + y \pa_x \omega + u \cdot \nabla \omega - \nu \Delta \omega = 0 \qquad (x, y) \in \mathbb{T} \times (-1, 1), \\
& u = \begin{pmatrix} -\partial_y \\ \partial_x \end{pmatrix} \psi \\ & -\Delta \psi = \omega, \;\psi(t,x,\pm 1) = 0, \\ 
&\omega|_{t = 0} = \omega_{in}(x, y), \\ \label{eq1c}
&\omega|_{y = \pm 1} = 0. 
\end{align}
\end{subequations}
We make the following assumptions on the initial datum (see below for details):
\begin{align}
&\text{Support: } \text{supp}(\omega_{in}) = \Omega_0 \subset \mathbb{T} \times (-\frac 1 4, \frac 1 4) \label{cond:supp} \\
&\text{Regularity: } \norm{e^{\Lambda_0 \abs{\grad}^{r}} \omega_{in}}_{L^2} \leq \eps, \quad r > 1/2, \label{cond:reg}
\end{align}
(the choice $1/4$ is irrelevant and could be replaced with any number $< 1$).

We are interested in the simultaneous limits $\nu \to 0$ and $t \to \infty$, and in particular, we are interested
in being able to prove the inviscid limit, i.e. the convergence of viscous solutions to inviscid solutions, uniformly on time-scales which diverge as $\nu \to 0$ (namely $0 < t \lesssim \nu^{-1/3+\delta}$ for any $\delta > 0$, which is nearly optimal).
To our knowledge, this is the first result of its kind for the Navier-Stokes equations even for Navier boundary conditions. Moreover, we capture precise asymptotics in both $t \rightarrow \infty$ and $\nu \rightarrow 0$.  

\vspace{2 mm}

\noindent \textbf{Dynamics as $t \rightarrow \infty$:} The long time behavior of Navier-Stokes near Couette features two important physical phenomena, \textbf{inviscid damping} and \textbf{enhanced dissipation}. While these were known linear stability mechanisms classically by Orr \cite{Orr1907} and Kelvin \cite{Kelvin1887}, proving they persist at the nonlinear level for the Euler and Navier-Stokes equations was achieved only relatively recently. 
The inviscid case ($\nu = 0$) in the channel with no-penetration conditions was studied by Ionescu and Jia  in \cite{HI20}. 
Therein, they prove that for $\eps$ sufficiently small (under conditions \eqref{cond:supp} and \eqref{cond:reg}), the vorticity $\omega$ weakly converges back to an $x$-independent final state and the velocity inviscid damps back to a shear flow as it does in $\mathbb T \times \mathbb R$ (treated earlier in \cite{BM13}). 
On $\mathbb T \times \mathbb R$, this inviscid damping was proved uniformly in $\nu$, along with the additional effect of enhanced dissipation in \cite{BMV14}.
In this paper, we prove this latter result on the channel with Navier boundary conditions and moreover that the viscous solutions converge strongly to the inviscid solutions uniformly over time-scales $0 < t \lesssim \nu^{-1/(3+\eta)}$. 

\vspace{2 mm}

\noindent \textbf{Dynamics as $\nu \rightarrow 0$:} Passage to the \textbf{inviscid limit} as $\nu \rightarrow 0$ for the Navier-Stokes equations in the presence of boundaries with no-slip conditions is notoriously challenging due to the formation of boundary layers as $\nu \to 0$ which involve large concentrations of vorticity in very thin layers along the boundary  (see discussions in e.g. \cite{SammartinoCaflisch98a} and \cite{SammartinoCaflisch98b}).
Navier boundary conditions are easier to treat in that, at worst, any singularities at the boundary should appear only in the higher derivatives of the vorticity, not in the vorticity itself. 
As such, with Navier boundary conditions, the inviscid limit is known on fixed time-scales in bounded domains \cite{MasRou12}.
Nevertheless, the boundary and the viscosity together will pose a significant difficulty for us over longer time-scales $1 \ll t \ll \nu^{-1/3}$ (i.e. not so long that the viscosity becomes dominant), as we explain in more detail below. 

\vspace{2 mm}

\noindent \textbf{Simultaneous Dynamics $t \rightarrow \infty, \nu \rightarrow 0$:}
While viscosity $\nu$ determines the very long-time dynamics, it can act as a \textit{destabilizing effect} in the presence of boundaries, at least with no-slip boundary conditions (referred to as `Heisenberg’s viscous destabilization'). 
Moreover, the methods for the two asymptotic regimes appear to be completely disparate:  inviscid damping and enhanced dissipation require subtle Fourier side analysis in order to capture at the nonlinear level, whereas passing to the inviscid limit with boundaries requires capturing spatially localized quantities. The central effort in our work is to develop a framework to unify these methods. 


\subsection{Statement of Main Theorem}

We now state our main result.
We obtain estimates in much stronger norms than those stated in Theorem \ref{thm:main}, at least for $t \lesssim \nu^{-1/3-\delta}$ for some small $\delta > 0$; see Section \ref{sec:IL} for the stronger versions of the results of Theorem \ref{thm:main}. 
Note that the dynamics of the inviscid $\nu = 0$ problem were characterized in \cite{HI20} (in particular, the inviscid damping). 
Parts (i) and (ii) were proved for $\mathbb T \times \mathbb R$ in \cite{BMV14}, however, Part (iii) is new even without a boundary. 

In what follows, for any function $h \in L^2$, denote the Fourier transform in $x$ as $h_k(t,y) = \fint_{\mathbb T} h(t,x,y)e^{-ikx} \dee x $ and denote the projection to the zero mode $P_0 h := h_0$ and non-zero modes $P_{\neq} h := h - h_0$.


\begin{theorem} \label{thm:main}
  For all  $r \in (\frac{1}{2},1]$ and all $\Lambda_0 > 0 $, there are $\eps_0, \nu_0 > 0$ such that for all $0 < \eps < \eps_0$ and all $\omega_{in}$ satisfying \eqref{cond:supp} and \eqref{cond:reg}, the following holds for all $\nu \in (0,\nu_0)$ (with all constants independent of $\nu, t,$ and $\eps$):
\begin{itemize}
\item[(i)] uniform-in-$\nu$ inviscid damping and enhanced dissipation: $\exists \delta > 0$ (independent of $\nu, t, \eps$) such that  
\begin{align}
  \norm{P_{\neq} \omega(t)}_{L^2} +  \norm{P_{\neq} \brak{t} u_1(t)}_{L^2} + \norm{P_{\neq} \brak{t}^2 u_2(t)}_{L^2} \lesssim \eps e^{-\delta \nu^{1/3} t}; \label{ineq:NeqDecay}
\end{align}
\item[(ii)] long-time convergence back to equilibrium 
\begin{align*}
\norm{P_0 \omega(t)}_{L^2} + \norm{P_0 u_1(t)}_{L^2} \lesssim \eps e^{-\delta \nu t}; 
\end{align*}
\item[(iii)]  for $\nu \geq 0$, if we denote the corresponding solutions $\set{\omega^\nu}_{\nu \geq 0}$, for all $\eta > 0$, $\exists C \geq 1$ such that for $t < \nu^{-1/(3+\eta)}$, there holds 
\begin{align}
\norm{\mathbf{1}_{\abs{y} < 1/2}P_0(\omega^{\nu}(t) - \omega^{0}(t))}_{L^\infty} & \leq C\eps \nu t^{2+\eta}, \\ 
\norm{\mathbf{1}_{\abs{y} < 1/2}(\omega^{\nu}(t) - \omega^{0}(t))}_{L^\infty} & \leq C\eps \nu t^{3+\eta}, \label{ineq:LTIL} \\ 
\norm{\mathbf{1}_{\abs{y} \geq 1/2}(\omega^{\nu}(t) - \omega^{0}(t))}_{L^\infty} &  \leq C\eps e^{-\nu^{-1/8}}.  
\end{align}
\end{itemize} 
\end{theorem}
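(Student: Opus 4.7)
The plan is to treat parts (i) and (ii) as a nonlinear bootstrap in Gevrey-$\frac{1}{r}$ regularity, generalizing the scheme of \cite{BMV14} to the channel with Navier conditions, and then to derive part (iii) a posteriori by a difference argument on the time window $t \lesssim \nu^{-1/(3+\eta)}$, distinguishing the interior region $|y| < 1/2$ (where we exploit Gevrey control of the inviscid profile $\omega^0$) from the boundary strip $|y| \geq 1/2$ (where the support assumption on $\omega_{in}$ combined with propagation estimates keep the solutions super-algebraically small).

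First I would pass to a time-dependent coordinate system straightening the modified background shear $\tilde u(t,y) := \langle u_1 \rangle_x(t,y)$, rewriting \eqref{eq1}--\eqref{eq1c} in profile variables $(x, v)$ with $v$ a flattening of the streamfunction level sets; this places the main part of the evolution in the form of a shear transport plus viscous diffusion plus a small remainder. On these variables I would run a bootstrap on a composite norm combining an Orr-type multiplier capturing nonlinear inviscid damping, a dissipation-enhancing multiplier encoding the $\nu^{1/3}$ rate, and physical-space hypocoercive weights compatible with the Dirichlet condition $\omega|_{y=\pm 1}=0$. The elliptic and parabolic estimates needed to close the bootstrap, which are not obtainable by direct Fourier analysis in $y$, are developed in the companion work referenced in the abstract. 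Propagating the bootstrap yields \eqref{ineq:NeqDecay}; the zero-mode bound (ii) then follows because $P_0\omega$ satisfies a 1D Dirichlet heat equation whose source $-\partial_y P_0(u_2\omega) = -\partial_y P_0(P_{\neq}u_2\,P_{\neq}\omega)$ is integrable in time thanks to part (i), so that after a transient of length $\nu^{-1/3}$ the zero mode relaxes at the ambient Dirichlet heat rate $e^{-\delta\nu t}$.

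For part (iii) I would write $W := \omega^\nu - \omega^0$, which solves
\begin{equation*}
\partial_t W + y\partial_x W + u^\nu\cdot\nabla W + (u^\nu-u^0)\cdot\nabla\omega^0 - \nu\Delta W = \nu\Delta\omega^0, \qquad W|_{t=0}=0,\quad W|_{y=\pm 1}=0,
\end{equation*}
and argue by Gronwall. In the interior, $\nu\Delta\omega^0$ is controlled in $L^\infty$ using the Gevrey bounds from step one: in the straightened variables, derivatives of $\omega^0$ in physical coordinates grow no faster than $\langle t\rangle$ per $\partial_y$, hence $\|\mathbf{1}_{|y|<1/2}\Delta\omega^0(t)\|_{L^\infty} \lesssim \eps\langle t\rangle^{2+\eta}$, giving $\eps\nu t^{3+\eta}$ after time integration up to $t\lesssim \nu^{-1/(3+\eta)}$. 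The improvement to $\eps\nu t^{2+\eta}$ for the zero mode arises because the forcing $(u^\nu-u^0)\cdot\nabla\omega^0$ driving $P_0W$ involves only non-zero modes of the velocity difference, and these inviscid-damp at rate $\langle t\rangle^{-1}$, saving one power of $t$. In the strip $|y|\geq 1/2$, the support hypothesis $\operatorname{supp}(\omega_{in})\subset\mathbb T\times(-1/4,1/4)$ combined with Gevrey transport estimates bounds $\omega^\nu$ and $\omega^0$ super-algebraically small away from the data's support on the relevant time-scale, producing the final $e^{-\nu^{-1/8}}$ bound.

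The main obstacle is executing step one without Fourier analysis in $y$. Prior nonlinear inviscid damping proofs on $\mathbb T\times\mathbb R$ rely critically on Fourier in the straightened vertical variable to implement both the Orr multiplier and the enhanced dissipation weight; neither object has a transparent physical-space counterpart, and the Dirichlet boundary conditions for $\omega$ and $\psi$ destroy translation invariance in $v$. The resolution, building on the linear machinery of the companion paper, is to replace Fourier multipliers by spectrally defined functional calculus of Dirichlet-type elliptic operators, supplemented with sharp commutator estimates quantifying the near-commutation of these operators with the shear transport. All nonlinear energy and commutator estimates that close the bootstrap must then be carried out in this physical-space-friendly framework, with each quantity compatible with the boundary conditions and localizing cleanly into $|y|<1/2$ so the interior/boundary split in step three goes through.
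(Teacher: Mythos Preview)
Your proposal has the right high-level shape (coordinate change, bootstrap, difference argument for the inviscid limit), but there are two substantive gaps.

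\textbf{Parts (i)--(ii): the interior/exterior split, not global spectral calculus.} You propose replacing Fourier multipliers by ``spectrally defined functional calculus of Dirichlet-type elliptic operators'' to avoid Fourier in $y$ entirely. The paper does something structurally different: it splits into an \emph{interior} zone $|v|\lesssim 3/4$ where the vorticity profile $\chi^I f$ is compactly supported away from the boundary, and there the full Fourier multiplier machinery of \cite{BM13,HI20,BMV14} is used essentially as-is; and an \emph{exterior} zone where physical-space vector fields $\Gamma_t=v_y^{-1}\partial_y+t\partial_x$, co-normal weights $q$, and a large Gaussian localization weight $e^W$ with $W\sim (|y|-1/4)_+^2/(K\nu(1+t))$ are used. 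The two regimes are glued by showing that on the overlap the exterior norm (with its huge weight $e^W$) dominates the interior Gevrey norm despite anisotropic regularity shedding; this is the content of the paper's Lemma~\ref{lem:ExtToInt}. Your spectral-calculus idea would have to reproduce the delicate frequency-by-frequency Orr weight $w(t,k,\eta)$, which seems very hard; the paper sidesteps this by keeping genuine Fourier analysis in the interior. The bootstrap is then run only up to $t_*\approx \nu^{-1/3-\zeta}$, after which the authors' prior Sobolev work \cite{BHIW23} takes over to get global-in-time (i)--(ii).

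\textbf{Part (iii): Gronwall fails on long times.} Your difference equation contains the term $(u^\nu-u^0)\cdot\nabla\omega^0$, which is \emph{linear} in $W=\omega^\nu-\omega^0$ (via Biot--Savart) with coefficient $\nabla\omega^0$ of size $\sim\eps$ (or $\sim\eps\langle t\rangle$ in physical coordinates). A Gronwall in $L^\infty$ or $L^2$ therefore gives $\|W\|\lesssim \eps\nu t^3\,e^{C\eps t}$ at best, losing control once $t\gtrsim|\log\eps|/\eps$, not $t\lesssim\nu^{-1/(3+\eta)}$. The paper explicitly flags this (Section~\ref{sec:NonTechIntro}, ``The Inviscid Limit'' paragraph): the linearized-around-$\omega^0$ terms ``destroy any naive attempt to propagate the inviscid limit beyond time-scales like $|\log\eps|^{-1}$.'' The remedy is to run a full Gevrey-2 energy estimate on $\tilde f=f^\nu-f^0$ in the \emph{inviscid} coordinate system $(z,v^0)$, using the same multiplier $A^{il}$ as in part (i) so that the linear terms are absorbed by $\mathcal{CK}$ terms exactly as in the nonlinear bootstrap, not by Gronwall. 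The $t^{2+\eta}$ versus $t^{3+\eta}$ split for the zero mode then falls out of the two-component energy $t^{-4-2\delta}\|\langle\partial_v\rangle^{-1}A^{il}\tilde f_0\|^2+t^{-6-2\delta}\|A^{il}\tilde f\|^2$. For the exterior $|y|\geq 1/2$, the $e^{-\nu^{-1/8}}$ bound needs the quantitative Gaussian weight $W$ propagated through the bootstrap, not just ``super-algebraic'' smallness; the translation from $\Gamma_{t;\nu}$ to $\Gamma_{t;0}$ regularity is then done via Fa\`a di Bruno.
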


\begin{remark}
Comparing the enhanced dissipation with the results of \cite{HI20} shows that $t \ll \nu^{-1/3}$ is the optimal range of times one can prove the inviscid limit. 
\end{remark}

\begin{remark}
We obtain the same quantitative inviscid limit in a stronger norm than $L^\infty$ (however the higher regularity is adapted to the transport operator, rather than traditional Sobolev spaces). 
\end{remark}

\subsection{Further Comments on Existing Literature}

We organize the relevant literature in the following categories.

\vspace{2 mm}

\noindent \textbf{Nonlinear Inviscid Damping (and uniform-in-$\nu$ results):} There are only a few works on nonlinear inviscid damping (and uniform in $\nu$ damping), all in recent years. The first such result for the Euler equations $(\nu = 0)$ near the Couette flow on $\mathbb{T} \times \mathbb{R}$ was obtained in \cite{BM13} for Gevrey-2+ perturbations. The work of Ionescu-Jia, \cite{HI20}, treated the Euler equations near Couette on the channel domain $\mathbb{T} \times [-1,1]$, and with the sharp Gevrey-2 datum. The work \cite{BMV14} treated the Navier-Stokes equations and obtained uniform in $\nu$ inviscid damping and enhanced dissipation near Couette on $\mathbb{T} \times \mathbb{R}$. Nonlinear inviscid damping has also been obtained for general monotonic shear flows recently in Ionescu-Jia, \cite{JI20}, and Masmoudi-Zhao, \cite{MZ20} (see e.g. \cite{Zillinger2017,WZZ19,WZZ20,Jia2020,J20,WZZ18,ISJ22,J23,beekie2024uniform,ChenWeiZhang23linear,GNRS20,CZEW20,chen2024enhanced,BH20,WZ19,CLWZ20,almog2021stability} and the references therein for works on the linearized problem). 
The work \cite{DM23} shows that, in some sense at least, the Gevrey-2 regularity is required for these kinds of results. 
Of course, the question of inviscid limit from Navier-Stokes can be most naturally asked for the Couette flow (and the Poiseuille flow) because these flows are solutions to the Navier-Stokes equations as well as Euler.
In all these aforementioned works, Fourier analysis can still play the central role in understanding the nonlinear difficulties due to the fact that the assumptions ensure the perturbation vorticity remains compactly supported away from the boundary.
The previous results on nonlinear inviscid damping all depend heavily on Fourier analysis methods, while here physical space methods must also play a central role in our analysis.

\vspace{2 mm}

\noindent \textbf{Transition Threshold:} For understanding the limit $t \to \infty$, a standard type of dynamical problem which has been recently studied frequently is the `transition' or `stability' threshold, i.e. for a given pair of norms $X,Y$, determine the smallest power of $\gamma$ such that $\norm{u_{in}}_X \lesssim \nu^\gamma$ implies $\norm{u}_Y \ll 1$, where the norm $Y$ is chosen to ensure the dynamics match the linearized dynamics at least to some degree, e.g. enhanced dissipation and inviscid damping estimates such as \eqref{ineq:NeqDecay}. See \cite{BGM_Bull19} and discussions within.
In the case of $\mathbb T \times \mathbb R$ with Sobolev initial data, see \cite{BVW16} which proved first $\gamma \geq 1/2$ and then $\gamma \leq 1/3$ for sufficiently regular data \cite{MW19,WZ23} and $\gamma \leq 1/2$ for rougher data \cite{MW20}; see \cite{LiMasmoudiZhao22b} for some results on sharpness of the lower regularity results. 
For 3D results in Sobolev spaces without boundaries, see for example \cite{BGM15II,BGM15III,WZ21}. 
The work~\cite{BMV14} was able to obtain an uniform in $\nu$ threshold $\gamma=0$ in  Gevrey-$2+$, which is sharp, in some sense, in view of~ \cite{DM23}, consistent with the result of the present paper.

In the presence of a boundary, Fourier analysis cannot be applied, at least not directly, to study the transition threshold problem. Using resolvent estimates, Chen, Li, Wei, and Zhang addressed the problem with Dirichlet (no-slip) boundary conditions in a channel with Sobolev initial data, with the stability threshold $\gamma \leq 1/2$ in 2D \cite{CLWZ20} and $\gamma = 1$ in 3D \cite{ChenWeiZhang20}.
Recently, to extend Theorem~\ref{thm:main} beyond the time interval $[0, \nu^{-1/3-}]$ to infinity, the authors of the present paper developed an energy based method to obtain the stability threshold $\gamma \leq 1/2$ \cite{BHIW23} with Navier boundary conditions, which has subsequently been improved to $\leq 1/3$ \cite{WZ23}. 

\vspace{2 mm}

\noindent \textbf{Inviscid Limit with Boundaries:} For the dynamical problem as $\nu\rightarrow0$ under the Navier boundary condition of general type
$
	\partial_{y} u_1 = \alpha u_1,
$
a derivation of the boundary layer expansion is obtained in~\cite{InfSue11}. In~\cite{MasRou12}, Masmoudi and Rousset verified this boundary layer expansion, proving that the inviscid limit holds for short time in Sobolev spaces. We refer the readers to \cite{Bar72, CloMikrob98, IftPla06, Pad14, Ngu19} for more work in this direction.
In the case of no-slip Dirichlet boundary conditions, due to the more significant mismatch of the boundary conditions between the Navier-Stokes and Euler equations, a stronger boundary layer (of size $\nu^{-1/2}$) appears. In this respect, a classical result of Sammartino and Caflisch \cite{SammartinoCaflisch98a,SammartinoCaflisch98b}
showed the inviscid limit holds for analytic initial data (again on short times). 
 Subsequently, Maekawa \cite{Maekawa14} proved  that the same is true  if the initial data $u_0$ has  vorticity $\omega_0=\curl u_0$ that is compactly supported in the interior of the domain.
The readers may find different proof of these two results in~\cite{NguyenNguyen18, FeiTaoZhang18, WanWanZha17}. Recently, the fourth author and his collaborators bridged the results 
 of Sammartino-Caflisch and Maekawa by proving that the inviscid limit holds for data which are analytic in a vicinity of the boundary, 
 and Sobolev regular otherwise~\cite{KukNguVicWan22,	KukVicWan19,  KukVicWan22,	Wan20}.
 We would also like to mention some important progress  of inviscid limit in Gevrey class~\cite{GerMaeMas18, GerMaeMas2020, CheWuZha22}.
 
 \vspace{2 mm}

\noindent \textbf{Mixing via Vector-Field Method:} It is becoming a somewhat standard practice to apply vector field estimates to derive mixing/inviscid damping in fluid and kinetic systems.
At the simplest level, Coti Zelati introduced in \cite{CotiZelati20} a vector field method merged with hypocoercivity to derive a uniform-in-$\nu$ mixing estimate for passive scalars subject to a strictly monotone shear flow in the high P\'{e}clet number regime.
In the paper \cite{WeiZhangZhu20}, Wei, Zhang, and Zhu applied the vector field method to derive inviscid damping in the $\beta$-plane equations.
Chaturvedi, Luk, and Nguyen applied a vector field method together with hypocoercivity to study the Vlasov-Poisson-Landau equation in the weakly collisional regime, deducing both enhanced dissipation and Landau damping (the analogue of inviscid damping) \cite{ChaturvediLukNguyen23} with a transition threshold of $\gamma \leq 1/3$ (expected to be sharp also in kinetic theory). 
Later, a similar vector field method was successfully applied to the non-cutoff Boltzmann equations on $\mathbb R^d$ \cite{BedrossianCotiZelatiDolce22}.
The present work seems to be the first that uses an nonlinear-adapted vector field to study mixing.

\subsection{Non-technical Overview of Main Ideas} \label{sec:NonTechIntro}

In Section \ref{sec:Outline} we outline the main steps of the proof as lemmas and spend the rest of the paper proving these.
Here we include a relatively non-technical overview of the main ideas in order to ease the reading of the technical details.

The proof is done in two main steps. The first, and primary step, is to reach the time-scales $t \approx \nu^{-1/3-\delta}$ for some small $\delta >0$.
By this time, the enhanced dissipation has eliminated essentially all of the $x$-dependence of the solution and one can apply our recent work \cite{BHIW23} to obtain global-in-time estimates. 
To prove step 1, we employ a nonlinear bootstrap argument which couples several crucial estimates in order to close up to times $t \approx \nu^{-1/3-\delta}$.

\vspace{2 mm}

\noindent \underline{\textsc{Inviscid Damping and Enhanced Dissipation in Interior}} The starting point for our analysis is to recall the mechanisms of inviscid damping and enhanced dissipation on $\mathbb{T} \times \mathbb{R}$ (primarily from \cite{BM13} and \cite{BMV14}, respectively).
This will motivate the types of bounds we can hope to prove in the ``interior" and the methods we employ to handle the nonlinear interactions therein.
One of the primary difficulties in \cite{BM13} is the presence of `nonlinear echo resonances' which necessitates the Gevrey-2 regularity requirement and motivates the delicate Fourier multiplier-based norms employed in \cite{BM13}.
These norms (or slight variations thereof) have appeared in all of the positive results on inviscid damping which do not require smallness with respect to $\nu$ \cite{BM13, HI20, JI20, MZ20, BMV14} and we will employ such norms as well (with some small adjustments). 
As discussed in \cite{BM13,BMV14}, in order to employ such norms, we first need a change-of-coordinates which depends on the solution itself in order to eliminate the background (time-varying) shear flow and make it possible to propagate vital regularity and use the Fourier methods effectively.
Hence we define the new coordinates $(x,y) \mapsto (z,v)$ as (formal for now), 
\begin{subequations}
\begin{align} \label{eq:v}
&z(t,x,y) = x - tv(t,y) \\ 
&\pa_t (t (v - y)) - \nu t \pa_y^2 v = \fint_{\mathbb T} u_1(t,x,  y)\dee x \\ 
&\pa_y v|_{y = \pm 1} = 1  \\ 
& f(t, z(t,x,y), v(t,y)) = \omega(t, x, y); 
\end{align}
\end{subequations}
this is the same change of variables used in \cite{BMV14} (except for the Neumann boundary conditions). 
Due the boundaries, in order to effectively employ a variation of the Fourier multiplier norms, we study the ``interior vorticity profile'' $\chi^I f$ for a Gevrey-smooth cutoff $\chi^I$ supported in the interval $(-\frac{31}{40},\frac{31}{40})$.
The main norm we propagate on the interior is of the form $\norm{\mathfrak{A}(\chi^I f)}_{L^2}$, where $\mathfrak{A}$ is a small adjustment of the multiplier norms used in \cite{BM13, HI20, BMV14} . 
There are also a corresponding set of ``interior'' estimates on the coordinate system itself, performed on energy estimates on $v_y(t,y) := \partial_y v(t,y)$ expressed in the $(z,v)$ variables and several related auxiliary variables (referred to as ``coordinate system variables''). 
See Sections \ref{sec:Outline} and \ref{sec:interior} for more details. 

\vspace{2 mm}

\noindent \underline{\textsc{Strong Localization in the Exterior}}
We now turn to the treatment of the ``exterior'', which comprises most of the new difficulties in this work. 
The first kind of property we will need to quantify is the localization of the vorticity.
Indeed, due to the viscosity, the vorticity will instantly spread everywhere in the channel, however, we will need to very strongly localize it close to the initial support.
We do so with the following (qualitatively optimal) weight
\begin{align}
W(t, y) := \frac{(|y|-1/4-L\ep\arctan (t))_+^2}{K\nu(1+t)},
\end{align}
where $K, L$ are parameters that we choose.
Aside from the $L\eps\arctan(t)$, this Gaussian weight is motivated by the form of the fundamental solution of the heat equation. The extra $L \eps \arctan(t)$ is there to account for the spreading of the vorticity support due to the nonlinear transport; the fact that it remains bounded uniformly in $\nu$ is due to the time-integrable inviscid damping of $u_2(t)$. 

\vspace{2 mm}

\noindent \underline{\textsc{Adapted Vector Fields}}
One can check that that the profile derivative $\p_v f(t, z, v)$ coincides with the following vector field $\Gamma_t$ acting on the vorticity $\omega$, 
\begin{align}
\Gamma_t := \frac{1}{\partial_y v(t,y)} \partial_y + t \partial_x. 
\end{align}
The significance of this derivative $\Gamma_t$ is that it is an \emph{approximate commuting vector field} for the time-dependent advection diffusion associated with the background shear: 
\begin{align}
 [\Gamma_t, \left(\partial_t + (y +  u_1)\partial_x - \nu \Delta\right)] \omega =   {G}\pav \Gamma_t \omega  - \nu \pav(v_y^2-1)\pav^2\omega,\label{eq:GammatComm}
\end{align}
where $\pav = \frac{1}{\p_y v(t, y)} \partial_y$ and where $G = \partial_t v - \nu \partial_{yy} v$ is an auxiliary function that is formally expected to decay like $O(\brak{t}^{-2})$ (although we only prove $O(\brak{t}^{-2+O(\eps)})$) and $P_0 q(t,y) = \fint q(t,x,y) \dee x$; see Section \ref{sec:coord:FEI} for more details. 

While the natural functional setting for the analysis of the interior vorticity is in Gevrey spaces in the $(z, v)$ variable on $f$, we adapt the functional setting for the total vorticity by working in Gevrey spaces of the two commuting vector fields. Moreover, due to the potentially hazardous boundary contributions at $y = \pm 1$, we define the co-normal weight function
\begin{align} 
q(y) =  \begin{cases} 99(y+1), & -1 < y < -1 + \frac{1}{100} \\
1 &  -1 + \frac{1}{50} < y < 1-\frac{1}{50} \\  
99(1-y),  & 1 - \frac{1}{100} < y < 1.
\end{cases}
\end{align}
We quantify the Gevrey regularity in the ``exterior'' using $W q^n \Gamma^n_t \partial_x^m \omega$ (note that this is different from $(q\Gamma_t)^n \partial_x^m$). 

\vspace{2 mm}

\noindent \underline{\textsc{Pseudo-Gevrey Regularity:}} It is useful at this stage to imagine the following domain decomposition, indicated in Figure \ref{fig:regions} which indicates the two primary regimes: the interior region where inviscid damping (and enhanced dissipation) dominates the dynamics and the exterior region where the vorticity should be  small due to the spatial localization. 
\begin{figure}[h] \label{fig:regions}
\centering
\begin{tikzpicture}
\draw[ultra thick, -] (-7,0) -- (7,0);
\node [below] at (5, 0) {$\frac34$};
\node [below] at (6, 0) {$\frac{31}{40}$};
\node [below] at (-5, 0) {$-\frac34$};
\node [below] at (-6, 0) {$-\frac{31}{40}$};
\node [below] at (2, 0) {$\frac{11}{40}$};
\node [below] at (-2, 0) {$-\frac{11}{40}$};
\node [below] at (1, 0) {$\frac{1}{4}$};
\node [below] at (-1, 0) {$-\frac{1}{4}$};
\node [below] at (7.5, 0) {$1$};
\node [below] at (-7.5, 0) {$-1$};
\node [below] at (3, 0) {\textcolor{red}{$\frac38$}};
\node [below] at (4, 0) {\textcolor{red}{$\frac12$}};
\node [below] at (-3, 0) {\textcolor{red}{$-\frac38$}};
\node [below] at (-4, 0) {\textcolor{red}{$-\frac12$}};
\draw[scale=1, domain=-1:1, smooth, variable=\x, thick, blue] plot ({\x}, {0});
\draw[->, scale=1, domain=1:3, smooth, variable=\x, thick, blue] plot ({\x}, {\x-1});
\draw[<-, scale=1, domain=-3:-1, smooth, variable=\x, thick, blue] plot ({\x}, {-\x-1});
\node [below] at (3, 1) {\textcolor{blue}{$(y - \frac14)_+$}};
\draw[color=orange, domain = -5:5, thick] plot (\x,{1}) node[right] {$\chi_I(y)$};
\draw[color=orange, domain = 6:7, thick] plot (\x,{0});
\draw[color=orange, domain = 5:6, thick] plot (\x,{6-\x});
\draw[color=orange, domain = -7:-6, thick] plot (\x,{0});
\draw[color=orange, domain = -6:-5, thick] plot (\x,{6+\x});
\draw[ultra thick, dashed, -, blue] (-7, -1) -- (-2,-1);
\draw[ultra thick, dashed, -, blue] (2, -1) -- (7,-1);
\node[right] at (7,-1) {\textcolor{blue}{weight large}};
\draw[ultra thick, dashed, -, orange] (-5, -1.5) -- (5,-1.5);
\node[right] at (5,-1.5) {\textcolor{orange}{inviscid damping}};
\draw[ultra thick, dashed, -, red] (3, -2) -- (4,-2);
\draw[ultra thick, dashed, -, red] (-4, -2) -- (-3,-2);
\node[right] at (4,-2) {\textcolor{red}{transition region}};
\end{tikzpicture}
\end{figure}
It would seem from this picture that two complementary, overlapping regimes have been identified in which two complementary mechanisms are available to control the nonlinearity, and therefore we might imagine that after estimating suitable commutators with the cutoffs that a scheme built on this picture ``closes". However, this is grossly misleading due to necessary \textit{regularity discrepancies.} Let us explain as follows. There are three apparent constraints on the regularities: 
\begin{itemize}
\item[(1)] Due to commutator terms from $\chi^I$ (schematically of the form $\text{derivatives}(\chi^I) \omega$) which appear as source terms for the ``interior equation" written on $\chi^I \omega$, we must have \textit{at least equal regularity} of $\omega$ in the support of $*\chi^I)'$, namely on $(-\frac{31}{40}, \frac34) \cup (\frac34, \frac{31}{40})$, as we hope to propagate on $\chi^I \omega$ (i.e. the interior).  
\item[(2)] In the exterior, we need to pay regularity in order to obtain enough inviscid damping of the stream function on the support of (at least) $(-\frac{11}{40}, \frac{11}{40})$, we need to propagate \textit{less regularity} in the exterior norm of $\omega$ than on the interior norm for $\chi^I \omega$ on $(-\frac{11}{40}, \frac{11}{40})$.
\item[(3)] Due to the complicated frequency-by-frequency nature of the Fourier multiplier norm employed in the interior (see \eqref{frak:A}) which has no physical-side analogue, it is not possible to work in the \textit{same (or even comparable) regularity} in both the exterior and interior analysis. This means one will always need to lose \emph{Gevrey-scale} regularity when comparing interior and exterior quantities.   
\end{itemize}
These regularity constraints motivate our use of a ``Pseudo-Gevrey regularity"\footnote{The terminology ``Pseudo-Gevrey regularity" arises from the fact that the Gevrey exponent depends on the location of $y$.} which gradually transitions (as a function of $y$) from
$L^2$ regularity on $(-\frac{11}{40},\frac{11}{40})$ to nearly analytic on $\frac{3}{4} < \abs{y} < 1-\frac{1}{100}$ (in particular, a stronger Gevrey regularity than that used on the initial data, and so we are using an instant gain of Gevrey regularity away from the initial support). 
In the region colored red in Figure \ref{fig:regions}, we introduce an infinite cascade of cutoff functions which are tailored specifically to the Gevrey regularity gain we need to obtain.
Hence the interior norm operators at scale $\frac{1}{2} \leq r$ while the exterior norm on $\frac{3}{4} < \abs{y} < 1 - \frac{1}{100}$ provides $r < \frac{1}{s} < 1$ regularity (with $r$ close to $1/2$ and $\frac{1}{s}$ close to $1$).

More specifically, we define a sequence of parameters as follows: 
\begin{figure}[h]
\centering
\begin{tikzpicture}
\draw[ultra thick, -, red, dashed] (-3,0) -- (7,0);
\node[right] at (7, 0) {\textcolor{red}{transition region}}; 
\node[below] at (-3, 0) {$\frac38 = x_1$}; 
\node[below] at (7, 0) {$\frac12$}; 
\node[below] at (-1, 0) {$x_n$};
\node[below] at (0, 0) {$y_n$};
\node[below] at (3, 0) {$x_{n+1}$};
\node[below] at (4, 0) {$y_{n+1}$};
\draw[scale=1, domain=-3:-1, smooth, variable=\x, thick, blue] plot ({\x}, {0});
\draw[scale=1, domain=-1:0, smooth, variable=\x, thick, blue] plot ({\x}, {\x+1});
\draw[scale=1, domain=0:7, smooth, variable=\x, thick, blue] plot ({\x}, {1});
\node[above] at (-.1,.3) {\textcolor{blue}{$\chi_{n}$}};
\draw[scale=1, domain=-3:3, smooth, variable=\x, thick, cyan] plot ({\x}, {0});
\draw[scale=1, domain=3:4, smooth, variable=\x, thick, cyan] plot ({\x}, {\x-3});
\draw[scale=1, domain=4:7, smooth, variable=\x, thick, cyan] plot ({\x}, {1});
\node[above] at (4.1,.3) {\textcolor{cyan}{$\chi_{n+1}$}};
\end{tikzpicture}
\end{figure}
where we choose 
\begin{align}
x_{n+1} = x_n + \frac{c_\sigma}{n^{1+\sigma}}, \qquad y_{n} = x_n + \frac{c_\sigma}{100 n^{1+\sigma}}, \qquad 0 <  \sigma < s - 1.
\end{align}
Then we measure regularity using $W \chi_{n+m} \partial_x^m q^n \Gamma_t^n \omega$, and refer to this as `Pseudo-Gevrey regularity'. 




\vspace{2 mm}

\noindent \underline{\textsc{ Anisotropic Loss of Regularity after $t = \nu^{-\frac13-}$}:} Even with adapted vector fields, the linear commutators and nonlinear effects seem to slowly deplete the available regularity in the exterior, i.e. we cannot propagate a uniformly controlled radius of graded Gevrey regularity.
For this reason, we shed radius of regularity like $O(\brak{t}^{-1})$, however, only in the $y$-direction. 
However, an interpolation argument using the localization gained from the very strong weight $W$ will ensure that, at least for $t \lesssim \nu^{-1/3-\delta}$ for a sufficiently small $\delta$, the scheme closes. We point the reader to Lemma \ref{lem:ExtToInt} for a more precise statement, but in essence, we prove schematically: 
\begin{align*}
\Big(\text{Pseudo-Gevrey $s$} + \text{Weight $W$}  - \text{Anisotropic Shedding}  \Big) \Big|_{\substack{\{(x,  y) \in \text{supp}((\chi^I)')\} \cap \\ \{t \lesssim \nu^{-1/3-\delta}\}}}> \text{Gevrey $\frac1r$}.
\end{align*}
In other words, if we propagate Pseudo-Gevrey regularity close to analytic $(s \sim 1)$ coupled with the large spatial weight, then on timescales $t \lesssim \nu^{-1/3-\delta}$ the anisotropic shedding brings us down to just above Gevrey-2 regularity after we spatially localize to $(\chi^I)'$ (which is enough to merge with the interior inviscid damping analysis): 

\vspace{2 mm}

\noindent \underline{\textsc{Six Families of Energy Functionals:}} The main nonlinear argument to reach the time-scale $\approx \nu^{-1/3-\delta}$ involves six families of vorticity-coordinate system norms (which themselves have several variants and sub-families). These six families can be thought of as belonging to four general groups: interior vorticity norms (these are built from the basic form $\norm{\mathfrak{A}(\chi^I f)}_{L^2}$, where $\mathfrak{A}$ is a multiplier similar to those used in \cite{BM13,BMV14}, interior coordinate system norms, exterior vorticity norms, and exterior coordinate system norms. Out of these, the interior vorticity and interior coordinate system norms are controlled using the techniques from \cite{BM13,BMV14,HI20} and are almost ``black-boxed" from those previous works. The vast majority of the effort comes in controlling the exterior vorticity norms, the exterior coordinate system norms, and the elliptic norms to be discussed further below. Moreover, it is obviously crucial to ensure that all of these aspects work together due to the requirement in this work to switch between frequency space and physical space approaches. We briefly describe some aspects of the various norms now; the reader can turn to Section \ref{sec:norms} for the precise presentation of each of the seven families.


\begin{itemize}

\item[] \underline{Exterior Vorticity Functionals:} First, we wish to control the vorticity (and auxiliary `coordinate' functions arising from the commutators involving $\Gamma_t$) in the exterior in a manner that builds in several of the features we have discussed above. To do so, we design functionals of the general form 
\begin{align*}
\sum_{m = 0}^\infty \sum_{n =0}^\infty \sum_{\iota \in \{0, 1\}} \bold{b}_{m,n}^2 \| e^W \chi_{m + n} q^n (\sqrt{\nu} \nabla)^{\iota} \p_x^m \Gamma_t^n \omega \|_{L^2}^2, \qquad \bold{b}_{m,n} \approx \frac{1}{\brak{t}^{n+1}}\left(\frac{c(t)^{m+n}}{(n+m)!}\right)^s.
\end{align*}
The variants used on the auxiliary coordinate functions are somewhat more subtle; see \eqref{fei:fei:fei} -- \eqref{fei:fei:fei:3} and the discussion thereafter for more details.

\item[] \underline{Sobolev Cloud Functionals:} A major effort in this paper is to control precisely contributions from the nonlinear terms (for example, Section \ref{sec:q} -- \ref{sec:Tri}). It turns out we need a type of intermediary norm that contains weighted information (which are in the exterior norms) but at lower regularities which enable us to avoid inserting the cutoffs $\chi_{n + m}$. As such, these ``cloud" functionals are a type of interpolant between the interior Fourier analytic norms and the exterior norms. To motivate the idea behind controlling this norm, we consider a typical ``quasilinear term" of the form $\nabla^\perp \phi^{(I)}_{\neq 0} \cdot \nabla \omega_{m,n}$, where $\omega_{m,n} = \p_x^m \Gamma^n \omega$, and we have written $\phi^{(I)}$ to denote the contribution of the stream function that comes from the interior (in particular, this is not small in $\nu$). We note that in our scheme, in addition to commuting $\p_x, \Gamma$ derivatives in the Gevrey scale, we also commute one factor of $\sqrt{\nu} \nabla$. We therefore need to consider the following type of trilinear term (among many others):
\begin{align*}
&\text{Trilinear Term of Commutator-Quasilinear Type:= } \\
&\qquad \sum_{m, n} \bold{b}_{m,n}^2 \langle (\sqrt{\nu} \nabla \nabla^\perp \phi^{(I)}) \cdot \nabla \omega_{m,n}, \sqrt{\nu} \nabla \omega_{m,n} \chi_{m+n}^2 q^{2n} e^{2W} \rangle.
\end{align*}
We temporarily restrict $m + n \le 10$, as we are discussing a ``low-frequency" issue. One basic strategy to control this term would be to put $ \sqrt{\nu} \nabla \omega_{m,n}$ in $L^2$, the stream function in $L^\infty$, and $\nabla \omega_{m,n}$ in $L^2$. However, even though we are controlling an $H^1$ based norm (for fixed $m,n$), we cannot put $\nabla \omega_{m,n}$ in $L^2$ due to the lack of $\nu^{\frac12}$ factors. Therefore, we opt to rewrite $\nabla \omega_{m,n}$ as a linear combination of $\omega_{m+1,n}$ and $\omega_{m,n+1}$. 

Normally, the price one has to pay for ``losing a derivative" in this manner is in the Gevrey coefficient $\bold{b}_{m,n}$. However, since we are restricting to low frequencies ($m + n \le 10$) this ends up being a uniform loss. On the other hand, the major loss in our scheme is the required shift of the cutoff function: 
\begin{align*}
\chi_{m + n} \mapsto \chi_{m + n + 1},
\end{align*}
which leaves behind the gap $\chi_{m + n} - \chi_{m + n - 1}$. Now, for $m + n \le 10$, these ``gap-regions" are localized to the interior. Therefore, we need a special ``Cloud" norm\footnote{The terminology ``Cloud" comes from imagining the support of this low-frequency norm as a cloud that encompasses all of these problematic gaps.} which lives at the Sobolev regularity, but importantly covers these ``gap-regions" (also including the substantial weight).  

\item[] \underline{Sobolev boundary functionals:} For several reasons, we need to propagate $H^4$ regularity of the vorticity all the way to the boundary, i.e. without the co-normal weights $q$. The two main motivations are (A) to obtain pointwise estimates on the auxiliary coordinate functions and $P_0\omega$ (i.e. to control the background shear flow); and (B) to obtain the type of $H^{1}$ estimates required to apply our previous work \cite{BHIW23} to extend the results from $\nu^{-1/3-\delta}$ to infinity. These norms require a delicate coupling between Sobolev norms (defined in \eqref{hn:1} -- \eqref{com:bulk:1}) in the bulk and Sobolev trace norms (defined in \eqref{zero:changes:1} -- \eqref{line:90}). Moreover, our trace analysis is quite nontrivial by itself, as it requires deriving PDEs for the vorticity evaluated at the boundaries $\{y = \pm 1\}$, and subsequently extracting maximum regularity bounds on these PDEs as quantified by our ``large" norms, \eqref{zero:changes:10} -- \eqref{zero:changes:large:1}.

\end{itemize}

\vspace{2 mm}

\noindent \underline{\textsc{Four Families of Elliptic Functionals:}}  A crucial, and difficult, part of the proof is obtaining suitable estimates on the streamfunction for the nonlinear terms. Intuitively, we think of the elliptic functionals as the ``arbiters" of the proof: they communicate simultaneously with the interior and exterior mechanisms and couple them together. More precisely, one can think that there are four possible ``directions of communication": 

\begin{itemize}

\item[a.] \textit{Exterior Vorticity-to-Exterior Streamfunction}: This is the most delicate part of the analysis. We introduce a family of quantities ($S\omega$ and $J\psi$) to capture various commutator terms that appear when implementing the Gevrey estimates of the streamfunction. Thanks to this ``ICC scheme" developed in the companion paper \cite{BHIW24b}, one is able to close the estimate. We skip further details here.  

\item[b.] \textit{Interior Vorticity-to-Exterior Streamfunction}:
In our analysis, the interior vorticity is in a low regularity space, and the exterior streamfunction is in a high regularity space. The method to upgrade the regularity is to identify a positive spatial gap between the support of the interior vorticity and the exterior region under consideration. This gap between the ``source" and the ``target" allows the Green's kernel in the Biot-Savart law to exhibit a smoothing effect, enabling us to upgrade the smoothness in the exterior region.  

\item[c.] \textit{Exterior Vorticity-to-Interior Streamfunction}: For the interior streamfunction generated by exterior vorticity, we use the fact that the detailed pseudo-Gevrey estimates yield that the exterior vorticity is small in a high Gevrey space. Hence, a standard elliptic estimate yields that they generate smooth streamfunction in the interior. 

\item[d.] \textit{Interior Vorticity-to-Interior Streamfunction}: For the interior-to-interior elliptic estimates, they are derived in a similar fashion as in the literature \cite{BM13,BMV14, HI20}. We will refer the readers  to these sources for further information. 

\end{itemize}

In particular, in order to execute these elliptic bounds favorably, we actually require \textit{two separate decompositions} of the stream function: 
\begin{align}
\psi = \psi^{(I)} + \psi^{(E)}, \qquad \psi = \phi^{(I)} + \phi^{(E)}.
\end{align}
Roughly, these four quantities above correspond to the four directions of communication in the bullet points above; see \eqref{Intr_psi_I} -- \eqref{ell:E} for precise formulations of these four stream functions. 

\vspace{2 mm}

\noindent \underline{\textsc{Linear Analysis \& the ICC-Method:}}
Clearly, the Navier-Stokes equations will not commute with the weights $q(y)^n \Gamma^n_t \partial_x^m$. 
A technical challenge in our exterior estimates is to analyze and estimate these commutators precisely, as they are (primarily) \textit{linear} terms (and hence the smallness of the data is irrelevant). To treat these contributions we developed a special scheme detailed in our separate, recent paper \cite{BHIW24b}, which we refer to as ``the ICC-Method''. It capitalizes on the precise algebraic structure of these commutators, both for the parabolic vorticity equation and the elliptic equation for the streamfunction (necessary to estimate the nonlinear terms in the exterior). While the implementation of this ICC-Method is the primary content of our companion paper, \cite{BHIW24b}, we quickly indicate the purpose of the technique. Indeed, due to linear commutators between $\nu$, our co-normal weight, $q(y)$, and the adapted vector-field $\Gamma_t$, we need to control linear error terms of the type: 
\begin{align*}
\lf(\frac{m+n}{q}\rg)^a \pa_y^b\pa_x^c \lf(\pa_x^m q^n\Gamma_t^n\omega\rg)\sim S^{(a,b,c)}_{m,n} \omega.
\end{align*}
These types of error terms also appear with certain factors of $\nu$. They represent a delicate linear trade-off between the quantities $(\nu, t,  m,n, q(y)^{-1})$. It turns out that these quantities can be estimated inductively, which is the content of our ICC-Method. However, the scheme is highly nontrivial and, in particular, requires the anisotropic $\varphi(t)^{n+1}$ weight in our energy functionals.   
\vspace{2 mm}

\noindent \underline{\textsc{Analysis of Trilinear Terms and the ``Quasiproduct" Framework}} Upon designing our elaborate functional framework, a major challenge becomes to control the corresponding nonlinear terms. To caricature the analysis, let $U \in \{E, I\}$. We write the nonlinear term $\p_x^m \Gamma^n \{ \nabla^\perp \phi^{(U)}_{\neq 0} \cdot \nabla \omega \}$ as follows
\begin{align} \n
 &\underbrace{\sum_{m' = 0}^{m} \sum_{n' = 0}^n \mathbbm{1}_{n' + m' < n + m} \binom{m}{m'} \binom{n}{n'} ( \Gamma \phi^{(U, \neq 0)}_{m-m', n-n'} \p_x \omega_{m', n'} - \p_x \phi^{(U, \neq 0)}_{m-m', n-n'} \Gamma \omega_{m', n'})}_{\mathcal{T}[\phi^{(U)}, \omega]} \\ \label{joey:2LK}
& +  \nabla^\perp \phi^{(U)}_{\neq 0} \cdot \nabla \p_x^m \Gamma^n \omega. 
\end{align}
We want to pull out the quasilinear term (the second term in \eqref{joey:2LK}, where all the derivatives land on $\nabla \omega$) because normally we integrate this term by parts when paired with $\p_x^m \Gamma^n \omega$ in an inner product. On the other hand, the first term in \eqref{joey:2LK} behaves like a product due to its origin, but it is not formally a product because we've pulled out the quasilinear term. Therefore, we call this bilinear operator $\mathcal{T}[\cdot, \cdot]$ ``quasiproduct", and we prove many types of bilinear estimates for these operators of the type 
\begin{align*}
&\sum_{m = 0}^\infty \sum_{n =0}^\infty \sum_{\iota \in \{0, 1\}} \bold{b}_{m,n}^2 \| e^W \chi_{m + n} q^n (\sqrt{\nu} \nabla)^{\iota} \mathcal{T}[ \phi^{(U)}, \omega ]_{m,n}\|_{L^2}^2 \\
\lesssim &\| \text{Elliptic Norm on $\phi^{(U)}$} \| \times \| \text{Vorticity Norm on $\omega$} \|.
\end{align*}
Such estimates are technically involved: the quadruple infinite sum appearing above requires many decompositions, combinatorial estimates, and various optimal choices of distributing the weights among the trilinear factors $\phi \times \omega \times \omega$ which also depend on the regime $U \in \{E, I\}$ of the streamfunction.  This extensive machinery is developed in Section \ref{sec:q} of our paper and is employed in Section \ref{sec:Tri} to control all trilinear contributions.

\vspace{2 mm}

\noindent \underline{\textsc{Fractional derivative loss and decay in the anisotropic regularity:}} There are three quantities used in this paper to characterize the 
nonlinear change of coordinate system/the adapted vector field, namely $H, \overline H,$ and $G$.
As in \cite{BM13,BMV14,HI20} the PDEs these solve have some hidden regularity loss.
The following toy model captures the nature of the regularity loss involved in how these coefficients couple with the vorticity
\begin{align*}
	\partial_{t} \omega &= H, \\
	\partial_{t} H &= \overline H, \\
	\partial_{t} \overline H &= \partial_{y} \omega.
\end{align*}
Informally, one could expect a $G^3$ loss, which is less than what is needed here (specifically $G^{2+}$).
To simplify the proof, we assume that $H$ has the same regularity as $\omega$, resulting in $\overline H$ having $1/2$-derivative less regularity than $\omega$.
Using a fractional weight  $(n+1)^{2s-2}$ with shifted Gevrey coefficients $\bold{a}_{n+1}$, we account for this regularity discrepancy in physical space. Also because of this discrepancy in regularity and the shedding of Gevrey radius,  the coordinate functional $\overline H$ loses $\brak{t}^{-1/2}$ decay, which would prevent us from obtaining the stability of $H$. A delicate balance among the localization property of the coordinate system, regularity discrepancy, and the fast decay in the interior region is needed to ensure that we achieve the appropriate decay of $\overline H$.

\vspace{2 mm}

\noindent \underline{\textsc{Timescale $t > \nu^{-\frac13-}$:}} As alluded to above, in order to apply the results of our previous work \cite{BHIW23}, we need to obtain some amount of Sobolev regularity up to the boundary, i.e. without co-normal weights. 
In particular, on $P_0 \omega$, we need to demonstrate a uniform estimate in $H^4$ regularity and on $\omega - P_0 \omega$ we need sufficient smallness in $H^1$. 
Neither of these estimates are directly implied by the norms we used thus far.
Hence, we need a new set of estimates specifically to obtain such regularity. The main issue here is ensuring that the boundary conditions on the higher derivatives can be treated and properly taking advantage of the above estimates to obtain the required smallness.

Ultimately, what we prove is the following: at some time $t_\ast = \nu^{-1/3-\zeta}$ for some $0 < \zeta \ll 1$, we have the Sobolev estimates
\begin{align*}
& \norm{\fint \omega(t_\ast,x',\cdot) \dee x'}_{H^4} \lesssim \eps \\
& \norm{\omega(t_\ast) - \fint \omega(t,x',\cdot) \dee x'}_{H^1}  \lesssim \eps e^{-\delta \nu^{-\zeta}}.  
\end{align*}
By using $\omega(t_\ast)$ as the initial data in \cite{BHIW23}, the results therein imply (possibly after adjusting $\eps_0$ in Theorem \ref{thm:main})
\begin{align*}
& \norm{P_0\omega(t)}_{H^4} \lesssim \eps e^{-\delta' \nu t} \\
& \norm{P_{\neq}\omega(t)}_{L^2} \lesssim \eps e^{-\delta \nu^{-\zeta}} e^{-\delta'\nu^{1/3}(t-t_\ast)} \\
& \norm{P_{\neq}u(t)}_{L^2} \lesssim \eps e^{-\delta \nu^{-\zeta}} e^{-\delta'\nu^{1/3}(t-t_\ast)}. 
\end{align*}
Choosing $\delta'' < \frac{1}{2}\min(\delta,\delta')$ and using $e^{-\delta'' \nu^{1/3} t} \lesssim_{N,\zeta} \brak{t}^{-N}$ for all $t > t_\ast$ and $N > 0$, we obtain parts (i) and (ii) in Theorem \ref{thm:main}. 

\vspace{2 mm}

\noindent \underline{\textsc{The Inviscid Limit:}} Part (iii) of Theorem \ref{thm:main} is proved in Section \ref{sec:IL}. 
In order to prove the inviscid limit, we need to estimate $\tilde{\omega} = \omega^{\nu} - \omega^0$.
However, the PDE this quantity satisfies contains not only nonlinear terms but also terms which are basically the Euler equations linearized around $\omega^0$.
Since $\omega^0$ is neither small in $\nu$ nor decaying, these linear terms will destroy any naive attempt to propagate the inviscid limit beyond time-scales like $\abs{\log \eps}^{-1}$.
Instead, a much more subtle approach must be taken, requiring estimates in Gevrey regularity using (again) the full power of the Fourier multiplier methods of \cite{BM13,HI20}.
The regularity is measured in the coordinate system defined by $\omega^0$, however $\omega^{\nu}$ (and hence $\tilde{\omega}$) do not a priori satisfy good estimates in this coordinate system.
In the ``exterior'', the strong weight and flexibility in regularity index is used to obtain such smallness estimates at the cost of restricting the time-scale to $t < \nu^{-1/3+\gamma}$ for an arbitrary $\gamma > 0$.
In the ``interior'', a Gevrey-2 (nonlinear) energy estimate is made which resembles that used in the proof of parts (i) and (ii). This estimate hinges on proving that the background shear $y + P_0 u_1^{\nu} = y + \fint u_1^\nu(t,x,\cdot) \dee x$ converges faster to $y + P_0 u_1^{0}$ (as a function of $\nu$) than the full vorticity using the special structure of the Euler equations (analogous to the structure that ensured $O(\brak{t}^{-2})$ convergence in \cite{BM13,HI20}).
See Section \ref{sec:IL} for more details.


\section{Bootstraps, Norms, and Outline of Main Proof}
\label{sec:Outline}
In this section we provide a technical overview of the proof Theorem \ref{thm:main}.

\subsection{Preparatory Objects \& Definitions}

\vspace{2 mm}

\noindent \underline{\textsc{Coordinate Transform \& Adapted Vector Fields:}}
As discussed above, in \cite{BM13,HI20} and in our interior estimates, the Fourier transform is a necessary tool, and so adapting the coordinate system itself to the background shear flow is a necessity (as opposed to purely using vector fields). 
We  recall how this is done in more detail than above.

In \eqref{eq:v} above we defined the coordinate change $(x,y) \mapsto (z,v)$ and the profile 
\begin{align*}
f(t,z(t,x,y),v(t,y)) = \omega(t,x,y),
\end{align*}
(technically speaking, for now this is formal, as one needs to justify that this coordinate change remains well-defined as one goes forward; see e.g. \cite{BM13,BMV14} for the details of how to do this). Furthermore, we denote the streamfunction profile as $\psi$ and the velocity profile as $U$. 

We now define the following associated quantities, which are taken from \cite{BMV14}:
\begin{subequations}
\begin{align}
C(t, v(t, y)) = &v(t, y) - y ,\\
v'(t, v(t, y)) = & \partial_y v(t, y).
\end{align} 
\end{subequations}
Of paramount importance are the three quantities that we bootstrap control over (introduced in \cite{BM13,BMV14}): 
\begin{subequations}
\begin{align} \label{defn:g}
g(t, v) = & [\pa_t v] - \nu v'' \\ \label{defn:barh}
\overline{h}(t, v) = & v' \pa_v g \\ \label{defn:h}
h(t, v) = & v' - 1,
\end{align}
\end{subequations}
where we are denoting $[\partial_t v](t,v(t,y)) = \partial_tv(t,y)$.
These auxiliary quantities solve PDEs of their own, which we delineate now (see \cite{BMV14} for the derivations):
\begin{subequations}
\begin{align}
&\pa_t g + \frac2t g + g \pa_v g  - \nu |v'|^2 \pa_v^2 g = - \frac{v'}{t} P_0\left( \nabla^\perp \psi_{\neq 0} \cdot \nabla U \right) \\
&\partial_t h + \frac{2}{t} h + g \partial_v h - \nu |v'|^2 \pa_v^2 h = \overline{h} \\
&\partial_t \overline{h} + \frac{2}{t} \overline{h} - \nu |v'|^2 \pa_v^2 \overline{h} = -\frac{v'}{t}P_0\left(U_{\neq} \cdot \grad f\right). 
\end{align}
\end{subequations}
We refer to the auxiliary unknowns $g,h,\bar{h}$ (and their analogues $G,H,\overline{H}$ below) as `coordinate system unknowns' (not to be confused with the complex conjugate $\bar{f}$ of a function ${f}$ also used in this paper). 
The vorticity profile satisfies the PDE
\begin{align*}
\partial_t f + g \partial_v f + v'\grad^\perp \psi_{\neq} \cdot \grad f = \nu |v'|^2 (\pa_v - t\partial_x)^2 f =: \nu \widetilde{\Delta_t} f. 
\end{align*}
See Section \ref{sec:interior} for more information about the stream function profile.

In the $(z,v)$ coordinate system, $\partial_z,\partial_v$ derivatives are the natural way to measure regularity.
These correspond directly to the derivatives
\begin{align*}
\partial_x, \quad \Gamma_t = \frac{1}{\partial_y v(t,y)} \partial_y + t \partial_x,
\end{align*}
and hence regularity in the $(z,v)$ coordinates matches regularity with respect to the vector fields $(\partial_x, \Gamma_t)$. 
As pointed out in \eqref{eq:GammatComm}, the significance of this derivative $\Gamma_t$ is that it is an \emph{approximate commuting vector field} for the time-dependent advection diffusion associated with the background shear. Therefore, the same quantities we need to control in order to use regularity in $(z,v)$ must be estimated in their corresponding $\partial_x,\Gamma_t$-regularity representations. 
With these vector fields, for the exterior fluid, we work in the $(x, y)$ coordinate and rewrite the equation~\eqref{eq1} as 
\begin{align}
	\label{M1a}
	&\pa_t \omega + (y + P_0u_1(t, y))\partial_{x}\omega  =\nu \de \omega-\nabla^{\perp}\stm_{\neq}\cdot\nabla \omega,\\ \n
	&\omega(t,y=\pm 1)=0,\qquad \
	\omega(t=0,y)=\omega_{\text{in}}(y),
	\end{align}
where $u^1_0(t, y)$ is the zero mode of the horizontal velocity. 
The auxiliary quantities $g, \overline h,$ and $h$ are given by $g(t, v(t,y)) = G(t, y)$, $\overline{h}(t, v(t,y)) = \overline{H}(t, y)$, and $h(t, v(t,y)) = H(t, y)$, where
\begin{subequations}
\begin{align} \label{defn:g:cap}
G(t, y)  =&  \pa_t v - \nu \pa_y^2 v = \frac{ P_0u_1(t, y) - (v-y) }{t}\\ \label{defn:barh:cap}
\overline{H}(t, y) = & \pa_y G = \frac{\pa_y P_0u_1(t, y) - (v_y - 1)}{t} = \frac{-P_0\omega(t, y) - H(t, y)}{t} \\ \label{defn:h:cap}
H(t, y) = & \pa_y v(t, y) - 1 = - P_0\omega - t \overline{H}, 
\end{align}
\end{subequations}
where we have used $\omega_0(t, y) =- \pa_y u^1_0(t, y)$.
The PDE that $G$ satisfies is  
\begin{subequations}
	\label{eq:G:main}
\begin{align}
&\pa_t G + \frac2t G - \nu \pa_y^2 G = -\frac{1}{t} \left( u_{\neq} \cdot \nabla u_1 \right)_0 \\ 
&\pa_y G|_{y = \pm 1} = 0.  
\end{align}
\end{subequations}
Above, the Neumann boundary condition on $G$ is forced by the corresponding Neumann boundary condition imposed on $v$, \eqref{eq:v}.
The bulk of our analysis on these quantities will be on $\overline{H} = \pa_y G$ (as in \cite{BM13} as well), on which we can obtain strong localization. This quantity satisfies 
\begin{subequations}
	\label{eq:bar:H:main}
\begin{align} 
&\pa_t \overline{H} + \frac2t \overline{H}  - \nu \pa_y^2 \overline{H}= -\frac{1}{t} \left( u_{\neq} \cdot \nabla \omega \right)_0 \\ 
&\overline{H}(t, \pm 1) =  0,
\end{align}
\end{subequations}
We also need the equation for $H(t, y)$, which is simply 
\begin{subequations}
	\label{eq:H:main}
\begin{align}
&\pa_t H - \nu \pa_y^2 H = \overline{H} \\
&H(t, \pm1) = 0.
\end{align}
\end{subequations}
These seven unknowns, together with the streamfunctions in the two different coordinate systems,  are the quantities studied in the nonlinear bootstrap.

\vspace{2 mm}

\noindent \underline{\textsc{Gevrey Indices:}} We have two essential Gevrey indices which we record here at the outset: 
\begin{align}
\begin{aligned} \label{pgiL1}
\frac12 < r < 1 = & \text{ Interior Gevrey $1/r$ Index} \\
1 < s = &  \text{ Exterior Pseudo-Gevrey $s$ Index.}
\end{aligned}
\end{align}
Our convention of letting $r < 1$ and $s > 1$ may appear strange, but it is motivated by wanting to simplify the notations in the actual analysis: as the interior analysis takes place on the Fourier side, the number $r$ appear most often in the multipliers. On the other hand, the exterior analysis occurs on the physical space side, and therefore the number $s$ most often appears in the estimates. When we compare the two regularities, it will always be either $1/r$ compared to $s$, or $1/s$ compared to $r$. More precisely, it turns out due to Lemma \ref{lem:ExtToInt} below, we will take the regularity gap 
\begin{align}
\frac12 < r < \frac{51}{100}, \qquad \frac{4}{5} < \frac{1}{s} < 1. 
\end{align}
\vspace{2 mm}

\noindent \underline{\textsc{Cutoff Functions:}}
A delicate part of our analysis is the presence of several different localization regions. 

In order to measure the interior profile $f$ (and the coordinate auxiliaries $g,h,\overline{h}$)
appropriately, we define the ``interior" localization using the following cut-off function
\begin{align} \label{chi:I:def}
\chi^I(\zeta) = \begin{cases} 1 \qquad \zeta \in  (-\frac34, \frac34) \\ 0 \qquad \zeta \in (-\frac{31}{40}, \frac{31}{40})^c,  \end{cases} 
\end{align}
and a similar cutoff with slightly expanded support, $\chi^I_e$ which is $\equiv 1$ on the $\abs{\zeta} \leq \frac{31}{40}$ (i.e. the support of $\chi^I$) and vanishes on $\abs{\zeta} \geq \frac{33}{40}$, i.e.
\begin{align}
\chi^I_e(\zeta) := \lf\{\begin{array}{cc}1,\quad&|\zeta|\leq \frac{31}{40},\\ 0,\quad&|\zeta|\in[\frac{33}{40},1].\end{array}\rg.\label{chiIe}
\end{align}


We will now detail the cutoffs used to gradually transition from $L^2$ regularity on the interior to nearly analytic regularity in the transition zone beyond the support of the initial data. Associated to the Gevrey index $s > 1$, we define the following associated parameters, which will arise in our analysis:
\begin{align} \label{s:prime}
0 <  \sigma < s - 1, \qquad \sigma_{\ast} := (s-1) - \sigma > 0. 
\end{align}
Given the parameter $\sigma$ defined above, we design now a sequence of cut-off functions $\chi_n(y)$, $n \in \mathbb{N}$. We first choose the following parameters: 
\begin{align}
x_1 = &\frac38, \\
x_{n+1} = &x_n + \frac{c_{\sigma}}{n^{1+\sigma}}, \qquad n \ge 1, \\
y_n = & x_n + \frac{c_{\sigma}}{100 n^{1+\sigma}}. 
\end{align}
Above, the constant $c_{\sigma}$ is chosen so that $c_{\sigma} \sum_{n \ge 1} \frac{1}{n^{1+\sigma}} < \frac18$. We now define cut-offs, $\chi_n(y)$, adapted to these scales:
\begin{align}\label{chi}
\chi_n(y) = \begin{cases} 0, \qquad - x_n < y < x_n \\ 1, \qquad \{- 1 < y < -y_n\} \cup \{y_n < y < 1\} \end{cases} \qquad n \ge 1.
\end{align}
The following properties follow from our choice of sequences, $\{x_n\}, \{y_n\}$: 
\begin{subequations}
\begin{align} \label{chi:prop:1}
\cap_{n = 1}^{\infty} \{ \chi_n = 1 \} \supset & (-1, -\frac12) \cup (\frac12, 1), \\ \label{chi:prop:2}
\text{supp}(\nabla^k \chi_{n+1}) \subset & \{ \chi_n = 1 \} \qquad \text{ for all } k \in \mathbb{N}, n \in \mathbb{N}, \\ \label{chi:prop:3}
|\pa_y^{j} \chi_n| \lesssim &n^{j (1 + \sigma)} \chi_{n-1}.
\end{align}
\end{subequations}
We also define a Gevrey-$s$ cutoff $\wt \chi_1$ satisfying
\begin{align}\label{wt_chi_intro}
 \wt \chi_1(\xi)= \begin{cases}1,\qquad |\xi|\geq \frac{3}{8}-\frac{1}{80},\\ 
0,\qquad |\xi|\leq \frac{3}{8}-\frac{1}{40}, \\ \text{monotone}, \quad \text{others}, \end{cases} \qquad \wt \chi_1^\mathfrak{c}(\xi)=1-\wt \chi_1(\xi).
\end{align}
This cutoff function is a fattened version $\chi_1$-cutoff \eqref{chi} above.
We highlight that the $\wt \chi_1$-cutoff function is defined in the new coordinate system $v$, so the variable $\xi$ takes values in $v(t,-1)$ and $v(t,1). $ 

There are several additional cutoffs defined in the internal workings of the proof, however all are straightforward variations of these choices. 

\vspace{2 mm}

\noindent \underline{\textsc{Weight Functions:}} We now define our co-normal weight function as a smooth, strictly positive function which satisfies the following,  
\begin{align} \label{q:defn}
q(y) =  \begin{cases} 99(y-1), & -1 < y < -1 + \frac{1}{100} 
\\ 1 &  -1 + \frac{1}{50} < y < 1-\frac{1}{50} \\  
99(1-y),  & 1 - \frac{1}{100} < y < 1.
\end{cases}
\end{align}
We now define our vorticity localization weight-function:
\begin{align} \label{defndW}
W(t, y)=\frac{(|y|-1/4-L\ep\arctan (t))_+^2}{K\nu(1+t)}.
\end{align}
Above, $K,L$ are large but universal constants. It can be checked (for example, see [Lemma 4.12, \cite{BHIW24b}]), that the function $W$ satisfies the following estimate:
\begin{align}\label{W_prop}
    \pa_t W+\frac{1}{8}K\nu|\pa_y W|^2  \leq& -\frac{(|y|-1/4-L\ep\arctan(t))_+^2}{2K\nu(1+t)^2}  -\frac{2L\epsilon(|y|-1/4-L\ep\arctan(t))_+}{K\nu(1+t)(1+t^2)} \leq  0.
\end{align}
Alternatively, there exists a constant $C$, independent of the parameters $K, L$ such that 
\begin{subequations}
\begin{align} \label{wdot:est:a}
\nu |\p_y W|^2 \le & - \frac{C}{K} \pa_t{W}; \\  \label{wdot:est:b}
\frac{\eps}{(1 + t)^2}|\p_y W| \le & - \frac{C}{L}\pa_t{W}.
\end{align}
\end{subequations}

%
We now define a few temporal weights that will serve as base exterior Gevrey radii as follows:
\begin{align} \label{Gev:la}
\lambda(t) := &\lambda_0( 1 + (1 + t)^{-\frac{1}{100}}), \\  
\widehat{\lambda}(t)  := & 2\lambda(t),  \\ \label{varphi}
\varphi(t) := & \frac{1}{(1 + t^2)^{\frac12}}.
\end{align} 
Here $\lambda_0$  is an order one small number chosen depending on the size of the solution ($\ep$) and the Gevrey index $s$. We note that $\lambda(t)$ will appear in the definition of the interior vorticity norm through the $\mathfrak{A}$ multiplier (Section \ref{sec:interior}), as well as to measure the exterior vorticity, \eqref{ef:a}. We have chosen a consistent $\lambda$ for convenience of notation, though it is not required in the proof that the radii of the interior and exterior vorticity functionals are exactly the same.  

The following basic inequalities regarding the above defined functions will be in constant use:  
\begin{align} \label{ineq:varphi}
1 \lesssim & \langle t \rangle \frac{\dot{\varphi}(t)}{\varphi(t)}, \\ \label{lambda:yessir}
 1 \lesssim & \langle t \rangle^2 \frac{\dot{\lambda}(t)}{\lambda(t)}. 
\end{align}
Regarding \eqref{ineq:varphi}, it is important that the weight $\varphi(t)$ does not decay \textit{too fast} so as to force our exterior norms to lose too many derivatives (measured by an appropriate Gevrey index) over the critical time-scale $t \in [0, \nu^{- \frac13-\zeta})$. Therefore the choice of $\varphi(t)$ is not arbitrary: there is a balance between ensuring enough decay to counteract commutators from our adapted vector field $\Gamma$, and having enough non-degeneracy so as to not shed too much regularity (as in the crucial Lemma \ref{lem:ExtToInt}).

Subsequently, we denote our base Gevrey weights:
\begin{subequations}\label{all_B_mn}
\begin{align}\label{B_low}
B_{m,n}^{\mathrm{low}}=&\lf(\frac{2^{-(m+n)}}{(m+n)!}\rg)^{4}, 
\\ \label{Bweight}
B_{m,n}(t) & :=\lf( \frac{\lambda^{n+m}}{(m+n)!} \rg)^{s}, \\
\widehat{B}_{m,n}(t) &:=  \lf( \frac{ \widehat{\lambda}^{m+n}}{(m+n)!} \rg)^{s}.
\end{align}
\end{subequations}
Of these $B_{m,n}$ will be by far the most important: it is used to measure exterior vorticity regularity. On the other hand, $\widehat{B}_{m,n}$ only arises in one elliptic functional, namely \eqref{Friday:1} -- \eqref{Friday:2}. 

We will use the following notations to track Gevrey regularity in the exterior
\begin{align} \label{a:weight}
\bold{a}_{m, n}(t) := & B_{m, n}(t)  \varphi(t)^{1+n}, \\
\bold{a}_{n}(t):  = &\bold{a}_{0, n}, \\ \label{hat_bf_a_intro}
\widehat{\bold{a}}_{m,n}(t) := & \widehat{B}_{m,n}(t) \varphi(t)^{1 + n}.
\end{align}
 
The final (minor) set of weights is designed to slightly adjust the way constants grow as increasing number of derivatives are taken, however, we will want to quantify some of these losses in a way which does not actually lose any radius of Gevrey regularity. 
we will use are defined as follows: let $\{ \theta_n\}_{n = 0}^\infty$ be a sequence of positive numbers such that 
\begin{align}
\begin{aligned}
&\theta_{n + 1} \le \theta_n, \qquad n = 0, 1, \dots \\
&\frac{\theta_{n+1}}{\theta_n} = \delta_{\text{Drop}}, \qquad n < n_\ast \\
&\theta_n = 1, \qquad n \ge n_\ast,
\end{aligned}
\end{align}
where the parameters $n_\ast$ and $\delta_{\text{Drop}}$ will be chosen based on universal constants as part of the linearized theory, Proposition \ref{thm:main:para}. 

\vspace{2 mm}

\subsection{Six Families of Energy Functionals} \label{sec:norms}

To control the vorticity and coordinate systems, we will bootstrap control over six different families of norms:
(1) interior vorticity norms, (2) exterior vorticity norms, (3) exterior coordinate norms, (4) interior coordinate norms, (5) Sobolev boundary norms, and (6) Sobolev ``cloud" norms (explained below).
These quantities measure two of the three main types of quantities that appear in our analysis: the vorticity, $\omega$, and the `coordinate system' auxiliary unknowns $(G, H, \overline{H})$ introduced above (the third being the stream function, $\psi$). 

\vspace{2 mm}

\noindent \textsc{Interior Vorticity:}
To measure the interior vorticity we use 
\begin{align}\mathcal{E}_{\mathrm{Int}}^{\mathrm{low}}(t):=&\sum_{m+n=0}^\infty \lf(B_{m,n}^{\mathrm{low}}\|\pa_x^m \Gamma_k^n(\chi^I \omega_k)\|_{L^2}\rg)^2,\label{E_Int_low}\\
\mathcal{E}_{\text{Int}}(t) := & \norm{\mathfrak{A}(t,\grad)  (\chi^I f) }_{L^2}^2, \label{E_Int}
\end{align}
and dissipation functional 
\begin{align}
\mathcal{D}_{\text{Int}}(t) := &\nu \norm{\grad_L \mathfrak{A}(t,\grad) (\chi^I f) }_{L^2}^2,
\end{align}
and several $\mathcal{CK}$ functionals (terminology we use to denote dissipation-like terms that arise when the time-derivatives land on the norm), 
\begin{align}
\mathcal{CK}_\lambda(t) & := \dot{\lambda} \norm{\abs{\grad}^{s/2} \mathfrak{A} (\chi^I f)}_{L^2}^2 \\
\mathcal{CK}_w(t) & := \norm{\sqrt{\frac{\partial_t w}{w}} \widetilde{\mathfrak{A}} (\chi^I f)  }_{L^2}^2 \\
\mathcal{CK}_M(t) & := \norm{\sqrt{\frac{\partial_t M}{M}} \mathfrak{A} (\chi^I f) }_{L^2}^2 \\
\mathcal{CK}_{\text{Int}} & := \mathcal{CK}_\lambda+ \mathcal{CK}_w + \mathcal{CK}_M, 
\end{align}
where $w,M,\mathfrak{A},\widetilde{\mathfrak{A}}$ are Fourier multipliers defined in Section \ref{sec:interior}. 
We further observe that $\mathcal{E}_{\mathrm{Int}}^{\mathrm{low}}(t)\lesssim \mathcal{E}_{\mathrm{Int}}(t).$
\vspace{2 mm}

\noindent \textsc{Interior Coordinate System:}
We introduce the following energy functionals for the three quantities from our interior coordinate system ($A$ and $A_R$ are defined in Section \ref{sec:interior}):
\begin{align}
\mathcal{E}_{\text{Int,Coord}}^{(h)} := & \norm{A_R (\chi^Ih)}_{L^2}^2,\label{E_IntCh} \\
\mathcal{E}_{\text{Int, Coord}}^{(\overline{h})} := & \brak{t}^{2 + 2r} \norm{\brak{\partial_v}^{-r} A_R (\chi^I \bar{h} )}_{L^2}^2, \\
\mathcal{E}_{\text{Int, Coord}}^{(g)} := & \brak{t}^{4 - 2K \eps} \norm{\brak{\partial_v}^{-5} A (\chi^Ig)}_{L^2}^2, \label{E_IntCg}
\end{align}
for a fixed universal constant $K \geq 1$. 
The nonlinear estimate scheme based on this trio of norms was introduced in \cite{BM13}. 
We have the following dissipation functionals for our interior coordinate system (See Section \ref{sec:interior} for definitions of the multipliers $A$ and $A_R$), 
\begin{align}
\mathcal{D}_{\text{Int,Coord}}^{(h)} := & \nu \norm{\partial_v A_R (\chi^I h) }_{L^2}^2  \\
\mathcal{D}_{\text{Int,Coord}}^{(\overline{h})} := & \nu \brak{t}^{2+2r} \norm{\partial_v A_R (\chi^I \bar{h} ) }_{L^2}^2\\
\mathcal{D}_{\text{Int,Coord}}^{(g)} := & \nu \brak{t}^{4-2K\eps} \norm{\partial_v \brak{\partial_v}^{-5} A (\chi^I g) }_{L^2}^2, 
\end{align}
and the following $\mathcal{CK}$ interior functionals for our coordinate system:
\begin{align}
\mathcal{CK}_{\text{Int,Coord}, \lambda}^{(h)} := &\dot{\lambda} \norm{A_R (\chi^Ih)}_{L^2}^2, \\
\mathcal{CK}_{\text{Int,Coord}, w}^{(h)} := & \norm{ \sqrt{\frac{\p_t w_R}{w_R}} A_R (\chi^Ih)}_{L^2}^2, \\
\mathcal{CK}_{\text{Int, Coord}, \lambda}^{(\overline{h})} := & \dot{\lambda} \brak{t}^{2 + 2r} \norm{\brak{\partial_v}^{-r} A (\chi^I \bar{h} )}_{L^2}^2, \\
\mathcal{CK}_{\text{Int, Coord}, w}^{(\overline{h})} := &  \brak{t}^{2 + 2r} \norm{\sqrt{\frac{\p_t w}{w}}\brak{\partial_v}^{-r} A (\chi^I \bar{h} )}_{L^2}^2, \\
\mathcal{CK}_{\text{Int, Coord}, \lambda}^{(g)} := & \dot{\lambda} \brak{t}^{4 - K \eps} \norm{\brak{\partial_v}^{-5} A (\chi^Ig)}_{L^2}^2, \\
\mathcal{CK}_{\text{Int, Coord}, w}^{(g)} := &  \brak{t}^{4 - 2K \eps} \norm{\sqrt{\frac{\p_t w}{w}}\brak{\partial_v}^{-5} A (\chi^Ig)}_{L^2}^2.
\end{align}
We also define
\begin{align*}
	\mathcal{E}_{\text{Int,Coord}}&= \mathcal{E}_{\text{Int,Coord}}^{(\bar{h})}+ \mathcal{E}_{\text{Int,Coord}}^{(g)} + \frac{1}{M}\mathcal{E}_{\text{Int,Coord}}^{(h)}, \\
	\mathcal{CK}_{\text{Int,Coord}}&= \sum_{\iota \in \{\lambda, w \}}  \mathcal{CK}_{\text{Int,Coord}, \iota}^{(\bar h)}  + \sum_{\iota \in \{\lambda, w \}}  \mathcal{CK}_{\text{Int,Coord}, \iota}^{(g)} + \frac{1}{M} \sum_{\iota \in \{\lambda, w \}}  \mathcal{CK}_{\text{Int,Coord}, \iota}^{(h)}, \\
	\mathcal{D}_{\text{Int,Coord}}^{(h)} &= \mathcal{D}_{\text{Int,Coord}}^{(\bar h)} + \mathcal{D}_{\text{Int,Coord}}^{(g)}+ \frac{1}{M} \mathcal{D}_{\text{Int,Coord}}^{(h)}
\end{align*}
for some $M>0$ sufficiently large.

\vspace{2 mm}

\noindent \textsc{Exterior Vorticity:}
For the exterior vorticity, we will have three families of energy-dissipation-CK functionals: the ``$\gamma$" version corresponding to $L^2$ level of regularity, the ``$\alpha$" version corresponding to $H^1_y$ level of regularity, and the ``$\mu$" version corresponding to $H^1_x$ level of regularity. We note that these $(\gamma, \alpha, \mu)$ functionals are then propagated at the Gevrey level (summing over $(m, n)$ below). We note that it is perhaps counter-intuitive, but our Gevrey spaces \textit{do not} directly imply regularity in the usual $\p_x, \p_y$ derivatives. This is due to a variety of reasons, chief among which is the inclusion of the co-normal weights $q^n$, the pseudo-Gevrey regularity cutoffs $\chi_{m + n}$, and the use of $\Gamma_t$.
It is important to note that simply controlling the $\mathcal{E}^{(\gamma)}$ functional yields essentially no useful $H^1$ information.
First, we define
\begin{align} 
\Gamma & :=\frac{1}{v_y(t,y)}\partial_y + t\partial_x, \quad \omega_{m,n}(t,x,y) := \partial_x^m \Gamma^n \omega(t,x,y),\\ 
\Gamma_k & := \frac{1}{v_y(t,y)}\partial_y + ikt, \quad \omega_{m,n;k}(t,y):= \abs{k}^m\Gamma_{k}^n \omega_k(t,y). 
\end{align}
We now introduce the energy functionals:
\begin{align} \label{ef:a}
\mathcal{E}^{(\gamma)}(t) := & \sum_{m = 0}^\infty \sum_{n =0}^\infty \theta_n^2 \bold{a}_{m,n}^2 \| q^n\omega_{m,n} e^W \chi_{m + n}  \|_{L^2}^2, \\ \label{ef:b}
\mathcal{E}^{(\alpha)}(t) := & \sum_{m = 0}^\infty \sum_{n =0}^\infty \theta_n^2 \bold{a}_{m,n}^2 \| \sqrt{\nu} \p_y (q^n\omega_{m,n}) e^W \chi_{m + n}  \|_{L^2}^2, \\ \label{ef:c}
\mathcal{E}^{(\mu)}(t) := & \sum_{m = 0}^\infty \sum_{n =0}^\infty \theta_n^2 \bold{a}_{m,n}^2 \| \sqrt{\nu} \p_x(q^n \omega_{m,n}) e^W \chi_{m + n}  \|_{L^2}^2.
\end{align}
Correspondingly, we have the dissipation functionals:
\begin{align} \label{df:a}
\mathcal{D}^{(\gamma)}(t) := & \sum_{m = 0}^\infty \sum_{n =0}^\infty \theta_n^2 \bold{a}_{m,n}^2 \| \sqrt{\nu} \nabla (q^n\omega_{m,n}) e^W \chi_{m + n}  \|_{L^2}^2 \\ \label{df:b}
\mathcal{D}^{(\alpha)}(t) := &  \sum_{m = 0}^\infty \sum_{n =0}^\infty \theta_n^2 \bold{a}_{m,n}^2 \| \nu \p_y \nabla (q^n\omega_{m,n}) e^W \chi_{m + n}  \|_{L^2}^2 \\ \label{df:c}
\mathcal{D}^{(\mu)}(t) := & \sum_{m = 0}^\infty \sum_{n =0}^\infty  \theta_n^2 \bold{a}_{m,n}^2 \| \nu \p_x \nabla  (q^n\omega_{m,n}) e^W \chi_{m + n} \|_{L^2}^2
\end{align}
Next, we have the $\mathcal{CK}$ functionals.
In our analysis, we have three decreasing quantities: $\varphi(t)$ defined in \eqref{varphi}, the Gevrey radius $\lambda(t)$ defined in \eqref{Gev:la}, as well as our weight function $W(t, y)$ defined in \eqref{defndW}. Consequently, we have three families of $\mathcal{CK}$ terms which will be used in different ways:   
\begin{align} \label{CKgammavarphi}
\mathcal{CK}^{(\gamma; \varphi)}(t) := &  \sum_{m = 0}^\infty \sum_{n =0}^\infty (1 + n) \theta_n^2 \bold{a}_{m,n}^2 \frac{\dot{\varphi}}{\varphi} \| q^n\omega_{m,n} e^W \chi_{m + n}  \|_{L^2}^2,  \\
\mathcal{CK}^{(\alpha; \varphi)}(t) := & \sum_{m = 0}^\infty \sum_{n =0}^\infty (1 + n) \theta_n^2 \bold{a}_{m,n}^2 \frac{\dot{\varphi}}{\varphi} \| \sqrt{\nu} \p_y  (q^n\omega_{m,n}) e^W \chi_{m + n} \|_{L^2}^2, \\
\mathcal{CK}^{(\mu; \varphi)}(t) := &  \sum_{m = 0}^\infty \sum_{n =0}^\infty (1 + n) \theta_n^2 \bold{a}_{m,n}^2 \frac{\dot{\varphi}}{\varphi} \| \sqrt{\nu} \p_x (q^n\omega_{m,n}) e^W \chi_{m + n}  \|_{L^2}^2, \\
\mathcal{CK}^{(\gamma; \lambda)}(t) := &  \sum_{m = 0}^\infty \sum_{n =0}^\infty (m + n) \theta_n^2 \bold{a}_{m,n}^2 \frac{\dot{\lambda}}{\lambda} \| (q^n\omega_{m,n}) e^W \chi_{m + n}  \|_{L^2}^2,  \\
\mathcal{CK}^{(\alpha; \lambda)}(t) := & \sum_{m = 0}^\infty \sum_{n =0}^\infty (m + n) \theta_n^2 \bold{a}_{m,n}^2 \frac{\dot{\lambda}}{\lambda} \| \sqrt{\nu} \p_y (q^n\omega_{m,n}) e^W \chi_{m + n}  \|_{L^2}^2, \\
\mathcal{CK}^{(\mu; \lambda)}(t) := &  \sum_{m = 0}^\infty \sum_{n =0}^\infty (m + n) \theta_n^2 \bold{a}_{m,n}^2 \frac{\dot{\lambda}}{\lambda} \| \sqrt{\nu} \p_x (q^n\omega_{m,n}) e^W \chi_{m + n}  \|_{L^2}^2, \\
\mathcal{CK}^{(\gamma; W)}(t):=&\sum_{m = 0}^\infty \sum_{n = 0}^\infty \theta_n^2 \bold{a}_{m,n}^2\lf\|\sqrt{-\pa_t W}q^n\omega_{m,n}e^W\chi_{m+n}\rg\|_{L^2}^2.\label{CK_W_ga} \\
\mathcal{CK}^{(\al; W)}(t):=& \sum_{m = 0}^\infty \sum_{n = 0}^\infty \theta_n^2 \bold{a}_{m,n}^2\left\|\sqrt{ -\pa_t W  } \sqrt{\nu}\pa_y (q^n\omega_{m,n})e^W\chi_{m+n}\right\|_{L^2}^2,\label{CK_W_al}\\  
\mathcal{CK}^{(\mu; W)}(t):=& \sum_{m  = 0}^\infty \sum_{n = 0}^\infty \theta_n^2 \bold{a}_{m,n}^2 \lf\|\sqrt{-\pa_t W} \sqrt{\nu}\p_x(q^n\omega_{m,n})e^W\chi_{m+n}\rg\|_{L^2}^2.\label{CK_W_mu}
\end{align}
Our use of these various $\mathcal{CK}$ functionals will be featured extensively in particular when performing trilinear estimates in Section \ref{sec:Tri}. Roughly speaking, the compare $\mathcal{CK}^{(\cdot; \varphi)}$ and $\mathcal{CK}^{(\cdot; \lambda)}$ we see a trade-off between regularity and decay: $(m+n)$ is in general stronger than $(1 + n)$, but $\frac{\dot{\varphi}}{\varphi} \sim \langle t \rangle^{-1}$ which is larger than $\frac{\dot{\lambda}}{\lambda}$. On the other hand, $\mathcal{CK}^{(\cdot, W)}$ encodes more subtle spatial localization information through the quotient $\frac{\p_t W}{W}$ which is absent from both $\mathcal{CK}^{(\cdot; \varphi)}$ and $\mathcal{CK}^{(\cdot; \lambda)}$.

\vspace{2 mm}

\noindent \textsc{Exterior Coordinate System Functionals:}
To control the auxiliary quantities $G,H,\overline{H}$ we need a suitable adaptation of the scheme we applied to estimate $g,h,\bar{h}$, which requires us to introduce some fractional-derivative adjustments to the pseudo-Gevrey norms.
We will also obtain slightly weaker localization estimates on $\overline{H}$, $H$ than we have available on $\omega$ (so that we can gain in nonlinear estimates). Note that localization estimates are not available on $G$. 
For $\iota\in\{\alpha, \gamma\}$, 
we introduce the following energy functionals  on which we will write our parabolic estimates:
\begin{align*}
	\mathcal{E}_{\overline{H}}^{(\iota)} := & 
	\sum_{n\ge0} \theta_{n}^2\mathcal{E}_{\overline{H},n}^{(\iota)}, \\
	\mathcal{E}_{G}^{(\iota)} := & 
	\sum_{n\ge0} \theta_{n}^2\mathcal{E}_{G,n}^{(\iota)}, \\
	\mathcal{E}_{H}^{(\iota)} := & 
	\sum_{n\ge0} \theta_{n}^2\mathcal{E}_{H,n}^{(\iota)},
\end{align*}
where 
\begin{align} \label{fei:fei:fei}
	\mathcal{E}_{\overline{H},n}^{(\iota)} := &\nu^{i_{\iota}/2} (n+1)^{2\sss-2}  \bold{a}_{n+1}^2 \brak{t}^{3+2\ss} \|\partial_y^{i_{\iota}} \bhqn e^{W/2} \chi_n \|_{L^2}^2, \\
	\mathcal{E}_{G,n}^{(\iota)} := & \begin{cases} \langle t \rangle^{4-2K\eps_1} \|\partial_y^{i_{\iota}} G \chi_0 \|_{L^2}^2  \qquad n = 0 \\
		\bold{a}_{n}^2 \brak{t}^{3+2\ss} 
		\| \partial_y^{i_{\iota}} \gqn  e^{W/2} \chi_{n-1+i_\iota}\|_{L^2}^2 \qquad n \ge 1 \end{cases}  \\ \label{fei:fei:fei:3}
	\mathcal{E}_{H,n}^{(\iota)} := & \nu^{i_{\iota}/2} \bold{a}_n^2 \| \partial_y^{i_{\iota}} \hqn  e^{W/2} \chi_n \|_{L^2}^2.
\end{align}
with $K$ given in~\eqref{E_IntCg},  $i_{\iota}$ defined by 
\begin{align*}
	i_{\iota} = \begin{cases}
		1, & \ \ \iota = \alpha \\
		0, & \ \ \iota = \gamma. 
	\end{cases}
\end{align*}
The corresponding dissipation functionals are given as 
\begin{align*}
\mathcal{D}_{\overline{H}}^{(\iota)} & = \sum_{n\ge0} \nu^{1+i_{\iota}/2} \theta_{n}^2 (n+1)^{2\sss-2}  \bold{a}_{n+1}^2 \brak{t}^{3+2\ss}
\|  \partial_y^{1+i_{\iota}} \overline{H}_n e^{W/2} \chi_n \|_{L^2}^2, \\ 
\mathcal{D}_{H}^{(\iota)} & =  \nu \theta_{0}^2 \langle t \rangle^{4-2K\eps}\enorm{\partial_y^{1+i_{\iota}} G\chi_0}^2  + \sum_{n\ge1} 
\nu \theta_{n}^2 \bold{a}_{n}^2 \brak{t}^{3+2\ss} 
\| \partial_y^{1+i_{\iota}} G_n e^{W/2} \chi_{n-1+i_\iota}\|_{L^2}^2,  \\
\mathcal{D}_{G}^{(\iota)} & = \sum_{n\ge0}\theta_{n}^2 \bold{a}_{n}^2 \nu^{1+i_{\iota}/2} \|  \partial_y^{1+i_{\iota}} {H}_n e^{W/2} \chi_n \|_{L^2}^2,
\end{align*}
and the $\mathcal{CK}$ terms are written as
  \begin{align*}
	\mathcal{CK}_{\overline{H}}^{(\iota)} :=& \sum_{n\ge0} \theta_{n}^2 \mathcal{CK}_{\overline{H}, n}^{(\iota)}, \\
	\mathcal{CK}_{G}^{(\iota)} :=& \sum_{n\ge0} \theta_{n}^2 \mathcal{CK}_{G, n}^{(\iota)}, \\
	\mathcal{CK}_{H}^{(\iota)} :=& \sum_{n\ge0} \theta_{n}^2 \mathcal{CK}_{H, n}^{(\iota)},
\end{align*}
where
\begin{align}
	\mathcal{CK}_{\overline{H},n}^{(\iota)} := & \sum_{j = 1}^2  \mathcal{CK}^{(\iota;j)}_{\overline{H}, n}(t), \\
	\mathcal{CK}_{\overline{H},n}^{(\iota;1)}:= &  \nu^{i_{\iota}/2}(n+1)^{2\sss-2} \bold{a}_{n+1}^2 \brak{t}^{3+2\ss} \|\partial_y^{i_{\iota}} \overline{H}_n 
	\sqrt{- \partial_{t}W} e^{W/2} \chi_n \|_{L^2}^2,
	\label{CK1:defi} \\
	\mathcal{CK}_{\overline{H},n}^{(\iota;2)} :=&  -  \nu^{i_{\iota}/2} (n+1)^{2\sss-2} \bold{a}_{n+1}   \dot{\bold{a}}_{n+1} \brak{t}^{3+2\ss}
	\| \partial_y^{i_{\iota}} \overline{H}_n e^{W/2} \chi_n \|_{L^2}^2, \label{CK2:defi} \\
	\mathcal{CK}_{G,n}^{(\iota)} := & \begin{cases}  \paren{4\langle t \rangle^{4-2K\eps_1}t^{-1} - (4-2K\eps_1)t\brak{t}^{2-2\eps}}
		\enorm{\partial_y^{i_{\iota}} G\chi_0}^2 \qquad & n=0
		\\ \sum_{j=1}^{2} \mathcal{CK}_{G, n}^{(\iota; j)}(t)^2 \qquad & n\ge 1, 
	\end{cases}\\
	\mathcal{CK}_{G,n}^{(\iota;1)} :=& \bold{a}_{n}^2 \brak{t}^{3+2\ss} \|\partial_y^{i_{\iota}} G_n \sqrt{- \partial_{t}W} e^{W/2} \chi_{n-1+i_\iota} \|_{L^2}^2,\\
	\mathcal{CK}_{G,n}^{(\iota;2)} :=	&-  \bold{a}_{n} \dot{\bold{a}}_{n} 
	\brak{t}^{3+2\ss}
	\| \partial_y^{i_{\iota}} G_n  \chi_{n-1+i_\iota} \|_{L^2}^2,\\
	\mathcal{CK}_{H,n}^{(\iota)} :=& - \nu^{i_{\iota}/2}  \bold{a}_{n} \dot{\bold{a}}_{n}
	\| \partial_y^{i_{\iota}}{H}_n e^{W/2} \chi_n \|_{L^2}^2
	+ \nu^{i_{\iota}/2}  \bold{a}_{n}^2   \| \partial_y^{i_{\iota}}{H}_n \sqrt{- W_t} e^{W/2} \chi_n \|_{L^2}^2  .
\end{align}
We remind the readers that as in the functionals of $\omega$, we have three kinds of CK terms coming from the functions $\varphi, \lambda,$ and $W$. However, to ease the notation, we do not differentiate them and 
just denote the sum of them as $\mathcal{CK}$ for each coordinate functions $\overline H, H$, and $G$.

We also define the weighted sum of the three families of coordinate functionals.
\begin{align*}
	&\mathcal{E}_{\text{Ext, Coord}}= \sum_{\iota\in\{\gamma, \alpha\}}\paren{\mathcal{E}_{\overline{H}}^{(\iota)} + \mathcal{E}_{G}^{(\iota)} + \frac{1}{M} \mathcal{E}_{H}^{(\iota)}},
	\\&
	\mathcal{D}_{\text{Ext, Coord}} =\sum_{\iota\in\{\gamma, \alpha\}} \paren{\cd_{\overline{H}}^{(\iota)} + \cd_{G}^{(\iota)} + \frac{1}{M} \cd_{{H}}^{(\iota)}},
	\\&
	\mathcal{CK}_{\text{Ext, Coord}} =\sum_{\iota\in\{\gamma, \alpha\}} \paren{\mathcal{CK}_{\overline{H}}^{(\iota)} + \mathcal{CK}_{G}^{(\iota)} 
	+ \frac{1}{M} \mathcal{CK}_{{H}}^{(\iota)}},
\end{align*}
for $M>0$ sufficiently large (independent of $\eps$, $\nu$, $t$).
\vspace{2 mm}

\noindent \textsc{Sobolev Boundary Functionals:}
Due to the degeneracy of the weight, $q$, near the boundaries, we will need to supplement our norms with the following functionals, which do not degenerate at the boundaries, but instead have extra powers of $\nu$. 
We introduce the following Sobolev energy functionals
\begin{align} \label{hn:1}
\mathcal{E}_{sob, 0}(t) := & \sum_{k \in \mathbb{Z}} \| \omega_{k,0} e^W\|_{L^2}^2, \\
\mathcal{D}_{sob, 0}(t) := &\sum_{k \in \mathbb{Z}} \|\nu^{\frac12} \nabla_k \omega_{k,0}e^W \|_{L^2}^2, \\
\mathcal{CK}_{sob,0}(t) := & \sum_{k \in \mathbb{Z}}\|   \omega_{k,0}\sqrt{-\pa_t W}    e^{W} \|_{L^2}^2, \\
\mathcal{E}_{sob, n}(t) := & \nu^{2n} \sum_{k \in \mathbb{Z}} \|\nabla \omega_{k,n-1} e^W\|_{L^2}^2, \qquad \qquad n \ge 1 \\
\mathcal{D}_{sob, n}(t) := & \nu^{2n} \sum_{k \in \mathbb{Z}} \|\nu^{\frac12} \nabla_k^2 \omega_{k,n-1}e^W \|_{L^2}^2, \qquad n \ge 1, \\
\mathcal{CK}_{sob, n}(t) := & \nu^{2n} \sum_{k \in \mathbb{Z}} \|\nabla \omega_{k,n-1} \sqrt{-\partial_t W} e^W\|_{L^2}^2, \qquad \qquad n \ge 1
\end{align}
and finally the complete bulk Sobolev norm we propagate: 
\begin{align} \label{com:bulk:1}
\mathcal{E}_{sob}(t) := \sum_{i = 0}^4 \mathcal{E}_{sob,i}(t), \qquad \mathcal{D}_{sob}(t) := \sum_{i = 0}^4 \mathcal{D}_{sob,i}(t), \qquad \mathcal{CK}_{sob}(t) := \sum_{i = 0}^4 \mathcal{CK}_{sob,i}(t).
\end{align}
It turns out that the bounds on these bulk Sobolev functionals are themselves subtle, and require a delicate interplay between the one-dimensional quantities $\alpha_j^{\pm}(t, x) := \p_y^j \omega(t, x, \pm 1)$ arising through the boundary conditions on higher derivatives on $\omega$.
We introduce functionals to measure these one-dimensional quantities as follows:
\begin{align} \label{zero:changes:1}
\mathcal{E}_{\text{Trace},0}(t) := &\| \nu^{\frac32} \alpha_1 e^W \|_{L^2_x}^2, \qquad \mathcal{D}_{\text{Trace},0}(t) :=  \| \nu^{2} \p_x \alpha_1 e^W\|_{L^2_x}^2, \\
\mathcal{E}_{\text{Trace},1}(t) := &\| \nu^{\frac52} \p_x \alpha_1 e^W \|_{L^2_x}^2, \qquad \mathcal{D}_{\text{Trace},1}(t) :=  \| \nu^2 \p_t \alpha_1 e^W\|_{L^2_x}^2, \\
\mathcal{E}_{\text{Trace},2}(t) := &\| \nu^{\frac72} \p_x^2 \alpha_1 e^W\|_{L^2_x}^2, \qquad \mathcal{D}_{\text{Trace},2}(t) := \| \nu^3 \p_t \p_x \alpha_1 e^W\|_{L^2_x}^2. 
\end{align}
Above, when we use $W$, we mean $W|_{y = \pm 1}(t, x)$. Consequently, we also gain $\mathcal{CK}$ functionals: 
\begin{align}
\mathcal{CK}_{\text{Trace},0}(t) := &\| \sqrt{-\p_t W} \nu^{\frac32} \alpha_1 e^W \|_{L^2_x}^2, \\
\mathcal{CK}_{\text{Trace},1}(t) := &\|\sqrt{-\p_t W} \nu^{\frac52} \p_x \alpha_1 e^W \|_{L^2_x}^2,  \\
\mathcal{CK}_{\text{Trace},2}(t) := &\|\sqrt{-\p_t W} \nu^{\frac72} \p_x^2 \alpha_1 e^W\|_{L^2_x}^2. 
\end{align}
We subsequently also need the following maximal regularity functionals: 
\begin{align}
\mathcal{D}_{\text{Trace, Max},1}(t) &:= \| \nu^3 \p_x^2 \alpha_1 e^W \|_{L^2_x}^2, \\
\mathcal{D}_{\text{Trace, Max},2}(t) &:= \| \nu^4 \p_x^3 \alpha_1 e^W \|_{L^2_x}^2. 
\end{align}
Finally, we will need to measure (in $L^2_t$) the quantities $\alpha_4$ and $\p_t \alpha_4$. Hence, we introduce 
\begin{align}\label{zero:changes:10}
\mathcal{D}_{\text{Trace, Large},0}(t) &:= \| \nu^3 \alpha_4 e^W\|_{L^2_x}^2,\\
\mathcal{D}_{\text{Trace, Large},1}(t) &:= \| \nu^4 \p_x \alpha_4 e^W\|_{L^2_x}^2, \\ \label{zero:changes:large:1}
\mathcal{D}_{\text{Trace, Large},2}(t) &:= \| \nu^4 \p_t \alpha_4 e^W\|_{L^2_x}^2 + \| \nu^5 \p_{xx} \alpha_4 e^W\|_{L^2_x}^2. 
\end{align}
We will have our full trace norms: 
\begin{align}
\mathcal{E}_{\text{Trace}}(t) := & \sum_{i = 0}^2 \mathcal{E}_{\text{Trace},i}(t), \\
\mathcal{D}_{\text{Trace}}(t) := &\sum_{i = 0}^2 \mathcal{D}_{\text{Trace},i}(t) + \sum_{i = 1}^2 \mathcal{D}_{\text{Trace, Max},i}(t) + \sum_{i = 1}^3 \mathcal{D}_{\text{Trace, Large},i}(t),  \\ \label{line:90}
\mathcal{CK}_{\text{Trace}}(t) := &\sum_{i = 0}^2 \mathcal{CK}_{\text{Trace},i}(t).
\end{align}

\vspace{2 mm}

\noindent \textsc{Sobolev Cloud Functionals:} It turns out to close our nonlinear analysis, we need to introduce the family of so-called Sobolev ``cloud" norms. These norms should be interpreted as finite regularity interpolants (even though they cannot be obtained by any interpolation) between interior vorticity and exterior vorticity in the following sense: they retain the weight $e^W$ from the exterior energy functionals, whereas they are localized, due to $\chi^{I}$, to the interior.
In particular, we define: 
\begin{align}
\mathcal{E}_{\text{cloud}}(t) := &\sum_{m + n \le 20}  \| \varphi^{n} \p_x^m \Gamma^n \omega e^W \chi^{I} \|_{L^2}^2, \\
\mathcal{D}_{\text{cloud}}(t) := &\sum_{m + n \le 20} \|\varphi^{n} \sqrt{\nu} \nabla \p_x^m \Gamma^n \omega e^W \chi^{I} \|_{L^2}^2 \\
\mathcal{CK}_{\text{cloud}}^{(W)}(t) := &\sum_{m + n \le 20} \|\varphi^{n} \sqrt{-\dot{W}} \p_x^m \Gamma^n \omega e^W \chi^{I} \|_{L^2}^2 \\
\mathcal{CK}_{\text{cloud}}^{(\varphi)}(t) := &\sum_{m + n \le 20}  \|\varphi^{n} \sqrt{-\dot{\varphi}} \p_x^m \Gamma^n \omega e^W \chi^{I} \|_{L^2}^2.
\end{align}
\vspace{2 mm}

\subsection{Four Families of Elliptic Functionals}

\vspace{2 mm}

\noindent \textsc{Exterior \& Interior $\mapsto$ Interior Streamfunction Decomposition:} There are two separate decompositions of the streamfunction made into `interior' and `exterior' contributions.
In the $(z,v)$ variables, we use the decomposition $\psi = \psi^{(I)} + \psi^{(E)}$ where 
\begin{align}
& -\partial_{xx}\psi^{(I)} -(1+ h^I)^2 (\partial_v-t\partial_z)^2 \psi^{(I)} + (1+h^I)\partial_v h^I (\partial_v - t\partial_z) \psi^{(I)} = f^I,\label{Intr_psi_I} \\
& \psi^{(I)}(t,z,v(t,\pm 1)) = 0,
\end{align}
and
\begin{align}
& -\Delta_t \psi^{(E)} = (1-\chi^I)f - \left( (1+h^I)^2 - (1 +h)^2 \right) (\partial_v-t\partial_z)^2 \psi^{(I)} \\
& \qquad \qquad \;\; -  \left((1+h)\partial_v h - (1+h^I)\partial_v h^I\right)(\partial_v - t\partial_z) \psi^{(I)}, \label{Intr_psi_E} \\ 
& \psi^{(E)}(t, x, \pm 1) = 0,
\end{align}
where here $h^I := h \chi^I$ and $f^I := f \chi^I$.  
For $\psi^{(I)}_k$ in \eqref{Intr_psi_I}, we apply the Ionescu-Jia elliptic estimates from \cite{HI20}; see Section \ref{sec:interior} for more details. 
We consider the following norm for the interior and exterior contribution $\Psi^{(E)}_k(t,x,y)=\psi_k^{(E)}(t,z(t,x,y),v(t,y))$:
\begin{align}
\mathcal{F}_{ell}^{(E)}(t) := &\sum_{k\in\mathbb{Z}\backslash{\{0\}}}\sum_{m+n=0}^\infty\lf(\frac{(2\widetilde{\lambda})^{m+n}}{(m+n)!}\rg)^{2/r}\|\chi_{m+n}|k|^m q^n\Gamma_k^n  \Psi^{{(E)}}_k\|_{L^2}^2.
\end{align}
Estimates on this quantity are obtained in our companion work \cite{BHIW24b}. 
\vspace{2 mm}

\noindent \textsc{Exterior \& Interior $\mapsto$ Exterior Streamfunction Decomposition:} For the exterior elliptic estimates, we use a different decomposition for the stream function 
\begin{subequations}
\begin{align} \label{ell:I}
(\pa_v^2-|k|^2)\ \phi^{(I)}_k(t,v) &=\ \widetilde{\chi}_1^\mathfrak{c}(v)\ \omega_k(t,v)+{\wt \chi_1^\mathfrak{c}(v)(\pa_v^2-(v'\pa_v)^2)\phi_k^{(I)}(t,v)}, \\
\n\qquad &\phi^{(I)}_k(t,v)|_{v=v(t,\pm 1)} \ =\ 0, \\  \label{ell:E}
(\pa_y^2-|k|^2)\ \phi^{(E)}_k(t,y) &=\ \widetilde{\chi}_1\lf(v(y)\rg) \omega_k(t,y)+{\wt\chi_1(v(y))}(\pav^2-\pa_{y}^2)\phi^{(I)}_k(t,v(y)),\\
\n \qquad &\phi^{(E)}_k(t,y)|_{y = \pm 1} = 0. 
\end{align}
\end{subequations}
Here $\pav:=v_y^{-1}\pa_y$ and we will use the $v'$ to denote the function $v_y$ in the $(x,v)$-coordinate. 

To measure these contributions, we introduce a second set of functionals to measure the stream function $\phi^{(I)}$, defined above in \eqref{ell:I}: 
\begin{align} \label{Friday:1}
\mathcal{E}_{\mathrm{ell}}^{(I, out)}(t) := &\sum_{k\in \mathbb{Z}\backslash\{0\}} \sum_{m+n=0}^\infty \widehat{\bold{a}}_{m,n}^2\|\chi_1 |k|^m(\pa_v+ikt)^n\pa_v^\ell \phi^{(I)}_k\|_{L^2_v}^2,\quad \ell\in\{0,1,2\}, \\ \label{Friday:2}
\mathcal{E}_{\mathrm{ell}}^{(I, full)}(t) := & \sum_{k\in \mathbb{Z}\backslash\{0\}}\sum_{m+n=0}^{1000} \widehat{B}_{m,n}^2\| |k|^m(\pa_v+i kt)^n  \phi^{(I)}_k\|_{L^2_v}^2.
\end{align}
We define an auxiliary functional that will be bounded (this helps simplify notations in the more complex trilinear estimates of Section \ref{sec:Tri}): 
\begin{align}
\slashed{\mathcal{E}}_{\mathrm{ell}}^{(I)}(t) := \langle t \rangle^4 \mathcal{E}_{\mathrm{ell}}^{(I, full)} + \langle t \rangle^{1000} \mathcal{E}_{\mathrm{ell}}^{(I, out)}(t). 
\end{align}  

Near the boundary, one needs to control various commutator terms involving $q$. It turns out that the following operations are crucial to express these commutators
\begin{align}
J^{(a,b,c)}_{m,n}\phe_k= &{\lf(\frac{m+n}{q}\rg)^a}\pav^b |k|^{c} ( \chi_{m+n}|k|^m q^n\Gamma_k^n \phe_k) \mathbbm{1}_{\{a+b+c\leq n\text{ or }a=0\}},\quad a,b,c\in \mathbb{N},\quad a+b+c\leq3 ,\label{J_intro}
\end{align}
where $\pav:=v_y^{-1}\pa_y$. We will derive the estimates for the following functionals
\begin{align}
\mathcal{J}_{{\mathrm{ell}}}^{(\ell)}(t):=& \sum_{m,n\geq 0}\ \sum_{a+b+c=\ell}{\bf a}_{m,n}^2 \|J_{m,n}^{(a,b,c)}\phe_{\neq}\|_{L_{x,y}^2}^2 \n\\
=&\sum_{k\in \mathbb{Z}\backslash\{0\}}\ \sum_{m,n\geq 0}\ \sum_{a+b+c=\ell}{\bf a}_{m,n}^2 \|J_{m,n}^{(a,b,c)}\phe_k\|_{L_y^2}^2,\quad \ell\in\{1,2,3\}.\label{Gj_intro}
\end{align}
We highlight that these norms controls the usual $H^{\ell}$ norm of the function. We further highlight that for the $\ell=1,2$ case, the $L_t^\infty$-estimates of $\mathcal{J}_{ell}^{(\ell)}$ are available; whereas for the $\ell=3$ case, one only has $L_t^2$-bound.  

\subsection{Bootstraps and Main Proposition}


We will execute a standard bootstrap argument. Specifically, we assume the following estimates hold on some time interval $[0,T]$ and show that the same inequalities hold with `4' replaced with `2' for all $\eps$ sufficiently small, thus permitting the extension of the desired estimates on the coordinate system and vorticity for all time (note, all quantities will depend continuously on time for $t > 0$). 
\begin{itemize}
\item Interior profile estimates: 
\begin{align}  \label{boot:Intf}
&\sup_{0 \le t \le T} \mathcal{E}_{\text{Int}}(t) + \int_0^T [ \mathcal{CK}_{\lambda}(t) +  \mathcal{CK}_{w}(t) +  \mathcal{CK}_{M}(t) ] dt + \int_0^T \mathcal{D}_{\text{Int}}(t)  dt \leq 4 \eps^2.
\end{align}

\item Interior coordinate system estimates:
	\begin{align} 
			\label{boot:IntH}
		\sup_{0 \le t \le T} \mathcal{E}_{\text{Int,Coord}} +  \int_0^T \mathcal{CK}_{\text{Int,Coord}} dt +  \int_0^T \mathcal{D}_{\text{Int,Coord}} dt \leq 4 \eps^2 .
	\end{align}
\item Exterior vorticity estimates:
\begin{subequations}
 \label{boot:ExtVort}
	\begin{align}
		\sup_{0 \le t \le T} \mathcal{E}^{(\gamma)} + \sum_{\iota \in \{\varphi, W \}} \int_0^T \mathcal{CK}^{(\gamma; \iota)}(t) dt +  \int_0^T \mathcal{D}^{(\gamma)}(t) dt \le 4 \eps^2 \\
		\sup_{0 \le t \le T} \mathcal{E}^{(\alpha)} + \sum_{\iota \in \{\varphi, W \}} \int_0^T \mathcal{CK}^{(\alpha; \iota)}(t) dt +  \int_0^T \mathcal{D}^{(\alpha)}(t) dt \le 4 \eps^2\\
		\sup_{0 \le t \le T} \mathcal{E}^{(\mu)} + \sum_{\iota \in \{\varphi, W \}} \int_0^T \mathcal{CK}^{(\mu; \iota)}(t) dt +  \int_0^T \mathcal{D}^{(\mu)}(t) dt \le 4 \eps^2.
	\end{align}
\end{subequations}
\item Exterior coordinate system estimates: 
\begin{align} \label{boot:H}
\sup_{0 \le t \le T} \mathcal{E}_{\text{Ext, Coord}} + \int_0^T \mathcal{CK}_{\text{Ext, Coord}}(t) dt +  \int_0^T \mathcal{D}_{\text{Ext, Coord}}(t) dt \le 4\eps^2.
\end{align}
\item Sobolev boundary estimates: 
\begin{subequations}
 \label{boot:sob}
 \begin{align}
 	&\sup_{0 \le t \le T} \mathcal{E}_{sob}(t) + \int_0^T \mathcal{CK}_{sob}(t) dt + \int_0^T \mathcal{D}_{sob}(t)  dt \leq 4 \eps^2, \\
 	&\sup_{0 \le t \le T} \mathcal{E}_{Trace}(t) + \int_0^T \mathcal{CK}_{Trace}(t) dt + \int_0^T \mathcal{D}_{Trace}(t)  dt \leq 4 \eps^2.
 \end{align}
\end{subequations}
\item Cloud norm estimates:
\begin{align} \label{boot:cloud}
&\sup_{0 \le t \le T} \mathcal{E}_{\text{cloud}}(t) + \int_0^T \mathcal{CK}_{\text{cloud}}^{(\varphi)}(t) + \mathcal{CK}_{\text{cloud}}^{(W)}(t) dt + \int_0^T \mathcal{D}_{\text{cloud}}(t)  dt \leq 4 \eps^2. 
\end{align}
\end{itemize}

The main step in the proof of Theorem \ref{thm:main} is the following. 
\begin{proposition} \label{prop:boot}
For all $\zeta > 0$ sufficiently small, and all $\lambda_0 > 0$ sufficiently small, there exists an $ \eps_0, \nu_0$ such that for all $\eps \in (0,\eps_0)$ and all $\nu \in (0,\nu_0)$, if the bootstrap hypotheses \eqref{boot:Intf},\eqref{boot:IntH},\eqref{boot:ExtVort}, \eqref{boot:H}, \eqref{boot:sob}, and \eqref{boot:cloud} hold on $[0,T]$ for some $T < \nu^{-1/3-\zeta}$, then the same inequalities in fact hold with `4' replaced with `2'.
\end{proposition}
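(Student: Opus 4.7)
The plan is a standard continuity/bootstrap argument: continuity of the energy functionals ensures it suffices to improve each of the six families of inequalities \eqref{boot:Intf}--\eqref{boot:cloud} from `$4\epsilon^2$' to `$2\epsilon^2$' on $[0,T]$, assuming all of them hold with constant $4$. For each bootstrapped quantity I would differentiate in time, isolate on the left-hand side the dissipation functional $\mathcal{D}_\bullet$ together with the $\mathcal{CK}_\bullet$ terms produced by the time derivative of the Fourier multipliers/Gevrey weights/weight function $W$ (using \eqref{wdot:est:a}--\eqref{wdot:est:b}, \eqref{W_prop} and the Gevrey-weight properties \eqref{ineq:varphi}--\eqref{lambda:yessir}), and then show that all remaining source terms can be absorbed either by these left-hand side quantities (paying a small constant) or by $\epsilon \cdot \mathcal{E}_\bullet$. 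Using the initial data bound from \eqref{cond:reg} controls everything by $\epsilon^2$ at $t=0$, so a constant strictly less than $4$ on the right-hand side closes the bootstrap for $\epsilon_0$ small.

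The linear contributions split into three groups. First, the background shear-flow commutator $[\Gamma_t,\partial_t+(y+u_1)\partial_x-\nu\Delta]\omega$, whose structure is given by \eqref{eq:GammatComm}, is handled by packaging the $G\overline{\partial_v}\Gamma_t\omega$ piece with the propagated decay of $G$ coming from the coordinate-system functionals $\mathcal{E}_{\text{Ext,Coord}}$ and $\mathcal{E}_{\text{Int,Coord}}$. Second, the commutators produced by the co-normal weight $q^n$, the cutoff $\chi_{m+n}$, and $\Gamma_t^n$ (against $\nu\Delta$ and $y\partial_x$) would be controlled using the ICC-Method of the companion paper \cite{BHIW24b}: these are exactly the $S^{(a,b,c)}_{m,n}\omega$-type terms, and the anisotropic factor $\varphi(t)^{1+n}$ built into $\bold{a}_{m,n}$ is what makes the induction in $(m,n)$ close. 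Third, cutoff-commutators between $\chi^I$ and the transport/diffusion generate source terms supported on $\frac34 \le |y| \le \frac{31}{40}$ which must be controlled by the exterior norms; this is precisely where the \emph{pseudo-Gevrey} setup is engineered to feed the interior norms (via Lemma~\ref{lem:ExtToInt}), trading the near-analytic exterior regularity and the strong Gaussian weight $e^W$ for the Gevrey-$1/r$ interior norm.

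The nonlinear source terms are treated through the quasiproduct framework sketched in \eqref{joey:2LK}. The idea is that, after commuting $\partial_x^m\Gamma_t^n$ through $u\cdot\nabla\omega$, the quasilinear term $\nabla^\perp\phi_{\neq 0}\cdot\nabla\omega_{m,n}$ is integrated by parts against the test function to produce only harmless symmetric remainders, while the quasiproduct $\mathcal{T}[\phi^{(U)},\omega]$ is estimated by the bilinear inequalities
\begin{equation*}
\sum_{m,n,\iota}\bold{b}_{m,n}^2\|e^W\chi_{m+n}q^n(\sqrt\nu\nabla)^\iota\mathcal{T}[\phi^{(U)},\omega]_{m,n}\|_{L^2}^2 \lesssim \|\text{elliptic norm on }\phi^{(U)}\|\cdot\|\text{vorticity norm on }\omega\|,
\end{equation*}
and bootstrapped $\mathcal{E}_\bullet^{1/2}$ factors. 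The streamfunction is split in the two distinct ways \eqref{Intr_psi_I}--\eqref{Intr_psi_E} and \eqref{ell:I}--\eqref{ell:E}, so that interior-vorticity-to-exterior-streamfunction uses smoothing off the support of $\chi^I$, exterior-vorticity-to-interior-streamfunction uses the smallness of exterior vorticity in a higher Gevrey class, and the interior-to-interior elliptic estimates are borrowed from \cite{HI20,BM13,BMV14}; the companion paper \cite{BHIW24b} supplies the exterior-to-exterior bounds through $\mathcal{J}_{\mathrm{ell}}^{(\ell)}$, $\mathcal{F}_{\mathrm{ell}}^{(E)}$, and the $S\omega$/$J\psi$ commutator scheme. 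The interior vorticity and coordinate estimates \eqref{boot:Intf}--\eqref{boot:IntH} follow the scheme of \cite{BM13,BMV14,HI20} with only minor modifications for the cutoff $\chi^I$.

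The main obstacle is ensuring that, on the overlapping interior/exterior strip, the three incompatible regularity indices identified in the introduction can be reconciled. Concretely, the commutator source terms from $\chi^I$ demand at least interior-Gevrey-$1/r$ control of $\omega$ on $\text{supp}((\chi^I)')$, while the exterior functionals only propagate pseudo-Gevrey-$s$ with an anisotropic shedding of $y$-regularity at rate $\langle t\rangle^{-1}$. The reconciliation rests on the interpolation inequality of Lemma \ref{lem:ExtToInt}, which converts the strong weight $e^W$ (ensuring the vorticity is extremely small away from the initial support) together with the near-analytic exterior regularity into the Gevrey-$1/r$ regularity needed by the interior scheme, at the cost of restricting to $t\lesssim\nu^{-1/3-\zeta}$. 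The Sobolev boundary analysis \eqref{boot:sob} is the second substantial obstacle: the required uniform $H^4$ control up to $y=\pm 1$ cannot be read off from the co-normal norms, and must be closed via the trace PDEs for $\alpha_j^\pm(t,x):=\partial_y^j\omega(t,x,\pm 1)$ and the large-norm dissipation functionals \eqref{zero:changes:10}--\eqref{zero:changes:large:1}, coupled back to the bulk through $\mathcal{E}_{\text{sob}}$. Once these two obstacles are passed, all source terms carry either an $\epsilon$ factor (from the bootstrap hypotheses on the opposite family) or a factor that can be absorbed into $\mathcal{CK}_\bullet$ via \eqref{wdot:est:a}--\eqref{wdot:est:b} and \eqref{ineq:varphi}--\eqref{lambda:yessir}, which yields the improved constant $2$ and closes the argument.
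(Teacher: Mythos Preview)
Your proposal is correct and follows essentially the same architecture as the paper: the proof of Proposition~\ref{prop:boot} is obtained by combining the sub-propositions~\ref{prop:MainInterior}--\ref{pro:ell:intro} (which package exactly the energy/dissipation/$\mathcal{CK}$ inequalities, ICC-method linear commutator bounds, quasiproduct trilinear estimates, elliptic bounds, and trace/Sobolev estimates you describe), using the bootstrap hypotheses to absorb $\epsilon$-small source terms and then integrating in time. The two key obstacles you single out---Lemma~\ref{lem:ExtToInt} for the interior/exterior regularity reconciliation and the trace-PDE coupling for the $H^4$ boundary control---are precisely those handled in Sections~\ref{sec:interior} and~\ref{sec:SOB:BDRY}.
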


\begin{remark}
Note that this proposition both takes $r>1/2$ close to $1/2$ and $\lambda_0$ small, mainly for technical convenience. However, this does not change the generality of Theorem \ref{thm:main}. 
\end{remark}

Proposition \ref{prop:boot} along with the standard well-posedness theory implies that the estimates \eqref{boot:Intf}, \eqref{boot:IntH}, \eqref{boot:ExtVort}, and \eqref{boot:H} hold on $[0,\nu^{-1/3-\zeta}]$. Denote $t_\ast = \nu^{-1/3-\zeta}$.  
At $t_\ast$ we then deduce the estimates
\begin{align*}
& \norm{\omega_0(t_\ast)}_{H^4} \lesssim \eps, \\
& \norm{\omega_{\neq}(t_\ast)}_{H^4} \lesssim \eps e^{-\delta' \nu^{1/3} t_\ast},  
\end{align*}
for some small $\delta' > 0$. As discussed in Section \ref{sec:NonTechIntro}, this combines with \cite{BHIW23} and the estimates proved in Proposition \ref{prop:boot} to imply Parts (i) and (ii) in  Theorem \ref{thm:main}. Part (iii) is explained in Section \ref{sec:IL}. 

\subsection{Sub-Propositions and Architecture of the Proof}
In this section we briefly outline the strategy to prove Proposition \ref{prop:boot}. There are really five main kinds of estimates in the proof.
The first kind are those that are done in the $(z,v)$ coordinate system and use the Fourier multiplier methods of \cite{BM13,HI20}; namely, the inequalities \eqref{boot:Intf} and \eqref{boot:IntH}. 
The second kind are high regularity estimates on the exterior vorticity and coordinate system variables which simultaneously quantify the gain in Gevrey index away from the interior, the strong localization away from the boundary, and Gevrey regularity with respect to the almost-commuting vector field $\Gamma_t$ that is adapted to match the $\partial_v$ derivatives. However, these estimates are done with the co-normal weight $q$, and so regularity is not obtained all the way to the boundary.  
These are the family of inequalities \eqref{boot:ExtVort} and \eqref{boot:H}.
The next family are the Sobolev boundary estimates, which propagates strong localization away from the boundary and finite regularity all the way to the boundary.
Hence, these estimates must deal with the boundaries in a more fundamental way, but have the advantage that we only need a few derivatives.
These are \eqref{boot:sob}.
The last family of inequalities are \eqref{boot:cloud}, which are the Sobolev cloud norms. These estimates are used to control some of the most asymmetric quasilinear terms as explained above and bridge the gap between the interior and exterior norms. Crucial to all of these are the four types of elliptic estimates on the streamfunction. 

\subsubsection{Interior estimates}
The main estimate on the interior vorticity is summarized in the following proposition, which is proved in Section \ref{sec:int:vort}.  
\begin{proposition}[Section \ref{sec:int:vort}] \label{prop:MainInterior} 
Under the bootstrap hypotheses, for $\eps$ sufficiently small, $t \lesssim \nu^{-1/3-1/\zeta}$ $0 < \zeta < 1/78$, the following estimates are valid: 
\begin{align*} \n
\frac{1}{2}\frac{d}{dt}\mathcal{E}_{\mathrm{Int}}(t) + \mathcal{CK}_{\mathrm{Int}}(t) +  \mathcal{D}_{\mathrm{Int}}(t) + \delta_I \nu^{1/3} \mathcal{E}_{\mathrm{Int}}(t) &\\
& \hspace{-6cm} \lesssim (\mathcal{E}_{\mathrm{Int}}^{1/2} + (\mathcal{E}_{\mathrm{Int,Coord}}^{(g)})^{1/2}) \left(\mathcal{CK}_{\mathrm{Int}}(t) + \delta_I \nu^{1/3} \mathcal{E}_{\mathrm{Int}}(t)  + \mathcal{D}_{\mathrm{Int}}\right) \\ 
& \hspace{-6cm} \quad + \frac{\mathcal{E}_{\mathrm{Int}}^{1/2} + (\mathcal{E}_{\mathrm{Int,Coord}})^{1/2}}{\brak{t}^{3/2}} \mathcal{E}_{\mathrm{Int}}  + (\mathcal{E}_{\mathrm{Int}}(t))^{1/2}\mathcal{CK}_{\mathrm{Int,Coord}}(t) \\ 
& \hspace{-6cm} \quad + \mathcal{E}_{\mathrm{Ext,Coord}}^{1/2}( \mathcal{CK}_{\mathrm{Int}} +  \frac{1}{\brak{t}^{3/2}}\mathcal{E}_{\mathrm{Int}}) + e^{-\nu^{-1/9}} (\mathcal{E}_H^{(\gamma)})^{1/2} (\mathcal{E}_{\mathrm{Int}}^{1/2} + \mathcal{E}_{\mathrm{Int,Coord}}^{1/2}) \mathcal{D}_{\mathrm{Int}}^{1/2}\\
& \hspace{-6cm} \quad + e^{-\nu^{-1/10}} \left( \mathcal{E}^{(\gamma)}\right)^{1/2} \mathcal{E}_{\mathrm{Int}}^{1/2} \mathcal{D}_{\mathrm{Int}}^{1/2} + \nu^{50} (\mathcal{E}^{(\gamma)}_H)^{1/2} \mathcal{D}_{\mathrm{Int}}^{1/2} \left( \mathcal{D}_{\mathrm{Int}}^{1/2} + \mathcal{D}_{\mathrm{Int,Coord}}^{1/2}\right) \\
& \hspace{-6cm} \quad + \nu^{50}\mathcal{E}_{\mathrm{Int}}^{1/2} (\mathcal{E}^{(\gamma)})^{1/2}(1 + \mathcal{E}_H^{(\gamma)})^{1/2}. 
\end{align*}
\end{proposition}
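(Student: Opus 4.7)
\textbf{Proof proposal for Proposition \ref{prop:MainInterior}.} The plan is to run a Fourier-multiplier energy estimate on the localized profile $\chi^I f$ in the $(z,v)$ coordinates, mirroring the scheme of \cite{BM13, BMV14, HI20} but keeping careful track of commutators generated both by the cutoff $\chi^I$ and by the exterior coordinate-system unknowns. First, I would derive the equation satisfied by $\chi^I f$: using the profile equation
\[
\partial_t f + g\,\partial_v f + v'\,\nabla^\perp \psi_{\neq}\cdot\nabla f = \nu |v'|^2 \widetilde{\Delta_t} f,
\]
multiplying by $\chi^I$ produces the expected transport-diffusion dynamics plus commutator source terms of the form $[\chi^I,g\partial_v]f$, $[\chi^I, v'\nabla^\perp\psi_{\neq}\cdot\nabla] f$ and $\nu[\chi^I,|v'|^2\widetilde{\Delta_t}]f$. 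Because $\nabla\chi^I$ is supported in $\{\tfrac{3}{4}\leq|v|\leq\tfrac{31}{40}\}$, which is disjoint from the initial support of the vorticity profile, these commutators will only be exercised against exterior quantities, and so acquire the tiny prefactors $e^{-\nu^{-1/9}}$, $e^{-\nu^{-1/10}}$ or $\nu^{50}$ via the Gevrey gap quantified in Lemma \ref{lem:ExtToInt}.

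Second, I would apply $\mathfrak{A}$ to the localized equation and take the $L^2$ inner product with $\mathfrak{A}(\chi^I f)$. The time derivative falling on $\mathfrak{A}$ produces, by construction of $w,M,\lambda$, the three good $\mathcal{CK}$ contributions $\mathcal{CK}_\lambda+\mathcal{CK}_w+\mathcal{CK}_M=\mathcal{CK}_{\mathrm{Int}}$ on the left; the dissipative term produces $\mathcal{D}_{\mathrm{Int}}$, and an additional application of the standard resolvent/Lyapunov estimate for the shear-diffusion semigroup (as in \cite{BMV14}) yields the enhanced-dissipation gain $\delta_I \nu^{1/3}\mathcal{E}_{\mathrm{Int}}$ for some small $\delta_I>0$. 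The remaining work is to estimate three classes of contributions: (a) the main transport nonlinearity $\langle \mathfrak{A}(\chi^I g\partial_v f),\mathfrak{A}(\chi^I f)\rangle$ which, after the symmetrization used in \cite{BM13}, is controlled by $(\mathcal{E}_{\mathrm{Int,Coord}}^{(g)})^{1/2}(\mathcal{CK}_{\mathrm{Int}}+\mathcal{D}_{\mathrm{Int}}+\delta_I\nu^{1/3}\mathcal{E}_{\mathrm{Int}})$ plus a $\brak{t}^{-3/2}$ decaying term; (b) the velocity-vorticity nonlinearity, which after splitting $\psi=\psi^{(I)}+\psi^{(E)}$ and exploiting the frequency-space reaction/transport/remainder paraproduct decomposition of \cite{BM13,BMV14} yields $\mathcal{E}_{\mathrm{Int}}^{1/2}(\mathcal{CK}_{\mathrm{Int}}+\mathcal{D}_{\mathrm{Int}}+\delta_I \nu^{1/3}\mathcal{E}_{\mathrm{Int}})+\mathcal{E}_{\mathrm{Int}}^{1/2}\brak{t}^{-3/2}\mathcal{E}_{\mathrm{Int}}$, together with coordinate-system coupling $\mathcal{E}_{\mathrm{Int}}^{1/2}\mathcal{CK}_{\mathrm{Int,Coord}}$; and (c) the commutator and viscous error terms, which through Lemma \ref{lem:ExtToInt} are bounded by the exponentially small terms multiplied by the appropriate exterior energy/dissipation factors $\mathcal{E}^{(\gamma)}$, $\mathcal{E}_H^{(\gamma)}$, $\mathcal{E}_{\mathrm{Ext,Coord}}^{1/2}$.

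The main obstacle, and also the part most specific to this work, is handling the $\chi^I$-commutators: the integrand is supported where $f$ is purely exterior, so one must bridge between the Fourier-analytic interior norm $\mathfrak{A}$ and the pseudo-Gevrey physical-space norms that control $\omega$ outside. The key mechanism is the Gevrey-radius mismatch: on the support of $\nabla\chi^I$ one may trade a fixed portion of the exterior pseudo-Gevrey regularity (which by Lemma \ref{lem:ExtToInt} exceeds Gevrey $1/r$) for a quantitatively strong smallness factor of the form $e^{-c\nu^{-\sigma}}$ (with $\sigma\in\{1/9,1/10\}$ depending on which norm one is using and whether a factor of $\sqrt{\nu}\nabla$ is available), or $\nu^{50}$ when paired with one factor of dissipation. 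These absorb into the dissipation $\mathcal{D}_{\mathrm{Int}}$ via Young's inequality. Once all contributions are collected, regrouping them and invoking the standard $\mathfrak{A}$-multiplier paraproduct estimates produces exactly the bound stated in the proposition; the nonlinear interior-only pieces are essentially black-boxed from \cite{BM13,BMV14,HI20}, while the interior-exterior bridging pieces are the genuinely new contribution controlled through the small prefactors.
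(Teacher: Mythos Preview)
Your proposal is correct and follows essentially the same approach as the paper: an $\mathfrak{A}$-multiplier energy estimate on $\chi^I f$, with the nonlinear transport split via $\psi=\psi^{(I)}+\psi^{(E)}$ and the interior piece handled by the reaction/transport/remainder paraproduct of \cite{BM13,HI20}, while the exterior-velocity and cutoff-commutator contributions are controlled through Lemmas \ref{lem:ExtToInt}, \ref{lem:ExtToIntf}, \ref{lem:ghExtToInt}, and \ref{lem:phiE}. One small clarification: the enhanced dissipation $\delta_I\nu^{1/3}\mathcal{E}_{\mathrm{Int}}$ does not come from a resolvent estimate but is built directly into $\mathfrak{A}$ via the factor $e^{\delta_I\nu^{1/3}t}\mathbf{1}_{k\neq 0}$ together with the ghost multiplier $M$ and Lemma \ref{def:ICommM}; this also introduces two additional commutator pieces $\mathcal{T}_{T;2}$ and $\mathcal{T}_{T;3}$ in the transport estimate beyond what is in \cite{BM13,HI20}, which the paper handles explicitly in Section \ref{sec:NIVE}.
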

The first three terms on the RHS arise essentially as in \cite{HI20,BM13} with some slight variations.
The rest arise from the contributions of $\phi^{(E)}$ and commutators with $\chi^I$. 
In order to control $(g,h,\bar{h})$, we use the following, which follow by methods in \cite{HI20,BM13} along with those used to prove Proposition \ref{prop:MainInterior}. 
\begin{proposition}[Section \ref{sec:interior}] \label{prop:MainCoordInt}
Under the bootstrap hypotheses, the following estimates are valid: 
\begin{align*}
\frac{1}{2}\frac{d}{dt}\mathcal{E}_{\mathrm{Int,Coord}}^{(h)} + \mathcal{CK}^{(h)}_{\mathrm{Int,Coord}} +  \mathcal{D}_{\mathrm{Int,Coord}}^{(h)} & \lesssim \\
& \hspace{-6cm} (\mathcal{E}_{\mathrm{Int,Coord}}^{(g)} + \mathcal{E}_{G}^{(\gamma)})^{1/2} \mathcal{CK}_{\mathrm{Int,Coord}}^{(h)} + (\mathcal{E}_{\mathrm{Int,Coord}}^{(h)})^{1/2}( \mathcal{CK}_{\mathrm{Int,Coord}}^{(h)})^{1/2} (\mathcal{CK}_{\mathrm{Int,Coord}}^{(\bar{h})})^{1/2} \\ 
& \hspace{-6cm} \quad + (\mathcal{CK}^{(h)}_{\mathrm{Int,Coord}})^{1/2}(\mathcal{CK}^{(\bar{h})}_{\mathrm{Int,Coord}})^{1/2} \\
& \hspace{-6cm} \quad + \nu (\mathcal{E}_{\mathrm{Int,Coord}}^{(h)})^{1/2} \mathcal{D}_{\mathrm{Int,Coord}}^{(h)} + \nu^{100}(\mathcal{E}_{\mathrm{Int,Coord}}^{(h)})^{1/2}(\mathcal{E}^{(\gamma)}_{H})^{1/2}, \\
\frac{1}{2}\frac{d}{dt}\mathcal{E}_{\mathrm{Int,Coord}}^{(g)}(t) + \mathcal{CK}^{(g)}_{\mathrm{Int,Coord}}(t) +  \mathcal{D}_{\mathrm{Int,Coord}}^{(g)}(t) & \lesssim  \\
& \hspace{-6cm} (\mathcal{E}_{\mathrm{Int,Coord}}^{(g)} + \mathcal{E}_{G}^{(\gamma)})^{1/2} \mathcal{CK}_{\mathrm{Int,Coord}}^{(g)} + (\mathcal{E}_{\mathrm{Int,Coord}}^{(g)})^{1/2}( \mathcal{CK}_{\mathrm{Int,Coord}}^{(g)})^{1/2} (\mathcal{CK}_{\mathrm{Int,Coord}}^{(\bar{h})})^{1/2} \\
& \hspace{-6cm} \quad + \nu (\mathcal{E}_{\mathrm{Int,Coord}}^{(h)})^{1/2} \mathcal{D}_{\mathrm{Int,Coord}}^{(g)} + \nu^{100}(\mathcal{E}_{\mathrm{Int,Coord}}^{(g)})^{1/2}(\mathcal{E}^{(\gamma)}_{G})^{1/2}, \\
& \hspace{-6cm}  \frac{1}{t^{1+r}}\left(\mathcal{E}_{\mathrm{Int}} + \mathcal{E}^{(\gamma)} \right), \\
\frac{1}{2}\frac{d}{dt}\mathcal{E}_{\mathrm{Int,Coord}}^{(\bar{h})}(t) + \mathcal{CK}^{(\bar{h})}_{\mathrm{Int,Coord}}(t) +  \mathcal{D}_{\mathrm{Int,Coord}}^{(\bar{h})}(t) & \lesssim \\  
& \hspace{-6cm} (\mathcal{E}_{\mathrm{Int,Coord}}^{(g)} + \mathcal{E}_{G}^{(\gamma)})^{1/2} \mathcal{CK}_{\mathrm{Int,Coord}}^{(\bar{h})} + (\mathcal{E}_{\mathrm{Int,Coord}}^{(\bar{h})})^{1/2} \mathcal{CK}_{\mathrm{Int,Coord}}^{(\bar{h})} \\
& \hspace{-6cm} \quad + \nu (\mathcal{E}_{\mathrm{Int,Coord}}^{(h)})^{1/2} \mathcal{D}_{\mathrm{Int,Coord}}^{(\bar{h})} + \nu^{100}(\mathcal{E}_{\mathrm{Int,Coord}}^{(\bar{h})})^{1/2}(\mathcal{E}^{(\gamma)}_{\overline{H}})^{1/2}, \\
& \hspace{-6cm} \quad + (\mathcal{CK}_{\mathrm{Int,Coord}})^{1/2}(\mathcal{CK}_{\mathrm{Int}})^{1/2}( \mathcal{E}_{\mathrm{Int}} + \mathcal{E}^{(\gamma)} )^{1/2} \\
& \hspace{-6cm} \quad  + \frac{1}{t^{1+r}}(\mathcal{E}_{\mathrm{Int}}^{1/2} + (\mathcal{E}^{(\gamma)})^{1/2}) \mathcal{E}_{\mathrm{Int,Coord}}^{(\bar{h})}.
\end{align*}
\end{proposition}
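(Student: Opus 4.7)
\textbf{Proof Plan for Proposition \ref{prop:MainCoordInt}.}

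The overall plan is to derive each of the three estimates by the same blueprint: apply the interior cutoff $\chi^I$ and the appropriate Fourier multiplier ($A_R$ for $h, \bar{h}$ and $\brak{\partial_v}^{-5} A$ for $g$) to the PDEs \eqref{defn:g}--\eqref{defn:h}, compute the time derivative of the time-weighted $L^2_v$ energy, and balance the resulting terms against $\mathcal{CK}_{\mathrm{Int,Coord}}$ and $\mathcal{D}_{\mathrm{Int,Coord}}$. Three favorable structural features will be used repeatedly: (i) the time-weights $\brak{t}^{2+2r}$ and $\brak{t}^{4-2K\eps}$ are designed so that their time derivatives combine with the $\frac{2}{t}$ damping from the PDE to produce nonnegative or controllable boundary contributions; (ii) differentiation of $\lambda(t)$ and $w$ (or $w_R$) in the multiplier generates precisely the $\mathcal{CK}^{(\cdot)}_{\mathrm{Int,Coord}, \lambda}$ and $\mathcal{CK}^{(\cdot)}_{\mathrm{Int,Coord}, w}$ terms that encode the radius-shedding; (iii) the viscous term $\nu|v'|^2 \partial_v^2$ integrates by parts to produce the dissipation $\mathcal{D}_{\mathrm{Int,Coord}}^{(\cdot)}$ modulo commutator errors involving $\partial_v(|v'|^2)$, which are controlled via $\mathcal{E}^{(h)}_{\mathrm{Int,Coord}}$ and the bootstrap on $H$.

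For the $h$-equation, the source $\bar{h}$ is paired directly with $A_R^2(\chi^Ih)$ and dualized; by Cauchy--Schwarz against the two CK weights $\sqrt{\partial_t w_R/w_R}$ and $\sqrt{\partial_t w/w}$, this produces exactly the cross term $(\mathcal{CK}^{(h)}_{\mathrm{Int,Coord}})^{1/2}(\mathcal{CK}^{(\bar{h})}_{\mathrm{Int,Coord}})^{1/2}$ (following the scheme of \cite{BM13} where the difference between the two norm families is calibrated to absorb this forcing). The nonlinear transport $g\partial_v h$ is decomposed paraproduct-style into (a) a reaction-type piece $(\text{low } g)(\partial_v \text{high } h)$ which gives the quasilinear contribution $(\mathcal{E}_{\mathrm{Int,Coord}}^{(g)} + \mathcal{E}_G^{(\gamma)})^{1/2}\mathcal{CK}^{(h)}_{\mathrm{Int,Coord}}$ after using the enhanced dissipation factor inside the CK weight, and (b) a transport-type piece $(\text{high } g)(\partial_v \text{low } h)$ giving the trilinear form $(\mathcal{E}^{(h)}_{\mathrm{Int,Coord}})^{1/2}(\mathcal{CK}^{(h)}_{\mathrm{Int,Coord}})^{1/2}(\mathcal{CK}^{(\bar{h})}_{\mathrm{Int,Coord}})^{1/2}$. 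The $g$-equation estimate follows the same paraproduct strategy on $g\partial_v g$, and the source $-\tfrac{v'}{t}P_0(\nabla^\perp \psi_{\neq}\cdot\nabla U)$ is handled by isolating the inviscidly damped stream function $\psi^{(I)}_{\neq}$ (using interior elliptic bounds to gain $\brak{t}^{-(1+r)}$ decay) and combining with $\mathcal{E}_{\mathrm{Int}}$ plus $\mathcal{E}^{(\gamma)}$ for the exterior contribution to the mean velocity, producing the $\tfrac{1}{t^{1+r}}(\mathcal{E}_{\mathrm{Int}} + \mathcal{E}^{(\gamma)})$ factor.

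For $\bar{h}$, the source $-\tfrac{v'}{t}P_0(U_{\neq}\cdot\nabla f)$ is the main novelty. Placing the full time-weight $\brak{t}^{2+2r}$ onto this term and exploiting the $\tfrac{1}{t}$ prefactor together with the $\brak{t}$-decay of $U_{\neq}$ coming from inviscid damping (again via $\mathcal{E}_{\mathrm{Int}}$ and, away from the support of $\chi^I f$, via $\mathcal{E}^{(\gamma)}$) gives the trilinear estimate $\tfrac{1}{t^{1+r}}(\mathcal{E}_{\mathrm{Int}}^{1/2}+(\mathcal{E}^{(\gamma)})^{1/2})\mathcal{E}^{(\bar{h})}_{\mathrm{Int,Coord}}$, where one factor of $\mathcal{E}^{(\bar{h})}_{\mathrm{Int,Coord}}$ comes from the Cauchy--Schwarz pairing against $A_R(\chi^I\bar{h})$. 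A second cross-term $(\mathcal{CK}_{\mathrm{Int,Coord}})^{1/2}(\mathcal{CK}_{\mathrm{Int}})^{1/2}(\mathcal{E}_{\mathrm{Int}}+\mathcal{E}^{(\gamma)})^{1/2}$ arises when the multiplier $A_R$ is commuted onto $\psi_{\neq}$, which is essentially the Ionescu--Jia argument applied to the current setting. The commutators of $\chi^I$ with $g\partial_v$ and $\nu|v'|^2\partial_v^2$ are supported in the ``transition'' region $\tfrac{3}{4}<|y|<\tfrac{31}{40}$ where the exterior norms have nearly analytic regularity; since the interior multiplier has at most Gevrey-$2$ growth, applying Lemma \ref{lem:ExtToInt}-type interpolation converts these into the $\nu^{100}(\mathcal{E}^{(\cdot)}_{\mathrm{Int,Coord}})^{1/2}(\mathcal{E}^{(\gamma)}_{(\cdot)})^{1/2}$ errors. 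The viscous commutator with $|v'|^2-1$ gives the $\nu(\mathcal{E}^{(h)}_{\mathrm{Int,Coord}})^{1/2}\mathcal{D}^{(\cdot)}_{\mathrm{Int,Coord}}$ term.

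The main obstacle is organizing the paraproduct decompositions of the transport terms $g\partial_v(\cdot)$ so that the multiplier commutators (\emph{not} the cutoff commutators) yield only quantities of the exact form $(\mathcal{CK})^{1/2}(\mathcal{CK})^{1/2}(\text{energy})^{1/2}$ that one can later Gronwall-absorb: this is delicate because the natural multiplier for $\bar{h}$ is $\brak{\partial_v}^{-r}A$ rather than $A_R$, and the $r$-fractional derivative forces nontrivial commutator estimates that trade regularity for $\brak{t}^{-1/2}$ decay along the lines discussed in the ``Fractional derivative loss'' paragraph of Section \ref{sec:NonTechIntro}. Handling this trade-off while simultaneously keeping every cross-term at the quadratic (rather than cubic) CK level is the essential bookkeeping to verify; the rest follows the established \cite{BM13,BMV14,HI20} technology applied compatibly with the cutoff $\chi^I$.
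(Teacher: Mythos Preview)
Your proposal is correct and follows essentially the same approach as the paper: apply the \cite{BM13,HI20} scheme to the localized coordinate PDEs, treat the transport and viscous terms via the standard paraproduct/commutator analysis, and handle the cutoff commutators $\mathcal{C}_{\ast}$ via the exterior-to-interior comparison (Lemmas \ref{lem:ExtToInt}, \ref{lem:ghExtToInt}) exactly as in the proof of Proposition \ref{prop:MainInterior}. In fact the paper omits the detailed proof entirely, simply stating in Section \ref{sec:interior} that it ``follow[s] easily from the ideas of \cite{HI20,BM13} combined with the ideas used to prove Proposition \ref{prop:MainInterior},'' so your plan is more explicit than what the paper provides.
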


\subsubsection{Exterior vorticity estimates} 

We invoke our main parabolic linearized result from our companion paper, \cite{BHIW24b}:
\begin{proposition} \label{thm:main:para} Let the constants $\{\theta_n\}$ appearing in \eqref{ef:a} -- \eqref{CK_W_mu} be chosen as in \cite{BHIW24b}. Under the bootstrap hypotheses, the following energy inequalities are valid 
\begin{align} \n
&\frac{d}{d t} \mathcal{E}^{(\gamma)}[\omega]  + \mathcal{D}^{(\gamma)}[\omega] + \mathcal{CK}^{(\gamma; \varphi)}[\omega] + \mathcal{CK}^{(\gamma; W)}[\omega] 
 \\ \label{lights:on:1}
 & \qquad \lesssim   \sum_{L \in \{I, E\}}  |\mathcal{I}^{(L, \gamma)}_{\mathrm{Trilinear}}| +  \eps \cd_{H}^{(\gamma)} +\eps\cd_{\overline H}^{(\gamma)} + \frac{\eps^3}{\langle t \rangle^2}, \\ \n
 &\frac{d}{d t} \mathcal{E}^{(\alpha)}[\omega]  + \mathcal{D}^{(\alpha)}[\omega] + \mathcal{CK}^{(\alpha; \varphi)}[\omega] + \mathcal{CK}^{(\alpha; W)}[\omega] 
 \\ \label{lights:on:2}
 & \qquad \lesssim  \sum_{\iota \in (\gamma, \alpha, \mu)} \sum_{L \in \{I, E\}}  |\mathcal{I}^{(L, \iota)}_{\mathrm{Trilinear}}|  + \eps \sum_{\iota \in (\alpha, \gamma)} (\mathcal{D}^{(\iota)}_{H} +\mathcal{D}^{(\iota)}_{\overline{H}})  + \eps \cd^{(\gamma)}[\omega]  
 \n\\ &\qquad\quad+ \eps \paren{ \mathcal{CK}^{(\gamma; \varphi)}[\omega] + \mathcal{CK}^{(\gamma; W)}[\omega] }  + \frac{\eps^3}{\langle t \rangle^2},  \\ \n
 &\frac{d}{d t} \mathcal{E}^{(\mu)}[\omega]  + \mathcal{D}^{(\mu)}[\omega] + \mathcal{CK}^{(\mu; \varphi)}[\omega] + \mathcal{CK}^{(\mu; W)}[\omega] 
 \\ \label{lights:on:3}
 & \qquad \lesssim  \sum_{L \in \{I, E\}}  |\mathcal{I}^{(L, \mu)}_{\mathrm{Trilinear}}|  + \eps \cd_{H}^{(\gamma)} +\eps\cd_{\overline H}^{(\gamma)} + \frac{\eps^3}{\langle t \rangle^2}.
\end{align}
The source terms appearing above are defined as follows:
\begin{align}\label{in:n:out:1}
\mathcal{I}^{(L, \gamma)}_{\mathrm{Trilinear}} := & - \sum_{m = 0}^\infty \sum_{n = 0}^\infty \bold{a}_{m,n}^2 \theta_n^2 \mathrm{Re} \langle q^n \p_x^m \Gamma^n (\nabla^\perp \phi^{(L)} \cdot \nabla \omega), \omega_{m,n} \chi_{m + n}^2 e^{2W} \rangle, \\ \label{in:n:out:2}
\mathcal{I}^{(L, \alpha)}_{\mathrm{Trilinear}} := &- \sum_{m = 0}^\infty \sum_{n = 0}^\infty \bold{a}_{m,n}^2 \theta_n^2 \nu \mathrm{Re} \langle q^n \p_x^m \p_y \Gamma^n  (\nabla^\perp \phi^{(L)} \cdot \nabla \omega), \p_y \omega_{m,n} \chi_{m + n}^2 e^{2W} \rangle, \\ \label{in:n:out:3}
\mathcal{I}^{(L, \mu)}_{\mathrm{Trilinear}} := & - \sum_{m = 0}^\infty \sum_{n = 0}^\infty \bold{a}_{m,n}^2 \theta_n^2 \nu \mathrm{Re} \langle q^n \p_x^m \p_x \Gamma^n  (\nabla^\perp \phi^{(L)} \cdot \nabla \omega), \p_x \omega_{m,n} \chi_{m + n}^2 e^{2W} \rangle,
\end{align}
where $\phi^{(I)}, \phi^{(E)}$ are defined above in \eqref{ell:I}, \eqref{ell:E}. 
\end{proposition}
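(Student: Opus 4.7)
Since Proposition~\ref{thm:main:para} is the main parabolic output of the companion paper~\cite{BHIW24b}, I outline the strategy rather than reproduce the internal calculations. The starting point is to apply $\p_x^m \Gamma^n$ to the vorticity equation~\eqref{M1a} and iterate the commutator identity~\eqref{eq:GammatComm}. Since $x \in \mathbb{T}$ and the weights $\chi_{m+n}, q^n, e^W$ depend only on $(t,y)$, the background drift $(y+P_0 u_1)\p_x$ antisymmetrizes exactly in the inner product and contributes nothing. One obtains a schematic evolution
\begin{align*}
(\p_t + (y+P_0 u_1)\p_x - \nu\Delta)\omega_{m,n} = \mathcal{C}_{m,n}[\omega,G,H,\overline{H}] + \p_x^m\Gamma^n(\nabla^\perp \phi_{\neq}\cdot\nabla\omega),
\end{align*}
which I would test against $\omega_{m,n}\chi_{m+n}^2 q^{2n}e^{2W}\bold{a}_{m,n}^2\theta_n^2$ and sum in $m,n$; the $\alpha$ and $\mu$ functionals are obtained by first applying $\sqrt{\nu}\p_y$ or $\sqrt{\nu}\p_x$, respectively, to the equation for $\omega_{m,n}$.

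The second step is the extraction of the left-hand sides of~\eqref{lights:on:1}--\eqref{lights:on:3}. The time derivative of $\bold{a}_{m,n}^2=B_{m,n}^2\varphi^{2+2n}$ supplies $\mathcal{CK}^{(\iota;\varphi)}$ (and, implicitly, $\mathcal{CK}^{(\iota;\lambda)}$), while $\p_t e^{2W}\le 0$ supplies $\mathcal{CK}^{(\iota;W)}$ via~\eqref{W_prop}. Integration by parts against $\nu\Delta\omega_{m,n}$ delivers $\mathcal{D}^{(\iota)}$ plus errors from $\nabla$ falling on $e^{2W}$, $q^n$, or $\chi_{m+n}$: the $e^{2W}$ error is absorbed using the free quantity~\eqref{wdot:est:a}; the $q^n$ error produces the $q^{-1}(m+n)$-type objects $S^{(a,b,c)}_{m,n}\omega$ central to the ICC-method; and the $\chi_{m+n}$ error is supported on the thin strip $\{\chi_{m+n-1}=1,\ \chi_{m+n}\neq 1\}$, whose width $\sim (m+n)^{-1-\sigma}$ is sharply tuned to the Gevrey gain so that the index shift can be reabsorbed into $\mathcal{E}^{(\iota)}$ with one extra derivative.

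The main technical work, carried out via the ICC-method in~\cite{BHIW24b}, is to show that the entire linear error $\mathcal{C}_{m,n}$ together with the $q$- and $\chi$-commutators assemble into quantities that either get absorbed by $\mathcal{D}^{(\iota)}+\mathcal{CK}^{(\iota;\varphi)}+\mathcal{CK}^{(\iota;W)}$ or admit a bound of the form $\eps(\mathcal{D}_H^{(\gamma)}+\mathcal{D}_{\overline{H}}^{(\gamma)})$ using the bootstrap~\eqref{boot:H} and the structure of the coordinate PDEs for $G,H,\overline{H}$. The residual $\eps^3/\brak{t}^2$ collects the $\brak{t}^{-2}$-decaying terms produced by the inviscid damping of $u_2$ paired with the coordinate energies, while the full nonlinear contribution is by construction $\mathcal{I}^{(L,\iota)}_{\mathrm{Trilinear}}$ once $\phi_{\neq}=\phi^{(I)}+\phi^{(E)}$ is inserted.

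I expect the principal obstacle to be the $\alpha$-estimate~\eqref{lights:on:2}. Commuting $\sqrt{\nu}\p_y$ with $q^n$ costs one power of the co-normal weight; commuting it with $\chi_{m+n}$ produces a boundary-adjacent cascade that cannot be absorbed into $\mathcal{D}^{(\alpha)}$ alone and must borrow from $\mathcal{D}^{(\gamma)}$ and $\mathcal{CK}^{(\gamma;\cdot)}$ (hence the $\eps\mathcal{D}^{(\gamma)}$ and $\eps\mathcal{CK}^{(\gamma;\cdot)}$ on the right of~\eqref{lights:on:2}); and each $\Gamma_t$-commutator mixes $\p_x$ and $\p_y$ derivatives, explaining the sum over $\iota\in(\gamma,\alpha,\mu)$ of trilinear sources. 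Closing this anisotropic coupling uniformly in $\nu$ on $t\lesssim\nu^{-1/3-\zeta}$ relies critically on the fractional weight $(n+1)^{2s-2}$, the shifted radius $\bold{a}_{n+1}$ used on the coordinate functionals, and the precise spacing $x_{n+1}-x_n\sim c_\sigma n^{-1-\sigma}$ of the pseudo-Gevrey cutoffs.
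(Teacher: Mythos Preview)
Your outline is correct and matches the underlying strategy. The paper's own proof here is much shorter: it simply invokes Theorem~2.1 of the companion paper~\cite{BHIW24b} as a black box (applied to~\eqref{M1a} with forcing $f=\nabla^\perp\phi_{\neq}\cdot\nabla\omega$), which yields inequalities of the form~\eqref{lights:on:1}--\eqref{lights:on:3} but with $\eps^3\nu^{98}$ in place of $\eps^3/\brak{t}^2$ and with additional $\eps(\mathcal{D}^{(\iota)}[\omega]+\mathcal{CK}^{(\iota)}[\omega])$ terms on the right; the proof then concludes by observing $\nu\lesssim\brak{t}^{-2}$ on $t\lesssim\nu^{-1/3-}$ and absorbing the $\eps$-small same-level terms into the left-hand side. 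What you have sketched is essentially the content of that companion theorem (the ICC-method, the $q$/$\chi$/$e^W$ commutator hierarchy, the anisotropic coupling in the $\alpha$-estimate), so you are reproducing the substance rather than the citation---which is fine, and your identification of the $\alpha$-level as the delicate case is accurate.
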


The next proposition provides estimates on the trilinear terms appearing in \eqref{in:n:out:1} -- \eqref{in:n:out:3}: 
\begin{proposition}[Section \ref{sec:Tri}] \label{pro:tri:in} Under the bootstrap hypotheses, the $\gamma$-level trilinear terms obey the following bounds: 
\begin{align} \label{esp:1}
|\mathcal{I}_{\mathrm{Trilinear}}^{(I, \gamma)}| \lesssim & (\mathcal{E}^{(I)}_{\mathrm{ell}})^{\frac12}( \mathcal{CK}^{(\gamma)} + \mathcal{CK}_{\mathrm{Cloud}} + \mathcal{D}^{(\gamma)} ) + \frac{(\mathcal{E}_{\mathrm{ell}}^{(I)})^{\frac12}}{\langle t \rangle^{100}} \mathcal{E}^{(\gamma)}, \\ \label{esp:2}
|\mathcal{I}_{\mathrm{Trilinear}}^{(E, \gamma)}| \lesssim &  \frac{\langle t \rangle^2}{\nu^2} \mathcal{J}^{(2)}_{\mathrm{ell}}(t)^{\frac12} ( \mathcal{D}^{(\gamma)}(t) + \mathcal{E}^{(\gamma)}(t)).
\end{align}
The $\alpha$-level trilinear terms obey the following bounds: 
\begin{align} \label{esp:3}
|\mathcal{I}_{\mathrm{Trilinear}}^{(I, \alpha)}| \lesssim &( \mathcal{E}_{\mathrm{ell}}^{(I)})^{\frac12} ( \mathcal{CK}^{(\gamma)}(t) +  \mathcal{CK}^{(\alpha)}(t) +  \mathcal{CK}_{\mathrm{Cloud}}(t)  ) + \frac{\mathcal{E}^{(I)}_{\mathrm{ell}}(t)^{\frac12}}{\langle t \rangle^{100}} \mathcal{E}^{(\alpha)}(t) , \\ \n
|\mathcal{I}_{\mathrm{Trilinear}}^{(E, \alpha)}| \lesssim & \frac{\langle t \rangle^2}{\nu^2} (\mathcal{J}^{(3)}_{\mathrm{ell}})^{\frac12}[ (\mathcal{E}^{(\alpha)})^{\frac12} (\mathcal{D}^{(\alpha)})^{\frac12} + (\mathcal{E}_{\mathrm{Cloud}})^{\frac12}( \mathcal{E}^{(\alpha)})^{\frac12} ] + \frac{\langle t \rangle^2}{\nu^2} (\mathcal{J}^{(2)}_{\mathrm{ell}})^{\frac12} \mathcal{D}^{(\alpha)} \\ \label{esp:4}
& + e^{-\nu^{-\frac19}} \sqrt{\mathcal{E}_{\mathrm{sob}}} \sqrt{\mathcal{E}^{(\gamma)} \mathcal{D}^{(\alpha)}}.
\end{align}
The $\mu$-level trilinear terms obey the following bounds: 
\begin{align} \label{esp:5}
|\mathcal{I}_{\mathrm{Trilinear}}^{(I, \mu)}| \lesssim &( \mathcal{E}_{\mathrm{ell}}^{(I)})^{\frac12} ( \mathcal{CK}^{(\gamma)}(t) +  \mathcal{CK}^{(\mu)}(t) +  \mathcal{CK}_{\mathrm{Cloud}}(t)  ) + \frac{\mathcal{E}^{(I)}_{\mathrm{ell}}(t)^{\frac12}}{\langle t \rangle^{100}} \mathcal{E}^{(\mu)}(t) \\ \n
|\mathcal{I}_{\mathrm{Trilinear}}^{(E, \mu)}| \lesssim &\frac{\langle t \rangle^2}{\nu^2} (\mathcal{J}^{(3)}_{\mathrm{ell}})^{\frac12}[ (\mathcal{E}^{(\mu)})^{\frac12} (\mathcal{D}^{(\mu)})^{\frac12} + (\mathcal{E}_{\mathrm{Cloud}})^{\frac12}( \mathcal{E}^{(\mu)})^{\frac12} ] + \frac{\langle t \rangle^2}{\nu^2} (\mathcal{J}^{(2)}_{\mathrm{ell}})^{\frac12} \mathcal{D}^{(\mu)}  \\ \label{esp:6}
&  + e^{-\nu^{-\frac19}} \sqrt{\mathcal{E}_{\mathrm{sob}}} \sqrt{\mathcal{E}^{(\gamma)} \mathcal{D}^{(\mu)}}.
\end{align}
\end{proposition}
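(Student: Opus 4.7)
The plan is to decompose each trilinear integral in \eqref{in:n:out:1}--\eqref{in:n:out:3} using the quasiproduct structure of \eqref{joey:2LK}:
$$\p_x^m \Gamma^n \bigl(\nabla^\perp \phi^{(L)}_{\neq 0} \cdot \nabla \omega\bigr) = \mathcal{T}[\phi^{(L)}, \omega]_{m,n} + \nabla^\perp \phi^{(L)}_{\neq 0} \cdot \nabla \omega_{m,n},$$
and treat the quasilinear second piece and the quasiproduct first piece separately. For the quasilinear piece paired with $\omega_{m,n}\chi_{m+n}^2 q^{2n}e^{2W}$, I would integrate by parts so the transport-type operator $\nabla^\perp\phi^{(L)}_{\neq 0}\cdot\nabla$ acts antisymmetrically on $|\omega_{m,n}|^2$ and vanishes. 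The resulting commutators are where the derivative lands on $e^{2W}$ (producing $\p_y W$ controlled by $\mathcal{CK}^{(\iota;W)}$ via \eqref{wdot:est:a}), on $\chi_{m+n}^2$ (producing terms localized to the transition gap, controlled by $\mathcal{CK}_{\mathrm{Cloud}}$ for low $(m,n)$ and by exponentially small tails for large $(m,n)$ once a sufficient number of cutoff shifts accumulate), or on $q^{2n}$ (absorbed via the ICC-method of \cite{BHIW24b} into $\mathcal{CK}^{(\iota;\lambda)}$ and $\mathcal{D}^{(\iota)}$).

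For the quasiproduct piece $\mathcal{T}[\phi^{(L)},\omega]_{m,n}$, I would invoke the bilinear Gevrey estimates developed in Section \ref{sec:q} in the schematic form
$$\Big(\sum_{m,n}\bold{a}_{m,n}^2\theta_n^2\|e^W\chi_{m+n}q^n(\sqrt{\nu}\nabla)^{\iota}\mathcal{T}[\phi^{(L)},\omega]_{m,n}\|_{L^2}^2\Big)^{1/2}\lesssim \|\text{Elliptic on }\phi^{(L)}\|\cdot\|\text{Vorticity on }\omega\|,$$
and then close by Cauchy--Schwarz against $(\sqrt{\nu}\nabla)^{\iota}\omega_{m,n}$. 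The case split $L\in\{I,E\}$ is decisive. For $L=I$, the stream function is supported away from the effective annular support of the exterior cutoffs $\chi_{m+n}$, so a Biot--Savart/Green's-function argument exploiting the positive separation gap upgrades $\phi^{(I)}$ to the high Gevrey regularity of $\slashed{\mathcal{E}}^{(I)}_{\mathrm{ell}}$; the factor $\langle t\rangle^{-100}$ in \eqref{esp:1}, \eqref{esp:3}, \eqref{esp:5} is then harvested from the $\langle t\rangle^{1000}$ prefactor in the definition of $\slashed{\mathcal{E}}^{(I)}_{\mathrm{ell}}$ combined with the inviscid damping of $\mathcal{E}^{(I,\mathrm{out})}_{\mathrm{ell}}$. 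For $L=E$, the stream function sits at comparable regularity to the vorticity all the way to the boundary; here we invoke $\mathcal{J}^{(\ell)}_{\mathrm{ell}}$, and the prefactor $\tfrac{\langle t\rangle^2}{\nu^2}$ arises from trading the two $\sqrt{\nu}\nabla$ factors for $\p_x$ or $\Gamma$ derivatives (each $\Gamma$ producing a $\langle t\rangle$) while absorbing the residual $\nu^{-1}$ into the bounds provided by the elliptic norm.

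For the $\alpha$- and $\mu$-level terms \eqref{esp:3}--\eqref{esp:6}, I would distribute the extra $\sqrt{\nu}\nabla$ derivative optimally among the three factors $\phi\times\omega\times\omega$ according to which has the largest multi-index: when it lands on $\phi^{(E)}$ we pay one additional regularity and invoke $\mathcal{J}^{(3)}_{\mathrm{ell}}$; when it lands on a low-frequency $\omega$-factor we exploit $\mathcal{E}_{\mathrm{Cloud}}$, which was engineered precisely to absorb the loss $\chi_{m+n}\mapsto\chi_{m+n+1}$ in the low-frequency regime $m+n\le 20$; when it lands on the highest-frequency $\omega$-factor, we close via Cauchy--Schwarz against $\mathcal{D}^{(\alpha/\mu)}$. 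The exponentially small term $e^{-\nu^{-1/9}}\sqrt{\mathcal{E}_{\mathrm{sob}}}$ in \eqref{esp:4} and \eqref{esp:6} comes from the thin boundary strip $|y|>1-\tfrac{1}{50}$ where the co-normal weight $q$ degenerates: there we drop $q^n$, replace the lost control by the Sobolev boundary norms $\mathcal{E}_{\mathrm{sob}}$ (with the compensating powers of $\nu$ built in), and extract the tiny factor $e^{-\nu^{-1/9}}$ from $e^W$ evaluated near $y=\pm 1$, since $W\gtrsim \nu^{-1}(1+t)^{-1}$ there on the timescale $t\lesssim\nu^{-1/3-\zeta}$.

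The main obstacle will be the combinatorial bookkeeping in the quadruple sum over $(m,n,m',n')$ produced by the Leibniz rule inside $\mathcal{T}$, coupled with the need to select a different optimal weight-distribution regime depending on which of $\phi$, the first $\omega$-factor, and the second $\omega$-factor carries the largest Gevrey index, and whether $L=I$ or $L=E$. Each regime must be closed either into a dissipation, a CK term, a cloud term, or an elliptic norm on the right-hand side of \eqref{esp:1}--\eqref{esp:6}, and the bookkeeping must be compatible with both the pseudo-Gevrey cutoff shifts and the anisotropic $\varphi(t)^{1+n}$ weight in $\bold{a}_{m,n}$. The Gevrey product inequalities of Section \ref{sec:q} are precisely designed to make all these regimes line up, so the proof will reduce, case by case, to invoking them with the correct choice of weight distribution.
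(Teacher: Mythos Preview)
Your high-level plan is correct and matches the paper's architecture: the decomposition into quasiproduct $\mathcal{T}[\phi,\omega]$ plus quasilinear transport, the invocation of the bilinear estimates from Section~\ref{sec:q} (the $\mathcal{R}_{m,n}$ and $\mathcal{Q}_{m,n}$ lemmas), the $L\in\{I,E\}$ split, and the use of $\mathcal{CK}_{\mathrm{Cloud}}$ for low $(m,n)$ are all exactly what the paper does.

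The one substantive difference is your treatment of the quasilinear piece $\nabla^\perp\phi^{(L)}_{\neq 0}\cdot\nabla\omega_{m,n}$. You propose integration by parts, producing commutators on $e^{2W}$, $\chi_{m+n}^2$, and $q^{2n}$. The paper does \emph{not} integrate by parts here. Instead (see the proof of \eqref{jb:43:c} and \eqref{hold:ya:2}), it applies Cauchy--Schwarz directly and exploits a pointwise weight inequality: on $\mathrm{supp}\,\chi_{m+n}$ for $m+n\ge 10$ one has $\sqrt{-\partial_t W}\gtrsim (\sqrt{\nu}\,\langle t\rangle)^{-1}$, so inserting a factor $d(y)/(\sqrt{\nu}\langle t\rangle)$ on one copy of $\omega_{m,n}$ converts it to $\mathcal{CK}^{(\gamma;W)}$ while the compensating $\sqrt{\nu}\langle t\rangle$ on the other copy produces $\mathcal{D}^{(\gamma)}$ after placing $\nabla\phi^{(I)}_{\neq}\chi_2$ in $L^\infty$. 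For $m+n<10$ one falls back on the cloud norm. This sidesteps the $q^{2n}$-commutator entirely, which is the most delicate of your three commutators (it produces an $n/q$ factor supported only near $|y|=1$, and handling it via ICC is not immediate since that machinery is tuned to the \emph{linear} parabolic commutators, not to trilinear ones with a streamfunction coefficient). Your route is workable in principle---$q'$ lives where $e^W$ is enormous, and $\partial_x\phi$ carries $t^{-2}$ inviscid damping---but it is more laborious than the paper's direct weight trick.

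A smaller point: your mechanism for the prefactor $\langle t\rangle^2/\nu^2$ in \eqref{esp:2}, \eqref{esp:4}, \eqref{esp:6} is not quite right. It does not come from trading $\sqrt{\nu}\nabla$ for $\Gamma$; it arises inside the exterior $\mathcal{Q}$-bound \eqref{ewing:2}, where passing to $L^\infty$ via Sobolev embedding costs a $\nu^{-1}$ and the Gevrey-coefficient gymnastics contribute the $\langle t\rangle^2$.
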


\subsubsection{Exterior Coordinate Estimates}
\begin{proposition}[Section \ref{sec:coord:FEI}]
	Let $r$ and $s$ be defined as in \eqref{pgiL1}. Under the bootstrap hypotheses, we have
	\begin{align*}
		\frac{1}{2}\frac{d}{dt} \mathcal{E}_{\mathrm{Ext, Coord}}+
		\mathcal{D}_{\mathrm{Ext, Coord}} +
		\mathcal{CK}_{\mathrm{Ext, Coord}}
		\lesssim&  \frac{\eps^3}{\brak{t}^{3+r-2\ss}} + \eps \sum_{\iota\in\{\alpha,\gamma\}} \cd^{(\iota)} .
	\end{align*}
\end{proposition}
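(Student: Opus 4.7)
The proof is a weighted energy estimate on the coupled equations \eqref{eq:G:main}, \eqref{eq:bar:H:main}, \eqref{eq:H:main} for $G, \overline{H}, H$, carried out at every order $n$ in the pseudo-Gevrey scale defined by \eqref{fei:fei:fei}--\eqref{fei:fei:fei:3}. The argument splits into three ingredients: a linear structural identity producing the dissipation and $\mathcal{CK}$ terms, a commutator analysis invoked from the ICC-method of \cite{BHIW24b}, and trilinear estimates on the nonlinear source terms.

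For the linear structural identity, I would differentiate each $\mathcal{E}_{\overline{H},n}^{(\iota)}$, $\mathcal{E}_{G,n}^{(\iota)}$, $\mathcal{E}_{H,n}^{(\iota)}$ in time and substitute the corresponding PDE. For $\overline{H}$ this produces: (a) the dissipation $\mathcal{D}_{\overline{H},n}^{(\iota)}$ from $-\nu\partial_y^2$ (modulo one weight-boundary contribution absorbed using \eqref{wdot:est:a}); (b) a positive $\brak{t}^{2+2\ss}\|\cdot\|^2$ term coming from combining $\tfrac{d}{dt}\brak{t}^{3+2\ss}$ with the $\tfrac{2}{t}\overline{H}$ damping, which are compatible in sign; (c) the $\mathcal{CK}_{\overline{H},n}^{(\iota;1)}$ and $\mathcal{CK}_{\overline{H},n}^{(\iota;2)}$ quantities of \eqref{CK1:defi}--\eqref{CK2:defi} coming from $\partial_t W$ (using \eqref{W_prop}) and from $\bold{a}_{n+1}\dot{\bold{a}}_{n+1}$ respectively; and (d) a source term $-\tfrac{1}{t}P_0(u_{\neq}\cdot\nabla\omega)$. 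The same calculation applied to $G$ and $H$ gives analogous structure, with the Neumann condition $\partial_y G|_{y=\pm 1}=0$ used to discard a boundary term. The commutators $[q^n \partial_y^n, \partial_t - \nu\partial_y^2]$ and $[\chi_n,\partial_y^2]$ are precisely those controlled by the operators $S_{m,n}^{(a,b,c)}$ of \cite{BHIW24b}, which I would invoke essentially as a black box to absorb the commutator error into a small fraction of $\mathcal{D}_{\mathrm{Ext,Coord}} + \mathcal{CK}_{\mathrm{Ext,Coord}}$.

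For the trilinear estimates, the zero-mode structure of the source $\tfrac{1}{t}P_0(u_{\neq}\cdot\nabla\omega)$ forces both factors to live at non-zero frequency, so the inviscid damping $\|u_{\neq}\|\lesssim\eps\brak{t}^{-1}$ guaranteed by the bootstrap \eqref{boot:ExtVort} applies. I would decompose via Cauchy--Schwarz together with the elliptic estimates on $\phi^{(E)}, \phi^{(I)}$ (from the two streamfunction decompositions \eqref{Intr_psi_I}--\eqref{Intr_psi_E} and \eqref{ell:I}--\eqref{ell:E}), apportioning $\sqrt{\nu}$ factors so that the derivative-heavy factor is absorbed into $\eps\mathcal{D}^{(\iota)}[\omega]$ on the right-hand side of the target inequality, and the low-frequency factor yields $\eps^2\brak{t}^{-2-\text{gain}}$ from the inviscid damping. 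Combining with the $\brak{t}^{3+2\ss}$ weight on the LHS and the explicit $t^{-1}$ prefactor yields the stated decay $\eps^3 \brak{t}^{-(3+r-2\ss)}$. The analogous analysis for the $G$-source $\tfrac{1}{t}P_0(u_{\neq}\cdot\nabla u_1)$ is parallel and slightly easier, since there is one less derivative on the second factor.

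The main obstacle is the coupling $\partial_t H = \nu\partial_y^2 H + \overline{H}$ in \eqref{eq:H:main}. The energy estimate on $H$ generates a pairing of the schematic form $\langle \partial_y^\iota H_n, \partial_y^\iota \overline{H}_n\rangle$, and since $\overline{H}$ has half a derivative less regularity than $H$ (which is precisely the reason for the fractional weight $(n+1)^{2\ss-2}$ in \eqref{fei:fei:fei}), a naive bound loses $\brak{t}^{-1/2}$ and fails to close. The resolution is the asymmetric weighting built into the definition of $\mathcal{E}_{\mathrm{Ext,Coord}}$: the $H$-functional enters with a small prefactor $1/M$, so Young's inequality yields $\tfrac{1}{M}|\langle H_n,\overline{H}_n\rangle|\leq \tfrac{1}{M^2}\mathcal{CK}_{H,n}^{(\iota)} + \delta \cdot \mathcal{E}_{\overline{H},n}^{(\iota)}/\brak{t}$, with $M$ taken large and $\delta$ small so the first term is absorbed into $\mathcal{CK}_{\mathrm{Ext,Coord}}$ and the second is controlled by the $\brak{t}^{3+2\ss}$-weighted quantities already produced. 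This asymmetric weighting, together with the fractional Gevrey scaling $(n+1)^{2\ss-2}$ that exactly compensates the half-derivative discrepancy, is the structural device that finally closes the estimate.
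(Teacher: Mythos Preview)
Your overall architecture matches the paper's: the proposition is deduced from separate estimates on $\overline{H}$, $G$, $H$ at both the $\gamma$ and $\alpha$ levels (Propositions \ref{gamm:esti} and \ref{alph:esti}), combined via the $1/M$ weighting. However, two structural details are off and would cause trouble if carried through.

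First, the commuting operator is $q^n\Gamma_0^n = q^n(v_y^{-1}\partial_y)^n$, not $q^n\partial_y^n$. This matters because the transport and viscous commutators $\widetilde{\mathcal{C}^{(n)}_{\mathrm{trans}}}$, $\widetilde{\mathcal{C}^{(n)}_{\mathrm{visc}}}$ in \eqref{Cnta}--\eqref{Cnva} involve $\Gamma_0^m G$ and $\Gamma_0^m v_y^2$ factors, which are controlled by the coordinate functionals themselves. With $\partial_y^n$ you would generate different, worse commutators. Relatedly, the $q$-commutators in \eqref{tildeCnq} are not handled by invoking the $S_{m,n}^{(a,b,c)}$ machinery of \cite{BHIW24b} as a black box; the paper treats them directly in Lemma~\ref{C:n:q} via a summation argument using the $\theta_n$ weights and choosing $n_\ast$, $\delta_{\mathrm{Drop}}$ appropriately.

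Second, your description of the $H$--$\overline{H}$ coupling is not quite the paper's mechanism. The pairing $\bold{a}_n^2\langle \ztp{\overline{H}}_n,\ztp{H}_n e^W\chi_n^2\rangle$ is controlled not by splitting into $\mathcal{CK}_H$ and $\mathcal{E}_{\overline{H}}/\brak{t}$, but by the weight identity \eqref{H:for}, namely $\bold{a}_n^2 \lesssim n^{\sss-1}\sqrt{-\bold{a}_{n+1}\dot{\bold{a}}_{n+1}}\sqrt{-\bold{a}_n\dot{\bold{a}}_n}$, which gives directly $\lesssim \sqrt{\mathcal{CK}_H^{(\iota)}}\sqrt{\mathcal{CK}_{\overline{H}}^{(\iota)}}$. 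Then Lemma~\ref{H:esti} reads $\tfrac{1}{2}\tfrac{d}{dt}\mathcal{E}_H^{(\iota)}+\mathcal{CK}_H^{(\iota)}+\mathcal{D}_H^{(\iota)}\lesssim \mathcal{CK}_{\overline{H}}^{(\iota)}$ with a \emph{unit} (not small) constant on the right; the $1/M$ prefactor in the definition of $\mathcal{E}_{\mathrm{Ext,Coord}}$ is what allows $\tfrac{C}{M}\mathcal{CK}_{\overline{H}}^{(\iota)}$ to be absorbed by the $\mathcal{CK}_{\overline{H}}^{(\iota)}$ already present on the left from the $\overline{H}$ estimate. There is no $\brak{t}^{-1/2}$ loss to repair; the fractional weight $(n+1)^{2\sss-2}$ is built precisely so that \eqref{H:for} holds.
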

The above proposition is  a direct consequence of the proposition below
\begin{proposition}[Section \ref{sec:coord:FEI}] \label{gamm:esti:llp} Let $r$ and $s$ be defined as in \eqref{pgiL1}. Assuming the bootstrap hypotheses, the following estimates are valid for the $\gamma$-level coordinate functionals: 
\begin{align}\n
		\frac{1}{2}\frac{d}{dt} \mathcal{E}_{\overline{H}}^{\gm}+
\cd_{\overline{H}}^{\gm}+
\mathcal{CK}_{\overline{H}}^{\gm}
\lesssim&  
\frac{\eps^3}{\brak{t}^{3+r-2\ss}} +  \eps \paren{\cd^{\gm}+\cd_{H}^{\gm} 
	+\cd_{\overline H}^{\gm}}
\n	\\&\quad 
+\eps\paren{\mathcal{CK}^{\gm}
	+\mathcal{CK}_{H}^{\gm}+\mathcal{CK}_{\overline H}^{\gm}}, \n \\ 
\frac{1}{2}\frac{d}{dt} \mathcal{E}_{G}^{\gm}+
\cd_{G}^{\gm}+
\mathcal{CK}_{G}^{\gm}
\lesssim&  
\frac{\eps^3}{\brak{t}^{3+r-2\ss}} +  \eps \paren{\cd^{\gm}+\cd_{H}^{\gm} 
	+\cd_{\overline H}^{\gm}}
\n \\&\quad 
+\eps\paren{\mathcal{CK}^{\gm}
	+\mathcal{CK}_{H}^{\gm}+\mathcal{CK}_{\overline H}^{\gm}}, \n
\\ 
\frac{1}{2}\frac{d}{dt} \mathcal{E}_{H}^{\gm}+
\cd_{H}^{\gm}+
\mathcal{CK}_{H}^{\gm}
\lesssim &   \mathcal{CK}_{\overline{H}}^{\gm} . \n
\end{align}
The following estimates are valid for the $\alpha$-level coordinate functionals: 
\begin{align} 
	\frac{1}{2}\frac{d}{dt} \mathcal{E}_{\overline{H}}^{(\alpha)}&+
\cd_{\overline{H}}^{(\alpha)}+
\mathcal{CK}_{\overline{H}}^{(\alpha)}
\lesssim \frac{\eps^3}{\brak{t}^{3+r-2\ss}} +  \eps \sum_{\iota\in\{\alpha,\gamma\}} \paren{\cd^{(\iota)}+\cd_{H}^{(\iota)} 
	+\cd_{\overline H}^{(\iota)}}
\n \\&\qquad  \qquad  \qquad  \qquad  \qquad  
+\eps\sum_{\iota\in\{\alpha,\gamma\}} \paren{\mathcal{CK}^{(\iota)}
	+\mathcal{CK}_{H}^{(\iota)}+\mathcal{CK}_{\overline H}^{(\iota)}}, \n
\\ 
\frac{1}{2}\frac{d}{dt} \mathcal{E}_{G}^{(\alpha)} &+
\cd_{G}^{(\alpha)}+
\mathcal{CK}_{G}^{(\alpha)}
\lesssim \frac{\eps^3}{\brak{t}^{3+r-2\ss}} +  \eps \sum_{\iota\in\{\alpha,\gamma\}} \paren{\cd^{(\iota)}+\cd_{H}^{(\iota)} 
	+\cd_{\overline H}^{(\iota)}}
\n	\\&\qquad  \qquad  \qquad  \qquad  \qquad  
+\eps\sum_{\iota\in\{\alpha,\gamma\}} \paren{\mathcal{CK}^{(\iota)}
	+\mathcal{CK}_{H}^{(\iota)}+\mathcal{CK}_{\overline H}^{(\iota)}}, \n 
\\ 
\frac{1}{2}\frac{d}{dt} \mathcal{E}_{H}^{(\alpha)} &+
\cd_{H}^{(\alpha)}+
\mathcal{CK}_{H}^{(\alpha)}
\lesssim \mathcal{CK}_{\overline{H}}^{(\alpha)} . \n
	\end{align}
\end{proposition}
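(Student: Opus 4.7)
The main proposition is obtained by summing the three individual estimates of Proposition~\ref{gamm:esti:llp} (and summing their $\gamma$ and $\alpha$ versions), after multiplying the $H$-estimate by a sufficiently large constant $1/M$ to absorb the right-hand side contribution $\mathcal{CK}_{\overline H}^{(\iota)}$ into the left-hand side of the combined inequality. Hence the task reduces to establishing the six estimates of Proposition~\ref{gamm:esti:llp}. For each of $\overline H$, $G$, $H$, the plan is to apply the $n$-th order operator underlying $\bhqn, \gqn, \hqn$ to the governing PDEs~\eqref{eq:bar:H:main},~\eqref{eq:G:main},~\eqref{eq:H:main}, then run a weighted energy estimate against the test function $\nu^{i_\iota/2}(n+1)^{2s-2}\bold{a}_{n+1}^2\brak{t}^{3+2s} \partial_y^{i_\iota}\bhqn\, e^{W}\chi_n^2$ (respectively, the analogous tests for $G,H$), sum in $n$, and use the weights $\theta_n$ to close.

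\textbf{Energy estimates for $\overline{H}$ and $G$.} After applying the $n$-th order spatial operator, the terms $\frac{2}{t}\overline H$ and $\frac{2}{t}G$ together with the time derivative of $\brak{t}^{3+2s}$ produce controlled time-weight contributions (the $\brak{t}^{3+2s}$ growth is compatible with the $2/t$ damping in view of $\ss<\frac{1}{2}$). The dissipation $-\nu\partial_y^2$ yields the positive terms $\cd_{\overline H}^{(\iota)}$ and $\cd_{G}^{(\iota)}$ after integration by parts; boundary terms from $y=\pm1$ vanish thanks to the Dirichlet condition $\overline H(\pm 1)=0$ on $\overline H$ (and $\chi_n$ vanishes near $\pm1$ for $G$, so the Neumann boundary condition causes no obstruction in the bulk estimate, while the separately-defined $n=0$ functional uses $\chi_0$). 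Time derivatives of $e^W$, $\bold{a}_{n+1}$, and $\varphi^{n+1}$ generate precisely $\mathcal{CK}_{\overline H, n}^{(\iota; 1)} + \mathcal{CK}_{\overline H, n}^{(\iota;2)}$ (and the analogous $\mathcal{CK}_G$). Commutators between $\partial_y^{i_\iota}q^n\partial_y^n$ and the cutoffs/weights are handled by the ICC-method of \cite{BHIW24b}, producing linear error terms absorbable into $\eps\cd^{(\iota)}+\eps\mathcal{CK}^{(\iota)}$ (after using the bootstrap). The key forcing terms are $-\frac{1}{t}(u_{\neq}\cdot\nabla\omega)_0$ and $-\frac{1}{t}(u_{\neq}\cdot\nabla u_1)_0$: using $P_0(u_{\neq}\cdot\nabla P_0 f)=0$ these reduce to purely non-zero-mode nonlinearities, to which I would apply the quasiproduct / trilinear machinery of Section~\ref{sec:Tri} (but in zero-mode form, which is easier due to the extra $1/t$ prefactor and the inviscid damping of $u_{\neq}$). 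The $1/t$ prefactor combined with the time-integrable damping of $u_2$ and the bootstrap hypotheses on $\cd^{(\gamma)},\cd^{(\alpha)},\mathcal{CK}^{(\gamma)},\mathcal{CK}^{(\alpha)}$ gives exactly the claimed RHS of the form $\frac{\eps^3}{\brak{t}^{3+r-2\ss}}+\eps\sum_\iota(\cd^{(\iota)}+\cd_H^{(\iota)}+\cd_{\overline H}^{(\iota)})+\eps\sum_\iota(\mathcal{CK}^{(\iota)}+\mathcal{CK}_H^{(\iota)}+\mathcal{CK}_{\overline H}^{(\iota)})$.

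\textbf{Estimate for $H$ and coupling.} The equation $\partial_t H - \nu\partial_y^2 H=\overline H$ is linear with no $2/t$ damping, so the $H$-functional has no $\brak{t}^{3+2s}$ weight. Apply the same energy procedure: $\partial_t$ on the weighted norm produces $\mathcal{CK}_{H,n}^{(\iota)}$, dissipation yields $\cd_{H}^{(\iota)}$, and pairing the forcing $\overline H$ against the test function $\nu^{i_\iota/2}\bold{a}_n^2\partial_y^{i_\iota}\hqn e^W\chi_n^2$ combined with the elementary bound $\bold{a}_n\lesssim \bold{a}_{n+1}(n+1)^{s-1}$ gives an upper bound by $(\mathcal{CK}_H^{(\iota)})^{1/2}(\mathcal{CK}_{\overline H}^{(\iota)})^{1/2}$, which after Young's inequality and absorption of the $\mathcal{CK}_H^{(\iota)}$ piece produces the asserted $\lesssim\mathcal{CK}_{\overline H}^{(\iota)}$ bound.

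\textbf{Main obstacle.} The hardest step is controlling the forcing $\frac{1}{t}(u_{\neq}\cdot\nabla\omega)_0$ in the $\overline H$ equation at the $\alpha$-level (one $\sqrt\nu\partial_y$) with the fractional Gevrey weight $(n+1)^{2s-2}\bold{a}_{n+1}^2$. The difficulty is threefold: one must (i) transfer one derivative from $\omega$ onto the other factor without losing the non-integrable $\brak{t}^{-1}$, (ii) align the shifted Gevrey weight $\bold{a}_{n+1}$ (which is what $\overline H$ carries) with the vorticity weight $\bold{a}_{m,n}$ that controls $\omega_{m,n}$ in $\mathcal{D}^{(\iota)}$, and (iii) reconcile the mismatch between the cutoffs $\chi_n$ appearing on $\overline H$ and $\chi_{m+n}$ on the vorticity norms. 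For (i), I would exploit the $1/t$ prefactor together with the decay of $u_{\neq}$ in $L^\infty$ (via the stronger-than-$\brak{t}^{-2}$ inviscid damping of $u_2$ and the Sobolev/cloud norms), and split $\nabla\omega = \p_x\omega+\p_y\omega$, treating the $\p_x$ piece via enhanced dissipation and the $\p_y$ piece via the $\sqrt\nu$ that appears in $\cd^{(\alpha)}$. For (ii)-(iii), I would use the $(n+1)^{s-1}$ factor to shift between $\bold{a}_{n+1}$ and $\bold{a}_n$ (this accounts precisely for the half-derivative regularity discrepancy emphasized in the introduction), and apply Lemma-type cutoff-shifting arguments from the quasiproduct framework of Section~\ref{sec:q} to pass from $\chi_n$ to $\chi_{m+n}$. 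The net cost is an $\eps$ factor times the desired dissipation/CK quantities, giving the stated inequality.
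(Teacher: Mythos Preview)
Your overall energy-method strategy matches the paper's, but several specific claims are off and one key structural shortcut is missing.

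First, for $G$: the paper does \emph{not} run the full energy estimate on the $G$ equation for $n\ge 1$. Instead it uses the identity $\partial_y G = \overline H$, which gives $\gqn = q\,(\overline H/v_y)_{n-1}$, and then bounds $\mathcal{E}_G^{(\iota)} - \theta_0^2\mathcal{E}_{G,0}^{(\iota)}$ directly by $(1+\eps)(\mathcal{E}_{\overline H}^{(\gamma)}+\mathcal{E}_{\overline H}^{(\alpha)})$ via product rules on $\overline H/v_y$. Only the $n=0$ piece requires a separate energy identity, and there the nonlinearity $(u_{\neq}\cdot\nabla u_1)_0$ is handled by splitting both $\phi$ and $u^x$ into interior/exterior pieces. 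Your plan to derive an energy inequality on the full $G$ hierarchy would force you to control $(u_{\neq}\cdot\nabla u_1)_0$ at Gevrey level in the exterior weighted norms, which is needlessly hard and not what is done.

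Second, the nonlinear forcing $-\frac{1}{t}(u_{\neq}\cdot\nabla\omega)_0$ in the $\overline H$ estimate is not handled via the quasiproduct operators $\mathcal{Q},\mathcal{R}$ of Sections~\ref{sec:q}--\ref{sec:Tri}. The paper works mode-by-mode: after Leibniz on $\Gamma_0^n$, it splits the sum into HL/LH pieces, decomposes $\phi = \phi^{(I)} + \phi^{(E)}$, and applies the commutator identity $[\partial_y,\Gamma_k^{n-m}]$ from Lemma~\ref{lem:com:BA} together with the elliptic bounds $\mathcal{J}_{ell}^{(1)},\mathcal{J}_{ell}^{(2)}$ (exterior) and $\mathcal{E}_{ell}^{(I,\mathrm{out})}$ (interior). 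The machinery of Section~\ref{sec:q} is built around cutoffs $\chi_{m+n}$ indexed by two parameters; here there is no $m$-sum and the cutoffs are $\chi_n$, so the frameworks are related in spirit but not directly interchangeable.

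Two smaller errors: (a) $\chi_n$ equals $1$ near $y=\pm 1$, not $0$; boundary terms for $G_n$ with $n\ge 1$ vanish because $q(\pm 1)=0$, and for $n=0$ because $\partial_y G|_{\pm1}=0$. (b) Your claim ``$\ss<\tfrac12$'' is false (here $\ss=s^{-1}$ with $s$ close to $1$); the residual $(3/2+\ss)\brak{t}^{1+2\ss}t - \frac{2}{t}\brak{t}^{3+2\ss}$ is positive and is absorbed by the $\mathcal{CK}$ contribution coming from $\dot{\bold a}_{n+1}$, not eliminated by the $2/t$ damping. Finally, the cases $n=0,1$ of the nonlinear lemma require separate arguments (the exterior localization is unavailable there), invoking the interior coordinate norm $\mathcal{E}^{(\bar h)}_{\mathrm{Int,Coord}}$ and the cloud norm; your proposal does not flag this.
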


\subsubsection{Sobolev Boundary Norm Estimates} 

\begin{proposition}[Section \ref{sec:SOB:BDRY}] \label{pro:int:sob} The following coupled energy inequality holds for the Sobolev norms under the bootstrap hypotheses,
\begin{align} \label{hazee:a}
\frac{\p_t}{2} \mathcal{E}_{\mathrm{sob}} + \mathcal{CK}_{\mathrm{sob}}^{(W)} + \mathcal{D}_{\mathrm{sob}} \lesssim &\nu^2 \mathcal{D}_{\mathrm{Trace}} + \mathcal{E}_{\mathrm{sob}} \mathcal{D}_{\mathrm{sob}} ,  \\ \n
\frac{\p_t}{2} \mathcal{E}_{\mathrm{Trace}}(t) +   \mathcal{CK}_{\mathrm{Trace}}(t) + \mathcal{D}_{\mathrm{Trace}}(t) \lesssim &(1 +  (\frac{\mathcal{E}_{\mathrm{ell}}^{(I, out)}}{\langle t \rangle^{100}}  + \nu^{100} \mathcal{E}_{\mathrm{sob}})) (1+ (\mathcal{E}_{\mathrm{ell}}^{(I, out)})^{\frac12} +  \mathcal{E}_{\mathrm{sob}}^{\frac12})\mathcal{D}_{\mathrm{sob}} \\ \label{hazee:b}
& + \nu^3  (\frac{\mathcal{E}_{\mathrm{ell}}^{(I, out)}}{\langle t \rangle^{100}}  + \nu^{100} \mathcal{E}_{\mathrm{sob}}) \mathcal{E}_{\mathrm{Trace}}(t). 
\end{align}
\end{proposition}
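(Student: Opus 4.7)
The proof couples a family of bulk Sobolev estimates for $\omega$ with evolution equations for the trace quantities $\alpha_j^{\pm}(t,x) = \partial_y^j \omega(t,x,\pm 1)$. The starting observation is that the Navier condition $\omega|_{y=\pm 1}=0$ together with the vorticity equation \eqref{M1a} forces an algebraic cascade at the boundary: since $\omega$, $\partial_x\omega$, $\partial_t\omega$, and $u_2$ all vanish at $y=\pm 1$, evaluating \eqref{M1a} there gives $\nu \alpha_2^\pm = 0$, i.e. $\alpha_2^\pm \equiv 0$. Taking one $\partial_y$ of \eqref{M1a} and restricting to $y=\pm 1$ then produces a one-dimensional transport--diffusion PDE for $\alpha_1^\pm$, schematically
\[
\partial_t \alpha_1^\pm + P_0 u_1(t,\pm 1)\,\partial_x \alpha_1^\pm - \nu \partial_x^2 \alpha_1^\pm - \nu \alpha_3^\pm = \mathcal{N}_\pm,
\]
with $\mathcal{N}_\pm$ a bilinear source built from traces of $\nabla^\perp \phi_{\neq}$ and $\nabla\omega$. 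A further $\partial_y$ at the boundary expresses $\nu \alpha_4^\pm$ as a combination of $\partial_y P_0 u_1 \cdot \partial_x \alpha_1^\pm$ and nonlinear trace contributions, which is the mechanism by which the "Large" trace dissipations are obtained.

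For \eqref{hazee:a}, I would for each $n\in\{0,1,2,3,4\}$ apply the appropriate vertical derivative to the vorticity equation and test against $\nu^{2n} e^{2W}$ times the corresponding Sobolev weight. The time derivative produces $\tfrac12\partial_t\mathcal{E}_{\mathrm{sob},n}$ together with a $-\partial_t W$ factor that, by \eqref{W_prop}, contributes $\mathcal{CK}_{\mathrm{sob},n}^{(W)}$; the diffusion produces $\mathcal{D}_{\mathrm{sob},n}$ after integrating by parts, with the cross terms $\nu|\partial_y W|^2$ absorbed using \eqref{wdot:est:a}. The transport $y\partial_x$ is antisymmetric and vanishes, while the commutator from $P_0 u_1\,\partial_x$ is estimated by $\mathcal{E}_{\mathrm{sob}}\mathcal{D}_{\mathrm{sob}}$ after controlling $P_0 u_1$ in $L^\infty$ via Sobolev embedding. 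The nonlinearity $\nabla^\perp\phi_{\neq}\cdot\nabla\omega$ is handled by placing $\nabla^\perp\phi_{\neq}$ in $L^\infty$ using the interior elliptic bounds and the remaining factor in $L^2$, yielding the quadratic term $\mathcal{E}_{\mathrm{sob}}\mathcal{D}_{\mathrm{sob}}$. The genuinely new contributions are the boundary terms arising from integrating $\nu \partial_y^2$ by parts: these take the form $\nu^{2n+1}\alpha_{n+1}^\pm \alpha_n^\pm e^{2W}|_{y=\pm 1}$; those with an even index factor vanish thanks to $\alpha_0^\pm=\alpha_2^\pm=0$, while the surviving odd-odd pairings are absorbed into $\nu^2 \mathcal{D}_{\mathrm{Trace}}$ via Cauchy--Schwarz and the explicit weighting of $\mathcal{D}_{\mathrm{Trace},j}$.

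For \eqref{hazee:b}, I would perform $L^2_x$ energy estimates on the trace PDE for $\alpha_1^\pm$, $\partial_x \alpha_1^\pm$, and $\partial_x^2 \alpha_1^\pm$ with the weight $e^{2W|_{y=\pm 1}}$, which yields $\mathcal{E}_{\mathrm{Trace},j}$, $\mathcal{CK}_{\mathrm{Trace},j}$, and the $\nu^2\partial_x$-dissipations $\mathcal{D}_{\mathrm{Trace},j}$ as defined. The $\nu \alpha_3^\pm$ source is controlled through a trace inequality by $\mathcal{D}_{\mathrm{sob}}$, accounting for the factor $\nu^{3}$ since $\alpha_3^\pm$ is the trace of $\partial_y^3 \omega$ which enters $\mathcal{E}_{\mathrm{sob},4}$. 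For the nonlinear source $\mathcal{N}_\pm$, one splits the streamfunction using $\phi = \phi^{(I)}+\phi^{(E)}$: the interior part is controlled by $\mathcal{E}^{(I,out)}_{\mathrm{ell}}$ with the harmless $\langle t\rangle^{-100}$ decay (the trace of $\phi^{(I)}$ at $y=\pm 1$ lies in the region where the support of $\chi_1$ is well-separated from $y = \pm 1$, yielding Gevrey smoothing), while the exterior contribution near the boundary is estimated using Sobolev bulk information, producing the $\nu^{100}\mathcal{E}_{\mathrm{sob}}$ prefactor. The maximum regularity terms $\mathcal{D}_{\mathrm{Trace,Max}}$ are obtained by testing the trace PDE against $\nu^{2}\partial_t\alpha_1^\pm$ (resp.\ $\nu^{3}\partial_t\partial_x\alpha_1^\pm$) and then reading $\nu^3\partial_x^2\alpha_1^\pm$ off from the equation itself; the large dissipations $\mathcal{D}_{\mathrm{Trace,Large}}$ follow from the algebraic identity for $\nu\alpha_4^\pm$ derived at the outset.

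\textbf{Main obstacle.} The hardest part is the simultaneous management of (i) the boundary integration-by-parts contributions coupling $\mathcal{E}_{\mathrm{sob}}$ to $\mathcal{D}_{\mathrm{Trace}}$ at the highest derivative level, and (ii) the extraction of maximum parabolic regularity $\nu^{3}\partial_x^2\alpha_1$ and the "large" control of $\nu^{3}\alpha_4$. These require that the time-derivative and diffusion of the 1D trace PDE be coordinated so that the algebraic identities for $\alpha_2^\pm\equiv 0$ and for $\nu\alpha_4^\pm$ provide just enough control to close the loop \emph{without losing powers of $\nu$}. The $\nu^{100}$ savings in the RHS of \eqref{hazee:b} are what makes this work: they reflect the exponential smallness of all bulk quantities near the initial vorticity support, which compensates the large powers of $\nu^{-1}$ that appear when passing from $H^4$-bulk control to Sobolev trace control close to $y=\pm 1$.
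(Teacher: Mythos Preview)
Your overall strategy matches the paper's: derive the boundary cascade $\alpha_0^\pm=\alpha_2^\pm=0$, extract the one-dimensional transport--diffusion PDE for $\alpha_1^\pm$, express $\nu\alpha_4^\pm$ algebraically, and then couple bulk and trace energy estimates. However, your treatment of the boundary integration-by-parts terms at the top Sobolev level contains a genuine gap. Your claim that the only surviving boundary terms are ``odd--odd pairings'' is not correct: at level $n=3$ (which governs $\mathcal{E}_{\mathrm{sob},4}=\nu^8\|\nabla f_{k,3}\|^2$) the boundary term produced is $\nu^{2n+1}\alpha_4\alpha_3$ (if you test $f_{k,n}$ against itself) or $\partial_t\alpha_3\cdot\alpha_4$ (if you test $f_{k,3}$ against $-\Delta_k f_{k,3}$), and neither factor vanishes---indeed $\alpha_4$ is even-indexed but nonzero. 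Worse, if you went one level higher in this fashion you would face $\alpha_5$, which is not controlled by any of the trace functionals. The paper resolves this by introducing a lift function $h_k(t,y)=\nu\sum_\pm\alpha_{k,4,\pm}(t)\varphi_\pm(y/\nu)$ with $\varphi_\pm'(\pm 1)=1$, so that $F_{k,3}:=f_{k,3}-h_k$ satisfies the \emph{homogeneous} Neumann condition $\partial_y F_{k,3}|_{y=\pm 1}=0$; the $H^1$ estimate on $F_{k,3}$ then has no boundary terms, and the price is an additional source $H_{k,3}=-\partial_t h_k-ik\bar U h_k-\nu\Delta_k h_k$ whose $L^2$ norm (with the correct $\nu$-weight) is bounded by $\nu^2\mathcal{D}_{\mathrm{Trace}}$. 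This is precisely where the $\nu^2\mathcal{D}_{\mathrm{Trace}}$ on the right of \eqref{hazee:a} comes from, and it is the mechanism your sketch is missing.

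A secondary difference: for the nonlinear terms the paper does not use the $\phi^{(I)},\phi^{(E)}$ decomposition you invoke, but rather a separate Sobolev-adapted splitting $\psi=\psi^{(I)}_{\mathrm{sob}}+\psi^{(E)}_{\mathrm{sob}}$ based on the cutoff $\chi_2$, and packages all streamfunction control into a single functional $\mathcal{J}_{\mathrm{sob}}\lesssim\mathcal{E}_{\mathrm{sob}}+\mathcal{E}_{\mathrm{Int}}$. This yields the clean $\mathcal{E}_{\mathrm{sob}}\mathcal{D}_{\mathrm{sob}}$ structure in \eqref{hazee:a}; your route via $\mathcal{E}^{(I,\mathrm{out})}_{\mathrm{ell}}$ would work in principle but produces different prefactors than those stated.
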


\subsubsection{Sobolev Cloud Norm Estimates}
\begin{proposition}[Section \ref{sec:CLOUD}] \label{pro:cloud:intro} Let $L \gg 1$ chosen large relative to universal constants. Under the bootstrap hypotheses, the following inequality is valid: 
\begin{align} \n
\frac{\p_t}{2} \mathcal{E}_{\mathrm{cloud}} + \mathcal{D}_{\mathrm{cloud}} + \mathcal{CK}_{\mathrm{cloud}} \lesssim & \frac{\nu^{\frac13 - 2\zeta}}{\langle  t\rangle^{2}}  \mathcal{E}^{(\gamma)} +   \frac{1}{\langle t \rangle^2} \sqrt{ \mathcal{J}_{\mathrm{ell}}^{(2)}(t)} \mathcal{E}_{\mathrm{cloud}}(t) + \frac{1}{\langle t \rangle^2}\sqrt{ \mathcal{J}_{\mathrm{ell}}^{(2)}(t)} \mathcal{E}^{(\gamma)} \\ \label{bhbhy:1:llp}
&+ \frac{1}{L} \mathcal{CK}^{(\gamma, W)} +  \frac{1}{\brak{t}^{2+}} \sqrt{\mathcal{E}_{\overline{H}}^{(\gamma)}}  \mathcal{E}_{\mathrm{cloud}} + \sqrt{ \mathcal{E}_{\overline{H}}^{(\gamma)}}  \mathcal{D}_{\mathrm{cloud}}.
\end{align}
\end{proposition}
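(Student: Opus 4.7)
The plan is to derive \eqref{bhbhy:1:llp} via a weighted $L^2$ energy identity on $\chi^I \varphi^n \p_x^m \Gamma^n \omega$ for each pair $(m,n)$ with $m + n \leq 20$, and summing. Starting from the vorticity equation \eqref{M1a} with the splitting $u_1 = P_0 u_1 + (u_1)_{\neq}$, I apply $\p_x^m \Gamma^n$ (using the almost-commutation \eqref{eq:GammatComm} to exchange $\Gamma$ with $\p_t + (y + u_1)\p_x - \nu \Delta$), multiply by $\chi^I \varphi^n$, and pair against $\chi^I \varphi^n \omega_{m,n} e^{2W}$. The time derivative produces $\tfrac{1}{2}\p_t \mathcal{E}_{\text{cloud}}$ together with the two positive contributions $\mathcal{CK}^{(\varphi)}_{\text{cloud}}$ and $\mathcal{CK}^{(W)}_{\text{cloud}}$ from $-\dot \varphi, -\dot W \geq 0$; the dissipation $-\nu \Delta \omega_{m,n}$, after integration by parts (all boundary terms vanish because $\chi^I$ is supported strictly inside $(-31/40, 31/40)$), yields $\mathcal{D}_{\text{cloud}}$ plus controllable commutators.

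\textbf{Absorbable linear error terms.} The commutator $[\nu\Delta, e^W]$ generates $\nu|\nabla W|^2 e^{2W}$ terms, which by \eqref{wdot:est:a} are dominated by a small multiple $C/K$ of $\mathcal{CK}^{(W)}_{\text{cloud}}$. The commutator $[\p_x^m\Gamma^n, \nu\Delta]$ from \eqref{eq:GammatComm} produces $\nu \pav(v_y^2 - 1)\pav^2$; since we restrict to $m + n \leq 20$, pointwise bounds on $v_y - 1 = H$ derived from the bootstrap \eqref{boot:H} (via Sobolev embedding of $\mathcal{E}^{(\gamma)}_{H}$) turn this into a perturbation of $\mathcal{D}_{\text{cloud}}$ of size $O(\eps)$. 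The transport commutator $G\,\pav \Gamma\, \omega_{m,n}$ needs pointwise control of $G$, which by \eqref{fei:fei:fei} and the temporal weight $\brak{t}^{3+2s}$ inside $\mathcal{E}^{(\gamma)}_{\overline H}$ yields the last two RHS terms $\brak{t}^{-2-}\sqrt{\mathcal{E}^{(\gamma)}_{\overline H}}\mathcal{E}_{\text{cloud}}$ and $\sqrt{\mathcal{E}^{(\gamma)}_{\overline H}}\mathcal{D}_{\text{cloud}}$ (the latter with no decay factor, since one $\p_y$ from $\pav$ lands on the factor paired with $\omega_{m,n}$).

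\textbf{Nonlinear term.} Using the decomposition $\psi = \phi^{(I)} + \phi^{(E)}$ from \eqref{ell:I}--\eqref{ell:E} and the Leibniz rule, $\p_x^m \Gamma^n(\nabla^\perp \psi_{\neq}\cdot \nabla \omega)$ becomes a sum of terms $\nabla^\perp \phi^{(\bullet)}_{m_1, n_1} \cdot \nabla \omega_{m_2, n_2}$ with $m_1 + m_2 + n_1 + n_2 = m + n \leq 20$. The quasilinear piece (all derivatives on $\omega$) is paired via an antisymmetric integration by parts in $x$, leaving only the commutator $[\chi^{I,2} e^{2W}, \nabla^\perp \phi^{(\bullet)}\cdot]$; the resulting $L^\infty$ bounds on $\phi^{(\bullet)}$ and its first derivatives are controlled through $\brak{t}^{-2}\sqrt{\slashed{\mathcal{E}}_{\text{ell}}^{(I)}}$ (for $\phi^{(I)}$, after Sobolev embedding in the bounded frequency regime) and $\brak{t}^{-2}\sqrt{\mathcal{J}_{\text{ell}}^{(2)}}$ (for $\phi^{(E)}$), where the $\brak{t}^{-2}$ reflects inviscid damping of $u$. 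The non-quasilinear pieces are bilinear with both factors of low degree; placing the low-frequency velocity in $L^\infty$ and $\omega_{m_2, n_2}$ in $L^2$ against the cloud norm on the support of $\chi^I$ produces the middle RHS terms $\brak{t}^{-2}\sqrt{\mathcal{J}_{\text{ell}}^{(2)}}(\mathcal{E}_{\text{cloud}} + \mathcal{E}^{(\gamma)})$.

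\textbf{Main obstacle.} The delicate point is the commutator $[\chi^I, \nu\Delta]\omega_{m,n}$ which is supported in the annulus $\{3/4 \leq |y| \leq 31/40\}$. There $\chi_{m+n} \equiv 1$ for all $n \geq 0$ by \eqref{chi:prop:1}, so the exterior functional $\mathcal{E}^{(\gamma)}$ measures precisely this quantity, but only with a much heavier Gevrey-$s$ loading; the cloud norm by contrast only sees $m + n \leq 20$. The required exchange of Gevrey radius for a small prefactor is obtained by distributing $\nu|\p_y \chi^I|^2$ into one copy of $\mathcal{CK}^{(\gamma, W)}$ (using $\nu |\nabla W|^2 \lesssim -\p_t W/K$ inside the support of $(\chi^I)'$, because there $W$ is already order $1/(\nu \brak{t})$) and one copy of $\mathcal{E}^{(\gamma)}$; Young's inequality with a large parameter $L$ produces the absorbable term $L^{-1}\mathcal{CK}^{(\gamma, W)}$ plus a residue weighted by $\nu/(K L\brak{t}^2)$ times $\mathcal{E}^{(\gamma)}$ summed against its Gevrey factorial. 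At the bootstrap time scale $t \lesssim \nu^{-1/3-\zeta}$ this residue is bounded by $\nu^{1/3 - 2\zeta}\brak{t}^{-2}\mathcal{E}^{(\gamma)}$, yielding the first term of \eqref{bhbhy:1:llp}. This exchange is the most delicate step, because one must simultaneously respect the Gevrey coefficient structure and retain a small enough prefactor to make the time integration close in the subsequent Grönwall argument.
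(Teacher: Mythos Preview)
Your overall framework is right, but you have swapped the mechanisms for two key terms, and as written your treatment of the quasilinear transport term has a genuine gap.

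\textbf{The $[\chi^I,\nu\Delta]$ commutator is not the main obstacle.} This term is in fact handled very simply in the paper: the commutator produces $\nu\langle \partial_y\omega_{m,n},\,\omega_{m,n}\,e^{2W}\varphi^{2(n+1)}\,2\chi^I\partial_y\chi^I\rangle$; one factor $\sqrt{\nu}\,\partial_y\omega_{m,n}\,e^W\chi^I\varphi^{n+1}$ goes into $\sqrt{\mathcal{D}_{\mathrm{cloud}}}$, and since $\partial_y\chi^I$ is supported where all $\chi_{m+n}\equiv 1$, the other factor is bounded by $\sqrt{\nu}\sqrt{\mathcal{E}^{(\gamma)}}$. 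Writing $\sqrt{\nu}=\langle t\rangle^{-1}(\sqrt{\nu}\,t)$ and using $\sqrt{\nu}\,t\leq \nu^{1/6-\zeta}$ on the bootstrap interval, Young's inequality gives the absorbable piece of $\mathcal{D}_{\mathrm{cloud}}$ plus $\nu^{1/3-2\zeta}\langle t\rangle^{-2}\mathcal{E}^{(\gamma)}$. No appeal to $\nu|\nabla W|^2\lesssim -\partial_t W$ is needed here --- $\partial_y\chi^I$ is a bounded function and $\partial_y W$ does not appear in this commutator at all. Your proposed mechanism for this term is therefore based on a misreading of which factor is large.

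\textbf{The actual delicate term is the one you under-treated.} In the quasilinear piece, after integrating $\nabla^\perp\psi_{\neq}\cdot\nabla\omega_{m,n}$ by parts, the commutator with $e^{2W}$ produces $\langle \partial_x\psi_{\neq}\,\omega_{m,n},\,\omega_{m,n}\,\partial_y W\,e^{2W}\varphi^{2(n+1)}(\chi^I)^2\rangle$, and here $\partial_y W\sim (K\nu\langle t\rangle)^{-1}$ on much of the support of $\chi^I$ --- it is \emph{not} controllable by $L^\infty$ bounds on $\psi$ alone. The paper handles this via \eqref{wdot:est:b}, i.e.\ $\tfrac{\eps}{(1+t)^2}|\partial_y W|\leq -\tfrac{C}{L}\partial_t W$, which precisely couples the inviscid damping $\|\partial_x\psi_{\neq}\|_{L^\infty}\lesssim \eps\langle t\rangle^{-2}$ to the $W$-structure and yields the $L^{-1}\mathcal{CK}^{(\gamma,W)}$ term. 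This is where that term in \eqref{bhbhy:1:llp} actually originates, and your proposal does not account for it.

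A minor point: the $\sqrt{\mathcal{E}_{\overline H}^{(\gamma)}}\,\mathcal{D}_{\mathrm{cloud}}$ contribution comes from the viscous commutator $\nu\,\pav(v_y^2-1)\pav^2$ (after writing $\pav=\Gamma-t\partial_x$ once and using $\sqrt{\nu}\,t\lesssim 1$), not from the transport commutator $G\,\pav\Gamma$; the latter yields only the $\brak{t}^{-2+}\mathcal{E}_{\mathrm{cloud}}$ contribution.
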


\subsubsection{Elliptic estimates}


The following proposition is our main elliptic estimate. 
\begin{proposition}  \label{pro:ell:intro} Assume the bootstrap assumption and recall the definitions \eqref{E_Int}, \eqref{E_IntCh}. The $\mathcal{J}_{ell}$ functionals satisfy the following bounds: 
\begin{align}
\mathcal{J}_{ell}^{(1)} \lesssim& e^{-\nu^{-1/9}}\lf( \mathcal{E}^{(\gamma)} +\lf(\mathcal{E}_{H}^{(\gamma)}+\mathcal{E}_{H}^{(\al)}\rg)\mathcal{E}_{\mathrm{Int}}^{\mathrm{low}}\rg), \label{jell:1}\\
\mathcal{J}_{ell}^{(2)} \lesssim& e^{-\nu^{-1/9}}\lf(\mathcal{E}^{(\gamma)} +\lf(\mathcal{E}_{H}^{(\gamma)}+\mathcal{E}_{H}^{(\al)}\rg)\mathcal{E}_{\mathrm{Int}}^{\mathrm{low}}\rg), \label{jell:2}\\
\mathcal{J}_{ell}^{(3)} \lesssim& e^{-\nu^{-1/9}}  \mathcal{D}^{(\gamma)} +e^{-\nu^{-1/9}}\sum_{\iota\in\{\al,\gamma\}}\lf(\mathcal{E}_{H}^{(\iota)}+\mathcal{D}_{H}^{(\iota)}\rg)\lf(\mathcal{E}^{(\gamma)}
+\mathcal{E}_{\mathrm{Int}}^{\mathrm{low}}\rg). \label{jell:3}
\end{align}
The $\mathcal{E}_{ell}$ functionals satisfy the following bounds: 
\begin{align} 
\mathcal{E}_{ell}^{(I, out)}(t)  \lesssim &  \frac{C_\mathfrak{n}}{ \lan t\ran^2(|k|t)^{2\mathfrak{n}}}\mathcal{E}_{\mathrm{Int}}^{\mathrm{low}}(t);\label{eell:out} \\
\mathcal{E}_{ell}^{(I, full)}(t) \lesssim & \frac{1}{\langle t \rangle^4} \mathcal{E}_{\mathrm{Int}}^{\mathrm{low}}(t) \exp\{-\delta_I\nu^{1/3}t\}.  \label{eell:sob} 
\end{align}
The $\mathcal{F}_{ell}^{(E)}$ functional satisfies the following bounds: 
\begin{align}
\mathcal{F}_{ell}^{(E)}(t) \lesssim &  e^{-\nu^{-1/8}}\lf(\mathcal{E}^{(\gamma)} +\mathcal{E}_{\mathrm{Int}}^{\mathrm{low}} \right) \lf(1+\mathcal{E}_H^{(\al)} +\mathcal{E}_H^{(\gamma)}\rg)^2. \label{fell:e}
\end{align}
\end{proposition}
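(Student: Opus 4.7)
The plan is to prove each family of bounds separately, using the four elliptic decompositions \eqref{Intr_psi_I}, \eqref{Intr_psi_E}, \eqref{ell:I}, \eqref{ell:E}, combined with the ``ICC scheme'' from the companion paper \cite{BHIW24b}. In each estimate, the forcing of the relevant elliptic problem localizes to a region where smallness can be extracted in one of two ways: (i) a \emph{spatial gap} between the support of the forcing and the region of measurement, via Green's-function smoothing for $\pa_v^2 - k^2$; or (ii) the \emph{weight} $e^W$, which on the relevant exterior set satisfies $W \gtrsim \nu^{-1/9}$ (resp.\ $\nu^{-1/8}$) throughout $t \lesssim \nu^{-1/3-\zeta}$ (this follows from $|y|-1/4 \gtrsim 1$ on the exterior supports combined with \eqref{defndW}), so that the weighted exterior functional $\mathcal{E}^{(\gamma)}$, which carries $e^W$ inside its $L^2$-norm by \eqref{ef:a}, dominates the unweighted forcing norm with the advertised gain.

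\textbf{The $\mathcal{J}_{ell}^{(\ell)}$ bounds \eqref{jell:1}--\eqref{jell:3}.} The right-hand side of \eqref{ell:E} is $\widetilde{\chi}_1(v)\omega_k + \widetilde{\chi}_1(v)(\pav^2-\pa_y^2)\phi^{(I)}_k$, supported in $\{|v| \geq \tfrac{3}{8}-\tfrac{1}{40}\}$, where $W \gtrsim \nu^{-1/9}$. Applying the operators $J_{m,n}^{(a,b,c)}$ to \eqref{ell:E}, pairing with $J_{m,n}^{(a,b,c)}\phi^{(E)}_k$, and summing against $\bold{a}_{m,n}^2$, the principal-part commutator with $\pa_v^2-k^2$ is handled by the inductive ICC scheme of \cite{BHIW24b}, which absorbs the problematic $((m+n)/q)^a$, $\pav$ and $|k|$ factors. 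The first forcing term contributes $e^{-\nu^{-1/9}}\mathcal{E}^{(\gamma)}$ after trading $e^{-W}e^W$; the second expands as a product of (derivatives of) $H = v_y-1$ and of second derivatives of $\phi^{(I)}_k$, yielding $e^{-\nu^{-1/9}}(\mathcal{E}_H^{(\alpha)}+\mathcal{E}_H^{(\gamma)})\mathcal{E}_{\mathrm{Int}}^{\mathrm{low}}$ once \eqref{eell:sob} is invoked. The $\ell=3$ version \eqref{jell:3} measures one extra $\pa_v$-derivative, which must be placed in $\mathcal{D}_H^{(\iota)}$ and $\mathcal{D}^{(\gamma)}$ rather than in the corresponding energies, explaining the structure of the bound.

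\textbf{The $\mathcal{E}_{ell}^{(I)}$ and $\mathcal{F}_{ell}^{(E)}$ bounds.} For \eqref{eell:out}: equation \eqref{ell:I} is a $k$-dependent one-dimensional Helmholtz problem whose forcing $\widetilde{\chi}_1^{\mathfrak{c}}\omega_k$ is supported on $\{|v| \leq \tfrac{3}{8}-\tfrac{1}{80}\}$, while the measurement in \eqref{Friday:1} carries $\chi_1$, supported on $\{|v| \geq \tfrac{3}{8}-\tfrac{1}{80}\}$, producing a spatial gap of order one. The Green's function $G_k$ of $\pa_v^2 - k^2$ obeys $|k|^{2\mathfrak{n}}|G_k(v,v')| \lesssim C_\mathfrak{n}$ across this gap, and $(\pa_v + ikt)^n \pa_v^\ell$ acting on $G_k$ contributes at most $(|k|+|k|t)^n|k|^\ell$ growth, which is cashed against the $|k|^{-2\mathfrak{n}}$ decay, with the extra $\langle t\rangle^{-2}$ coming from $\varphi^{2(1+n)}$ in $\widehat{\bold{a}}_{m,n}$. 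Then \eqref{eell:sob} is a standard $H^2$-type estimate on \eqref{ell:I} as in \cite{HI20}, with the enhanced-dissipation factor $e^{-\delta_I\nu^{1/3}t}$ transported through $\mathcal{E}_{\mathrm{Int}}^{\mathrm{low}} \lesssim \mathcal{E}_{\mathrm{Int}}$. For \eqref{fell:e}: the forcing of \eqref{Intr_psi_E} is all exterior-supported, since $(1-\chi^I)$ and $(1+h^I)^2 - (1+h)^2 = (\chi^I-1)h(2+(1+\chi^I)h)$ both vanish on $\{\chi^I = 1\}$; the gain $e^{-\nu^{-1/8}}$ thus arises from trading $e^{-W}$ off $\mathcal{E}^{(\gamma)}$ (and from \eqref{eell:sob} for the residual $\psi^{(I)}$-factor), while a product-rule expansion on the coefficient factor yields the quadratic dependence on $\mathcal{E}_H$.

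\textbf{Main obstacle.} The core technical difficulty is the $\mathcal{J}_{ell}^{(\ell)}$ estimate: the operators $J_{m,n}^{(a,b,c)}$ contain the irregular weight $((m+n)/q)^a$, which interacts non-trivially with the co-normal weight $q^n$, the pseudo-Gevrey cutoffs $\chi_{m+n}$, and the commutator between $\Gamma_k$ and $\pa_v^2-k^2$. Closing the Gevrey-summed energy estimate for arbitrary $m,n$ requires the delicate ICC induction of \cite{BHIW24b} to simultaneously balance the parameters $(\nu, t, m, n, q^{-1})$; by comparison, the spatial-gap Green's-function reasoning for $\phi^{(I)}_k$ and the $W$-weight trading are essentially routine once the elliptic regularity theory of the companion paper is in hand.
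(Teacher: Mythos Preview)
Your proposal is correct and aligns with the paper's approach, but at a different level of detail. The paper's own proof of Proposition~\ref{pro:ell:intro} consists entirely of invoking Theorem~3.1 from the companion paper \cite{BHIW24b}: after noting that the bootstrap hypotheses \eqref{boot:Intf}--\eqref{boot:H} verify the assumptions of that theorem, the paper sets $w=-\omega$ there and reads off all six bounds directly. No argument is given in the present paper beyond this black-box citation.

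What you have written is, in effect, a sketch of the proof of that companion theorem rather than a sketch of the paper's proof. Your mechanisms are the right ones --- the ICC induction for the $\mathcal{J}_{ell}^{(\ell)}$ commutators, the spatial-gap Green's-function smoothing for $\mathcal{E}_{ell}^{(I,\mathrm{out})}$, and the $e^W$-weight trading on the exterior supports for the $e^{-\nu^{-1/9}}$ and $e^{-\nu^{-1/8}}$ gains --- and they match the informal description the paper gives in Section~\ref{sec:NonTechIntro} under ``Four Families of Elliptic Functionals.'' One small ordering point worth making explicit: the $\phi^{(I)}$ bounds \eqref{eell:out}--\eqref{eell:sob} must be established first (since \eqref{ell:I} does not involve $\phi^{(E)}$), and only then fed into the right-hand side of \eqref{ell:E} for the $\mathcal{J}_{ell}^{(\ell)}$ estimates; your write-up implicitly assumes this but invokes \eqref{eell:sob} before stating it, which could read as circular. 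Otherwise there is no gap.
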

\begin{remark} The bounds above are split into three groups. Informally, the first, \eqref{jell:1} -- \eqref{jell:3} measure information moving from exterior to exterior. The second, \eqref{eell:out} -- \eqref{eell:sob} captures interior to exterior. The third, \eqref{fell:e} captures exterior to interior.
The interior to interior estimates are essentially in \cite{HI20,BM13}; see Section \ref{sec:interior} for more information. 
\end{remark}

\vspace{2 mm}

\subsection{Proof of Proposition \ref{prop:boot}}

\subsubsection{Proof of Propositions Invoked from \cite{BHIW24b}}

\begin{proof}[Proof of Proposition \ref{thm:main:para}] We apply Theorem 2.1 from \cite{BHIW24b} to the equation~\eqref{M1a} with the forcing term $f=\nabla^{\perp}\stm_{\neq}\cdot\nabla \omega$ to obtain
\begin{align} \n
&\frac{d}{d t} \mathcal{E}^{(\gamma)}[\omega]  + \mathcal{D}^{(\gamma)}[\omega] + \mathcal{CK}^{(\gamma; \varphi)}[\omega] + \mathcal{CK}^{(\gamma; W)}[\omega] 
 \\ \label{lights:on:15}
 & \qquad \lesssim     |\mathcal{I}^{(\gamma)}_{\mathrm{Source}}| +  \eps^3 \nu^{98}
+\eps \paren{\cd^{(\gamma)}[\omega]+\mathcal{CK}^{(\gamma)}[\omega] + \mathcal{D}^{(\gamma)}_{\overline{H} }+ \mathcal{D}^{(\gamma)}_{{H}}}, \\ \n
 &\frac{d}{d t} \mathcal{E}^{(\alpha)}[\omega]  + \mathcal{D}^{(\alpha)}[\omega] + \mathcal{CK}^{(\alpha; \varphi)}[\omega] + \mathcal{CK}^{(\alpha; W)}[\omega] 
 \\ \label{lights:on:25}
 & \qquad \lesssim  \sum_{\iota \in (\gamma, \alpha, \mu)} |\mathcal{I}^{(\iota)}_{\mathrm{Source}}| +  \eps^3 \nu^{98}
+\eps \sum_{\iota \in (\alpha, \gamma)}\paren{\cd^{(\iota)}[\omega]+\mathcal{CK}^{(\iota)}[\omega] + \mathcal{D}^{(\iota)}_{\overline{H} }+ \mathcal{D}^{(\iota)}_{{H}}},  \\ \n
 &\frac{d}{d t} \mathcal{E}^{(\mu)}[\omega]  + \mathcal{D}^{(\mu)}[\omega] + \mathcal{CK}^{(\mu; \varphi)}[\omega] + \mathcal{CK}^{(\mu; W)}[\omega] 
 \\ \label{lights:on:35}
 & \qquad \lesssim  |\mathcal{I}^{(\mu)}_{\mathrm{Source}}| +  \eps^3 \nu^{98}
 +\eps \paren{\cd^{(\mu)}[\omega]+\mathcal{CK}^{(\mu)}[\omega] + \mathcal{D}^{(\gamma)}_{\overline{H} }+ \mathcal{D}^{(\gamma)}_{{H}}},
\end{align}
where the source terms are given as follows:
\begin{align*}
	\mathcal{I}^{(L, \gamma)}_{\mathrm{Trilinear}} := & - \sum_{m = 0}^\infty \sum_{n = 0}^\infty \bold{a}_{m,n}^2 \theta_n^2 \mathrm{Re} \langle q^n \p_x^m \Gamma^n (\nabla^\perp \stm_{\neq} \cdot \nabla \omega), \omega_{m,n} \chi_{m + n}^2 e^{2W} \rangle, \\ 
	\mathcal{I}^{(L, \alpha)}_{\mathrm{Trilinear}} := &- \sum_{m = 0}^\infty \sum_{n = 0}^\infty \bold{a}_{m,n}^2 \theta_n^2 \nu \mathrm{Re} \langle q^n \p_x^m \p_y \Gamma^n  (\nabla^\perp \stm_{\neq} \cdot \nabla \omega), \p_y \omega_{m,n} \chi_{m + n}^2 e^{2W} \rangle, \\ 
	\mathcal{I}^{(L, \mu)}_{\mathrm{Trilinear}} := & - \sum_{m = 0}^\infty \sum_{n = 0}^\infty \bold{a}_{m,n}^2 \theta_n^2 \nu \mathrm{Re} \langle q^n \p_x^m \p_x \Gamma^n  (\nabla^\perp \stm_{\neq} \cdot \nabla \omega), \p_x \omega_{m,n} \chi_{m + n}^2 e^{2W} \rangle.
\end{align*}
Noting that 
\begin{align*}
	\nu \lesssim \frac{1}{\brak{t}^2}
\end{align*}
for $t<\nu^{-1/3-}$ and $\eps\ll 1$, we obtain \eqref{lights:on:1} -- \eqref{lights:on:3}.

\end{proof}

\begin{proof}[Proof of Proposition \ref{pro:ell:intro}] First of all, we recall  that in our companion work \cite{BHIW24b}, it was proven in Theorem 3.1 that under suitable conditions on the coordinate system (covered by the bootstrap hypotheses \eqref{boot:Intf},\eqref{boot:IntH},\eqref{boot:ExtVort}, and \eqref{boot:H}, which are listed in (3.23) in \cite{BHIW24b}) one can obtain a variety of elliptic estimates. More precisely we have the following.   
\begin{theorem} [Theorem 3.1 in \cite{BHIW24b}]
Assume that the bootstrap hypotheses \eqref{boot:Intf},\eqref{boot:IntH},\eqref{boot:ExtVort}, and \eqref{boot:H} hold on $[0,T]$. The $\mathcal{J}_{ell}$ functionals satisfy the following bounds on the same time interval: 
\begin{align*}
\mathcal{J}_{ell}^{(1)} \lesssim& e^{-\nu^{-1/9}}\lf( \mathcal{E}^{(\gamma)}[\ww] +\lf(\mathcal{E}_{H}^{(\gamma)}+\mathcal{E}_{H}^{(\al)}\rg)\mathcal{E}_{\mathrm{Int}}^{\mathrm{low}}[\ww]\rg),\\
\mathcal{J}_{ell}^{(2)} \lesssim& e^{-\nu^{-1/9}}\lf(\mathcal{E}^{(\gamma)}[\ww] +\lf(\mathcal{E}_{H}^{(\gamma)}+\mathcal{E}_{H}^{(\al)}\rg)\mathcal{E}_{\mathrm{Int}}^{\mathrm{low}}[\ww]\rg), \\
\mathcal{J}_{ell}^{(3)} \lesssim& e^{-\nu^{-1/9}}  \mathcal{D}^{(\gamma)}[\ww] +e^{-\nu^{-1/9}}\sum_{\iota\in\{\al,\gamma\}}\lf(\mathcal{E}_{H}^{(\iota)}+\mathcal{D}_{H}^{(\iota)}\rg)\lf(\mathcal{E}^{(\gamma)}[\ww]
+\mathcal{E}_{\mathrm{Int}}^{\mathrm{low}}[\ww]\rg). 
\end{align*}
The $\mathcal{E}_{ell}$ functionals satisfy the following bounds: 
\begin{align*} 
\mathcal{E}_{ell}^{(I, out)}(t)  \lesssim &  \frac{C_\mathfrak{n}}{ \lan t\ran^2(|k|t)^{2\mathfrak{n}}}\mathcal{E}_{\mathrm{Int}}^{\mathrm{low}}[\ww]; \\
\mathcal{E}_{ell}^{(I, full)}(t) \lesssim & \frac{1}{\langle t \rangle^4} \mathcal{E}_{\mathrm{Int}}^{\mathrm{low}}[\ww].  
\end{align*}
Finally, the $\mathcal{F}_{ell}^{(E)}$ functional satisfies the following bound: 
\begin{align*}
\mathcal{F}_{ell}^{(E)}(t) \lesssim &  e^{-\nu^{-1/8}}\lf(\mathcal{E}^{(\gamma)}[\ww] + \mathcal{E}_{\mathrm{Int}}^{\mathrm{low}}[\ww] \right)\lf(1+\mathcal{E}_H^{(\al)}  +\mathcal{E}_H^{(\gamma)}\rg)^2. 
\end{align*}
\end{theorem}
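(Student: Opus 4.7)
My plan is to prove Proposition \ref{pro:ell:intro} in two steps: invoke Theorem 3.1 of \cite{BHIW24b} for every bound except the enhanced-dissipation improvement in \eqref{eell:sob}, and then extract that exponential factor separately from the interior vorticity inequality of Proposition \ref{prop:MainInterior}.

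For the first step I would verify that the bootstrap hypotheses \eqref{boot:Intf}--\eqref{boot:H}, together with the smallness of $\eps$ and $\nu$, imply the hypotheses (3.23) of \cite{BHIW24b} required for Theorem 3.1 there. Granted this, Theorem 3.1 outputs verbatim the bounds \eqref{jell:1}--\eqref{jell:3}, \eqref{eell:out}, and \eqref{fell:e}, together with the \emph{weaker} version
\begin{equation*}
\mathcal{E}_{ell}^{(I,full)}(t) \lesssim \frac{1}{\langle t \rangle^4} \mathcal{E}_{\mathrm{Int}}^{\mathrm{low}}(t)
\end{equation*}
of \eqref{eell:sob}, i.e., without the factor $\exp\{-\delta_I \nu^{1/3} t\}$. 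It therefore suffices to establish the enhanced dissipation estimate $\mathcal{E}_{\mathrm{Int}}^{\mathrm{low}}(t) \lesssim \eps^2 \exp\{-\delta_I \nu^{1/3} t\}$ independently.

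For this, I use Proposition \ref{prop:MainInterior}, whose LHS carries the coercive enhanced-dissipation term $\delta_I \nu^{1/3} \mathcal{E}_{\mathrm{Int}}$. Under the bootstrap each term on the RHS falls into one of four categories: (i) it is bounded by $C\eps$ times one of $\mathcal{CK}_{\mathrm{Int}}$, $\mathcal{D}_{\mathrm{Int}}$, $\delta_I \nu^{1/3} \mathcal{E}_{\mathrm{Int}}$, and is absorbed on the left after shrinking $\eps$; (ii) it is of the form $\tfrac{C\eps}{\langle t\rangle^{3/2}} \mathcal{E}_{\mathrm{Int}}$ with $\int_0^\infty \langle s\rangle^{-3/2} ds < \infty$; (iii) it is of the form $\eps \mathcal{CK}_{\mathrm{Int,Coord}}$ with $\int_0^T \mathcal{CK}_{\mathrm{Int,Coord}} \lesssim \eps^2$ by \eqref{boot:IntH}; or (iv) it is multiplied by an overwhelming $\nu$-smallness ($e^{-\nu^{-1/9}}$, $e^{-\nu^{-1/10}}$, or $\nu^{50}$) that is dominated by $\nu^{1/3}\mathcal{E}_{\mathrm{Int}}$ on the bootstrap window. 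After absorbing type-(i) contributions on the left, Grönwall with the integrating factor $\exp\{(\delta_I/2)\nu^{1/3}t - C\eps \int_0^t \langle s\rangle^{-3/2}ds\}$ yields $\mathcal{E}_{\mathrm{Int}}(t) \lesssim \eps^2 \exp\{-(\delta_I/2)\nu^{1/3}t\}$; combined with the inequality $\mathcal{E}_{\mathrm{Int}}^{\mathrm{low}} \lesssim \mathcal{E}_{\mathrm{Int}}$ noted in the excerpt and with the first-step bound, this closes \eqref{eell:sob} (after relabeling $\delta_I$).

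The principal obstacle is the source-type term $\eps \mathcal{CK}_{\mathrm{Int,Coord}}$ of category (iii): since $\mathcal{CK}_{\mathrm{Int,Coord}}$ is only time-integrable rather than pointwise exponentially decaying, a naive Grönwall does not convert $\int_0^t \eps\mathcal{CK}_{\mathrm{Int,Coord}}(\tau) e^{-\delta\nu^{1/3}(t-\tau)} d\tau$ into a clean $O(\eps^2 e^{-\delta\nu^{1/3}t})$ bound. I would address this by coupling Proposition \ref{prop:MainInterior} with the coordinate-system inequalities of Proposition \ref{prop:MainCoordInt}: the latter provides $\mathcal{CK}_{\mathrm{Int,Coord}}$ on its own LHS, so the two systems close as a joint Lyapunov functional $\mathcal{E}_{\mathrm{Int}} + M\mathcal{E}_{\mathrm{Int,Coord}}$ for $M$ large enough, in which the $\eps \mathcal{CK}_{\mathrm{Int,Coord}}$ source gets absorbed and the joint exponential decay at rate $(\delta_I/2)\nu^{1/3}$ is extracted from the lone coercive term on the vorticity side.
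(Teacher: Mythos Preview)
Your step 1 is exactly what the paper does: Theorem 3.1 is quoted from the companion paper \cite{BHIW24b} with no proof given here beyond checking that the bootstrap hypotheses supply the assumptions (3.23) of \cite{BHIW24b} and substituting $\ww=-\omega$. For the statement actually displayed, that is the entire argument.

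Your step 2 is both unnecessary and flawed. You have misread what needs to be shown: the quantity $\mathcal{E}_{ell}^{(I,full)}$ involves only nonzero modes $k\neq 0$, and the enhanced-dissipation factor is already hard-wired into the interior multiplier
\[
\mathfrak{A} = \frac{A}{M}\bigl(e^{\delta_I \nu^{1/3}t}\mathbf{1}_{k\neq 0} + \mathbf{1}_{k=0}\bigr)
\]
used to define $\mathcal{E}_{\mathrm{Int}}$. Hence the bootstrap bound $\mathcal{E}_{\mathrm{Int}}\le 4\eps^2$ \emph{by itself} gives $e^{2\delta_I\nu^{1/3}t}$ times the nonzero-mode part of $\mathcal{E}_{\mathrm{Int}}^{\mathrm{low}}$ bounded by $\lesssim\eps^2$; combined with the $\langle t\rangle^{-4}$ elliptic bound from Theorem 3.1 this yields \eqref{eell:sob} with no further dynamics. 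No Gr\"onwall is needed.

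Separately, your Gr\"onwall would not close. The coordinate inequalities in Proposition \ref{prop:MainCoordInt} contain no coercive term of the form $\nu^{1/3}\mathcal{E}_{\mathrm{Int,Coord}}$, so coupling $\mathcal{E}_{\mathrm{Int}}+M\mathcal{E}_{\mathrm{Int,Coord}}$ cannot manufacture exponential decay at rate $\nu^{1/3}$ for the joint functional; you would at best get uniform boundedness of the sum, which is just the bootstrap again. And the target you wrote, $\mathcal{E}_{\mathrm{Int}}^{\mathrm{low}}(t)\lesssim \eps^2 e^{-\delta_I\nu^{1/3}t}$, is simply false because $\mathcal{E}_{\mathrm{Int}}^{\mathrm{low}}$ includes the zero mode, which carries no enhanced dissipation. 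The correct statement is that only its $k\neq 0$ part decays, and that is automatic from the definition of $\mathfrak{A}$.
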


Finally, by setting $w$ in \cite{BHIW24b} to be the $-\omega$ here, we obtain all the bounds in Proposition \ref{pro:ell:intro}. We highlight that the notations in these papers are chosen such that $\mathcal{E}^{(\gamma)}[\ww]\big|_{\ww=\omega},\ \mathcal{D}^{(\gamma)}[\ww]\big|_{\ww= \omega}$ and  $\mathcal{E}^{\text{low}}_{\text{Int}}[\ww]\big|_{w=\omega}$ correspond to $\mathcal{E}^{(\gamma)},\  \mathcal{D}^{(\gamma)}$ and $\mathcal{E}^{\text{low}}_{\text{Int}}$ in this paper.

\end{proof}

\subsubsection{Closing the Main Bootstrap}

We are now ready to bring together all of the above estimates to close the main estimate. 

\begin{proof}[Proof of Proposition \ref{prop:boot}] 
By Proposition~\ref{prop:MainInterior}, using the bootstrap assumptions, we easily get
\begin{align} \n
	\frac{1}{2}\frac{d}{dt}\mathcal{E}_{\mathrm{Int}}(t) + \mathcal{CK}_{\mathrm{Int}}(t) +  \mathcal{D}_{\mathrm{Int}}(t) + \delta \nu^{1/3} \mathcal{E}_{\mathrm{Int}}(t) &\\ \n
	& \hspace{-6cm} \lesssim  \eps^2\mathcal{CK}_{\mathrm{Int,Coord}}(t) + \eps^2\nu^{100},
\end{align}
from where we integrate in time and use the bootstrap assumptions to get inequality~\eqref{boot:Intf} is true with $4\eps^2$ replaced with $2\eps^2$ on the right hand side, where we also used that $\nu\ll 1$.  
In order to prove the right side of~\eqref{boot:IntH} is bounded by $2\eps^2$, we get from Proposition~\ref{prop:MainCoordInt},  \eqref{fell:e}, and the bootstrap assumption that
\begin{align*} 
	\frac{1}{2}\frac{d}{dt} \mathcal{E}_{\text{Int,Coord}} +  \mathcal{CK}_{\text{Int,Coord}} +  \mathcal{D}_{\text{Int,Coord}}  \leq \eps^2\nu^{100} +  \eps\mathcal{CK}_{\text{Int}}
		+ \frac{\eps^3}{\brak{t}^{1 + K\eps}}.
\end{align*}
Again the estimate for the interior coordinate system closes by integrating in time and using that $\nu\ll 1.$ 
We next turn to the estimate of the exterior vorticity. By~Proposition~\ref{thm:main:para} and \ref{pro:tri:in}, we obtain
\begin{align*}
	\frac{d}{d t} \mathcal{E}^{(\gamma)}[\omega]&  + \mathcal{D}^{(\gamma)}[\omega] + \mathcal{CK}^{(\gamma; \varphi)}[\omega] + \mathcal{CK}^{(\gamma; W)}[\omega] 
	 \\& \lesssim   \sum_{L \in \{I, E\}}  |\mathcal{I}^{(L, \gamma)}_{\mathrm{Trilinear}}| +  \eps \cd_{H}^{(\gamma)} +\eps\cd_{\overline H}^{(\gamma)} + \frac{\eps^3}{\langle t \rangle^2}
	 \\&
	 \lesssim \eps^2\nu^{100} +  \eps\mathcal{CK}^{(\gamma)} + \eps\mathcal{CK}_{(\text{cloud)}} + \eps \cd^{(\gamma)}+ \eps \cd_{H}^{(\gamma)} +\eps\cd_{\overline H}^{(\gamma)} + \frac{\eps^3}{\langle t \rangle^2}
\end{align*}
which we integrate in time to get
\begin{align*}
			\sup_{0 \le t \le T} \mathcal{E}^{(\gamma)} + \sum_{\iota \in \{\varphi, W \}} \int_0^T \mathcal{CK}^{(\gamma; \iota)}(t) dt +  \int_0^T \mathcal{D}^{(\gamma)}(t) dt \le 2 \eps^2.
\end{align*}
The $\alpha$ and $\mu$ level estimates follow similarly and we omit further details. Using Proposition~\ref{gamm:esti:llp} and the bootstrap assumptions, we integrate in time to get \eqref{boot:H} holds for an upper bound $2\eps^2$. By Proposition~\eqref{pro:int:sob} and the bootstrap assumptions, we arrive at
\begin{align*}
	\frac{\p_t}{2} \mathcal{E}_{\mathrm{sob}} &+ \mathcal{CK}_{\mathrm{sob}}^{(W)} + \mathcal{D}_{\mathrm{sob}}  + \frac{1}{M} \paren{\frac{\p_t}{2} \mathcal{E}_{\mathrm{Trace}}(t) +   \mathcal{CK}_{\mathrm{Trace}}(t) + \mathcal{D}_{\mathrm{Trace}}(t)}
	\\& \lesssim \frac{\eps^3}{\brak{t}^{100}} + \eps^2\nu^{100}
\end{align*}
for some $M\gg 1.$
Integrating in time gives
\begin{align*}
	\frac{\p_t}{2} \mathcal{E}_{\mathrm{sob}} &+ \mathcal{CK}_{\mathrm{sob}}^{(W)} + \mathcal{D}_{\mathrm{sob}}  + \frac{1}{M} \paren{\frac{\p_t}{2} \mathcal{E}_{\mathrm{Trace}}(t) +   \mathcal{CK}_{\mathrm{Trace}}(t) + \mathcal{D}_{\mathrm{Trace}}(t)}
	\lesssim \eps^3,
\end{align*}
concluding the bootstrap argument for the Sobolev norm.
Similarly, we integrate in time \eqref{bhbhy:1:llp} and get \eqref{boot:cloud} holds with $4\eps^2$ replaced by $2\eps^2$ concluding the proof.
\end{proof}


\section{Interior Estimates}
\label{sec:interior}
In this section we prove Propositions \ref{prop:MainInterior}  and \ref{prop:MainCoordInt}. 
As discussed in Section \ref{sec:Outline}, upon moving to the profile representation in the $(z,v)$ coordinates we have 
\begin{align}
\label{main:equa}
\partial_t f + g \partial_v f + v' \grad^\perp \psi_{k \neq 0} \cdot \grad f = \nu \widetilde{\Delta_{t}} f. 
\end{align}
Defining $f^I = \chi^I f$, 
\begin{align*}
\partial_t f^I + g \partial_v f^I + v' \grad^\perp \psi_{k \neq 0} \cdot \grad f^I - \nu \widetilde{\Delta_t} f^I = \mathcal{C},  
\end{align*}
with the commutator given by 
\begin{align*}
\mathcal{C} =  g \partial_v \chi^I f - v' \partial_v \chi^I  \partial_x \psi_{k \neq 0} f - \nu (v')^2 \partial_{vv}\chi^I f - 2 \nu (v')^2 \partial_v\chi^I (\partial_v-t\partial_z) f. 
\end{align*}
The left-hand side has now been essentially reduced to the profile equation treated in \cite{BM13, HI20}, except for the fact that the Biot-Savart law includes contributions from both $\chi^I f$ and $(1-\chi^I)f$. 
The commutator $\mathcal{C}$ is supported near $\frac{3}{4} \leq \abs{y}  \leq \frac{31}{40}$, which means that the exterior estimate will be extremely strong there due to the weight (see Lemma \ref{lem:ExtToInt} below). For this reason, the extension of the work in \cite{BM13, HI20} to cover the bootstrap estimate on $f^I$ is not too big of a challenge, and hence we will provide only a sketch here, focusing on the places where the arguments used herein depart from those in \cite{BM13, HI20}. 

Next, we further localize the PDEs for the coordinate systems, defining
\begin{align*}
g^I = \chi^I g, \quad h^I  = \chi^I h, \quad \overline{h}^I = \overline{h} \chi^I,
\end{align*}
and the corresponding PDEs become
\begin{align*}
& \partial_t \overline{h}^I + \frac{2}{t} \overline{h}^I + g \partial_v \overline{h}^I + \left(\chi^I v' \grad^\perp \psi_{k \neq 0} \cdot \grad \overline{h}\right)_0 - \nu \widetilde{\Delta_t} \overline{h}^I = \mathcal{C}_{\overline{h}} \\
& \partial_t g^I + \frac{2}{t} g^I + g \partial_v g^I + \left(\chi^I v' \grad^\perp \psi_{k \neq 0} \cdot \grad u\right)_0 - \nu \widetilde{\Delta_t} g^I = \mathcal{C}_g \\
& \partial_t h^I + + g \partial_v h^I - \nu \widetilde{\Delta_t} h^I - \overline{h}^I = \mathcal{C}_{h}, 
\end{align*}
where the commutators above are of the form
\begin{align*}
\mathcal{C}_{\ast} = g \partial_v\chi^I (\ast) - \nu (v')^2 \partial_{vv}\chi^I  (\ast) - 2 \nu (v')^2 \partial_v \chi^I \partial_v (\ast). 
\end{align*}
We subdivide the effective velocity field into two terms 
\begin{align*}
& U^I = \begin{pmatrix} 0 \\ g^I \end{pmatrix} + \chi_e^I(1 + h^I)\grad^\perp \psi^I,
\end{align*}
and 
\begin{align*}
& U^E = \begin{pmatrix} 0 \\ \chi^I_e (1-\chi^I)g \end{pmatrix} + \chi_e^I (1+h^I)\grad^\perp \psi^E + \chi_e^I(1-\chi^I)h\grad^\perp \psi^I,
\end{align*}
where recall the definitions of $\psi^E$ \eqref{Intr_psi_E} and $\psi^I$ \eqref{Intr_psi_I}. 
\subsection{Interior Fourier multiplier norms}
We define the Fourier multipliers $A$ and $A_R$ as in \cite{BM13}; one can also use \cite{HI20} in order to treat the $r=1/2$ case (although we use $w$ to denote the $b$ used therein).
Specifically, these multipliers are of the form
\begin{align*}
A(t,k,\eta) & = e^{\lambda(t)\abs{\grad}^r}\left(\frac{e^{\mu \abs{\eta}^{1/2}}}{w(t,k,\eta)} + e^{\mu \abs{k}^{1/2}}\right) \\
A_R(t,k,\eta) & = e^{\lambda(t)\abs{\grad}^r}\left(\frac{e^{\mu \abs{\eta}^{1/2}}}{w_R(t,k,\eta)} + e^{\mu \abs{k}^{1/2}}\right) \\
\tilde{A}(t,k,\eta) &= e^{\lambda(t)\abs{\grad}^r} \frac{e^{\mu \abs{\eta}^{1/2}}}{w(t,k,\eta)}, 
\end{align*}
where we can use the Fourier multipliers $w,w_R$  as defined in either \cite{BM13} (or \cite{HI20} where their upgraded version is called $b$), and we will choose $\dot{\lambda}(t) \approx -\frac{\eps}{\brak{t}^{1+\delta'}}$ for some small $\delta'>0$.

We also introduce a Fourier multiplier to deduce the enhanced dissipation (as introduced in \cite{BGM15III}), which we define as 
\begin{align*}
M(t,k,\eta) & := \begin{cases} \exp \left( \int_0^t \frac{\nu^{1/3}}{1 + \left(\nu^{1/3} \abs{\eta - k \tau} \right)^{1+\kappa} } d\tau \right) & \quad k \neq 0 \\
1 & \quad k = 0, 
\end{cases}
\end{align*}
where $\kappa> 0$ is a parameter which can be chosen arbitrarily small.

Finally, the main Fourier multiplier norm we apply to the profile is the combination of all the main ideas; for some sufficiently small $\delta_I > 0$, 
\begin{align} \label{frak:A}
\mathfrak{A} := \frac{A}{M} \left(e^{\delta_I \nu^{1/3}t} \mathbf{1}_{k \neq 0} + \mathbf{1}_{k = 0}\right). 
\end{align}

\subsection{Some basic properties of $\mathfrak{A}$}

Aside from the more delicate estimates on $A$ from \cite{BM13,HI20}, we also make use of a few rougher estimates which are convenient for treating the commutator terms.
The proofs are omitted as they are straightforward from the content of \cite{BM13,HI20}. 
\begin{lemma} \label{lem:BasicA}
There holds the pointwise bounds for all $j \geq 0$,
\begin{align}
\brak{k,\eta}^j\left(A(t,k,\eta) + A_R(t,k,\eta)\right)   & \lesssim_{j,\lambda} e^{2\lambda(t) \abs{k,\eta}^r} \\
A_R(t,k,\eta) & \lesssim \brak{t} A(t,k,\eta). \label{ineq:ARtrivBd}
\end{align}
Moreover, for  any sufficiently regular functions $\mathfrak{f}, \mathfrak{g}$, the following product rules hold
\begin{align}
\norm{A(\mathfrak{f} \mathfrak{g})}_{L^2} & \lesssim \brak{t} \norm{A \mathfrak{f}}_{L^2}\norm{A \mathfrak{g}}_{L^2} \label{ineq:Aprod} \\ 
\norm{A_R(\mathfrak{f} \mathfrak{g})}_{L^2} & \lesssim \norm{A_R \mathfrak{f}}_{L^2}\norm{A_R \mathfrak{g}}_{L^2}. \label{ineq:ARprod}
\end{align}
\end{lemma}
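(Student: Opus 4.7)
I would prove each assertion as a direct consequence of elementary calculus inequalities together with the structural properties of the Bedrossian--Masmoudi multipliers $w,w_R$ established in \cite{BM13} and \cite{HI20}.

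For the pointwise bounds, recall the standard fact that $w,w_R \geq 1$, so
\begin{align*}
\frac{e^{\mu|\eta|^{1/2}}}{w(t,k,\eta)} + e^{\mu|k|^{1/2}} \leq 2\, e^{\mu|k,\eta|^{1/2}}.
\end{align*}
Since $r \geq 1/2$, the elementary inequalities $\mu x^{1/2} \leq \tfrac{\lambda}{2} x^r + C_{\mu,\lambda}$ and $x^j \leq C_{j,\lambda}\, e^{(\lambda/2)x^r}$ (for $x \geq 0$, verified separately for $x$ large and $x$ bounded) allow us to absorb both the fractional-Gevrey factor and the polynomial $\brak{k,\eta}^j$ into the top-order exponential $e^{2\lambda(t)|k,\eta|^r}$; the factor of $2$ in the exponent is generous and leaves ample room.

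The comparison $A_R \lesssim \brak{t}\, A$ follows from the pointwise inequality $w(t,k,\eta) \lesssim \brak{t}\, w_R(t,k,\eta)$ proved in \cite{BM13}. Indeed, $w_R$ differs from $w$ only inside the resonant windows around $\eta \approx kt$, where the ratio is at most $O(\brak{t})$. Substituting $1/w_R \lesssim \brak{t}/w$ into the definition of $A_R$ and noting that $e^{\mu|k|^{1/2}}$ is common to both multipliers (and $\brak{t}\geq 1$) gives the claim.

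The product inequalities \eqref{ineq:Aprod}--\eqref{ineq:ARprod} are the most substantive part. My plan is a paraproduct decomposition in frequency: split the convolution $\widehat{\mathfrak{f}\mathfrak{g}}(k,\eta)$ into high-low, low-high, and high-high pieces according to the comparison of $|(k',\eta')|$ and $|(k-k',\eta-\eta')|$, and in each piece place the $A$-multiplier on whichever argument carries the higher frequency. Two structural facts, both developed in \cite{BM13}, make the scheme close: (a) the Gevrey subadditivity $\lambda|k,\eta|^r \leq \lambda|k',\eta'|^r + \lambda|k-k',\eta-\eta'|^r$, valid precisely because $r \leq 1$; and (b) the Bedrossian--Masmoudi triangle-type inequality
\begin{align*}
\frac{1}{w(t,k,\eta)} \lesssim \brak{t}\Bigl(\frac{1}{w(t,k',\eta')} + \frac{1}{w(t,k-k',\eta-\eta')}\Bigr),
\end{align*}
together with its sharper $w_R$-counterpart without the $\brak{t}$ factor (which is precisely the improvement built into the reduced weight). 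Applying the pointwise bound from the first part to dominate the low-frequency multiplier by $e^{2\lambda|\cdot|^r}$ and then invoking Cauchy--Schwarz or Young's inequality reduces the estimate to a standard computation; the $\brak{t}$ loss in \eqref{ineq:Aprod} is intrinsic to (b) and disappears for $A_R$ in \eqref{ineq:ARprod}. The main (already-resolved) obstacle is the verification of (b) for both weights; every step downstream is routine bookkeeping.
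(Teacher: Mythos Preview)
Your proposal is correct and is precisely the argument the paper has in mind: the paper itself omits the proof entirely, stating only that it is ``straightforward from the content of \cite{BM13,HI20}.'' Your sketch accurately identifies the relevant ingredients from those references (the lower bounds $w,w_R\geq 1$, the ratio $w/w_R\lesssim\brak{t}$, Gevrey subadditivity, and the BM13 triangle-type inequalities for $w$ and $w_R$ with and without the $\brak{t}$ loss) and assembles them in the standard paraproduct fashion.
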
 
The use of the multiplier $M$ to obtain the exponential decay at $k \neq 0$ slightly departs from the arguments in \cite{BMV14}, and so we will need to provide some additional estimates.
As in \cite{BGM15III}, to obtain the exponential decay estimates using $M$, we apply the following straightforward estimate.
\begin{lemma}
There holds the following uniformly in $t,\eta,\nu$ and $k \neq 0$ 
\begin{align*}
\nu^{1/3} \lesssim_\kappa \frac{\partial_t M(t,k,\eta)}{M(t,k,\eta)} + \nu \abs{k,\eta-kt}^2.  
\end{align*}
\end{lemma}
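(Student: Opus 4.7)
The plan is to prove this by a direct two-case split based on the size of $\nu^{1/3}|\eta - kt|$. First I would compute the logarithmic derivative explicitly from the definition of $M$. Since, for $k \neq 0$,
\begin{equation*}
M(t,k,\eta) = \exp\Bigl(\int_0^t \frac{\nu^{1/3}}{1 + (\nu^{1/3}|\eta - k\tau|)^{1+\kappa}}\,d\tau\Bigr),
\end{equation*}
the fundamental theorem of calculus gives
\begin{equation*}
\frac{\partial_t M(t,k,\eta)}{M(t,k,\eta)} = \frac{\nu^{1/3}}{1 + (\nu^{1/3}|\eta - kt|)^{1+\kappa}}.
\end{equation*}
Writing $\xi = \eta - kt$, the inequality reduces to showing
\begin{equation*}
\nu^{1/3} \lesssim_\kappa \frac{\nu^{1/3}}{1 + (\nu^{1/3}|\xi|)^{1+\kappa}} + \nu(k^2 + \xi^2).
\end{equation*}

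The argument then splits on whether $\xi$ lives in the ``resonant'' zone or the ``dissipative'' zone. In the first case, when $\nu^{1/3}|\xi| \le 1$, the denominator of the $M$-quotient is bounded by $2$, so the first term on the right is already at least $\nu^{1/3}/2$ and dominates the left-hand side. In the complementary case, $\nu^{1/3}|\xi| > 1$ gives $\xi^2 > \nu^{-2/3}$, hence $\nu \xi^2 > \nu^{1/3}$, and the second term on the right already absorbs the left-hand side (with no need to invoke the $k^2$ contribution). Combining the two cases yields the desired bound with an implicit constant depending only on $\kappa$ through the constants used to estimate the denominator.

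There is really no obstacle here: the lemma is a clean pointwise reformulation of the standard mechanism behind the enhanced-dissipation multiplier introduced in \cite{BGM15III}, namely that $M$ is designed so that its logarithmic derivative captures precisely the times when the frequency $(\eta-kt)$ is close to critical, while outside that window the physical dissipation $\nu|\xi|^2$ is already of size $\nu^{1/3}$ or larger. The only subtlety worth flagging is that the constants implicit in $\lesssim_\kappa$ are independent of $\kappa$ in the regime $\nu^{1/3}|\xi| \le 1$; the $\kappa$-dependence would only appear if one tried to optimize across the transition. No dependence on $k$ is needed beyond $k \neq 0$ being well-defined so that $\partial_t M/M$ takes the stated form.
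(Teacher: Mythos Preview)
Your proof is correct and is exactly the standard argument for this lemma; the paper does not even write out a proof, simply citing it as a ``straightforward estimate'' from \cite{BGM15III}. One small remark: your implicit constant is actually $2$ in both cases and hence independent of $\kappa$, so the $\lesssim_\kappa$ in the statement is harmless but not strictly needed.
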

As far as the rest of the arguments of \cite{BMV14} are concerned, since $M \approx 1$ (a so-called ``ghost multiplier''), the presence of $M$ will mainly only affect the `transport' commutator estimate in [Section 5, \cite{BM13}]; see  Section \ref{sec:NIVE} below. 
We will need the following lemma. 
\begin{lemma} \label{def:ICommM}
Suppose that $\abs{k,\eta} \approx \abs{\ell,\xi}$. Then, 
\begin{align*}
\abs{\frac{M(t,k,\eta)}{M(t,\ell,\xi)} - 1} \lesssim  \brak{k-\ell}\min\left(\frac{t}{\abs{\eta}}, \frac{1}{\abs{k}} \right). 
\end{align*}
\end{lemma}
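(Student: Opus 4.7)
The plan is to prove this by reducing to a logarithmic estimate and then exploiting the explicit integral formula for $\log M$.

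First, I would show that $M$ is uniformly bounded above (and below by $1$) for $|k|\geq 1$. Performing the change of variable $u=\eta-k\tau$ in the definition of $M$ yields
\[
\log M(t,k,\eta) \;=\; \frac{1}{|k|}\int_{\min(\eta,\eta-kt)}^{\max(\eta,\eta-kt)} \frac{\nu^{1/3}\,du}{1+|\nu^{1/3}u|^{1+\kappa}} \;\leq\; \frac{C_\kappa}{|k|} \;\leq\; C_\kappa,
\]
since the integrand belongs to $L^1(\mathbb{R})$ with norm depending only on $\kappa$. By the mean value theorem applied to $x\mapsto e^x$ on a compact interval, this boundedness reduces the claim to proving
\[
\bigl|\log M(t,k,\eta)-\log M(t,\ell,\xi)\bigr| \;\lesssim\; \langle k-\ell\rangle\,\min\!\Bigl(\tfrac{t}{|\eta|},\tfrac{1}{|k|}\Bigr).
\]

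Next, I would establish the pointwise bound $|\log M(t,k,\eta)|\lesssim\min(1/|k|,\,t/|\eta|)$. The bound $C/|k|$ is immediate from the substitution above. For the bound $Ct/|\eta|$, I would split on the location of the ``critical time'' $\eta/k$. When $|\eta|\geq 2|k|t$, the critical time lies outside $[0,t]$ (with margin), so $|\eta-k\tau|\geq|\eta|/2$ for all $\tau\in[0,t]$, giving
\[
F(k,\eta,\tau) \;\leq\; \frac{\nu^{1/3}}{1+(\nu^{1/3}|\eta|/2)^{1+\kappa}} \;\leq\; \frac{C}{|\eta|},
\]
after checking the elementary inequality $y\leq 1+(y/2)^{1+\kappa}$ with $y=\nu^{1/3}|\eta|$. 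Integrating in $\tau$ yields $\log M\leq Ct/|\eta|$. When $|\eta|<2|k|t$, instead $1/|k|\leq 2t/|\eta|$, so the $1/|k|$ bound already dominates. Combining the two regimes, we obtain the claimed unified bound.

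Finally, I would apply the triangle inequality and the hypothesis $|(k,\eta)|\approx|(\ell,\xi)|$. Writing $\min(1/|k|, t/|\eta|)=1/\max(|k|, |\eta|/t)$, the goal reduces to
\[
\max(|\ell|,|\xi|/t) \;\lesssim\; \langle k-\ell\rangle\,\max(|k|,|\eta|/t).
\]
For the $|\ell|$ term, one uses $|\ell|\leq|k|+|k-\ell|\leq(1+|k-\ell|)|k|\leq\langle k-\ell\rangle|k|$, which requires $|k|\geq 1$. For the $|\xi|/t$ term, the comparability of total norms yields $|\xi|\leq|(\ell,\xi)|\lesssim|(k,\eta)|\leq|k|+|\eta|$, so $|\xi|/t\lesssim|k|/t+|\eta|/t\lesssim\max(|k|,|\eta|/t)$ (the factor $1/t$ on $|k|$ being harmless since for $t\geq 1$ it is $\leq 1$, and for $t<1$ the entire estimate can be recovered from the trivial bound $|\log M|\leq t\nu^{1/3}$ applied to both sides).

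The main obstacle will be the final case analysis: handling the asymmetry between the two minima when $|\ell|$ is much smaller than $|k|$ while $|\xi|$ compensates to keep the norms comparable. This is resolved by systematically exploiting $|k|\leq\langle k-\ell\rangle|\ell|$ together with the structural identity $|(k,\eta)|\approx|(\ell,\xi)|$, which forces any large discrepancy in one coordinate to be absorbed by $\langle k-\ell\rangle$ through integer-valuedness of $k,\ell$.
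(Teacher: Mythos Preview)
Your proposal is correct and follows essentially the same approach as the paper: bound each $\log M$ separately by $\min(1/|k|,\,t/|\eta|)$ via case analysis on whether $t$ has passed the critical time $\eta/k$, then combine via the triangle inequality and the comparability hypothesis. The paper organizes the case split slightly differently (splitting on $t$ versus $\tfrac12\min(\eta/k,\xi/\ell)$ first, then bounding both integrals in each case), but the content is the same.

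One algebraic slip: in your final step you wrote the reduction as $\max(|\ell|,|\xi|/t)\lesssim\langle k-\ell\rangle\max(|k|,|\eta|/t)$, but after taking reciprocals the correct direction is $\max(|k|,|\eta|/t)\lesssim\langle k-\ell\rangle\max(|\ell|,|\xi|/t)$. Your verification (e.g.\ $|\ell|\le(1+|k-\ell|)|k|$) proves the stated-but-wrong direction; swapping the roles of $(k,\eta)$ and $(\ell,\xi)$ throughout that paragraph gives the correct one with the same argument, so this is a bookkeeping slip rather than a gap.
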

\begin{proof}
We start by observing, 
\begin{align*}
\abs{\frac{M(t,k,\eta)}{M(t,\ell,\xi)} - 1} \lesssim  \abs{\int_0^t \frac{\nu^{1/3}}{1 + \left(\nu^{1/3} \abs{\eta - k \tau} \right)^{1+\kappa} } d\tau - \int_0^t \frac{\nu^{1/3}}{1 + \left(\nu^{1/3} \abs{\xi - \ell \tau} \right)^{1+\kappa} } d\tau }.  
\end{align*}
For $t < \frac{1}{2}\min( \eta/k , \xi/\ell )$ we can brute force estimate
\begin{align*}
\int_0^t \frac{\nu^{1/3}}{1 + \left(\nu^{1/3} \abs{\eta - k \tau} \right)^{1+\kappa} } d\tau \lesssim \frac{\nu^{1/3} t}{1 + \nu^{1/3}\max(\abs{kt},\abs{\eta})},
\end{align*}
and similarly for $\ell,\xi$.
On the other hand, suppose $t \geq \frac{1}{2}\min( \eta/k , \xi/\ell )$.
Without loss of generality, assume that $\eta/k < \xi/\ell$.
Then, by straightforward integration, 
\begin{align*}
\int_0^t \frac{\nu^{1/3}}{1 + \left(\nu^{1/3} \abs{\eta - k \tau} \right)^{1+\kappa} } d\tau \lesssim \frac{1}{\abs{k}} \lesssim \min \left( \frac{1}{\abs{k}}, \frac{t}{\abs{\eta}} \right), 
\end{align*}
and 
\begin{align*}
\int_0^t \frac{\nu^{1/3}}{1 + \left(\nu^{1/3} \abs{\xi - \ell \tau} \right)^{1+\kappa} } d\tau \lesssim \frac{1}{\abs{\ell}} \lesssim \frac{\brak{k-\ell}}{\brak{k}} \lesssim \brak{k-\ell} \min \left( \frac{1}{\abs{k}}, \frac{t}{\abs{\eta}} \right).   
\end{align*}
This proves the lemma. 
\end{proof} 

\subsection{Exterior-to-interior estimates}

In order to relate the exterior and interior norms, we first record the following lemma which relates the Fourier-side Gevrey regularity norms to the physical-side infinite series norms we use in the exterior. 
\begin{lemma} \label{lem:FourierToPhysical}
Let $r \in (0,1)$ be fixed.
There exists a universal constant $\mathfrak{C}>1$ such that for any function $g(\cdot)\in C_c^\infty(\mathbb{T}\times(-\frac{99}{100},\frac{99}{100}))$  and $\lambda > 0$ there holds
\begin{align}
\| e^{\lambda \abs{\grad}^r} g\|_{L^2}^2
\lesssim \sum_{m+n=0}^\infty \frac{\mathfrak {C}^{m+n} \lambda^{(m+n)/r}}{((m+n)!)^{2/r}}\|\partial_z^m \partial_v^n g\|_{L^2}^2. 
\end{align}
\end{lemma}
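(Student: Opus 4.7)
The plan is to reduce the inequality to a pointwise Fourier-side bound via Plancherel, and then establish the bound by combining a combinatorial identity for binomial sums with a Stirling asymptotic for Gevrey series. Since $g \in C_c^\infty(\mathbb{T}\times(-\tfrac{99}{100},\tfrac{99}{100}))$, the function extends by zero to $\mathbb{T}\times\mathbb{R}$ and admits a hybrid Fourier representation $\hat{g}(k,\eta)$ with $k\in\mathbb{Z}$, $\eta\in\mathbb{R}$. Plancherel turns the left-hand side into $\sum_{k}\int|\hat g(k,\eta)|^2 e^{2\lambda|(k,\eta)|^r}\,d\eta$ and the right-hand side into
$$\sum_{k}\int |\hat g(k,\eta)|^2 \sum_{N\geq 0}\frac{\mathfrak{C}^N\lambda^{N/r}}{(N!)^{2/r}}\sum_{m+n=N}k^{2m}\eta^{2n}\,d\eta,$$
so the problem reduces to the scalar pointwise inequality
$$e^{2\lambda|\xi|^r} \leq C\sum_{N\geq 0}\frac{\mathfrak{C}^N\lambda^{N/r}}{(N!)^{2/r}}\sum_{m+n=N}\xi_1^{2m}\xi_2^{2n}, \qquad \xi=(\xi_1,\xi_2)\in\mathbb{R}^2.$$

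Next I would peel off the inner binomial sum using $(\xi_1^2+\xi_2^2)^N = \sum_{m+n=N}\binom{N}{m}\xi_1^{2m}\xi_2^{2n}$ together with the trivial bound $\binom{N}{m}\leq 2^N$, giving $\sum_{m+n=N}\xi_1^{2m}\xi_2^{2n}\geq 2^{-N}|\xi|^{2N}$. This reduces the desired claim to the one-variable bound
$$e^{2\lambda|\xi|^r} \lesssim \sum_{N\geq 0}\frac{(\mathfrak{C}/2)^N\lambda^{N/r}|\xi|^{2N}}{(N!)^{2/r}}.$$
The right-hand side is a Gevrey-$(2/r)$ series in the variable $y := (\mathfrak{C}/2)\lambda^{1/r}|\xi|^2$, and a classical optimization (Stirling locates the maximal term near $N \sim y^{r/2}$) yields the lower bound $\sum_{N\geq 0}y^N/(N!)^{2/r} \gtrsim e^{c_r y^{r/2}}$, i.e.\ an exponential $\gtrsim \exp(c_r \mathfrak{C}^{r/2}\lambda^{1/2}|\xi|^r)$. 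For bounded $|\xi|$, the $N=0$ term already dominates $e^{2\lambda|\xi|^r}$, while in the regime of large $|\xi|$ the Stirling lower bound suffices once $\mathfrak{C}$ has been chosen appropriately.

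The main obstacle is the careful bookkeeping of the $\lambda$-dependence across the two sides: the Stirling maximum produces $\lambda^{1/2}$ in the exponent, whereas the left-hand side carries $\lambda$. These are reconciled by absorbing the discrepancy into the constant $\mathfrak{C}$ (permitted since $r$ is fixed and $\lambda$ enters only through a rescaling of the effective Gevrey variable $y$), and one verifies that a single choice of universal $\mathfrak{C}$ can be made to work uniformly. Once the pointwise scalar inequality is established, integration against $|\hat g(k,\eta)|^2$, summation over $k\in\mathbb{Z}$, and application of Plancherel once more in reverse returns the physical-space estimate and completes the proof.
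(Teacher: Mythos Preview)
Your reduction to a pointwise Fourier-side inequality is correct and is also where the paper begins; from there the two arguments diverge. The paper does not invoke a Stirling-type lower bound on the Gevrey series. Instead, after first rescaling to $\lambda=1$, it uses the concavity bound $|(k,\eta)|^r\le|k|^r+|\eta|^r$, expands $e^{|k|^r}e^{|\eta|^r}$ as a double power series, and applies H\"older's inequality twice: once in the Fourier integral to convert the fractional moments $|k|^{rm}|\eta|^{rn}$ into integer moments via $\||k|^{rm}|\eta|^{rn}\hat g\|_{L^2}^2\le\||k|^m|\eta|^n\hat g\|_{L^2}^{2r}\|\hat g\|_{L^2}^{2(1-r)}$, and once more in the $(m,n)$ summation. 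Your Stirling route is perfectly reasonable for this kind of Gevrey comparison and is arguably more direct; the paper's route has the minor advantage of avoiding asymptotic analysis entirely.

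There is, however, a genuine gap in your handling of the $\lambda$-dependence. You correctly compute that the Stirling lower bound on the right-hand side produces an exponent $\sim(\mathfrak{C}/2)^{r/2}\lambda^{1/2}|\xi|^r$, whereas the left-hand side carries $2\lambda|\xi|^r$, and you note the mismatch. But your claim that it can be ``absorbed into $\mathfrak{C}$'' fails: matching the two exponents forces $(\mathfrak{C}/2)^{r/2}\gtrsim r\lambda^{1/2}$, so $\mathfrak{C}$ would have to grow with $\lambda$ and could not be universal. The correct fix is the one the paper uses in its very first sentence: rescale the Fourier variable $\xi\mapsto\lambda^{1/r}\xi$ at the outset to reduce the pointwise inequality to the case $\lambda=1$, at which point your Stirling argument determines $\mathfrak{C}$ from $r$ alone. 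One caveat: with the stated exponent $\lambda^{(m+n)/r}$ on the right, this rescaling leaves a residual factor $\lambda^{-(m+n)/r}$, so the reduction is clean only for $\lambda\le 1$; this is harmless in the paper's applications (where $\lambda$ is a small Gevrey radius), but you should state the restriction explicitly rather than asserting uniformity for all $\lambda>0$.
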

\begin{proof}
By rescaling $k$ and $\eta$, we can reduce to the case $\lambda=1$ without loss of generality. 

First of all, we observe the relation $(x+y)^{r}\leq x^{r}+y^{r},\, x, y\geq 0,\, r\in(0,1)$. This is a consequence of the concavity of the function $(\cdot)^{r}$. 
We estimate the left hand side as follows,
\begin{align*}
\|\exp\{(|k|+|\eta|&)^{r}\}\widehat g( k,\eta)\|_{L^2_\eta}^2\leq \|\exp\{|k|^r+|\eta|^r\}\widehat g(k,\eta)\|_{L_\eta^2}^2\\
=&\lf\|\sum_{m=0}^\infty \sum_{n=0}^\infty \frac{|k|^{rm}}{m!}\frac{|\eta|^{rn}}{n!}\widehat{g}(k,\eta)\rg\|_{L_\eta^2}^2\\
\leq&\lf(\sum_{m=0}^\infty \sum_{n=0}^\infty \lf(\frac{2}{ 3 }\rg)^{m+n}\rg) \lf(\sum_{m=0}^\infty \sum_{n=0}^\infty\lf(\frac{3}{2}\rg)^{m+n}\frac{1}{(m!)^2(n!)^2} \lf\||k|^{rm}  |\eta|^{rn} \widehat{g}(k,\eta)\rg\|_{L_\eta^2}^2\rg)\\
\lesssim&  \lf(\sum_{m=0}^\infty \sum_{n=0}^\infty\lf(\frac{3}{2}\rg)^{m+n}\frac{1}{(m!)^2(n!)^2} \lf\||k|^{m}  |\eta|^{n} \widehat{g}(k,\eta)\rg\|_{L_\eta^2}^{2r}\|\widehat{g}(k,\eta)\|_{L_\eta^2}^{2-2r}\rg), 
\end{align*}
where the last line followed by H\"older's inequality. Note that there has been no loss of $(m,n)$ dependent constants.
By H\"older's inequality again we have 
\begin{align*}
\|\exp\{(|k|^2+|\eta|^2&)^{r/2}\}\widehat g( k,\eta)\|_{L_\eta^2}^2\\
\lesssim &\lf(\sum_{m=0}^\infty \sum_{n=0}^\infty \frac{ 3^{(m+n)/r}}{(m!)^{2/r}(n!)^{2/r}} \lf\||k|^{m}  |\eta|^{n} \widehat{g}(k,\eta)\rg\|_2^{2}\rg)^{r}\lf(\sum_{m=0}^\infty\sum_{n=0}^\infty \frac{1}{2^{(m+n)/(1-r)}}\|\widehat{g}(k,\eta)\|_{L_\eta^2}^{2}\rg)^{{1-r}}\\\lesssim &\sum_{m=0}^\infty \sum_{n=0}^\infty \frac{ 3^{(m+n)/r}}{(m+n)!^{2/r}}\underbrace{\frac{(m+n)!^{2/r}}{(m!)^{2/r}(n!)^{2/r}}}_{=\binom{m+n}{n}^{2/r}} \lf\||k|^{m}  |\eta|^{n} \widehat{g}(k,\eta)\rg\|_{L_\eta^2}^{2}.
\end{align*}
Now we note the identity $\sum_{\ell=0}^M \binom{M}{\ell}=2^{M}$, and so $\binom{m+n}{n}\leq 2^{m+n}$. As a result,
\begin{align*}
\|\exp\{(|k|^2+|\eta|^2&)^{r/2}\}\widehat g( k,\eta)\|_{L_\eta^2}^2\lesssim \sum_{m=0}^\infty\sum_{n=0}^\infty \frac{12^{(m+n)/r}}{(m+n)!^{2/r}} \lf\||k|^{m}  |\eta|^{n} \widehat{g}(k,\eta)\rg\|_{L_\eta^2}^{2}, 
\end{align*}
which completes he proof. 
\end{proof}

The next lemma is the fundamental one that relates the exterior estimates to the interior estimates, which shows that in the gluing region (where $\partial_v \chi^I$ is non-vanishing), the exterior norm is much stronger than the interior norm even though the radius of Gevrey regularity of exterior region is smaller for large times due to the large spatial weight $e^W$.
\begin{lemma} \label{lem:ExtToInt}
  Consider $\mathfrak {r}\in( \frac{4}{5},1), \, t\in[0, \nu^{-1/3-\zeta}],\, \zeta \leq \frac{1}{78}$ and $|y|\in[\frac{29}{40},\frac{7}{8}]$. Then, for any $\mathfrak{g}$ sufficiently regular there holds 
\begin{align}
e^{2\delta_I \nu^{1/3} t} \sum_{m+n=0}^\infty \frac{\wt \lambda ^{2(m+n)/\wt{ \mathfrak{r}}}  }{ ((m+n)!)^{2/\wt{ \mathfrak{r}}}}\lf\|\mathbbm{1}_{|y|\in \lf[\frac{29}{40}, \frac{7}{8}\rg]}|k|^m \Gamma_k^n \mathfrak{g}_k \rg\|_{L^2}^2 & \notag \\ & \hspace{-5cm} \lesssim e^{-\nu^{-1/8}} \sum_{m+n=0}^\infty\frac{\lambda ^{2(m+n)/\mathfrak{r}} \varphi^{2n+2}}{((m+n)!)^{2/\mathfrak{r}}}\|e^{W/10}\chi_{m+n}|k|^m q^n \Gamma_k^n \mathfrak{g}_k \|_{L^2}^2,\label{glu_rl}\\
& \hspace{-6cm}  \wt{\mathfrak{r}}=\frac{34\mathfrak{r}}{19(2 +\mathfrak{r})},\quad\wt\lambda(t)=\mathcal{C}(\mathfrak{r},\wt {\mathfrak{r}},K) \lambda(t)^{\wt{\mathfrak{r}}/\mathfrak{r}}.\n
\end{align} 
Here $K$ is the parameter in the definition of $W$ \eqref{defndW} and the implicit constant depends on $K$. 
Moreover, if $\mathfrak{r}\geq 4/5$, we have that  $\wt{\mathfrak{r}}>1/2$. 
\end{lemma}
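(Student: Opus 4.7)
The idea is to exploit the enormous size of $W(t,y)$ in the gluing annulus to compensate for the loss of Gevrey regularity (from $1/\mathfrak{r}$ to $1/\tilde{\mathfrak{r}}$) and the absence of the $\varphi^{2n+2}$ factor on the left. The proof reduces to a pointwise Gevrey-weight comparison together with a Stirling computation.

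First I would restrict to $|y|\in[\tfrac{29}{40},\tfrac{7}{8}]$ and observe that on this annulus $q(y)\equiv 1$ and, by construction of the sequences $\{x_n,y_n\}$ (both bounded by $1/2$ for every $n$), $\chi_n(y)\equiv 1$ for all $n\geq 0$. Provided $\eps$ is taken small enough that $L\eps\pi/2<1/10$, we also have
\[
W(t,y)\;\geq\;W_\star(t):=\frac{c_0}{K\nu(1+t)}
\]
for a universal $c_0>0$. Consequently the RHS of \eqref{glu_rl} dominates $e^{W_\star/5}$ times the same double series but without the weight and without the cutoff. Matching terms of the two series by $N=m+n$ and using $\varphi^{2n+2}\geq\varphi^{2(N+1)}$, the lemma reduces to the uniform-in-$N$ pointwise inequality
\[
\frac{\tilde{\lambda}^{2N/\tilde{\mathfrak{r}}}}{(N!)^{2/\tilde{\mathfrak{r}}}}\,e^{2\delta_I\nu^{1/3}t}\;\leq\;e^{-\nu^{-1/8}}\,\frac{\lambda^{2N/\mathfrak{r}}}{(N!)^{2/\mathfrak{r}}}\,\varphi^{2(N+1)}\,e^{W_\star/5}.
\]

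With the choice $\tilde{\lambda}=\mathcal{C}\lambda^{\tilde{\mathfrak{r}}/\mathfrak{r}}$, set $\alpha:=2/\tilde{\mathfrak{r}}-2/\mathfrak{r}$ and $A:=\mathcal{C}^{2/\tilde{\mathfrak{r}}}\varphi^{-2}$. The inequality becomes $A^N(N!)^{-\alpha}\leq C_\alpha\,\varphi^{2}\,e^{W_\star/5-\nu^{-1/8}-2\delta_I\nu^{1/3}t}$. A Stirling computation yields $\sup_{N\geq 0}A^N(N!)^{-\alpha}\leq C_\alpha\exp(\alpha A^{1/\alpha})$. A direct calculation using $\tilde{\mathfrak{r}}=34\mathfrak{r}/(19(2+\mathfrak{r}))$ produces the key algebraic identity
\[
\alpha\;=\;\frac{4}{17\mathfrak{r}}+\frac{19}{17}\;\geq\;\frac{23}{17}\qquad\text{for }\mathfrak{r}\in(4/5,1],
\]
and the claim $\tilde{\mathfrak{r}}>1/2$ follows from monotonicity together with the endpoint value $\tilde{\mathfrak{r}}|_{\mathfrak{r}=4/5}=68/133>1/2$.

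On the time interval $t\leq\nu^{-1/3-\zeta}$ one has $\varphi^{-2}\leq 2\nu^{-2/3-2\zeta}$, giving $\alpha A^{1/\alpha}\lesssim\nu^{-(2/3+2\zeta)/\alpha}$, whereas $W_\star/5\geq c_0\nu^{-2/3+\zeta}/(10K)$. The pointwise estimate therefore closes provided $(2/3+2\zeta)/\alpha<2/3-\zeta$, i.e.\ $\alpha>2(1/3+\zeta)/(2/3-\zeta)$. For $\zeta\leq 1/78$ this threshold equals $18/17$, strictly below $23/17$; the positive margin then absorbs the remaining terms $\nu^{-1/8}$, $2\delta_I\nu^{-\zeta}$, and $\log C_\alpha$ for $\nu$ sufficiently small. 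Summing the pointwise bound over $N$ and over $k\in\mathbb{Z}$ then yields the stated estimate. The main obstacle is precisely this fine numerical balance: the Gevrey-loss exponent $\alpha$ must beat the threshold $2(1/3+\zeta)/(2/3-\zeta)$, and the explicit formula for $\tilde{\mathfrak{r}}$ in the statement is calibrated to make this work uniformly for all $\mathfrak{r}>4/5$ and $\zeta\leq 1/78$ while still preserving $\tilde{\mathfrak{r}}>1/2$; everything else — the reductions $q\equiv 1$, $\chi_n\equiv 1$ and the Stirling maximum — is routine once this alignment is fixed.
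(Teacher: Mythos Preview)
Your argument is correct and follows essentially the same strategy as the paper: on the annulus $|y|\in[29/40,7/8]$ one has $q\equiv 1$, $\chi_n\equiv 1$, and a lower bound $W\gtrsim (K\nu(1+t))^{-1}$, and the whole lemma reduces to a pointwise comparison of Gevrey weights in $N=m+n$. The only real difference is in how the comparison is executed. The paper expands $e^{W/5}$ as a Taylor series, keeps the single term of order $N\approx\frac{2/3+2\zeta}{2/3-\zeta}(m+n)$, and then estimates the resulting $\lfloor\frac{18}{17}(m+n)\rfloor!$ via log-convexity of the Gamma function, arriving at the extra factorial power $((m+n)!)^{19/17}$; this is what produces the formula $2/\tilde{\mathfrak r}=\frac{19}{17}(2/\mathfrak r+1)$. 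You instead take the Stirling-type supremum $\sup_N A^N(N!)^{-\alpha}\leq C_\alpha e^{\alpha A^{1/\alpha}}$ and close by checking the exponent inequality $(2/3+2\zeta)/\alpha<2/3-\zeta$. Your computed value $\alpha=4/(17\mathfrak r)+19/17\geq 23/17$ is exactly the paper's $2/\tilde{\mathfrak r}-2/\mathfrak r$, and your threshold $18/17$ at $\zeta=1/78$ matches the paper's choice of $N$. So the two arguments are equivalent; yours is slightly more compact, while the paper's explicit factorial bookkeeping makes the origin of the precise constant $\tilde{\mathfrak r}=\frac{34\mathfrak r}{19(2+\mathfrak r)}$ more transparent.
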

\begin{proof}
Using that $|y|\in[\frac{29}{40},\frac{7}{8}]$, on the time interval  $t\in[0,\nu^{-1/3-\zeta}]$, we have the following estimate involving the $e^{W}$ weight \eqref{defndW} and $\varphi$ \eqref{varphi}, 
\begin{align*}
\exp&\lf\{-2\delta_I\nu^{1/3}t-\nu^{-1/8}\rg\}\exp\lf\{2\frac{W}{10}\rg\}\varphi^{2n+2}\geq  \exp\lf\{-2\delta_I\nu^{-\zeta}-\nu^{-1/8}+\frac{1}{5K\nu(1+t)}\rg\}\varphi^{2(m+n)+2}\\
\geq& \frac{1}{C}\exp\lf\{\frac{1}{8K\nu (1+t)}\rg\}(1+t^2)^{-m-n-1}\\
\geq & \frac{1}{C} \exp\lf\{\frac{1}{100K\nu(1+t)}\rg\}\ \frac{1}{N!}\frac{1}{(9K)^N\nu^{N} (1+t)^{N}}\  \frac{1}{(1+t^2)^{ m+n }}\ \frac{1}{1+t^2}\\
\geq & \frac{1}{C}\exp\lf\{\frac{1}{200K\nu^{2/3-\zeta}}\rg\}\frac{1}{N!}\frac{1}{\mathcal{G}^N2^{m+n}}\nu^{-\lf(\frac{2}{3}-\zeta \rg)N+\lf(\frac{2}{3}+2\zeta \rg)(m+n) },\qquad\qquad |y|\in\lf[\frac{29}{40},\frac{7}{8}\rg],
\end{align*}
where $\mathcal{G}>1$ is a constant depending on the parameter $K$ in the weight $W$.
By setting $N=\lf\lfloor\frac{2/3+2\zeta}{2/3-\zeta}(m+n)\rg\rfloor$, we have
\begin{align*}
e^{-2\delta_I\nu^{1/3}t-\nu^{-1/8}} e^{W/5}\varphi^{2n+2}
\geq\frac{1}{C\mathcal{G}^{\lf\lfloor\frac{2/3+2\zeta}{2/3-\zeta}(m+n)\rg\rfloor}2^{m+n}\lf\lfloor \frac{2/3+2\zeta}{2/3-\zeta}(m+n)\rg\rfloor!}. 
\end{align*}
By invoking the facts that $\chi_{m+n}\equiv 1,\ \forall \ m+n\in \mathbb{N},  \ |y|\in[29/40,7/8]$ (see the definition in \eqref{chi:prop:1}),
and that the co-normal weight $q\equiv 1,\ \forall |y|\in[29/40,7/8]$, we obtain the following
\begin{align}e^{-2\delta_I\nu^{1/3}t-\nu^{-1/8}}&\sum_{m+n=0}^\infty\frac{\lambda^{2(m+n)/\mathfrak{r}}\varphi^{2n+2}}{((m+n)!)^{2/\mathfrak{r}}}\|e^{W/10}\chi_{m+n}|k|^m q^n \Gamma_k^n \omega_k \|_{L^2}^2\n \\
\gtrsim& \sum_{ m+n=0}^\infty \frac{\lambda^{2(m+n)/\mathfrak{r}} }{\mathcal{G}^{\lf\lfloor\frac{2/3+2\zeta}{2/3-\zeta}(m+n)\rg\rfloor}2^{m+n}((m+n)!)^{2/\mathfrak{r}}\lf\lfloor{\frac{2/3+2\zeta}{2/3-\zeta}(m+n)}\rg\rfloor!}\|\chi_{m+n}|k|^m \Gamma_k^n\omega_k\|_{L^2}^2\n \\
\gtrsim &\sum_{m+n=0}^\infty \frac{\lambda^{2(m+n)/\mathfrak{r}} }{(\sqrt{2}\mathcal{G})^{2(m+n)}\lf(\lf\lfloor\frac{2/3+2\zeta}{2/3-\zeta}(m+n)\rg\rfloor!\rg)^{2/\mathfrak{r}+1}} \|\mathbbm{1}_{|y|\in [29/40,7/8]} |k|^m \Gamma_k^n\omega_k\|_{L^2}^2. \label{Gmm_fnc}
\end{align}

To estimate the right hand side of \eqref{Gmm_fnc}, we use the Gamma function $\Gamma(n)=(n-1)!,\quad n\in \mathbb{N}\backslash \{0\}$ and the log convexity of the Gamma function:
\begin{align*}
\Gamma(\theta x_1+(1-\theta)x_2)\leq \Gamma(x_1)^{\theta}\Gamma(x_2)^{1-\theta},\quad \theta\in[0,1], x_1,x_2>0.
\end{align*}
Since $\zeta \leq \frac{1}{78} $,  so $\frac{2/3+2\zeta}{2/3-\zeta}\leq \frac{18}{17}$, and
\begin{align*}
\left\lfloor \frac{2/3+2\zeta}{2/3-\zeta}(m+n)\right\rfloor!=&\Gamma\lf(\lf\lfloor\frac{18}{17}(m+n)\rg\rfloor+1\rg)\\
\leq& \Gamma(m+n+1)^{\theta}\Gamma(2(m+n)+1)^{1-\theta},\quad \theta = \frac{2(m+n)-\lfloor\frac{18}{17}(m+n)\rfloor}{m+n}\geq\frac{16}{17}.
\end{align*}
Now we estimate the $\Gamma(2(m+n)+1)=(2(m+n))!$:
\begin{align*}
(2(m+n))!=\prod_{\ell_1=1}^{(m+n)}(2\ell_1)\prod_{\ell_2=0}^{(m+n-1)}(2\ell_2+1)\leq 2^{2(m+n)}((m+n)!)^2.
\end{align*}
Hence,
\begin{align*}
\left\lfloor \frac{2/3+2\zeta}{2/3-\zeta}(m+n)\right\rfloor!\leq& ((m+n)!)^{\theta}[(2(m+n))!]^{1-\theta} 
\leq  (m+n)![(2(m+n))!]^{\frac{1}{17}}\\
\leq & 2^{(m+n)\frac{ 2}{17} }((m+n)!)^{\frac{19}{17} }.
\end{align*}
Therefore, the right hand side of  \eqref{Gmm_fnc} has the following lower bound,
\begin{align*}
  \sum_{m+ n=0}^\infty &\frac{\lambda ^{2(m+n)/\mathfrak{r}} }{(\sqrt{2}\mathcal{G})^{2(m+n)}\lf(\lf\lfloor\frac{2/3+2\zeta}{2/3-\zeta}(m+n)\rg\rfloor!\rg)^{2/\mathfrak{r}+1}}
  \|\mathbbm{1}_{|y|\in [29/40,7/8]}|k|^m \Gamma_k^n\omega_k\|_{L^2}^2\\
\gtrsim&\sum_{m+ n=0}^\infty \frac{\lambda^{2(m+n)/\mathfrak{r}} }{(\sqrt{2}\mathcal{G})^{2(m+n)} 2^{\frac{2}{17}(m+n)(\frac{2}{\mathfrak{r}}+1)}((m+n)!)^{\frac{19}{17}(\frac{2}{\mathfrak{r}}+1)}}\|\mathbbm{1}_{|y|\in [29/40, 7/8]} |k|^m \Gamma_k^n\omega_k\|_{L^2}^2\\
\gtrsim&\sum_{ m+n=0}^\infty  \frac{ \lambda^{2(m+n)/\mathfrak{r}}  }{ ({C(\mf r)}\mathcal{G})^{2(m+n)} ((m+n)!)^{\frac{19}{17}(\frac{2}{\mathfrak{r}}+1)}}\|\mathbbm{1}_{|y|\in [29/40, 7/8]} |k|^m \Gamma_k^n\omega_k\|_{L^2}^2.
\end{align*} 
Hence the resulting Gevrey index $\wt{ \mathfrak{r}}$ and radius of analyticity are 
$$\frac{2}{\wt{\mathfrak{r}}}=\frac{19}{17}\lf(\frac{2}{\mathfrak{r}}+1\rg)\quad\Rightarrow\quad \wt {\mathfrak{r}}=\frac{34\mathfrak{r}}{19(2+\mathfrak{r})},\qquad \
\wt \lambda(t)=\frac{\lambda(t)^{\wt {\mathfrak{r}}/\mathfrak{r}}}{(C(\mf r)\mathcal{G})^{\wt{\mathfrak{r}}}}=:\mathcal{C}(\mathfrak{r},\wt {\mathfrak{r}},K) \lambda(t)^{\wt {\mathfrak{r}}/\mathfrak{r}}.$$ 
The last claim of the lemma is a consequence of the following computation, 
\begin{align*}
\mf r\geq 4/5\quad\Rightarrow\quad \wt{\mathfrak{r}} \geq\frac{34}{19(2\frac{5}{4}+{1})}\geq \frac{68}{133}>0.51>1/2.
\end{align*}
This completes the proof. 
\end{proof}

From Lemma \ref{lem:ExtToInt} we deduce the following observation. 
\begin{lemma} \label{lem:ExtToIntf}
For all $j \geq 0$, there holds
\begin{align*}
\norm{\brak{\grad}^j \mathfrak{A} \chi^I_e(1-\chi^I) f}_{L^2} \lesssim e^{-\nu^{-1/8}} (\mathcal{E}^{(\gamma)})^{1/2}.
\end{align*}
\end{lemma}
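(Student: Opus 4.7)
The plan is to chain four reductions: pointwise-bound $\mathfrak{A}$ by a Gevrey-$1/r$ exponential, convert this Fourier-side exponential into a physical-side Gevrey-$1/r$ series, transport those physical derivatives into the $(x,y)$-coordinates as $\partial_x^m\Gamma_t^n$ acting on $\chi_e^I(1-\chi^I)\omega$, and finally feed the resulting series into Lemma \ref{lem:ExtToInt} to extract the exponentially small factor $e^{-\nu^{-1/8}}$. Concretely, set $g:=\chi_e^I(1-\chi^I)f$, which is compactly supported in $|v|\in[3/4,33/40]$. Since $M\ge 1$ and Lemma \ref{lem:BasicA} yields the pointwise bound $\brak{k,\eta}^j\mathfrak{A}(t,k,\eta)\lesssim_j e^{\delta_I\nu^{1/3}t}e^{2\lambda(t)|k,\eta|^r}$, Plancherel produces
\begin{align*}
\|\brak{\grad}^j\mathfrak{A}g\|_{L^2}^2\lesssim e^{2\delta_I\nu^{1/3}t}\|e^{2\lambda|\grad|^r}g\|_{L^2}^2,
\end{align*}
and Lemma \ref{lem:FourierToPhysical} applied with radius $2\lambda$ reduces this to a sum of $\|\partial_z^m\partial_v^n g\|_{L^2}^2$ weighted by $\mathfrak{C}^{m+n}(2\lambda)^{(m+n)/r}/((m+n)!)^{2/r}$.

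Next, the identifications $\partial_z=\partial_x$ and $\partial_v=\Gamma_t$ (valid with uniformly bounded Jacobian since $v_y$ is controlled by the bootstrap) transport the physical derivatives to the $(x,y)$-side. A Leibniz expansion distributes $\Gamma_t^n$ between $\chi_e^I(1-\chi^I)$ and $\omega$; because the cutoff depends only on $v$, $\partial_x$-derivatives pass through it, and the cutoff may be chosen Gevrey of any admissible index, contributing at worst a geometric $K^{n-n'}((n-n')!)^{2/\widetilde{\mathfrak{r}}}$ factor. The support inclusion $\{|v|\in[3/4,33/40]\}\subset\{|y|\in[29/40,7/8]\}$ permits me to insert the indicator appearing in Lemma \ref{lem:ExtToInt}, yielding a Gevrey-$1/r$ series in $\|\mathbbm{1}_{|y|\in[29/40,7/8]}|k|^m\Gamma_k^{n'}\omega_k\|_{L^2}^2$.

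Finally, I choose $\mathfrak{r}=1/s$ in Lemma \ref{lem:ExtToInt}; then $\mathfrak{r}\in(4/5,1)$ by \eqref{pgiL1}, which forces $\widetilde{\mathfrak{r}}>68/133>51/100>r$, so $2/r-2/\widetilde{\mathfrak{r}}>0$. By Stirling,
\begin{align*}
\sup_{N\ge 0}\bigl(\mathfrak{C}^N(2\lambda)^{N/r}\,\widetilde\lambda^{-2N/\widetilde{\mathfrak{r}}}\,(N!)^{2/\widetilde{\mathfrak{r}}-2/r}\bigr)<\infty,
\end{align*}
so the physical-side Gevrey-$1/r$ coefficients are dominated termwise by the Gevrey-$1/\widetilde{\mathfrak{r}}$ coefficients on the LHS of Lemma \ref{lem:ExtToInt}. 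Applying that lemma and using $e^{W/5}\le e^{2W}$ on the support (with $\theta_n\asymp 1$ absorbed into the constant) gives $\|\brak{\grad}^j\mathfrak{A}g\|_{L^2}^2\lesssim e^{-\nu^{-1/8}}\mathcal{E}^{(\gamma)}$; taking square roots and absorbing the factor $\tfrac12$ into the exponential constant finishes the argument. The main obstacle is precisely this matching: the interior multiplier naturally lives at Gevrey-$1/r$ close to $2$, while Lemma \ref{lem:ExtToInt} only delivers Gevrey-$1/\widetilde{\mathfrak{r}}$ with a modified radius $\widetilde\lambda=\mathcal{C}\lambda^{\widetilde{\mathfrak{r}}/\mathfrak{r}}$; the strict inequality $r<\widetilde{\mathfrak{r}}$ engineered by $r<51/100$ and $1/s>4/5$ in \eqref{pgiL1} is exactly what yields the positive factorial gap that overwhelms the loss of analyticity radius in the Stirling comparison, and the Leibniz expansion on the cutoff is then routine.
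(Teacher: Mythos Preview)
Your proof is correct and follows essentially the same strategy as the paper: bound $\mathfrak{A}$ pointwise via Lemma~\ref{lem:BasicA}, pass to the physical-side Gevrey series via Lemma~\ref{lem:FourierToPhysical}, change variables $v\mapsto y$ so that $\partial_v\mapsto\Gamma_t$, separate the cutoff from $\omega$ by a Gevrey product/Leibniz argument, and invoke Lemma~\ref{lem:ExtToInt} to extract $e^{-\nu^{-1/8}}$. The paper records this as ``standard Gevrey product estimates \ldots\ and Lemma~\ref{lem:ExtToInt}''; your explicit Stirling comparison showing that the factorial gap $(N!)^{2/\widetilde{\mathfrak r}-2/r}$ (with $\widetilde{\mathfrak r}>51/100>r$) absorbs the radius mismatch between $(2\lambda)^{1/r}$ and $\widetilde\lambda^{2/\widetilde{\mathfrak r}}$ is precisely the content hidden in that phrase.
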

\begin{proof}
Recall Lemma \ref{lem:BasicA}, which implies
\begin{align*}
\norm{\brak{\grad}^j \mathfrak{A} \chi^I_e(1-\chi^I) f}_{L^2} & \lesssim e^{\delta_I \nu^{1/3}t} \norm{e^{2\lambda(t)\abs{\grad}^{r}} \chi^I_e(1-\chi^I) f_{\neq}}_{L^2} \\
& \quad + \norm{e^{2\lambda(t)\abs{\grad}^{r}} \chi^I_e(1-\chi^I) f_{0}}_{L^2}. 
\end{align*}
The two contributions are treated in the same manner. 
Lemma \ref{lem:FourierToPhysical} implies for some universal constant $\mathfrak{C}$
\begin{align*}
\sum_{k \neq 0} \norm{\brak{\grad}^j \mathfrak{A} \chi^I_e(1-\chi^I) f_k }_{L^2}^2 & \lesssim  \sum_{k \neq 0} \sum_{m+n=0}^\infty \frac{\mathfrak{C}^{m+n} (2\lambda)^{(m+n)/r}}{((m+n)!)^{2/r}}\||k|^m \partial_v^n \chi^I_e(1-\chi^I) f_k \|_{L^2}^2 \\
& \lesssim  \sum_{k \neq 0} \sum_{m+n=0}^\infty \frac{\mathfrak{C}^{m+n} (2\lambda)^{(m+n)/r}}{((m+n)!)^{2/r}}\||k|^m \Gamma^n_k \widetilde{\chi^I_e}(1-\widetilde{\chi^I}) \omega_k \|_{L^2}^2, 
\end{align*}
where in the last line we changed variables $v \mapsto y$, denoting $\widetilde{\chi^I_e} = \chi^I_e \circ v(t,y)$ (similarly for $\chi^I$). 
Therefore, by standard Gevrey product estimates (and the strength of the weight on the support of $\chi_e^I(1-\chi^I)$) and Lemma \ref{lem:ExtToInt}, we have 
\begin{align*}
e^{\delta_I \nu^{1/3}t} \norm{\brak{\grad}^j \mathfrak{A} \chi^I_e(1-\chi^I)f }_{L^2}^2 & \lesssim e^{-\nu^{-1/8}}  \mathcal{E}^{(\gamma)} \sum_{m+n=0}^\infty \frac{\mathfrak{C}^{m+n} (2\lambda)^{(m+n)/r}}{((m+n)!)^{2/r}}\| (v_y^{-1} \partial_y)^n \widetilde{\chi^I_e}(1-\widetilde{\chi^I}) \|_{L^2}^2\\
&= e^{-\nu^{-1/8}} \mathcal{E}^{(\gamma)} \sum_{m+n=0}^\infty \frac{\mathfrak{C}^{m+n} (2\lambda)^{(m+n)/r}}{((m+n)!)^{2/r}}\| \partial_v^n \left(\chi^I_e(1-\chi^I)\right) \|_{L^2}^2 \\ 
&\lesssim e^{-\nu^{-1/8}} \mathcal{E}^{(\gamma)},
\end{align*}
which completes the treatment of the $f_{\neq}$ contribution. The $f_0$ contribution is treated similarly. 
\end{proof}

We have similar estimates on the coordinate system unknowns.
\begin{lemma} \label{lem:ghExtToInt}
For all $j \geq 0$, there holds 
\begin{align*}
\norm{\brak{\grad}^j \mathfrak{A}_R \chi^I_e(1-\chi^I)g}_{L^2} & \lesssim e^{-\nu^{-1/8}}\mathcal{E}_{\overline{H}}^{1/2} + \frac{1}{\brak{t}^2}\mathcal{E}_{G}^{1/2} \\ 
\norm{\brak{\grad}^j \mathfrak{A}_R \chi^I_e(1-\chi^I)h}_{L^2} & \lesssim e^{-\nu^{-1/8}} \mathcal{E}_{H}^{1/2}. 
\end{align*}
\end{lemma}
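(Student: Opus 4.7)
The argument mirrors the proof of Lemma \ref{lem:ExtToIntf}, following a four-step template: first, use Lemma \ref{lem:BasicA} to reduce $\brak{\grad}^j \mathfrak{A}_R$ to a Gevrey-$1/r$ multiplier of radius $\sim \lambda(t)$, picking up an innocuous $\brak{t}$ factor via the bound $A_R \lesssim \brak{t} A$; second, apply Lemma \ref{lem:FourierToPhysical} to rewrite the resulting Gevrey norm as a physical-side series $\sum_n \frac{\mathfrak{C}^n(2\lambda)^{n/r}}{(n!)^{2/r}} \|\partial_v^n(\chi^I_e(1-\chi^I)g)\|_{L^2_v}^2$ (with no sum in $m$ since $g,h$ are zero modes in $z$); third, change variables $v \mapsto y$ using that $\partial_v = v_y^{-1}\partial_y$ on zero modes, which converts $\partial_v^n$ into $\Gamma^n$ up to commutators involving derivatives of $v_y$; and fourth, apply Lemma \ref{lem:ExtToInt} on the support of $\widetilde{\chi^I_e}(1-\widetilde{\chi^I}) \subset \{|y|\in[3/4, 33/40]\} \subset \{|y|\in [29/40, 7/8]\}$, where the weight $e^W$ is enormous and produces the desired exponential factor $e^{-\nu^{-1/8}}$.

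\textbf{The $h$ estimate.} After carrying out the four steps, expand $\Gamma^n(\widetilde{\chi^I_e}(1-\widetilde{\chi^I})H)$ using the Leibniz rule. Derivatives falling on the cutoff are placed in $L^\infty$ (using that the cutoff is a fixed Gevrey-$1/r$ function, producing only harmless universal constants), while $\Gamma^{n'}H$ with $n' \leq n$ is kept in $L^2$ localized to the cutoff's support. Lemma \ref{lem:ExtToInt}, applied with $\mathfrak{r}$ chosen so that $\wt{\mathfrak{r}} \approx r$, then upgrades this into the weighted exterior functional, picking up the $e^{-\nu^{-1/8}}$ gain and matching the $\mathcal{E}_H^{(\gamma)}$ structure directly.

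\textbf{The $g$ estimate.} Steps (i)--(iv) proceed identically, but the Leibniz expansion of $\Gamma^n(\widetilde{\chi^I_e}(1-\widetilde{\chi^I})G)$ must now be split according to whether the $\Gamma$ derivatives land on $G$ or on the cutoff. When no derivatives land on $G$ (i.e.\ the $n'=0$ piece), no $\overline{H}$ conversion is available, and we instead control $\|G\chi_0\|_{L^2}$ directly using the $n=0$ component $\mathcal{E}_{G,0}^{(\gamma)} = \brak{t}^{4-2K\eps}\|G\chi_0\|_{L^2}^2$ (using that $\chi_0 \equiv 1$ on the support of $\chi^I_e(1-\chi^I)$). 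This yields the $\brak{t}^{-2}\mathcal{E}_G^{1/2}$ term, with the mild $\brak{t}^{K\eps}$ loss absorbed into universal constants for $\eps$ sufficiently small. For $n'\ge 1$ derivatives on $G$, invoke the identity $\Gamma G = v_y^{-1}\partial_y G = v_y^{-1}\overline{H}$, which follows immediately from $\overline{H}=\partial_y G$, to convert one $\Gamma$ derivative into an $\overline{H}$; the resulting expression reduces (after absorbing $v_y^{-1}$ and its derivatives via the bootstrap control on $H$) to $\Gamma^{n'-1}\overline{H}$, and Lemma \ref{lem:ExtToInt} then produces $e^{-\nu^{-1/8}}\mathcal{E}_{\overline H}^{1/2}$.

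\textbf{Main obstacle.} The main technical challenge is the combinatorial bookkeeping required to reconcile the Gevrey coefficients $(n!)^{-2/r}$ produced by Lemma \ref{lem:FourierToPhysical} with the intricate exterior weights $\bold{a}_n^2$ and $\bold{a}_{n+1}^2(n+1)^{2s-2}\brak{t}^{3+2s}$, together with the shift from $\chi_n$ to $\chi_{n-1}$ appearing in the coordinate functionals, compounded by the cascade of $v_y$ derivatives generated when converting $\partial_v^n$ to $\Gamma^n$. However, the overwhelming exponential gain $e^{-\nu^{-1/8}}$ arising from the weight $e^W$ on the transition region absorbs all polynomial losses in $n$, $\brak{t}$, and the $v_y$ factors (exactly as in Lemma \ref{lem:ExtToIntf}), so these discrepancies are technical rather than essential.
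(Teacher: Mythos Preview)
Your proposal is correct and follows precisely the approach the paper intends: the paper's own proof simply states ``This is a straightforward variation of Lemma \ref{lem:ExtToInt} so we omit the proof for brevity,'' and your four-step template (reduce via Lemma \ref{lem:BasicA}, convert to physical side via Lemma \ref{lem:FourierToPhysical}, change variables, apply Lemma \ref{lem:ExtToInt}) is exactly the variation of Lemma \ref{lem:ExtToIntf} that the paper has in mind. One small simplification: you do not need the bound $A_R \lesssim \brak{t} A$ in step one, since the first inequality in Lemma \ref{lem:BasicA} already gives $\brak{\eta}^j A_R(t,0,\eta) \lesssim e^{2\lambda|\eta|^r}$ directly, avoiding the spurious $\brak{t}$ factor.
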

\begin{proof}
This is a straightforward variation of Lemma \ref{lem:ExtToInt} so we omit the proof for brevity. 
\end{proof}


Before we continue, let us remark a few simple estimates on $\grad^\perp \psi^I$ that are useful for estimating the commutators.
The proof is omitted as it is immediate from the content of \cite{HI20,BM13}. 
\begin{lemma} \label{lem:AphiItriv}
Under the bootstrap hypotheses, for $\eps$ sufficiently small depending only on universal constants, there holds 
\begin{align}
\norm{\mathfrak{A}(\chi_e^I \grad^\perp \psi^I)}_{L^2} & \lesssim \brak{t} \norm{\mathfrak{A} f}_{L^2}(1 + \brak{t}^2 \norm{\mathfrak{A}_R h^I}_{L^2}) \notag \\ 
\norm{\brak{\grad}^{-4}\mathfrak{A}(\chi_e^I \grad^\perp \psi^I)}_{L^2} & \lesssim \brak{t}^{-2} \norm{\mathfrak{A} f}_{L^2}. \label{ineq:LossyInt}
\end{align}
\end{lemma}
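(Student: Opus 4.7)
The plan is to treat the elliptic equation \eqref{Intr_psi_I} as a perturbation of the sheared Laplacian $-\Delta_L := -\partial_{zz} - (\partial_v - t\partial_z)^2$, whose Fourier symbol is $k^2 + (\eta - kt)^2$, following the template of the interior elliptic estimates in \cite{BM13, HI20}. The operator $\grad^\perp$ in the $(z, v)$ variables has Fourier symbol of magnitude $\lesssim |k, \eta - kt|$, so formally $\grad^\perp \psi^{(I)}$ gains one power of $|k, \eta - kt|^{-1}$ relative to $f^{I}$, up to perturbations generated by the $h^I$-dependent coefficients.

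First, I would apply $\mathfrak{A}$ to both sides of \eqref{Intr_psi_I}, moving it past the quasilinear coefficients via the product rules \eqref{ineq:Aprod}--\eqref{ineq:ARprod}, to obtain schematically
\begin{align*}
-\Delta_L (\mathfrak{A} \psi^{(I)}) = \mathfrak{A} f^{I} + \mathcal{R},
\end{align*}
where $\mathcal{R}$ collects the commutators between $\mathfrak{A}$ and the factors $(1 + h^I)^2 - 1$ and $(1 + h^I) \partial_v h^I$. Using \eqref{ineq:ARtrivBd} to trade $\mathfrak{A}$ for $\mathfrak{A}_R$ at the cost of $\brak{t}$, and the bootstrap \eqref{boot:IntH} to ensure $\|\mathfrak{A}_R h^I\|_{L^2} = O(\eps)$, the remainder absorbs into the left-hand side provided $\eps$ is small. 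Inverting $-\Delta_L$ in Fourier and handling the cut-off $\chi^I_e$ by a standard commutator argument in the spirit of Lemma \ref{lem:ExtToIntf} (the cut-off is a Gevrey multiplier of $v$ alone, so the commutator is very regular) yields
\begin{align*}
\|\mathfrak{A}(\chi^I_e \grad^\perp \psi^{(I)})\|_{L^2} \lesssim \Big\| \frac{\mathfrak{A} \widehat{f^{I}}}{|k, \eta - kt|} \Big\|_{L^2} \bigl(1 + \brak{t}^2 \|\mathfrak{A}_R h^I\|_{L^2}\bigr),
\end{align*}
with the zero-frequency mode $f^{I}_0$ handled separately via direct one-dimensional inversion of $-\partial_v^2$.

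The first inequality then follows from the elementary symbol bound $|k, \eta - kt|^{-1} \lesssim \brak{t}/\brak{k, \eta}$, proved by writing $\eta = (\eta - kt) + kt$ and splitting into the two regimes $|\eta| \leq 2|k| t$ and $|\eta| > 2|k| t$. For the lossy bound \eqref{ineq:LossyInt}, the extra factor $\brak{\grad}^{-4}$ combined with the same case split gives $\brak{k, \eta}^{-4} |k, \eta - kt|^{-1} \lesssim \brak{t}^{-2} \brak{k, \eta}^{-2}$ in the first regime (using $|k, \eta| \lesssim |k| \brak{t}$ there) and $\lesssim \brak{k, \eta}^{-5}$ in the second. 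The main obstacle is tracking the $\brak{t}^2 \|\mathfrak{A}_R h^I\|_{L^2}$ coupling in the absorption: commutators of $\mathfrak{A}$ with $(1 + h^I)^2$ may pay up to two factors of $|k, \eta - kt|/\brak{k, \eta}$ on the Fourier side, each costing a power of $\brak{t}$, which is exactly why the $\brak{t}^2$ weight multiplies $\|\mathfrak{A}_R h^I\|_{L^2}$ in the statement. This combinatorics is carried out verbatim as in \cite{BM13, HI20}, which is why the lemma is immediate from that content.
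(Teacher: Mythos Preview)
Your approach is essentially the one the paper defers to (the paper omits the proof, citing \cite{BM13,HI20}): perturb off $-\Delta_L$, absorb the $h^I$-dependent coefficients via the product rules and the bootstrap smallness of $\|\mathfrak{A}_R h^I\|_{L^2}$, and finish with Fourier symbol bounds split on $|\eta|$ versus $|k|t$. One small slip worth correcting: the operator $\grad^\perp = (-\partial_v,\partial_z)$ in $(z,v)$ has symbol of magnitude $|k,\eta|$, not $|k,\eta-kt|$; the correct multiplier on $\widehat{f^I}$ is therefore $|k,\eta|\,|k,\eta-kt|^{-2}$ rather than $|k,\eta-kt|^{-1}$. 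This does not affect the conclusion---using $|k,\eta|\leq |k,\eta-kt|+t|k|\leq (1+t)|k,\eta-kt|$ recovers the first bound, and for \eqref{ineq:LossyInt} the case split (e.g.\ $|\eta|\leq |k|t/2$ gives $|k,\eta-kt|\gtrsim |k|t$, while $|\eta|>|k|t/2$ gives $\brak{k,\eta}\gtrsim |k|t$) yields $\brak{k,\eta}^{-4}|k,\eta|\,|k,\eta-kt|^{-2}\lesssim \brak{t}^{-2}$ for $k\neq 0$ in either regime---but your stated intermediate inequalities should be adjusted accordingly.
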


Next we give the fundamental estimate which controls the effect of the exterior vorticity through the non-local streamfunction. 
\begin{lemma} \label{lem:phiE}
Under the bootstrap hypotheses, for $\eps$ sufficiently small and $t < \nu^{-1/3 - \zeta}$ for $\zeta < 1/78$ sufficiently small, there holds 
\begin{align}
\norm{\mathfrak{A}_R(\chi_e^I \grad^\perp (\chi^I\psi^E))}_{L^2}^2 & \lesssim e^{-\nu^{-1/9}} \left(\mathcal{E}^{(\gamma)} + \norm{\mathfrak{A}f}_{L^2}^2 \right)^{1/2} \left(1 + \mathcal{E}_H^{(\alpha)} + \mathcal{E}_H^{(\gamma)}\right)^{1/2} \label{ineq:chiIpsiE} \\ 
e^{2\delta_I \nu^{1/3} t} \norm{e^{2\lambda(t)\abs{\grad}^r}(\chi_e^I \grad^\perp( 1-\chi^I) \psi^E)}_{L^2}^2 & \lesssim  e^{-\nu^{-1/8}} \left(\mathcal{E}^{(\gamma)} + \norm{\mathfrak{A}f}_{L^2}^2 \right)^{1/2} \left(1 + \mathcal{E}_H^{(\alpha)} + \mathcal{E}_H^{(\gamma)}\right)^{1/2}
\label{ineq:1mchiIpsiE} 
\end{align}
\end{lemma}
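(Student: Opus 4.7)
The plan is to reduce both estimates to the elliptic bound \eqref{fell:e} on $\mathcal{F}_{ell}^{(E)}$ from Proposition \ref{pro:ell:intro}. The fundamental tool is the composition identity $\Psi^{(E)}(t,x,y) = \psi^{E}(t,z(t,x,y),v(t,y))$ together with the chain-rule computation $\Gamma_t \Psi^{(E)} = (\partial_v \psi^{E}) \circ (z,v)$ and $\partial_x \Psi^{(E)} = (\partial_z \psi^{E}) \circ (z,v)$. These identities allow the $(m,n)$-fold $(\partial_x, \Gamma_t)$-regularity of $\Psi^{(E)}$ in $(x,y)$ that is measured by $\mathcal{F}_{ell}^{(E)}$ to be translated cleanly into $(\partial_z, \partial_v)$-regularity of $\psi^{E}$ in profile variables.

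For \eqref{ineq:chiIpsiE}: since $\chi^I$ is supported in $\{|v| \le 31/40\}$, any derivative falling on $\chi^I \psi^{E}$ corresponds in physical space to a point where $q \equiv 1$ and $\chi_{m+n} \equiv 1$ for every $m,n$. Using Lemma \ref{lem:BasicA} to bound $\mathfrak{A}_R \lesssim e^{2\lambda |\nabla|^r}$ (with the multiplier $M$ contributing only a benign enhanced-dissipation factor), followed by Lemma \ref{lem:FourierToPhysical} to expand the exponential into a series in $\partial_z^m \partial_v^n$, the Leibniz rule on $\chi_e^I$, and the coordinate identifications above, one dominates the LHS by
\begin{align*}
e^{2\delta_I \nu^{1/3} t} \sum_{k \neq 0}\sum_{m+n \geq 0} \left( \frac{(\mathfrak{C}\lambda)^{m+n}}{(m+n)!}\right)^{2/r} \|\chi_{m+n} q^n |k|^m \Gamma_k^n \Psi^{(E)}_k\|_{L^2}^2.
\end{align*}
Since $\widetilde{\lambda} = 2\lambda$ in the definition of $\mathcal{F}_{ell}^{(E)}$, choosing $\lambda_0$ small enough absorbs $\mathfrak{C}$ and identifies the sum with $\mathcal{F}_{ell}^{(E)}(t)$. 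Applying \eqref{fell:e} gives the factor $e^{-\nu^{-1/8}}$, and the leftover $e^{2\delta_I \nu^{1/3} t} \le e^{2\delta_I \nu^{-\zeta}}$ (using $t < \nu^{-1/3-\zeta}$ with $\zeta < 1/78$) is absorbed into $e^{-\nu^{-1/8}+2\delta_I \nu^{-\zeta}} \lesssim e^{-\nu^{-1/9}}$, yielding the stated estimate.

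For \eqref{ineq:1mchiIpsiE}: the support of $\chi_e^I(1-\chi^I)$ lies in the gluing annulus, which corresponds in physical coordinates to $|y| \in [29/40, 7/8]$ (up to an $\mathcal{O}(\eps)$ distortion of the profile map, which is harmless under the bootstrap hypotheses). On this set the localization weight satisfies $e^{W/10} \gtrsim e^{c/(\nu(1+t))}$, which, following exactly the Stirling-type trade-off of Lemma \ref{lem:ExtToInt}, can be exchanged for a controlled loss of Gevrey radius and an order-one combinatorial factor. Following the same composition-and-expansion reduction as in the first estimate — now using Lemma \ref{lem:FourierToPhysical} to expand $e^{2\lambda|\nabla|^r}$ rather than $\mathfrak{A}_R$ — and then applying the weight-for-regularity trade, we dominate the LHS by $\mathcal{F}_{ell}^{(E)}(t)$ multiplied by the factor $e^{-\nu^{-1/8} + 2\delta_I \nu^{-\zeta}}$. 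Invoking \eqref{fell:e} closes the estimate.

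The main obstacle is the Gevrey bookkeeping: one must verify that the radius $\widetilde{\lambda}$ appearing in $\mathcal{F}_{ell}^{(E)}$ is large enough to simultaneously absorb the constant $\mathfrak{C}$ from Lemma \ref{lem:FourierToPhysical}, the Leibniz losses from commuting $\chi_e^I$ and $\chi^I$ (or $1-\chi^I$) with arbitrarily many derivatives, and — in the second estimate — the additional combinatorial loss from the weight-trade as in Lemma \ref{lem:ExtToInt}. The choice $\widetilde{\lambda} = 2\lambda$ provides exactly the room needed; the multiplier $M$ in $\mathfrak{A}_R$ is inert at the regularity level, and the exponential factor $e^{\delta_I \nu^{1/3} t}$ is harmless because the time interval is restricted to $t < \nu^{-1/3-\zeta}$.
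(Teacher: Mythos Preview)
Your treatment of \eqref{ineq:1mchiIpsiE} is essentially the paper's argument: both reduce to $\mathcal{F}_{ell}^{(E)}$ via Lemma \ref{lem:FourierToPhysical}, the change of variables $v \mapsto y$, a Gevrey product/composition estimate to handle the cutoff, and then Lemma \ref{lem:ExtToInt} combined with \eqref{fell:e}.

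Your argument for \eqref{ineq:chiIpsiE}, however, has a genuine gap. The claim that on the support of $\chi^I\psi^E$ one has $\chi_{m+n} \equiv 1$ is backwards: by \eqref{chi}, the exterior cutoffs $\chi_n$ \emph{vanish} on $|y| < x_n$ (with $x_1 = 3/8$), so in the bulk of $\mathrm{supp}\,\chi^I$ (which reaches all the way to $y=0$) the $\chi_{m+n}$ are zero. Consequently $\mathcal{F}_{ell}^{(E)}$, which carries the weight $\chi_{m+n}$, provides no direct control over $\psi^E$ in the interior; your proposed reduction simply throws away the interior contribution. The paper flags this explicitly: the elliptic equation for $\psi^E$ has variable coefficient $v' = 1+h$, and $h$ is only controlled at the $A_R$ level in the interior, so one cannot expect better than $\mathfrak{A}_R$-regularity there regardless of how small the exterior source is.

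The paper's fix is to localize the \emph{equation}, not the solution: applying $\chi^I$ to the elliptic PDE for $\psi^E$ yields an equation for $\chi^I\psi^E$ with the ``interior'' operator $-\partial_{zz} + ((1+h^I)(\partial_v - t\partial_z))^2$ and a commutator source $\mathcal{C}_E$ that is supported only where $\partial_v\chi^I \neq 0$ or where $h \neq h^I$, i.e.\ only in the exterior annulus. One then applies the elliptic estimate (the analogue of Lemma \ref{lem:AphiItriv}) to obtain $\norm{\mathfrak{A}(\grad^\perp(\chi^I\psi^E))}_{L^2} \lesssim \langle t\rangle \norm{\mathfrak{A}\mathcal{C}_E}_{L^2}(1 + \langle t\rangle^2\norm{\mathfrak{A}_R h^I}_{L^2})$, and the point is that $\mathcal{C}_E$ \emph{is} supported in the exterior, so Lemmas \ref{lem:FourierToPhysical}--\ref{lem:ExtToInt} together with the already-proved \eqref{ineq:1mchiIpsiE} give $\norm{\mathfrak{A}_R\mathcal{C}_E}_{L^2} \lesssim e^{-\nu^{-1/8}}\langle t\rangle^2(\cdots)$. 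The powers of $\langle t\rangle$ are absorbed by the exponentially small prefactor.
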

\begin{proof}
Recall that
\begin{align*}
& -\partial_{zz} \psi^E + (v' (\partial_v - t\partial_z))^2 \psi^E = (1-\chi^I)f - \left( (1+h^I)^2 - (1 +h)^2 \right) (\partial_v-t\partial_z)^2 \psi^I \\
& \qquad \qquad -  \left((1+h)\partial_v h - (1+h^I)\partial_v h^I\right)(\partial_v - t\partial_z) \psi^I \\
& \psi^E(t,x,v(t,\pm 1)) = 0. 
\end{align*}
The subtlety in this lemma is that the left-hand side contains $v' = 1+h$ which limits the regularity of $\psi^E$ to $\mathfrak{A}_R$ in the interior (even though the right-hand side has much more regularity through Lemma \ref{lem:ExtToInt}).

The estimate \eqref{ineq:1mchiIpsiE} follows as a corollary of the elliptic estimate \eqref{fell:e} (proved in our companion paper \cite{BHIW24b}). 
Indeed, by standard the Gevrey product rules, we have 
\begin{align*}
\norm{e^{2\lambda(t)\abs{\grad}^r}(\chi_e^I\grad^\perp (1-\chi^I) \psi^E)}^2_{L^2} & \lesssim_\lambda \norm{e^{3\lambda(t)\abs{\grad}^r}( \tilde{\chi} \psi^{(E)}_k}^2_{L^2}, 
\end{align*}
where $\tilde{\chi}$ is a smooth cutoff which is is one on the support of $\chi_e^I(1-\chi^I)$ and supported on a slightly expanded interval. 
Therefore, by Lemma \ref{lem:FourierToPhysical} we have
\begin{align*}
\norm{e^{2\lambda(t)\abs{\grad}^r}(\chi_e^I\grad^\perp (1-\chi^I) \psi^E)}^2_{L^2} & \lesssim \sum_{k \in \mathbb Z} \sum_{n+m =0}^\infty \frac{(4\lambda)^{2(n+m)/r}}{((n+m)!)^{2/r}} \norm{\abs{k}^m \Gamma_k^n (\tilde{\chi} \circ v \Psi^{(E)}_k)}_{L^2}^2 \\
& \lesssim \sum_{k \in \mathbb Z} \sum_{n+m =0}^\infty \frac{(4\lambda)^{2(n+m)/r}}{((n+m)!)^{2/r}} \norm{ \tilde{\tilde{\chi}}\abs{k}^m \Gamma_k^n \Psi^{(E)}_k)}_{L^2}^2, 
\end{align*}
for another smooth cutoff $\tilde{\tilde{\chi}}$ which is is one on the support of $\tilde{\chi}$ and supported on a slightly expanded interval.
Therein we used that for 
\begin{align*}
\sum_{n+m =0}^\infty \frac{(4\lambda)^{2(n)/r}}{((n)!)^{2/r}} \norm{ \overline{\partial_v}^n \tilde{\chi} \circ v }_{L^2}^2 \lesssim 1,
\end{align*}
using the assumption that $\mathcal{E}_H^{(\gamma)} \lesssim 1$ by standard Gevrey composition estimates (see e.g. [Appendix A, \cite{BM13}]. 
Hence, by Lemma \ref{lem:ExtToInt}, we have that \eqref{ineq:1mchiIpsiE} follows from \eqref{fell:e}.

Next, we prove \eqref{ineq:chiIpsiE}. 
Applying $\chi^I$ we have
\begin{align*}
& -\partial_{zz} \chi^I\psi^E + ((1+h^I) (\partial_v - t\partial_z))^2 (\chi^I\psi^E)  = \mathcal{C}_E, \\ 
& \mathcal{C}_E := -[\chi^I,\Delta_t]\psi^E - \left( (1 + h)^2 - (1+h^I)^2 \right)(\partial_v - t\partial_z)^2 (\chi^I\psi^E) \\ & \qquad\quad - \left((1+h)\partial_v h - (1+h^I)\partial_v h^I\right) (\partial_v - t\partial_z) (\chi^I\psi^E). 
\end{align*}
Using the analogue of Lemma \ref{lem:AphiItriv}, we obtain
\begin{align*}
\norm{\mathfrak{A}(\grad^\perp (\chi^I\psi^E)}_{L^2} \lesssim \brak{t} \norm{\mathfrak{A} \mathcal{C}_E}_{L^2}(1 + \brak{t}^2 \norm{\mathfrak{A}_R h^I}_{L^2}). 
\end{align*}
However, since $\mathcal{C}_E$ is supported only in the exterior, we can use Lemmas \ref{lem:FourierToPhysical}, \ref{lem:ExtToInt}, and \eqref{ineq:1mchiIpsiE} to estimate all of the terms (note that $\psi^E$ does not have explicit localization bounds, only smallness).
This yields 
\begin{align*}
\norm{\brak{\grad}^j \mathfrak{A}_R \mathcal{C}_E}_{L^2} \lesssim e^{-\nu^{-1/8}} \brak{t}^2 \left(\mathcal{E}^{(\gamma)} + \norm{\mathfrak{A}f}_{L^2}^2 \right)^{1/2} \left(1 + \mathcal{E}_H^{(\alpha)} + \mathcal{E}_H^{(\gamma)}\right)^{1/2}, 
\end{align*}
which hence implies \eqref{ineq:chiIpsiE}. 
\end{proof}

\subsection{Interior vorticity energy estimate} \label{sec:int:vort}
In this section we prove Proposition \ref{prop:MainInterior}. 
The energy estimate on the vorticity profile begins as follows
\begin{align*}
\frac{1}{2}  \frac{d}{dt} \mathcal{E}_{\text{Int}} & = -\mathcal{CK}_{\mathrm{Int}} - \mathcal{D}_{\mathrm{Int}} + 2 \delta_I \nu^{1/3} \norm{A f^I_{\neq}}^2_{L^2} \\ 
& \quad  - \nu \brak{\mathfrak{A} f, \mathfrak{A} (1 - (v')^2) (\partial_{v} - t\partial_z)^2 f^I } - \brak{\mathfrak{A} f, \mathfrak{A} (U \cdot \grad f^I) } +  \brak{\mathfrak{A} f, \mathfrak{A} \mathcal{C} f}.    
\end{align*}


The nonlinear transport term is divided into exterior and interior contributions: 
\begin{align*}
\brak{\mathfrak{A} f^I, \mathfrak{A} (U \cdot \grad f^I) } = \brak{\mathfrak{A} f^I, \mathfrak{A} (U^I \cdot \grad f^I) } + \brak{\mathfrak{A} f^I, \mathfrak{A} (U^E \cdot \grad f^I) }. 
\end{align*}
These two contributions are treated in the following two subsections.

\subsubsection{Nonlinear interior velocity estimate} \label{sec:NIVE}
We have set up the argument so that the nonlinear term $U^I \cdot \grad f^I$ is controlled almost exactly the same as it was in \cite{HI20,BM13}, the only differences being the presence of the multiplier $M$ and the enhanced dissipation factor $e^{\delta_I \nu^{1/3} t}$, which adds two extra terms in the commutator estimates.
The estimate begins by introducing a commutator,
\begin{align*}
\brak{\mathfrak{A} f^I, \mathfrak{A} (U^I \cdot \grad f^I) } = \brak{\mathfrak{A} f^I, \grad \cdot U^I \mathfrak{A} f^I } + \brak{\mathfrak{A} f^I, [\mathfrak{A}, U^I \cdot \grad ] f^I }. 
\end{align*}
The first term is dealt with easily as in \cite{HI20,BM13} using \eqref{ineq:LossyInt}, yielding the following under the bootstrap hypotheses
\begin{align*}
\abs{\brak{\mathfrak{A} f^I, \grad \cdot U^I \mathfrak{A} f^I}} \lesssim \frac{\mathcal{E}_{\text{Int}}^{1/2} + (\mathcal{E}_{\text{Int,Coord}}^{(g)})^{1/2}}{\brak{t}^{3/2}} \mathcal{E}_{\text{Int}}
\end{align*}
For the latter term, one needs to carefully study
\begin{align}
\mathfrak{A}(t,k,\eta) - \mathfrak{A}(t,k-\ell,\eta-\xi) , \label{eq:AAdiff}
\end{align}
since we can use the Fourier transform to express (ignoring irrelevant factors of $2\pi$)
\begin{align}
\brak{\mathfrak{A} f^I, [\mathfrak{A}, U^I \cdot \grad ] f^I } & \nonumber \\ & \hspace{-3.5cm} = \textup{Re} \sum_{k,\ell} \int_{\mathbb R^2}  \overline{\mathfrak{A}\widehat{f^I_k}(\eta)}\left(\mathfrak{A}(t,k,\eta) - \mathfrak{A}(t,k-\ell,\eta-\xi)\right) \widehat{U^I_{\ell}}(\xi) \cdot i(k-\ell,\eta-\xi) \widehat{f^I_{k-\ell}}(\eta-\xi) \dee \eta \dee \xi. \label{eq:IntComm}
\end{align}
This convolution-in-frequency is split into three regions as in a standard paraproduct, 
\begin{align}
\abs{\ell,\xi} \geq 2 \abs{k-\ell,\eta-\xi}, \quad \abs{k-\ell,\eta-\xi} \leq 2 \abs{\ell,\xi}, \quad \abs{k-\ell,\eta-\xi} \approx \abs{\ell,\xi}, \label{eq:RTRsplit}
\end{align}
respectively called the ``reaction, transport, and remainder'' terms in \cite{BM13}, labeled as $\mathcal{T}_R$, $\mathcal{T}_T$, $\mathcal{T}_{rem}$ respectively. 
The treatment of the reaction and remainder terms are unchanged by the presence of $M$ and the enhanced dissipation factors. These terms are treated as in \cite{HI20,BM13} are hence omitted; these are estimated therein as 
\begin{align*}
\mathcal{T}_R + \mathcal{T}_{rem} \lesssim \mathcal{E}_{\mathrm{Int}}(t)^{1/2} \left(\mathcal{CK}_{\mathrm{Int}}(t) + \mathcal{CK}_{\mathrm{Int,Coord}}(t)\right) + \frac{1}{\brak{t}^{3/2}}(\mathcal{E}_{\text{Int}}^{1/2} + \mathcal{E}_{\text{Int,Coord}}^{1/2}) \mathcal{E}_{\text{Int}}. 
\end{align*}
However, the presence of $M$ and the enhanced dissipation multipliers do introduce some slight additional complications in the `transport' term.
In the `transport' region of frequency, the difference of the $\mathfrak{A}$ multipliers is split as
\begin{align*}
\mathfrak{A}(t,k,\eta) - \mathfrak{A}(t,k-\ell,\eta-\xi) & = \left(A(t,k,\eta) - A(t,k-\ell,\eta-\xi)\right)\frac{1}{M(t,k,\eta)} \left(e^{2\delta_I \nu^{1/3} t} \mathbf{1}_{k \neq 0} + \mathbf{1}_{k = 0} \right) \\ 
& \hspace{-2cm} \quad + A(t,k-\ell,\eta-\xi) \left( \frac{1}{M(t,k,\eta)} - \frac{1}{M(t,k-\ell,\eta-\xi)}\right) \left(e^{2\delta_I \nu^{1/3} t} \mathbf{1}_{k \neq 0} + \mathbf{1}_{k = 0} \right) \\
& \hspace{-2cm} \quad - \frac{A(t,k-\ell,\eta-\xi)}{M(t,k-\ell,\eta-\xi)} \left(\mathbf{1}_{k \neq 0}\mathbf{1}_{k-\ell = 0} - \mathbf{1}_{k=0}\mathbf{1}_{k-\ell \neq 0}\right)\left(e^{2\delta_I \nu^{1/3} t} - 1\right);
\end{align*}
we denote each corresponding contribution in the integral in \eqref{eq:IntComm} as $\mathcal{T}_{T;j}$. 
The first term, $\mathcal{T}_{T;1}$, is treated essentially as in \cite{HI20,BM13} and is omitted for the sake of brevity (note that the $e^{\delta_I \nu^{1/3} t}$ factor can be grouped with either $\widehat{U^I_\ell}(\xi)$ or $\widehat{f^I_{k-\ell}}(\eta-\xi)$, depending on which factor has non-zero $z$ frequency), yielding
\begin{align*}
\mathcal{T}_{T;1} \lesssim (\mathcal{E}_{\text{Int}}^{1/2} + (\mathcal{E}_{\text{Int,Coord}}^{(g)})^{1/2}) \mathcal{CK}_{\text{Int}}. 
\end{align*}
To treat $\mathcal{T}_{T;3}$, we note the simple observation 
\begin{align*}
e^{2\delta_I \nu^{1/3} t} - 1 \lesssim  \nu^{1/3} t e^{2 \delta_I \nu^{1/3}t}. 
\end{align*}
Then, using the frequency localizations, we have 
\begin{align*}
  \mathcal{T}_{T;3} & \lesssim \nu^{1/3} t \norm{\abs{\grad}^{3/4} \mathfrak{A} f_0}_{L^2} \norm{U^I}_{H^4} \norm{\abs{\grad}^{1/4} \mathfrak{A} f}_{L^2} \\
  & \quad +  \nu^{1/3} \norm{U^I}_{H^4} \norm{\abs{\grad}^{1/4} \mathfrak{A} f}_{L^2} \norm{\abs{\partial_v}^{3/4} \mathfrak{A} f_0}_{L^2} \\
& \lesssim \left(\mathcal{E}_{\text{Int}} + \mathcal{E}_{\text{Int,Coord}}^{(g)} \right)^{1/2} \left( \mathcal{CK}_\lambda +  \nu \norm{\partial_v \mathfrak{A} f_0}_{L^2}^2\right),
\end{align*}
where we used the interpolation estimate $\norm{\partial_v^{3/4} f}_{L^2} \lesssim \norm{\partial_v^{1/4} f}_{L^2}^{1/3} \norm{\partial_v f}_{L^2}^{2/3}$ and \eqref{ineq:LossyInt}.

Next, for $\mathcal{T}_{T;2}$ we apply Lemma \ref{def:ICommM},
giving 
\begin{align*}
\abs{\mathcal{T}_{T;2}} & \lesssim \brak{t}^{1-s} \norm{U^I}_{H^2} \norm{\abs{\grad}^{s/2} \mathfrak{A} f^I}_{L^2}^2 \lesssim (\mathcal{E}_{\text{Int}}^{1/2} + (\mathcal{E}_{\text{Int,Coord}}^{(g)})^{1/2}) \mathcal{CK}_\lambda, 
\end{align*}
and hence all contributions can be integrated consistent with Proposition \ref{prop:boot}. 

\subsubsection{Nonlinear exterior velocity estimate} \label{sec:UENL}
This concerns the nonlinear term $\brak{\mathfrak{A} f^I, \mathfrak{A} (U^E \cdot \grad f^I) }$, which is mainly treated via the diffusion (the delicate aspects are the estimates on $\psi^E$, carried out in \eqref{ineq:chiIpsiE} and \eqref{ineq:1mchiIpsiE}) and the strong a priori estimates available on $g$ from Lemma \ref{lem:ExtToInt} and $\mathcal{E}_{\text{Ext,Coord}}$. 
Expanding 
\begin{align*}
\brak{\mathfrak{A} f^I, \mathfrak{A} (U^E \cdot \grad f^I) } & = \brak{\mathfrak{A} f^I, \mathfrak{A} (\chi_e^I (1+h^I)\grad^\perp \psi^E \cdot \grad f^I) } \\
& \quad + \brak{\mathfrak{A} f^I, \mathfrak{A} (\chi^I_e (1-\chi^I)g \partial_v f^I) } + \brak{\mathfrak{A} f^I,\mathfrak{A}(\chi_e^I(1-\chi^I)h\grad^\perp \psi^I, \grad f^I) } \\
& =: T_1 + T_2 + T_3. 
\end{align*}
Consider first $T_2$, which begins by introducing a commutator and integrating by parts
\begin{align*}
T_2 & = -\brak{\mathfrak{A} f^I, \partial_v (\chi^I_e (1-\chi^I) g)  \mathfrak{A} f^I } + \brak{\mathfrak{A} f^I, [\mathfrak{A}, (\chi^I_e (1-\chi^I)g \partial_v] f^I) } \\
& =: T_{21} + T_{22}. 
\end{align*}
The estimate of the first term is immediate using \eqref{lem:ExtToInt}, yielding (note that the term where no derivatives land on $g$ is what implies there is no $\nu$-dependent smallness in this estimate and hence why we cannot directly use the dissipation term), 
\begin{align*}
T_{21} \lesssim \frac{1}{t^{2 - K\eps}} \mathcal{E}_{\text{Ext,Coord}}^{1/2} \mathcal{E}_{\text{Int}}. 
\end{align*}
The latter term, $T_{22}$, we use that $P_0g = g$, which implies the commutator is a little simpler than in the case of $\mathcal{T}$ above in \eqref{eq:IntComm}.
We may apply the same decomposition as in \eqref{eq:RTRsplit}, and we deduce that (using the simpler estimates on $g$ coming from Lemma \ref{lem:ExtToInt})
\begin{align*}
T_{22} \lesssim \mathcal{E}_{\text{Ext,Coord}}^{1/2} (\mathcal{CK}_{\text{Int}} + \frac{1}{t^{2-K\eps}}\mathcal{E}_{\text{Int}}).  
\end{align*}
We can approach $T_3$ more directly, indeed by Lemma \ref{lem:BasicA} 
\begin{align*}
T_3 \lesssim \brak{t}^4 \norm{\mathfrak{A}f^I}_{L^2} \norm{\mathfrak{A}\chi^I_e \grad^\perp \psi^I }_{L^2} \norm{A_R\chi^I_e (1-\chi^I)h}_{L^2} \norm{\mathfrak{A} \grad_L f^I}_{L^2}. 
\end{align*}
Then, using Lemma  \ref{lem:AphiItriv} to estimate $\psi^I$ and Lemma \ref{lem:ghExtToInt} to estimate $(1-\chi^I)h$ we have
\begin{align*}
T_3 \lesssim e^{-\nu^{-1/8}} \brak{t}^6 (\mathcal{E}_{H}^{(\gamma)})^{1/2} (\norm{\mathfrak{A} f}_{L^2} + \norm{\mathfrak{A}_R h^I}_{L^2})\norm{\mathfrak{A}f^I}_{L^2}\norm{\mathfrak{A} \grad_L f^I}_{L^2}, 
\end{align*}
which is again easily integrated using the diffusion to the desired time-scale. 
For $T_1$ we begin again with Lemma \ref{lem:BasicA}, 
\begin{align*}
T_1 \lesssim \brak{t}^2 \norm{\mathfrak{A} f^I}(1 + \norm{A_R h^I}_{L^2}) \norm{\mathfrak{A}_R (\chi_e^I \grad^\perp \psi^E)}_{L^2} \norm{\mathfrak{A} \grad_L f^I}_{L^2}.  
\end{align*}
The middle factor is then estimated via \eqref{ineq:chiIpsiE} and \eqref{ineq:1mchiIpsiE}, yielding a contribution which is again easily integrated using the diffusion to the desired time-scale. 

\subsubsection{Diffusion error term}
The diffusion error term is relatively straightforward.
First note that 
\begin{align*}
\nu \brak{\mathfrak{A} f, \mathfrak{A} (1 - (v')^2) (\partial_{v} - t\partial_z)^2 f^I } = \nu \brak{\mathfrak{A} f, \mathfrak{A} (2h - h^2)  (\partial_{v} - t\partial_z)^2 f^I }. 
\end{align*}
Next, expand the $h$ using the localization, 
\begin{align*}
\nu \brak{\mathfrak{A} f, \mathfrak{A} (1 - (v')^2) (\partial_{v} - t\partial_z)^2 f^I } & = \nu \brak{\mathfrak{A} f, \mathfrak{A} (2h^I - (h^I)^2)  (\partial_{v} - t\partial_z)^2 f^I } \\
& \quad + \nu \brak{\mathfrak{A} f, \mathfrak{A} ( 2(h-h^I) - 2(h-h^I)h - (h-h^I)^2  )  (\partial_{v} - t\partial_z)^2 f^I } \\
& =: \mathcal{D}_1 + \mathcal{D}_2. 
\end{align*}
The former term $\mathcal{D}_1$ is estimated as in \cite{BMV14}, integrating by parts and using the product rules on $A$, yielding
\begin{align*}
\mathcal{D}_1 \lesssim \nu (\mathcal{E}_{\text{Int,Coord}}^{(h)})^{1/2} \norm{\grad_L \mathfrak{A} f}_{L^2}^2 + \sqrt{\nu} \mathcal{E}_{\text{Int}}^{1/2} \norm{\grad_L \mathfrak{A} f}_{L^2}(\mathcal{D}_{\text{Int,Coord}}^{(h)})^{1/2}. 
\end{align*}
The treatment of $\mathcal{D}_2$ is similar, except that on the support of integrand, the exterior norms are used to estimate $h$, easily yielding
\begin{align*}
\mathcal{D}_1 \lesssim \nu^{1000} (\mathcal{E}^{(\gamma)}_{H})^{1/2} \norm{\grad_L \mathfrak{A} f^I}_{L^2}^2.  
\end{align*}

\subsubsection{The cut-off commutators}
In this section we deal with the terms involving $\partial_v\chi^I$. 
First, we treat the commutators coming from the transport term, which we subdivide as before 
\begin{align*}
\mathcal{C}_T := \brak{\mathfrak{A}f ,\mathfrak{A}\left( (U^E_2) + (U^I_2) \right) \partial_v\chi_I f }.
\end{align*}
By the product rules on $\mathfrak{A}$ and Lemma \ref{lem:ExtToIntf} we have 
\begin{align*}
  \brak{\mathfrak{A} f^I,\mathfrak{A} ( U^E_2 \chi_I' f)} & \lesssim \nu^{1000} \norm{\mathfrak{A} f^I}_{L^2} (\mathcal{E}^{(\gamma)})^{1/2} \norm{ \mathfrak{A}_R (\chi_e^I U^E_2)}_{L^2}. 
\end{align*}
Using the estimates we have available on $U^E$ as in Section \ref{sec:UENL} this is easily integrated to the desired time-scale. 
We have (using Lemma \ref{lem:AphiItriv} and \eqref{ineq:ARtrivBd}), 
\begin{align*}
\brak{\mathfrak{A} f^I,\mathfrak{A} (U_2^I \partial_v \chi^I f)} & \lesssim \norm{\mathfrak{A} f^I}_{L^2} \norm{\mathfrak{A}_R U_2^I}_{L^2} \norm{\mathfrak{A}_R (\partial_v\chi^I f)}_{L^2} \\
&  \lesssim \nu^{1000} \brak{t}^2 \norm{\mathfrak{A} f}_{L^2}(1 + \brak{t}^2 \norm{\mathfrak{A}_R h^I}_{L^2}) \norm{\mathfrak{A} f^I}_{L^2}(\mathcal{E}^{(\gamma)})^{1/2}, 
\end{align*}
which easily integrates. 

Next, we treat the commutators coming from the dissipation, namely
\begin{align*}
  \mathcal{C}_D := -\nu \brak{\mathfrak{A}f,\mathfrak{A} \left( (v')^2 \partial_{vv}\chi^I f - 2 (v')^2 \partial_v\chi^I (\partial_v-t\partial_z) f\right)}.  
\end{align*}
Using Lemmas \ref{lem:ExtToIntf} and \ref{lem:ghExtToInt} we have 
\begin{align*}
\mathcal{C}_D \lesssim \nu^{100} \brak{t} \norm{\mathfrak{A} f}_{L^2} (1 + (\mathcal{E}_H^{(\gamma)})^{1/2})^2 (\mathcal{E}^{(\gamma)})^{1/2}, 
\end{align*}
which is again integrated easily on the relevant time-scales. 
This completes the proof of Proposition \ref{prop:MainInterior}. 

\subsection{Interior coordinate energy estimates} 

The proof of Proposition \ref{prop:MainCoordInt} follow easily from the ideas of  \cite{HI20,BM13} combined with the ideas used to prove Proposition \ref{prop:MainInterior} and so we omit the arguments for the sake of brevity.

%

%

\section{Quasiproduct Bounds in ``Sequential" Gevrey Spaces} \label{sec:q}
\subsection{Sequential Gevrey Norms}

Associated to the energy functional, $\mathcal{E}^{(\gamma)}$, we can define the following norm: 
\begin{align} \label{Xgammasp}
\| \omega \|_{X_\gamma}^2 := &  \sum_{n = 0}^{\infty} \sum_{m = 0}^{\infty} \bold{a}_{m, n}^2  \| \p_x^m q^n \Gamma^n \omega e^W \chi_{n+m}  \|_{L^2}^2.
\end{align}
It will be convenient to slightly generalize this notion; instead of thinking of \eqref{Xgammasp} as a norm on a single function, $\omega$, we can think of it as measuring a sequence of functions $\omega_{n,m}$, which in the case of \eqref{Xgammasp} turn out to be related through the relation $\omega_{n,m} := \p_x^m q^n \Gamma^n \omega$. Therefore, given a sequence of functions, $\{ f_{m,n} \}_{m \ge 0, n \ge 0}$, we define the following ``sequential Gevrey" norms
\begin{align} \label{seqXgamma}
\| \{ f_{m,n} \}_{m, n \ge 0} \|_{X_\gamma}^2 := & \sum_{n = 0}^{\infty} \sum_{m = 0}^{\infty} \bold{a}_{m,n}^2\| q^n f_{m,n} e^W \chi_{n + m} \|_{L^2(\mathbb{T} \times [-1,1])}^2, \\ \label{seqXalpha}
\| \{ f_{m,n} \}_{m, n \ge 0} \|_{X_\alpha}^2 :=& \sum_{n = 0}^{\infty} \sum_{m = 0}^{\infty} \nu \bold{a}_{m,n}^2\| q^n \p_y f_{m,n} e^W \chi_{n + m} \|_{L^2(\mathbb{T} \times [-1,1])}^2.
\end{align}
More generally, we will need to keep track of the various multipliers in our norms. Therefore, we introduce the notation which explicitly depicts various parameters in our normed spaces as follows: 
\begin{definition} Let $f_{m,n}$ be a sequence of functions related by the condition $f_{m,n} := \p_x^m \Gamma^n f$. Let $G_{m,n}$ be a sequence of weight functions. Then we define 
\begin{align}
\| \{ f_{m,n} \} \|_{X^{s}_{p}(\{ G_{m,n} \})}^2 := \sum_{n = 0}^{\infty} \sum_{m = 0}^{\infty} \|  f_{m,n} G_{m,n} \|_{W^{s,p}_{xy}(\mathbb{T} \times [-1,1])}^2
\end{align}
\end{definition}
The sequence $G_{m,n} = G_{m,n}(t, y)$ are weights (one can think of the generic scenario from \eqref{seqXgamma} as $G_{m,n}(t, y) := \bold{a}_{m,n} q^n e^W \chi_{n+m}$). We note that, according to the definitions above, the following two relations trivially hold 
\begin{align}
\| \{ f_{m,n} \} \|_{X_\gamma} =  &\| \{ f_{m,n} \}_{m, n \ge 0} \|_{X^{0}_{2}(\bold{a}_{m,n} q^n e^W \chi_{n + m} )} \\
\| \{ f_{m,n} \} \|_{X_\alpha} = &  \| \{ \nu^{\frac12} \p_y f_{m,n} \}_{m, n \ge 0} \|_{X^{0}_{2}(\bold{a}_{m,n} q^n e^W \chi_{n + m} )}
\end{align}

\subsubsection{Definition of Quasiproduct Bilinear Operators} \label{sbg:1}
We examine the trilinear terms appearing from the $\gamma$ estimate. In particular, recalling the stream function decomposition \eqref{ell:I} -- \eqref{ell:E},  we have 
\begin{align} \label{Tmain:k1}
\bold{T}^{(n)}_{main} :=  & \Gamma^n(\nabla^\perp \phi_{\neq 0} \cdot \nabla  \omega) =  \sum_{n' = 0}^n \binom{n}{n'} ( \Gamma^{n-n'} \nabla^\perp \phi_{\neq 0} \cdot \Gamma^{n'} \nabla \omega )
\end{align}
We will split \eqref{Tmain:k1} into 
\begin{align} \label{Tmain:k2}
\bold{T}^{(n)}_{main} :=  \bold{T}^{(n, I)}_{main} + \bold{T}^{(n, E)}_{main}
\end{align}
depending on the corresponding decomposition of the stream function $\phi_{\neq 0}$ into $\phi^{(I)}_{\neq 0}$ and $\phi^{(E)}_{\neq 0}$. We commute $\p_x^m \Gamma^n$, which produces (for $U \in \{E, I\}$):
\begin{align} \n
\bold{T}^{(U, \gamma)}_{m,n} := & \p_x^m \Gamma^n \{ \nabla^\perp \phi^{(U)}_{\neq 0} \cdot \nabla \omega \} \\ \n
= & \sum_{m' = 0}^{m} \sum_{n' = 0}^n \mathbbm{1}_{n' + m' < n + m} \binom{m}{m'} \binom{n}{n'}  (\nabla^\perp \phi^{(U)}_{\neq 0})_{m-m', n-n'} \cdot \nabla  \omega_{m', n'} + \nabla^\perp \phi^{(U)}_{\neq 0} \cdot \nabla \p_x^m \Gamma^n \omega \\ \n
= & \sum_{m' = 0}^{m} \sum_{n' = 0}^n \mathbbm{1}_{n' + m' < n + m} \binom{m}{m'} \binom{n}{n'} ( \Gamma \phi^{(U, \neq 0)}_{m-m', n-n'} \p_x \omega_{m', n'} - \p_x \phi^{(U, \neq 0)}_{m-m', n-n'} \Gamma \omega_{m', n'}) \\ \label{joey:2}
& +  \nabla^\perp \phi^{(U)}_{\neq 0} \cdot \nabla \p_x^m \Gamma^n \omega \\ \n
= & \sum_{i = 1}^2 T_{m,n}^{(i)}[ \{ \phi^{(U, \neq 0)}_{m, n}\}, \{\omega_{m,n}\}] +\nabla^\perp \phi^{(U, \neq 0)} \cdot \nabla \p_x^m \Gamma^n \omega. 
\end{align}
Above, we have used the following identity. First, we define 
\begin{align}
\slashed{\nabla} := (\p_x, \Gamma),
\end{align}
Then, we have for abstract functions $f(t, x, y), g(t, x, y)$
\begin{align} \label{id:gamma:re}
\nabla^\perp g \cdot \nabla f = - \p_x g \p_y f + \p_y g \p_x f = v_y(- \p_x g \Gamma f + \Gamma g \p_x f) = v_y \slashed{\nabla}^\perp g \cdot \slashed{\nabla} f. 
\end{align}
Above, we have defined the following sequential operators: 
\begin{align} \label{odesza:1}
T_{m,n}^{(1)}[ \{b_{m, n}\}, \{\omega_{m,n}\}] := & \sum_{m' = 0}^{m} \sum_{n' = 0}^n \mathbbm{1}_{n' + m' < n + m} \binom{m}{m'} \binom{n}{n'} \Gamma b_{m-m', n-n'} \p_x \omega_{m', n'} \\ \label{odesza:2}
 T_{m,n}^{(2)}[ \{b_{m, n}\}, \{\omega_{m,n}\}] := &-\sum_{m' = 0}^{m} \sum_{n' = 0}^n \mathbbm{1}_{n' + m' < n + m} \binom{m}{m'} \binom{n}{n'} \p_x b_{m-m', n-n'} \Gamma \omega_{m', n'}.
\end{align}
As a matter of notation, we will think of the operators, \eqref{odesza:1} -- \eqref{odesza:2}, as abstract operators. Therefore, the sequence of functions $b_{m,n}$ will denote an abstract sequence of functions related by the relations $b_{m,n} = \p_x^m \Gamma^n b$ and all satisfying the boundary condition $b_{m,n}|_{y = \pm 1} = 0$ (eventually, $b_{m,n}$ will either represent $\phi^{(I)}_{\neq 0}$ or $\phi^{(E)}_{\neq 0}$). 

As a matter of conception, we regard the operators \eqref{odesza:1} -- \eqref{odesza:2} ``quasiproducts". Indeed, $T^{(1)}_{m,n}$ coincides with $\Gamma^n \p_x^m \{ \Gamma b \p_x \omega \}$, with the exception of the term $\Gamma b \p_x^{m+1} \Gamma^n \omega$ (the ``quasilinear" term). 

We now define the following operators
\begin{align} \label{bilinear:Q}
\mathcal{Q}_{m,n}[\{b_{m, n}\}, \{\omega_{m,n}\}] := & \begin{cases} 0 , \qquad m + n \le 4N \\ \sum_{i = 1}^2 T^{(i)}_{m,n}[\{b_{m, n}\}, \{\omega_{m,n}\}], \qquad m + n > 16N  \end{cases} \\ \label{bilinear:R}
\mathcal{R}_{m,n}[\{b_{ m, n}\}, \{\omega_{m,n}\}] := & \begin{cases} \sum_{i = 1}^2 T^{(i)}_{m,n}[\{b_{m, n}\}, \{\omega_{m,n}\}] , \qquad m + n \le 16N \\ 
0 \qquad m + n > 16N, 
\end{cases}
\end{align}
where $N >> 1$ is a large parameter. We will define $\mathcal{Q}^{(i)}_{m,n}[\{b_{m, n}\}, \{\omega_{m,n}\}]$ and $\mathcal{R}^{(i)}_{m,n}[\{b_{m, n}\}, \{\omega_{m,n}\}]$ in the natural manner. 
 
We now define the quantities
\begin{subequations}
\begin{align}   \label{aug5:a}
\bold{T}^{(U, \gamma, 1)}_{m,n} := & \mathcal{R}_{m,n}[\{\phi^{(U)}_{\neq 0, m, n}\}, \{\omega_{m,n}\}],  \\ \label{aug5:b}
\bold{T}^{(U, \gamma, 2)}_{m,n}  := & \mathcal{Q}_{m,n}[\{\phi^{(U)}_{\neq 0, m, n}\}, \{\omega_{m,n}\}], \\ \label{aug5:d}
\bold{T}^{(U, \gamma, 3)}_{m,n} := &v_y \slashed{\nabla}^\perp \phi^{(U)}_{\neq 0} \cdot \slashed{\nabla} \p_x^m \Gamma^n \omega.
\end{align}
\end{subequations}
The following decomposition then is clear:
\begin{align} \label{jalen:brunson:for:three}
\bold{T}^{(U, \gamma)}_{m,n} = \sum_{i = 1}^3 \bold{T}^{(U, \gamma, i)}_{m,n},  \qquad U \in \{I, E \}. 
\end{align}

When we treat the $\alpha$ and $\mu$ estimates, we will need to apply $\nabla$. In the forthcoming, we collect all relevant definitions for this procedure, which will then be used below in Section \ref{jaylen:1}. First, in order to treat $\alpha$ and $\mu$ at once (their estimates are almost fully analogous), we introduce the notation
\begin{align}
\xi \in \{\alpha, \mu\}, \qquad \p_{x_\xi} = \begin{cases} \p_{x} \qquad \xi = \mu \\ \p_y \qquad \xi = \alpha \end{cases}. 
\end{align}
We then define for $\xi \in \{ \alpha, \mu \}$,  
\begin{align} \n
\bold{T}^{(U, \xi)}_{m,n} := \p_{x_\xi} \bold{T}^{(U, \gamma)}_{m,n} = & \mathcal{R}_{m,n}[\{\p_{x_\xi} \phi^{(U)}_{\neq 0, m, n}\}, \{\omega_{m,n}\}] + \mathcal{R}_{m,n}[\{\phi^{(U)}_{\neq 0, m, n}\}, \{\p_{x_\xi} \omega_{m,n}\}] \\ \n
& +  \mathcal{Q}_{m,n}[\{\p_{x_\xi} \phi^{(U)}_{\neq 0, m, n}\}, \{\omega_{m,n}\}] + \mathcal{Q}_{m,n}[\{\phi^{(U)}_{\neq 0, m, n}\}, \{\p_{x_\xi} \omega_{m,n}\}]  \\ \n
& + \p_{x_\xi} \slashed{\nabla}^\perp \phi^{(U)}_{\neq 0} \cdot \slashed{\nabla} \p_x^m \Gamma^n \omega + \slashed{\nabla}^\perp \phi^{(U)}_{\neq 0} \cdot \p_{x_\xi} \slashed{\nabla} \p_x^m \Gamma^n \omega \\ \label{harden:1}
= & \sum_{i = 1}^{6} \bold{T}^{(U, \xi, i)}_{m,n}.
\end{align}
We will need to introduce the Fourier transform for each of these quantities: 
\begin{align}
 \bold{T}^{(U, \gamma)}_{m,n} = \sum_{k \in \mathbb{Z}} e^{ikx} \bold{T}^{(U, \gamma)}_{m,n;k}, \qquad \bold{T}^{(U, \alpha)}_{m,n} = \sum_{k \in \mathbb{Z}} e^{ikx} \bold{T}^{(U, \alpha)}_{m,n;k}, \qquad \bold{T}^{(U, \mu)}_{m,n} = \sum_{k \in \mathbb{Z}} e^{ikx} \bold{T}^{(U, \mu)}_{m,n;k}
\end{align}

\subsection{Estimates on Abstract Bilinear Operators I: interior $\mathcal{R}[\cdot, \cdot]$ bounds}

We first obtain some bounds on the bilinear operator $\mathcal{R}_{m,n}[\cdot, \cdot]$.
\begin{lemma} \label{Lemma:restless:bones} Assume $b_{m,0}|_{y = \pm 1} = 0$. The following bounds are valid on $\mathcal{R}_{m,n}$:
\begin{align} \n
&\| \{  \mathcal{R}_{m,n}[\{b_{m,n}\}, \{\omega_{m,n}\}]  \|_{{X^{0}_2(\bold{a}_{m,n} q^n e^W \chi_{n + m} )}} \\ \n
&\lesssim ( \langle t \rangle \| \{  b_{m, n} \} \|_{X^{0}_\infty(\mathbbm{1}_{m + N < 17N})} +\langle t \rangle^2 \| \{  b_{m, n} \} \|_{X^{0}_\infty(\chi_2 \mathbbm{1}_{m + N < 17N} )}) \\ \label{snow:patrol:1}
& \qquad \| \{ \omega_{m,n} \} \|_{X^{0}_2(\mathbbm{1}_{m + N < 16N} \bold{a}_{m,n} q^n e^W \chi_{n + m})}.
\end{align}
\end{lemma}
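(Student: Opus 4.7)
The structure of the proof is that of a finite-sum Hölder estimate. Since $\mathcal{R}_{m,n}$ is supported in $\{m+n \le 16N\}$ and each $T^{(i)}_{m,n}$ involves the further restriction $\mathbbm{1}_{n'+m'<n+m}$, all binomial coefficients and all Gevrey $B$-weight ratios $B_{m,n}/B_{\ast,\ast}$ are $O_N(1)$. No true Gevrey combinatorics enter: the hard truncation shuts down the high-frequency tail entirely. What remains is to bound each summand by Hölder and then track the $\varphi$-factors inside $\bold{a}_{m,n}=B_{m,n}\varphi^{1+n}$ together with the spatial weights $q^n$, $e^W$, $\chi_{n+m}$.

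For the $T^{(1)}$-piece I would use the identity $\Gamma b_{m-m',n-n'}\p_x\omega_{m',n'}=b_{m-m',n-n'+1}\,\omega_{m'+1,n'}$ and Hölder as $b\in L^\infty$, $\omega\in L^2$. All spatial weights are pushed onto the $\omega$-factor: $q^n\le q^{n'}$ (from $q\in(0,1]$ and $n\ge n'$); $\chi_{n+m}\le\chi_{n'+m'+1}$, using the nested-support property \eqref{chi:prop:2} applied together with the strict inequality $n'+m'<n+m$; and $e^W$ is retained intact. The Gevrey-coefficient ratio
$$\bold{a}_{m,n}/\bold{a}_{m'+1,n'}=(B_{m,n}/B_{m'+1,n'})\,\varphi^{n-n'}\;\lesssim_N\; 1$$
is bounded since $\varphi\le 1$ and $n\ge n'$, so $T^{(1)}$ produces no $\langle t\rangle$-loss. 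For $T^{(2)}$ the analogous identity is $\p_x b_{m-m',n-n'}\Gamma\omega_{m',n'}=b_{m-m'+1,n-n'}\,\omega_{m',n'+1}$, and the crucial ratio becomes
$$\bold{a}_{m,n}/\bold{a}_{m',n'+1}=(B_{m,n}/B_{m',n'+1})\,\varphi^{n-n'-1}\;\lesssim_N\;\varphi^{-1}\sim\langle t\rangle$$
in the worst case $n'=n$ (forcing $m'\le m-1$). Summing the finite collection of contributions by Cauchy--Schwarz then gives the first term on the RHS of \eqref{snow:patrol:1}, with its $\langle t\rangle$-prefactor on the unconstrained $b$-norm.

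The second term on the RHS, with the $\chi_2$-localized $b$-norm but heavier $\langle t\rangle^2$-loss, is a complementary bound designed to be combined with the first: it is profitable precisely when the $\chi_2$-restricted $L^\infty$ norm of $b$ is strictly smaller than the unrestricted one (as happens for $b=\phi^{(I)}_{\neq 0}$, which is concentrated in the deep interior $\{\chi_2\equiv 0\}$), so that the extra $\langle t\rangle$-loss is dominated by the extra smallness extracted from $\chi_2 b$. Structurally this bound is obtained by decomposing $b=\chi_2 b+(1-\chi_2)b$: the $(1-\chi_2)b$-contribution is absorbed by the argument of the previous paragraph (producing the first RHS term), while the $\chi_2 b$-contribution is inserted into a modified Hölder chain in which one of the $\omega$-derivative shifts is traded for an additional $\p_x$-shift via the identity $\Gamma=v_y^{-1}\p_y+t\p_x$. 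This conversion, linear in $t$, pays the extra factor of $\langle t\rangle$, yielding the $\langle t\rangle^2$ prefactor on $\|\chi_2 b\|_{L^\infty}$.

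I expect the main obstacle to be the careful bookkeeping of the cutoffs $\chi_{n+m}$ and the co-normal weights $q^n$ across all index shifts $(m,n)\mapsto(m-m',n-n'+1)$, $(m',n')\mapsto(m'+1,n')$ arising from $T^{(1)}$ and $T^{(2)}$; in particular, the repeated use of the support-nesting inequality $\chi_{n+m}\le\chi_{n'+m'+1}$ under the constraint $n'+m'<n+m$ must be verified at each step. Once this combinatorial bookkeeping is in place, the rest of the argument is a routine finite chain of Hölder and Cauchy--Schwarz bounds, with no high-regularity or Gevrey subtleties, and the finite index range $m+n\le 16N$ absorbs every combinatorial coefficient into the implicit constant.
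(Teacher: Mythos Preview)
Your treatment of $T^{(1)}$ is essentially the paper's. The gap is in $T^{(2)}$, and it is precisely the point where the hypothesis $b_{m,0}|_{y=\pm 1}=0$ enters --- a hypothesis your argument never invokes.

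In $T^{(2)}$, the $\omega$-factor is $\omega_{m',n'+1}$, which the target norm pairs with $q^{n'+1}$. But the left-hand side carries only $q^n$. When $n'<n$ this is harmless since $q^n\le q^{n'+1}$, but in the case $n'=n$ (which you correctly flag as the worst $\varphi$-case) one has $q^n=q^{n'+1}\cdot q^{-1}$, and the singular factor $q^{-1}$ must land on the $b$-side. Your proposal is silent on this; you track the cutoffs $\chi_{n+m}$ and the $\varphi$-ratios carefully but never account for the $q$-budget in $T^{(2)}$.

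The paper handles this via the weight $w_{n,n'}(y)$ equal to $1$ if $n'<n$ and $q^{-1}$ if $n'=n$, and then controls $\|b_{m-m'+1,0}\,w_{n,n}\,\chi_2\|_{L^\infty}$ by a Hardy-type inequality that uses $b_{m-m'+1,0}|_{y=\pm 1}=0$:
\[
\|q^{-1}b_{m-m'+1,0}\chi_3\|_{L^\infty}\;\lesssim\;\|\partial_y b_{m-m'+1,0}\chi_2\|_{L^\infty}+\|b_{m-m'+1,0}\chi_2\|_{L^\infty}.
\]
The resulting $\partial_y b$ is then rewritten via $\partial_y=v_y(\Gamma-t\partial_x)$, which is where the extra factor of $\langle t\rangle$ and the $\chi_2$-localized norm actually come from. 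Your alternative explanation --- a spatial decomposition $b=\chi_2 b+(1-\chi_2)b$ with the $\chi_2$-piece handled by trading an $\omega$-shift for a $t\partial_x$-shift --- does not produce a closed estimate: on the $\chi_2$-support the $q^{-1}$ is still singular, and converting $\Gamma\omega_{m',n}$ to $v_y^{-1}\partial_y\omega_{m',n}+t\omega_{m'+1,n}$ leaves the $\partial_y\omega$ term outside the $\{\omega_{m,n}\}$ family. The fact that the boundary hypothesis never appears in your argument, while the very next lemma in the paper drops that hypothesis and obtains a genuinely different conclusion (paying a $\partial_y$ on the $\omega$-side instead), confirms that this Hardy step is the essential mechanism you are missing.
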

\begin{proof} We recall the definition \eqref{bilinear:R}. We first treat the case of $T^{(1)}_{m,n}$. We estimate 
\begin{align} \n
&\| \{  \mathcal{R}^{(1)}_{m,n}[\{b_{m,n}\}, \{\omega_{m,n}\}]  \|_{{X^{0}_2(\bold{a}_{m,n} q^n e^W \chi_{n + m} )}}^2 \\ \n
= &\sum_{n = 0}^\infty \sum_{m = 0}^\infty  \mathbbm{1}_{n + m < 16N} \bold{a}_{n,m}^2 \| \sum_{n' = 0}^n \sum_{m' =0}^m \binom{n}{n'} \binom{m}{m'} \mathbbm{1}_{n' + m' < n + m} \Gamma b_{m - m', n - n'} \p_x \omega_{m', n'} \chi_{n + m} e^W q^n \|_{L^2}^2 \\ \label{kijui:1}
\lesssim & \sum_{n = 0}^\infty \sum_{m = 0}^\infty \mathbbm{1}_{n +m \le 16N}\bold{a}_{n,m}^2 ( \sum_{n' = 0}^n \sum_{m' =0}^m  \| \Gamma b_{m - m', n - n'} \|_{L^\infty} \| \omega_{m' + 1, n'} \chi_{n' + 1 + m'} e^W q^{n'} \|_{L^2} )^2\\ \n
\lesssim & \sum_{n = 0}^\infty \sum_{m = 0}^\infty \mathbbm{1}_{n +m \le 16N} ( \sum_{n' = 0}^n \sum_{m' =0}^m  \| \Gamma b_{m - m', n - n'} \|_{L^\infty} \varphi^{n'+1} \| \omega_{m' + 1, n'} \chi_{n' + 1 + m'} e^W q^{n' } \|_{L^2} )^2 \\ \n
\lesssim & \sum_{n = 0}^\infty \sum_{m = 0}^\infty \mathbbm{1}_{n +m \le 16N} ( \sum_{n' = 0}^n \sum_{m' =0}^m  \| \Gamma b_{m - m', n - n'} \|_{L^\infty}^2 ) (\sum_{n' = 0}^n \sum_{m' =0}^m \varphi^{2(n'+1)} \| \omega_{m' + 1, n'} \chi_{n' + 1 + m'} e^W q^{n' } \|_{L^2}^2) \\ \n
\lesssim & ( \sum_{n + m \le 16N} \| \Gamma b_{m, n } \|_{L^\infty}^2)( \sum_{m + n \le 4N + 1} \varphi^{2n} \| \omega_{m, n} \chi_{n + m} e^W q^{n} \|_{L^2}^2  ) \\ \n
\lesssim & \| \{  b_{ m, n} \} \|_{X^{0}_\infty(\mathbbm{1}_{m + N < 17N})}^2 \| \{ \omega_{m,n} \} \|_{X^{0}_2(\varphi^{n + 1}\mathbbm{1}_{m + n < 4N} q^n e^W \chi_{n + m})}^2
\end{align}
where we have used that $m' + n' + 1 \le m + n$ in order to insert the appropriate cutoff function $\chi_{n + m} \le \chi_{n' + m' + 1}$. Similarly, we have $\varphi^{n} \le \varphi^{n'}$. 

We next treat the case of $T^{(2)}_{m,n}$.  We estimate 
\begin{align} \n
&\| \{  \mathcal{R}^{(2)}_{m,n}[\{b_{m,n}\}, \{\omega_{m,n}\}]  \|_{{X^{0}_2(\bold{a}_{m,n} q^n e^W \chi_{n + m} )}}^2 \\ \n
= &\sum_{n = 0}^\infty \sum_{m = 0}^\infty  \mathbbm{1}_{n + m < 16N} \bold{a}_{n,m}^2 \| \sum_{n' = 0}^n \sum_{m' =0}^m \binom{n}{n'} \binom{m}{m'} \mathbbm{1}_{n' + m' < n + m} \p_x b_{m - m', n - n'} \Gamma \omega_{m', n'} \chi_{n + m} e^W q^n \|_{L^2}^2 \\ \n
\lesssim & \sum_{n = 0}^\infty \sum_{m = 0}^\infty \mathbbm{1}_{n +m \le 16N}\bold{a}_{n,m}^2 ( \sum_{n' = 0}^n \sum_{m' =0}^m  \| \p_x b_{m - m', n - n'} w_{n,n'}(y) \|_{L^\infty} \| \omega_{m', n' + 1} \chi_{n' + 1 + m'} e^W q^{n' + 1} \|_{L^2} )^2\\ \label{sca:1}
\lesssim & \sum_{n = 0}^\infty \sum_{m = 0}^\infty \mathbbm{1}_{n +m \le 16N} ( \sum_{n' = 0}^n \sum_{m' =0}^m  \|  b_{m - m'+1, n - n'} w_{n,n'}(y) \varphi^{-1}\|_{L^\infty} \varphi^{n' + 2} \| \omega_{m' , n'+1} \chi_{n' + 1 + m} e^W q^{n'+1 } \|_{L^2} )^2 \\ \n
\lesssim & \sum_{n = 0}^\infty \sum_{m = 0}^\infty \mathbbm{1}_{n +m \le 16N} ( \sum_{n' = 0}^n \sum_{m' =0}^m  \|  b_{m - m'+1, n - n'} \varphi^{-1} \|_{L^\infty}^2 +  \| \chi_2 \p_y b_{m - m'+1, n - n'} \varphi^{-1} \|_{L^\infty}^2 ) \\ \label{sca:2}
&\qquad \times (\sum_{n' = 0}^n \sum_{m' =0}^m \varphi^{2(1 + n')} \| \omega_{m' , n' + 1} \chi_{n' + 1 + m'} e^W q^{n' + 1 } \|_{L^2}^2) \\ \n
\lesssim & ( \sum_{n + m \le 16N} \| \varphi^{-1} \p_x b_{m, n } \|_{L^\infty}^2 + \|\varphi^{-1} \p_{xy} b_{m, n } \chi_2 \|_{L^\infty}^2)( \sum_{m + n \le 4N + 1} \varphi^{2(1 +  n)} \| \omega_{m, n} \chi_{n + m} e^W q^{n } \|_{L^2}^2  ),
\end{align}
where we have used that $m' + n' + 1 \le m + n$ in order to insert the appropriate cutoff function $\chi_{n + m} \le \chi_{n' + m' + 1}$. Similarly, we have $\varphi^{n} \le \varphi^{n'+ 1} \varphi^{-1}$. Moreover, we have defined 
\begin{align} \label{def:w:n:n'}
w_{n,n'}(y) := \begin{cases} 1 \qquad n' < n \\ \frac{1}{q} \qquad n' = n \end{cases}.
\end{align}
To go from \eqref{sca:1} to \eqref{sca:2}, we have used the following Hardy-type inequality
\begin{align} \n
\| b_{m-m' + 1, n-n'} w_{n, n'} \|_{L^\infty} \lesssim &\| b_{m-m' + 1, n-n'} w_{n, n'} \{1 - \chi_3 \} \|_{L^\infty} + \| b_{m-m' + 1, n-n'} w_{n, n'} \chi_3 \|_{L^\infty} \\ \n
\lesssim & \| b_{m-m' + 1, n-n'}  \{1 - \chi_3 \} \|_{L^\infty} + \mathbbm{1}_{n' = n} \| \p_y b_{m-m' + 1, 0} \chi_3 \|_{L^\infty} \\ \n
&+  \mathbbm{1}_{n' = n} \| b_{m-m' + 1, 0} \chi_3' \|_{L^\infty} \\ \n
\lesssim & \| b_{m-m' + 1, n-n'}  \|_{L^\infty} + \mathbbm{1}_{n' = n} \| \p_y b_{m-m' + 1, 0} \chi_2 \|_{L^\infty} \\
&+  \mathbbm{1}_{n' = n} \| b_{m-m' + 1, 0} \chi_2 \|_{L^\infty},
\end{align}
which is valid due to the hypothesis that $b_{m-m'+1,0}|_{y = \pm 1} = 0$. 

To conclude, we rewrite the term that depends on $b$ from the final line as follows: 
\begin{align*}
&( \sum_{n + m \le 16N} \| \varphi^{-1} \p_x b_{m, n } \|_{L^\infty}^2 + \|\varphi^{-1} \p_{xy} b_{m, n } \chi_2 \|_{L^\infty}^2) \\
\lesssim &\langle t \rangle \sum_{n + m \le 17N}  \| b_{m,n} \|_{L^\infty}^2 + \langle t \rangle \sum_{n + m \le 16N+1}  \| \frac{1}{v_y} \overline{\p}_v b_{m,n} \chi_2 \|_{L^\infty}^2 \\
\lesssim & \langle t \rangle \sum_{n + m \le 17N}  \| b_{m,n} \|_{L^\infty}^2 + \|\frac{1}{v_y}\|_{L^\infty} \langle t \rangle \sum_{n + m \le 16N+1}  \| (\Gamma + t \p_x) b_{m,n} \chi_2 \|_{L^\infty}^2 \\
\lesssim & \langle t \rangle \sum_{n + m \le 17N}  \| b_{m,n} \|_{L^\infty}^2 + \|\frac{1}{v_y}\|_{L^\infty} \langle t \rangle^2 \sum_{n + m \le 17N}  \|  b_{m,n} \chi_2\|_{L^\infty}^2.
\end{align*}
This concludes the proof of the lemma. 
\end{proof} 

We need a variant of this lemma which applies in the more general case when $b_{m,0}|_{y = \pm 1}$ is not assumed to vanish. In this case, we will have to pay derivative on the $\omega_{m,n}$ component. Indeed, we have the following:
\begin{lemma}The following bounds are valid on $\mathcal{R}_{m,n}$:
\begin{align} \n
&\| \{  \mathcal{R}_{m,n}[\{b_{m,n}\}, \{\omega_{m,n}\}]  \|_{{X^{0}_2(\bold{a}_{m,n} q^n e^W \chi_{n + m} )}} \\ \n
\lesssim & \langle t \rangle \| \{  b_{m, n} \} \|_{X^{0}_\infty(\mathbbm{1}_{m + N < 17N})}  \| \{ \omega_{m,n} \} \|_{X^{0}_2(\mathbbm{1}_{m + N < 16N} \bold{a}_{m,n} q^n e^W \chi_{n + m})} \\ \label{office:1}
& +  \| \{  b_{m, n} \} \|_{X^{0}_\infty(\chi_2 \mathbbm{1}_{m + N < 17N})}  \| \{\p_y  \omega_{m,n} \} \|_{X^{0}_2(\mathbbm{1}_{m + N < 16N} \bold{a}_{m,n} q^n e^W \chi_{n + m})}.
\end{align}
\end{lemma}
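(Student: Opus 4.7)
My plan is to adapt the proof of Lemma \ref{Lemma:restless:bones} with the key modification that the Hardy inequality step — which required the boundary condition $b_{m,0}|_{y=\pm 1} = 0$ — is avoided entirely. I will begin by observing that the estimate for $\mathcal{R}^{(1)}_{m,n}$, which involves the pairing $\Gamma b \cdot \partial_x \omega$, proceeds verbatim from the original proof and contributes to the first term on the right-hand side of \eqref{office:1}; no boundary issue arises since the weight $q^n$ matches $q^{n'}$ naturally.

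The main new work concerns $\mathcal{R}^{(2)}_{m,n}$, which contains the pairing $\partial_x b \cdot \Gamma\omega$. In the original lemma, the only use of $b_{m,0}|_{y=\pm 1}=0$ came here: when one expanded $\Gamma\omega_{m',n'}$ into $\omega_{m',n'+1}$ (which carries the natural weight $q^{n'+1}$), matching the outer $q^n$ left a stray $q^{-1}$ in the case $n'=n$, which Hardy converted to $\partial_y b$ near the boundary. My strategy is to bypass this step by decomposing the vector field directly on $\omega$ via
\[
\Gamma \omega_{m',n'} \;=\; \tfrac{1}{v_y}\,\partial_y \omega_{m',n'} \;+\; t\,\partial_x \omega_{m',n'},
\]
so that the boundary-sensitive loss is absorbed by a derivative landing on $\omega$ rather than on $b$. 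The $t\partial_x$ contribution $t\,\partial_x b_{m-m',n-n'}\cdot\partial_x\omega_{m',n'} = t\,b_{m-m'+1,n-n'}\,\omega_{m'+1,n'}$ reduces to exactly the $\mathcal{R}^{(1)}$ estimate with an extra $\langle t\rangle$, yielding the first term on the RHS. For the $v_y^{-1}\partial_y$ contribution, I will use $\|v_y^{-1}\|_{L^\infty}\lesssim 1$ from the bootstrap, the monotonicity $q^n \le q^{n'}$ (since $q \le 1$ and $n'\le n$), and the nesting $\chi_{m+n}\le \chi_{m'+n'+1}$ (from $m'+n'<m+n$), to obtain the pointwise pairing $\|b_{m-m'+1,n-n'}\|_{L^\infty}\cdot\|\partial_y \omega_{m',n'}\, q^{n'}\,\chi_{m'+n'+1}\,e^W\|_{L^2}$, which is precisely the structure of the second term on the RHS.

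To recover the $\chi_2$ localization on $b$ in that second term, I will further split $b = b\chi_2 + b(1-\chi_2)$. The support of $(1-\chi_2)$ lies in $\{|y|<y_2\}$, while $\chi_{m+n}$ lives in $\{|y|\ge x_{m+n}\}$; by the numerology of the sequences $\{x_k\}$ and $\{y_k\}$ defined in \eqref{chi}, one has $x_{m+n}>y_2$ as soon as $m+n\ge 3$, so $(1-\chi_2)\chi_{m+n}\equiv 0$ in that range and only the $b\chi_2$ part survives. The finitely many remaining small-index contributions from $b(1-\chi_2)$ are supported in a fixed compact interior region where $e^W$, $q^n$, and $\chi_{m+n}$ are all bounded by universal constants, and can be absorbed into either RHS term.

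The combinatorial closure — Cauchy--Schwarz in the inner sums over $(m',n')$, reindexing the outer sums over $(m,n)$ after the shift $m\mapsto m-m'+1$, and extracting the cutoff $\mathbbm{1}_{m+n<17N}$ from $\mathbbm{1}_{m+n\le 16N}$ — is identical to the closing argument in the proof of Lemma \ref{Lemma:restless:bones}, together with the observation that $B_{m,n}$ and $\varphi^{n+1}$ behave monotonically under the relevant index inequalities, so I will omit these routine details. The conceptual obstacle, and the only genuinely new ingredient, is the first step: recognizing that the Hardy inequality (and with it the boundary condition on $b$) can be traded for the commutator expansion of $\Gamma$, transferring a boundary-sensitive derivative of $b$ into an interior derivative of $\omega$ — which is precisely why the second term on the RHS of \eqref{office:1} carries $\partial_y\omega$ rather than $\omega$.
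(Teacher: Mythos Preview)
Your approach is essentially the paper's: both leave $\mathcal{R}^{(1)}$ alone and, for $\mathcal{R}^{(2)}$, replace the Hardy step by the expansion $\Gamma\omega_{m',n'}=v_y^{-1}\partial_y\omega_{m',n'}+t\,\partial_x\omega_{m',n'}$ paired with an interior/exterior split. The only difference is the choice of cutoff: the paper splits first via $\chi_I$ versus $1-\chi_I$ (on $\text{supp}\,\chi_I$ one has $q\equiv 1$ so the previous lemma's argument runs without the weight $w_{n,n'}$; on $\text{supp}(1-\chi_I)\subset\{|y|\ge 3/4\}$ one has $\chi_2\equiv 1$ automatically, so the $\partial_y$-contribution acquires the $\chi_2$ on $b$ for free at every index), whereas you decompose $\Gamma$ first and then split $b=b\chi_2+b(1-\chi_2)$, leaving a residual small-index case $m+n\le 2$. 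Your handling of that residue has a minor imprecision: $e^W$ is \emph{not} bounded by a universal constant on the interior annulus (it blows up like $\exp(c/(\nu t))$), though this does not matter since the same $e^W$ passes straight through to the $\omega$-factor on the RHS. The correct reason the residue is harmless is that on $\text{supp}\big((1-\chi_2)\chi_{m+n}\big)$ one has $q=1$, so $\partial_y\omega_{m',n'}=v_y(\omega_{m',n'+1}-t\,\omega_{m'+1,n'})$ can be converted back and routed through the first RHS term with its $\langle t\rangle$ factor.
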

\begin{proof} First of all, note that the estimate of $\mathcal{R}^{(1)}_{m,n}[\cdot, \cdot]$ from the previous lemma did not invoke the hypothesis $b_{m,0}|_{y = \pm 1} = 0$, and therefore the following estimate still applies in the present lemma: 
\begin{align} \n
&\| \{  \mathcal{R}^{(1)}_{m,n}[\{b_{m,n}\}, \{\omega_{m,n}\}]  \|_{{X^{0}_2(\bold{a}_{m,n} q^n e^W \chi_{n + m} )}}^2 \\ \n
\lesssim & \| \{  b_{ m, n} \} \|_{X^{0}_\infty(\mathbbm{1}_{m + N < 17N})}^2 \| \{ \omega_{m,n} \} \|_{X^{0}_2(\varphi^{n + 1}\mathbbm{1}_{m + n < 4N} q^n e^W \chi_{n + m})}^2.
\end{align}
We therefore estimate $\mathcal{R}^{(2)}_{m,n}[\cdot, \cdot]$, which we decompose as follows:
\begin{align*}
&\| \{  \mathcal{R}^{(2)}_{m,n}[\{b_{m,n}\}, \{\omega_{m,n}\}]  \|_{{X^{0}_2(\bold{a}_{m,n} q^n e^W \chi_{n + m} )}}^2 \\ \n
\le &\sum_{n = 0}^\infty \sum_{m = 0}^\infty  \mathbbm{1}_{n + m < 16N} \bold{a}_{n,m}^2 \| \sum_{n' = 0}^n \sum_{m' =0}^m \binom{n}{n'} \binom{m}{m'} \mathbbm{1}_{n' + m' < n + m} \p_x b_{m - m', n - n'} \Gamma \omega_{m', n'} \chi_{n + m} e^W q^n \chi_I \|_{L^2}^2 \\
& + \sum_{n = 0}^\infty \sum_{m = 0}^\infty  \mathbbm{1}_{n + m < 16N} \bold{a}_{n,m}^2 \| \sum_{n' = 0}^n \sum_{m' =0}^m \binom{n}{n'} \binom{m}{m'} \mathbbm{1}_{n' + m' < n + m} \p_x b_{m - m', n - n'} \Gamma \omega_{m', n'} \chi_{n + m} e^W q^n \\
& \times (1- \chi_I) \|_{L^2}^2 \\
=&: I_{\text{In}} + I_{\text{Out}}.
\end{align*}
The contribution $I_{\text{In}}$ is estimated by following exactly the proof of $\mathcal{R}^{(2)}_{m,n}$ from the previous lemma, without needing to introduce the weight $w_{n,n'}$. As a result, one finds 
\begin{align*}
I_{\text{In}} \lesssim  & \langle t \rangle^2 \| \{  b_{ m, n} \} \|_{X^{0}_\infty(\mathbbm{1}_{m + N < 17N})}^2 \| \{ \omega_{m,n} \} \|_{X^{0}_2(\varphi^{n + 1}\mathbbm{1}_{m + n < 4N} q^n e^W \chi_{n + m})}^2
\end{align*}
We further decompose $I_{\text{Out}}$ depending on the action of the vector-field $\Gamma$ as follows 
\begin{align*}
I_{\text{Out}} \lesssim&  \sum_{n = 0}^\infty \sum_{m = 0}^\infty  \mathbbm{1}_{n + m < 16N} \bold{a}_{n,m}^2 \| \sum_{n' = 0}^n \sum_{m' =0}^m \binom{n}{n'} \binom{m}{m'} \mathbbm{1}_{n' + m' < n + m} \p_x b_{m - m', n - n'} \p_y \omega_{m', n'} \chi_{n + m} e^W q^n \\
& \times (1- \chi_I) \|_{L^2}^2 \\
& + \langle t \rangle^2 \sum_{n = 0}^\infty \sum_{m = 0}^\infty  \mathbbm{1}_{n + m < 16N} \bold{a}_{n,m}^2 \| \sum_{n' = 0}^n \sum_{m' =0}^m \binom{n}{n'} \binom{m}{m'} \mathbbm{1}_{n' + m' < n + m} \p_x b_{m - m', n - n'}  \omega_{m'+1, n'} \chi_{n + m} e^W q^n \\
& \times (1- \chi_I) \|_{L^2}^2 := I_{\text{Out,1}} + I_{\text{Out,2}}.
\end{align*}
The contribution $I_{\text{Out,2}}$ is estimated in the same way as $I_{\text{In}}$, and produces  
\begin{align*}
I_{\text{Out,2}} \lesssim  & \langle t \rangle^2 \| \{  b_{ m, n} \} \|_{X^{0}_\infty(\mathbbm{1}_{m + N < 17N})}^2 \| \{ \omega_{m,n} \} \|_{X^{0}_2(\varphi^{n + 1}\mathbbm{1}_{m + n < 4N} q^n e^W \chi_{n + m})}^2
\end{align*}
The contribution $I_{\text{Out,1}}$ is estimated as follows:  
\begin{align*}
I_{\text{Out,1}} \lesssim  &  \| \{  b_{ m, n} \} \|_{X^{0}_\infty(\chi_2 \mathbbm{1}_{m + N < 17N})}^2 \| \{ \p_y \omega_{m,n} \} \|_{X^{0}_2(\varphi^{n + 1}\mathbbm{1}_{m + n < 4N} q^n e^W \chi_{n + m})}^2.
\end{align*}

\end{proof}

\subsection{Estimates on Abstract Bilinear Operators II: exterior $\mathcal{R}[\cdot, \cdot]$ bounds}

We now provide a second bound on the operator $\mathcal{R}_{m,n}$. 
\begin{lemma} The bilinear operator $\mathcal{R}_{m,n}[\cdot, \cdot]$, defined in \eqref{bilinear:R}, satisfies the following estimates
\begin{align} \n
&\| \mathcal{R}_{m,n}[\{ b_{m,n} \}, \{ \omega_{m,n} \}] \|_{X^0_2(\bold{a}_{m,n} q^{n} e^W \chi_{n+m}^2)} \\ \label{63023:1}
\lesssim & \langle t \rangle \| \{ b_{m,n}\} \|_{X^2_2( \bold{a}_{m,n} q^n \chi_{m+n} )} \| \{ \nabla \omega_{m,n}\} \|_{X^0_2( \bold{a}_{m,n} q^n \chi_{m+n} e^W )}.
\end{align}
\end{lemma}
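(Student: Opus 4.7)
The plan is to estimate \eqref{63023:1} summand by summand, exploiting the fact that $\mathcal{R}_{m,n}$ vanishes for $m+n > 16N$. This truncation renders the Gevrey weights $\bold{a}_{m,n}$ uniformly bounded and the combinatorial factors $\binom{m}{m'}\binom{n}{n'}$ harmless on the relevant index range, so it suffices to estimate an individual summand of $T^{(1)}_{m,n}$ or $T^{(2)}_{m,n}$, weighted by $\bold{a}_{m,n} q^n e^W \chi_{m+n}^2$ in $L^2$, and then to collect the $O(N^2)$ terms via the triangle inequality.

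The central analytic tool is the two-dimensional Sobolev embedding $H^2(\mathbb{T}\times[-1,1]) \hookrightarrow L^\infty$, which we apply to the $b$-factor paired against the $\omega$-factor in $L^2$. The key observation is that we should \emph{not} convert $\Gamma b$ and $\p_x b$ into $\nabla b$ (which would force us into the endpoint $H^{2+\epsilon}\hookrightarrow W^{1,\infty}$), but rather interpret them directly as sequence elements: $\Gamma b_{m-m',n-n'} = b_{m-m',n-n'+1}$ and $\p_x b_{m-m',n-n'} = b_{m-m'+1, n-n'}$. Thus for $T^{(1)}_{m,n}$ the $b$-factor is $b_{m-m', n-n'+1}$ and the $\omega$-factor is $\p_x \omega_{m',n'}$, pointwise $\le |\nabla \omega_{m',n'}|$, with no $\langle t\rangle$ loss. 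For $T^{(2)}_{m,n}$ the $b$-factor is $b_{m-m'+1,n-n'}$, and the $\omega$-factor is $\Gamma \omega_{m',n'} = v_y^{-1}\p_y\omega_{m',n'} + t\p_x \omega_{m',n'}$, pointwise $\lesssim \langle t\rangle |\nabla \omega_{m',n'}|$ thanks to the bootstrap bound $\|v_y^{-1}\|_{L^\infty}\lesssim 1$; this is the sole source of the $\langle t\rangle$ factor in the statement.

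For weight distribution, place $e^W$ entirely on the $\omega$-factor, split $q^n = q^{n-n'} q^{n'}$, and invoke the pointwise monotonicity $\chi_{m+n}(y) \le \chi_k(y)$ for $k \le m+n$. Combined with the indicator $\mathbbm{1}_{m'+n'<m+n}$, this yields $\chi_{m+n}^2 \le \chi_{k_1} \chi_{k_2}$ with $k_1, k_2 \le m+n$ matching the shifted sequence indices of the $b$ and $\omega$ factors, respectively (the boundary case $(m',n')=(0,0)$ is handled by placing both factors of $\chi_{m+n}$ on the $b$-side and $\chi_0 \equiv 1$ on the $\omega$-side). The $\omega$-factor $\|\nabla \omega_{m',n'} \cdot q^{n'} \chi_{k_2} e^W\|_{L^2}$ is then exactly a term of $\|\{\nabla\omega_{m,n}\}\|_{X^0_2(\bold{a}_{m,n} q^n \chi_{m+n} e^W)}$, while the $b$-factor $\|b_{\cdot} \cdot q^{n-n'} \chi_{k_1}\|_{L^\infty}$ is controlled by $\|b_\cdot\|_{X^2_2(\bold{a}_{m,n} q^n \chi_{m+n})}$ via Sobolev applied to the weighted quantity (the Leibniz terms arising when $H^2$-derivatives land on $q$ or $\chi$ remain controlled because derivatives of $\chi_k$ are supported in the interior bulk and, since $m+n \le 16N$, all index-dependent constants are bounded). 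Summing by Cauchy--Schwarz over $(m,n,m',n')$ with $m+n \le 16N$ yields \eqref{63023:1}.

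\textbf{Main obstacle.} The most delicate point is the matching of $q$-powers between the LHS weight share $q^{n-n'}$ on the $b$-factor and the RHS sequence weight $q^{n_1}$ at the shifted index (which equals $q^{n-n'+1}$ for $T^{(1)}$), i.e.\ one extra power of $q$ appears on the RHS. Since $q \equiv 1$ throughout the co-normal bulk $|y| \in (-1+\tfrac{1}{50}, 1-\tfrac{1}{50})$, this mismatch is active only in the thin boundary strips; there, the combined effect of the bounded index range $m+n \le 16N$ (keeping all weight ratios uniformly comparable within bounded regions), the monotone cascade $\chi_{m+n} \le \chi_{k_1}$, and the Sobolev embedding applied to $b_\cdot \cdot q^\cdot \chi_\cdot$ as a single weighted function (rather than to $b_\cdot$ alone) absorbs the mismatch into the universal constant. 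The careful bookkeeping of these boundary contributions—together with isolating the endpoint $(m',n')=(0,0)$ where the $\chi$-cascade must be redistributed asymmetrically—constitutes the main technical labor of the proof.
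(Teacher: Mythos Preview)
Your overall strategy is sound and close to the paper's: exploit the truncation $m+n\le 16N$ to trivialize combinatorics and Gevrey weights, place the $b$-factor in $L^\infty$ via Sobolev, the $\omega$-factor in $L^2$, and attribute the single $\langle t\rangle$ to $|\Gamma\omega_{m',n'}|\lesssim \langle t\rangle|\nabla\omega_{m',n'}|$ in $T^{(2)}$. Your identification of the $q$-power mismatch for $T^{(1)}$ (LHS $b$-share $q^{n-n'}$ versus RHS sequence weight $q^{n-n'+1}$ at the shifted index) is also exactly the right obstacle.

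However, your proposed resolution of this mismatch is not correct as stated. Applying $H^2\hookrightarrow L^\infty$ to $g_1:=b_{m-m',n-n'+1}\,q^{n-n'}\chi_{k_1}$ gives $\|g_1\|_{H^2}$, whose top-order piece $\|(\partial_y^2 b_{m-m',n-n'+1})\,q^{n-n'}\|_{L^2}$ is \emph{not} dominated by the single RHS summand $\|b_{m-m',n-n'+1}\,q^{n-n'+1}\chi\|_{H^2}$ near the boundary, so the mismatch is not ``absorbed into the universal constant.'' You frame the Leibniz terms (derivatives landing on $q$) as error terms to be controlled, but in fact they are the \emph{source} of control on the RHS side: the key point is that the $X^2_2$ norm is a sum over indices, and each $L^2$ piece coming from the $H^1_xH^1_y$ Sobolev expansion of $g_1$ must be matched to a \emph{different} shifted index $(m_1,n_1)$ (via $\partial_x b_{m,n}=b_{m+1,n}$), where the RHS $H^2$-Leibniz terms $\|(\partial_y b_{m_1,n_1})\,q^{n_1-1}q'\|_{L^2}$ and $\|b_{m_1,n_1}\,q^{n_1-2}(q')^2\|_{L^2}$ supply exactly the lower $q$-powers needed---and these are non-degenerate precisely on $\{q<1\}$ since $|q'|\approx 99$ there. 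The paper implements this mechanism through an explicit interior/exterior split (the interior handled with mixed $L^\infty_xL^2_y\times L^2_xL^\infty_y$ norms where $q\equiv 1$, the exterior via $H^1_xH^1_y$ Sobolev followed by $\partial_y=v_y(\Gamma-t\partial_x)$ and term-by-term index matching); your unified approach can also be completed, but the index-distribution step needs to be made explicit.
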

\begin{proof} We recall the definition \eqref{bilinear:R}. We first treat the case of $T^{(1)}_{m,n}$. We first localize our estimate, as we need to treat the interior region (where issues of cutoff functions become more pronounced) separately from exterior region (where issues of $q$-weights become pronounced). Therefore, we have 
\begin{align} \n
&\| \mathcal{R}_{m,n}[\{ b_{m,n} \}, \{ \omega_{m,n} \}] \|_{X^0_2(\bold{a}_{m,n} q^{n} e^W \chi_{n+m}^2)} \\ \n
\lesssim & \sum_{i = 1}^2 \| \mathcal{R}^{(i)}_{m,n}[\{ b_{m,n} \}, \{ \omega_{m,n} \}] \|_{X^0_2(\bold{a}_{m,n} q^{n} e^W \chi_{n+m}^2 \chi_I)} \\ \n
& +\sum_{i = 1}^2 \| \mathcal{R}^{(i)}_{m,n}[\{ b_{m,n} \}, \{ \omega_{m,n} \}] \|_{X^0_2(\bold{a}_{m,n} q^{n} e^W \chi_{n+m}^2 (1 - \chi_I) )} =: \sum_{i = 1}^2 ( \mathcal{I}^{(i)}_{\text{Bulk}} +   \mathcal{I}^{(i)}_{\text{Bdry}}).
\end{align}
We split the estimates of the above terms into $\mathcal{I}^{(1)}$ and $\mathcal{I}^{(2)}$. 

\vspace{2 mm}

\noindent \underline{Estimate of $\mathcal{I}^{(1)}$:} First, we will treat $\mathcal{I}_{\text{Bulk}}$ for which we used mixed-type norms in order to efficiently distribute the cut-off functions, as follows: 
\begin{align} \label{mast:1}
\mathcal{I}_{\text{Bulk}}^{(1)} = &\| \{\mathbbm{1}_{m + n \le 4N}  T^{(1)}_{m,n}[\{b_{m,n}\}, \{\omega_{m,n}\}]  \|_{{X^{0}_2(\bold{a}_{m,n} q^{n} e^W \chi_{n + m}^2 \chi_I )}}^2 \\ \n
= &\sum_{n = 0}^\infty \sum_{m = 0}^\infty  \mathbbm{1}_{n + m < 4N} \bold{a}_{n,m}^2 \| \sum_{n' = 0}^n \sum_{m' =0}^m \binom{n}{n'} \binom{m}{m'} \mathbbm{1}_{n' + m' < n + m} \Gamma b_{m - m', n - n'} \p_x \omega_{m', n'} \\ \label{mast:2}
& \times \chi_{n + m}^2 e^W q^{n} \chi_I \|_{L^2}^2 \\ \label{mast:3}
\lesssim & \sum_{n = 0}^\infty \sum_{m = 0}^\infty  \mathbbm{1}_{n + m < 4N} \bold{a}_{n,m}^2 \Big(\sum_{n' = 0}^n \sum_{m' =0}^m  \mathbbm{1}_{n' + m' < n + m}\|  \Gamma b_{m - m', n - n'} \p_x \omega_{m', n'} \chi_{n + m}^2 e^W q^{n}  \chi_I \|_{L^2} \Big)^2 \\ \n
\lesssim &  \sum_{n = 0}^\infty \sum_{m = 0}^\infty  \mathbbm{1}_{n + m < 4N} \bold{a}_{n,m}^2 \Big( \sum_{m' = 0}^m \sum_{n' = 0}^n \mathbbm{1}_{n' + m' < n + m} \| \Gamma b_{m-m', n-n'} q^{n-n' - 1} \chi_{(n-n') + (m-m') - 1}^{\frac12} \\ \label{mast:4}
& \qquad \chi_{(n-n') + (m-m')}^{\frac12} \chi_I \|_{L^\infty_x L^2_y} \| \p_x \omega_{m', n'} \chi_{m' + n'}^{\frac12} \chi_{m' + n' + 1}^{\frac12} q^{n' + 1}  \|_{L^2_x L^\infty_y} \Big)^2 \\ \n
\lesssim &  \sum_{n = 0}^\infty \sum_{m = 0}^\infty  \mathbbm{1}_{n + m < 4N} \bold{a}_{n,m}^2 \Big( \sum_{m' = 0}^m \sum_{n' = 0}^n \mathbbm{1}_{n' + m' < n + m} \| \Gamma b_{m-m', n-n'} q^{n-n' - 1} \chi_{(n-n') + (m-m') - 1}^{\frac12} \\ \label{firestone:1}
& \qquad \chi_{(n-n') + (m-m')}^{\frac12}  \chi_I \|_{L^\infty_x L^2_y}^2 \Big) \Big(  \sum_{m' = 0}^m \sum_{n' = 0}^n  \mathbbm{1}_{n + m < 4N} \| \p_x \omega_{m', n'} \chi_{m' + n'}^{\frac12} \chi_{m' + n' + 1}^{\frac12} q^{n' + 1}  \|_{L^2_x L^\infty_y}^2 \Big) \\ \label{defthru:1}
=: & \sum_{n = 0}^\infty \sum_{m = 0}^\infty  \mathbbm{1}_{n + m < 4N} \bold{a}_{n,m}^2 \Big( \sum_{m' = 0}^m \sum_{n' = 0}^n \mathbbm{1}_{n' + m' < n + m}A_{m',n';m,n}^2 \Big) \Big(  \sum_{m' = 0}^m \sum_{n' = 0}^n  \mathbbm{1}_{n + m < 4N}B_{m',n'}^2 \Big). 
\end{align}
Above, to go from \eqref{mast:1} to \eqref{mast:2}, we used the definition \eqref{odesza:1}. To go from \eqref{mast:2} to \eqref{mast:3}, we use the triangle inequality, and the cut-off function $\mathbbm{1}_{m + n} < 4N$ to discard the combinatorial factors. To go from \eqref{mast:3} to \eqref{mast:4}, we have used the following inequality for the sliding cutoff functions: 
\begin{align*}
\chi_{n + m}^2 \le \chi_{n' + m' + 1} \chi_{(n-n') + (m-m')} \le \chi_{n' + m' + 1}^{\frac12} \chi_{(n-n') + (m-m')}^{\frac12} \chi_{n' + m' }^{\frac12} \chi_{(n-n') + (m-m')-1}^{\frac12} 
\end{align*}
which is valid because $n' + m' < n + m$. 

To conclude the estimate, we will provide the following interpolation inequalities: 
\begin{align} \label{oneg:0}
A_{m',n';m,n} = &\|  b_{m-m', n-n'+1} q^{n-n' - 1} \chi_{(n-n') + (m-m') - 1}^{\frac12}  \chi_{(n-n') + (m-m')}^{\frac12}  \chi_I\|_{L^\infty_x L^2_y} \\ \label{oneg:1}
 \lesssim &\|  b_{m-m', n-n'+1} q^{n - n'} \chi_{(n-n') + (m-m') - 1}^{\frac12}  \chi_{(n-n') + (m-m')}^{\frac12}\|_{L^\infty_x L^2_y} \\ \label{oneg:2}
 \lesssim &  \|  b_{m-m', n-n'+1} q^{n-n' } \chi_{(n-n') + (m-m') - 1}\|_{L^2_x L^2_y}^{\frac12}  \|  b_{m-m'+1, n-n'+1} q^{n-n' }   \chi_{(n-n') + (m-m')} \|_{L^2_x L^2_y}^{\frac12} \\ \label{off:the:hook:1}
 \lesssim & \langle t \rangle^2 \| b_{m-m', n-n'} q^{n-n'} \chi_{(m-m') + (n-n')} \|_{H^2_{xy}} 
\end{align}
Above, to go from \eqref{oneg:0} to \eqref{oneg:1}, we use that on the support of $ \chi_I$, the inverse weight function, $q^{-1}$, is bounded. To go from \eqref{oneg:1} to \eqref{oneg:2}, we use that a standard Sobolev interpolation.  

We now estimate the quantity $B_{m',n'}$, defined through \eqref{defthru:1}. To avoid putting derivatives on square-roots of cut-off functions, we square the quantity and estimate it in the following manner: 
\begin{align} \label{tes:1}
 &|\p_x \omega_{m', n'}|^2 \chi_{m' + n'} \chi_{m' + n' + 1} q^{2(n' + 1)} \\ \label{tes:2}
 = & \int_0^y \p_y \{  |\p_x \omega_{m', n'}|^2 \chi_{m' + n'} \chi_{m' + n' + 1} q^{2(n' + 1)}  \} \\ \n
 = & \int_0^y  \p_x \omega_{m', n'}\p_{xy} \omega_{m', n'} \chi_{m' + n'} \chi_{m' + n' + 1} q^{2(n' + 1)}+  \int_0^y  |\p_x \omega_{m', n'}|^2 \chi_{m' + n'}' \chi_{m' + n' + 1} q^{2(n' + 1)} \\ \label{tes:3}
 & +   \int_0^y  |\p_x \omega_{m', n'}|^2 \chi_{m' + n'} \chi_{m' + n' + 1}' q^{2(n' + 1)} +  2(n'+1)  \int_0^y  |\p_x \omega_{m', n'}|^2 \chi_{m' + n'} \chi_{m' + n' + 1}' q^{2(n' + 1)-1} \\ \n
 = &  \int_0^y  \p_x \omega_{m', n'}\p_{xy} \omega_{m', n'} \chi_{m' + n'} \chi_{m' + n' + 1} q^{2(n' + 1)} +   \int_0^y  |\p_x \omega_{m', n'}|^2 \chi_{m' + n'}^2 \frac{ \chi_{m' + n' + 1}'}{\chi_{m' + n'}} q^{2(n' + 1)} \\ \label{tes:4}
 &+  2(n'+1)  \int_0^y  |\p_x \omega_{m', n'}|^2 \chi_{m' + n'} \chi_{m' + n' + 1} q^{2(n' + 1)-1} \\ \label{tes:5}
 \lesssim & \| \p_x \omega_{m',n'} \chi_{m' + n'} q^{n'} \|_{L^2_y} \| \p_x \p_y \omega_{m',n'} \chi_{m' + n'+1} q^{n' + 1} \|_{L^2_y} + \| \p_x \omega_{m',n'} \chi_{m' + n'} q^{n'} \|_{L^2_y}^2.
\end{align}
Above to go from \eqref{tes:1} to \eqref{tes:2}, we use that the factor $\chi_{m' + n' + 1}$ vanishes at $y = 0$. To go from \eqref{tes:3} to \eqref{tes:4}, we use that $\text{supp}(\chi_{m' + n'}') \cap \text{supp}(\chi_{m' + n' + 1}) = \emptyset$, and we also use that $\chi_{m' + n'} = 1$ on the support of $\chi_{m' + n' +1}'$ which allows us to introduce factors of $\chi_{m' + n'}$. According to \eqref{tes:5}, we have
\begin{align} \n
B_{m',n'}^2 \lesssim &  \| \p_x \omega_{m',n'} \chi_{m' + n'} q^{n'} \|_{L^2} \| \p_x \p_y \omega_{m',n'} \chi_{m' + n'+1} q^{n' + 1} \|_{L^2} + \| \p_x \omega_{m',n'} \chi_{m' + n'} q^{n'} \|_{L^2}^2 \\ \n
= & \| \p_x \omega_{m',n'} \chi_{m' + n'} q^{n'} \|_{L^2} \| \p_x\{ \Gamma - t \p_x \} \omega_{m',n'} \chi_{m' + n'+1} q^{n' + 1} \|_{L^2} + \| \p_x \omega_{m',n'} \chi_{m' + n'} q^{n'} \|_{L^2}^2 \\ \label{off:the:hook:2}
\lesssim & \langle t \rangle ( \| \p_x \omega_{m',n'} \chi_{m' + n'} q^{n'} \|_{L^2}^2 + \| \p_x \omega_{m',n'+1} \chi_{m' + n'+1} q^{n'+1} \|_{L^2}^2+  \| \p_x \omega_{m'+1,n'} \chi_{m' + n'+1} q^{n'} \|_{L^2}^2)
\end{align} 
 To conclude the estimate, we insert \eqref{off:the:hook:1} and \eqref{off:the:hook:2} into \eqref{defthru:1}.

We now move to the estimate $\mathcal{I}^{(1)}_{\text{Bdry}}$. 
\begin{align} \label{mmast:1}
\mathcal{I}_{\text{Bdry}}^{(1)} = &\| \{\mathbbm{1}_{m + n \le 4N}  T^{(1)}_{m,n}[\{b_{m,n}\}, \{\omega_{m,n}\}]  \|_{{X^{0}_2(\bold{a}_{m,n} q^{n} e^W \chi_{n + m}^2 (1-\chi_I) )}}^2 \\ \n
= &\sum_{n = 0}^\infty \sum_{m = 0}^\infty  \mathbbm{1}_{n + m < 4N} \bold{a}_{n,m}^2 \| \sum_{n' = 0}^n \sum_{m' =0}^m \binom{n}{n'} \binom{m}{m'} \mathbbm{1}_{n' + m' < n + m} \Gamma b_{m - m', n - n'} \p_x \omega_{m', n'} \\ \label{mmast:2}
& \times \chi_{n + m}^2 e^W q^{n} (1- \chi_I )\|_{L^2}^2 \\ \label{mmast:3}
\lesssim & \sum_{n = 0}^\infty \sum_{m = 0}^\infty  \mathbbm{1}_{n + m < 4N} \bold{a}_{n,m}^2 \Big(\sum_{n' = 0}^n \sum_{m' =0}^m  \mathbbm{1}_{n' + m' < n + m}\|  \Gamma b_{m - m', n - n'} \p_x \omega_{m', n'} \chi_{n + m}^2 e^W q^{n}  (1- \chi_I) \|_{L^2} \Big)^2 \\ \n
\lesssim &  \sum_{n = 0}^\infty \sum_{m = 0}^\infty  \mathbbm{1}_{n + m < 4N} \bold{a}_{n,m}^2 \Big( \sum_{m' = 0}^m \sum_{n' = 0}^n \mathbbm{1}_{n' + m' < n + m} \| \Gamma b_{m-m', n-n'} q^{n-n' } \chi_{(n-n') + (m-m') +10} \|_{L^\infty} \\ \label{mmast:4}
& \qquad \times  \| \p_x \omega_{m', n'} \chi_{m' + n' + 10}q^{n' }  \|_{L^2} \Big)^2 \\ \label{mmast:45}
=: &  \sum_{n = 0}^\infty \sum_{m = 0}^\infty  \mathbbm{1}_{n + m < 4N} \bold{a}_{n,m}^2 \Big( \sum_{m' = 0}^m \sum_{n' = 0}^n \mathbbm{1}_{n' + m' < n + m} B_{m',n';m,n} \| \p_x \omega_{m', n'} \chi_{m' + n' + 10}q^{n' }  \|_{L^2} \Big)^2
\end{align}
To estimate the $L^\infty$ norm above, we have
\begin{align} \label{mntn:1}
B_{m',n';m,n} \lesssim & \|  b_{m-m', n-n'+1} q^{n-n' } \chi_{(n-n') + (m-m') +10} \|_{H^1_x H^1_y} \\ \n
\lesssim & \| \p_y  b_{m-m'+ 1, n-n'+1} q^{n-n' } \chi_{(n-n') + (m-m') +10} \|_{L^2} \\ \n
&+  \|   b_{m-m'+ 1, n-n'+1} \mathbbm{1}_{n' < n} q^{n-n' -1 } \chi_{(n-n') + (m-m') +10} \|_{L^2} \\  \label{mntn:2}
& +  \|   b_{m-m'+ 1, n-n'+1}  q^{n-n'  } \chi'_{(n-n') + (m-m') +10} \|_{L^2} \\  \n
\lesssim & \| \p_y  b_{m-m'+ 1, n-n'+1} q^{n-n' } \chi_{(n-n') + (m-m') +10} \|_{L^2}\\ \label{mntn:3}
& +  \|   b_{m-m'+ 1, n-n'+1}  q^{n-n'  } \chi'_{(n-n') + (m-m') +10} \|_{L^2} \\ \n 
\lesssim & \|   b_{m-m'+ 1, n-n'+2} q^{n-n' } \chi_{(n-n') + (m-m') +10} \|_{L^2} \\ \n
&+ \|   b_{m-m'+ 2, n-n'+1} q^{n-n' } \chi_{(n-n') + (m-m') +10} \|_{L^2} \\  \label{mntn:4}
& +  \|   b_{m-m'+ 1, n-n'+1}  q^{n-n'  } \chi_{(n-n') + (m-m') +9} \|_{L^2}.
\end{align}
 We now insert this estimate \eqref{mmast:45} which concludes the proof for the term $\mathcal{R}^{(1)}$. 
 
\vspace{2 mm}

\noindent \underline{Estimate of $\mathcal{I}^{(2)}$:} First, the bound on $\mathcal{I}^{(2)}_{\text{Bulk}}$ is identical to $\mathcal{I}^{(1)}_{\text{Bulk}}$ due to the presence of $\chi_I$: factors of $q$ do not matter, and only the total derivative count is relevant for the distribution of the cutoff functions. We therefore move to $\mathcal{I}^{(2)}_{\text{Bdry}}$. 
\begin{align} \label{evmmast:1}
\mathcal{I}_{\text{Bdry}}^{(2)} = &\| \{\mathbbm{1}_{m + n \le 4N}  T^{(2)}_{m,n}[\{b_{m,n}\}, \{\omega_{m,n}\}]  \|_{{X^{0}_2(\bold{a}_{m,n} q^{n} e^W \chi_{n + m}^2 (1-\chi_I) )}}^2 \\ \n
= &\sum_{n = 0}^\infty \sum_{m = 0}^\infty  \mathbbm{1}_{n + m < 4N} \bold{a}_{n,m}^2 \| \sum_{n' = 0}^n \sum_{m' =0}^m \binom{n}{n'} \binom{m}{m'} \mathbbm{1}_{n' + m' < n + m} \p_x b_{m - m', n - n'} \Gamma \omega_{m', n'} \\ \label{evmmast:2}
& \times \chi_{n + m}^2 e^W q^{n} (1- \chi_I )\|_{L^2}^2 \\ \label{evmmast:3}
\lesssim & \sum_{n = 0}^\infty \sum_{m = 0}^\infty  \mathbbm{1}_{n + m < 4N} \bold{a}_{n,m}^2 \Big(\sum_{n' = 0}^n \sum_{m' =0}^m  \mathbbm{1}_{n' + m' < n + m}\|  b_{m - m'+1, n - n'}  \omega_{m', n'+1} \chi_{n + m}^2 e^W q^{n}  (1- \chi_I) \|_{L^2} \Big)^2 \\ \n
\lesssim &  \sum_{n = 0}^\infty \sum_{m = 0}^\infty  \mathbbm{1}_{n + m < 4N} \bold{a}_{n,m}^2 \Big( \sum_{m' = 0}^m \sum_{n' = 0}^n \mathbbm{1}_{n' + m' < n + m} \| b_{m-m'+1, n-n'} q^{n-n' } \chi_{(n-n') + (m-m') +10} \|_{L^\infty} \\ \label{evmmast:4}
& \qquad \times  \|  \omega_{m', n'+1} \chi_{m' + n' + 10}q^{n' }  \|_{L^2} \Big)^2 \\ \label{evmmast:45}
\lesssim & \langle t \rangle \sum_{n = 0}^\infty \sum_{m = 0}^\infty  \mathbbm{1}_{n + m < 4N} \bold{a}_{n,m}^2 \Big( \sum_{m' = 0}^m \sum_{n' = 0}^n \mathbbm{1}_{n' + m' < n + m} B_{m',n';m,n} \| \nabla \omega_{m', n'} \chi_{m' + n' + 10}q^{n' }  \|_{L^2} \Big)^2,
\end{align}
where we now define $B_{m',n';m,n}$ through 
\begin{align*}
B_{m',n';m,n} :=  & \|  b_{m-m'+1, n-n'} q^{n-n' } \chi_{(n-n') + (m-m') +10} \|_{L^\infty} \\
\lesssim & \|  b_{m-m'+1, n-n'} q^{n-n' } \chi_{(n-n') + (m-m') +10} \|_{H^1_x H^1_y}.
\end{align*}
From here, the bound works essentially identically to the bounds \eqref{mntn:1} -- \eqref{mntn:4}.  
 
\end{proof}

\subsection{Estimates on Abstract Bilinear Operators III: interior $\mathcal{Q}[\cdot, \cdot]$ bounds}

\begin{lemma} \label{lemma:Q:mn:1} Assume $b_{m,0}|_{y = \pm 1} = 0$. The following bilinear sequential bound holds on the operator $\mathcal{Q}_{m,n}$:
\begin{align} \n
&\| \{ \mathcal{Q}_{m,n}[\{ b_{m,n} \}, \{ \omega_{m,n} \}] \} \|_{X^{0}_2(\frac{\bold{a}_{m,n}}{(m + n)^{\frac12}} q^n e^W \chi_{n+m} )} \\ \label{ewing:1}
 \lesssim & \| \{ \p_y b_{m,n} \} \|_{X^{0}_{\infty}(\bold{a}_{m,n}  \chi_{2} \varphi^{-4}  )} \| \{ \omega_{m,n} \} \|_{X^{0}_2((n + m)^{\frac12}\bold{a}_{m,n} q^n e^W \chi_{n+m} )}.
\end{align}
\end{lemma}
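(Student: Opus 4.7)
Since $\mathcal{Q}_{m,n} = T^{(1)}_{m,n} + T^{(2)}_{m,n}$ is supported on $m+n > 16N$ (cf.~\eqref{bilinear:Q}), the binomial coefficients $\binom{m}{m'}\binom{n}{n'}$ must now be handled at the Gevrey scale rather than absorbed into constants. The strategy is a paraproduct-type decomposition of each $T^{(i)}_{m,n}$, placing the $b$-factor in $L^\infty$ (via the $X^0_\infty$-norm on $\partial_y b$ from the RHS) and the $\omega$-factor in weighted $L^2$. For $T^{(1)}_{m,n}$, the triangle inequality yields
\begin{align*}
\|T^{(1)}_{m,n}\, q^n e^W \chi_{m+n}\|_{L^2}
\lesssim \sum_{\substack{m',n'\\ m'+n'<m+n}} \binom{m}{m'}\binom{n}{n'}\, \|b_{m-m', n-n'+1}\, q^{n-n'}\chi_2\|_{L^\infty} \,\|\omega_{m'+1,n'}\, q^{n'} e^W \chi_{m'+n'+1}\|_{L^2},
\end{align*}
after using $\chi_{n+m} \le \chi_{(m-m')+(n-n')}^{1/2}\chi_{m'+n'+1}^{1/2}$ (valid since $m'+n'<m+n$) and splitting $q^n = q^{n-n'}\cdot q^{n'}$; the $T^{(2)}_{m,n}$ contribution admits an analogous decomposition with the roles of $\Gamma$ and $\partial_x$ swapped.

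Next, we convert the $L^\infty$ bound on $b_{m-m', n-n'+1} = \Gamma b_{m-m', n-n'}$ into one on $\partial_y b$ using $\Gamma = v_y^{-1}\partial_y + t\partial_x$ together with the boundary hypothesis $b_{m,0}|_{y=\pm 1}=0$. The two summands of $\Gamma$ commute, so iterated expansion writes any $b_{\tilde m,\tilde n}$ as a finite binomial sum of $t^j (v_y^{-1}\partial_y)^{\tilde n - j} b_{\tilde m + j, 0}$. For every $j < \tilde n$, a $\partial_y$-derivative already acts on $b$ and the corresponding $L^\infty$-bound is read off directly from the RHS norm. For $j = \tilde n$ (pure $\partial_x$-derivatives on the boundary), Hardy's inequality $|b_{\tilde m+\tilde n,0}(y)|/q(y) \lesssim \|\partial_y b_{\tilde m+\tilde n,0}\|_{L^\infty}$ handles the $q^{-1}$-divergence that arises in the worst case $n'=n$. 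The $\chi_2$-cutoff in the $X^0_\infty$-norm is essential here since it localizes the $L^\infty$ bounds to the transition region where $q\to 0$ and Hardy applies.

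The decisive combinatorial identity is the Gevrey-algebra inequality
\begin{align*}
\binom{m}{m'}\binom{n}{n'}\, \frac{\bold{a}_{m,n}}{\bold{a}_{m-m',n-n'+1}\,\bold{a}_{m'+1,n'}} \;\lesssim\; \varphi^{-2}\, \bigl((m+n-m'-n'+1)(m'+n'+1)\bigr)^{s}\, \binom{m+n}{m'+n'}^{1-s},
\end{align*}
which follows from the Vandermonde-type bound $\binom{m}{m'}\binom{n}{n'}\le \binom{m+n}{m'+n'}$ together with the explicit formulas $B_{m,n} = (\lambda^{m+n}/(m+n)!)^s$ and $\bold{a}_{m,n}=B_{m,n}\varphi^{n+1}$. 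Since $s>1$, the factor $\binom{m+n}{m'+n'}^{1-s}$ is summable and provides enough smallness to absorb the polynomial prefactor $((m+n-m'-n'+1)(m'+n'+1))^s$; that prefactor is distributed as $\lesssim (m+n)^{1/2}(m'+n'+1)^{1/2}\cdot(\text{summable})$, where $(m+n)^{1/2}$ combines with the $(m+n)^{-1/2}$ built into the LHS-weight to yield a bounded constant, while $(m'+n'+1)^{1/2}$ matches exactly the $(m+n)^{1/2}$-weight on $\omega$ in the RHS-norm. The residual $\varphi^{-2}$, coming from the mismatch $\varphi^{n+1}$ versus $\varphi^{n-n'+2}\varphi^{n'+1}$, is absorbed by the $\varphi^{-4}$ on the $b$-side of the RHS.

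The main obstacle will be closing three competing constraints simultaneously: (i) the Hardy step that cures the $q^{-1}$-divergence requires the boundary vanishing $b_{m,0}|_{y=\pm 1}=0$ and a careful argument to reduce arbitrary $\Gamma$-derivatives to $\partial_y$-derivatives compatible with the $X^0_\infty$-norm; (ii) the net half-derivative shift between LHS weight $(m+n)^{-1/2}$ and RHS weight $(m+n)^{1/2}$ on $\omega$ must be recovered exactly from the Gevrey combinatorics, leaving no room for arbitrary gains or losses; and (iii) the Cauchy--Schwarz step in the $(m',n')$-sum must be performed with a weight finely tuned so that no logarithmic $(m+n)$ factor is lost, which would prevent summation in $(m,n)$ afterwards.
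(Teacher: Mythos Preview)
Your overall strategy is correct and matches the paper's: triangle inequality, $L^\infty$--$L^2$ splitting with $b$ in $L^\infty$, Gevrey combinatorics exploiting $s>1$, and Hardy for the $q^{-1}$-singularity when $n'=n$. However, your ``unified'' combinatorial distribution has a genuine gap. You claim
\[
\bigl((m+n-m'-n'+1)(m'+n'+1)\bigr)^{s}\binom{m+n}{m'+n'}^{1-s}
\;\lesssim\;(m+n)^{1/2}(m'+n'+1)^{1/2}\cdot(\text{summable}),
\]
but take $m'=n'=0$: the left side is $(m+n+1)^{s}$ while the distribution gives only $(m+n)^{1/2}$, so the residual factor is $\sim(m+n)^{s-1/2}\to\infty$. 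The endpoint $m'+n'\ll m+n$ (the ``low'' regime) therefore cannot be handled this way. A second, related issue: your Gevrey split places $\bold{a}$-weight of total index $m+n-m'-n'+1$ on the $b$-factor, which injects the spurious $(m+n-m'-n'+1)^{s}$; the paper instead weights $b$ at total index $m+n-m'-n'$ (one less), so that only $(m'+n'+1)^{s}$ appears.

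The paper resolves this by a genuine tri-partition $\mathbb{N}^{(\mathrm{hi})}\cup\mathbb{N}^{(\mathrm{mid})}\cup\mathbb{N}^{(\mathrm{lo})}$ of the $(m',n')$-sum, with the multiplier $\bold{X}_{m',n',m,n}$ bounded differently in each region: in ``hi'' ($m'+n'$ within $N$ of $m+n$) one has $\bold{X}\lesssim(m+n)^{1/2}$, which is absorbed by $(m'+n'+1)^{1/2}\approx(m+n)^{1/2}$ on the RHS $\omega$-weight; in ``lo'' ($m'+n'\le N$) one has $\bold{X}\lesssim 1$ directly (the $(m+n)^{-1/2}$ from the LHS is consumed here, and no factor is placed on $\omega$), after which Cauchy--Schwarz over the \emph{finitely many} $(m',n')$ and re-indexing $(m-m',n-n')\mapsto(m'',n'')$ gives the $b$-norm; in ``mid'' the bound $\bold{X}\lesssim(m+n)^{-C(N)}$ from $\binom{m+n}{m'+n'}^{-(s-1)}$ absorbs everything. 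Your unified scheme conflates these three regimes and cannot close without the partition. Separately, your iterated expansion of $\Gamma^{\tilde n}$ into $\sum_j t^j(v_y^{-1}\partial_y)^{\tilde n-j}$ is more machinery than needed: the paper introduces the weight $w_{n,n'}(y)=q^{-1}\mathbf{1}_{n'=n}+\mathbf{1}_{n'<n}$ and applies Hardy only in the single case $n'=n$, which already produces $\partial_y b_{\cdot,0}$.
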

\begin{proof}

The treatment of $\mathcal{Q}_{m,n}^{(i)}$ for $i = 1, 2$ are identical, and so we estimate $\mathcal{Q}^{(1)}_{m,n}[\cdot, \cdot]$. By definition of $\mathcal{Q}_{m,n}$, we need to consider indices $m, n$ such that $m + n \ge 16N$. In particular, this assumption ensures the following identity is valid
\begin{align}
\chi_{m + n} = \chi_{m + n} \chi_2, \qquad m + n \ge 16N. 
\end{align}
This observation will enable us to insert factors of $\chi_2$ in the forthcoming estimates. 
\vspace{2 mm}

\noindent \underline{Case $m \ge 4N$ and $n \ge 4N$} This will be the main case. In this case, we have 
\begin{align} \nonumber
&\| \{ \mathcal{Q}_{m,n}[\{ b_{m,n} \}, \{ \omega_{m,n} \}] \} \|_{X^{0}_2(\frac{\bold{a}_{m,n}}{(m + n)^{\frac12}}, q^n e^W \chi_{n+m} )}^2 \\ \n
=  &\sum_{n = 0}^{\infty} \sum_{m = 0}^{\infty} B_{n,m}^2 \varphi(t)^{2n}  \\  \nonumber
& \times \| (n+m)^{-\frac12} \sum_{n' = 0}^{n} \sum_{m' = 0}^m \mathbbm{1}_{n' + m' < n + m} \binom{n}{n'} \binom{m}{m'} b_{m-m'+1, n-n'} \omega_{m', n' + 1}  e^W q^n \chi_{n + m} \|_{L^2(\mathbb{T} \times [-1,1])}^2 \\  \nonumber
= & \sum_{n = 0}^{\infty} \sum_{m = 0}^{\infty} \Big( \frac{\lambda^{(n+m)}}{(n+m)!} \Big)^{2s'} \varphi(t)^{2n} \\  \label{yonL:1}
& \times \|\frac{1}{(n+m)^{\frac12}} \sum_{n' = 0}^{n} \sum_{m' = 0}^m  \mathbbm{1}_{n' + m' < n + m} \binom{n}{n'} \binom{m}{m'} b_{m-m'+1, n-n'} \omega_{m', n' + 1}  e^W q^n \chi_{n + m} \|_{L^2(\mathbb{T} \times [-1,1])}^2 \\  \n
\le & \sum_{n = 0}^{\infty} \sum_{m = 0}^{\infty}  \Big(  \sum_{n' = 0}^{n} \sum_{m' = 0}^m  \mathbbm{1}_{n' + m' < n + m} \binom{n}{n'} \binom{m}{m'} \frac{1}{(n+m)^{\frac12}}  \Big( \frac{\lambda^{(n+m)}}{(n+m)!} \Big)^{s'} \| b_{m-m'+1, n-n'} w_{n,n'}(y) \varphi^{-1} \chi_2 \|_{L^\infty} \\  \ \label{yonL:2}
& \qquad \qquad \qquad \| \omega_{m', n' + 1}  e^W  \chi_{m' + n' + 1} \varphi^{n'+1} q^{n' + 1}  \|_{L^2} \Big)^2 \\ \n
= & \sum_{n = 0}^{\infty} \sum_{m = 0}^{\infty}  \Big(  \sum_{n' = 0}^{n} \sum_{m' = 0}^m  \mathbbm{1}_{n' + m' < n + m} \binom{n}{n'} \binom{m}{m'}  \frac{1}{(n+m)^{\frac12}} \Big( \frac{(n-n' + m-m')! (m'+n'+1)!}{(n+m)!} \Big)^{s'} \\   \nonumber
& \qquad \qquad \qquad\times \frac{(\lambda^{m-m' + n-n'})^s}{[(n-n' + m-m')!]^s} \|  b_{m-m'+1, n-n'} w_{n,n'}(y) \chi_2   \varphi^{-1}\|_{L^\infty} \\
& \qquad \qquad \qquad \times \frac{ \lambda^{(m' + n' + 1)s} }{[(m' + n' + 1)!]^s}  \| \omega_{m', n' + 1} q^{n' + 1}  e^W \chi_n \varphi^{n'+1} \|_{L^2} \Big)^2 \\  \nonumber
= &  \sum_{n = 0}^{\infty} \sum_{m = 0}^{\infty}  \Big(  \sum_{n' = 0}^{n} \sum_{m' = 0}^m  \mathbbm{1}_{n' + m' < n + m} \bold{X}_{m', n', n, m} \frac{(\lambda^{m-m' + n-n'})^s}{[(n-n' + m-m')!]^s} \| b_{m-m'+1, n-n'} w_{n,n'}(y) \chi_2 \varphi^{-1} \|_{L^\infty} \\ \label{bdfmn1}
& \qquad \qquad \qquad \times \frac{ \lambda^{(m' + n' + 1)s} }{[(m' + n' + 1)!]^s}  \| \omega_{m', n' + 1}  e^W q^{n' + 1} \chi_{m' + n' + 1} \varphi^{n'+1}  \|_{L^2} \Big)^2,
\end{align}
where we define 
\begin{align}
\bold{X}_{m', n', n, m}  := \binom{n}{n'} \binom{m}{m'}  \frac{1}{(n+m)^{\frac12}} \Big( \frac{(n-n' + m-m')! (m'+n'+1)!}{(n+m)!} \Big)^{s},
\end{align}
and we define the weight function 
\begin{align}
w_{n,n'}(y) := \begin{cases} 1 \qquad n' < n \\ \frac{1}{q} \qquad n' = n \end{cases}.
\end{align}
According to this definition, the following inequality is valid (and is used to go from \eqref{yonL:1} to \eqref{yonL:2}):
\begin{align*}
 q^n \le q^{n' + 1}w_{n,n'}(y), \qquad n' \le n.
\end{align*} 
A further consequence of this weight is the following Hardy-type inequality: 
\begin{align*}
\| b_{m-m'+1, n-n'} w_{n,n'}(y) \chi_2 \|_{L^\infty} \lesssim & \mathbbm{1}_{n' = n} ( \| \p_y b_{m-m'+1, n-n'} \chi_2  \|_{L^\infty} + \|  b_{m-m'+1, n-n'} \chi_1  \|_{L^\infty} ) \\
&+ \mathbbm{1}_{n' < n} \| b_{m-m'+1, n-n'}  \chi_2 \|_{L^\infty}.
\end{align*}

We now decompose the double sum above into three components: the ``hi", ``mid", and ``lo" components:
\begin{subequations}
\begin{align}
\mathbb{N}_{m,n}^{(hi)} := & \{ (m', n') \in \mathbb{N}^2: m - N \le m' \le m, n - N \le n' \le n-1 \} \\
\mathbb{N}_{m,n}^{(lo)} := & \{ (m', n') \in \mathbb{N}^2: 0 \le m' \le N, 0 \le n' \le N \} \\
\mathbb{N}_{m, n}^{(mid)} := & \{(m', n') \in \mathbb{N}^2: 0 \le m' \le m, 0 \le n' \le n -1 \} - \mathbb{N}_{m,n}^{(hi)} - \mathbb{N}_{m,n}^{(lo)}. 
\end{align}
\end{subequations}
Next, we define the associated sum as follows 
\begin{align*}
S^{(\iota)} := & \sum_{n = 0}^{\infty} \sum_{m = 0}^{\infty} \Big( \sum_{(m', n') \in \mathbb{N}_{m,n}^{(\iota)}} \bold{X}(m', n', n, m) \frac{(\lambda^{m-m' + n-n'})^s}{[(n-n' + m-m')!]^s} \| \p_y b_{m-m', n-n'} \chi_2 \varphi^{-1}\|_{L^\infty} \\ 
& \qquad \qquad \qquad \times \frac{ \lambda^{(m' + n' + 1)s} }{[(m' + n' + 1)!]^s}  \| \omega_{m', n' + 1}  e^W \chi_n \varphi^{n'+1} \|_{L^2} \Big)^2
\end{align*}
Correspondingly, we have 
\begin{align*}
\eqref{bdfmn1} = \sum_{\iota \in \{ \text{hi, mid, low} \}} S^{(\iota)}.
\end{align*}

\noindent \underline{Bounding $S^{(hi)}$} In this region, we have the following bound on the multiplier: 
\begin{align} \n
\bold{X}(m', n', n, m) \mathbbm{1}_{\mathbb{N}_{m,n}^{(hi)}}(m', n') \lesssim & \binom{n}{n'} \binom{m}{m'} \frac{1}{(n+m)^{\frac12}}  \Big( \frac{(m' + n' + 1)!}{(n+m)!} \Big)^s \\ \n
\lesssim & \frac{n!}{(n')!} \frac{m!}{(m')!} \frac{1}{(n+m)^{\frac12}}\Big( \frac{(m' + n' + 1)!}{(n+m)!} \Big)^s \\ \n
\lesssim & n^{n-n'} m^{m-m'} \frac{1}{(n+m)^{\frac12}} \Big( \frac{1}{(n+m)^{(n-n' - 1 + m - m')}} \Big)^s \\ \n
\lesssim & n n^{n-n' - 1} m^{m-m'}\frac{1}{(n+m)^{\frac12}} \Big( \frac{1}{(n+m)^{(n-n' - 1 + m - m')}} \Big)^s \\ \n
\lesssim & \frac{n}{(n+m)^{\frac12}} (n + m)^{n- n' - 1 + m - m'}\Big( \frac{1}{(n+m)^{(n-n' - 1 + m - m')}} \Big)^s \\ \label{mb1}
\lesssim & (n+m)^{\frac12}.
\end{align}
Due to this multiplier bound, we are led to estimate these bilinear contributions as follows 
\begin{align} \n
|S^{(hi)}| \lesssim &  \sum_{n = 0}^\infty \sum_{m = 0}^\infty \Big(\sum_{(m', n') \in \mathbb{N}^{(hi)}_{m,n}} \frac{(\lambda^{m-m' + n-n'})^{2s}}{[(n-n' + m-m')!]^{2s}} \| \p_y b_{m-m', n-n'} \chi_2  \varphi^{-1} \|_{L^\infty}^2 \Big) \\ \label{sga:1}
 \times & \Big(\sum_{(m', n') \in \mathbb{N}^{(hi)}_{m,n}} (n+m) \frac{ \lambda^{2(m' + n' + 1)s} }{[(m' + n' + 1)!]^{2s}}  \| \omega_{m', n' + 1}  e^W \chi_{m' + n' + 1} q^{n' + 1} \varphi^{n'+1}  \|_{L^2}^2 \Big) \\ \n
 \lesssim & \Big(\sum_{\substack{0 \le m' \le N \\ 0 \le n' \le N} } \frac{(\lambda^{m' + n'})^{2s}}{[(n' + m')!]^{2s}} \|\p_y  b_{m', n'} \chi_2  \varphi^{-1}\|_{L^\infty}^2\Big) \\ \label{sga:2}
 \times & \Big( \sum_{n = 0}^\infty \sum_{m = 0}^\infty \sum_{(m', n') \in \mathbb{N}^{(hi)}_{m,n}} (n+m) \frac{ \lambda^{2(m' + n' + 1)s} }{[(m' + n' + 1)!]^{2s}}  \| \omega_{m', n' + 1}  e^W \chi_n \varphi^{n'+1}  \|_{L^2}^2\Big) \\ \n
  \lesssim & \Big(\sum_{\substack{0 \le m' \le N \\ 0 \le n' \le N} } \frac{(\lambda^{m' + n'})^{2s}}{[(n' + m')!]^{2s}} \|\p_y b_{m', n'} \chi_2 \varphi^{-1}\|_{L^\infty}^2\Big) \\ \label{sga:3}
 \times & \Big( \sum_{n = 0}^\infty \sum_{m = 0}^\infty \sum_{(m', n') \in \mathbb{N}^{(hi)}_{m,n}} (n'+m' + 1) \frac{ \lambda^{2(m' + n' + 1)s} }{[(m' + n' + 1)!]^{2s}}  \| \omega_{m', n' + 1}  e^W \chi_{m' + n' + 1} q^{n' + 1} \varphi^{n'+1} \|_{L^2}^2\Big) \\ \n
   \lesssim & \Big(\sum_{\substack{0 \le m' \le N \\ 0 \le n' \le N} } \frac{(\lambda^{m' + n'})^{2s}}{[(n' + m')!]^{2s}} \|\p_y b_{m', n'} \chi_2  \varphi^{-1}\|_{L^\infty}^2\Big) \\ \label{sga:4}
 \times & \Big( \sum_{n' = 0}^\infty \sum_{m' = 0}^\infty \sum_{ \substack{m' \le m \le m' + N \\ n' \le n \le n' + N }  } (n'+m' + 1) \frac{ \lambda^{2(m' + n' + 1)s} }{[(m' + n' + 1)!]^{2s}}  \| \omega_{m', n' + 1}  e^W \chi_{m' + n' + 1} q^{n' + 1} \varphi^{n'+1}  \|_{L^2}^2\Big) \\ \n
 \lesssim & \Big(\sum_{\substack{0 \le m' \le N \\ 0 \le n' \le N} } \frac{(\lambda^{m' + n'})^{2s}}{[(n' + m')!]^{2s}} \| \p_y b_{m', n'} \chi_2  \varphi^{-1}\|_{L^\infty}^2\Big) \\ \label{sga:5}
 \times & \Big( \sum_{n' = 0}^\infty \sum_{m' = 0}^\infty  (n'+m' ) \frac{ \lambda^{2(m' + n' )s} }{[(m' + n' )!]^{2s}}  \| \omega_{m', n' }  e^W q^{n'} \chi_{n' + m'} \varphi^{n'}  \|_{L^2}^2\Big).
\end{align}
For estimate \eqref{sga:1}, we have used Cauchy-Schwartz as well as the multiplier bound \eqref{mb1}. For estimate \eqref{sga:2}, we have re-indexed the first sum by defining $m' \mapsto m - m'$ and $n' \mapsto n - n'$. For estimate \eqref{sga:3}, we have used that $n + m$ and $n' + m' + 1$ are comparable in the support of $\mathbb{N}^{(hi)}_{n,m}$:
\begin{align*}
c \le \bold{1}_{\mathbb{N}_{n,m}^{(hi)}} \frac{n+m}{n' + m' + 1} \le C.
\end{align*}
For estimate \eqref{sga:4} we have re-indexed the region $m \in \mathbb{N}, m - N \le m' \le m$ into $m' \in \mathbb{N}, m' \le m \le m' + N$ (and similarly for $n, n'$).
\vspace{2 mm}

\noindent \underline{Bounding $S^{(lo)}$:} Here, we have the following multiplier bound:
\begin{align} \n
\bold{X}(m', n', n, m)  \mathbbm{1}_{\mathbb{N}_{m,n}^{(lo)}}(m', n') \lesssim & \binom{n}{n'} \binom{m}{m'} \frac{1}{(n+m)^{\frac12}}  \Big( \frac{(m - m' + n- n')!}{(n+m)!} \Big)^s \\ \n 
\lesssim & \frac{n!}{(n-n')!} \frac{m!}{(m-m')!} \frac{1}{(n+m)^{\frac12}} \Big( \frac{(m - m' + n- n')!}{(n+m)!} \Big)^s \\ \n
\lesssim & n^{n'} m^{m'} \frac{1}{(n+m)^{\frac12}} \Big( \frac{1}{(n+m)^{n' + m'}} \Big)^s \lesssim 1. 
\end{align}
Therefore, we resume as follows
\begin{align*}
|S^{(lo)}| \lesssim & \sum_{n = 0}^\infty \sum_{m = 0}^\infty \Big( \sum_{(m', n') \in \mathbb{N}_{m,n}^{(lo)}} \frac{(\lambda^{m-m' + n - n'})^{2s}}{[(n-n' +m-m')!]^{2s}} \| \p_y b_{m-m', n - n'} \chi_{2} \varphi^{-1} \|_{L^\infty}^2  \Big) \\
& \times \Big( \sum_{(m', n') \in \mathbb{N}^{(lo)}_{m,n}}  \frac{ \lambda^{2(m' + n' + 1)s} }{[(m' + n' + 1)!]^{2s}}  \| \omega_{m', n' + 1}  e^W \chi_{m' + n' + 1} q^{n' + 1} \varphi^{n'+1} \|_{L^2}^2  \Big) \\
\lesssim & \sum_{(m', n') \in \mathbb{N}^{(lo)}_{m,n}}  \frac{ \lambda^{2(m' + n' + 1)s} }{[(m' + n' + 1)!]^{2s}}  \| \omega_{m', n' + 1}  e^W q^{n' + 1} \chi_{m' + n' + 1} \varphi^{n'+1} \|_{L^2}^2 \\
& \times   \sum_{n = 0}^\infty \sum_{m = 0}^\infty  \sum_{m' = m-N}^m  \sum_{n' = n - N}^n \frac{(\lambda^{m' +  n'})^{2s}}{[(n' +m')!]^{2s}} \| \p_y b_{m', n'} \chi_{2} \varphi^{-1} \|_{L^\infty}^2  \Big) \\
\lesssim & \sum_{(m', n') \in \mathbb{N}^{(lo)}_{m,n}}  \frac{ \lambda^{2(m' + n' + 1)s} }{[(m' + n' + 1)!]^{2s}}  \| \omega_{m', n' + 1}  e^W q^{n' + 1} \chi_{m' + n' + 1} \varphi^{n'+1} \|_{L^2}^2 \\
& \times   \sum_{m' = 0}^\infty \sum_{n' = 0}^\infty  \sum_{m = m'}^{m' + N}  \sum_{n = n' }^{n' + N} \frac{(\lambda^{m' +  n'})^{2s}}{[(n' +m')!]^{2s}} \| \p_y b_{m', n'} \chi_{2} \varphi^{-1} \|_{L^\infty}^2  \Big).
\end{align*}
\vspace{2 mm}

\noindent \underline{Bounding $S^{(mid)}$} Here, we have
\begin{align} \n
&\bold{X}(m', n', n, m)   \mathbbm{1}_{\mathbb{N}_{m,n}^{(mid)}}(m', n') \\ \n
= & \mathbbm{1}_{\mathbb{N}_{m,n}^{(mid)}}(m', n') \binom{n}{n'} \binom{m}{m'} \frac{1}{(n+m)^{\frac12}}  \binom{n + m}{n' + m'}^{-s} (1 + n' + m')^s \\
\lesssim & \mathbbm{1}_{\mathbb{N}_{m,n}^{(mid)}}(m', n') \binom{n + m}{n' + m'} \frac{1}{(n+m)^{\frac12}}\binom{n + m}{n' + m'}^{-s} (1 + n' + m')^s \\ \label{62923:1}
\lesssim &  \mathbbm{1}_{\mathbb{N}_{m,n}^{(mid)}}(m', n') \binom{n+m}{n' + m'}^{- (s-1)} \frac{1}{(n+m)^{\frac12}}(n + m)^s \\
\lesssim & (n + m)^{-C(N)}. 
\end{align}
We therefore have 
\begin{align*}
|S^{(mid)}| \lesssim & \sum_{n = 0}^\infty \sum_{m = 0}^\infty (n+m)^{-C(N)} \Big( \sum_{(m', n') \in \mathbb{N}_{m,n}^{(mid)}} \frac{(\lambda^{m-m' + n - n'})^{2s}}{[(n-n' +m-m')!]^{2s}} \| \p_y b_{m-m', n - n'} \chi_{2}  \varphi^{-1}\|_{L^\infty}^2  \Big) \\
& \times \Big( \sum_{(m', n') \in \mathbb{N}^{(mid)}_{m,n}}   \frac{ \lambda^{2(m' + n' + 1)s} }{[(m' + n' + 1)!]^{2s}}  \| \omega_{m', n' + 1}  q^{n' + 1} e^W \chi_{m' + n' + 1} \varphi^{n'+1}  \|_{L^2}^2  \Big) \\
\lesssim &\Big( \sum_{m' = N}^\infty \sum_{n' = N}^\infty \frac{(\lambda^{m' + n'})^{2s}}{[(n' + m')!]^{2s}} \| \p_y b_{m', n'} \chi_{2}  \varphi^{-1}\|_{L^\infty}^2 \Big) \\
& \times \Big(  \sum_{m' = N}^\infty \sum_{n' = N}^\infty   \frac{ \lambda^{2(m' + n' + 1)s} }{[(m' + n' + 1)!]^{2s}}  \| \omega_{m', n' + 1}  e^W q^{n' + 1} \chi_{m' + n' + 1} \varphi^{n'+1} \|_{L^2}^2  \Big) \\
& \times \sum_{n = 0}^\infty \sum_{m = 0}^\infty (n + m)^{-C(N)}.
\end{align*}

\end{proof}

As with the $\mathcal{R}_{m,n}[\cdot, \cdot]$ operators, we need a lemma which applies to the case when $b_{m,0}$ does not vanish at $y = \pm 1$. In this case, it turns out we compensate by losing a derivative on $\omega_{m,n}$.
\begin{lemma} \label{lem:tr:1} The following bilinear sequential bound holds on the operator $\mathcal{Q}_{m,n}$:
\begin{align} \n
&\| \{ \mathcal{Q}_{m,n}[\{ b_{m,n} \}, \{ \omega_{m,n} \}] \} \|_{X^{0}_2(\frac{\bold{a}_{m,n}}{(m + n)^{\frac12}} q^n e^W \chi_{n+m} )} \\ \label{e:money:1}
 \lesssim & \| \{  b_{m,n} \} \|_{X^{0}_{\infty}(\bold{a}_{m,n}  \chi_{2} \varphi^{-4}  )} \| \{ \nabla \omega_{m,n} \} \|_{X^{0}_2((n + m)^{\frac12}\bold{a}_{m,n} q^n e^W \chi_{n+m} )}.
\end{align}
\end{lemma}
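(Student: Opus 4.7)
\textbf{Proof plan for Lemma \ref{lem:tr:1}.}

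The strategy is to follow the proof of Lemma \ref{lemma:Q:mn:1} step by step, replacing the single ingredient that required the boundary hypothesis $b_{m,0}|_{y=\pm 1}=0$---a Hardy inequality applied to $b$---with a purely algebraic identity that shifts one derivative onto the $\omega$-factor. We split $\mathcal{Q}_{m,n} = \mathcal{Q}^{(1)}_{m,n}+\mathcal{Q}^{(2)}_{m,n}$ according to \eqref{odesza:1}--\eqref{odesza:2}, so each term is a sum of products of the form $\Gamma b_{m-m', n-n'}\,\p_x \omega_{m',n'}$ or $\p_x b_{m-m'+1,n-n'}\,\Gamma \omega_{m',n'}$. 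Since $\Gamma$ commutes with $\p_x$, we may rewrite
\begin{align*}
\Gamma \omega_{m',n'} = v_y^{-1}\,\p_y \omega_{m',n'} + t\, \p_x \omega_{m',n'},
\end{align*}
which, together with the trivial observation that $\p_x\omega_{m',n'}$ is already a component of $\nabla\omega_{m',n'}$, expresses the $\omega$-factor in both $\mathcal{Q}^{(1)}_{m,n}$ and $\mathcal{Q}^{(2)}_{m,n}$ entirely in terms of components of $\nabla\omega_{m',n'}$ at the index $(m',n')$, with coefficients carrying at most one factor of $v_y^{-1}$ (uniformly bounded by the coordinate-system bootstrap \eqref{boot:IntH}) and at most one factor of $(1+t)\lesssim \varphi(t)^{-1}$.

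The central consequence is that the weight mismatch that forced the use of Hardy in Lemma \ref{lemma:Q:mn:1} simply disappears. The $q$-weight naturally attached to $\nabla\omega_{m',n'}$ on the right-hand side of \eqref{e:money:1} is $q^{n'}$, and the target weight on the left is $q^n=q^{n-n'}q^{n'}$ with $q^{n-n'}\le 1$ (since $q(y)\in(0,1]$), so the basic pointwise factorization becomes
\begin{align*}
\| q^n\, b_{\cdot,\cdot}\, \Gamma\omega_{m',n'}\, e^W \chi_{m+n}\|_{L^2}
\lesssim (1+t)\,\|v_y^{-1}\|_{L^\infty}\, \|b_{\cdot,\cdot}\,\chi_2\|_{L^\infty}\, \| q^{n'}\,\nabla \omega_{m',n'}\, e^W \chi_{m'+n'+1}\|_{L^2},
\end{align*}
using $\chi_{m+n}\le \chi_{m'+n'+1}$ (valid because $m'+n'+1\le m+n$ by the constraint $\mathbbm{1}_{m'+n'<m+n}$), and similarly for the $\p_x b\cdot \p_x\omega$ piece. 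The extra $(1+t)$ and $\|v_y^{-1}\|_{L^\infty}$ are absorbed harmlessly into the $\varphi^{-4}$ slack already carried by the $b$-side norm in \eqref{e:money:1}, and crucially no boundary value of $b$ enters anywhere.

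From this point on, the argument mirrors the proof of Lemma \ref{lemma:Q:mn:1} essentially verbatim: we decompose the $(m',n')$ convolution into the high, middle, and low regions $\mathbb{N}^{(hi)}_{m,n},\mathbb{N}^{(mid)}_{m,n},\mathbb{N}^{(lo)}_{m,n}$, apply Cauchy--Schwarz together with the multiplier bounds \eqref{mb1} and \eqref{62923:1} in each region, and reindex at the end. The single bookkeeping difference is that the $\omega$-sum is now assembled at index $(m',n')$ rather than the shifted $(m',n'+1)$ or $(m'+1,n')$ used previously; this is precisely what produces the factor $\|\{\nabla\omega_{m,n}\}\|_{X^{0}_2}$ on the right-hand side of \eqref{e:money:1}. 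The main technical obstacle is therefore purely combinatorial: confirming that the factorial and $\varphi,\lambda$ weights rearrange cleanly under the index shift $(m',n'+1)\mapsto(m',n')$, and verifying that the resulting ratio of Gevrey coefficients is absorbed by the $(m+n)^{1/2}$ weight on the $\omega$-side together with the discrepancy between $\bold{a}_{m',n'}$ and $\bold{a}_{m',n'+1}$. The conceptual mechanism---replacing the Hardy step by the commutator identity for $\Gamma$---introduces no genuinely new difficulty.
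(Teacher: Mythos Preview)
Your proposal is correct and follows essentially the same route as the paper's proof: the paper, too, rewrites $\omega_{m',n'+1}=\Gamma\omega_{m',n'}=(v_y^{-1}\partial_y+t\partial_x)\omega_{m',n'}$ so that the $\omega$-factor carries only the weight $q^{n'}\ge q^n$, thereby eliminating the need for the auxiliary weight $w_{n,n'}$ and the Hardy inequality, and then observes that ``from here, the bounds follow identically to the previous lemma.'' Your handling of the extra factor of $(1+t)\|v_y^{-1}\|_{L^\infty}$ via the $\varphi^{-4}$ slack on the $b$-norm and your combinatorial bookkeeping match the paper's argument.
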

\begin{proof} We essentially repeat the calculations from the previous lemma that resulted in \eqref{bdfmn1}, with one change: 
\begin{align*} \nonumber
&\| \{ \mathcal{Q}_{m,n}[\{ b_{m,n} \}, \{ \omega_{m,n} \}] \} \|_{X^{0}_2(\frac{\bold{a}_{m,n}}{(m + n)^{\frac12}}, q^n e^W \chi_{n+m} )}^2 \\ \n
=  &\sum_{n = 0}^{\infty} \sum_{m = 0}^{\infty} B_{n,m}^2 \varphi(t)^{2n}  \\  \nonumber
& \times \| (n+m)^{-\frac12} \sum_{n' = 0}^{n} \sum_{m' = 0}^m \mathbbm{1}_{n' + m' < n + m} \binom{n}{n'} \binom{m}{m'} b_{m-m'+1, n-n'} \omega_{m', n' + 1}  e^W q^n \chi_{n + m} \|_{L^2(\mathbb{T} \times [-1,1])}^2 \\  \nonumber
= & \sum_{n = 0}^{\infty} \sum_{m = 0}^{\infty} \Big( \frac{\lambda^{(n+m)}}{(n+m)!} \Big)^{2s'} \varphi(t)^{2n} \\ 
& \times \|\frac{1}{(n+m)^{\frac12}} \sum_{n' = 0}^{n} \sum_{m' = 0}^m  \mathbbm{1}_{n' + m' < n + m} \binom{n}{n'} \binom{m}{m'} b_{m-m'+1, n-n'} (\frac{1}{v_y} \p_y + t \p_x) \omega_{m', n' }  e^W q^n \chi_{n + m} \|_{L^2(\mathbb{T} \times [-1,1])}^2 \\  \n
\le & \sum_{n = 0}^{\infty} \sum_{m = 0}^{\infty}  \Big(  \sum_{n' = 0}^{n} \sum_{m' = 0}^m  \mathbbm{1}_{n' + m' < n + m} \binom{n}{n'} \binom{m}{m'} \frac{1}{(n+m)^{\frac12}}  \Big( \frac{\lambda^{(n+m)}}{(n+m)!} \Big)^{s'} \| b_{m-m'+1, n-n'} \varphi^{-1} \chi_2 \|_{L^\infty} \\  
& \qquad \qquad \qquad \| \nabla \omega_{m', n'}  e^W  \chi_{m' + n' } \varphi^{n'+1} q^{n'}  \|_{L^2} \Big)^2 \\ \n
= & \sum_{n = 0}^{\infty} \sum_{m = 0}^{\infty}  \Big(  \sum_{n' = 0}^{n} \sum_{m' = 0}^m  \mathbbm{1}_{n' + m' < n + m} \binom{n}{n'} \binom{m}{m'}  \frac{1}{(n+m)^{\frac12}} \Big( \frac{(n-n' + m-m')! (m'+n')!}{(n+m)!} \Big)^{s'} \\   \nonumber
& \qquad \qquad \qquad\times \frac{(\lambda^{m-m' + n-n'})^s}{[(n-n' + m-m')!]^s} \|  b_{m-m'+1, n-n'}  \chi_2   \varphi^{-1}\|_{L^\infty} \\
& \qquad \qquad \qquad \times \frac{ \lambda^{(m' + n' )s} }{[(m' + n' )!]^s}  \| \nabla \omega_{m', n'} q^{n' }  e^W \chi_{m' + n'} \varphi^{n'+1} \|_{L^2} \Big)^2 \\  \nonumber
= &  \sum_{n = 0}^{\infty} \sum_{m = 0}^{\infty}  \Big(  \sum_{n' = 0}^{n} \sum_{m' = 0}^m  \mathbbm{1}_{n' + m' < n + m} \bold{X}_{m', n', n, m} \frac{(\lambda^{m-m' + n-n'})^s}{[(n-n' + m-m')!]^s} \| b_{m-m'+1, n-n'}  \chi_2 \varphi^{-1} \|_{L^\infty} \\ 
& \qquad \qquad \qquad \times \frac{ \lambda^{(m' + n' )s} }{[(m' + n' )!]^s}  \| \omega_{m', n'}  e^W q^{n' } \chi_{m' + n' } \varphi^{n'+1}  \|_{L^2} \Big)^2.
\end{align*}
From here, the bounds follow identically to the previous lemma. 
\end{proof}
%
%

\subsection{Estimates on Abstract Bilinear Operators IV: exterior $\mathcal{Q}[\cdot, \cdot]$ bounds}

We will need to prove a different bound on the bilinear operator $\mathcal{Q}[\cdot, \cdot]$ which we will invoke in the case when $b_{m,n} = \psi^{(E)}_{m,n}$. Here the bounds are dictated by balancing weights of $q$ (which is not so important in the case when $b = \phi^{(E)}$. 

Let us motivate how we are performing the bounds below. Formally performing a derivative count on the trilinear product $\Gamma b_{m-m', n-n'} \p_x \omega_{m', n'} \omega_{m,n}$, we see that there are $1 + (m-m') + (n-n') - 2 + 1 + m' + n' + m + n = 2(m + n)$ derivatives distributed among $\omega^3$, where we subtract $2$ due to the disparity between $\psi$ and $\omega$. On the other hand, there are $q^{2(m+n)}$ weights. This appears to work out exactly in balance, with the only discrepancy being that these considerations are only in $L^2$, whereas in the real estimate we need to put one factor in $L^\infty$. To account for this discrepancy, we need to invoke excess factors of $\nu$. 

\begin{lemma} The following bounds are valid: 
\begin{align} \n
&\| \{ \mathcal{Q}_{m,n}[\{ b_{m,n} \}, \{ \omega_{m,n} \}] \} \|_{X^{0}_2(\frac{\bold{a}_{m,n}}{(m + n)^{s},}  q^{n} e^W \chi_{n+m}^2 )} \\ \label{ewing:2}
 \lesssim & \frac{\langle t \rangle^2}{\nu^2} \| \{b_{m,n} \} \|_{X^{2}_2(\bold{a}_{m,n}, q^n \chi_{m+n})} \| \{ \sqrt{\nu} \nabla  \omega_{m,n} \} \|_{X^{0}_2(\bold{a}_{m,n}, e^W \chi_{m + n} q^n)} 
\end{align}
\end{lemma}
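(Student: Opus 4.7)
The plan is to follow the same paraproduct architecture used in Lemma~\ref{lemma:Q:mn:1}, namely expanding the bilinear operator $\mathcal{Q}_{m,n}=\mathcal{Q}^{(1)}_{m,n}+\mathcal{Q}^{(2)}_{m,n}$, using the definition of $T^{(1)}_{m,n},T^{(2)}_{m,n}$ in \eqref{odesza:1}--\eqref{odesza:2}, applying Cauchy--Schwartz in the inner $(m',n')$ sum with the combinatorial multiplier $\bold{X}_{m',n',n,m}$, and splitting the summation region as
$\mathbb{N}^{(hi)}_{m,n}\cup\mathbb{N}^{(mid)}_{m,n}\cup\mathbb{N}^{(lo)}_{m,n}$.
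The pointwise multiplier estimates established in Lemma~\ref{lemma:Q:mn:1} (in particular the bound $\bold{X}\lesssim (m+n)^{\frac12}$ on $\mathbb{N}^{(hi)}$, $\bold{X}\lesssim 1$ on $\mathbb{N}^{(lo)}$, and $\bold{X}\lesssim (m+n)^{-C(N)}$ on $\mathbb{N}^{(mid)}$) transfer directly, since they depend only on the combinatorics of $s$ and not on the precise placement of $q$-weights.

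The novelty, and main task, is to distribute the $q^n$ weight and $L^\infty$ bounds on $b$ correctly in the exterior setting. The idea is to split $q^n=q^{n-n'}\cdot q^{n'}$, placing $q^{n-n'}$ on the $b$-factor and $q^{n'}$ on the $\omega$-factor (with a Hardy-type correction if $n'=n$, handled via the $\chi_2$ cutoff and a one-derivative cost absorbed into $H^1_{y}$). Then, for the $b$ factor, I would use the 2D Sobolev embedding
\[
\|q^{n-n'}b_{m-m'+\ast,n-n'+\ast}\chi_{\ast}\|_{L^\infty_{x,y}}
\lesssim \|q^{n-n'}b_{m-m'+\ast,n-n'+\ast}\chi_{\ast}\|_{H^2_{x,y}},
\]
which accounts for the $X^{2}_{2}$ norm on the right-hand side; the commutators of $\partial_y$ with $q^{n-n'}$ are controlled using $(n-n')/q\lesssim 1$ on the support of $\chi_{m-m'+n-n'}$ via the structure of $q$ (this is exactly the kind of commutator identity captured by the $J^{(a,b,c)}_{m,n}$ objects from \eqref{J_intro}, used here in a low-order/unweighted fashion).

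For the $\omega$ factor in $L^2$, the shifted-index factor $\partial_x\omega_{m',n'}=\omega_{m'+1,n'}$ (and analogously the $\Gamma\omega$ factor in $T^{(2)}$, handled via $\Gamma=v_y^{-1}\partial_y+t\partial_x$) is rewritten as
\[
\omega_{m'+1,n'}=\tfrac{1}{\sqrt{\nu}}\bigl(\sqrt{\nu}\,\partial_x\omega_{m',n'}\bigr),\qquad
\Gamma\omega_{m',n'}=v_y^{-1}\tfrac{1}{\sqrt{\nu}}\bigl(\sqrt{\nu}\,\partial_y\omega_{m',n'}\bigr)+t\,\tfrac{1}{\sqrt{\nu}}\bigl(\sqrt{\nu}\,\partial_x\omega_{m',n'}\bigr),
\]
producing the $\sqrt{\nu}\nabla\omega_{m',n'}$ required on the RHS at the cost of a $\nu^{-1/2}\langle t\rangle$ factor. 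A second power of $\nu^{-1/2}$ (and a second $\langle t\rangle$) arise from similarly converting $\Gamma b_{m-m',n-n'}$ into terms of the form $b_{m-m'+1,n-n'}$ and $b_{m-m',n-n'+1}$ in order to land in the $X^2_2$ norm for $b$, using that $\overline{\partial_v}=v_y^{-1}\partial_y$ commutes with $H^2$ control up to bounded factors under the bootstrap. Accumulating gives the $\nu^{-2}\langle t\rangle^2$ prefactor.

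The hard part will be Step~3, the $q$-weight distribution combined with the Sobolev embedding at $H^2$: the weight $q$ is degenerate at the boundary, so $L^\infty$-control via Sobolev must respect both the co-normal structure (so that the embedding constant does not blow up with $n$) and the fact that $b$ does not vanish at $y=\pm 1$, forcing us to upgrade to one extra derivative rather than using a boundary Hardy inequality as in Lemma~\ref{Lemma:restless:bones}. Once the tri-region sum is executed with these bookkeeping choices and the combinatorial multiplier bounds quoted above, the three contributions $S^{(hi)},S^{(mid)},S^{(lo)}$ sum up to the claimed inequality \eqref{ewing:2}, with the $(m+n)^{-s}$ factor on the LHS absorbing the $(m+n)^{+s}$ from $\mathbb{N}^{(hi)}$ after the re-indexing trick $(m',n')\mapsto (m-m',n-n')$ used in Lemma~\ref{lemma:Q:mn:1}.
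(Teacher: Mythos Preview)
Your overall architecture (expand $\mathcal{Q}^{(i)}_{m,n}$, split into hi/mid/lo, use combinatorial multiplier bounds) matches the paper's. However, there are two genuine gaps.

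First, the multiplier estimates do not ``transfer directly'' from Lemma~\ref{lemma:Q:mn:1}. The norm here carries $(m+n)^{-s}$, not $(m+n)^{-1/2}$, and keeping both shifted derivatives (one on $b$ and one on $\omega$) yields a doubly-shifted factorial structure. The paper defines a new multiplier
\[
\bold{Y}_{m',n',n,m}:=\binom{n}{n'}\binom{m}{m'}\frac{1}{(n+m)^{s}}\Big(\frac{(n-n'+m-m'+1)!\,(m'+n'+1)!}{(n+m)!}\Big)^{s},
\]
and shows $\bold{Y}\lesssim 1$ on $\mathbb{N}^{(hi)}$ (not $(m+n)^{1/2}$), $\bold{Y}\lesssim 1$ on $\mathbb{N}^{(lo)}$, and $\bold{Y}\lesssim (m+n)^{-C(N)}$ on $\mathbb{N}^{(mid)}$. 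Your final sentence about ``absorbing the $(m+n)^{+s}$ from $\mathbb{N}^{(hi)}$'' does not correspond to any actual multiplier bound.

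Second, and more importantly, you always place $b$ in $L^\infty$ via $H^2$ Sobolev. This fails in $\mathbb{N}^{(lo)}_{m,n}$, where $b$ carries the high index and $\omega$ the low one. The obstruction is a $q$-weight mismatch: after splitting $q^n=q^{n-n'}q^{n'}$ and writing $\Gamma b_{m-m',n-n'}=b_{m-m',n-n'+1}$, you would need to control $\|b_{m-m',n-n'+1}\,q^{n-n'}\chi_\ast\|_{H^2}$, but the $X^2_2$ norm at index $(m-m',n-n'+1)$ carries $q^{n-n'+1}$, one power too many, and no Hardy inequality is available since $b_{m,n}$ need not vanish at $y=\pm 1$ for $n\geq 1$. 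The paper resolves this by \emph{swapping} the $L^\infty/L^2$ roles in $S^{(lo)}$: $b$ (high-frequency) stays in $L^2$ while $\omega$ (low-frequency) goes to $L^\infty$, with the asymmetric split $q^n=q^{n-n'-1}\cdot q^{n'+1}$. Then $\|\nabla b_{m-m',n-n'}\,q^{n-n'-1}\chi\|_{L^2}$ is re-indexed (using $\nabla b_{m-m',n-n'}=\nabla\Gamma b_{m-m',n-n'-1}$ at cost $\langle t\rangle$) into $X^2_2$ at $(m-m'+1,n-n'-1)$, where the $q^{n-n'-1}$ matches exactly; and $\|\sqrt\nu\,\nabla\omega_{m',n'}\,q^{n'+1}e^W\chi\|_{L^\infty}$ is controlled at small shifted indices via $H^1_xH^1_y$. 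This swap is the essential new ingredient absent from the interior lemma and from your proposal.

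Finally, your explanation of the second $\nu^{-1/2}$ factor (``from similarly converting $\Gamma b_{m-m',n-n'}$'') is incorrect: $\Gamma b_{m-m',n-n'}=b_{m-m',n-n'+1}$ is a pure re-indexing with no $\nu$-cost.
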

\begin{proof}

\vspace{2 mm}

\noindent \underline{Case $m \ge 4N$ and $n \ge 4N$} This will be the main case. In this case, we have 
\begin{align} \nonumber
&\| \{ \mathcal{Q}_{m,n}[\{ b_{m,n} \}, \{ \omega_{m,n} \}] \} \|_{X^{0}_2(\frac{\bold{a}_{m,n}}{(m + n)^{s},}  q^{n} e^W \chi_{n+m}^2 )}^2 \\ \n
=  &\sum_{n = 0}^{\infty} \sum_{m = 0}^{\infty} B_{n,m}^2 \varphi(t)^{2(n+m)}  \\  \nonumber
& \times \| (n+m)^{-s} \sum_{n' = 0}^{n-1} \sum_{m' = 0}^m \binom{n}{n'} \binom{m}{m'} \slashed{\nabla}^\perp b_{m-m', n-n'} \cdot \slashed{\nabla} \omega_{m', n' } q^{n  } e^W \chi_{n + m}^2 \|_{L^2(\mathbb{T} \times [-1,1])}^2 \\  \nonumber
= & \sum_{n = 0}^{\infty} \sum_{m = 0}^{\infty} \Big( \frac{\lambda^{(n+m)}}{(n+m)!} \Big)^{2s'} \varphi(t)^{2(n+m)}  \\  \nonumber
& \times \|\frac{1}{(n+m)^{s}} \sum_{n' = 0}^{n-1} \sum_{m' = 0}^m \binom{n}{n'} \binom{m}{m'}  \slashed{\nabla}^\perp b_{m-m', n-n'} \cdot \slashed{\nabla} \omega_{m', n' }   e^W q^{n} \chi_{n + m}^2 \|_{L^2(\mathbb{T} \times [-1,1])}^2 \\
= & \sum_{n = 0}^{\infty} \sum_{m = 0}^{\infty}  \Big(  \sum_{n' = 0}^{n-1} \sum_{m' = 0}^m \binom{n}{n'} \binom{m}{m'}  \frac{1}{(n+m)^{s}} \Big( \frac{(n-n' + m-m' + 1)! (m'+n'+1)!}{(n+m)!} \Big)^{s} \\   \nonumber
& \qquad \qquad \qquad \times \frac{(\lambda^{m-m' + n-n' + 1})^s}{[(n-n' + m-m' + 1)!]^s}\frac{ \lambda^{(m' + n' + 1)s} }{[(m' + n' + 1)!]^s} \\
& \qquad \qquad \qquad \times   \|  \slashed{\nabla}^\perp b_{m-m', n-n'} \cdot \slashed{\nabla} \omega_{m', n' }   e^W \chi_{n+m}^2 \varphi^{n+m} q^{n }  \|_{L^2} \Big)^2 \\
= & \sum_{n = 0}^{\infty} \sum_{m = 0}^{\infty}  \Big(  \sum_{n' = 0}^{n-1} \sum_{m' = 0}^m \bold{Y}(m', n', m, n)  \frac{(\lambda^{m-m' + n-n' + 1})^s}{[(n-n' + m-m' + 1)!]^s}\frac{ \lambda^{(m' + n' + 1)s} }{[(m' + n' + 1)!]^s} \\
& \qquad \qquad \qquad \times   \|  \slashed{\nabla}^\perp b_{m-m', n-n'} \cdot \slashed{\nabla} \omega_{m', n' }   e^W \chi_{n+m}^2 \varphi^{n+m} q^{n }  \|_{L^2} \Big)^2 \\
= & S^{(hi)} + S^{(mid)} + S^{(lo)}, 
\end{align}
where we define the multiplier 
\begin{align}
\bold{Y}_{m', n', n, m}  := \binom{n}{n'} \binom{m}{m'}  \frac{1}{(n+m)^{s}} \Big( \frac{(n-n' + m-m' + 1)! (m'+n'+1)!}{(n+m)!} \Big)^{s}.
\end{align}
First, we follow a similar strategy to \eqref{mb1} to obtain the bound 
\begin{align} \n
\bold{Y}(m', n', n, m) \mathbbm{1}_{\mathbb{N}_{m,n}^{(hi)}}(m', n') \lesssim & \binom{n}{n'} \binom{m}{m'} \frac{1}{(n+m)^{s}}  \Big( \frac{(m' + n' + 1)!}{(n+m)!} \Big)^s \\ \n
\lesssim & \frac{n!}{(n')!} \frac{m!}{(m')!} \frac{1}{(n+m)^{s}}\Big( \frac{(m' + n' + 1)!}{(n+m)!} \Big)^s \\ \n
\lesssim & n^{n-n'} m^{m-m'} \frac{1}{(n+m)^{s}} \Big( \frac{1}{(n+m)^{(n-n' - 1 + m - m')}} \Big)^s \\ \n
\lesssim & n^{n-n'} m^{m-m'}  \Big( \frac{1}{(n+m)^{(n-n'  + m - m')}} \Big)^s \\
\lesssim & (n + m)^{-(n-n' + m-m')(s-1)},
\end{align}
which in particular is bounded by $1$. 

Similarly, we can bound 
\begin{align*}
\bold{Y}(m', n', n, m) \mathbbm{1}_{\mathbb{N}_{m,n}^{(lo)}}(m', n') \lesssim & \binom{n}{n'} \binom{m}{m'} \frac{1}{(n+m)^{s}}  \Big( \frac{(m-m' + n-n' + 1)!}{(n+m)!} \Big)^s \\
= &  \binom{n}{n'} \binom{m}{m'}  \Big(\frac{n-n' + m-m' + 1}{n+m}\Big)^s  \Big( \frac{(m-m' + n-n' )!}{(n+m)!} \Big)^s \\
\lesssim & \frac{n!}{(n-n')!} \frac{m!}{(m-m')!}  \Big( \frac{(m-m' + n-n' )!}{(n+m)!} \Big)^s  \\
\lesssim & n^{n'} m^{m'} \Big( \frac{1}{(n+m)^{n' + m'}} \Big)^s \\
\lesssim & 1. 
\end{align*}

We now have the mid range, where we need to extract some decay of the multiplier. We estimate in the same way as \eqref{62923:1} to obtain 
\begin{align*}
\bold{Y}(m', n', n, m) \mathbbm{1}_{\mathbb{N}_{m,n}^{(mid)}}(m', n') \lesssim (m + n)^{-C(N)},
\end{align*}
for a constant $C(N)$ where $C(N) \rightarrow \infty$ as $N \rightarrow \infty$. 

\vspace{2 mm}

\noindent \underline{Bounding $S^{(hi)}$} In the case of $S^{(hi)}$, we want to put the $b$ term in $L^\infty$. We notice that $(m', n') \in \mathbb{N}^{(hi)}_{m,n}$ implies the following inequalities 
\begin{align}
&m' \ge m - N \ge 3N, \qquad m - m' \le N, \qquad \Rightarrow m- m' << m' \\
&n' \ge n - N \ge 3N, \qquad n - m' \le N, \qquad  \Rightarrow n- n' << n'.
\end{align}
This then implies the following inequalities for our sliding cut-off functions
\begin{align*}
\mathbbm{1}_{(m', n') \in \mathbb{N}^{(hi)}_{m,n}} \chi_{m + n}^2 \lesssim \mathbbm{1}_{(m', n') \in \mathbb{N}^{(hi)}_{m,n}} \chi_{m' + n' + 1} \chi_{(m-m') + (n-n') + 2}.
\end{align*}

We now proceed as follows: 
\begin{align*}
|S^{(hi)}| \lesssim &  \sum_{n = 0}^{\infty} \sum_{m = 0}^{\infty}  \Big(  \sum_{n' = 0}^{n-1} \sum_{m' = 0}^m \mathbbm{1}_{(m', n') \in \mathbb{N}^{(hi)}_{m,n}} \frac{(\lambda^{m-m' + n-n'})^s}{[(n-n' + m-m')!]^s}\frac{ \lambda^{(m' + n' + 1)s} }{[(m' + n' + 1)!]^s} \\
& \qquad \qquad \qquad \times  \frac{\langle t \rangle}{\sqrt{\nu}} \| \nabla b_{m-m', n-n'} q^{n-n'} \chi_{m-m' + n-n' + 2} \|_{L^\infty} \\
&\qquad \qquad \qquad  \times \| \sqrt{\nu} \langle t \rangle \nabla \omega_{m', n' }  e^W \chi_{n'+m' + 1} \varphi^{n+m} q^{n'}  \|_{L^2} \Big)^2 \\
\lesssim & \frac{\langle t \rangle^2}{\sqrt{\nu}}  \sum_{n = 0}^{\infty} \sum_{m = 0}^{\infty} S^{(hi, 1)}_{m,n} S^{(hi, 2)}_{m,n},
\end{align*}
where we define the quantities 
\begin{align*}
S^{(hi, 1)}_{m,n} := &  \sum_{n' = 0}^{n-1} \sum_{m' = 0}^m \mathbbm{1}_{(m', n') \in \mathbb{N}^{(hi)}_{m,n}} \frac{(\lambda^{m-m' + n-n' + 1})^{2s}}{[(n-n' + m-m' + 1)!]^{2s}}   \|  \nabla b_{m-m' , n-n'} \varphi^{n-n'+m-m'} \\
& \qquad \times q^{n-n' } \chi_{m-m' + n-n' + 2} \|_{L^\infty}^2, \\
S^{(hi, 2)}_{m,n} := &   \sum_{n' = 0}^{n-1} \sum_{m' = 0}^m \mathbbm{1}_{(m', n') \in \mathbb{N}^{(hi)}_{m,n}} \frac{ \lambda^{2(m' + n' + 1)s} }{[(m' + n' + 1)!]^{2s}}  \| \sqrt{\nu} \nabla \omega_{m', n' }  e^W \chi_{n'+m' + 1} \\
& \qquad \times \varphi^{n'+m'} q^{n'}  \|_{L^2}^2.
\end{align*}
We now provide bounds on each of these quantities. First, in the support of $\mathbb{N}^{(hi)}_{m,n}$, $n-n'$ and $m-m'$ are bounded uniformly. Therefore, we may estimate 
\begin{align} \n
|S^{(hi, 1)}_{m,n}|  \lesssim &  \sum_{n' = 0}^{n-1} \sum_{m' = 0}^m \mathbbm{1}_{(m', n') \in \mathbb{N}^{(hi)}_{m,n}} \frac{(\lambda^{m-m' +1 + n-n'})^{2s}}{[(n-n' + m-m' + 1)!]^{2s}}   \| \frac{ \langle t \rangle }{\nu} \nabla b_{m-m', n-n'} \varphi^{n-n'+m-m' +1}  \\ \label{62903:1}
& \qquad \times q^{n-n' } \chi_{m-m' + n-n' + 2} \|_{L^\infty}^2 \\ \n
\lesssim & \langle t \rangle  \sum_{n' = 0}^{n-1} \sum_{m' = 0}^m \mathbbm{1}_{(m', n') \in \mathbb{N}^{(hi)}_{m,n}} \frac{(\lambda^{m-m' +2 + n-n'})^{2s}}{[(n-n' + m-m' + 2)!]^{2s}}   \| \frac{ \langle t \rangle }{\nu} \nabla b_{m-m', n-n'} \varphi^{n-n'+m-m' +2}  \\ \label{62903:2}
& \qquad \times q^{n-n' } \chi_{m-m' + n-n' + 2} \|_{L^\infty}^2 \\ \n
\lesssim &  \sum_{n' = 0}^{n-1} \sum_{m' = 0}^m \mathbbm{1}_{(m', n') \in \mathbb{N}^{(hi)}_{m,n}} \frac{(\lambda^{m-m' +2 + n-n'})^{2s}}{[(n-n' + m-m' + 2)!]^{2s}}   \| \frac{ \langle t \rangle }{\nu} \nabla b_{m-m', n-n'} \varphi^{n-n'+m-m' +2}  \\  \label{62903:3}
& \qquad \times q^{n-n' } \chi_{m-m' + n-n' + 2} \|_{L^\infty}^2 \\  \label{62903:4}
\lesssim &   \sum_{n' = 0}^{\infty} \sum_{m' = 0}^{\infty} \frac{(\lambda^{m' + n'+1})^{2s}}{[(n' + m'+1)!]^{2s}}   \| \langle t \rangle \nabla  b_{m' , n'} \varphi^{n'+m'+1} q^{n'} \chi_{m' + n'+1} \|_{L^\infty}^2 \\  \label{62903:5}
\lesssim &   \sum_{n' = 0}^{\infty} \sum_{m' = 0}^{\infty} \frac{(\lambda^{m' + n'+1})^{2s}}{[(n' + m'+1)!]^{2s}}   \| \langle t \rangle \nabla  b_{m' , n'} \varphi^{n'+m'+1} q^{n'} \chi_{m' + n' +1} \|_{H^1_x H^1_y}^2 \\  \label{62903:6}
\lesssim & \langle t \rangle^2 \| \{ b_{m,n} \} \|_{X^{2,2}(\bold{a}_{m,n}, q^n \chi_{m+n})}^2.
\end{align}
Above, to go from line \eqref{62903:1} to \eqref{62903:2}, we have used the following bounds: 
\begin{align*}
 \mathbbm{1}_{(m', n') \in \mathbb{N}^{(hi)}_{m,n}} \frac{1}{[(n-n' + m-m' + 1)!]^{2s}} \lesssim &  \mathbbm{1}_{(m', n') \in \mathbb{N}^{(hi)}_{m,n}} \frac{1}{[(n-n' + m-m' + 2)!]^{2s}} , \\
 \varphi^{n-n'+m-m' +1}  \lesssim & \langle t \rangle \varphi^{n-n'+m-m' +2} 
\end{align*}

We next turn our attention to the quantity $|S^{(hi, 2)}_{m,n}|$. In this case, we observe that we can re-index 
\begin{align*}
\sum_{n = N}^\infty \sum_{m = N}^\infty \sum_{n' = n-N}^n \sum_{m' = m- N}^m = \sum_{m' = 0}^\infty \sum_{n' = 0}^\infty \sum_{n = n'}^{n' + N} \sum_{m = m'}^{m' + N}.
\end{align*}
Therefore, we can estimate 
\begin{align*}
 \sum_{n = 0}^{\infty} \sum_{m = 0}^{\infty} S^{(hi, 2)}_{m,n} \lesssim &   \sum_{n' = 0}^{\infty} \sum_{m' = 0}^{\infty} \Big( \sum_{n = n'}^{n' + N} \sum_{m = m'}^{m' + N} \mathbbm{1}_{(m', n') \in \mathbb{N}^{(hi)}_{m,n}} \Big) \frac{ \lambda^{2(m' + n' + 1)s} }{[(m' + n' + 1)!]^{2s}}  \| \sqrt{\nu} \nabla \omega_{m', n' }  \\
& \qquad \times e^W \chi_{n'+m' + 1}  \varphi^{n'+m'} q^{n'} \|_{L^2}^2 \\
\lesssim &   \sum_{n' = 0}^{\infty} \sum_{m' = 0}^{\infty}  \frac{ \lambda^{2(m' + n' + 1 )s} }{[(m' + n'  + 1)!]^{2s}}  \| \sqrt{\nu}\nabla \omega_{m', n' }   e^W \chi_{n'+m' + 1}  \varphi^{n'+m'} q^{n'} \|_{L^2}^2 \\ 
\lesssim &   \sum_{n' = 0}^{\infty} \sum_{m' = 0}^{\infty}  \frac{ \lambda^{2(m' + n'+1 )s} }{[(m' + n' +1)!]^{2s}}  \| \sqrt{\nu} \nabla \omega_{m' + 1, n' }   e^W \chi_{n'+m' + 1}  \varphi^{n'+m'} q^{n'}  \|_{L^2}^2 \\
= & \| \{ \sqrt{\nu} \nabla  \omega_{m,n} \} \|_{X^{0,2}(\bold{a}_{m,n}, e^W \chi_{m + n} q^n)}^2.
\end{align*}

\vspace{2 mm}

\noindent \underline{Bounding $S^{(lo)}$} In the case of $(m', n') \in \mathbb{N}_{m,n}^{(lo)}$, we have the inequalities
\begin{align*}
&m' < N, \qquad m- m' \ge m - N \ge 3N, \Rightarrow m' << m - m', \\
&n' < N, \qquad n- n' \ge n - N \ge 3N, \Rightarrow n' << n - n'. 
\end{align*}
Therefore, the following bounds are valid on our cutoff functions
\begin{align*}
\mathbbm{1}_{(m', n') \in \mathbb{N}^{(lo)}_{m,n}} \chi_{m + n}^2 \le \mathbbm{1}_{(m', n') \in \mathbb{N}^{(lo)}_{m,n}} \chi_{n' + m' + 2} \chi_{(n-n') + (m-m')}
\end{align*}

We have 
\begin{align*}
|S^{(lo)}| \lesssim & \langle t \rangle^3  \sum_{n = 0}^{\infty} \sum_{m = 0}^{\infty}  \Big(  \sum_{n' = 0}^{n-1} \sum_{m' = 0}^m \mathbbm{1}_{(m', n') \in \mathbb{N}^{(lo)}_{m,n}} \frac{(\lambda^{m-m' + n-n' + 1})^s}{[(n-n' + m-m' + 1)!]^s}\frac{ \lambda^{(m' + n' + 1)s} }{[(m' + n' + 1)!]^s} \\
& \qquad \qquad \qquad \times   \| \frac{1}{\nu} \nabla b_{m-m' , n-n'} q^{n-n'-1} \chi_{m-m' + n-n'} \|_{L^2} \\
&\qquad \qquad \qquad  \times \| \sqrt{\nu} \nabla \omega_{m', n' }  e^W \chi_{n'+m'+2} \varphi^{n'+m'+3} q^{n'+1}  \|_{L^\infty} \Big)^2 \\
\lesssim & \frac{\langle t \rangle^3}{\nu^2}  \sum_{n = 0}^{\infty} \sum_{m = 0}^{\infty} S^{(lo, 1)}_{m,n} S^{(lo, 2)}_{m,n},
\end{align*}
where we have defined the quantities 
\begin{align*}
S^{(lo, 1)}_{m,n} := & \sum_{n' = 0}^{n-1} \sum_{m' = 0}^m \mathbbm{1}_{(m', n') \in \mathbb{N}^{(lo)}_{m,n}} \frac{(\lambda^{m-m' + n-n' + 1})^{2s}}{[(n-n' + m-m' + 1)!]^{2s}}   \| \nabla b_{m-m' , n-n'} q^{n-n'-1} \chi_{m-m' + n-n'} \|_{L^2}^2, \\
S^{(lo, 2)}_{m,n} := & \sum_{n' = 0}^{n-1} \sum_{m' = 0}^m \mathbbm{1}_{(m', n') \in \mathbb{N}^{(lo)}_{m,n}} \frac{ \lambda^{(m' + n' +1)s} }{[(m' + n' +1)!]^s} \| \sqrt{\nu} \nabla \omega_{m', n' }  e^W \chi_{n'+m'+2} \varphi^{n'+m'+2} q^{n'+1}  \|_{L^\infty}^2.
\end{align*}
We will first provide bounds on $S^{(lo, 2)}_{m,n}$, as this is the simpler quantity. We have 
\begin{align*}
S^{(lo, 2)}_{m,n} \lesssim & \sum_{n' = 0}^{n-1} \sum_{m' = 0}^m \mathbbm{1}_{(m', n') \in \mathbb{N}^{(lo)}_{m,n}} \frac{ \lambda^{(m' + n' + 3 )s} }{[(m' + n' + 3 )!]^s} \| \sqrt{\nu} \nabla \omega_{m', n' }  e^W \chi_{n'+m'+3} \varphi^{n'+m'+3} q^{n'+1}  \|_{L^\infty}^2 \\
\lesssim &  \sum_{n' = 0}^{\infty} \sum_{m' = 0}^{\infty} \frac{ \lambda^{(m' + n' + 3 )s} }{[(m' + n' + 3 )!]^s} \| \sqrt{\nu} \nabla \omega_{m', n' }  e^W \chi_{n'+m'+3} \varphi^{n'+m'+3} q^{n'+1} \|_{H^1_xH^1_y}^2 \\
\lesssim &  \sum_{n' = 0}^{\infty} \sum_{m' = 0}^{\infty} \frac{ \lambda^{(m' + n'  )s} }{[(m' + n'  )!]^s} \| \sqrt{\nu}  \nabla \omega_{m', n' }  e^W \chi_{n'+m'} \varphi^{n'+m'} q^{n'}  \|_{L^2}^2 \\
\lesssim & \| \{ \sqrt{\nu} \nabla \omega_{m,n}  \} \|_{X^{0,2}(\bold{a}_{m,n}, q^n e^W \chi_{n+m})}^2.
\end{align*}
For the quantity $S^{(lo, 1)}_{m,n}$, we need to estimate separately the cases when $n' = n$ and $n' < n$.  
\begin{align*}
 \| \nabla b_{m-m' , n-n'} q^{n-n'-1} \chi_{m-m' + n-n'} \|_{L^2}^2 \mathbbm{1}_{n' < n} = &  \| \nabla \Gamma b_{m-m' , n-n'-1} q^{n-n'-1} \chi_{m-m' + n-n'} \|_{L^2}^2\mathbbm{1}_{n' < n} \\
 \lesssim & \langle t \rangle \| \nabla^2 b_{m-m'+1 , n-n'-1} q^{n-n'-1} \chi_{m-m' + n-n'} \|_{L^2}^2 \\
 \lesssim & \langle t \rangle \|   b_{m-m'+1 , n-n'-1} q^{n-n'-1} \chi_{m-m' + n-n'} \|_{H^2}^2. 
\end{align*}
We then perform a re-indexing trick to bound it as follows 
\begin{align*}
\sum_{m \ge N} \sum_{n \ge N} |S^{(lo, 1)}_{m,n}| \lesssim & \sum_{m} \sum_n \frac{\lambda^{2(m + n)s}}{((n + m)!)^{2s}} \|  \langle \p_x \rangle^{-1} b_{m,n} q^{n} \chi_{m +n} \|_{H^2}^2 \lesssim  \| \langle \p_x \rangle^{-1} \{b_{m,n} \} \|_{X^{1,2}(\bold{a}_{m,n}, q^n \chi_{m+n})}^2. 
\end{align*}

\vspace{2 mm}

\noindent \underline{Bounding $S^{(mid)}$} In the case of $S^{(mid})$, the main favorable properties of the localized bilinear operator will be the rapid decay of the multiplier as well as the separation of $m', m-m'$ from $m$ and similarly $n', n-n'$ from $n$, which allows for the following identities 
\begin{align*}
\chi_{m + n} = \chi_{m' + n' + 5} \chi_{(m-m') + (n-n') + 5} \chi_{m + n}.
\end{align*}
 We proceed to estimate this contribution as follows 
 \begin{align*}
|S^{(mid)}| \lesssim &  \sum_{n = 0}^{\infty} \sum_{m = 0}^{\infty}  \Big(  \sum_{n' = 0}^{n-1} \sum_{m' = 0}^m \mathbbm{1}_{(m', n') \in \mathbb{N}^{(mid)}_{m,n}} ( n + m)^{-C(N)} \frac{(\lambda^{m-m' + n-n'})^s}{[(n-n' + m-m')!]^s}\frac{ \lambda^{(m' + n' + 1)s} }{[(m' + n' + 1)!]^s} \\
& \qquad \qquad \qquad \times  \frac{1}{\sqrt{\nu}} \| \nabla b_{m-m' , n-n'} q^{n-n'} \chi_{m-m' + n-n' + 2} \|_{L^\infty} \\
&\qquad \qquad \qquad  \times \| \sqrt{\nu} \nabla \omega_{m', n' }  e^W \chi_{n'+m' + 1} \varphi^{n+m} q^{n'}  \|_{L^2} \Big)^2 \\
\lesssim &  \sum_{n = 0}^{\infty} \sum_{m = 0}^{\infty}  \Big(  \sum_{n' = 0}^{n-1} \sum_{m' = 0}^m \mathbbm{1}_{(m', n') \in \mathbb{N}^{(mid)}_{m,n}} ( n + m)^{-\frac{C(N)}{2}} \frac{(\lambda^{m-m' + n-n'})^s}{[(n-n' + m-m' + 5)!]^s}\frac{ \lambda^{(m' + n' + 1)s} }{[(m' + n' + 5)!]^s} \\
& \qquad \qquad \qquad \times  \frac{1}{\sqrt{\nu}} \| \nabla b_{m-m' , n-n'} q^{n-n'} \chi_{m-m' + n-n' + 5} \|_{L^\infty} \\
&\qquad \qquad \qquad  \times \| \sqrt{\nu} \nabla \omega_{m', n' }  e^W \chi_{n'+m' + 5} \varphi^{n+m} q^{n'}  \|_{L^2} \Big)^2,
\end{align*}
and from here the bound follows in essentially the identical fashion to $S^{(hi)}$, the only exception being that the decaying factor of $(n + m)^{-\frac{C(N)}{2}}$ is used to make the summation in $n, m$ finite. A very similar argument applies also to the case when either $m \le 4N$ (which implies that $n \ge 12N$ or $n \le 4N$ (which implies that $m \ge 12N$). 
\end{proof}

\section{Trilinear Bounds}  \label{sec:Tri}
In this section, our aim is to use the abstract bilinear bounds obtained in the previous section in order to control the various trilinear contributions arising on the right-hand sides. Ultimately, this will furnish a proof of Proposition \ref{pro:tri:in}. 

\subsection{$(int, \gamma)$ Inner Products}

We will need to consider the following inner product: 
\begin{align} \n
\mathcal{I}^{(I, \gamma)}(t) := &\sum_{m = 0}^{\infty} \sum_{n = 0}^\infty \bold{a}_{m,n}^2 \langle \bold{T}^{(I, \gamma)}_{m,n} , \omega_{m,n}  q^{2n} e^{2W} \chi_{n + m}^2 \rangle \\ \label{defn:IP:1}
=&  \sum_{i = 1}^3 \sum_{m = 0}^{\infty} \sum_{n = 0}^\infty \bold{a}_{m,n}^2 \langle  \bold{T}^{(I, \gamma, i)}_{m,n} , \omega_{m,n}  q^{2n} e^{2W} \chi_{n + m}^2 \rangle =  \sum_{i = 1}^3 \mathcal{I}^{(I, \gamma, i)}(t).
\end{align}
The main proposition is to obtain estimates on the inner products appearing above. 
\begin{proposition}[$\gamma$ Inner Product Bounds] The inner products defined in \eqref{defn:IP:1} satisfy the following bounds: 
\begin{align} \label{jb:43:a}
| \mathcal{I}^{(I, \gamma, 1)}(t)| \lesssim & \sqrt{\slashed{\mathcal{E}}_{ell}^{(I)}} \mathcal{CK}^{(\gamma, \varphi)}\\ \label{jb:43:b}
| \mathcal{I}^{(I, \gamma, 2)}(t)| \lesssim &  \sqrt{\slashed{\mathcal{E}}_{ell}^{(I)}} \mathcal{CK}^{(\gamma; \lambda)} \\ \label{jb:43:c}
| \mathcal{I}^{(I, \gamma, 3)}(t)| \lesssim & \sqrt{\slashed{\mathcal{E}}_{ell}^{(I)}}( \mathcal{D}^{(\gamma)} +\mathcal{CK}^{(\gamma; W)} + \mathcal{CK}^{(\gamma; \varphi)} + \mathcal{CK}_{\text{Cloud}}) .
\end{align}
\end{proposition}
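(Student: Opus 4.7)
The overall plan is to apply Cauchy--Schwarz in $(m,n)$ to each $\mathcal{I}^{(I,\gamma,i)}$, invoke the quasiproduct bounds of Section~\ref{sec:q}, and match the resulting norms against the prescribed $\mathcal{CK}$ and $\mathcal{D}$ families; each of the three sub-operators $\bold{T}^{(I,\gamma,i)}_{m,n}$ requires its own mechanism. For \eqref{jb:43:a} the operator $\mathcal{R}_{m,n}$ vanishes outside the low-frequency band $m+n\le 16N$: after Cauchy--Schwarz I apply \eqref{snow:patrol:1}, placing $\phi^{(I)}_{\neq 0}$ in the $L^\infty$-based sequential norm controlled by $\mathcal{E}^{(I,full)}_{\mathrm{ell}}$ through a finite-order Sobolev embedding, with the $\langle t\rangle^2$ loss absorbed into the $\langle t\rangle^4$ factor of $\slashed{\mathcal{E}}^{(I)}_{\mathrm{ell}}$. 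The remaining $\sum_{m+n\le 16N}\bold{a}_{m,n}^2\|\omega_{m,n}q^n e^W\chi_{m+n}\|_{L^2}^2$ is comparable to $\langle t\rangle\,\mathcal{CK}^{(\gamma;\varphi)}$, because $(1+n)\dot\varphi/\varphi\sim\langle t\rangle^{-1}$ and $n$ is bounded on this range; the extra $\langle t\rangle$ is again absorbed into $\slashed{\mathcal{E}}^{(I)}_{\mathrm{ell}}$.

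For \eqref{jb:43:b}, the bilinear bound \eqref{ewing:1} on $\mathcal{Q}_{m,n}$ features mismatched $(m+n)^{\mp 1/2}$ weights. I would write $\bold{a}_{m,n}^2\|\mathcal{Q}_{m,n}\|\|\omega_{m,n}\|=\bigl((m+n)^{-1/2}\bold{a}_{m,n}\|\mathcal{Q}_{m,n}\|\bigr)\bigl((m+n)^{1/2}\bold{a}_{m,n}\|\omega_{m,n}\|\bigr)$ and apply Cauchy--Schwarz in $(m,n)$. Lemma~\ref{lemma:Q:mn:1} controls the first factor by $\|\{\partial_y\phi^{(I)}\}\|_{L^\infty(\chi_2\varphi^{-4})}$ times a copy of the second factor: the $\chi_2$-localized part of $\partial_y\phi^{(I)}$ is bounded in $L^\infty$ by $\mathcal{E}^{(I,out)}_{\mathrm{ell}}$ via Sobolev embedding (with $\varphi^{-4}\sim\langle t\rangle^4$ absorbed into the $\langle t\rangle^{1000}$ factor of $\slashed{\mathcal{E}}^{(I)}_{\mathrm{ell}}$), while $\sum \bold{a}_{m,n}^2(m+n)\|\omega_{m,n}\|^2 \lesssim (\lambda/\dot\lambda)\mathcal{CK}^{(\gamma;\lambda)}\sim \langle t\rangle^2\mathcal{CK}^{(\gamma;\lambda)}$, once more absorbable into $\slashed{\mathcal{E}}^{(I)}_{\mathrm{ell}}$.

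The quasilinear piece \eqref{jb:43:c} I would treat by integration by parts: using $\nabla\cdot\nabla^\perp\phi^{(I)}=0$ and that the weight $q^{2n}e^{2W}\chi_{m+n}^2$ depends only on $y$, one obtains
\[
\mathcal{I}^{(I,\gamma,3)}=-\tfrac12\sum_{m,n}\bold{a}_{m,n}^2\int \omega_{m,n}^2\,\partial_x\phi^{(I)}_{\neq 0}\,\partial_y\!\bigl(q^{2n} e^{2W}\chi_{m+n}^2\bigr).
\]
Expanding $\partial_y$ gives three pieces: the $nq^{2n-1}q'$ contribution pairs with $\bold{a}_{m,n}^2$ to produce $\mathcal{CK}^{(\gamma;\varphi)}$ (after using $n\lesssim (1+n)(\dot\varphi/\varphi)\langle t\rangle$); the $\partial_y W$ contribution is handled via \eqref{wdot:est:a} by writing $|\partial_y W|\omega_{m,n}^2 = (\sqrt{\nu}|\nabla\omega_{m,n}|)\cdot(|\partial_y W|\omega_{m,n}/\sqrt\nu)$ and using $|\partial_y W|/\sqrt\nu\lesssim\sqrt{-\partial_t W}/\nu^{1/2}$, producing the pair $\mathcal{D}^{(\gamma)}+\mathcal{CK}^{(\gamma;W)}$; the $\chi'_{m+n}$ contribution is supported in the interior gap region on which the Cloud norm is precisely designed to dominate, giving $\mathcal{CK}_{\mathrm{Cloud}}$. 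In each piece $\partial_x\phi^{(I)}$ is placed in $L^\infty$ and controlled by $\slashed{\mathcal{E}}^{(I)}_{\mathrm{ell}}$.

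The main obstacle is the $\partial_y W$ contribution in \eqref{jb:43:c}: since $\partial_y W$ is not itself a CK weight, trading it for $\mathcal{D}^{(\gamma)}+\mathcal{CK}^{(\gamma;W)}$ via \eqref{wdot:est:a} while retaining the elliptic factor in the clean form $\sqrt{\slashed{\mathcal{E}}^{(I)}_{\mathrm{ell}}}$ (and not a square root times an energy) and avoiding cross terms that cannot be summed in $(m,n)$ is where the bookkeeping will be most delicate.
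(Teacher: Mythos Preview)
Your treatment of \eqref{jb:43:a} and \eqref{jb:43:b} is essentially the paper's: Cauchy--Schwarz followed by \eqref{snow:patrol:1} and \eqref{ewing:1} respectively, with the $\langle t\rangle$-losses absorbed into $\slashed{\mathcal{E}}^{(I)}_{\mathrm{ell}}$.

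For \eqref{jb:43:c} the paper does \emph{not} integrate by parts. It splits at $m+n=10$ and applies Cauchy--Schwarz directly. For $m+n\geq 10$ one has $\chi_{m+n}=\chi_{m+n}\chi_2$ and, crucially, $d(y):=(|y|-1/4-L\eps\arctan t)_+\gtrsim 1$ on $\supp\chi_{m+n}$; inserting $1\lesssim \sqrt{\nu}\langle t\rangle\cdot\frac{d(y)}{\sqrt{\nu}\langle t\rangle}$ and using $\frac{d(y)}{\sqrt{\nu}\langle t\rangle}\lesssim\sqrt{-\partial_tW}$ pairs $\sqrt{\mathcal{D}^{(\gamma)}}$ against $\sqrt{\mathcal{CK}^{(\gamma;W)}}$ while the weight $\chi_{m+n}^2$ remains intact. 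For $m+n<10$ one writes $\chi_{m+n}=\chi_{m+n+2}+(\chi_{m+n}-\chi_{m+n+2})$; the first piece is handled as above, and on the second (which is contained in $\chi^I$) one rewrites $\slashed{\nabla}\omega_{m,n}=(\omega_{m+1,n},\omega_{m,n+1})$ and controls it by the Cloud norm, since the shifted index stays $\leq 11\leq 20$.

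Your integration-by-parts route has a genuine gap in the $\chi'_{m+n}$ piece at \emph{high} $m+n$. The Cloud functional is defined only for $m+n\leq 20$, so it cannot absorb the commutator supported on $(x_{m+n},y_{m+n})$ for large $m+n$. On that gap region $\chi_{m+n}$ itself runs down to $0$, so $\chi_{m+n}|\chi'_{m+n}|$ is not dominated by $\chi_{m+n}^2$ and you cannot simply invoke $\mathcal{E}^{(\gamma)}$ or $\mathcal{CK}^{(\gamma;W)}$ at level $(m,n)$. If instead you shift down via $\omega_{m,n}=\partial_x\omega_{m-1,n}$ (or $\Gamma\omega_{m,n-1}$) to reach the $\chi_{m+n-1}$-weighted norms, you are forced to pay $\nu^{-1}$ from $\mathcal{D}^{(\gamma)}$, and no amount of inviscid damping of $\partial_x\phi^{(I)}$ on $\supp\chi_2$ compensates for this uniformly in $\nu$ at $O(1)$ times. (There is also a smaller omission in your $nq^{2n-1}q'$ piece: on $\supp q'$ you have $q\to 0$, so $\mathcal{CK}^{(\gamma;\varphi)}$---which carries $q^{2n}$---does not control it directly; one must first use the Dirichlet condition $\phi^{(I)}_k|_{y=\pm1}=0$ to write $|\partial_x\phi^{(I)}|\lesssim q\|\partial_{xy}\phi^{(I)}\|_{L^\infty}$ and recover the missing factor of $q$.) The paper's direct approach avoids both difficulties by never differentiating $\chi_{m+n}^2$.
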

\begin{proof}[Proof of \eqref{jb:43:a}]We estimate this contribution to the inner-product using Cauchy-Schwartz as follows 
\begin{align} \label{aug5:1}
| \mathcal{I}^{(I, \gamma, 1)}| \lesssim & \Big[ \sum_{m = 0}^{\infty} \sum_{n = 0}^\infty \bold{a}_{m,n}^2 \|  \bold{T}^{(I, \gamma, 1)}_{m,n} q^n e^W \chi_{n+m} \|_{L^2}^2 \Big]^{\frac12} \Big[ \sum_{m = 0}^{\infty} \sum_{n = 0}^\infty \bold{a}_{m,n}^2 \| \omega_{m,n}  q^{n} e^{W} \chi_{n + m}\|_{L^2}^2 \Big]^{\frac12} \\  \label{aug5:2}
\lesssim &  \| \mathcal{R}_{m,n}[ \phi^{(I)}_{\neq 0, m, n}, \omega_{m,n} ]  \|_{X^0_2(\bold{a}_{m,n} q^n e^W \chi_{m + n} )} \sqrt{\mathcal{CK}^{(\gamma; \varphi)}} \langle t \rangle^{\frac12}  \\ \n
\lesssim &  ( \langle t \rangle \| \{  \phi^{(I)}_{\neq 0, m, n}  \} \|_{X^{0}_\infty(\mathbbm{1}_{m + N < 17N})} + \langle t \rangle^2 \| \{ \phi^{(I)}_{\neq 0, m, n} \} \|_{X^{0}_\infty(\chi_2 \mathbbm{1}_{m + N < 17N} )}) \\ \label{snow:p:1}
& \qquad \| \{\omega_{m,n} \} \|_{X^{0}_2(\mathbbm{1}_{m + N < 16N} \bold{a}_{m,n} q^n e^W \chi_{n + m})}\sqrt{ \mathcal{CK}^{(\gamma; \varphi)}} \langle t \rangle^{\frac12}  \\ \label{aug5:3}
\lesssim &  ( (\langle t \rangle^2 \mathcal{E}_{ell}^{(I, \text{Full})})^{\frac12} +  (\langle t \rangle^4 \mathcal{E}_{ell}^{(I, \text{Out})})^{\frac12}) \sqrt{\mathcal{CK}^{(\gamma; \varphi)}} \sqrt{ \mathcal{CK}^{(\gamma; \varphi)}} \langle t \rangle \\  \label{aug5:4}
\lesssim & \sqrt{\slashed{\mathcal{E}}_{ell}^{(I)}}  \mathcal{CK}^{(\gamma, \varphi)}.
\end{align}
Above to go from \eqref{aug5:1} to \eqref{aug5:2}, we have used the definitions \eqref{aug5:a} and \eqref{CKgammavarphi}. To go from \eqref{aug5:2} to \eqref{snow:p:1}, we have used the bilinear estimate \eqref{snow:patrol:1}. To go from \eqref{snow:p:1} to \eqref{aug5:3}, we use Sobolev embedding with the vector-fields $\p_x, \Gamma$, \eqref{Sob:emb}. To go from \eqref{aug5:3} to \eqref{aug5:4}, we use our elliptic bounds. 
\end{proof}
\begin{proof}[Proof of \eqref{jb:43:b}]We estimate this contribution to the inner-product using Cauchy-Schwartz, but after redistributing a weight of $(n + m)^{\frac12}$, as follows: 
\begin{align} \n
| \mathcal{I}^{(I, \gamma, 2)}| \lesssim & \Big[ \sum_{m = 0}^{\infty} \sum_{n = 0}^\infty \frac{\bold{a}_{n,m}^2}{n + m}  \| \bold{T}^{(I, \gamma, 2)}_{m,n} q^n e^W \chi_{n+m} \|_{L^2}^2 \Big]^{\frac12}  \Big[ \sum_{m = 0}^{\infty} \sum_{n = 0}^\infty (n+m)\bold{a}_{n,m}^2  \| \omega_{m,n}  q^{n} e^{W} \chi_{n + m}\|_{L^2}^2 \Big]^{\frac12} \\ \label{some:nights:1}
\lesssim & \| \{ \mathcal{Q}_{m,n}[\{ \phi^{(I)}_{\neq 0, m,n}   \}, \{ \omega_{m,n} \}] \} \|_{X^{0,2}(\frac{\bold{a}_{m,n}}{(m + n)^{\frac12}} q^n e^W \chi_{n+m} )} \sqrt{ \mathcal{CK}^{(\gamma; \lambda)}(t)} \langle t \rangle \\ \label{some:nights:2}
\lesssim &  \| \{ \p_y \phi^{(I)}_{\neq 0, m,n} \} \|_{X^{0,\infty}(\bold{a}_{m,n}  \chi_{2} \varphi^{-4}  )} \| \{ \omega_{m,n} \} \|_{X^{0,2}((n + m)^{\frac12}\bold{a}_{m,n} q^n e^W \chi_{n+m} )} \sqrt{\mathcal{CK}^{(\gamma; \lambda)}} \langle t \rangle\\ \label{some:nights:3}
\lesssim &  \| \{ \p_y \phi^{(I)}_{\neq 0, m,n} \} \|_{X^{0,\infty}(\bold{a}_{m,n}  \chi_{2} \varphi^{-4}  )}  \mathcal{CK}^{(\gamma; \lambda)} \langle t \rangle^2\\ \label{some:nights:4}
\lesssim &(\langle t \rangle^4 \mathcal{E}_{ell}^{(I, out)})^{\frac12} \mathcal{CK}^{(\gamma; \lambda)} \\
\lesssim &\sqrt{\slashed{\mathcal{E}}_{ell}^{(I)}} \mathcal{CK}^{(\gamma; \lambda)}.
\end{align}
Above, to obtain \eqref{some:nights:1}, we use the bound \eqref{lambda:yessir}. To obtain \eqref{some:nights:2}, we use the bilinear estimate \eqref{ewing:1}. 
\end{proof}
\begin{proof}[Proof of \eqref{jb:43:c}] We split the estimate of this contribution into two cases: $n + m \ge 10$ and $n + m < 10$: 
\begin{align}
I^{(I, \gamma, 3)} = I^{(I, \gamma, 3, \ge 10)} + I^{(I, \gamma, 3, < 10)}.
\end{align}

\vspace{1 mm}

\noindent \textit{Case $n + m \ge 10$:} In this case, we want to use the $CK^{(W)}$ term to create a diffusive factor of $\sqrt{\nu}$ to absorb the loss of derivative. We proceed as follows
\begin{align} \n
| \mathcal{I}^{(I, \gamma, 3, \ge 10)}| \lesssim & \Big[ \sum_{m = 0}^{\infty} \sum_{n = 0}^\infty \mathbbm{1}_{m + n \ge 10} \bold{a}_{n,m}^2  \|  \nabla \phi^{(I)}_{\neq} \cdot \nabla \omega_{m,n} q^n e^W \chi_{n+m} \|_{L^2}^2 \Big]^{\frac12} \\ \label{shuy:0}
& \times \Big[ \sum_{m = 0}^{\infty} \sum_{n = 0}^\infty \mathbbm{1}_{m + n \ge 10} \bold{a}_{n,m}^2 \| \omega_{m,n}  q^{n} e^{W} \chi_{n + m}\|_{L^2}^2 \Big]^{\frac12} \\ \n
\lesssim &  \Big[ \sum_{m = 0}^{\infty} \sum_{n = 0}^\infty \bold{a}_{n,m}^2 \mathbbm{1}_{m + n \ge 10}  \|  \nabla \phi^{(I)}_{\neq} \chi_2 \langle t \rangle  \|_{L^\infty} \| \sqrt{\nu} \nabla \omega_{m,n} q^n e^W \chi_{n+m} \|_{L^2}^2 \Big]^{\frac12} \\ \label{shuy:1}
& \times \Big[ \sum_{m = 0}^{\infty} \sum_{n = 0}^\infty \mathbbm{1}_{m + n \ge 10} \bold{a}_{n,m}^2 \|  \frac{d(y)}{\sqrt{\nu} \langle t \rangle} \omega_{m,n}  q^{n} e^{W} \chi_{n + m}\|_{L^2}^2 \Big]^{\frac12} \\ \label{avicii:1}
\lesssim & \sqrt{\langle t \rangle^4 \mathcal{E}_{ell}^{(I, out)}} \sqrt{\mathcal{D}^{(\gamma)}} \sqrt{ \mathcal{CK}^{(\gamma, W)}(t)} \\
\lesssim & \sqrt{\slashed{\mathcal{E}}_{ell}^{(I)}} \sqrt{\mathcal{D}^{(\gamma)}} \sqrt{ \mathcal{CK}^{(\gamma, W)}(t)}.
\end{align} 
Above, to go from \eqref{shuy:0} to \eqref{shuy:1}, we have used that $m + n \ge 10$ to write the identity $\chi_{m + n} = \chi_{m + n} \chi_2$ as well as to insert a factor of $d(y)$ for free, as $1/d(y)$ is bounded on the support of $\chi_{m + n}$. 

\vspace{2 mm}

\noindent \textit{Case $n + m < 10$:} In this case, we cannot invoke the factor of $d(y)$ to generate an extra $\sqrt{\nu}$. Therefore, we need to absorb the loss of derivative (which is finite) into the Cloud norm. First of all, we decompose $\chi_{m + n} = \chi_{m + n + 2} + (\chi_{m + n} - \chi_{m + n+2})$, the latter of which is contained in $\chi_I$. The $\chi_{m + n + 2}$ contribution is estimated identically to the $n + m \ge 10$ case. We proceed to estimate the ``interior" contribution as follows 
\begin{align} \n
| \mathcal{I}^{(I, \gamma, 3, < 10, int)}| \lesssim & \Big[ \sum_{m = 0}^{\infty} \sum_{n = 0}^\infty \mathbbm{1}_{m + n < 10} \bold{a}_{n,m}^2 \| \slashed{\nabla} \phi^{(I)}_{\neq} \cdot \slashed{\nabla} \omega_{m,n} q^n e^W \chi_{I} \|_{L^2}^2 \Big]^{\frac12} \\ \label{shuy:0a}
& \times \Big[ \sum_{m = 0}^{\infty} \sum_{n = 0}^\infty \mathbbm{1}_{m + n < 10} \bold{a}_{n,m}^2 \| \omega_{m,n}  q^{n} e^{W} \chi_{n + m}\|_{L^2}^2 \Big]^{\frac12} \\ \n
\lesssim &  \Big[ \sum_{m = 0}^{\infty} \sum_{n = 0}^\infty \bold{a}_{n+1,m+1}^2 \mathbbm{1}_{m + n < 10}  \|  \slashed{\nabla} \phi^{(I)}_{\neq}  \varphi^{-1} \|_{L^\infty}^2 \|  \slashed{\nabla} \omega_{m,n} q^n e^W \chi_{I} \|_{L^2}^2 \Big]^{\frac12}\mathcal{CK}^{(\gamma)}(t) \\ \label{shuy:1a}\\ 
\lesssim & \langle t \rangle^2 \sqrt{\mathcal{E}_{ell}^{(I, full)}} \sqrt{\mathcal{CK}_{\text{Cloud}}(t)} \sqrt{ \mathcal{CK}^{(\gamma; \varphi)}(t)} \\
\lesssim & \sqrt{\slashed{\mathcal{E}}_{ell}^{(I)}} (\mathcal{CK}_{\text{Cloud}}(t) +  \mathcal{CK}^{(\gamma; \varphi)}(t)), 
\end{align}
which proves the lemma. 
\end{proof}

\subsection{$(int, \alpha/ \mu)$ Inner Products} \label{jaylen:1}

For this section, we want to control the inner products that arise from the decomposition \eqref{harden:1}. We define 
\begin{align} \n
\mathcal{I}^{(I, \xi)}(t) = & \sum_{m = 0}^{\infty} \sum_{n = 0}^\infty \bold{a}_{n,m}^2 \nu \langle \bold{T}^{(I, \xi)}_{m,n} , \p_{x_\xi} \omega_{m,n}  q^{2n} e^{2W} \chi_{n + m}^2 \rangle  \\  \label{rodeo:alpha}
= & \sum_{i = 1}^{6} \sum_{m = 0}^{\infty} \sum_{n = 0}^\infty \bold{a}_{n,m}^2 \nu \langle \bold{T}^{(I, \xi, i)}_{m,n} , \p_{x_\xi} \omega_{m,n}  q^{2n} e^{2W} \chi_{n + m}^2 \rangle  =  \sum_{i = 1}^{6} \mathcal{I}^{(I, \xi, i)}(t).
\end{align}
The main proposition will be the following. 
\begin{proposition} The inner products defined above in \eqref{rodeo:alpha} satisfy the following estimates: 
\begin{align} \label{rodeo:1}
|\mathcal{I}^{(I, \xi, 1)}(t)| \lesssim &  \sqrt{\slashed{\mathcal{E}}_{ell}^{(I)}} (  \nu^{\frac16}\mathcal{CK}^{(\gamma; \varphi)} +  \mathcal{CK}^{(\xi; \varphi)} )  \\ \label{rodeo:2}
|\mathcal{I}^{(I, \xi, 2)}(t)| \lesssim & \sqrt{\slashed{\mathcal{E}}_{ell}^{(I)}}  \mathcal{CK}^{(\xi; \varphi)}(t) \\ \label{rodeo:3}
|\mathcal{I}^{(I, \xi, 3)}(t)| \lesssim &\sqrt{\slashed{\mathcal{E}}_{ell}^{(I)}}(\mathcal{CK}^{(\alpha; \lambda)}(t) + \mathcal{CK}^{(\mu; \lambda)}(t))\\ \label{rodeo:4}
|\mathcal{I}^{(I, \xi, 4)}(t)| \lesssim & \sqrt{\slashed{\mathcal{E}}_{ell}^{(I)}}\mathcal{CK}^{(\xi; \lambda)}(t) \\ \label{rodeo:5}
|\mathcal{I}^{(I, \xi, 5)}(t)| \lesssim &  \sqrt{\slashed{\mathcal{E}}_{ell}^{(I)} } (  \frac{\mathcal{E}^{(\alpha)}(t)}{\langle t \rangle^{90}} +  \frac{\mathcal{E}^{(\mu)}(t)}{\langle t \rangle^{90}} + \nu^{\frac13} \mathcal{CK}_{\text{Cloud}}^{(\varphi)} +  \mathcal{CK}^{(\alpha; \varphi)} ) \\ \label{rodeo:6}
|\mathcal{I}^{(I, \xi, 6)}(t)| \lesssim & \sqrt{\slashed{\mathcal{E}}_{ell}^{(I)} } (\mathcal{CK}_{\text{Cloud}} + \mathcal{CK}^{(\alpha; \varphi)}(t) ).
\end{align}
\end{proposition}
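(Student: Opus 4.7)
The plan is to follow closely the proof of the preceding $\gamma$-level proposition, where the additional $\sqrt{\nu}\,\partial_{x_\xi}$ factors appearing on each side of $\mathcal{I}^{(I,\xi,i)}$ are to be absorbed into $\xi$-level $\mathcal{CK}$ or $\mathcal{D}$ functionals. In every case I will split the weight $\nu$ as $\sqrt{\nu}\cdot\sqrt{\nu}$ and apply Cauchy--Schwarz: the factor $\sqrt{\nu}\,\partial_{x_\xi}\omega_{m,n}\,q^n e^W \chi_{m+n}$ is paired with one of $\sqrt{\mathcal{CK}^{(\xi;\varphi)}}$, $\sqrt{\mathcal{CK}^{(\xi;\lambda)}}$, $\sqrt{\mathcal{CK}^{(\xi;W)}}$ or $\sqrt{\mathcal{D}^{(\xi)}}$, while the remaining $\sqrt{\nu}\,\bold{T}^{(I,\xi,i)}_{m,n}$ is fed into the bilinear bounds of Section~\ref{sec:q}.

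The $\mathcal{R}$-contributions $\mathcal{I}^{(I,\xi,1)}$ and $\mathcal{I}^{(I,\xi,2)}$ are treated via Lemma~\ref{Lemma:restless:bones} exactly as in \eqref{jb:43:a}. For $i=2$ the derivative sits on $\omega$, so the argument is verbatim the $\gamma$-argument with $\omega_{m,n}$ replaced by $\sqrt{\nu}\,\partial_{x_\xi}\omega_{m,n}$, producing the $\mathcal{CK}^{(\xi;\varphi)}$ bound \eqref{rodeo:2}. For $i=1$ the derivative sits on $\phi^{(I)}_{\neq 0}$, so the $L^\infty$ norm demanded of $\phi$ in \eqref{snow:patrol:1} picks up an additional derivative, still controlled by $\slashed{\mathcal{E}}^{(I)}_{ell}$ via Sobolev embedding. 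The bifurcation in \eqref{rodeo:1} between $\nu^{1/6}\mathcal{CK}^{(\gamma;\varphi)}$ and $\mathcal{CK}^{(\xi;\varphi)}$ reflects a simple frequency interpolation of the type
\[
\sqrt{\nu}\,|\nabla| \;\lesssim\; \nu^{1/6}\,\mathbf{1}_{|\nabla|\lesssim\nu^{-1/3}} \;+\; \sqrt{\nu}\,|\nabla|\,\mathbf{1}_{|\nabla|\gtrsim\nu^{-1/3}},
\]
used to trade the high-frequency part of $\omega_{m,n}$ in the Cauchy--Schwarz pairing against $\mathcal{CK}^{(\xi;\varphi)}$, while the low-frequency part is paid for by $\nu^{1/6}\,\mathcal{CK}^{(\gamma;\varphi)}$. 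The $\mathcal{Q}$-contributions $\mathcal{I}^{(I,\xi,3)}$ and $\mathcal{I}^{(I,\xi,4)}$ are handled by the same scheme that gave \eqref{jb:43:b}: extract a factor of $(n+m)^{1/2}$ onto the $\omega$-side, convert $\mathcal{CK}^{(\xi;\varphi)}$ into $\mathcal{CK}^{(\xi;\lambda)}$ via \eqref{lambda:yessir}, and invoke Lemma~\ref{lemma:Q:mn:1}. The only subtlety is that when $\partial_{x_\xi}$ sits on $\phi$ (case $i=3$) the $L^\infty$ norm in \eqref{ewing:1} sees both $\partial_y\partial_x\phi$ and $\partial_{yy}\phi$, which is exactly why both $\mathcal{CK}^{(\alpha;\lambda)}$ and $\mathcal{CK}^{(\mu;\lambda)}$ appear on the right-hand side of \eqref{rodeo:3}.

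The quasilinear contributions $\mathcal{I}^{(I,\xi,5)}$ and $\mathcal{I}^{(I,\xi,6)}$ will follow the high/low frequency split already used for $\mathcal{I}^{(I,\gamma,3)}$. For $m+n\geq 10$ one has $\chi_{m+n}=\chi_{m+n}\chi_2$, so one inserts $d(y)\cdot d(y)^{-1}$: the factor $d(y)^{-1}$ is absorbed into the elliptic $L^\infty$ norm of $\slashed{\nabla}\phi^{(I)}\chi_2$, while $d(y)/(\sqrt{\nu}\langle t\rangle)$ matches $\sqrt{-\partial_t W}$ by \eqref{W_prop}, converting the pairing into $\sqrt{\mathcal{D}^{(\xi)}}\sqrt{\mathcal{CK}^{(\xi;W)}}$ (for $i=6$) or into $\mathcal{CK}^{(\xi;\varphi)}$ after an AM--GM using $\partial_{x_\xi}\omega_{m,n}$ as $\omega_{m+1,n}$ or $\omega_{m,n+1}$ (for $i=5$). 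For $m+n<10$ the integrand is supported inside $\chi_I$, where the Sobolev Cloud norm applies: bounding $\slashed{\nabla}^\perp\phi^{(I)}$ by $\mathcal{E}^{(I,full)}_{ell}$ and using $\partial_{x_\xi}\omega_{m,n}=\omega_{m+1,n}-t\Gamma\omega_{m,n}+\Gamma\omega_{m,n+1}$ (as in the construction of $\mathcal{E}_{\text{cloud}}$) produces $\sqrt{\slashed{\mathcal{E}}^{(I)}_{ell}}\sqrt{\mathcal{CK}_{\text{cloud}}}\sqrt{\mathcal{CK}^{(\xi;\varphi)}}$. The factor $\langle t\rangle^{-90}$ on the $\mathcal{E}^{(\xi)}$ term in \eqref{rodeo:5} comes from the strong time decay in \eqref{eell:out} with $\mathfrak{n}$ chosen sufficiently large.

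The principal obstacle will be $\mathcal{I}^{(I,\xi,5)}$: the extra $\partial_{x_\xi}$ on $\slashed{\nabla}^\perp\phi^{(I)}$ produces a second derivative of $\phi$, which at low frequency exceeds what $\mathcal{E}^{(I,out)}_{ell}$ directly supplies (it controls $\partial_v^{\ell}\phi^{(I)}$ only for $\ell\leq 2$). The resolution is exactly the high/low partition above: in the interior we use the finite-order but high-regularity $\mathcal{E}^{(I,full)}_{ell}$ together with the Cloud norm, and in the exterior we use the $\sqrt{-\partial_t W}$ trick to convert one lost derivative into a dissipation-type quantity. Keeping these mechanisms disjoint while tracking the $\chi_{m+n}$, $\chi_I$ and $\chi_2$ cutoffs consistently is the only genuinely delicate part of the argument.
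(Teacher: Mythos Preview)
Your overall strategy is right and several pieces (the Cauchy--Schwarz splitting, the $m+n\ge 10$ versus $m+n<10$ partition for the quasilinear terms $i=5,6$, the use of the Cloud norm in the interior, the $d(y)/\sqrt{\nu}\langle t\rangle\sim\sqrt{-\partial_tW}$ trick) match the paper. However, there is a genuine gap in your treatment of $\mathcal{I}^{(I,\xi,1)}$ and $\mathcal{I}^{(I,\xi,3)}$ when $\xi=\alpha$, and your explanation of the $\nu^{1/6}$ is not the actual mechanism.

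For $i=1$ and $i=3$ with $\xi=\alpha$, the input to the bilinear operator is $b_{m,n}=\partial_y\phi^{(I)}_{\neq 0,m,n}$. The normal derivative of the stream function does \emph{not} vanish at $y=\pm 1$, so you cannot invoke Lemma~\ref{Lemma:restless:bones} (via \eqref{snow:patrol:1}) or Lemma~\ref{lemma:Q:mn:1} (via \eqref{ewing:1}) --- both require $b_{m,0}|_{y=\pm 1}=0$. The paper instead uses the variants \eqref{office:1} and \eqref{e:money:1}, which drop the boundary hypothesis at the price of one extra $y$-derivative landing on $\omega_{m,n}$. This is the origin of the two-term splitting in the paper's proof of \eqref{rodeo:1}, and it is why both $\mathcal{CK}^{(\alpha;\lambda)}$ and $\mathcal{CK}^{(\mu;\lambda)}$ appear in \eqref{rodeo:3} (the RHS of \eqref{e:money:1} has $\|\nabla\omega_{m,n}\|$, hence both derivatives), not because the $L^\infty$ norm of $\phi$ sees $\partial_{yy}\phi$ and $\partial_{xy}\phi$ simultaneously.

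The $\nu^{1/6}$ in \eqref{rodeo:1} does not come from a frequency cutoff at $|\nabla|\sim\nu^{-1/3}$. In the paper, the extra derivative on $\phi^{(I)}$ is rewritten as $\partial_y\phi^{(I)}=v_y(\Gamma-t\partial_x)\phi^{(I)}$; the dangerous $t\partial_x$ piece produces a factor $\nu^{1/3}t$ (after splitting $\sqrt{\nu}=\nu^{1/3}\cdot\nu^{1/6}$), which is absorbed via the \emph{enhanced dissipation} built into $\mathcal{E}_{ell}^{(I,\mathrm{full})}$: since that functional carries the weight $e^{\delta_I\nu^{1/3}t}$, one has $\nu^{1/3}t\,e^{-\delta_I\nu^{1/3}t}\lesssim 1$. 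Your proposed frequency-splitting mechanism would not interact with the physical-space weighted norms here in any obvious way and misses this key use of $\delta_I>0$.
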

\begin{proof}[Proof of \eqref{rodeo:1}] Due to the definition \eqref{bilinear:R}, we have that $m + n$ is bounded by $16N$. We give the proof of the case $\xi = \alpha$, with the case $\xi = \mu$ actually being simpler. By definition \eqref{rodeo:alpha}, we use Cauchy-Schwartz to control the inner-product as follows 
\begin{align} \n
|\mathcal{I}^{(I, \alpha, 1)}(t)| \lesssim & \Big[ \sum_{m = 0}^\infty \sum_{n = 0}^\infty \bold{a}_{m,n}^2  \nu \|  \bold{T}^{(I,\alpha, 1)} e^W \chi_{n + m} q^n \|_{L^2}^2  \Big]^{\frac12}  \Big[ \sum_{m = 0}^\infty \sum_{n = 0}^\infty \bold{a}_{m,n}^2  \nu  \| \p_y \omega_{m,n} q^n e^W \chi_{m + n} \|_{L^2}^2  \Big]^{\frac12} \\ \n
\lesssim & \langle t \rangle^{\frac12} \nu^{\frac12} \| \{  \mathcal{R}_{m,n}[\{\p_y \phi^{(I)}_{\neq, m,n}\}, \{\omega_{m,n}\}]  \|_{{X^{0}_2(\bold{a}_{m,n} q^n e^W \chi_{n + m} )}} \\
& \times \Big[ \sum_{m = 0}^\infty \sum_{n = 0}^\infty \bold{a}_{m,n}^2 \frac{\dot{\varphi}}{\varphi}  \nu  \| \p_y \omega_{m,n} q^n e^W \chi_{m + n} \|_{L^2}^2  \Big]^{\frac12} \\ \n
\lesssim &\langle t \rangle^{\frac12} \nu^{\frac12} \| \{  \mathcal{R}_{m,n}[\{\p_y \phi^{(I)}_{\neq, m,n}\}, \{\omega_{m,n}\}]  \|_{{X^{0}_2(\bold{a}_{m,n} q^n e^W \chi_{n + m} )}} \sqrt{ \mathcal{CK}^{(\alpha; \varphi)}(t)} \\ \n
\lesssim &\langle t \rangle^{\frac12} \nu^{\frac12} \Big[ \langle t \rangle \| \{  \p_y \phi^{(I)}_{m, n} \} \|_{X^{0}_\infty(\mathbbm{1}_{m + N < 17N})}  \| \{ \omega_{m,n} \} \|_{X^{0}_2(\mathbbm{1}_{m + n < 16N} \bold{a}_{m,n} q^n e^W \chi_{n + m})} \\ \label{SM:1}
& + \| \{  \p_y \phi^{(I)}_{m, n} \} \|_{X^{0}_\infty(\chi_2 \mathbbm{1}_{m + n < 17N})}  \| \{\p_y  \omega_{m,n} \} \|_{X^{0}_2(\mathbbm{1}_{m + N < 16N} \bold{a}_{m,n} q^n e^W \chi_{n + m})} \Big] \\ \n
& \times \sqrt{ \mathcal{CK}^{(\alpha; \varphi)}(t)}=: |\mathcal{I}^{(I, \alpha, 1, 1)}(t)| + |\mathcal{I}^{(I, \alpha, 1, 2)}(t)|.  
\end{align}
To obtain the bound \eqref{SM:1}, we have invoked our bilinear estimate \eqref{office:1}. 

To estimate $|\mathcal{I}^{(I, \alpha, 1, 1)}(t)|$, we will need the following estimate: 
\begin{align}
 & \| \{\nu^{\frac13}\p_y \phi^{(I)}_{\neq 0, m, n} \} \|_{X^{0}_\infty(\mathbbm{1}_{m + n < 17N})} \\
 \lesssim &  \| \{ \nu^{\frac13} (\Gamma - t\p_x) \phi^{(I)}_{\neq 0, m, n} \} \|_{X^{0}_\infty(\mathbbm{1}_{m + n < 17N})} \\
 \lesssim & \| \{ \nu^{\frac13}t \phi^{(I)}_{\neq 0, m, n} \} \|_{X^{0}_\infty(\mathbbm{1}_{m + n < 18N})} \lesssim \| \{e^{-\delta_I  \nu^{\frac13}t} e^{\delta_I \nu^{\frac13}t} \nu^{\frac13}t \phi^{(I)}_{\neq 0, m, n} \} \|_{X^{0}_\infty(\mathbbm{1}_{m + n < 18N})} \\
 \lesssim &  \| \{ e^{\delta_I \nu^{\frac13}t}  \phi^{(I)}_{\neq 0, m, n} \} \|_{X^{0}_\infty(\mathbbm{1}_{m + n < 18N})} \lesssim \Big(\mathcal{E}_{ell}^{(I, full)}\Big)^{\frac12}.
\end{align}
In the final step, to absorb the extra $\nu^{\frac13}t$ weight, we have crucially used the fact that $\delta_I > 0$. Correspondingly, we have 
\begin{align*}
 |\mathcal{I}^{(I, \alpha, 1, 1)}(t)| \lesssim & \nu^{\frac16} \Big[   \langle t \rangle^4 \mathcal{E}_{ell}^{(I, full)} \Big]^{\frac12} \sqrt{\frac{\dot{\varphi}}{\varphi}} \| \{ \omega_{m,n} \} \|_{X^{0}_2(\mathbbm{1}_{m + N < 16N} \bold{a}_{m,n} q^n e^W \chi_{n + m})}  \sqrt{ \mathcal{CK}^{(\alpha; \varphi)}(t)} \\
\lesssim & \nu^{\frac16}  \Big(\langle t \rangle^4 \mathcal{E}_{ell}^{(I, full)} \Big)^{\frac12} \sqrt{\mathcal{CK}^{(\gamma; \varphi)}} \sqrt{ \mathcal{CK}^{(\alpha; \varphi)}(t)} \\
\lesssim & \nu^{\frac16} \sqrt{\slashed{\mathcal{E}}_{ell}^{(I)}} (\mathcal{CK}^{(\gamma; \varphi)} + \mathcal{CK}^{(\alpha; \varphi)}(t)). 
\end{align*}
To estimate the term $ |\mathcal{I}^{(I, \alpha, 1, 2)}(t)|$, we distribute the $\nu^{1/2}$ differently. Indeed we have upon writing $\p_y = \frac{1}{v_y} (\Gamma + t\p_x)$, 
\begin{align*}
 &|\mathcal{I}^{(I, \alpha, 1, 2)}(t)| \\
 \lesssim & \langle t \rangle^{\frac32} \| \phi^{(I)}_{\neq 0, m, n} \|_{X^{0}_\infty(\chi_2 \mathbbm{1}_{m + n < 17N})}  \| \{\nu^{\frac12}\p_y  \omega_{m,n} \} \|_{X^{0}_2(\mathbbm{1}_{m + N < 16N} \bold{a}_{m,n} q^n e^W \chi_{n + m})}\sqrt{ \mathcal{CK}^{(\alpha; \varphi)}(t)} \\
 \lesssim & \langle t \rangle^{2} \| \phi^{(I)}_{\neq 0, m, n} \|_{X^{0}_\infty(\chi_2 \mathbbm{1}_{m + n < 17N})}  \| \{\sqrt{\frac{\dot{\varphi}}{\varphi}}\nu^{\frac12}\p_y  \omega_{m,n} \} \|_{X^{0}_2(\mathbbm{1}_{m + N < 16N} \bold{a}_{m,n} q^n e^W \chi_{n + m})}\sqrt{ \mathcal{CK}^{(\alpha; \varphi)}(t)} \\
 \lesssim &  \sqrt{\slashed{\mathcal{E}}_{ell}^{(I)}} \mathcal{CK}^{(\alpha; \varphi)}(t).
\end{align*}
\end{proof}
\begin{proof}[Proof of \eqref{rodeo:2}]We again use Cauchy-Schwartz to control the inner-product as follows 
\begin{align} \n
|\mathcal{I}^{(I, \alpha, 2)}(t)| \lesssim & \Big[ \sum_{m = 0}^\infty \sum_{n = 0}^\infty \bold{a}_{m,n}^2 \nu \|  \bold{T}^{(I,\alpha, 2)} e^W \chi_{n + m} q^n \|_{L^2}^2  \Big]^{\frac12} \Big[ \sum_{m = 0}^\infty \sum_{n = 0}^\infty \bold{a}_{m,n}^2 \nu  \| \p_y \omega_{m,n} q^n e^W \chi_{m + n}  \|_{L^2}^2  \Big]^{\frac12} \\ \n
\lesssim & \langle t \rangle^{\frac12} \Big[ \sum_{m = 0}^\infty \sum_{n = 0}^\infty \bold{a}_{m,n}^2 \nu \|  \bold{T}^{(I,\alpha, 2)} e^W \chi_{n + m} q^n \|_{L^2}^2  \Big]^{\frac12} \Big[ \sum_{m = 0}^\infty \sum_{n = 0}^\infty \bold{a}_{m,n}^2 \frac{\dot{\varphi}}{\varphi} \nu  \| \p_y \omega_{m,n} q^n e^W \chi_{m + n}  \|_{L^2}^2  \Big]^{\frac12} \\ \n
\lesssim &\langle t \rangle^{\frac12}\Big[ \sum_{m = 0}^\infty \sum_{n = 0}^\infty \bold{a}_{m,n}^2 \nu \|  \mathcal{R}_{m,n}[ \{ \phi^{(I)}_{\neq 0, m, n} \}, \{ \p_y \omega_{m,n}\}  ] e^W \chi_{n + m} q^n \|_{L^2}^2  \Big]^{\frac12} \sqrt{\mathcal{CK}^{(\alpha; \varphi)}(t)} \\ \n
\lesssim &\langle t \rangle^{\frac12} \| \{   \mathcal{R}_{m,n}[\{\phi^{(I)}_{\neq 0, m,n} \}, \{\sqrt{\nu} \p_y \omega_{m,n} \}]  \|_{{X^{0}_2(\bold{a}_{m,n} q^n e^W \chi_{n + m} )}}\sqrt{\mathcal{CK}^{(\alpha; \varphi)}(t)} \\ \n
\lesssim &\langle t \rangle^{\frac12} ( \langle t \rangle \| \{  \phi^{(I)}_{\neq 0, m,n} \} \|_{X^{0}_\infty(\mathbbm{1}_{m + N < 17N})} +\langle t \rangle^2 \| \{ \phi^{(I)}_{\neq 0, m,n} \} \|_{X^{0}_\infty(\chi_2 \mathbbm{1}_{m + N < 17N} )}) \\ \label{snow::1878k}
& \qquad \| \{\sqrt{\nu} \p_y \omega_{m,n} \} \|_{X^{0}_2(\mathbbm{1}_{m + N < 16N} \bold{a}_{m,n} q^n e^W \chi_{n + m})} \sqrt{ \mathcal{CK}^{(\alpha; \varphi)}(t)}  \\ \n
\lesssim &\langle t \rangle ( \langle t \rangle \| \{  \phi^{(I)}_{\neq 0, m,n} \} \|_{X^{0}_\infty(\mathbbm{1}_{m + N < 17N})} +\langle t \rangle^2 \| \{ \phi^{(I)}_{\neq 0, m,n} \} \|_{X^{0}_\infty(\chi_2 \mathbbm{1}_{m + N < 17N} )}) \mathcal{CK}^{(\alpha; \varphi)}(t)  \\  \label{snow::1878k}
\lesssim & \sqrt{\slashed{\mathcal{E}}_{ell}^{(I)}}  \mathcal{CK}^{(\alpha; \varphi)}(t).
\end{align}
\end{proof}
\begin{proof}[Proof of \eqref{rodeo:3}] In this case, we will need to multiply and divide by the weight $(m +n)^{\frac12}$ prior to applying Cauchy-Schwartz, in the following manner: 
\begin{align*}
|\mathcal{I}^{(I, \alpha, 3)}(t)| \lesssim & \Big[ \sum_{m = 0}^\infty \sum_{n = 0}^\infty \frac{\bold{a}_{m,n}^2}{m + n} \nu  \| \bold{T}^{(I,\alpha, 3)} e^W \chi_{n + m} q^n \|_{L^2}^2  \Big]^{\frac12} \\
& \times \Big[ \sum_{m = 0}^\infty \sum_{n = 0}^\infty (m+n) \bold{a}_{m,n}^2  \nu  \| \p_y \omega_{m,n} q^n e^W \chi_{m + n} \|_{L^2}^2  \Big]^{\frac12}  \\
\lesssim & \Big[ \sum_{m = 0}^\infty \sum_{n = 0}^\infty \frac{\bold{a}_{m,n}^2}{m + n} \nu  \| \mathcal{Q}_{m,n}[ \{ \p_y \phi^{(I)}_{\neq 0, m, n} \}, \{ \omega_{m,n} \} ] e^W \chi_{n + m} q^n \|_{L^2}^2  \Big]^{\frac12} \\
& \times \Big[ \sum_{m = 0}^\infty \sum_{n = 0}^\infty (m+n) \bold{a}_{m,n}^2 \nu \| \p_y \omega_{m,n} q^n e^W \chi_{m + n} \|_{L^2}^2  \Big]^{\frac12} \\
\lesssim &\nu^{\frac12} \langle t \rangle \Big[ \sum_{m = 0}^\infty \sum_{n = 0}^\infty \frac{\bold{a}_{m,n}^2}{m + n}   \|  \mathcal{Q}_{m,n}[ \{\p_y \phi^{(I)}_{\neq 0, m, n} \}, \{ \omega_{m,n} \} ] e^W \chi_{n + m} q^n \|_{L^2_x L^2_y}^2  \Big]^{\frac12} \\
& \times \Big[ \sum_{m = 0}^\infty \sum_{n = 0}^\infty (m+n) \bold{a}_{m,n}^2 \frac{\dot{\lambda}}{\lambda}  \| \nu^{\frac12}\p_y \omega_{m,n} q^n e^W \chi_{m + n} \|_{L^2}^2  \Big]^{\frac12} \\
\lesssim & \langle t \rangle\| \{ \p_y \phi^{(I)}_{\neq 0, m,n} \} \|_{X^{0}_\infty(\bold{a}_{m,n}  \chi_{2}  \varphi^{-4} )} \| \{ \nu^{\frac12}\nabla \omega_{m,n}\} \|_{X^{0}_2((n + m)^{\frac12}\bold{a}_{m,n} q^n e^W \chi_{n+m} )} \sqrt{\mathcal{CK}^{(\alpha; \lambda)}(t)} \\
\lesssim &\sqrt{\slashed{\mathcal{E}}_{ell}^{(I)}}(\mathcal{CK}^{(\alpha; \lambda)}(t) + \mathcal{CK}^{(\mu; \lambda)}(t)).
\end{align*}
The lemma is proven. 
\end{proof}
\begin{proof}[Proof of \eqref{rodeo:4}] We follow similarly to the previous calculation, as follows: 
\begin{align*}
&|\mathcal{I}^{(I, \alpha, 4)}(t)| \\
\lesssim & \Big[ \sum_{m = 0}^\infty \sum_{n = 0}^\infty \frac{\bold{a}_{m,n}^2}{m + n} \nu \| \bold{T}^{(I,\alpha, 4)} e^W \chi_{n + m} q^n \|_{L^2}^2  \Big]^{\frac12}  \Big[ \sum_{m = 0}^\infty \sum_{n = 0}^\infty (m+n) \bold{a}_{m,n}^2 \nu   \| \p_y \omega_{m,n} q^n e^W \chi_{m + n} \|_{L^2}^2  \Big]^{\frac12}  \\
\lesssim & \Big[ \sum_{m = 0}^\infty \sum_{n = 0}^\infty \frac{\bold{a}_{m,n}^2}{m + n}  \| \mathcal{Q}_{m,n}[ \{ \phi^{(I)}_{\neq 0, m, n} \}, \{\nu^{\frac13} \p_y \omega_{m,n}  \} ] e^W \chi_{n + m} q^n \|_{L^2}^2  \Big]^{\frac12}\sqrt{ \mathcal{CK}^{(\alpha; \lambda)}(t)} \\
\lesssim &\| \{ \mathcal{Q}_{m,n}[\{\phi^{(I)}_{\neq 0, m,n} \}, \{ \nu^{\frac12} \p_y \omega_{m,n}   \}] \} \|_{X^{0}_2(\frac{\bold{a}_{m,n}}{(m + n)^{\frac12}}, q^n e^W \chi_{n+m} )}\sqrt{ \mathcal{CK}^{(\alpha; \lambda)}(t)} \\
\lesssim &  \| \{   \p_y \phi^{(I)}_{\neq 0, m,n}  \} \|_{X^{0}_{\infty}(\bold{a}_{m,n},  \chi_{2} \varphi^{-4})} \| \{ \nu^{\frac12} \p_y \omega_{m,n}  \} \|_{X^{0}_2((n + m)^{\frac12}\bold{a}_{m,n},  q^n e^W \chi_{n+m} )}\sqrt{ \mathcal{CK}^{(\alpha; \lambda)}(t)} \\
\lesssim & \sqrt{\slashed{\mathcal{E}}_{ell}^{(I)}}\mathcal{CK}^{(\alpha; \lambda)}(t). 
\end{align*}
\end{proof}
\begin{proof}[Proof of \eqref{rodeo:5}] This argument is significantly more involved. We first begin with the following estimate. 
\begin{claim} The quantity $\bold{T}^{(I, \alpha, 5)}_{m,n}$, defined in \eqref{harden:1}, satisfies the following bounds:
\begin{align} \label{morgan:1}
\| \bold{T}^{(I, \alpha, 5)}_{m,n} \|_{X_\alpha}^2 \lesssim &\frac{1}{\langle t \rangle^{90}} \slashed{\mathcal{E}}^{(I)}_{ell}(t) (  \mathcal{E}^{(\alpha)}(t) +  \mathcal{E}^{(\mu)}(t) ) +  \nu^{\frac13} \frac{1}{\langle t \rangle} \slashed{\mathcal{E}}_{ell}^{(I)} \mathcal{CK}_{\text{Cloud}}^{(\varphi)}. 
\end{align}
\end{claim}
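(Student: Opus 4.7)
Recall from \eqref{aug5:d} that $\bold{T}^{(I,\alpha,5)}_{m,n} = \p_y\,\slashed{\nabla}^\perp \phi^{(I)}_{\neq 0} \cdot \slashed{\nabla}\omega_{m,n}$, so the $X_\alpha$-norm carries an outer $\p_y$ derivative, producing (after Leibniz) two principal contributions: one where $\p_y$ lands on $\phi^{(I)}_{\neq 0}$ (yielding $\p_y^2 \slashed{\nabla}^\perp \phi^{(I)}_{\neq 0}\cdot \slashed{\nabla}\omega_{m,n}$), and one where it lands on $\omega_{m,n}$ (yielding $\p_y\slashed{\nabla}^\perp \phi^{(I)}_{\neq 0}\cdot \p_y\slashed{\nabla}\omega_{m,n}$). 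Derivatives on $e^W$, $q^n$, and $\chi_{n+m}$ are absorbed by the standard commutator arguments already used in Section \ref{sec:q}. The central plan is to dichotomize based on $m+n \geq 10$ versus $m+n < 10$, which mirrors the splitting used in the companion bound \eqref{jb:43:c}.

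For the high-frequency regime $m+n \geq 10$, I would exploit $\chi_{n+m} = \chi_{n+m}\chi_2$, which allows us to replace $\phi^{(I)}_{\neq 0}$-derivatives by $\chi_2$-localized versions supported away from the initial vorticity support. These are placed in $L^\infty_{x,y}$ and bounded by $\sqrt{\mathcal{E}_{\mathrm{ell}}^{(I,out)}}$, delivering a gain of $\langle t\rangle^{-100}$ via \eqref{eell:out}. The remaining $\omega$-factor $\slashed{\nabla}\omega_{m,n}$ (respectively $\p_y\slashed{\nabla}\omega_{m,n}$) is reindexed using $\slashed{\nabla}\omega_{m,n} = (\omega_{m+1,n}, \omega_{m,n+1})$, and placed in $L^2$ against $\sqrt{\nu}\, q^n e^W \chi_{n+m}$. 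After the shift $(m,n)\mapsto (m+1,n)$ or $(m,n+1)$ and comparing the Gevrey coefficients $\bold{a}_{m,n}/\bold{a}_{m+1,n}\lesssim \langle t\rangle^{O(1)}$, the tail is controlled by $\mathcal{E}^{(\alpha)}(t) + \mathcal{E}^{(\mu)}(t)$; the $\langle t\rangle^{-100}$ factor from the elliptic bound easily absorbs the polynomial growth, producing the first term on the right-hand side with the indicated $\langle t\rangle^{-90}$ rate.

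For the low-frequency regime $m+n < 10$, the sum is finite so Gevrey combinatorics are trivial, but $\chi_{n+m}$ is no longer supported away from the interior, and the localization trick above fails. My plan is to route this contribution through the Cloud norm $\mathcal{CK}_{\mathrm{Cloud}}^{(\varphi)}$, whose support $\chi^I$ matches precisely the uncovered region. I would bound $\p_y^j\slashed{\nabla}^\perp \phi^{(I)}_{\neq 0}$ for $j\in\{1,2\}$ in $L^\infty_{x,y}$ using Sobolev embedding together with $\p_y = v_y^{-1}(\Gamma + t\p_x)$, which trades each $\p_y$ for a factor of $\langle t\rangle$ after applying $\Gamma$; the resulting $\langle t\rangle$-growth is absorbed against the $e^{\delta_I\nu^{1/3}t}$ multiplier baked into $\mathfrak{A}$ (and hence into $\slashed{\mathcal{E}}_{\mathrm{ell}}^{(I)}$ via $\mathcal{E}_{\mathrm{ell}}^{(I,full)}$), at a cost of $\nu^{1/3}$ per conversion. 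With the remaining $\omega$-factor having total order at most $12 \leq 20$, it falls inside the range governed by $\mathcal{CK}_{\mathrm{Cloud}}^{(\varphi)}$, and the residual $\langle t\rangle^{-1}$ decay is produced by $\sqrt{-\dot\varphi/\varphi} \gtrsim \langle t\rangle^{-1}$ from \eqref{ineq:varphi} combined with the factor $\varphi$ implicit in $\bold{a}_{m,n}$.

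The main obstacle is the bookkeeping in the low-frequency regime: extracting exactly $\nu^{1/3}$ (rather than $\nu^{2/3}$ or $1$) requires a careful balance between the outer $\sqrt{\nu}$ in the $X_\alpha$-norm, the $\nu^{1/3}$ we pay to absorb each $\p_y$ applied to $\phi^{(I)}_{\neq 0}$, and the lack of any $\sqrt{\nu}\nabla$ in $\mathcal{E}_{\mathrm{Cloud}}$ or $\mathcal{CK}_{\mathrm{Cloud}}^{(\varphi)}$. A secondary delicate point is that, in the $\p_y$-on-$\omega$ subcase, the resulting $\p_y\slashed{\nabla}\omega_{m,n}$ must be realized purely via index shifts (so that $\mathcal{CK}_{\mathrm{Cloud}}^{(\varphi)}$ suffices and we do not need $\mathcal{D}^{(\alpha)}$ on the right-hand side), which crucially uses the boundedness of $m+n$ together with the absorption of the extra $\varphi$-weights by the elliptic prefactor.
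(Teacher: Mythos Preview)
Your overall architecture---the dichotomy $m+n\geq 10$ versus $m+n<10$, the $\chi_2$-localization giving $\mathcal{E}^{(I,\mathrm{out})}_{\mathrm{ell}}$ with arbitrary time decay in the former, and the routing through the Cloud norm in the latter---is exactly the paper's strategy. However, your opening move misreads the norm, and this propagates into a genuine gap. The paper's use of $\|\cdot\|_{X_\alpha}$ in this claim is notationally inconsistent with \eqref{seqXalpha}: as is clear from both the paper's proof of the claim and its downstream application in the proof of \eqref{rodeo:5}, the quantity actually being bounded is $\sum_{m,n}\nu\,\bold{a}_{m,n}^2\|q^n\,\bold{T}^{(I,\alpha,5)}_{m,n}\,e^W\chi_{n+m}\|_{L^2}^2$, \emph{without} a further $\p_y$ applied to $\bold{T}^{(I,\alpha,5)}_{m,n}$. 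The single $\p_y$ already present in $\bold{T}^{(I,\alpha,5)}_{m,n}$ is all there is; your Leibniz expansion into $\p_y^2\slashed{\nabla}^\perp\phi\cdot\slashed{\nabla}\omega_{m,n}$ and $\p_y\slashed{\nabla}^\perp\phi\cdot\p_y\slashed{\nabla}\omega_{m,n}$ is attacking a strictly harder object.

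The concrete failure is in your high-frequency $\p_y$-on-$\omega$ subcase. You assert that after reindexing $\slashed{\nabla}\omega_{m,n}\mapsto(\omega_{m+1,n},\omega_{m,n+1})$ the coefficient ratio satisfies $\bold{a}_{m,n}/\bold{a}_{m+1,n}\lesssim\langle t\rangle^{O(1)}$; this is false. From \eqref{a:weight}--\eqref{Bweight} one has $\bold{a}_{m,n}/\bold{a}_{m+1,n}=((m{+}n{+}1)/\lambda)^s$, which grows in $m+n$ with no $t$-dependence at all. The $\langle t\rangle^{-100}$ gain from $\mathcal{E}^{(I,\mathrm{out})}_{\mathrm{ell}}$ cannot absorb a loss that is polynomial in $m+n$, so your conclusion that the tail lands in $\mathcal{E}^{(\alpha)}+\mathcal{E}^{(\mu)}$ does not follow. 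In the correct reading (no extra $\p_y$), the $\omega$-factor is simply $\sqrt{\nu}\,\nabla\omega_{m,n}$ with the \emph{matching} weight $\bold{a}_{m,n}$, which sits directly in $\mathcal{E}^{(\alpha)}+\mathcal{E}^{(\mu)}$ with no shift and no ratio loss---this is precisely what the paper does. Once you drop the spurious outer $\p_y$, your sketch essentially coincides with the paper's proof, including the $\nu^{1/3}$ accounting in the low-frequency interior piece (keep $\sqrt{\nu}$ on the $\phi$-factor, convert $\p_y\mapsto v_y(\Gamma-t\p_x)$, and absorb $\nu^{1/3}t$ via the enhanced-dissipation decay built into $\mathcal{E}^{(I,\mathrm{full})}_{\mathrm{ell}}$).
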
 
\begin{proof}[Proof of Claim] To establish this bound, we proceed as follows. 
We first treat the case when $m + n \ge 10$. In this case, $\bold{b} = \nabla^\perp \phi^{(I)}_{\neq 0}$ decays rapidly, and so does $\p_y \bold{b}$. Therefore, we may estimate as follows 
\begin{align*}
\|  \bold{T}^{(I, \alpha, 5)}_{m,n} \mathbbm{1}_{m + n \ge 10} \|_{X_{\alpha}}^2 \lesssim & \sum_{n + m \ge 10} \nu \bold{a}_{n,m}^2  \|  q^n \p_y \bold{b} \cdot \nabla \omega_{m,n} e^W \chi_{n + m} \|_{L^2_x L^2_y}^2 \\
\lesssim & \sum_{n + m \ge 10} \nu \bold{a}_{n,m}^2 \| \p_y \bold{b} \chi_2  \|_{L^\infty}^2 \|  q^n \nabla \omega_{m,n} e^W \chi_{n + m} \|_{L^2_x L^2_y}^2 \\
 \lesssim & \frac{1}{\langle t \rangle^{90}} \mathcal{E}^{(I, out)}_{ell}(t)  \sum_{n + m \ge 10} \nu \bold{a}_{n,m}^2  \| q^n \nabla \omega_{m,n} e^W \chi_{n + m} \|_{L^2}^2 \\
\lesssim & \frac{1}{\langle t \rangle^{90}} \mathcal{E}^{(I, out)}_{ell}(t) (  \mathcal{E}^{(\alpha)}(t) +  \mathcal{E}^{(\mu)}(t) ) \\
\lesssim & \frac{1}{\langle t \rangle^{90}} \slashed{\mathcal{E}}^{(I)}_{ell}(t) (  \mathcal{E}^{(\alpha)}(t) +  \mathcal{E}^{(\mu)}(t) ).
\end{align*} 

We next treat the case when $m + n \le 10$. This case is more delicate due to slow decay. First, we localize further as follows 
\begin{align*}
\|  \bold{T}^{(I, \alpha, 5)}_{m,n} \mathbbm{1}_{m + n < 10} \|_{X_{\alpha}}^2 \lesssim & \sum_{n + m < 10} \nu \bold{a}_{n,m}^2  \| q^n \p_y \bold{b} \cdot \nabla \omega_{m,n} e^W \chi_{n + m} (1 - \chi_{10}) \|_{L^2_x L^2_y}^2 \\
&+ \sum_{n + m < 10} \nu^{\frac23} \bold{a}_{n,m}^2  \| q^n \p_y \bold{b} \cdot \nabla \omega_{m,n} e^W \chi_{n + m} \chi_{10} \|_{L^2_x L^2_y}^2 \\
=: &J_{low}^{(I)} + J_{low}^{(E)}.
\end{align*}
The bound on $J_{low}^{(E)}$ is essentially identical to the case when $m + n \ge 10$. We therefore treat $J_{low}^{(I)}$. 
\begin{align*}
|J_{low}^{(I)}| \lesssim &  \sum_{n + m < 10}  \bold{a}_{n+1,m}^2 \| v_y \|_{L^\infty}^2 \|  \nu^{\frac12} \p_y \p_x \phi^{(I)}_{\neq 0} \varphi^{-1} \|_{L^\infty}^2 \| q^{n+1}  e^W \omega_{m,n+1} \chi_{n + m + 1} (1 - \chi_{10})\|_{L^2}^2 \\
& +  \sum_{n + m < 10}  \bold{a}_{n+1,m}^2 \| v_y \|_{L^\infty}^2 \| \nu^{\frac12} \p_y \p_x \phi^{(I)}_{\neq 0} \varphi^{-1} \|_{L^\infty}^2 \| q^n  e^W  \omega_{m,n+1} \{ \chi_{n + m} - \chi_{n + m + 1} \} (1 - \chi_{10})\|_{L^2}^2 \\
= & |J_{low}^{(I, 1)}|  + |J_{low}^{(I, 2)}|.
\end{align*}
%
Above, we have used that $1/q$ is bounded uniformly on the support of $1 - \chi_{10}$ to upgrade $q^n$ to $q^{n+1}$ for the term $J_{low}^{(I, 1)}$. Moreover, we have used the bound $\bold{a}_{n,m} \lesssim \varphi^{-1} \bold{a}_{n+1, m}$. We proceed to estimate 
\begin{align*}
 |J_{low}^{(I, 1)}| \lesssim & \sum_{n + m < 10} \frac{\dot{\varphi}}{\varphi} \bold{a}_{n+1,m}^2 \| v_y \|_{L^\infty}^2 \|  \nu^{\frac12} \p_y \p_x \phi^{(I)}_{\neq 0} \langle t \rangle^{\frac32} \|_{L^\infty}^2 \| q^{n+1}  e^W \omega_{m,n+1} \chi_{n + m + 1} (1 - \chi_{10})\|_{L^2}^2 \\
 \lesssim & \nu^{\frac13} \frac{1}{\langle t \rangle} \slashed{\mathcal{E}}_{ell}^{(I)} \mathcal{CK}^{(\gamma; \varphi)}.
\end{align*}
Similarly, 
\begin{align*}
 |J_{low}^{(I, 2)}| \lesssim & \sum_{n + m < 10} \frac{\dot{\varphi}}{\varphi} \bold{a}_{n+1,m}^2 \| v_y \|_{L^\infty}^2 \|  \nu^{\frac12} \p_y \p_x \phi^{(I)}_{\neq 0} \langle t \rangle^{\frac32} \|_{L^\infty}^2 \| q^{n+1}  e^W \omega_{m,n+1} \chi_I\|_{L^2}^2 \\
 \lesssim & \nu^{\frac13} \frac{1}{\langle t \rangle} \slashed{\mathcal{E}}_{ell}^{(I)} \mathcal{CK}_{\text{Cloud}}^{(\varphi)}.
\end{align*}
The lemma is proven. 
\end{proof}

To now estimate this inner product, we proceed directly as follows
\begin{align*}
|\mathcal{I}^{(I, \alpha, 5)}(t)| \le & \sum_{m = 0}^{\infty} \sum_{n = 0}^\infty\sum_{k \neq 0} \bold{a}_{n,m}^2 \nu |\langle \bold{T}^{(I, \alpha, 5)}_{m,n} ,\p_y \omega_{m,n}  q^{2n} e^{2W} \chi_{n + m}^2 \rangle| \\
\lesssim & \Big[ \sum_{m = 0}^{\infty} \sum_{n = 0}^\infty\bold{a}_{n,m}^2 \nu \|\bold{T}^{(I, \alpha, 5)}_{m,n}  e^W \chi_{n + m} q^n\|_{L^2}^2  \Big]^{\frac12} \Big[ \sum_{m = 0}^{\infty} \sum_{n = 0}^\infty\bold{a}_{n,m}^2 \nu \| \p_y \omega_{m,n,k} e^W \chi_{n + m} q^n \|_{L^2}^2 \Big]^{\frac12} \\
\lesssim & \Big[ \frac{1}{\langle t \rangle^{90}} \slashed{\mathcal{E}}^{(I)}_{ell}(t) (  \mathcal{E}^{(\alpha)}(t) +  \mathcal{E}^{(\mu)}(t) ) +  \nu^{\frac13} \frac{1}{\langle t \rangle} \slashed{\mathcal{E}}_{ell}^{(I)} \mathcal{CK}_{\text{Cloud}}^{(\varphi)} \Big]^{\frac12} \langle t \rangle^{\frac12} \sqrt{\mathcal{CK}^{(\alpha; \varphi)}} \\
\lesssim &  \sqrt{\slashed{\mathcal{E}}_{ell}^{(I)} } (  \frac{\mathcal{E}^{(\alpha)}(t)}{\langle t \rangle^{90}} +  \frac{\mathcal{E}^{(\mu)}(t)}{\langle t \rangle^{90}} + \nu^{\frac13} \mathcal{CK}_{\text{Cloud}}^{(\varphi)} +  \mathcal{CK}^{(\alpha; \varphi)} ).
\end{align*}
The lemma is proven. 
\end{proof}
\begin{proof}[Proof of \eqref{rodeo:6}] We estimate the inner product by splitting into two cases as follows 
\begin{align} 
\mathcal{I}^{(I, \alpha, 6)} = & \sum_{m = 0}^{\infty} \sum_{n = 0}^\infty \bold{a}_{n,m}^2 \nu \langle \bold{T}^{(I, \alpha, 6)}_{m,n} , \p_y \omega_{m,n}  q^{2n} e^{2W} \chi_{n + m}^2 \rangle \\ \n
= & \sum_{m = 0}^{\infty} \sum_{n = 0}^\infty  \mathbbm{1}_{m + n \ge 10} \bold{a}_{n,m}^2 \nu \langle \bold{T}^{(I, \alpha, 6)}_{m,n} , \p_y \omega_{m,n}  q^{2n} e^{2W} \chi_{n + m}^2 \rangle\\
& + \sum_{m = 0}^{\infty} \sum_{n = 0}^\infty \mathbbm{1}_{m + n < 10} \bold{a}_{n,m}^2  \nu \langle \bold{T}^{(I, \alpha, 6)}_{m,n} , \p_y \omega_{m,n}  q^{2n} e^{2W} \chi_{n + m}^2 \rangle\\
= & \mathcal{I}^{(I, \alpha, 6, \ge 10)} + \mathcal{I}^{(I, \alpha, 6, < 10)}.
\end{align}
For the $\ge 10$ case, we want to invoke the $\mathcal{CK}^{(\alpha, W)}$ term to create a diffusive factor. We proceed as follows 
\begin{align} 
&|\mathcal{I}_{\neq}^{(I, \alpha, 6, \ge 10)}| \\ \n
 \lesssim & \Big[ \sum_{m = 0}^{\infty} \sum_{n = 0}^\infty \mathbbm{1}_{m + n \ge 10} \bold{a}_{m,n}^2 \|  \sqrt{\nu} \langle t \rangle \nabla^\perp \phi^{(I)}_{\neq 0} \cdot \nabla (\nu^{\frac12}\p_y \omega_{m,n}) e^W q^n  \chi_{m+n} \|_{L^2}^2 \Big]^{\frac12} \\
& \times \Big[\sum_{m = 0}^{\infty} \sum_{n = 0}^\infty \mathbbm{1}_{m + n \ge 10} \bold{a}_{m,n}^2 \| \frac{d(y)}{\sqrt{\nu}\langle t \rangle} \nu^{\frac12} \p_y \omega_{m,n}^{(\neq)} e^W q^n  \chi_{m+n} \|_{L^2}^2  \Big]^{\frac12} \\ \n
\lesssim & \Big[ \sum_{m = 0}^{\infty} \sum_{n = 0}^\infty \mathbbm{1}_{m + n \ge 10} \bold{a}_{m,n}^2 \|    \langle t \rangle \nabla^\perp \phi^{(I)}_{\neq 0} \chi_2 \|_{L^\infty}^2 \| \sqrt{\nu} \nabla (\nu^{\frac12}\p_y \omega_{m,n}) e^W q^n  \chi_{m+n} \|_{L^2_xL^2_y}^2 \Big]^{\frac12} \\
& \times \Big[\sum_{m = 0}^{\infty} \sum_{n = 0}^\infty \mathbbm{1}_{m + n \ge 10} \bold{a}_{m,n}^2 \| \frac{d(y)}{\sqrt{\nu}\langle t \rangle} \nu^{\frac12} \p_y \omega_{m,n} e^W q^n  \chi_{m+n} \|_{L^2}^2  \Big]^{\frac12} \\
\lesssim & \langle t \rangle\sqrt{ \mathcal{E}_{ell}^{(I, out)}}\sqrt{ \mathcal{D}^{(\alpha)}(t)} \sqrt{\mathcal{CK}^{(\alpha, W)}(t)} \\
\lesssim &   \sqrt{\slashed{\mathcal{E}}_{ell}^{(I)} } ( \mathcal{D}^{(\alpha)}(t)+ \mathcal{CK}^{(\alpha, W)}(t)). 
\end{align}
For the $\le 10$ case, we need to use the cloud norm to absorb the loss of derivative. We rewrite the product as follows
\begin{align} \n
\nabla^\perp \phi^{(I)}_{\neq} \cdot \nabla (\nu^{\frac12} \p_y \omega_{m,n}) = & v_y \slashed{\nabla}^\perp \phi^{(I)}_{\neq} \cdot \slashed{\nabla} ( \nu^{\frac12}\p_y \omega_{m,n} ) =  v_y \slashed{\nabla}^\perp \phi^{(I)}_{\neq} \cdot \slashed{\nabla} ( \nu^{\frac12} \omega_{m,n+1} - \nu^{\frac12}t \omega_{m+1,n} ) \\ \n
= & \nu^{\frac12} v_y \Gamma \phi^{(I)}_{\neq} \omega_{m+1, n+1} -  \nu^{\frac12} v_y \p_x \phi^{(I)}_{\neq} \omega_{m, n+2} \\ \label{four:terms}
& - v_y \nu^{\frac12}t \Gamma \phi^{(I)}_{\neq} \omega_{m+2, n} + v_y \nu^{\frac12}t \p_x \phi^{(I)}_{\neq} \omega_{m+1, n+1}.
\end{align}
Inserting this identity, we then have 
\begin{align} 
&|\mathcal{I}^{(I, \alpha, 6, < 10)}| \\ \n
 \lesssim & \Big[ \sum_{m = 0}^{\infty} \sum_{n = 0}^\infty \mathbbm{1}_{m + n < 10} \bold{a}_{m,n}^2 \|  \slashed{\nabla}^\perp \phi^{(I)}_{\neq} \cdot \slashed{\nabla} ( \nu^{\frac12} \omega_{m,n+1} - \nu^{\frac12}t \omega_{m+1,n} ) e^W q^n  \chi_{m+n} \|_{L^2}^2 \Big]^{\frac12} \\
& \times \Big[\sum_{m = 0}^{\infty} \sum_{n = 0}^\infty \mathbbm{1}_{m + n < 10} \bold{a}_{m,n}^2 \|  \nu^{\frac12} \p_y \omega_{m,n} e^W q^n  \chi_{m+n} \|_{L^2}^2  \Big]^{\frac12} \\ \n 
 \lesssim & \Big[ \sum_{m = 0}^{\infty} \sum_{n = 0}^\infty \mathbbm{1}_{m + n < 10} \bold{a}_{m,n}^2 \|  \slashed{\nabla}^\perp \phi^{(I)}_{\neq} \cdot \slashed{\nabla} ( \nu^{\frac12} \omega_{m,n+1} - \nu^{\frac12}t \omega_{m+1,n} ) e^W q^n  \chi_{m+n+2} \|_{L^2_xL^2_y}^2 \Big]^{\frac12} \\
& \times \Big[\sum_{m = 0}^{\infty} \sum_{n = 0}^\infty \mathbbm{1}_{m + n < 10} \bold{a}_{m,n}^2 \|  \nu^{\frac12} \p_y \omega_{m,n} e^W q^n  \chi_{m+n} \|_{L^2}^2  \Big]^{\frac12} \\ \n
+ &  \Big[ \sum_{m = 0}^{\infty} \sum_{n = 0}^\infty \mathbbm{1}_{m + n < 10} \bold{a}_{m,n}^2 \|  \slashed{\nabla}^\perp \phi^{(I)}_{\neq} \cdot \slashed{\nabla} ( \nu^{\frac12} \omega_{m,n+1} - \nu^{\frac12}t \omega_{m+1,n} ) e^W q^n  \chi_I \|_{L^2_xL^2_y}^2 \Big]^{\frac12} \\
& \times \Big[\sum_{m = 0}^{\infty} \sum_{n = 0}^\infty \mathbbm{1}_{m + n < 10} \bold{a}_{m,n}^2 \|  \nu^{\frac12} \p_y \omega_{m,n}e^W q^n  \chi_{m+n} \|_{L^2}^2  \Big]^{\frac12} \\
=:& |\mathcal{I}^{(I, \alpha, 6, < 10, int)}| + |\mathcal{I}^{(I, \alpha, 6, < 10, ext)}|.
\end{align}
The contribution $|\mathcal{I}^{(I, \alpha, 6, < 10, ext)}|$ can be controlled in an identical manner to the $\ge 10$ case. We further split
\begin{align}
\mathcal{I}^{(I, \alpha, 6, < 10, int)} = \sum_{i = 1}^4 |\mathcal{I}^{(I, \alpha, 6, < 10, int, i)}|
\end{align}
corresponding to the four terms in \eqref{four:terms}. 

We estimate 
\begin{align} \n
| \mathcal{I}_{\neq}^{(I, \alpha, 6, < 10, int, 1)} | \lesssim &  \Big[ \sum_{m = 0}^{\infty} \sum_{n = 0}^\infty \mathbbm{1}_{m + n < 10} \bold{a}_{m,n}^2 \|  \Gamma \phi^{(I)}_{\neq}  \nu^{\frac12} \omega_{m+1,n+1} e^W q^n \chi_{I} \|_{L^2_xL^2_y}^2 \Big]^{\frac12} \\
& \times \Big[\sum_{m = 0}^{\infty} \sum_{n = 0}^\infty \mathbbm{1}_{m + n < 10} \bold{a}_{m,n}^2 \|  \nu^{\frac12} \p_y \omega_{m,n} e^W q^n \chi_{m+n} \|_{L^2}^2  \Big]^{\frac12} \\ \n
\lesssim & \nu^{\frac12} \langle t \rangle^{2}  \Big[ \sum_{m = 0}^{\infty} \sum_{n = 0}^\infty \mathbbm{1}_{m + n < 10} \frac{\dot{\varphi}}{\varphi} \bold{a}_{m+1,n+1}^2 \|  \Gamma \phi^{(I)}_{\neq}   \omega_{m+1,n+1} e^W q^n \chi_{I} \|_{L^2_xL^2_y}^2 \Big]^{\frac12} \\
& \times \Big[\sum_{m = 0}^{\infty} \sum_{n = 0}^\infty \mathbbm{1}_{m + n < 10} \frac{\dot{\varphi}}{\varphi} \bold{a}_{m,n}^2 \|  \nu^{\frac12} \p_y \omega_{m,n} e^W q^n \chi_{m+n} \|_{L^2}^2  \Big]^{\frac12} \\ \n
 \lesssim & \nu^{\frac12} \langle t \rangle^{2} \|  \Gamma \phi^{(I)}_{\neq}   \|_{L^\infty} \sqrt{\mathcal{CK}_{\text{Cloud}}}\sqrt{\mathcal{CK}^{(\alpha; \varphi)}(t)} \\
\lesssim & \nu^{\frac12} \sqrt{\slashed{\mathcal{E}}_{ell}^{(I)} } (\mathcal{CK}_{\text{Cloud}} + \mathcal{CK}^{(\alpha; \varphi)}(t) ).
\end{align}
Next, we estimate 
\begin{align} \n
| \mathcal{I}_{\neq}^{(I, \alpha, 6, < 10, int, 2)} | \lesssim &  \Big[ \sum_{m = 0}^{\infty} \sum_{n = 0}^\infty \mathbbm{1}_{m + n < 10} \bold{a}_{m,n}^2 \|  \Gamma \phi^{(I)}_{\neq}  \nu^{\frac12} \omega_{m,n+2} e^W q^n  \chi_{I} \|_{L^2_xL^2_y}^2 \Big]^{\frac12} \\
& \times \Big[\sum_{m = 0}^{\infty} \sum_{n = 0}^\infty \mathbbm{1}_{m + n < 10} \bold{a}_{m,n}^2 \|  \nu^{\frac12} \p_y \omega_{m,n} e^W q^n \chi_{m+n} \|_{L^2}^2  \Big]^{\frac12} \\ \n
 \lesssim &  \Big[ \sum_{m = 0}^{\infty} \sum_{n = 0}^\infty \mathbbm{1}_{m + n < 10} \bold{a}_{m,n+2}^2 \frac{\dot{\varphi}}{\varphi} \|  \Gamma \phi^{(I)}_{\neq}  \nu^{\frac12} \varphi^2  \|_{L^\infty}^2  \| \omega_{m,n+2} e^W q^{n+2}  \chi_{I} \|_{L^2_xL^2_y}^2 \Big]^{\frac12} \\
& \times \sqrt{\mathcal{CK}^{(\alpha; \varphi)}(t)} \langle t \rangle \\
\lesssim & (\nu^{\frac12} \langle t \rangle) \sqrt{\slashed{\mathcal{E}}_{ell}^{(I)} } (\mathcal{CK}_{\text{Cloud}} + \mathcal{CK}^{(\alpha; \varphi)}(t) ) \\
\lesssim &  \sqrt{\slashed{\mathcal{E}}_{ell}^{(I)} } (\mathcal{CK}_{\text{Cloud}} + \mathcal{CK}^{(\alpha; \varphi)}(t) ). 
\end{align}
Third, we have 
\begin{align} \n
| \mathcal{I}_{\neq}^{(I, \alpha, 6, < 10, int, 3)} | \lesssim &  \Big[ \sum_{m = 0}^{\infty} \sum_{n = 0}^\infty \mathbbm{1}_{m + n < 10} \bold{a}_{m,n}^2 \|  \Gamma \phi^{(I)}_{\neq}  \nu^{\frac12} t \omega_{m+2,n} e^W q^n  \chi_{I} \|_{L^2_xL^2_y}^2 \Big]^{\frac12} \\
& \times \Big[\sum_{m = 0}^{\infty} \sum_{n = 0}^\infty \mathbbm{1}_{m + n < 10} \bold{a}_{m,n}^2 \|  \nu^{\frac12} \p_y \omega_{m,n} e^W q^n  \chi_{m+n} \|_{L^2}^2  \Big]^{\frac12} \\ \n
 \lesssim & \nu^{\frac12} \langle t \rangle^2 \Big[ \sum_{m = 0}^{\infty} \sum_{n = 0}^\infty \mathbbm{1}_{m + n < 10} \bold{a}_{m+2,n}^2 \frac{\dot{\varphi}}{\varphi} \|  \Gamma \phi^{(I)}_{\neq}  \omega_{m+2,n} e^W q^n  \chi_{I} \|_{L^2_xL^2_y}^2 \Big]^{\frac12} \\
& \times \Big[\sum_{m = 0}^{\infty} \sum_{n = 0}^\infty \mathbbm{1}_{m + n < 10} \bold{a}_{m,n}^2 \frac{\dot{\varphi}}{\varphi} \|  \nu^{\frac12} \p_y \omega_{m,n} e^W q^n  \chi_{m+n} \|_{L^2}^2  \Big]^{\frac12} \\
\lesssim &  \nu^{\frac12} \sqrt{\slashed{\mathcal{E}}_{ell}^{(I)} } (\mathcal{CK}_{\text{Cloud}} + \mathcal{CK}^{(\alpha; \varphi)}(t) ).
\end{align}
Finally, we have
\begin{align} \n
| \mathcal{I}_{\neq}^{(I, \alpha, 6, < 10, int, 4)} | \lesssim &  \Big[ \sum_{m = 0}^{\infty} \sum_{n = 0}^\infty \mathbbm{1}_{m + n < 10} \bold{a}_{m,n}^2 \|  \Gamma \phi^{(I)}_{\neq}  \nu^{\frac12}t \omega_{m+1,n+1} e^W q^n  \chi_{I} \|_{L^2_xL^2_y}^2 \Big]^{\frac12} \\
& \times \Big[\sum_{m = 0}^{\infty} \sum_{n = 0}^\infty \mathbbm{1}_{m + n < 10} \bold{a}_{m,n}^2 \|  \nu^{\frac12} \p_y \omega_{m,n} e^W q^n \chi_{m+n} \|_{L^2}^2  \Big]^{\frac12} \\ \n
 \lesssim & \sup_t (\nu^{\frac12}t) \langle t \rangle^2 \Big[ \sum_{m = 0}^{\infty} \sum_{n = 0}^\infty \mathbbm{1}_{m + n < 10} \bold{a}_{m+1,n+1}^2 \frac{\dot{\varphi}}{\varphi} \|  \Gamma \phi^{(I)}_{\neq}   \omega_{m+1,n+1} e^W q^n  \chi_{I} \|_{L^2_xL^2_y}^2 \Big]^{\frac12} \\
& \times \Big[\sum_{m = 0}^{\infty} \sum_{n = 0}^\infty \mathbbm{1}_{m + n < 10} \bold{a}_{m,n}^2 \frac{\dot{\varphi}}{\varphi} \|  \nu^{\frac12} \p_y \omega_{m,n} e^W q^n \chi_{m+n} \|_{L^2}^2  \Big]^{\frac12} \\
\lesssim & \sqrt{\slashed{\mathcal{E}}_{ell}^{(I)} } (\mathcal{CK}_{\text{Cloud}} + \mathcal{CK}^{(\alpha; \varphi)}(t) ).
\end{align}
The lemma is proven. 
\end{proof}

\subsection{$(ext, \gamma)$ Inner Products}

We define the inner-products
\begin{align} \label{def:ip:hgy}
I^{(E, \gamma, i)}(t) := \sum_{m = 0}^\infty \sum_{n = 0}^\infty \bold{a}_{m,n}^2 \langle \bold{T}^{(E, \gamma, i)}_{m,n}, \omega^{(\neq)}_{m,n} \chi_{m + n}^2 e^{2W} q^{2n} \rangle, \qquad i = 1, 2, 3. 
\end{align}

\begin{lemma} The following bounds are valid on the inner products defined in \eqref{def:ip:hgy}:
\begin{align} \label{hold:ya:1}
|I^{(E, \gamma, 1)}(t)| + |I^{(E, \gamma, 2)}(t)| \lesssim \frac{\langle t \rangle^2}{\nu^2} \sqrt{\mathcal{J}_{ell}^{(2)}(t)} \mathcal{D}^{(\gamma)}(t).
\end{align}
\end{lemma}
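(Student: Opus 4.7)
The plan is to apply the abstract bilinear bounds \eqref{63023:1} and \eqref{ewing:2} (the ``exterior'' versions of the bounds for $\mathcal{R}_{m,n}$ and $\mathcal{Q}_{m,n}$ respectively), combined with Cauchy--Schwartz in the $(m,n)$ sum and an identification of the three functionals on the right-hand side. Specifically, I will identify the following correspondences: the $X^{2}_{2}(\bold{a}_{m,n} q^n \chi_{m+n})$ norm of $\phi^{(E)}_{\neq}$ appearing on the right side of \eqref{63023:1} and \eqref{ewing:2} is controlled by $\sqrt{\mathcal{J}^{(2)}_{\mathrm{ell}}}$ (this is essentially the definition \eqref{Gj_intro} when one takes $(a,b,c)$ ranging over multi-indices of total order $\le 2$ with $a=0$); the $X^{0}_{2}(\bold{a}_{m,n} e^W q^n \chi_{m+n})$ norm of $\sqrt{\nu}\nabla \omega_{m,n}$ is precisely $\sqrt{\mathcal{D}^{(\gamma)}}$.

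For $|I^{(E,\gamma,1)}|$, which involves the low-frequency operator $\mathcal{R}_{m,n}$ (vanishing for $m+n > 16N$), Cauchy--Schwartz in $(m,n)$ and the inner product gives
\[
|I^{(E,\gamma,1)}(t)| \leq \| \mathcal{R}_{m,n}[\{\phi^{(E)}_{\neq,m,n}\},\{\omega_{m,n}\}]\|_{X^{0}_{2}(\bold{a}_{m,n} q^n e^W \chi_{m+n})} \cdot \sqrt{\mathcal{E}^{(\gamma)}}.
\]
Applying \eqref{63023:1} and the identifications above yields $|I^{(E,\gamma,1)}| \lesssim \tfrac{\langle t\rangle}{\sqrt{\nu}}\sqrt{\mathcal{J}^{(2)}_{\mathrm{ell}}}\sqrt{\mathcal{D}^{(\gamma)}}\sqrt{\mathcal{E}^{(\gamma)}}$. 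I then use a Poincar\'e-type estimate on the weighted quantity $q^n \omega_{m,n}$ (which vanishes at $y=\pm 1$ thanks to the factor $q$, cf.\ the definition \eqref{q:defn}) to obtain $\mathcal{E}^{(\gamma)} \lesssim \nu^{-1}\mathcal{D}^{(\gamma)}$. This yields $|I^{(E,\gamma,1)}| \lesssim \tfrac{\langle t\rangle}{\nu}\sqrt{\mathcal{J}^{(2)}_{\mathrm{ell}}}\mathcal{D}^{(\gamma)}$, which is absorbed into the desired bound.

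For $|I^{(E,\gamma,2)}|$, the challenge is that the bound \eqref{ewing:2} carries a factor of $(m+n)^{-s}$ on the left-hand norm. I therefore insert $(m+n)^{s} \cdot (m+n)^{-s}$ before Cauchy--Schwartz, giving
\[
|I^{(E,\gamma,2)}| \leq \|\mathcal{Q}_{m,n}[\{\phi^{(E)}_{\neq,m,n}\},\{\omega_{m,n}\}]\|_{X^{0}_{2}(\tfrac{\bold{a}_{m,n}}{(m+n)^s} q^n e^W \chi_{m+n}^2)} \cdot \Big(\sum_{m,n}(m+n)^{2s}\bold{a}_{m,n}^2 \| \omega_{m,n} q^n e^W \chi_{m+n}\|_{L^2}^{2}\Big)^{\tfrac{1}{2}}.
\]
The first factor is bounded by \eqref{ewing:2} as $\tfrac{\langle t\rangle^2}{\nu^2}\sqrt{\mathcal{J}^{(2)}_{\mathrm{ell}}}\sqrt{\mathcal{D}^{(\gamma)}}$. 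For the second factor, I use the Gevrey index shift $(m+n)^{2s}\bold{a}_{m,n}^{2} \lesssim \lambda^{2s}\varphi^2\,\bold{a}_{m,n-1}^{2}$ (obtained directly from $B_{m,n}=(\lambda^{m+n}/(m+n)!)^{s}$), and then rewrite $\omega_{m,n} = \Gamma \omega_{m,n-1}$, distributing $q^n$ through $\Gamma$ and using $v_y^{-1}$ boundedness plus Poincar\'e on $q^n \omega$ to control the shifted sum by $\nu^{-1/2}\sqrt{\mathcal{D}^{(\gamma)}}$ (up to harmless $\langle t\rangle$ factors already accounted for). Combining, one gets $|I^{(E,\gamma,2)}| \lesssim \tfrac{\langle t\rangle^2}{\nu^2}\sqrt{\mathcal{J}^{(2)}_{\mathrm{ell}}}\mathcal{D}^{(\gamma)}$.

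The main obstacle will be the bookkeeping in the second factor for $I^{(E,\gamma,2)}$: carefully absorbing the polynomial weight $(m+n)^{s}$ through the Gevrey index shift while simultaneously redistributing the $q^n$-weights when converting $\omega_{m,n}$ into $\Gamma \omega_{m,n-1}$, and verifying that the resulting quantity is controlled by the dissipation $\mathcal{D}^{(\gamma)}$ rather than requiring an additional copy of the energy $\mathcal{E}^{(\gamma)}$ (which would not fit into the claimed bound). This is the standard tension in these sequential Gevrey arguments between losing a $\Gamma$-derivative and gaining a factor of $\sqrt{\nu}$ from the dissipation; it is what forces the appearance of $\nu^{-2}$ on the right-hand side in the final bound.
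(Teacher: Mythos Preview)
Your overall strategy is correct and, for $I^{(E,\gamma,2)}$, essentially matches the paper: weighted Cauchy--Schwarz with the $(m+n)^{\pm s}$ split, the bilinear estimate \eqref{ewing:2}, and a Gevrey index shift on the $\omega$-factor to land in $\mathcal{D}^{(\gamma)}$.

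For $I^{(E,\gamma,1)}$ you take a genuinely different route. The paper treats both terms uniformly with the same weighted-CS / index-shift machinery (invoking \eqref{63023:1} for the $\mathcal{R}$ piece and \eqref{ewing:2} for the $\mathcal{Q}$ piece). Your Poincar\'e shortcut $\mathcal{E}^{(\gamma)}\lesssim\nu^{-1}\mathcal{D}^{(\gamma)}$ is valid, but the reason deserves to be made explicit: $q^n\omega_{m,n}$ vanishes at $y=\pm 1$, and both $e^{W}$ and $\chi_{m+n}$ are monotone nondecreasing in $|y|$, so for $f=q^n\omega_{m,n}$ and $y>0$ one has $|f(y)|e^{W(y)}\chi_{m+n}(y)\le\int_y^1|f'(s)|e^{W(s)}\chi_{m+n}(s)\,ds$, and a Fubini argument then gives $\|fe^{W}\chi_{m+n}\|_{L^2}\lesssim\|\partial_yf\,e^{W}\chi_{m+n}\|_{L^2}$. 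Without the monotonicity this weighted Poincar\'e would fail. Your route yields the slightly sharper $|I^{(E,\gamma,1)}|\lesssim\tfrac{\langle t\rangle}{\nu}\sqrt{\mathcal{J}^{(2)}_{\mathrm{ell}}}\,\mathcal{D}^{(\gamma)}$, which is of course absorbed into the stated bound.

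One point to tighten in your $I^{(E,\gamma,2)}$ argument: you shift only in $n$, but this (i) does not cover $n=0$, $m\ge 1$, and (ii) when you ``distribute $q^n$ through $\Gamma$,'' the commutator $[q^{n-1},\partial_y]\omega_{m,n-1}$ produces a factor of $n-1$ that your Poincar\'e alone does not absorb. The paper handles this by splitting: for $m>0$ shift $m\to m-1$ and use $\omega_{m,n}=\partial_x\omega_{m-1,n}$, which commutes with $q^n$ and produces \emph{no} commutator; only when $m=0$ does one shift in $n$. Adopting this split makes the argument cleaner and covers the $n=0$ case automatically.
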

\begin{proof} The bounds for $I^{(E, \gamma, 1)}(t)$ and $I^{(E, \gamma, 2)}(t)$ follow in an analogous manner. First, we notice that $m + n \ge 1$ due to the definition of $\bold{T}^{(E, \gamma, 1)}_{m,n}$. Therefore, we have the identity 
\begin{align*}
\chi_{m + n}^2 = \chi_{m + n}^2 \chi_{m + n -1}. 
\end{align*}
Inserting this identity, we have the following bound 
\begin{align} \label{my:sbux:1}
&|I^{(E, \gamma, 1)}(t)| \\ \label{my:sbux:2}
\le & \frac{1}{\sqrt{\nu}} \Big[\sum_{m = 0}^\infty \sum_{n = 0}^\infty \bold{a}_{m,n}^2 \|  \bold{T}^{(E, \gamma, 1)}_{m,n} \chi_{m + n}^2 e^W q^n  \|_{L^2}^2 \Big]^{\frac12} \Big[\sum_{m = 0}^\infty \sum_{n = 0}^\infty \bold{a}_{m,n}^2  \| \sqrt{\nu} \omega_{m,n} \chi_{m + n - 1} e^{W} q^{n}\|_{L^2}^2 \Big]^{\frac12} \\ \n
\lesssim  & \frac{1}{\sqrt{\nu}} \Big[\sum_{m = 0}^\infty \sum_{n = 0}^\infty \bold{a}_{m,n}^2 \| \bold{T}^{(E, \gamma, 1)}_{m,n} \chi_{m + n}^2 e^W q^n  \|_{L^2}^2 \Big]^{\frac12} \Big[\sum_{m = 0}^\infty \sum_{n = 0}^\infty \mathbbm{1}_{m + n \ge 1} \mathbbm{1}_{m = 0} \bold{a}_{m,n}^2 \\ \n
& \times \| \sqrt{\nu} \omega_{m,n} \chi_{m + n - 1} e^{W} q^{n} \|_{L^2}^2 \Big]^{\frac12} \\ \n
 + & \frac{1}{\sqrt{\nu}} \Big[\sum_{m = 0}^\infty \sum_{n = 0}^\infty \bold{a}_{m,n}^2 \| \bold{T}^{(E, \gamma, 1)}_{m,n} \chi_{m + n}^2 e^W q^n \|_{L^2}^2 \Big]^{\frac12} \Big[\sum_{m = 0}^\infty \sum_{n = 0}^\infty \mathbbm{1}_{m + n \ge 1} \mathbbm{1}_{m > 0} \bold{a}_{m,n}^2 \\ \label{my:sbux:3}
 & \times  \| \sqrt{\nu} \omega_{m,n} \chi_{m + n - 1} e^{W} q^{n}  \|_{L^2}^2 \Big]^{\frac12} \\ \n
\lesssim  & \frac{1}{\sqrt{\nu}} \Big[\sum_{m = 0}^\infty \sum_{n = 0}^\infty \bold{a}_{m,n}^2 \|  \bold{T}^{(E, \gamma, 1)}_{m,n} \chi_{m + n}^2 e^W q^n \|_{L^2}^2 \Big]^{\frac12} \Big[\sum_{m = 0}^\infty \sum_{n = 0}^\infty \mathbbm{1}_{m + n \ge 1} \mathbbm{1}_{m = 0} \frac{\bold{a}_{m,n}^2}{\bold{a}_{m,n-1}^2} \bold{a}_{m,n-1}^2   \\ \n
& \times \| \sqrt{\nu} (\p_y + t \p_x) \omega_{m,n-1} \chi_{m + n - 1} e^{W} q^{n}\|_{L^2}^2 \Big]^{\frac12} \\ \n
+ & \frac{1}{\sqrt{\nu}} \Big[\sum_{m = 0}^\infty \sum_{n = 0}^\infty \bold{a}_{m,n}^2 \| \bold{T}^{(E, \gamma, 1)}_{m,n} \chi_{m + n}^2 e^W q^n \|_{L^2}^2 \Big]^{\frac12} \Big[\sum_{m = 0}^\infty \sum_{n = 0}^\infty \mathbbm{1}_{m + n \ge 1} \mathbbm{1}_{m > 0} \frac{\bold{a}_{m,n}^2}{\bold{a}_{m-1,n}^2} \bold{a}_{m-1,n}^2   \\ \label{my:sbux:4}
& \times \| \sqrt{\nu} \p_x \omega_{m-1,n} \chi_{m + n - 1} e^{W} q^{n} \|_{L^2}^2 \Big]^{\frac12} \\ \label{my:sbux:5}
\lesssim & \frac{1}{\sqrt{\nu}}\Big[\sum_{m = 0}^\infty \sum_{n = 0}^\infty (n + m)^{-2s} \bold{a}_{m,n}^2 \|\bold{T}^{(E, \gamma, 1)}_{m,n} \chi_{m + n}^2 e^W q^n \|_{L^2}^2 \Big]^{\frac12} \sqrt{\mathcal{D}^{(\gamma)}(t)} \\ \label{my:sbux:6}
\lesssim & \frac{1}{\sqrt{\nu}} \| \{ \bold{T}^{(E, \gamma, 1)}_{m,n} \}\|_{X^0_2(\frac{\bold{a}_{m,n}}{(n + m)^s} q^n e^W \chi_{n + m}^2)}\sqrt{\mathcal{D}^{(\gamma)}(t)}.
\end{align}
Above, to go from \eqref{my:sbux:4} to \eqref{my:sbux:5}, we use the bounds 
\begin{align*}
\frac{\bold{a}_{m,n}}{\bold{a}_{m-1,n}} \lesssim (m + n)^{-s}, \qquad \frac{\bold{a}_{m,n}}{\bold{a}_{m,n-1}} \lesssim (m + n)^{-s} \varphi(t).
\end{align*}
We now recognize 
\begin{align*}
&\| \{ \bold{T}^{(E, \gamma, 1)}_{m,n} \}\|_{X^0_2(\frac{\bold{a}_{m,n}}{(n + m)^s} q^n e^W \chi_{n + m}^2)} =  \| \{ \mathcal{Q}_{m,n}[\{ \phi^{(E)}_{\neq, m,n}  \}, \{ \omega_{m,n} \}] \} \|_{X^{0,2}(\frac{\bold{a}_{m,n}}{(m + n)^{s},}  q^{n} e^W \chi_{n+m}^2 )} \\ 
 \lesssim & \frac{\langle t \rangle^2}{\nu^2} \| \{ \phi^{(E)}_{\neq 0, m,n}  \} \|_{X^{2}_2(\bold{a}_{m,n}, q^n \chi_{m+n})} \| \{ \sqrt{\nu} \nabla \omega_{m,n} \} \|_{X^{0}_2(\bold{a}_{m,n}, e^W \chi_{m + n} q^n)},
\end{align*}
where we have invoked the bound \eqref{ewing:2}. 

To conclude the bound, we invoke our elliptic bootstraps to estimate 
\begin{align*}
|I^{(E, \gamma, 1)}(t)| \lesssim & \frac{\langle t \rangle^2}{\nu^2} \| \{ \phi^{(E)}_{\neq, m,n}  \} \|_{X^{2}_2(\bold{a}_{m,n}, q^n \chi_{m+n})} \| \{ \sqrt{\nu} \nabla \omega_{m,n} \} \|_{X^{0}_2(\bold{a}_{m,n}, e^W \chi_{m + n} q^n)} \sqrt{\mathcal{D}^{(\gamma)}(t)} \\
\lesssim &\frac{\langle t \rangle^2}{\nu^2} \sqrt{\mathcal{J}_{ell}^{(2)}(t)}\mathcal{D}^{(\gamma)}(t).
\end{align*}
To estimate $|I^{(E, \gamma, 2)}(t)|$, we simply need to invoke the bound \eqref{63023:1} instead of \eqref{ewing:2}. 
\end{proof}

\begin{lemma}The following bound is valid on the inner product $I^{(E, \gamma, 3)}(t)$ defined in \eqref{def:ip:hgy}:
\begin{align} \label{hold:ya:2}
|I^{(E, \gamma, 3)}(t) | \lesssim  \frac{1}{\nu}\sqrt{ \mathcal{J}^{(2)}_{ell}(t)} ( \mathcal{D}^{(\gamma)}(t) + \mathcal{E}^{(\gamma)}(t)).
\end{align}
\end{lemma}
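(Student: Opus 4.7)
The term $\bold{T}^{(E,\gamma,3)}_{m,n}$ is the quasilinear component of $\partial_x^m\Gamma^n(\nabla^\perp \phi^{(E)}_{\neq 0}\cdot \nabla\omega)$, in which all vector‑field derivatives fall on the vorticity. My plan is to avoid any attempt at controlling $\nabla\omega_{m,n}$ directly, and instead exploit the divergence‑free and boundary structure of the transport field. Using identity \eqref{id:gamma:re}, I first rewrite
$$
\bold{T}^{(E,\gamma,3)}_{m,n}=v_y\,\slashed{\nabla}^\perp\phi^{(E)}_{\neq 0}\cdot\slashed{\nabla}\omega_{m,n}=\nabla^\perp\phi^{(E)}_{\neq 0}\cdot\nabla\omega_{m,n},
$$
which brings us into Cartesian form and makes the divergence‑free nature of $\nabla^\perp\phi^{(E)}$ manifest.

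Because $\phi^{(E)}_k|_{y=\pm 1}=0$ (so in particular $\partial_x\phi^{(E)}_{\neq 0}|_{y=\pm 1}=0$) and the test function $\omega_{m,n}\chi_{m+n}^2 e^{2W}q^{2n}$ is a pure function of $y$ times $\omega_{m,n}$, an integration by parts (no boundary contribution) yields
$$
I^{(E,\gamma,3)}_{m,n}=-\tfrac12\int\!\omega_{m,n}^2\,\partial_x\phi^{(E)}_{\neq 0}\,\partial_y\!\bigl(\chi_{m+n}^2\,e^{2W}\,q^{2n}\bigr)\,dx\,dy,
$$
since the $\partial_x$ component of $\nabla(\text{weight})$ is zero and $(\nabla^\perp\phi^{(E)})_2=\partial_x\phi^{(E)}$. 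I then split $\partial_y$ of the weight into three pieces:
$$
2\chi_{m+n}\chi'_{m+n}e^{2W}q^{2n}\ +\ 2\chi_{m+n}^2\,e^{2W}(\partial_y W)\,q^{2n}\ +\ 2n\,\chi_{m+n}^2\,e^{2W}q^{2n-1}q',
$$
and estimate each separately, always putting $\partial_x\phi^{(E)}_{\neq 0}$ in an $L^\infty$‑type norm dominated by Sobolev embedding of the quantities controlled by $\mathcal{J}^{(2)}_{\mathrm{ell}}$.

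For the cutoff piece I use property \eqref{chi:prop:3} together with the fact (verified via the definitions of $x_n,y_n$) that $\chi_{m+n-1}\equiv 1$ on $\mathrm{supp}(\chi'_{m+n})$ and that $q\sim 1$ there, absorbing the combinatorial factor $(m+n)^{1+\sigma}$ into $\bold{a}_{m,n}^2$ at the cost of one index shift (permitted since $1+\sigma<s$ by \eqref{s:prime}). For the $\partial_y W$ piece I apply the pointwise bound \eqref{wdot:est:a}, namely $|\partial_y W|\lesssim \nu^{-1/2}\sqrt{-\partial_t W}$, which produces the $\nu^{-1/2}$ loss; Cauchy–Schwarz in $(x,y)$ and then in $(m,n)$ pairs the vorticity against $\sqrt{\mathcal{E}^{(\gamma)}\,\mathcal{CK}^{(\gamma;W)}}$, which after rewriting $\sqrt{-\partial_t W}$ against $\sqrt{\nu}\nabla$ via the dissipation‑CK link (a second invocation of \eqref{wdot:est:a}) is absorbed into $\nu^{-1}\mathcal{D}^{(\gamma)}$ and $\nu^{-1}\mathcal{E}^{(\gamma)}$ by AM–GM. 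For the $q^{2n-1}q'$ piece, the factor $n/q$ appears; this is exactly the algebraic structure baked into $J^{(1,0,1)}_{m,n}\phi^{(E)}$ from \eqref{J_intro} (with $a=1$, $c=1$), so $\sqrt{\mathcal{J}^{(2)}_{\mathrm{ell}}}$ controls the streamfunction side via Sobolev embedding, and the remainder fits into $\mathcal{E}^{(\gamma)}$.

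The main obstacle is the $\partial_y W$ contribution: getting the sharp $\nu^{-1}$ weight (rather than a naive $\nu^{-1/2}$) requires carefully choosing which factor of $\omega_{m,n}$ inherits the square root of $-\partial_t W$ and which is converted (via integration by parts in $y$ together with the $q$–weight structure) into $\sqrt{\nu}\nabla(q^n\omega_{m,n})$, i.e.\ into $\mathcal{D}^{(\gamma)}$. Ensuring that this splitting is consistent with the infinite summation in $(m,n)$—in particular that the extra commutator produced by passing $\sqrt{\nu}\nabla$ through $\chi_{m+n}$ and $e^W$ stays inside $\mathcal{E}^{(\gamma)}+\mathcal{D}^{(\gamma)}$—is where the combinatorics of $\bold{a}_{m,n}$, the ICC bookkeeping, and property \eqref{chi:prop:3} must be combined, mirroring the argument already used to prove \eqref{hold:ya:1} but adapted to the quasilinear configuration.
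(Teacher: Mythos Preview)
Your integration-by-parts route is genuinely different from what the paper does, but it has two concrete gaps.

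\medskip

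\textbf{The $q'n/q$ piece.} After IBP the factor $n/q$ comes from $\partial_y(q^{2n})$ and sits on the \emph{vorticity}: you have
\[
\partial_x\phi^{(E)}_{\neq 0}\cdot \Bigl(\tfrac{n}{q}q^n\omega_{m,n}\Bigr)\cdot\bigl(q^n\omega_{m,n}\bigr)\cdot 2q'\chi_{m+n}^2 e^{2W}.
\]
The streamfunction appears only as the bare $\partial_x\phi^{(E)}$, carrying no $q^n\Gamma_k^n$ structure at all, so it cannot be matched to $J^{(1,0,1)}_{m,n}\phi^{(E)}=\frac{m+n}{q}|k|\bigl(\chi_{m+n}|k|^m q^n\Gamma_k^n\phi^{(E)}_k\bigr)$. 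What you actually need is control of $\tfrac{n}{q}q^n\omega_{m,n}$ in $L^2$ on the set $\{q'\neq 0\}$; that requires a Hardy inequality together with the ICC index-shift bookkeeping to absorb the resulting factor of $n$, none of which you carry out. Your attribution of this term to $\mathcal{J}^{(2)}_{\mathrm{ell}}$ is therefore incorrect.

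\medskip

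\textbf{The $\partial_y W$ piece.} Inequality \eqref{wdot:est:a} reads $\nu|\partial_y W|^2\lesssim -\partial_t W$; it bounds $|\partial_y W|$ by $\sqrt{-\partial_t W}$, not the reverse. There is no ``dissipation--CK link'' in the direction you need: you cannot convert $\mathcal{CK}^{(\gamma;W)}$ into $\mathcal{D}^{(\gamma)}$. Your argument for this piece stops at $\nu^{-1/2}\sqrt{\mathcal{J}^{(2)}_{\mathrm{ell}}}\,\sqrt{\mathcal{E}^{(\gamma)}\mathcal{CK}^{(\gamma;W)}}$, which is a legitimate estimate (and would suffice to close the bootstrap), but it is not the bound stated in the lemma.

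\medskip

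\textbf{What the paper actually does.} The paper avoids IBP entirely and proceeds by direct H\"older in mixed norms. It places $\partial_x\phi^{(E)}$ in $L^2_xL^\infty_y$ and $\partial_y\phi^{(E)}$ in $L^\infty_xL^2_y$, both controlled by $\|\phi^{(E)}\|_{H^2}\lesssim\sqrt{\mathcal{J}^{(2)}_{\mathrm{ell}}}$ via one-dimensional Sobolev embedding; it places $\nabla\omega_{m,n}$ in $L^2$ (paying $\nu^{-1/2}$ against $\mathcal{D}^{(\gamma)}$); and it places the remaining $\omega_{m,n}$ factor in $L^\infty_xL^2_y$ (or $L^2_xL^\infty_y$), interpolated as $(\mathcal{E}^{(\gamma)}_{m,n})^{1/4}(\mathcal{D}^{(\gamma)}_{m,n})^{1/4}$ up to another $\nu^{-1/4}$. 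Young's inequality in $(m,n)$ then yields $\nu^{-1}\sqrt{\mathcal{J}^{(2)}_{\mathrm{ell}}}(\mathcal{E}^{(\gamma)}+\mathcal{D}^{(\gamma)})$ directly, with no weight differentiation, no $n/q$ factor, and no appeal to the ICC machinery.
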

\begin{proof} We estimate directly 
\begin{align*}
|I^{(E, \gamma, 3)}(t) | \lesssim & \sum_{m = 0}^\infty \sum_{n = 0}^\infty \bold{a}_{m,n}^2  \| \p_x \phi^{(E)} \|_{L^2_x L^\infty_y} \| \nabla \omega_{m,n} \chi_{m + n} e^W q^n \|_{L^2} \| \omega_{m,n} e^W q^n \chi_{m+n} \|_{L^\infty_x L^2_y} \\
& + \| \p_y \phi^{(E)} \|_{L^\infty_x L^2_y} \|  \nabla \omega_{m,n} \chi_{m + n} e^W q^n \|_{L^2} \| \omega_{m,n} e^W q^n \chi_{m+n} \|_{L^2_x L^\infty_y} \\
\lesssim & \sum_{m = 0}^\infty \sum_{n = 0}^\infty \bold{a}_{m,n}^2 \frac{1}{\nu} \| \phi^{(E)} \|_{H^2} \mathcal{D}^{(\gamma)}_{m,n}(t) (\mathcal{E}_{m,n}^{(\gamma)})^{\frac12} (\mathcal{D}_{m,n}^{(\gamma)})^{\frac12} \\
\lesssim &  \sum_{m = 0}^\infty \sum_{n = 0}^\infty \bold{a}_{m,n}^2 \frac{1}{\nu} \sqrt{\mathcal{J}^{(2)}_{ell}(t)} \sqrt{\mathcal{D}^{(\gamma)}_{m,n}(t)} (\mathcal{E}_{m,n}^{(\gamma)})^{\frac14} (\mathcal{D}_{m,n}^{(\gamma)})^{\frac14} \\
\lesssim &  \frac{1}{\nu} \sqrt{ \mathcal{J}_{ell}^{(2)} } ( \mathcal{E}^{(\gamma)} + \mathcal{D}^{(\gamma)} ). 
\end{align*}
The lemma is proven. 
\end{proof}

\subsection{$(ext, \alpha/ \mu)$ Inner Products}

We recall the definition \eqref{harden:1}. Correspondingly, we define the inner products (for $\xi \in \{\alpha, \mu\}$)
\begin{align}
I^{(E, \xi, i)}(t) := \sum_{m = 0}^\infty \sum_{n = 0}^\infty \bold{a}_{m,n}^2  \nu \langle \bold{T}^{(E, \xi, i)}_{m,n}, \p_{x_\xi} \omega_{m,n} \chi_{m + n}^2 e^{2W} q^{2n} \rangle, \qquad 1 \le i \le 6. 
\end{align}

\begin{lemma} The following bounds are valid: 
\begin{align} \label{063023:1}
|I^{(E, \xi, 1)}(t)| \lesssim & \frac{\langle t \rangle^2}{\nu^2} \sqrt{\mathcal{J}_{ell}^{(3)}}  ( (\mathcal{E}^{(\alpha)})^{\frac12} + (\mathcal{E}^{(\mu)})^{\frac12}) ( \mathcal{D}^{(\xi)})^{\frac12}, \\ \label{063023:2}
|I^{(E, \xi, 2)}(t)| \lesssim &  \frac{\langle t \rangle^2}{\nu^2} \sqrt{\mathcal{J}_{ell}^{(2)}} \mathcal{D}^{(\xi)}.
\end{align}
\end{lemma}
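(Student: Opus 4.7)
The two estimates are structural variants of the calculations performed for $I^{(E, \gamma, 1)}$ and $I^{(E, \gamma, 2)}$ in the previous lemma, and I would proceed in the same style. The common blueprint is to apply Cauchy--Schwarz to the double sum, use the fact that $\mathcal{R}_{m,n}[\cdot, \cdot]$ vanishes whenever $m+n = 0$ (because of the indicator $\mathbbm{1}_{m' + n' < m+n}$ inside the definition of $T^{(i)}_{m,n}$) to insert the auxiliary cutoff $\chi_{m+n-1}$, and then rewrite the paired $\p_{x_\xi}\omega_{m,n}$ as a derivative of a lower-index term: $\p_{x_\xi}\p_x\omega_{m-1, n}$ when $m \ge 1$, and $\p_{x_\xi}(\frac{1}{v_y}\p_y + t\p_x)\omega_{0,n-1}$ when $m = 0$, $n \ge 1$, splitting the latter into its two summands exactly as in the treatment of $I^{(E, \gamma, 1)}$. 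The Gevrey weight ratio $\bold{a}_{m,n}/\bold{a}_{m-1,n} \sim (m+n)^{-s}$ (respectively $\bold{a}_{0,n}/\bold{a}_{0,n-1} \sim \varphi(t)\, n^{-s}$) generated by this shift is parked on the bilinear-norm side of the Cauchy--Schwarz and absorbed into the sequential norm $X^{0}_{2}(\frac{\bold{a}_{m,n}}{(m+n)^{s}} q^{n} e^{W}\chi)$ when invoking the abstract bilinear bound \eqref{ewing:2} (or its analog for $\mathcal{R}_{m,n}$).

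For the bound on $|I^{(E, \xi, 2)}|$, we have $\bold{T}^{(E, \xi, 2)}_{m,n} = \mathcal{R}_{m,n}[\{\phi^{(E)}_{\neq 0, m, n}\}, \{\p_{x_\xi}\omega_{m,n}\}]$, and the argument is a near-verbatim adaptation of the proof of $I^{(E, \gamma, 2)}$ with $\omega$ replaced by $\p_{x_\xi}\omega$ throughout. After the derivative shift, both Cauchy--Schwarz factors take the form $\sqrt{\nu}\nabla \p_{x_\xi}\omega_{m',n'}$ at shifted indices, each contributing a $\sqrt{\mathcal{D}^{(\xi)}}$ once one accounts for the $\nu^{2}$ inside the definition of $\mathcal{D}^{(\xi)}$; their product produces the linear $\mathcal{D}^{(\xi)}$ on the right-hand side. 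Since no derivative $\p_{x_\xi}$ lands on $\phi^{(E)}$, the elliptic factor sits at the $H^{2}$ level, giving $\sqrt{\mathcal{J}^{(2)}_{ell}}$, and the prefactor $\langle t\rangle^{2}/\nu^{2}$ is contributed by the bilinear bound itself.

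For the bound on $|I^{(E, \xi, 1)}|$, $\bold{T}^{(E, \xi, 1)}_{m,n} = \mathcal{R}_{m,n}[\{\p_{x_\xi}\phi^{(E)}_{\neq 0, m, n}\}, \{\omega_{m,n}\}]$, so the extra $\p_{x_\xi}$ is now absorbed into the elliptic factor. By the definition \eqref{Gj_intro} of $\mathcal{J}^{(\ell)}_{ell}$, the $X^{2}_{2}$-norm of $\p_{x_\xi}\phi^{(E)}$ is an $H^{3}$-level quantity for $\phi^{(E)}$, and so the elliptic factor becomes $\sqrt{\mathcal{J}^{(3)}_{ell}}$. The $\|\sqrt{\nu}\nabla\omega\|$ factor produced by the bilinear bound activates both $\sqrt{\nu}\p_{x}$ and $\sqrt{\nu}\p_{y}$, and hence matches $(\mathcal{E}^{(\alpha)})^{1/2} + (\mathcal{E}^{(\mu)})^{1/2}$. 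The remaining $(\mathcal{D}^{(\xi)})^{1/2}$ arises, as in the first estimate, from performing the derivative shift on the paired $\p_{x_\xi}\omega_{m,n}$.

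The main technical obstacle in both cases is the bookkeeping of weights: the factor $(m+n)^{-2s}$ produced by the Gevrey ratio has to be distributed between the two Cauchy--Schwarz factors so that what remains on the bilinear side fits into the $(m+n)^{-s}$-weighted sequential norm on the left of \eqref{ewing:2}; the co-normal weight $q^{n}$ and the nested cutoffs $\chi_{m+n}$ and $\chi_{m+n-1}$ must align with those appearing in that bound; and the $\nu$-powers routed through the auxiliary $\sqrt{\nu}/\sqrt{\nu}$ splittings have to reassemble with the $\langle t\rangle^{2}/\nu^{2}$ prefactor of \eqref{ewing:2} to match the stated right-hand sides exactly, including the production of the full linear $\mathcal{D}^{(\xi)}$ in the second bound rather than only $\sqrt{\mathcal{D}^{(\xi)}\mathcal{E}^{(\xi)}}$.
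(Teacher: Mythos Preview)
Your approach is essentially identical to the paper's: Cauchy--Schwarz in $(m,n)$, insert $\chi_{m+n-1}$ using $m+n\ge 1$, perform the derivative shift $\p_{x_\xi}\omega_{m,n}\to \p_{x_\xi}\nabla\omega_{m',n'}$ at a lower index (gaining the $(m+n)^{-s}$ weight from the ratio of $\bold{a}$'s), and then invoke the exterior bilinear bound \eqref{ewing:2}. Your identification of $\sqrt{\mathcal{J}^{(3)}_{ell}}$ versus $\sqrt{\mathcal{J}^{(2)}_{ell}}$ according to whether $\p_{x_\xi}$ lands on $\phi^{(E)}$ or on $\omega$, and of $(\mathcal{E}^{(\alpha)})^{1/2}+(\mathcal{E}^{(\mu)})^{1/2}$ from the $\|\sqrt{\nu}\nabla\omega\|$ factor, is exactly what the paper does.

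One clarification: there is an inconsistency in the paper between the labeling in \eqref{harden:1} (where $i=1,2$ are the $\mathcal{R}$ pieces and $i=3,4$ the $\mathcal{Q}$ pieces) and the proofs of this lemma and the next (which swap them, treating $i=1,2$ with \eqref{ewing:2} for $\mathcal{Q}$ and $i=3,4$ with \eqref{63023:1} for $\mathcal{R}$). The prefactor $\langle t\rangle^2/\nu^2$ in the claimed bounds \eqref{063023:1}--\eqref{063023:2} is the signature of \eqref{ewing:2}, so the paper intends the $\mathcal{Q}$ pieces here. You hedge with ``\eqref{ewing:2} (or its analog for $\mathcal{R}_{m,n}$)'', which is reasonable given the mismatch, but for these two bounds it is specifically \eqref{ewing:2} that is used.
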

\begin{proof} We first note that the definition of $\bold{T}^{(E, \alpha, 1)}_{m,n}$ implies that $m + n \ge 1$. Therefore, the identity $\chi_{m + n}^2 = \chi_{m + n}^2 \chi_{m + n -1}$ is valid. By an application of Cauchy-Schwartz in $(m, n)$, we obtain 
\begin{align*}
I^{(E, \alpha, 1)}(t) \lesssim & \Big[ \sum_{m = 0}^\infty \sum_{n = 0}^\infty  \bold{a}_{m,n}^2 \nu \| \bold{T}^{(E, \alpha, 1)}_{m,n} e^W q^{n} \chi_{m + n}^2 \|_{L^2}^2 \Big]^{\frac12} \\
& \times \Big[ \sum_{m = 0}^\infty \sum_{n = 0}^\infty  \bold{a}_{m,n}^2 \nu  \|\p_y \omega_{m,n} e^W q^{n} \chi_{m + n-1}  \|_{L^2}^2 \Big]^{\frac12} \\
\lesssim & \Big[ \sum_{m = 0}^\infty \sum_{n = 0}^\infty \frac{ \bold{a}_{m,n}^2}{(m+n)^{2s}} \nu \| \bold{T}^{(E, \alpha, 1)}_{m,n} e^W q^{n} \chi_{m + n}^2  \|_{L^2}^2 \Big]^{\frac12} \\
& \times \Big[ \sum_{m = 0}^\infty \sum_{n = 0}^\infty \mathbbm{1}_{n \ge 1} \bold{a}_{m,n-1}^2 \nu  \|\p_y \nabla \omega_{m,n-1} e^W q^{n} \chi_{m + n-1}  \|_{L^2}^2 \Big]^{\frac12} + \text{symm.} \\
\lesssim & \Big[ \sum_{m = 0}^\infty \sum_{n = 0}^\infty \frac{ \bold{a}_{m,n}^2}{(m+n)^{2s}} \nu \|\bold{T}^{(E, \alpha, 1)}_{m,n} e^W q^{n} \chi_{m + n}^2   \|_{L^2}^2 \Big]^{\frac12} \mathcal{D}^{(\alpha)}(t) + \text{symm.} \\
= &  \| \{ \mathcal{Q}_{m,n}[\{ \nu^{\frac12} \p_y \phi^{(E)}_{\neq, m,n}  \}, \{ \omega_{m,n} \}] \} \|_{X^{0}_2(\frac{\bold{a}_{m,n}}{(m + n)^{s},}  q^{n} e^W \chi_{n+m}^2 )}  \mathcal{D}^{(\alpha)}(t) + \text{symm.} \\
\lesssim &  \frac{\langle t \rangle^2}{\nu^2} \| \{ \p_y \phi^{(E)}_{m,n}  \} \|_{X^{2}_2(\bold{a}_{m,n}, q^n \chi_{m+n})} \| \{ \sqrt{\nu} \nabla \omega_{m,n} \} \|_{X^{0}_2(\bold{a}_{m,n}, e^W \chi_{m + n} q^n)} \\
& \times \mathcal{D}^{(\alpha)}(t)  + \text{symm.} \\
\lesssim & \frac{\langle t \rangle^2}{\nu^2} \sqrt{\mathcal{J}_{ell}^{(3)}(t)} ( \sqrt{\mathcal{E}^{(\alpha)}} +\sqrt{ \mathcal{E}^{(\mu)}})\sqrt{ \mathcal{D}^{(\alpha)}(t)} + \text{symm.}
\end{align*}
Above, we have invoked the bound \eqref{ewing:2}. The proof of the second bound, \eqref{063023:2}, follows in a nearly identical manner. 
\end{proof}

\begin{lemma} The following bounds are valid: 
\begin{align} \label{063023:3}
|I^{(E, \xi, 3)}(t)| \lesssim &  \langle t \rangle \sqrt{ \mathcal{J}^{(3)}_{ell}} ( \mathcal{E}^{(\xi)} \mathcal{D}^{(\xi)})^{\frac12} \\ \label{063023:4}
|I^{(E, \xi, 4)}(t)| \lesssim &  \langle t \rangle\sqrt{ \mathcal{J}^{(2)}_{ell}} \mathcal{D}^{(\xi)}.
\end{align}
\end{lemma}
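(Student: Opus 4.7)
\noindent \textit{Proof plan.} The proof will closely mirror the structure of the preceding lemma on $|I^{(E,\xi,1)}|$ and $|I^{(E,\xi,2)}|$. The essential difference is that we now deal with the quasiproduct pieces $\bold{T}^{(E,\xi,3)}_{m,n} = \mathcal{Q}_{m,n}[\{\p_{x_\xi}\phi^{(E)}_{\neq 0, m,n}\}, \{\omega_{m,n}\}]$ and $\bold{T}^{(E,\xi,4)}_{m,n} = \mathcal{Q}_{m,n}[\{\phi^{(E)}_{\neq 0, m,n}\}, \{\p_{x_\xi}\omega_{m,n}\}]$, for which the sum defining $\mathcal{Q}_{m,n}$ is restricted to $m+n>16N$. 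In both bounds the exterior stream function is placed in $L^\infty$-type norms controlled by the elliptic functionals $\mathcal{J}^{(\ell)}_{ell}$ from \eqref{Gj_intro}, and the two factors of $\omega$ carry the $\mathcal{E}^{(\xi)}$ and $\mathcal{D}^{(\xi)}$ densities.

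First I will treat \eqref{063023:3}. After applying Cauchy--Schwarz in $(m,n)$ with the weight redistribution $\bold{a}_{m,n}^2 = \bold{a}_{m,n}^2 (m+n)^{-2s} \cdot (m+n)^{2s}$, the inner product is estimated by
\begin{align*}
|I^{(E,\xi,3)}(t)| \lesssim \Big[\sum_{m,n} \tfrac{\bold{a}_{m,n}^2}{(m+n)^{2s}}\nu \, \|\mathcal{Q}_{m,n}[\{\p_{x_\xi}\phi^{(E)}_{\neq 0,m,n}\},\{\omega_{m,n}\}]\, e^W q^n \chi_{m+n}^2\|_{L^2}^2\Big]^{1/2} \\
\times \Big[\sum_{m,n} (m+n)^{2s}\bold{a}_{m,n}^2 \nu\,\|\p_{x_\xi}\omega_{m,n}\,e^W q^n \chi_{m+n}\|_{L^2}^2\Big]^{1/2}.
\end{align*}
For the second factor I will use the weight comparisons $\bold{a}_{m,n}/\bold{a}_{m-1,n}\lesssim (m+n)^{-s}$ and $\bold{a}_{m,n}/\bold{a}_{m,n-1}\lesssim (m+n)^{-s}\varphi$ to absorb the $(m+n)^{s}$ factor as an extra derivative on $\omega$, converting the expression (after a harmless re-indexing) into $\sqrt{\mathcal{D}^{(\xi)}(t)}$, analogously to the argument \eqref{my:sbux:1}--\eqref{my:sbux:6} used for $|I^{(E,\xi,1)}|$. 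For the first factor I will invoke the bilinear quasiproduct bound \eqref{ewing:2}, whose second input $\sqrt{\nu}\nabla\omega_{m,n}$ precisely recovers $\sqrt{\mathcal{E}^{(\xi)}(t)}$, and whose first input produces $\|\{\p_{x_\xi}\phi^{(E)}_{\neq 0, m,n}\}\|_{X^2_2(\bold{a}_{m,n},q^n\chi_{m+n})}$. This last quantity, by definition \eqref{Gj_intro}, corresponds to three total derivatives landing on $\phi^{(E)}$ and is therefore controlled by $\sqrt{\mathcal{J}^{(3)}_{ell}(t)}$. Collecting the time and viscosity factors carefully using the bootstrap regime $t<\nu^{-1/3-\zeta}$ gives the required $\langle t\rangle$ on the right-hand side.

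The proof of \eqref{063023:4} follows the same blueprint but with the roles of derivatives shifted by one: the $\p_{x_\xi}$ now lands on $\omega$ in the first input of $\mathcal{Q}$, so when applying \eqref{ewing:2} the stream function enters only through $\|\{\phi^{(E)}_{\neq 0, m,n}\}\|_{X^2_2(\bold{a}_{m,n},q^n\chi_{m+n})}$, which is bounded by $\sqrt{\mathcal{J}^{(2)}_{ell}(t)}$. The paired factor against $\p_{x_\xi}\omega_{m,n}$ in the outer inner product, together with the $(m+n)^s$ reshuffle, combines with the $\sqrt{\nu}\nabla$ inside the bilinear estimate to yield $\mathcal{D}^{(\xi)}$ cleanly (without the split into $\mathcal{E}^{(\xi)}$ and $\mathcal{D}^{(\xi)}$), producing the stated $\langle t\rangle\sqrt{\mathcal{J}^{(2)}_{ell}}\,\mathcal{D}^{(\xi)}$.

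The principal technical obstacle is the bookkeeping around the $(m+n)^{s}$ weight redistribution: one must verify that the shift of cutoffs $\chi_{m+n}\to \chi_{m+n-1}$, the Gevrey weight recurrences, and the re-indexing of sums are all compatible with the fact that $\mathcal{Q}_{m,n}$ is identically zero for small $m+n$ (so the $\chi_{m+n-1}$ trick used for $\mathcal{R}$-operators in the previous lemma must be replayed in the complementary frequency regime). A secondary point requiring care is that the extra $\p_{x_\xi}$ on $\phi^{(E)}$ in \eqref{063023:3} must be absorbed into the $X^2_2$ norm used in \eqref{ewing:2} without creating a further loss of $\langle t\rangle/\nu^{1/2}$; this is where the \emph{exterior} elliptic functional $\mathcal{J}^{(3)}_{ell}$ is designed to accommodate exactly the three co-normal derivatives produced.
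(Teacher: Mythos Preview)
Your overall architecture---Cauchy--Schwarz in $(m,n)$ with the $(m+n)^{\pm s}$ weight redistribution, then re-indexing the second factor into $\sqrt{\mathcal{D}^{(\xi)}}$---is exactly what the paper does. The gap is in which bilinear bound you apply to the first factor.

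You propose \eqref{ewing:2}. That estimate carries an explicit $\langle t\rangle^2/\nu^2$ prefactor; after tracking the $\nu^{1/2}$ coming from the Cauchy--Schwarz split you would arrive at something like $\frac{\langle t\rangle^2}{\nu^{3/2}}\sqrt{\mathcal{J}^{(3)}_{ell}}\,(\mathcal{E}^{(\xi)}\mathcal{D}^{(\xi)})^{1/2}$. On the interval $t\lesssim\nu^{-1/3-\zeta}$ the factor $\langle t\rangle^2/\nu^{3/2}$ cannot be reduced to $\langle t\rangle$, so the line ``collecting the time and viscosity factors carefully using the bootstrap regime\dots'' does not close. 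You would simply be reproducing the weaker bound of the preceding lemma, not the sharper one asserted here.

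The paper instead invokes \eqref{63023:1}, the exterior $\mathcal{R}$-type bilinear bound, whose prefactor is just $\langle t\rangle$ and whose second output is $\|\{\nabla\omega_{m,n}\}\|_{X^0_2}$ without a $\sqrt{\nu}$. The $\nu^{1/2}$ from the first Cauchy--Schwarz factor then pairs with this to give $\|\{\sqrt{\nu}\nabla\omega_{m,n}\}\|_{X^0_2}\lesssim\sqrt{\mathcal{E}^{(\xi)}}$, and the stated bound follows directly. (There is an evident $\mathcal{R}/\mathcal{Q}$ labeling inconsistency between \eqref{harden:1} and the paper's exterior proofs---the preceding lemma treats $i=1,2$ with \eqref{ewing:2} and this one treats $i=3,4$ with \eqref{63023:1}, opposite to the order in \eqref{harden:1}---but the proof mechanism is unambiguous: obtaining the $\langle t\rangle$ prefactor requires \eqref{63023:1}, not \eqref{ewing:2}.)
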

\begin{proof} We first note that the definition of $\bold{T}^{(E, \alpha, 3)}_{m,n}$ implies that $m + n \ge 1$. Therefore, the identity $\chi_{m + n}^2 = \chi_{m + n}^2 \chi_{m + n -1}$ is valid. By an application of Cauchy-Schwartz in $(m, n)$, we obtain 
\begin{align*}
I^{(E, \alpha, 3)}(t) \lesssim & \Big[ \sum_{m = 0}^\infty \sum_{n = 0}^\infty  \bold{a}_{m,n}^2 \nu \|  \bold{T}^{(E, \alpha, 3)}_{m,n} e^W q^{n} \chi_{m + n}^2   \|_{L^2}^2 \Big]^{\frac12} \\
& \times \Big[ \sum_{m = 0}^\infty \sum_{n = 0}^\infty  \bold{a}_{m,n}^2 \nu  \|\p_y \omega_{m,n} e^W q^{n} \chi_{m + n-1}  \|_{L^2}^2 \Big]^{\frac12} \\
\lesssim & \Big[ \sum_{m = 0}^\infty \sum_{n = 0}^\infty \frac{ \bold{a}_{m,n}^2}{(m+n)^{2s}} \nu \| \bold{T}^{(E, \alpha, 3)}_{m,n} e^W q^{n} \chi_{m + n}^2 \|_{L^2}^2 \Big]^{\frac12} \\
& \times \Big[ \sum_{m = 0}^\infty \sum_{n = 0}^\infty \mathbbm{1}_{n \ge 1} \bold{a}_{m,n-1}^2 \nu^{\frac23}  \|\p_y \nabla \omega_{m,n-1} e^W q^{n} \chi_{m + n-1}  \|_{L^2}^2 \Big]^{\frac12} + \text{symm.} \\
\lesssim & \Big[ \sum_{m = 0}^\infty \sum_{n = 0}^\infty \frac{ \bold{a}_{m,n}^2}{(m+n)^{2s}} \nu \|\bold{T}^{(E, \alpha, 3)}_{m,n} e^W q^{n} \chi_{m + n}^2  \|_{L^2}^2 \Big]^{\frac12}\sqrt{ \mathcal{D}^{(\alpha)}} + \text{symm.} \\
= &  \| \{ \mathcal{R}_{m,n}[\{ \nu^{\frac12} \p_y \phi^{(E)}_{\neq, m,n}  \}, \{ \omega_{m,n} \}] \} \|_{X^{0}_2(\frac{\bold{a}_{m,n}}{(m + n)^{s},}  q^{n} e^W \chi_{n+m}^2 )}  \sqrt{\mathcal{D}^{(\alpha)}} + \text{symm.} \\
\lesssim &\langle t \rangle \| \{  \p_y \phi^{(E)}_{\neq, m,n} \} \|_{X^2_2( \bold{a}_{m,n} q^n \chi_{m+n} )} \| \{\nabla \omega_{m,n}\} \|_{X^0_2( \bold{a}_{m,n} q^n \chi_{m+n} e^W )} \sqrt{ \mathcal{D}^{(\alpha)}} \\
&  + \text{symm.} \\
\lesssim & \langle t \rangle\sqrt{ \mathcal{J}^{(3)}_{ell}(t)} \sqrt{ \mathcal{E}^{(\alpha)}} \sqrt{\mathcal{D}^{(\alpha)}} + \text{symm.}
\end{align*}
Above, we have invoked the bound  \eqref{63023:1}. The second bound, \eqref{063023:4}, follows in a nearly identical manner. 
\end{proof}

\begin{lemma} The following bounds are valid:
\begin{align} \label{063023:5}
|I^{(E, \xi, 5)}(t)| \lesssim &\sqrt{ \mathcal{J}^{(3)}_{ell}(t)}(\sqrt{\mathcal{D}^{(\gamma)}(t)} + \sqrt{\mathcal{D}^{(\alpha)}(t)} + \sqrt{\mathcal{D}^{(\mu)}(t)} )\sqrt{\mathcal{E}^{(\xi)}(t)}.
\end{align}
\end{lemma}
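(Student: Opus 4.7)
The proof follows the direct Cauchy--Schwartz template already used for \eqref{063023:3}--\eqref{063023:4}. The structural observation is that $\bold{T}^{(E, \xi, 5)}_{m,n} = \p_{x_\xi}\slashed{\nabla}^\perp \phi^{(E)}_{\neq 0}\cdot\slashed{\nabla}\omega_{m,n}$ places \emph{two} derivatives on $\phi^{(E)}$ and only one on $\omega_{m,n}$, with no Gevrey-scale derivatives having been moved from $\omega$ onto $\phi$; this is what forces the appearance of $\sqrt{\mathcal{J}^{(3)}_{ell}}$ (an $H^3$ bound on $\phi^{(E)}$ is needed to absorb $\|\p_{x_\xi}\slashed{\nabla}^\perp\phi^{(E)}\|_{L^\infty}$ via Sobolev), and makes the pairing with $\p_{x_\xi}\omega_{m,n}$ yield $\sqrt{\mathcal{E}^{(\xi)}}$ essentially automatically.

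The plan is as follows. First, I would apply Cauchy--Schwartz in $L^2_{xy}$, placing the stream-function factor in $L^\infty$, and bound this $L^\infty$ norm by $\sqrt{\mathcal{J}^{(3)}_{ell}}$ using the two-dimensional embedding $H^{3}(\mathbb T \times [-1,1])\hookrightarrow W^{2,\infty}$ together with the remark after \eqref{Gj_intro} that $\mathcal{J}^{(\ell)}_{ell}$ controls the usual $H^\ell$ norm. Next I would apply Cauchy--Schwartz in the discrete $(m,n)$ sum, splitting $\nu = \sqrt{\nu}\cdot\sqrt{\nu}$ with one factor going to each $\omega$-side, arriving at
\[
|I^{(E,\xi,5)}| \lesssim \sqrt{\mathcal{J}^{(3)}_{ell}}\Big[\sum_{m,n}\bold{a}_{m,n}^2\,\nu\,\|\slashed{\nabla}\omega_{m,n}\chi_{m+n}e^W q^n\|_{L^2}^2\Big]^{1/2}\Big[\sum_{m,n}\bold{a}_{m,n}^2\,\nu\,\|\p_{x_\xi}\omega_{m,n}\chi_{m+n}e^W q^n\|_{L^2}^2\Big]^{1/2}.
\]
The second bracket is immediately recognized as $\mathcal{E}^{(\xi)}$ from \eqref{ef:b}--\eqref{ef:c} modulo standard commutators moving $q^n$ outside of $\p_{x_\xi}$, which are absorbed into lower-order terms.

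The main step is then to dominate the first bracket by $\mathcal{D}^{(\gamma)} + \mathcal{D}^{(\alpha)} + \mathcal{D}^{(\mu)}$. I would decompose $\slashed{\nabla} = (\p_x,\Gamma)$ and further $\Gamma = v_y^{-1}\p_y + t\p_x$. The pure $\sqrt{\nu}\p_x\omega_{m,n}$ and $\sqrt{\nu}v_y^{-1}\p_y\omega_{m,n}$ components are, modulo commutators with $q^n$, components of $\sqrt{\nu}\nabla(q^n\omega_{m,n})$, hence they feed directly into $\sqrt{\mathcal{D}^{(\gamma)}}$ (the uniform $L^\infty$-bound on $v_y^{-1}$ coming from the bootstrap on the coordinate-system unknowns). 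The residual piece $\sqrt{\nu}\, t\,\p_x\omega_{m,n}$ carried along by $\Gamma$ is the delicate one, and it is precisely what forces the appearance of the higher-order dissipations $\mathcal{D}^{(\alpha)}$, $\mathcal{D}^{(\mu)}$ on the right-hand side: transferring a derivative onto the companion factor via the Dirichlet condition $\omega_{m,n}|_{y=\pm 1}=0$ (Poincar\'e) and trading one power of $t$ for a power of $\sqrt{\nu}$ through the two-derivative structure of $\mathcal{D}^{(\alpha)}, \mathcal{D}^{(\mu)}$, this contribution is absorbed.

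The principal obstacle is this last step: one must balance the factor of $t$ intrinsic to $\Gamma$ against the weaker $\sqrt{\nu}$-weight separating $\mathcal{E}^{(\xi)}$-type norms from $\mathcal{D}^{(\alpha/\mu)}$-type norms, within the critical window $t\lesssim \nu^{-1/3-\zeta}$. As long as the redistribution is carried out at the level of the $\slashed{\nabla}\omega_{m,n}$ factor, rather than on the $\p_{x_\xi}\omega_{m,n}$ side (which must remain at the $\mathcal{E}^{(\xi)}$ regularity), and as long as commutators with $q^n$, $W$ and $\chi_{m+n}$ are carefully tracked (these are lower-order, in the spirit of the computations carried out in Lemma~\ref{lemma:Q:mn:1} and Lemma~\ref{lem:tr:1}), the bound \eqref{063023:5} follows.
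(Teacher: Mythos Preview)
Your overall strategy---Cauchy--Schwartz, stream function in $L^\infty$, split $\nu=\sqrt{\nu}\cdot\sqrt{\nu}$---is the paper's strategy as well. However, the execution contains a genuine error and an unnecessary detour.

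\textbf{The error.} You assert the embedding $H^3(\mathbb T\times[-1,1])\hookrightarrow W^{2,\infty}$. This is false: in two dimensions $H^1$ fails to embed into $L^\infty$ (critical case), so $D^2\phi\in H^1$ does not give $D^2\phi\in L^\infty$. Consequently $\|\p_{x_\xi}\slashed\nabla^\perp\phi^{(E)}\|_{L^\infty}$ cannot be bounded by $\sqrt{\mathcal J_{\mathrm{ell}}^{(3)}}$ in the way you claim. The paper avoids this by using a \emph{mixed} norm: placing $\p_{x_\xi}\nabla^\perp\phi^{(E)}$ in $L^2_x L^\infty_y$, which (via one-dimensional Sobolev in $y$) is indeed controlled by $\|\phi^{(E)}\|_{H^3}\lesssim\sqrt{\mathcal J_{\mathrm{ell}}^{(3)}}$. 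This then forces the $\nabla\omega_{m,n}$ factor into $L^\infty_x L^2_y$, and handling \emph{that} (via one-dimensional Sobolev in $x$) is precisely why $\mathcal D^{(\alpha)},\mathcal D^{(\mu)}$ appear on the right-hand side---not for the reason you give.

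\textbf{The detour.} You work with $\slashed\nabla\omega_{m,n}$, decompose $\Gamma=v_y^{-1}\p_y+t\p_x$, and then try to absorb the residual $\sqrt{\nu}\,t\,\p_x\omega_{m,n}$ via Poincar\'e and ``trading $t$ for $\sqrt{\nu}$''. This mechanism is not correct as stated: $\omega_{m,n}|_{y=\pm1}$ does not vanish for $n\ge1$ (only $q^n\omega_{m,n}$ does), so the Dirichlet/Poincar\'e step fails, and the power-counting you allude to does not balance. The paper never faces this issue because the quasilinear term is actually $\nabla^\perp\phi\cdot\nabla\omega_{m,n}$ (see \eqref{joey:2} and the identity \eqref{id:gamma:re}), so after differentiating in $\p_{x_\xi}$ the factor on $\omega_{m,n}$ is $\nabla\omega_{m,n}$, not $\slashed\nabla\omega_{m,n}$. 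Then $\sqrt{\nu}\|\nabla(q^n\omega_{m,n})\chi_{m+n}e^W\|_{L^2}$ is exactly $\sqrt{\mathcal D^{(\gamma)}_{m,n}}$ and no $t$-factor ever appears.
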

\begin{proof} We estimate as follows 
\begin{align*}
|I^{(E, \alpha, 5)}(t)| \lesssim & \Big[ \sum_{m = 0}^\infty \sum_{n = 0}^\infty \bold{a}_{m,n}^2 \nu \| \p_y \nabla^\perp \phi_{\neq}^{(E)}  \|_{L^\infty}^2 \| \nabla \omega_{m,n} \chi_{m + n} e^{W}\|_{L^2}^2 \Big]^{\frac12} \sqrt{\mathcal{E}^{(\alpha)}(t)} \\
\lesssim & \nu^{\frac12} \| \p_y \nabla^\perp \phi_{\neq}^{(E)}  \|_{L^2_x L^\infty_y} \Big[ \sum_{m = 0}^\infty \sum_{n = 0}^\infty \bold{a}_{m,n}^2  \| \nabla \omega_{m,n} \chi_{m + n} e^{W}\|_{L^\infty_x L^2_y}^2 \Big]^{\frac12}\sqrt{ \mathcal{E}^{(\alpha)}(t)} \\
\lesssim & \nu^{\frac12} \| \phi_{\neq}^{(E)}  \|_{H^3}  \sqrt{\mathcal{D}^{(\alpha)}(t) \mathcal{E}^{(\alpha)}(t)} \\
\lesssim & \sqrt{ \mathcal{J}^{(3)}_{ell}(t)}(\sqrt{\mathcal{D}^{(\gamma)}(t)} + \sqrt{\mathcal{D}^{(\alpha)}(t)} + \sqrt{\mathcal{D}^{(\mu)}(t)} )\sqrt{\mathcal{E}^{(\alpha)}(t)}.
\end{align*}
The lemma is proven. 
\end{proof}

\begin{lemma}The following bounds are valid:
\begin{align} \label{063023:6}
|I^{(E, \xi, 6)}(t)| \lesssim &\sqrt{ \mathcal{J}_{ell}^{(3)}(t) \mathcal{D}^{(\xi)}(t) \mathcal{E}^{(\xi)}(t)} +  \langle t \rangle^4 \nu^{\frac23} \sqrt{\mathcal{J}_{ell}^{(3)}(t) \mathcal{E}_{\text{cloud}}(t)\mathcal{E}^{(\xi)}(t)}.
\end{align}
\end{lemma}
\begin{proof} We estimate as follows 
\begin{align*}
I^{(E, \alpha, 6)}(t)  \lesssim & \Big[ \sum_{m = 0}^\infty \sum_{n = 0}^\infty   \bold{a}_{m,n}^2 \nu \| \bold{T}^{(E, \alpha, 6)}_{m,n} e^W q^{n} \chi_{m + n} \|_{L^2}^2 \Big]^{\frac12} \\
& \times \Big[ \sum_{m = 0}^\infty \sum_{n = 0}^\infty   \bold{a}_{m,n}^2 \nu  \| \p_y \omega_{m,n} e^W q^{n} \chi_{m + n}\|_{ L^2}^2 \Big]^{\frac12} \\
\lesssim & \Big[ \sum_{m = 0}^\infty \sum_{n = 0}^\infty   \bold{a}_{m,n}^2 \nu \|\bold{T}^{(E, \alpha, 6)}_{m,n} e^W q^{n} \chi_{m + n} \|_{L^2}^2 \Big]^{\frac12} \sqrt{\mathcal{E}^{(\alpha)}(t)} \\
\lesssim & \Big[ \sum_{m = 0}^\infty \sum_{n = 0}^\infty  \bold{a}_{m,n}^2 \nu \|\nabla^\perp \phi_{\neq}^{(E)} \cdot \p_y \nabla  \omega_{m,n} e^W q^{n} \chi_{m + n} (1 -\chi_{10}) \|_{L^2}^2 \Big]^{\frac12} \sqrt{\mathcal{E}^{(\alpha)}(t)} \\
& + \Big[ \sum_{m = 0}^\infty \sum_{n = 0}^\infty  \bold{a}_{m,n}^2 \nu \|\nabla^\perp \phi_{\neq}^{(E)} \cdot \p_y \nabla  \omega_{m,n} e^W q^{n} \chi_{m + n} \chi_{10} \|_{L^2}^2 \Big]^{\frac12} \sqrt{ \mathcal{E}^{(\alpha)}(t)} \\
\lesssim & \Big[ \sum_{m = 0}^\infty \sum_{n = 0}^\infty  \bold{a}_{m,n}^2 \nu  \|\nabla^\perp \phi_{\neq}^{(E)} \cdot \p_y \nabla  \omega_{m,n} e^W q^{n} \chi_{m + n} (1 -\chi_{10}) \|_{L^2}^2 \Big]^{\frac12} \sqrt{ \mathcal{E}^{(\alpha)}(t)}  \\
& +  \Big[ \sum_{m = 0}^\infty \sum_{n = 0}^\infty \bold{a}_{m,n}^2 \nu \| \nabla^\perp \phi_{\neq}^{(E)}  \chi_9\|_{L^\infty}\| \p_y \nabla  \omega_{m,n} e^W q^{n} \chi_{m + n} \chi_{10} \|_{L^2}^2 \Big]^{\frac12} \sqrt{\mathcal{E}^{(\alpha)}(t)}\\
= & I^{(E, \alpha, 6, int)}(t) + I^{(E, \alpha, 6, ext)}(t).
\end{align*}
For the ``ext" term, we estimate as follows 
\begin{align*}
| I^{(E, \alpha, 6, ext)}(t)| \lesssim & \|  \nabla^\perp \phi_{\neq}^{(E)}\chi_9 \|_{L^\infty} \sqrt{\mathcal{D}^{(\alpha)}(t) \mathcal{E}^{(\alpha)}(t) }\\
\lesssim &   \|\nabla^\perp \phi_{\neq}^{(E)}   \chi_9\|_{H^1_x H^1_y}\sqrt{ \mathcal{D}^{(\alpha)}(t) \mathcal{E}^{(\alpha)}(t) }\\
\lesssim &\sqrt{ \mathcal{J}_{ell}^{(3)}(t) \mathcal{D}^{(\alpha)}(t) \mathcal{E}^{(\alpha)}(t)}.  
\end{align*}
For the ``int" term, we need to invoke the Cloud norm as follows. First of all, we notice that the presence of $1 - \chi_{10}$ limits the indices to $m + n \le 15$ (clearly, $\chi_{m + n} (1 - \chi_{10}) = 0$ if $m +  n \ge 15$). Indeed, we have 
\begin{align*}
|I^{(E, \alpha, 6, int)}(t)| \lesssim & \Big[ \sum_{m = 0}^\infty \sum_{n = 0}^\infty \mathbbm{1}_{m + n \le 15}  \bold{a}_{m,n}^2 \nu \|\nabla^\perp \phi_{\neq}^{(E)} \cdot \p_y \nabla  \omega_{m,n} e^W q^{n} \chi_I \|_{L^2_x L^2_y}^2 \Big]^{\frac12}\sqrt{ \mathcal{E}^{(\alpha)}(t)} \\
 \lesssim & \langle t \rangle^4 \nu \|\nabla^\perp \phi_{\neq}^{(E)}  \|_{L^\infty} \Big[ \sum_{m = 0}^\infty \sum_{n = 0}^\infty \mathbbm{1}_{m + n \le 20}  \bold{a}_{m,n}^2  \|    \omega_{m,n} e^W q^{n} \chi_I \|_{L^2_x L^2_y}^2 \Big]^{\frac12}\sqrt{ \mathcal{E}^{(\alpha)}(t)} \\
 \lesssim & \langle t \rangle^4 \nu^{\frac23} \sqrt{\mathcal{J}_{ell}^{(3)}(t) \mathcal{E}_{\text{cloud}}(t)\mathcal{E}^{(\alpha)}(t)}.
\end{align*}
The lemma is proven. 
\end{proof}

\subsection{Proof of Proposition \ref{pro:tri:in}}

We are now ready to consolidate all of our trilinear estimates in order to prove Proposition \ref{pro:tri:in}. 

\begin{proof}[Proof of Proposition \ref{pro:tri:in}]To obtain \eqref{esp:1}, we sum together \eqref{jb:43:a} -- \eqref{jb:43:c} to get 
\begin{align*}
|\mathcal{I}^{(I, \gamma)}| \lesssim  &(\mathcal{E}_{ell}^{(I)})^{\frac12}   \mathcal{CK}^{(\gamma)} + \frac{(\mathcal{E}_{ell}^{(I)})^{\frac12}|}{\langle t \rangle^{100}}  ( (\mathcal{E}^{(\gamma)})^{\frac12} + (\mathcal{CK}^{(\gamma)})^{\frac12})(\mathcal{CK}^{(\gamma)})^{\frac12} \\
&+ \frac{(\mathcal{E}^{(I)}_{ell})^{\frac12}}{\langle t \rangle^{88}} (\mathcal{D}^{(\gamma)})^{\frac12} (\mathcal{CK}^{(\gamma, W)})^{\frac12} +  (\mathcal{E}^{(I)}_{ell})^{\frac12} (\mathcal{CK}_{\text{Cloud}})^{\frac12}( \mathcal{CK}^{(\gamma)})^{\frac12} \\
\lesssim & (\mathcal{E}^{(I)}_{ell})^{\frac12}( \mathcal{CK}^{(\gamma)} + \mathcal{CK}_{\text{Cloud}} + \mathcal{D}^{(\gamma)} ) + \frac{(\mathcal{E}_{ell}^{(I)})^{\frac12}}{\langle t \rangle^{100}} \mathcal{E}^{(\gamma)}
\end{align*}
upon using Young's inequality for products. 

The proofs of the remaining bounds in Proposition \ref{pro:tri:in} follows in a similar manner. To obtain \eqref{esp:2}, we sum together \eqref{hold:ya:1} and \eqref{hold:ya:2} and similarly use Young's inequality for products. To obtain \eqref{esp:3}, we sum together \eqref{rodeo:1} -- \eqref{rodeo:6} and split terms appropriately using Young's inequality. Finally to obtain \eqref{esp:4}, we sum together \eqref{063023:1}, \eqref{063023:2}, \eqref{063023:3}, \eqref{063023:4}, \eqref{063023:5}, and \eqref{063023:6} and again use Young's inequality to split the terms appropriately. 

\end{proof}


\section{Coordinate System Estimates} \label{sec:coord:FEI}
In this section, we obtain bounds on the coordinate system quantities $(\overline {H}, H, G)$. 
To make the writing compact, we denote 
\begin{align}
	\ztp{F}_{n} :=q^n F_n = q^n \Gamma_0^n F = q^n \Big( \frac{\pa_y}{v_y} \Big)^n F
\end{align}
for $F\in\{\overline H, H, G\}$.
As before, when the parameter $\iota=\gamma$, it means a $L^2$ based norm and thus we often omit it. For instance, we denote
\begin{align*}
	\mathcal{E}_{\overline{H},n}=\mathcal{E}_{\overline{H},n}^{(\gamma)}.
\end{align*}
 We note that $H$ has the same regularity as $\omega$ while $\overline H$ has $\ss$ derivatives less but with a polynomial decay in time.  To capture this regularity discrepancy, we make the following definition.
 \begin{definition} For $\alpha\in\{0, 1\}$ and $\beta\in\{0, -s\}$, we define the norm $Y_{\alpha,\beta}$ as
 	\label{Y:norm}
\begin{align*}
	&\norm{H}_{Y_{0,0}} = \sqrt{\mathcal{E}_{{H}}^{(\gamma)}} =    \paren{\sum_{n\ge0}  \bold{a}_n^2 \| \hqn  e^{W/2} \chi_n \|_{L^2}^2}^{1/2},
	\\&\norm{\overline H}_{Y_{0,-s}} =  \brak{t}^{-3/2-\ss} \sqrt{\mathcal{E}_{\overline{H}}^{(\gamma)}} =   \paren{\sum_{n\ge0}(n+1)^{2\sss-2}  \bold{a}_{n+1}^2  \|  \bhqn e^{W/2} \chi_n \|_{L^2}^2}^{1/2}, \\
	&\norm{H}_{Y_{1,0}} = \sqrt{\mathcal{E}_{{H}}^{(\alpha)}} = \nu^{1/4}   \paren{\sum_{n\ge0} \bold{a}_n^2 \| \partial_y \hqn  e^{W/2} \chi_n \|_{L^2}^2}^{1/2},
	\\& \norm{\overline H}_{Y_{1, -s}} =  \brak{t}^{-3/2-\ss} \sqrt{\mathcal{E}_{\overline{H}}^{(\alpha)}}
	= \nu^{1/4}    \paren{\sum_{n\ge0}(n+1)^{2\sss-2}  \bold{a}_{n+1}^2  \|  \partial_{y} \bhqn e^{W/2} \chi_n \|_{L^2}^2}^{1/2}.
\end{align*}
 \end{definition}
 \begin{remark}
 	We note that the first sub-index in $Y_{\alpha,\beta}$ is reserved for how many $y$ derivatives we have in the norm, while the second index means regularity loss compared with the Gevrey space $G^{s}$. 
 	Sometimes we also need exact the same norm but without the weight $e^{W/2}$, which are denoted by
 	\begin{align*}
 		\norm{H}_{\overline Y_{0,0}},\ \norm{\overline H}_{\overline Y_{0,-s}}, \ 
 		\norm{H}_{\overline Y_{1,0}} ,\ \norm{\overline H}_{\overline Y_{1, -s}} 
 	\end{align*}
 	respectively.
 \end{remark}
Our main result in this section is the following proposition. 
\begin{proposition}
	We have
	\begin{align*}
		\frac{1}{2}\frac{d}{dt} \mathcal{E}_{\text{Ext, Coord}}+
		 \mathcal{D}_{\text{Ext, Coord}} +
 \mathcal{CK}_{\text{Ext, Coord}}
		\lesssim&  \frac{\eps^3}{\brak{t}^{3+r-2\ss}} + \eps \sum_{\iota\in\{\alpha,\gamma\}} \cd^{(\iota)} .
	\end{align*}
\end{proposition}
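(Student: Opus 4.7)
\subsection*{Proof Plan for the Exterior Coordinate Proposition}

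The strategy is to derive the claimed bound as a direct consequence of Proposition~\ref{gamm:esti:llp}, which provides individual energy-dissipation-CK inequalities for each of the three coordinate quantities $\overline{H}, G, H$ at each regularity level $\iota\in\{\gamma,\alpha\}$. The only nontrivial bookkeeping is the choice of the weight $1/M$ attached to $\mathcal{E}_H^{(\iota)}, \mathcal{D}_H^{(\iota)}, \mathcal{CK}_H^{(\iota)}$ in the definition of $\mathcal{E}_{\text{Ext, Coord}}$, which is forced by the fact that the $H$-equation produces a source term $\mathcal{CK}_{\overline{H}}^{(\iota)}$ with no $\eps$-smallness.

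First, I would add together, for each fixed $\iota\in\{\gamma,\alpha\}$, the three inequalities of Proposition~\ref{gamm:esti:llp} applied to $\overline{H}$, $G$, and $H$, where the $H$-inequality is multiplied by $1/M$. The two ``hard'' inequalities for $\overline{H}$ and $G$ contribute a right-hand side of the shape
\begin{align*}
\frac{\eps^3}{\brak{t}^{3+r-2\ss}} + \eps\sum_{\iota'\in\{\alpha,\gamma\}}\Bigl(\cd^{(\iota')}+\cd_{H}^{(\iota')}+\cd_{\overline{H}}^{(\iota')}\Bigr) + \eps\sum_{\iota'\in\{\alpha,\gamma\}}\Bigl(\mathcal{CK}^{(\iota')}+\mathcal{CK}_{H}^{(\iota')}+\mathcal{CK}_{\overline{H}}^{(\iota')}\Bigr),
\end{align*}
while the $H$-inequality contributes $\tfrac{1}{M}\mathcal{CK}_{\overline{H}}^{(\iota)}$.

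Next, I would absorb the right-hand side into the left. The terms of the form $\eps\,\cd_H^{(\iota')}, \eps\,\cd_{\overline{H}}^{(\iota')}, \eps\,\mathcal{CK}_H^{(\iota')}, \eps\,\mathcal{CK}_{\overline{H}}^{(\iota')}$, as well as $\eps\,\cd_G^{(\iota')}$ and $\eps\,\mathcal{CK}_G^{(\iota')}$ (which are controlled by the $\cd$ and $\mathcal{CK}$ appearing implicitly via $\overline{H}$, $G$), are absorbed into $\mathcal{D}_{\text{Ext,Coord}}+\mathcal{CK}_{\text{Ext,Coord}}$ on the left for $\eps$ sufficiently small, using the fact that $\tfrac{1}{M}\mathcal{D}_H^{(\iota)}, \tfrac{1}{M}\mathcal{CK}_H^{(\iota)}$ each gain a favourable factor but this factor $\eps M$ is still $\ll 1$ provided $\eps \ll 1/M$. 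The key absorption is the $H$-contribution: the term $\tfrac{1}{M}\mathcal{CK}_{\overline{H}}^{(\iota)}$ coming from the $H$-inequality has no $\eps$-smallness, so instead we absorb it into the LHS term $\mathcal{CK}_{\overline{H}}^{(\iota)}$ (which sits on the left with coefficient $1$) by choosing $M$ large enough — say $M\geq 10$ — independent of $\eps,\nu,t$. The surviving right-hand side is exactly the source $\tfrac{\eps^3}{\brak{t}^{3+r-2\ss}}+\eps\sum_{\iota}\cd^{(\iota)}$.

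The main (and only) delicate point is the order in which the parameters must be fixed: $M$ must be chosen first, purely in terms of the implicit constants appearing in Proposition~\ref{gamm:esti:llp}, so as to make $1/M$ beat the constant multiplying $\mathcal{CK}_{\overline{H}}^{(\iota)}$ arising from the $H$-estimate; only then can $\eps$ be taken small enough (depending on this $M$) to absorb all the $\eps$-terms back into the dissipation and CK on the left. Summing finally over $\iota\in\{\alpha,\gamma\}$ yields the stated inequality; no new analytical ingredient is required beyond Proposition~\ref{gamm:esti:llp}.
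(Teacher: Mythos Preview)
Your proposal is correct and matches the paper's approach exactly: the paper states only that the proposition is ``a direct consequence of Proposition~\ref{gamm:esti} and~\ref{alph:esti}'' (which together comprise Proposition~\ref{gamm:esti:llp}), and you have correctly filled in the bookkeeping — in particular the role of the $1/M$ weight in absorbing the non-$\eps$-small source $\mathcal{CK}_{\overline{H}}^{(\iota)}$ from the $H$-estimate, and the order in which $M$ (large, universal) and then $\eps\ll 1/M$ must be chosen.
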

\begin{proof}
	It is an direct consequence of the Proposition~\ref{gamm:esti} and \ref{alph:esti}. 
\end{proof}
\begin{proposition}[$\gamma$ estimate] 
	\label{gamm:esti}Let $\overline H, G$, and $H$ be the solutions of the equations~\eqref{eq:bar:H:main}, \eqref{eq:G:main}, and \eqref{eq:H:main} respectively. Assuming the bootstraps,  the following estimates are valid for the $\gamma$-level coordinate functionals: 
\begin{align} \label{apbarH}
\frac{1}{2}\frac{d}{dt} \mathcal{E}_{\overline{H}}^{\gm}+
\cd_{\overline{H}}^{\gm}+
\mathcal{CK}_{\overline{H}}^{\gm}
\lesssim&  
\frac{\eps^3}{\brak{t}^{3+r-2\ss}} +  \eps \paren{\cd^{\gm}+\cd_{H}^{\gm} 
	+\cd_{\overline H}^{\gm}}
\n	\\&\quad 
+\eps\paren{\mathcal{CK}^{\gm}
	+\mathcal{CK}_{H}^{\gm}+\mathcal{CK}_{\overline H}^{\gm}}, \\ \label{apG}
\frac{1}{2}\frac{d}{dt} \mathcal{E}_{G}^{\gm}+
\cd_{G}^{\gm}+
\mathcal{CK}_{G}^{\gm}
\lesssim&  
\frac{\eps^3}{\brak{t}^{3+r-2\ss}} +  \eps \paren{\cd^{\gm}+\cd_{H}^{\gm} 
	+\cd_{\overline H}^{\gm}}
\n \\&\quad 
+\eps\paren{\mathcal{CK}^{\gm}
	+\mathcal{CK}_{H}^{\gm}+\mathcal{CK}_{\overline H}^{\gm}},
 \\ \label{apH}
\frac{1}{2}\frac{d}{dt} \mathcal{E}_{H}^{\gm}+
\cd_{H}^{\gm}+
\mathcal{CK}_{H}^{\gm}
\lesssim &   \mathcal{CK}_{\overline{H}}^{\gm} .
\end{align}
for $r$ and $s$ defined as in \eqref{pgiL1}.
\end{proposition}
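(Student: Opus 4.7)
My plan is to derive each of the three inequalities by an energy estimate performed directly on the $(x,y)$-side PDEs \eqref{eq:bar:H:main}, \eqref{eq:G:main}, \eqref{eq:H:main}, after commuting the operator $q^n\Gamma_0^n$ and multiplying by the sequential weight $\bold{a}_{n+1}$ (resp.\ $\bold{a}_n$) together with the temporal pre-factor $\brak{t}^{3+2\ss}$, the Gaussian weight $e^{W/2}$, and the pseudo-Gevrey cutoff $\chi_n$. The estimate \eqref{apH} for $H$ is the simplest: testing $\partial_t H - \nu \partial_y^2 H=\overline H$ against $q^{2n}\Gamma_0^n H \,e^{W}\chi_n^2$ and summing in $n$, the Dirichlet boundary condition $H|_{y=\pm 1}=0$ annihilates the boundary term from integration by parts, $-\nu\partial_y^2$ produces $\cd_H^{(\gamma)}$ plus a friendly term from $\partial_y W$ that is controlled via \eqref{wdot:est:a}, and the CK term is generated by $\dot{\bold{a}}_n$ and $\partial_t W$; the right-hand side contribution $\overline H$ is absorbed by Cauchy--Schwarz into $\mathcal{CK}_{\overline H}^{(\gamma)}$ using the structural relation $\|H\|_{Y_{0,0}}\lesssim\brak{t}^{-1-\ss}(\cdots)$ built into the weights in Definition \ref{Y:norm}, which is exactly the statement that the RHS of \eqref{apH} contains only $\mathcal{CK}_{\overline H}^{(\gamma)}$.

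For \eqref{apbarH} and \eqref{apG}, the left-hand sides are generated in the standard way: the factor $2/t$ in the PDEs together with the time derivative falling on $\brak{t}^{3+2\ss}$ produces (after choosing $\ss$ small) the required $\mathcal{CK}$-coercivity on top of the $\varphi,\lambda,W$ derived CK terms; the diffusion $-\nu\partial_y^2$ yields $\cd_{\overline H}^{(\gamma)}$ and $\cd_G^{(\gamma)}$. The boundary terms arising from integrating $-\nu\partial_y^2$ by parts vanish for $\overline H$ directly, and for $G$ they vanish because of the Neumann condition $\partial_y G|_{y=\pm 1}=0$ in \eqref{eq:G:main}; note however that after commuting with $\Gamma_0^n$ the boundary conditions on higher derivatives of $\ztp G_n$ do not vanish, so we must absorb these boundary contributions into the trace norms $\mathcal{E}_{\text{Trace}}$ via the Sobolev boundary bootstrap \eqref{boot:sob}. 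The commutators $[q^n\Gamma_0^n\chi_n,\,\partial_t]$, $[q^n\Gamma_0^n\chi_n,\,\nu\partial_y^2]$ and $[q^n\Gamma_0^n\chi_n,\, (y+P_0 u_1)\partial_x]$--type terms are handled \emph{verbatim} by invoking the ICC-Method of \cite{BHIW24b}, which is what the energy functionals \eqref{fei:fei:fei}--\eqref{fei:fei:fei:3} and the anisotropic weight $\varphi(t)^{n+1}$ were precisely designed to accommodate; these produce contributions of the form $\eps(\cd_{\overline H}^{(\gamma)}+\cd_H^{(\gamma)})$ plus $\eps \mathcal{CK}$ terms which are on the right-hand side of the claimed inequalities.

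The main obstacle, and the source of all nonlinear contributions on the RHS of \eqref{apbarH}--\eqref{apG}, is the forcing $-t^{-1}(u_{\neq}\cdot\nabla\omega)_0$ for $\overline H$ (resp.\ $-t^{-1}(u_{\neq}\cdot\nabla u_1)_0$ for $G$). After applying $q^n\Gamma_0^n$ and pairing with the weighted test function, this becomes a trilinear object of precisely the type considered in Sections \ref{sec:q}--\ref{sec:Tri}: for $\overline H$ it is morally $\sum_{m=0}\bold{a}_{n+1}^2 \brak{t}^{3+2\ss}t^{-1}\langle q^n\Gamma_0^n\{\nabla^\perp\phi_{\neq}\cdot\nabla\omega\}_0,\,q^n\ztp{\overline H}_n e^W\chi_n^2\rangle$ (with $m=0$ after zero-mode projection), and the quasiproduct decomposition \eqref{jalen:brunson:for:three} together with the bilinear operator bounds from Lemma \ref{Lemma:restless:bones} and Lemmas \ref{lemma:Q:mn:1}--\ref{lem:tr:1} reduces this to $\|\phi^{(I)}\|_{\text{ell}}$ or $\|\phi^{(E)}\|_{\text{ell}}$ times the coordinate dissipation or CK norms. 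The elliptic control is then supplied by Proposition \ref{pro:ell:intro}, and by the bootstrap we pay an $\eps$ from the streamfunction factor and buy either a dissipation/CK term (absorbable on the left) or the decaying residual $\eps^3\brak{t}^{-(3+r-2\ss)}$ coming from the $t^{-1}$ prefactor combined with $\brak{t}^{3+2\ss}$ and the inviscid-damping rate $\brak{t}^{-1-r}$ of the streamfunction in the interior regime. The one genuinely subtle point is the regularity-discrepancy between $\overline H$ (which has $\ss$ derivatives less than $\omega$ at the cost of the $\brak{t}^{3+2\ss}$ growth) and $\omega$: the sequential weight $(n+1)^{2\ss-2}\bold{a}_{n+1}^2$ in \eqref{fei:fei:fei} was engineered exactly so that Cauchy--Schwarz against $\nabla\omega_{m,n+1}$ in the trilinear bound absorbs the half-derivative loss into $\cd^{(\gamma)}$ and $\cd_{\overline H}^{(\gamma)}$; keeping careful track of this shift through the entire quasiproduct apparatus is where the main technical bookkeeping lies.

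Finally, assembling all these ingredients, the three inequalities close in the stated form: the $\frac{\eps^3}{\brak{t}^{3+r-2\ss}}$ remainder comes from the fully off-diagonal part of the trilinear interactions (measured against the decaying streamfunction), while the $\eps(\cd+\mathcal{CK})$ terms are the quasilinear/commutator-type contributions that the bootstrap then absorbs after integration in time, yielding \eqref{boot:H}.
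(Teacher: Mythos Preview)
Your overall scheme for \eqref{apH} and \eqref{apbarH} is correct in spirit---test the commuted equation against the weighted unknown, generate $\mathcal{E},\mathcal{D},\mathcal{CK}$ on the left, and bound commutators and forcing on the right---but there are two places where your plan diverges from what the paper actually does, and one of these is a genuine misstep.

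\textbf{The $G$ estimate.} You propose a direct energy estimate on \eqref{eq:G:main} after commuting $q^n\Gamma_0^n$, and you worry that ``after commuting with $\Gamma_0^n$ the boundary conditions on higher derivatives of $\ztp G_n$ do not vanish, so we must absorb these boundary contributions into the trace norms $\mathcal{E}_{\text{Trace}}$.'' This concern is misplaced: for $n\geq 1$ the co-normal weight $q^n$ vanishes at $y=\pm 1$, so $\ztp G_n|_{y=\pm 1}=0$ and the boundary term from integrating $-\nu\partial_y^2$ by parts is zero; for $n=0$ the Neumann condition $\partial_y G|_{\pm 1}=0$ kills it directly. There is no need to invoke $\mathcal{E}_{\text{Trace}}$ here at all. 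More importantly, the paper does \emph{not} run a direct energy estimate on $G$ at levels $n\geq 1$. Instead it uses the structural relation $\partial_y G=\overline H$ (equation \eqref{GbarH}), which gives $\ztp G_n=q(\overline H/v_y)_{n-1}$, and then the product rule in the $Y$-spaces (Lemma \ref{pro:-s} together with \eqref{1onvy}) yields $\mathcal{E}_G^{(\gamma)}-\theta_0^2\mathcal{E}_{G,0}^{(\gamma)}\lesssim(1+\eps)(\mathcal{E}_{\overline H}^{(\gamma)}+\mathcal{E}_{\overline H}^{(\alpha)})$, reducing everything to the already-proven $\overline H$ estimate. Only $n=0$ is handled by a separate energy estimate (Lemma following \eqref{G:0:esti}). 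Your direct route could in principle be carried out, but you would need to redo the commutator and nonlinear analysis with the $\bold a_n$ (not $\bold a_{n+1}$) and $\chi_{n-1}$ weights that define $\mathcal{E}_{G,n}^{(\gamma)}$, and the forcing $(u_{\neq}\cdot\nabla u_1)_0$ has a different structure from $(u_{\neq}\cdot\nabla\omega)_0$; the paper's reduction sidesteps all of this.

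\textbf{The nonlinear forcing for $\overline H$.} You propose to handle the trilinear term by invoking the quasiproduct operators $\mathcal{Q}_{m,n},\mathcal{R}_{m,n}$ and the bilinear bounds of Section~\ref{sec:q}. That machinery is built for the double-indexed vorticity functionals $\omega_{m,n}=\partial_x^m\Gamma^n\omega$ summed over $(m,n)$; the coordinate quantities $\overline H,G,H$ are zero-mode objects indexed only by $n$, and the forcing is a zero-mode projection $(\nabla^\perp\phi_{\neq}\cdot\nabla\omega)_0$ which does not fit the $\bold T_{m,n}^{(U,\gamma)}$ template. The paper's Lemma \ref{nonl:barH} instead expands the product rule directly, splits $\phi=\phi^{(I)}+\phi^{(E)}$, further splits into $HL$ and $LH$ pieces according to where the $\Gamma_0$-derivatives land, commutes $\partial_y$ past $\Gamma_k^{n-m}$ via \eqref{cm_py_G_n}, and closes using Sobolev embedding \eqref{Sob:emb}, the binomial lemmas (Lemma \ref{comb:boun:vari:2} and its corollaries), and the convolution Lemma \ref{con:no:k}. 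The key interior/exterior distinction is that for $\phi^{(E)}$ one exploits the smallness of $\mathcal{J}_{ell}^{(j)}$, while for $\phi^{(I)}$ at low $n$ one uses the interior inviscid-damping rate $\brak t^{-(1+r)}$ from $\mathcal{E}_{\text{Int,Coord}}^{(\bar h)}$---this is where the $\eps^3\brak t^{-(3+r-2\ss)}$ remainder genuinely originates, and your identification of its source is correct. The commutators $C^{(n)}_{\overline H,\text{visc}},C^{(n)}_{\overline H,\text{trans}},C^{(n)}_{\overline H,q}$ are bounded in Lemmas \ref{visc:comm:barH}, \ref{tran:comm:barH}, \ref{C:n:q} directly (with techniques parallel to, but not a black-box invocation of, the companion paper); the inductive $\theta_n$ weights with the $\delta_{\text{Drop}}$ mechanism are what make $C^{(n)}_{\overline H,q}$ absorbable.
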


\begin{proposition}[$\alpha$ estimate] 
	\label{alph:esti}Under the assumptions of Proposition~\ref{gamm:esti}, the following estimates are valid for the $\alpha$-level coordinate functionals: 
	\begin{align} \label{albarH}
		\frac{1}{2}\frac{d}{dt} \mathcal{E}_{\overline{H}}^{(\alpha)}&+
		\cd_{\overline{H}}^{(\alpha)}+
		\mathcal{CK}_{\overline{H}}^{(\alpha)}
	\lesssim \frac{\eps^3}{\brak{t}^{3+r-2\ss}} +  \eps \sum_{\iota\in\{\alpha,\gamma\}} \paren{\cd^{(\iota)}+\cd_{H}^{(\iota)} 
	+\cd_{\overline H}^{(\iota)}}
	\n \\&\qquad  \qquad  \qquad  \qquad  \qquad  
+\eps\sum_{\iota\in\{\alpha,\gamma\}} \paren{\mathcal{CK}^{(\iota)}
	+\mathcal{CK}_{H}^{(\iota)}+\mathcal{CK}_{\overline H}^{(\iota)}}, 
		 \\ \label{alG}
		\frac{1}{2}\frac{d}{dt} \mathcal{E}_{G}^{(\alpha)} &+
		\cd_{G}^{(\alpha)}+
		\mathcal{CK}_{G}^{(\alpha)}
	\lesssim \frac{\eps^3}{\brak{t}^{3+r-2\ss}} +  \eps \sum_{\iota\in\{\alpha,\gamma\}} \paren{\cd^{(\iota)}+\cd_{H}^{(\iota)} 
		+\cd_{\overline H}^{(\iota)}}
\n	\\&\qquad  \qquad  \qquad  \qquad  \qquad  
	+\eps\sum_{\iota\in\{\alpha,\gamma\}} \paren{\mathcal{CK}^{(\iota)}
		+\mathcal{CK}_{H}^{(\iota)}+\mathcal{CK}_{\overline H}^{(\iota)}}, 
\\ \label{alH}
		\frac{1}{2}\frac{d}{dt} \mathcal{E}_{H}^{(\alpha)} &+
			\cd_{H}^{(\alpha)}+
			\mathcal{CK}_{H}^{(\alpha)}
		\lesssim \mathcal{CK}_{\overline{H}}^{(\alpha)} .
	\end{align}
for $r$ and $s$ defined as in \eqref{pgiL1}.
\end{proposition}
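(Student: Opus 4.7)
The proof will mirror the architecture of Proposition~\ref{gamm:esti} with every step upgraded by the extra $\sqrt{\nu}\partial_y$ built into the $\alpha$-norms. Writing $\mathring F_n := q^n\Gamma^n F$ for $F\in\{\overline H,G,H\}$, I would first commute $\Gamma^n$ and the co-normal weight $q^n$ through \eqref{eq:bar:H:main}--\eqref{eq:H:main} using the ICC decomposition of \cite{BHIW24b}; the linear commutator errors are exactly the $S^{(a,b,c)}_{m,n}$-type quantities estimated therein, and remain controlled after commuting one further $\sqrt{\nu}\partial_y$. Testing the resulting equation for $\overline H_n$ against $\partial_y\mathring{\overline H}_n$ weighted by $\nu^{1/2}\theta_n^2(n+1)^{2s-2}\bold{a}_{n+1}^2\brak{t}^{3+2s}\chi_n^2 e^W$ (and the analogous weights for $G$, $H$) produces $\tfrac12\partial_t \mathcal{E}^{(\alpha)}_{\overline H,n}+\mathcal{D}^{(\alpha)}_{\overline H,n}+\mathcal{CK}^{(\alpha;1,2)}_{\overline H,n}$ from the time derivative, the $2/t$ damping, and the $\brak{t}^{3+2s}$ pre-factor; the $\mathcal{CK}$ pieces come from $-\partial_t W$ via \eqref{wdot:est:a} and from $\bold{a}_{n+1}\dot{\bold{a}}_{n+1}$. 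Integration by parts in the diffusion gives the full $\mathcal{D}^{(\alpha)}_{\overline H,n}$; boundary traces at $y=\pm 1$ vanish since $\overline H$ satisfies Dirichlet, $G$ satisfies Neumann (which kills exactly the $\partial_y$-trace needed in the $\alpha$ test), and $\chi_n$ suppresses the trace for $n\geq 1$. The only delicate boundary-sensitive term is $\mathcal{E}^{(\alpha)}_{G,0}$, which uses a standard $\chi_0$-cutoff $H^1$ energy with the Neumann condition absorbing the remaining trace.

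Step two handles the nonlinear sources. For $\overline H$, the RHS $-t^{-1}(u_{\neq}\cdot\nabla\omega)_0$ becomes, after commuting $\partial_y\,q^n\Gamma^n$, a sum of quasiproducts $\mathcal{R}_{0,n}[\cdot,\cdot]+\mathcal{Q}_{0,n}[\cdot,\cdot]$ plus a quasilinear remainder of the form $\nabla^\perp\phi^{(U)}_{\neq}\cdot\nabla \partial_y\Gamma^n\omega$ with $U\in\{I,E\}$; these are exactly the objects bounded in Section~\ref{sec:q}--\ref{sec:Tri}, now at the zero-mode level. I would invoke \eqref{063023:1}--\eqref{063023:6} (with $\omega_{m,n}$ replaced by $\overline H_n$-weighted quantities) to produce the $\frac{\eps^3}{\brak{t}^{3+r-2s}}$ and $\eps\,\mathcal{D}$ / $\eps\,\mathcal{CK}$ terms of \eqref{albarH}; the $t^{-1}$ pre-factor combined with the inviscid damping of $u_{\neq}$ (which supplies $\brak{t}^{-1}$ on $u_1$ and $\brak{t}^{-2}$ on $u_2$) and the pre-factor $\brak{t}^{3+2s}\bold{a}_{n+1}^2$ balance to the announced decay exponent. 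The $G$-equation is structurally identical with $\nabla u_1$ in place of $\nabla\omega$, and an additional inviscid-damping factor on $u_1$ compensates the weaker $\langle t\rangle^{4-2K\eps}$ on the $n=0$ component.

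Step three treats \eqref{alH}. The equation $\partial_t H-\nu\partial_y^2 H=\overline H$ is linear, so testing $\partial_y\mathring H_n$ against $\nu^{1/2}\theta_n^2 \bold{a}_n^2\chi_n^2 e^W$, using $H|_{y=\pm 1}=0$ to eliminate the trace, produces $\tfrac12\partial_t\mathcal{E}^{(\alpha)}_{H,n}+\mathcal{D}^{(\alpha)}_{H,n}+\mathcal{CK}^{(\alpha)}_{H,n}$ plus a source $\langle\partial_y\mathring{\overline H}_n,\partial_y\mathring H_n\rangle$ (modulo a commutator from $q^n$ controllable by ICC); Cauchy--Schwartz against the $\bold{a}_n\dot{\bold{a}}_n$ weight and the shift identity $\bold{a}_n^2 \lesssim (n+1)^{2s-2}\bold{a}_{n+1}^2$ (the precise encoding of the half-derivative regularity gap) absorb this source into $\mathcal{CK}^{(\alpha;2)}_{\overline H,n}$, yielding \eqref{alH}.

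The principal obstacle is precisely this fractional regularity mismatch. Because $\overline H$ lives at Gevrey radius $\bold{a}_{n+1}$ with the fractional weight $(n+1)^{2s-2}$ while $\omega$ lives at $\bold{a}_n$, the nonlinear trilinear bound consumes one more derivative than the $\alpha$-norm of $\omega$ can directly supply, and the extra $\sqrt{\nu}\partial_y$ of the $\alpha$-level adds a second. Balancing the $\nu$-power, $\brak{t}$-decay, and shifted Gevrey index between the inputs of the quasiproduct bounds in Lemmas~\ref{Lemma:restless:bones}--\ref{lem:tr:1} and the target pre-factor $\brak{t}^{3+2s}\bold{a}_{n+1}^2$, while keeping the final time-weight $\brak{t}^{-3-r+2s}$ integrable and only $\eps$-small multiples of $\mathcal{D}^{(\iota)}, \mathcal{CK}^{(\iota)}$ on the right, is exactly the bookkeeping the ICC scheme is designed to execute; once the $\gamma$-template of Proposition~\ref{gamm:esti} is in hand, the $\alpha$-version amounts to inserting one controlled $\sqrt{\nu}\partial_y$ throughout and tracking this balance carefully.
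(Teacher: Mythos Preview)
Your overall architecture—apply one extra $\partial_y$ to the commuted $\overline H$ and $H$ equations and re-run the $\gamma$-template—is exactly what the paper does. Two points, however, diverge substantively from the paper's execution.

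\textbf{Nonlinear terms and the $G$ estimate.} The paper does \emph{not} route the nonlinear source through the quasiproduct machinery of Sections~\ref{sec:q}--\ref{sec:Tri}; the estimates \eqref{063023:1}--\eqref{063023:6} are tailored to the vorticity trilinear terms with weight $\bold a_{m,n}$, not to the shifted coordinate weight $(n+1)^{2s-2}\bold a_{n+1}^2\brak{t}^{3+2s^{-1}}$. Instead, the paper develops a separate toolkit in Section~6.8 (the $Y_{\alpha,\beta}$ product rules Lemmas~\ref{pro:0}--\ref{pro:1:-s}, the binomial bounds Lemma~\ref{comb:boun:vari:2} and Corollaries~\ref{comb:boun}--\ref{comb:boun:vari:1}, the $\Gamma_k$-Sobolev embedding \eqref{Sob:emb}) and carries out a direct HL/LH decomposition in Lemma~\ref{nonl:barH:1}, splitting $\phi=\phi^{(I)}+\phi^{(E)}$ and, for small $n$, further splitting the integral into interior and exterior regions. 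Your route could likely be made to work but would require re-proving the bilinear lemmas for the shifted weight. For $G$, the paper bypasses a full energy argument entirely: the relation $\partial_y G=\overline H$ gives $\mathcal E_G^{(\alpha)}-\theta_0^2\mathcal E_{G,0}^{(\alpha)}\lesssim (1+\eps)(\mathcal E_{\overline H}^{(\gamma)}+\mathcal E_{\overline H}^{(\alpha)})$ directly via the product rules, and the $n=0$ case (Lemma~\ref{G:0:esti:1}) is an $\overline H$ estimate in disguise since $\mathcal E_{G,0}^{(\alpha)}=\brak{t}^{4-2K\eps_1}\|\overline H\chi_0\|^2$.

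\textbf{The $H$ source.} Your shift identity $\bold a_n^2\lesssim (n+1)^{2s-2}\bold a_{n+1}^2$ is false: since $\bold a_n/\bold a_{n+1}\sim (n+1)^{s}\lambda^{-s}\varphi^{-1}\sim (n+1)^{s}\brak t$, the claimed bound would require $(n+1)\brak t\lesssim 1$. The paper's mechanism (see \eqref{H:for}) uses instead the \emph{time derivatives} of the Gevrey weights together with the $\brak{t}^{3+2s^{-1}}$ pre-factor on the $\overline H$ side: one shows $\bold a_n^2\lesssim (n+1)^{s-1}\sqrt{-\bold a_{n+1}\dot{\bold a}_{n+1}}\sqrt{-\bold a_n\dot{\bold a}_n}\,\brak{t}^{(3+2s^{-1})/2}$ (up to harmless constants), which after Cauchy--Schwarz yields $\sqrt{\mathcal{CK}_H^{(\alpha)}}\sqrt{\mathcal{CK}_{\overline H}^{(\alpha)}}$, and only then does Young's inequality absorb the $\mathcal{CK}_H^{(\alpha)}$ factor to the left, leaving $\mathcal{CK}_{\overline H}^{(\alpha)}$ on the right as in \eqref{alH}.
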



We first record a lemma proved in a companion paper~\cite{BHIW24b}. 
\begin{lemma} \label{lem:com:BA}The following commutator relations hold:
	\begin{subequations}\label{commutators}
		\begin{align}[\pa_{y},\Gamma_k^n ]=&-ikt\sum_{\ell=0}^{n-1}\binom{n}{\ell}\pav^{n-\ell}(v_y-1)\Gamma_k^{\ell}+\sum_{\ell=1}^{n}\binom{n}{\ell}\pav^{n-\ell+1}(v_y-1)\Gamma_k^{\ell}\label{cm_py_G_n}\\
			[\pa_{yy},\Gamma_k^n ]=&-\sum_{\ell=0}^{n-1}\binom{n}{\ell} \left(2\pav^{n-\ell-1}v''\ \pav^2 +\pav^{n-\ell}v'' \ \pav\right)\Gamma_k^{\ell};\label{cm_pyy_G_n}\\
			[\pa_y,q^{n}]f=&q'\lf(\frac{n}{q}q^n f\rg) ;\label{cm_py_qn}\\
			[\pa_{yy},q^{n}]f=&-(q')^2\frac{n^2+n}{n^2}\lf(\frac{n^2}{q^2} q^n f\rg)+2q'\lf(\frac{n }{q}\pa_y(q^n f)\rg)+q''\lf(\frac{n}{q}q^n f\rg);\label{cm_pyy_qn}\\
			\lf[\pav,q^n\rg] f=&\frac{q'}{v_y}\lf(\frac{n}{q}q^n f\rg);\label{cm_pv_qn}\\ \n
			\lf[ \pav^2,q^n\rg]f=&2\frac{q'}{(v_y)^2}\lf(\frac{n}{q}\pa_y\lf(q^n f\rg)\rg)+\frac{q''}{(v_y)^2}\lf(\frac{n}{q}q^n f\rg)-\frac{q'}{(v_y)^2}\lf(\vyi\pa_y v_y\rg)\lf(\frac{n}{q}q^n f\rg)\\ 
			&-\frac{(q')^2}{(v_y)^2}\frac{n^2+n}{n^2}\lf(\frac{n^2}{q^2}q^n f\rg);\label{cm_pvv_qn}\\
			\lf[\pav^2,q\rg]f=&2\frac{q'}{v_y^2}  \pa_y  f  +\frac{q''}{v_y^2} f  -\frac{q'}{ v_y ^3}v _{yy}   f .\label{cm_pvv_q}
		\end{align} 
	\end{subequations}
	Here we recall the notation $\pav=v_y^{-1}\pa_y.$
\end{lemma}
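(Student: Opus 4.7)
The plan is to prove each identity by direct computation when the multiplicity is $1$ and by induction on $n$ otherwise, using only the definitions $\Gamma_k = \pav + ikt$ and $\pav = v_y^{-1}\pa_y$, the Leibniz rule for commutators $[A,BC] = [A,B]C + B[A,C]$, and the Pascal identity $\binom{n}{\ell-1} + \binom{n}{\ell} = \binom{n+1}{\ell}$.

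For \eqref{cm_py_G_n}, the base case $n=1$ follows from $\pa_y(v_y^{-1}\pa_y f) = v_y^{-1}\pa_y^2 f - v_y^{-2}v_{yy}\pa_y f$ together with $[\pa_y, ikt]=0$; this rewrites as $[\pa_y,\Gamma_k]$ being multiplication by $\pav(v_y-1)$ composed with $\Gamma_k - ikt$ (up to an overall sign set by the commutator convention). The inductive step writes $\Gamma_k^{n+1} = \Gamma_k^{n}\,\Gamma_k$, applies $[\pa_y,\cdot]$ via Leibniz, substitutes the inductive hypothesis for the $\Gamma_k^n$ factor and the base case for the $\Gamma_k$ factor, and then reindexes and collapses the resulting sums by Pascal. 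The identity \eqref{cm_pyy_G_n} is then obtained by writing $\pa_{yy} = \pa_y\cdot\pa_y$ and iterating \eqref{cm_py_G_n}, with one more Pascal collapse to assemble the terms containing $\pav^{n-\ell-1} v''\,\pav^2$ and $\pav^{n-\ell}v''\,\pav$.

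The five commutators \eqref{cm_py_qn}--\eqref{cm_pvv_qn} involving $q^n$ are purely algebraic since $q=q(y)$ is a smooth multiplier. I would first verify the $n=1$ identities (with \eqref{cm_pvv_q} itself serving as the base case for \eqref{cm_pvv_qn}, proved by one expansion of $\pav^2(qf)$), and then induct on $n$, using the product rule to produce powers of $q'$ and $q''$, together with the algebraic rewrites
\[
 n q^{n-1} q' \;=\; q'\cdot \frac{n}{q}\cdot q^{n},
\qquad
 n(n-1)q^{n-2}(q')^2 \;=\; (q')^2\,\frac{n^2+n}{n^2}\,\frac{n^2}{q^2}\,q^{n},
\]
in order to present the right-hand sides in precisely the form used in the statement. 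This arrangement is important because, when these operators are later applied to the sequence $\omega_{m,n} = \pa_x^m \Gamma^n \omega$, the factors $(n/q)$ and $(n^2/q^2)$ are exactly the objects controlled by the ICC-method and by the $J^{(a,b,c)}_{m,n}$ quantities from \eqref{J_intro}.

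The main obstacle is the combinatorial bookkeeping in the inductive step for \eqref{cm_py_G_n} and especially \eqref{cm_pyy_G_n}: applying $[\pa_y,\cdot]$ to the inductive hypothesis produces four families of sums, with different shifts in the exponent of $\pav$, different ranges for the summation index $\ell$, and different placements of the boundary terms. These have to be re-indexed, their boundary terms separated out, and their interior parts recombined via Pascal's identity so as to match the two-term right-hand side in the statement. No analytic content beyond this is required; the work is purely algebraic, but it is the kind of computation where a single off-by-one error propagates through everything downstream, so the bookkeeping must be handled carefully.
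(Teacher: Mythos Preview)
The paper does not give its own proof of this lemma; it simply records the identities and cites the companion paper \cite{BHIW24b} for the proof. So there is no paper proof to compare against. Your plan---direct computation for the low-$n$ cases and induction via the Leibniz rule $[A,BC]=[A,B]C+B[A,C]$ together with Pascal's identity---is the natural (and essentially the only) way to establish such operator commutator identities, and is presumably what the companion paper does as well.

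Two small corrections to your sketch. First, your ``algebraic rewrite'' for \eqref{cm_pyy_qn} is not quite right: you wrote $n(n-1)q^{n-2}(q')^2 = (q')^2\frac{n^2+n}{n^2}\frac{n^2}{q^2}q^n$, but the right-hand side equals $n(n+1)(q')^2 q^{n-2}$, not $n(n-1)(q')^2 q^{n-2}$. The formula in the lemma is nevertheless correct, because the second term $2q'\frac{n}{q}\pa_y(q^n f)$ contributes an additional $2n^2(q')^2 q^{n-2}f$ when $\pa_y$ lands on $q^n$, and $-n(n+1)+2n^2 = n(n-1)$. So the three displayed terms on the right of \eqref{cm_pyy_qn} are not an independent decomposition of the three terms you get from expanding $\pa_y^2(q^n f)$; there is a deliberate overcount-and-subtract built into the presentation (precisely so that the factors $n/q$ and $n^2/q^2$ appear cleanly).

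Second, deriving \eqref{cm_pyy_G_n} by writing $\pa_{yy}=\pa_y\cdot\pa_y$ and iterating \eqref{cm_py_G_n} is more painful than it looks: applying $\pa_y$ to the right-hand side of \eqref{cm_py_G_n} forces you to commute $\pa_y$ past both the multipliers $\pav^{n-\ell}(v_y-1)$ and the operators $\Gamma_k^\ell$, re-invoking \eqref{cm_py_G_n} recursively. It is cleaner to prove \eqref{cm_pyy_G_n} by its own induction on $n$, using $[\pa_{yy},\Gamma_k^{n+1}]=[\pa_{yy},\Gamma_k^n]\Gamma_k+\Gamma_k^n[\pa_{yy},\Gamma_k]$ and the base case $n=1$, which you can check directly.
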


\subsection{$(\overline{H}, \gamma)$ Estimates}
\begin{subequations}
\begin{align} \label{eq:bar:H:L}
&\pa_t \overline{H} + \frac2t \overline{H}  - \nu \pa_y^2 \overline{H}= -\frac{1}{t} \left( u_{\neq} \cdot \nabla \omega \right)_0, \\
&\overline{H}(t, \pm 1) =  0, \\
&\overline{H}|_{t =0} = 0  
\end{align}
\end{subequations}
We commute $q^n \Gamma_0^n$ to \eqref{eq:bar:H:L} to obtain 
\begin{subequations}
	\begin{align} \label{eq:bar:H:L:n}
		&\pa_t \bhqn + \frac2t \bhqn  - \nu \pa_y^2 \bhqn = -  \widetilde{\mathcal{C}^{(n)}_{trans}} -  \widetilde{\mathcal{C}^{(n)}_{visc}} -  \widetilde{\mathcal{C}^{(n)}_{q}}   - q^n \Gamma_0^n \frac{1}{t}       \paren{  \nabla^{\perp} \stf_{\neq 0} \cdot \nabla \omega }_0, \\
		&\bhqn (t, \pm 1) =  0,
	\end{align}
\end{subequations}
where
\begin{align} \label{Cnta}
 \widetilde{\mathcal{C}^{(n)}_{trans}} := & q^n\sum_{m = 1}^n \binom{n}{m} \Gamma_0^m G \Gamma_0^{n-m+1} \overline{H}, \\ \label{Cnva}
 \widetilde{\mathcal{C}^{(n)}_{visc}} := & \nu  q^n\sum_{m = 1}^n \binom{n}{m}  \Gamma_0^m v_y^2 \Gamma_0^{2 + n-m} \overline{H}, \\ \label{tildeCnq}
 \widetilde{C^{(n)}_q} := & - \nu |q'|^2 \frac{n(n-1)}{q^2}  \bhqn  - 2 \nu q'  \frac{n}{q} \pa_y \bhqn - \nu q'' \frac{n}{q} \bhqn .
\end{align}


\begin{proof}[Proof of~\eqref{apbarH}:] 
We compute the (complex) inner product of \eqref{eq:bar:H:L:n} against $\brak{t}^{3+2\ss} \bold{a}_{n+1}^2\bhqn  e^{W} \chi_n^2$ to get 
\begin{align*}
 &   \frac{(n+1)^{2-2\sss}}{2}\frac{d}{dt}  \mathcal{E}_{\overline{H},n}^2- \bold{a}_{n+1}  \frac{d}{dt}\bold{a}_{n+1} \brak{t}^{3+2\ss}
   \| \bhqn  e^{W/2} \chi_{n} \|_{L^2}^2
+ \bold{a}_{n+1}^2  \brak{t}^{3+2\ss} \| \bhqn  \sqrt{- W_t} e^{W/2} \chi_n \|_{L^2}^2 
\\&\qquad\qquad+ \bold{a}_{n+1}^2  \brak{t}^{3+2\ss} \nu \|  \pa_y \bhqn  e^{W/2} \chi_n \|_{L^2}^2 
\\
&\qquad = 
-\bold{a}_{n+1}^2  \brak{t}^{3+2\ss}  \nu \langle \partial_y\bhqn ,
 \bhqn   e^{W}  \pa_y \{\chi_n^2\}  \rangle 
- \bold{a}_{n+1}^2   \brak{t}^{3+2\ss} \nu \langle \partial_y \bhqn , 
\bhqn  \pa_y \{ e^{W}  \} \chi_n^2  \rangle 
\\&\qquad\qquad  +
 \paren{(3/2+\ss)\brak{t}^{1+2\ss}t - \frac{2}{t}\brak{t}^{3+2\ss}}  \bold{a}_{n+1}^2 
\| \bhqn  e^{W/2} \chi_{n} \|_{L^2}^2
\\&\qquad\qquad- \brak{t}^{3+2\ss} \bold{a}_{n+1}^2\Re \brak{\widetilde{\mathcal{C}^{(n)}_{trans}} ,
	 \bhqn  e^{W} \chi_n^2}
-  \brak{t}^{3+2\ss} \bold{a}_{n+1}^2\Re\brak{  \widetilde{\mathcal{C}^{(n)}_{visc}},
	 \bhqn  e^{W} \chi_n^2}
\\&\qquad\qquad- \brak{t}^{3+2\ss} \bold{a}_{n+1}^2\Re \brak{ \widetilde{\mathcal{C}^{(n)}_{q}}, 
	\bhqn  e^{W} \chi_n^2}
\\&\qquad\qquad
- \brak{t}^{3+2\ss} t^{-1} \bold{a}_{n+1}^2 
\Re \brak{    q^n \Gamma_0^n \paren{  \nabla^{\perp} \stf_{\neq 0} \cdot \nabla \omega }_0, 
	\bhqn  e^{W} \chi_n^2}
\\
&\qquad =  -\bold{a}_{n+1}^2  \brak{t}^{3+2\ss}  \nu \langle \partial_y\bhqn ,
\bhqn   e^{W}  \pa_y \{\chi_n^2\}  \rangle 
- \bold{a}_{n+1}^2   \brak{t}^{3+2\ss} \nu \langle \partial_y \bhqn , 
\bhqn  \pa_y \{ e^{W}  \} \chi_n^2  \rangle 
\\&\qquad\qquad  +
\paren{(3/2+\ss)\brak{t}^{1+2\ss}t - \frac{2}{t}\brak{t}^{3+2\ss}} \bold{a}_{n+1}^2 \| \bhqn  e^{W/2} \chi_{n} \|_{L^2}^2
\\&\qquad\qquad- C^{(n)}_{\overline{H},trans} - C^{(n)}_{\overline{H},visc} - C^{(n)}_{\overline{H},q} - N^{(n)}_{\overline{H}}
.
\end{align*}
Next we give lower bounds for the CK terms. Noting
\begin{align*}
		\partial_t W(t, y)=-\frac{(|y|-1/4-L\ep\arctan (t))_+^2}{K\nu(1+t)^2} 
		- \frac{(|y|-1/4-L\ep\arctan (t))_+^2}{K\nu(1+t)}\paren{L\eps \frac{1}{1+t^2}},
\end{align*}
by~\eqref{CK1:defi}, we obtain
\begin{align}
	\label{CK:1}
	\mathcal{CK}^{(1)}_{\overline{H}, n}(t) ^2 \gtrsim  \nu^{-1/3+2\delta}\bold{a}_{n+1}^2 \brak{t}^{3+2\ss} \| \bhqn   e^{W/2} \chi_n \|_{L^2}^2
\end{align}
for $t\lesssim \nu^{-1/3-\delta}$ and $n\ge 1$. In view of~\eqref{Bweight}, \eqref{varphi}, and~\eqref{CK2:defi}, it is easy to check for $n\ge 0$
\begin{align}
	\label{CK:2}
	\mathcal{CK}^{(2)}_{\overline{H}, n}(t)^2 \gtrsim  \frac{n+1}{t} \bold{a}_{n+1}^2 \brak{t}^{3+2\ss}
	\| \bhqn  e^{W/2} \chi_n \langle t \rangle^2\|_{L^2}^2.
\end{align}
Then the desired result is a consequence of lemmas~\ref{nonl:barH}--\ref{easy:term}.
\end{proof}
\subsubsection{Nonlinear contribution}
We first consider the nonlinear estimate. 
\begin{lemma}
	\label{nonl:barH}
\begin{align*}
		\sum_{n}& \theta_{n}^2 (n+1)^{2\sss-2}\abs{N^{(n)}_{\overline{H}}} 
		\\&\lesssim  \paren{\sum_{j=1, 2}\sqrt{\mathcal{J}_{ell}^{(i)}}+ \sqrt{\mathcal{E}_{ell}^{(I, out)}}} \paren{\mathcal{E}_{\overline H}^{(\gamma)} \sqrt{\mathcal{E}_{H}^{(\alpha)}}\brak{t}^{-3-2s^{-1}} + \cd^{\gm}+\cd_{H}^{\gm} 
			+\cd_{\overline H}^{\gm} + \mathcal{CK}^{\gm}
			+\mathcal{CK}_{H}^{\gm}+\mathcal{CK}_{\overline H}^{\gm}}
			\\&\quad+
			\sqrt{\mathcal{E}_{ell}^{(I, full)} + \brak{t}^2 \mathcal{J}_{ell}^{(1)}} \sqrt{\mathcal{E_\mathrm{Int}}} \brak{t}^{-1/2-s^{-1}}\sqrt{\mathcal{E}_{\overline H}^{(\gamma)}}  
			\quad+ 
			\frac{\brak{t}^{2\ss}}{\brak{t}^{1+r}} \sqrt{\mathcal{E}_{ell}^{(I, full)}}  \sqrt{\mathcal{E}_{cloud}} \sqrt{\mathcal{E}^{(\overline h)}_{\mathrm{Int, Coord}}}
			\\&
			\quad+
			\nu^{100}\brak{t}^{2\ss} \paren{\sqrt{\mathcal{E}_{ell}^{(I, full)}} + \sqrt{\brak{t}^2 \mathcal{J}_{ell}^{(1)}}} \mathcal{E}^{(\gamma)} \sqrt{\mathcal{E}_{\overline H}^{(\gamma)}}
			\\&
			\lesssim \frac{\eps^3}{\brak{t}^{3+r-2\ss}} +  \eps \paren{\cd^{\gm}+\cd_{H}^{\gm} 
		+\cd_{\overline H}^{\gm}}
	+\eps\paren{\mathcal{CK}^{\gm}
		+\mathcal{CK}_{H}^{\gm}+\mathcal{CK}_{\overline H}^{\gm}}.
\end{align*}
\end{lemma}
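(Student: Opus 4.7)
The plan is to decompose the nonlinearity in two stages: first along the stream function decomposition $\phi_{\neq 0} = \phi^{(I)}_{\neq 0} + \phi^{(E)}_{\neq 0}$ from \eqref{ell:I}--\eqref{ell:E}, and second, within each piece, along the quasiproduct decomposition of Section \ref{sbg:1}. Concretely, using identity \eqref{id:gamma:re}, I would rewrite
\begin{align*}
q^n\Gamma_0^n\bigl(\nabla^\perp\phi^{(U)}_{\neq 0}\cdot\nabla\omega\bigr)_0
= q^n\bigl[\mathcal{R}_{0,n}+\mathcal{Q}_{0,n}\bigr]\bigl[\{\phi^{(U)}_{\neq 0, m',n'}\},\{\omega_{m',n'}\}\bigr]
+ q^n\,v_y\slashed\nabla^\perp\phi^{(U)}_{\neq 0}\cdot\slashed\nabla\,\Gamma_0^n\omega,
\end{align*}
for $U\in\{I,E\}$, which matches the framework of Proposition \ref{pro:tri:in} except that (i) the outer test function is $\bhqn$ rather than $\omega_{m,n}$, (ii) the weight is $\bold{a}_{n+1}$ (accounting for the shedding of one $\varphi$ factor and the half-derivative loss tracked by $(n+1)^{2s-2}$), and (iii) only the $m=0$ row of the $(m,n)$ sum appears. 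The prefactor $t^{-1}\brak{t}^{3+2\ss}\sim\brak{t}^{2+2\ss}$ is absorbed into the $\bhqn$ factor using the weight $\brak{t}^{3/2+\ss}$ in the definition of $\mathcal{E}_{\overline H,n}^{\gm}$.

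For the interior streamfunction, I would mirror the proof of \eqref{jb:43:a}--\eqref{jb:43:c}. By Cauchy--Schwarz in $n$, pairing against $\bhqn e^W\chi_n^2$ creates a factor $\brak{t}^{3/2+\ss}\sqrt{\mathcal{E}_{\overline H}^{\gm}}$ or, better, $\sqrt{\mathcal{CK}_{\overline H}^{\gm}}$ via \eqref{CK:1}--\eqref{CK:2}, since one of $\bhqn$'s CK norms provides either $\nu^{1/3-\delta}$ smallness or the $(n+1)/\brak{t}$ factor needed for integrability. The $\mathcal{R}_{0,n}$-type contribution is estimated via Lemma \ref{Lemma:restless:bones} (using that $\psi^{(I)}_{\neq 0}|_{y=\pm 1}=0$), while the $\mathcal{Q}_{0,n}$-type uses Lemma \ref{lemma:Q:mn:1}; the quasilinear $\slashed\nabla^\perp\phi^{(I)}\cdot\slashed\nabla\Gamma_0^n\omega$ term is split at $n\lesssim 10$ versus $n\gtrsim 10$ as in \eqref{jb:43:c}, the former being absorbed into $\sqrt{\mathcal{E}^{(I,full)}_{ell}}\sqrt{\mathcal{E}_{\mathrm{cloud}}\mathcal{CK}_{\mathrm{Int,Coord}}^{(\bar h)}}$ and the latter into $\sqrt{\brak{t}^2\mathcal{J}_{ell}^{(2)}}\sqrt{\mathcal{D}^{\gm}\mathcal{CK}_{\overline H}^{\gm}}$ by inserting $d(y)/(\sqrt\nu\brak{t})$. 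The elliptic outputs are then replaced using \eqref{eell:out}--\eqref{eell:sob}.

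For the exterior streamfunction, I would follow the proof of \eqref{esp:2}: apply Cauchy--Schwarz with a $(n+1)^{-s}$ shift to move one derivative off $\bhqn$ onto the quadratic term, then use the exterior quasiproduct bounds \eqref{63023:1} and \eqref{ewing:2} in conjunction with $\mathcal{J}_{ell}^{(2)}$ and the $\nu^{-2}\brak{t}^2$ loss; crucially the extra $\brak{t}^{2+2\ss}$ prefactor is paid for by integrating the dissipation $\mathcal{D}^{(\gamma)}$ and using $t\lesssim\nu^{-1/3-\zeta}$ so that $\brak{t}^{2+2\ss}/\nu^2\cdot e^{-\nu^{-1/9}}\lesssim \eps$ after invoking the $\mathcal{F}_{ell}^{(E)}$-style smallness through \eqref{jell:1}--\eqref{jell:3}. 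Finally, for the zero-mode projection, I would exploit the identity $(\nabla^\perp\phi_{\neq 0}\cdot\nabla\omega)_0 = \nabla_y(u_{2,\neq}\omega_{\neq})_0$ when an $L^\infty$ norm on $\overline H$ is needed, which is useful for the $m=0$, $n$ small entries that generate the $\eps^3/\brak{t}^{3+r-2\ss}$ source term from the a priori smallness of $u_{2,\neq}$ in pure inviscid damping.

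The main obstacle is the fractional regularity mismatch: $\overline H$ lives at Gevrey radius $\bold{a}_{n+1}=B_{0,n+1}\varphi^{n+2}$ with the anisotropic fractional shift $(n+1)^{2s-2}$, whereas the commutators produce quantities at radius $\bold{a}_n$ with Gevrey weight shift $(n+1)^{-s}$, so the derivative count and the combinatorial multipliers $\bold{X}_{m',n',m,n}$ of Lemmas \ref{Lemma:restless:bones}--\ref{lemma:Q:mn:1} have to be re-examined in the shifted indices. Combined with the borderline temporal decay --- $\brak{t}^{2+2\ss}$ must be paid by $\brak{t}^{-1-r}$ worth of CK decay from $\mathcal{CK}_{\overline H}^{\gm}$ and the $1/\brak{t}^{1+\delta'}$ from $\dot\lambda$ --- this is where the bookkeeping has to be done carefully to land exactly at the decay rate $\brak{t}^{-3-r+2\ss}$ in the first source term on the right-hand side. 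All other sub-terms should follow by pattern-matching against Proposition \ref{pro:tri:in} and applying Young's inequality to turn products of square roots into the absorbable combinations $\eps(\cd^{\gm}+\mathcal{CK}^{\gm}+\cdots)$.
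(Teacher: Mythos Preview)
Your high-level plan (split $\phi_{\neq 0}=\phi^{(I)}+\phi^{(E)}$, paraproduct HL/LH decomposition, treat low $n$ separately) matches the paper's, but the paper takes a more direct route and does \emph{not} invoke the abstract quasiproduct machinery of Section~\ref{sec:q}. Instead, for $n\geq 2$ the paper expands the Leibniz rule $\Gamma_0^n(\nabla^\perp\phi_k\cdot\nabla\omega_k)_0$ mode-by-mode in $k$, and then handles the $\partial_y\phi^{(E)}_k$ factors via the explicit commutator formula $[\partial_y,q^{n-m}\Gamma_k^{n-m}]$ from Lemma~\ref{lem:com:BA}, which generates the additional $H$-dependent terms $\bd^{j}(v_y-1)\cdot q^\ell\Gamma_k^\ell\phi^{(E)}_k$. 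These are estimated with the Sobolev embedding of Lemma~\eqref{Sob:emb} together with the section-specific combinatorial bounds of Corollary~\ref{comb:boun:vari:1} and Lemma~\ref{comb:boun:vari:2}, crucially exploiting that $\chi_n$ forces localization to $\supp\chi_1$ where $e^{W/2}\gtrsim\nu^{-100}$ (inequality~\eqref{weig:huge}). For $\phi^{(I)}$ the paper rewrites $\nabla^\perp\phi^{(I)}\cdot\nabla\omega$ via $\Gamma_k$ rather than $\partial_y$ (so no commutator arises) and uses the fast decay in~\eqref{eell:out}. Your plan to pipe everything through Lemmas~\ref{Lemma:restless:bones} and~\ref{lemma:Q:mn:1} is not wrong in spirit, but those lemmas are stated for the weights $\bold a_{m,n}q^n e^W\chi_{n+m}$, not for the shifted $(n{+}1)^{2s-2}\bold a_{n+1}^2\brak{t}^{3+2\ss}e^W\chi_n^2$ structure; carrying out the adaptation you flag as ``the main obstacle'' amounts to reproving the combinatorics, which is precisely what the paper does directly via its dedicated lemmas.

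The case $n\in\{0,1\}$ is where your sketch is furthest from the paper. The paper does \emph{not} use the $\phi^{(I)}/\phi^{(E)}$ decomposition there; instead it splits spatially via $\chi_2^2$ versus $1-\chi_2^2$. On $\supp(1-\chi_2^2)$ there is no localization gain from $e^W$, so the paper uses the full streamfunction $\phi_k$ (not just $\phi^{(E)}_k$), estimates $\|\Gamma_k\phi_k(1-\chi_3)\|_{L^\infty}$ via the Sobolev embedding~\eqref{Sob:emb}, and invokes the inviscid damping bound~\eqref{eell:sob} together with $\mathcal J_{ell}^{(1)}$; this produces the pairing $\sqrt{\mathcal E_{ell}^{(I,\mathrm{full})}}\sqrt{\mathcal E_{\mathrm{cloud}}}\sqrt{\mathcal E^{(\bar h)}_{\mathrm{Int,Coord}}}$ and is the origin of the $\eps^3/\brak{t}^{3+r-2\ss}$ source. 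Your suggestion to use the divergence-form identity $(\nabla^\perp\phi_{\neq 0}\cdot\nabla\omega)_0=\partial_y(u_{2,\neq}\omega_{\neq})_0$ is not what the paper does and would not obviously yield the correct $\brak{t}^{-(1+r)}$ decay from $\mathcal E^{(\bar h)}_{\mathrm{Int,Coord}}$; the paper instead keeps the bilinear structure and pays the $r$-loss through the interior coordinate norm directly.
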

\begin{proof} 
{\bf (1). Case $n\ge2 $:} The most non-trivial case is when $n\ge 2$ which we consider first.
We recall that 
\begin{align*}
	&N^{(n)}_{\overline{H}} 
	= 
    \Big \langle q^n\Gamma_{0}^n \frac{1}{t} \paren{  \nabla^{\perp} \stf_{\neq 0} \cdot \nabla \omega }_0, \brak{t}^{3+2\ss}  \bold{a}_{n+1}^2  \bhqn  e^{W} \chi_n^2 \Big \rangle .
\end{align*}
As in~\eqref{ell:E} and~\eqref{ell:I}, we decompose $\stf_k = \stf^{(I)}_k + \stf^{(E)}_k$, and hence we get the corresponding decomposition of $N^{(n)}_{\overline{H}}$ as
\begin{align*}
N^{(n)}_{\overline{H}} & = 	 N^E +  N^{I} .
\end{align*}
{\bf Exterior part:} We start with the exterior part. 
Applying the product rule, 
we  arrive at
\begin{align*}
  N^E &
  =- \frac{1}{t} \sum_{k} \Big \langle  
    q^n\Gamma_{0}^n \paren{ \partial_y \stf_{k}^{(E)} \overline{ ik \omega_k } }, \brak{t}^{3+2\ss}  \bold{a}_{n+1}^2  \bhqn  e^{W} \chi_n^2 \Big \rangle 
  \\&\quad
  +\frac{1}{t} \sum_{k} \Big \langle   q^n\Gamma_{0}^n \paren{ik \stf_{k}^{(E)} \partial_y \overline{\omega_k}},  \brak{t}^{3+2\ss}  \bold{a}_{n+1}^2  \bhqn  e^{W} \chi_n^2 \Big \rangle 
  \\&
  = \frac{1}{t} \sum_{k}\sum_{m=0}^{n}\binom{n}{m}\Big \langle  q^{n}\Gamma_{k}^{n-m}\partial_y \stf_{k}^{(E)} \Gamma_{k}^{m} ik \overline{\omega_k } , \brak{t}^{3+2\ss}  \bold{a}_{n+1}^2  \bhqn  e^{W} \chi_n^2 \Big \rangle 
    \\&\quad
  +\frac{1}{t} \sum_{k}\sum_{m=0}^{n}\binom{n}{m}\Big \langle  q^{n}\Gamma_{k}^{n-m} ik \stf_{k}^{(E)} \Gamma_{k}^{m} \partial_y \overline{\omega_k} , \brak{t}^{3+2\ss}  \bold{a}_{n+1}^2  \bhqn  e^{W} \chi_n^2 \Big \rangle 
  \\&
  =  N_{1}^E + N_{2}^E.
\end{align*}
We first look at the term $N_{1}^E$ and divide it into two pieces:
\begin{align*}
	N_{1}^{E} &=  \brak{t}^{3+2\ss} t^{-1} \bold{a}_{n+1}^2 \sum_{k}\paren{\sum_{m\le n/2} + \sum_{m> n/2}^{n}}
	\binom{n}{m} \Big \langle  q^{n}\Gamma_{k}^{n-m}\partial_y \stf_{k}^{(E)} \Gamma_{k}^{m} ik \overline{\omega_k } ,   \bhqn  e^{W}\chi_n^2 \Big \rangle 
	\\& = N_{1} ^{HL, E} + N_{1} ^{LH, E}.
\end{align*}
We first consider the $LH$ piece as
\begin{align*}
\abs{	N_{1} ^{LH, E}} \lesssim
 \brak{t}^{3+2\ss} t^{-1} \bold{a}_{n+1}^2 \sum_{k} \sum_{m> n/2}^{n}\binom{n}{m} \norm{q^{n-m}\Gamma_{k}^{n-m}\partial_y \stf_{k}^{(E)}\chi_{n-1}}_{L^\infty}
 \enorm{ q^{m}\Gamma_{k}^{m} ik \overline{\omega_k }e^{W} \chi_n}  \enorm{\bhqn  \chi_n}.
\end{align*}
By~\eqref{cm_py_G_n}, we have
\begin{align*}
	[\pa_{y},\Gamma_k^{n-m}]
	&=
	-ikt\sum_{\ell=0}^{n-m-1}\binom{n-m}{\ell}\lf(\vyi\pa_y\rg)^{n-m-\ell}(v_y-1)\Gamma_k^{\ell}
	\\&\qquad
	+\sum_{\ell=1}^{n}\binom{n-m}{\ell}\lf(\vyi\pa_y\rg)^{n-m-\ell+1}(v_y-1)\Gamma_k^{\ell}.
\end{align*}
Using the formula
\begin{align*}
	[T_1,T_2T_3] = [T_1,T_2]T_3 + T_2[T_1,T_3]
\end{align*}
where $T_1,T_2,T_3$ are linear operators, we further obtain
\begin{align*}
	[\pa_{y},q^{n-m}\Gamma_{k}^{n-m}]& = 	[\pa_{y},q^{n-m}\Gamma_k^{n-m}] 
	= q^{n-m}[\pa_{y},\Gamma_k^{n-m}] +	[\pa_{y},q^{n-m}]\Gamma_k^{n-m} 
	\\&=
	-ikt\sum_{\ell=0}^{n-m-1}\binom{n-m}{\ell}q^{n-m-\ell}\bd^{n-m-\ell}(v_y-1)q^{\ell}\Gamma_k^{\ell}
		\\&\quad
		+\sum_{\ell=1}^{n}\binom{n-m}{\ell}q^{n-m-\ell+1}\bd^{n-m-\ell+1}(v_y-1)q^{\ell-1}\Gamma_k^{\ell}
	+
	q'(n-m)q^{n-m-1}\Gamma_k^{n-m}.
\end{align*}
Note that for the LH piece $m\ge 2$ holds and thus by Sobolev embedding we deduce 
\begin{align}
	\label{com:y:D}
	&\norm{q^{n-m}\Gamma_{k}^{n-m}\partial_y \stf_{k}^{(E)}\chi_{n-1}}_{L^\infty}
	\nn &\qquad\lesssim
	\norm{\partial_y \paren{q^{n-m}\Gamma_{k}^{n-m} \stf_{k}^{(E)}}\chi_{n-m+1}}_{L^\infty}
	\nn &\qquad\quad+ 
	kt\sum_{\ell=0}^{n-m-1}\binom{n-m}{\ell}\norm{q^{n-m-\ell}\bd^{n-m-\ell}(v_y-1)q^{\ell}\Gamma_k^{\ell}\stf_{k}^{(E)} \chi_{n-m+1}}_{L^\infty}
	\nn&\qquad\quad
	+\sum_{\ell=1}^{n}\binom{n-m}{\ell}\norm{\bd^{n-m-\ell+1}(v_y-1)q^{\ell-1}\Gamma_k^{\ell}\stf_{k}^{(E)}\chi_{n-m+1}}_{L^\infty}
	\nn&\qquad\quad
	+ 	
	(n-m)\norm{q^{n-m-1}\Gamma_k^{n-m}\stf_{k}^{(E)}\chi_{n-m+1}}_{L^\infty}
	\nn
&\qquad	\lesssim
	\norm{\partial_y \paren{q^{n-m}\Gamma_{k}^{n-m} \stf_{k}^{(E)}}\chi_{n-m+1}}_{L^2} ^{1/2}
	\Big(	\norm{\partial_y^2\paren{q^{n-m}\Gamma_{k}^{n-m} \stf_{k}^{(E)}}\chi_{n-m+1}}_{L^2}
		\nn&\qquad\quad
	+ \norm{\partial_y \paren{q^{n-m}\Gamma_{k}^{n-m} \stf_{k}^{(E)}}\partial_y\chi_{n-m+1}}_{L^2}\Big)^{1/2}
	\nn&\qquad\quad
	+ 
	kt\sum_{\ell=0}^{n-m-1}\binom{n-m}{\ell}\norm{q^{n-m-\ell}\bd^{n-m-\ell}(v_y-1)q^{\ell}\Gamma_k^{\ell}\stf_{k}^{(E)}\chi_{n-m+1}}_{L^\infty}
\nn &\qquad\quad
	+\sum_{\ell=1}^{n}\binom{n-m}{\ell}\norm{\bd^{n-m-\ell+1}(v_y-1)q^{\ell-1}\Gamma_k^{\ell}\stf_{k}^{(E)}\chi_{n-m+1}}_{L^\infty}
	\nn&\qquad\quad
	+
	(n-m)\norm{q^{n-m-1}\Gamma_k^{n-m}\stf_{k}^{(E)}\chi_{n-m+1}}_{L^\infty}
		\nn &\qquad =
		V^{LH, E}_1+V^{LH, E}_2+V^{LH, E}_3+V^{LH, E}_4.
\end{align}
We define
\begin{align}
	\label{LH:vi}
	N_{1,vi} ^{LH, E}:=\brak{t}^{3+2\ss} t^{-1} \bold{a}_{n+1}^2 \sum_{k} \sum_{m\ge n/2}^{n} V^{LH}_i
	\enorm{ q^{m}\Gamma_{k}^{m} ik \overline{\omega_k }e^{W} \chi_n}  \enorm{\bhqn  \chi_n}
\end{align}
for $i=1,2,3,4$.
Noting 
\begin{align*}
	V^{LH, E}_1 &\lesssim
	\norm{\partial_y \paren{q^{n-m}\Gamma_{k}^{n-m} \stf_{k}^{(E)}}\chi_{n-m+1}}_{L^2} ^{1/2}
	\norm{\partial_y^2\paren{q^{n-m}\Gamma_{k}^{n-m} \stf_{k}^{(E)}}\chi_{n-m+1}}_{L^2}^{1/2}
	\\&\quad
		+(n-m)^{(1+\sigma)/2} \norm{\partial_y \paren{q^{n-m}\Gamma_{k}^{n-m} \stf_{k}^{(E)}}\chi_{n-m}}_{L^2},
\end{align*}
we arrive at
\begin{align*}
	N_{1,v1} ^{LH, E}&:=\brak{t}^{3+2\ss} t^{-1} \bold{a}_{n+1}^2 \sum_{k} \sum_{m\ge n/2}^{n}
	\binom{n}{m} V^{LH, E}_1
	\enorm{ q^{m}\Gamma_{k}^{m} ik \overline{\omega_k }e^{W} \chi_n}  \enorm{\bhqn  \chi_n}
	\\&\lesssim \brak{t}^{3+2\ss} t^{-1} \bold{a}_{n+1}^2 \sum_{k} \sum_{m\ge n/2}^{n}
     \binom{n}{m}\norm{\partial_y \paren{q^{n-m}\Gamma_{k}^{n-m} \stf_{k}^{(E)}} \chi_{n-m+1}}_{L^2} ^{1/2}
          	\\&\quad\times
          \norm{\partial_y^2 \paren{q^{n-m}\Gamma_{k}^{n-m} \stf_{k}^{(E)}}\chi_{n-m+1}}_{L^2}^{1/2}
	\enorm{ q^{m}\Gamma_{k}^{m} ik \overline{\omega_k }e^{W} \chi_n}  \enorm{\bhqn  \chi_n} 
	\\&\quad+
	\brak{t}^{3+2\ss} t^{-1} \bold{a}_{n+1}^2 \sum_{k} \sum_{m\ge n/2}^{n} \binom{n}{m}
	(n-m)^{(1+\sigma)/2} \norm{\partial_y \paren{q^{n-m}\Gamma_{k}^{n-m} \stf_{k}^{(E)}}\chi_{n-m}}_{L^2}
		\\&\quad\times
	\enorm{ q^{m}\Gamma_{k}^{m} ik \overline{\omega_k }e^{W} \chi_n}  \enorm{\bhqn  \chi_n}. 
\end{align*}
Multiplying it by $(n+1)^{2\sss-2}$ and summing in $n$ gives
\begin{align*}
	\sum_{n}& \theta_{n}^2 (n+1)^{2\sss-2}N_{1,v1} ^{LH, E}\\
	&\lesssim
	\sum_{n\ge0}\sum_{m\ge n/2}^{n}\sum_{k}  \bold{a}_{n+1} \binom{n}{m} \bold{a}_{n-m}^{-1} \bold{a}_{1,m}^{-1} \brak{t}^{3/2+\ss}t^{-1/2} 
	\\&\quad \times  \bold{a}_{n-m} \norm{\partial_y \paren{q^{n-m}\Gamma_{k}^{n-m} \stf_{k}^{(E)}}\chi_{n-m+1}}_{L^2} ^{1/2}
	\norm{\partial_y^2\paren{q^{n-m}\Gamma_{k}^{n-m} \stf_{k}^{(E)}}\chi_{n-m+1}}_{L^2}^{1/2}
	\\&\quad \times
	(n+2)^{\sss-1}\bold{a}_{1,m} \enorm{ k q^{m}\Gamma_{k}^{m} \overline{\omega_k }e^{W} \chi_m}  
	\brak{t}^{3/2+\ss} t^{-1/2}(n+2)^{\sss-1}\bold{a}_{n+1}\enorm{\bhqn  \chi_n}
	\\&\quad + 
	\sum_{n\ge0}\sum_{m\ge n/2}^{n}\sum_{k}  \bold{a}_{n+1} \binom{n}{m} \bold{a}_{n-m}^{-1} \bold{a}_{1,m}^{-1}
	\brak{t}^{3/2+\ss}
		t^{-1/2} 
	\bold{a}_{n-m} (n-m)^{(1+\sigma)/2}
			\\&\quad \times
	 \norm{\partial_y \paren{q^{n-m}\Gamma_{k}^{n-m} \stf_{k}^{(E)}}\chi_{n-m}}_{L^2}
	(n+2)^{\sss-1}\bold{a}_{1,m} \enorm{ k q^{m}\Gamma_{k}^{m} \overline{\omega_k }e^{W} \chi_m}  
		\\&\quad \times
		\brak{t}^{3/2+\ss} t^{-1/2}(n+2)^{\sss-1}\bold{a}_{n+1}\enorm{\bhqn  \chi_n}. 
\end{align*}
By Corollary~\ref{comb:boun:vari:1}, the elliptic estimate~\eqref{jell:1}-\eqref{jell:2}, Lemma~\ref{con:no:k}, and that fact that
\begin{align}
	\label{weig:huge}
	\nu^{-100} \lesssim e^{W(t, y)}\ \mbox{for}\ y\in \supp \chi_1,\ t\lesssim \nu^{-1/3-\delta},
\end{align} 
the above quantity is bounded by 
\begin{align*}
	\sum_{n}& \theta_{n}^2 (n+1)^{2\sss-2}  N_{1,v1} ^{LH, E} 
\\&\lesssim
	 \paren{\sum_{j=1, 2}\sqrt{\mathcal{J}_{ell}^{(i)}}}
	\sqrt{\cd^{\gm} }\sqrt{\mathcal{CK}_{\overline{H}}^{\gm}} \lesssim \eps \sqrt{\cd^{\gm}} \sqrt{\mathcal{CK}_{\overline{H}}^{\gm}}
\end{align*}
where we also used H\"older's inequality. 
Next we deal with $N_{1,v2} ^{LH, E}$.
We note that
\begin{align*}
V^{LH, E}_2&=	kt\sum_{\ell=0}^{n-m-1}\binom{n-m}{\ell}\norm{q^{n-m-\ell}\bd^{n-m-\ell}(v_y-1)q^{\ell}\Gamma_k^{\ell}\stf_{k}^{(E)}\chi_{n-1}}_{L^\infty}
\\&\lesssim
kt\sum_{\ell=0}^{n-m-1}\binom{n-m}{\ell}\norm{q^{n-m-\ell}\bd^{n-m-\ell}(v_y-1)\chi_{n-2}}_{L^\infty}\norm{q^{\ell}\Gamma_k^{\ell}\stf_{k}^{(E)}\chi_{n-1}}_{L^\infty}
\\&\lesssim
kt\sum_{\ell=0}^{n-m-1}\binom{n-m}{\ell}\norm{q^{n-m-\ell}\bd^{n-m-\ell}(v_y-1)\chi_{n-2}}_{L^\infty}\norm{q^{\ell}\Gamma_k^{\ell}\stf_{k}^{(E)}\chi_{n-1}}_{L^\infty}
\\&\lesssim
kt\sum_{\ell=0}^{n-m-1}\binom{n-m}{\ell}\norm{q^{n-m-\ell}\bd^{n-m-\ell}(v_y-1)\chi_{n-2}}_{L^2}^{1/2}
\\&\quad\times \norm{\partial_y\paren{q^{n-m-\ell}\bd^{n-m-\ell}(v_y-1)\chi_{n-2}}}_{L^2}^{1/2}
\norm{q^{\ell}\Gamma_k^{\ell}\stf_{k}^{(E)}\chi_{n-1}}_{L^2}^{1/2} \norm{\partial_y\paren{q^{\ell}\Gamma_k^{\ell}\stf_{k}^{(E)}\chi_{n-1}}}_{L^2}^{1/2}
\end{align*}
from where we further arrive at
\begin{align*}
	N_{1,v2} ^{LH, E}&\lesssim\brak{t}^{3+2\ss} t^{-1} \bold{a}_{n+1}^2 \sum_{k} \sum_{m\ge n/2}^{n}
	\binom{n}{m} \enorm{ q^{m}\Gamma_{k}^{m} ik \overline{\omega_k }e^{W} \chi_n}  \enorm{\bhqn  \chi_n}
	\\&\quad\times
	kt\sum_{\ell=0}^{n-m-1}\binom{n-m}{\ell}\norm{q^{n-m-\ell}\bd^{n-m-\ell}(v_y-1)\chi_{n-2}}_{L^2}^{1/2}
	\\&\quad\times \norm{\partial_y\paren{q^{n-m-\ell}\bd^{n-m-\ell}(v_y-1)\chi_{n-2}}}_{L^2}^{1/2}
	\norm{q^{\ell}\Gamma_k^{\ell}\stf_{k}^{(E)}\chi_{n-1}}_{L^2}^{1/2} \norm{\partial_y\paren{q^{\ell}\Gamma_k^{\ell}\stf_{k}^{(E)}\chi_{n-1}}}_{L^2}^{1/2},
\end{align*}
and hence it follows
\begin{align}\label{est:LH:v2}
	\sum_{n}&(n+1)^{2\sss-2}N_{1,v2} ^{LH, E}\nn
	&\lesssim
	\sum_{n} \sum_{k} \sum_{m\ge n/2}^{n} \bold{a}_{n+1} \binom{n}{m} \bold{a}_{1,m}^{-1} 
	\bold{a}_{n-m-\ell}^{-1}\bold{a}_{\ell}^{-1}
	(n+2)^{\sss-1} \bold{a}_{1,m} k\enorm{ q^{m}\Gamma_{k}^{m} \overline{\omega_k }e^{W} \chi_n} 
	\nn&\quad\times 
	\brak{t}^{3/2+\ss} (n+2)^{\sss-1}\bold{a}_{n+1}\enorm{\bhqn  \chi_n}
	\nn&\quad\times
	kt\brak{t}^{3/2+\ss} t^{-1}\sum_{\ell=0}^{n-m-1} \bold{a}_{n-m-\ell}\binom{n-m}{\ell}\norm{q^{n-m-\ell}\bd^{n-m-\ell}(v_y-1)\chi_{n-2}}_{L^2}^{1/2}
	\nn&\quad\times
	\nu^{1/2}\norm{\partial_y\paren{q^{n-m-\ell}\bd^{n-m-\ell}(v_y-1)\chi_{n-2}}}_{L^2}^{1/2}
	\bold{a}_{\ell}\norm{q^{\ell}\Gamma_k^{\ell}\stf_{k}^{(E)}\chi_{n-1}}_{L^2}^{1/2}  
	\nn&\quad\times
           \nu^{1/2}\nu^{-1}\norm{\partial_y\paren{q^{\ell}\Gamma_k^{\ell}\stf_{k}^{(E)}\chi_{n-1}}}_{L^2}^{1/2}.
\end{align}
We use \eqref{weig:huge},  Lemma~\ref{comb:boun:vari:2}, the elliptic estimate~\eqref{jell:1}-\eqref{jell:2},
and Lemma~\ref{con:no:k} to obtain
\begin{align*}
	\sum_{n}& \theta_{n}^2 (n+1)^{2\sss-2} N_{1,v2} ^{LH, E} 
	\lesssim
	\sqrt{\mathcal{CK}^{\gm}} \norm{\overline{H}}_{Y_{0,-s}}
	\sum_{j \ge 0}\sum_{\ell=0}^{j} 2^{-j(\sss-1)}
	\bold{a}_{j-\ell}\norm{\bd^{j-\ell}H\chi_{j-\ell}}_{L^2}^{1/2}
	\\&\quad\times
	\nu^{1/2}\norm{\partial_y\paren{\bd^{j-\ell}H\chi_{j-l}}}_{L^2}^{1/2}
	\nu^{-1}\bold{a}_{\ell}\norm{q^{\ell}\Gamma_k^{\ell}\stf_{k}^{(E)}\chi_{\ell}}_{L^2}^{1/2}             \nu^{1/2}\norm{\partial_y\paren{q^{\ell}\Gamma_k^{\ell}\stf_{k}^{(E)}\chi_{\ell}}}_{L^2}^{1/2}
	\\&\lesssim
	\sqrt{\mathcal{CK}^{\gm}} \norm{\overline{H}}_{Y_{0,-s}} \paren{\sum_{j=1, 2}\sqrt{\mathcal{J}_{ell}^{(i)}}}
	\norm{H}_{\overline{Y}_{1,0}}^{1/2}\paren{\cd_{H}^{\gm}}^{1/4}
	\\&\lesssim
	\sqrt{\mathcal{CK}^{\gm}}\brak{t}^{-3/2-s^{-1}}\sqrt{\mathcal{E}_{\overline H}^{(\gamma)}} \sqrt{\mathcal{E}_{H}^{(\alpha)}}
	\paren{\sum_{j=1, 2}\sqrt{\mathcal{J}_{ell}^{(i)}}} \paren{\cd_{H}^{\gm}}^{1/4}
	\\&\lesssim \eps^3 \brak{t}^{-3/2-s^{-1}} \sqrt{\mathcal{CK}^{\gm}} \paren{\cd_{H}^{\gm}}^{1/4}.
\end{align*}
where we also used H\"older's inequality. 
Next we turn to $N_{1,v3} ^{LH, E}$ and observe that
\begin{align*}
    V^{LH, E}_3 &\lesssim
     \sum_{\ell=1}^{n-m}\binom{n-m}{\ell}\norm{q^{n-m-\ell+1} \bd^{n-m-\ell+1}(v_y-1)}_{L^\infty}
     \norm{q^{\ell-1}\Gamma_k^{\ell}\stf_{k}^{(E)}\chi_{n-1}}_{L^\infty}
     \\&\lesssim
     \sum_{\ell=1}^{n-m}\binom{n-m}{\ell}\norm{q^{n-m-\ell+1}\bd^{n-m-\ell+1}(v_y-1)}_{L^2}^{1/2}
     \\&\quad\times
          \norm{\partial_y \paren{q^{n-m-\ell+1}\bd^{n-m-\ell+1}(v_y-1)}}_{L^2}^{1/2}
          \\&\quad\times
     \norm{q^{\ell-1}\Gamma_k^{\ell}\stf_{k}^{(E)}\chi_{n-1}}_{L^2}^{1/2}
          \norm{\partial_y \paren{q^{\ell-1}\Gamma_k^{\ell}\stf_{k}^{(E)}\chi_{n-1}}}_{L^2}^{1/2}.
\end{align*}
In view of~\eqref{LH:vi}, we arrive at
\begin{align}
	\sum_{n}& \theta_{n}^2 (n+1)^{2\sss-2} N_{1,v3} ^{LH, E}:=\brak{t}^{3+2\ss} t^{-1} \bold{a}_{n+1}^2 \sum_{k} \sum_{m\ge n/2}^{n} 
	\binom{n}{m} V^{LH, E}_3
	\enorm{ q^{m}\Gamma_{k}^{m} ik \overline{\omega_k }e^{W} \chi_n}  \enorm{\bhqn  \chi_n}
	\nn
	&\lesssim
	\sum_{n} \sum_{k} \sum_{m\ge n/2}^{n} \sum_{\ell=1}^{n-m} \bold{a}_{n+1} \binom{n}{m} \bold{a}_{1,m}^{-1} 
	\bold{a}_{n-m-\ell}^{-1}\bold{a}_{\ell}^{-1}
	(n+2)^{\sss-1} \bold{a}_{1,m} k\enorm{ q^{m}\Gamma_{k}^{m} \overline{\omega_k }e^{W} \chi_n} 
	\nn&\quad\times 
	\brak{t}^{3/2+\ss} (n+2)^{\sss-1}\bold{a}_{n+1}\enorm{\bhqn  \chi_n} \brak{t}^{3/2+\ss} t^{-1} \bold{a}_{n-m-\ell}\binom{n-m}{\ell}
	\nn&\quad\times
	\norm{q^{n-m-\ell+1}\bd^{n-m-\ell+1}(v_y-1)}_{L^2}^{1/2} \nu^{1/2} \nu^{-1/2} \norm{\partial_y \paren{q^{n-m-\ell+1}\bd^{n-m-\ell+1}(v_y-1)}}_{L^2}^{1/2}
		\nn&\quad\times
    \bold{a}_{\ell}  \norm{q^{\ell-1}\Gamma_k^{\ell}\stf_{k}^{(E)}\chi_{n-1}}_{L^2}^{1/2}
	 \norm{\partial_y \paren{q^{\ell-1}\Gamma_k^{\ell}\stf_{k}^{(E)}\chi_{n-1}}}_{L^2}^{1/2}.
\end{align}
Similar as before, we use Lemma~\ref{comb:boun:vari:2}, Young's inequality, and \eqref{weig:huge} to obtain
\begin{align}
	\label{est:LH:v3}
	\sum_{n} \theta_{n}^2 (n+1)^{2\sss-2} N_{1,v3} ^{LH, E} 
    &\lesssim
	\sqrt{\cd^{\gm}} \norm{\overline{H}}_{Y_{0,-s}} \norm{H}_{Y_{0,0}}^{1/2}
	\paren{\cd_{H}^{\gm}}^{1/4}
	  \paren{\sum_{l} \enorm{\bold{a}_{\ell} q^{\ell-1}\Gamma_k^{\ell}\stf_{k}^{(E)}\chi_{l}}^2}^{1/4}
	  \nn&\qquad\times
	  \paren{\sum_{l} \enorm{\bold{a}_{\ell} \partial_y \paren{q^{\ell-1}\Gamma_k^{\ell}\stf_{k}^{(E)}\chi_{l}}}^2}^{1/4}.
\end{align}
By an estimate similar to Lemma~5.1 in~\cite{BHIW24b} and the elliptic estimates, we have 
\begin{align}
	\label{dif:typ:1}
	\sum_{l} \enorm{\bold{a}_{\ell} q^{\ell-1}\Gamma_k^{\ell}\stf_{k}^{(E)}\chi_{l}}^2
	&\lesssim
	\sum_{l=0}^{\infty} \sum_{m\le l-1} C^{l-m} \paren{\frac{l!}{m!}}^{-2\sigma}
	\bold{a}_{m}^2 \enorm{\partial_y\paren{q^{m}\Gamma_k^{m}\stf_{k}^{(E)}}\chi_{m}}^2
	\nn&
	\lesssim
\mathcal{J}_{ell}^{(1)},
\end{align}
where we used Fubini in the last step. Similarly, we may have
\begin{align*}
    \sum_{l} \enorm{\bold{a}_{\ell} \partial_y \paren{q^{\ell-1}\Gamma_k^{\ell}\stf_{k}^{(E)}\chi_{l}}}^2
	\lesssim
   \mathcal{J}_{ell}^{(2)}.
\end{align*}
Thus, we may obtain from \eqref{est:LH:v3} 
\begin{align*}
	\sum_{n} \theta_{n}^2 (n+1)^{2\sss-2} N_{1,v3} ^{LH} 
	&\lesssim
	\sqrt{\cd^{\gm}} \norm{\overline{H}}_{Y_{0,-s}} \norm{H}_{Y_{1, 0}}^{1/2}
	\paren{\cd_{H}^{\gm}}^{1/4} \paren{\sum_{j=1, 2}\sqrt{\mathcal{J}_{ell}^{(i)}}}
    \\&\lesssim
    \sqrt{\cd^{\gm}} \brak{t}^{-3/2-s^{-1}}\sqrt{\mathcal{E}_{\overline H}^{(\gamma)}} \sqrt{\mathcal{E}_{H}^{(\alpha)}}
    \paren{\cd_{H}^{\gm}}^{1/4} \paren{\sum_{j=1, 2}\sqrt{\mathcal{J}_{ell}^{(i)}}}
    \\&\lesssim
 \eps^3 \brak{t}^{-3/2-s^{-1}} \sqrt{\cd^{\gm}}
\paren{\cd_{H}^{\gm}}^{1/4}.
\end{align*}
For the last piece, we have
\begin{align*}
    V^{LH, E}_4 &= 
	(n-m)\norm{q^{n-m-1}\Gamma_k^{n-m}\stf_{k}^{(E)}\chi_{n-1}}_{L^\infty}
	\\&\lesssim
	(n-m)\norm{q^{n-m-1}\Gamma_k^{n-m}\stf_{k}^{(E)}\chi_{n-1}}_{L^2}^{1/2}
	\norm{\partial_y\paren{q^{n-m-1}\Gamma_k^{n-m}\stf_{k}^{(E)}\chi_{n-1}}}_{L^2}^{1/2}.
\end{align*}
Actually the treatment of $N_{1,v4} ^{LH, E}$ is just a part  of that of $N_{1,v3} ^{LH}$ and we omit further details.
Next we deal with the $HL$ part as
\begin{align*}
	\abs{N_{1} ^{HL, E}} 
	&
	=  \brak{t}^{3+2\ss} t^{-1} \bold{a}_{n+1}^2 \sum_{k}\sum_{m\le n/2}
	\binom{n}{m} \Big \langle  q^{n-m}\Gamma_{k}^{n-m}\partial_y \stf_{k}^{(E)} q^{m}\Gamma_{k}^{m} ik \overline{\omega_k } ,   \bhqn  e^{W} \chi_n^2 \Big \rangle 
	\\&\lesssim
	\brak{t}^{3+2\ss} t^{-1} \bold{a}_{n+1}^2 \sum_{k}\sum_{m\le n/2} \binom{n}{m} \norm{q^{n-m}\Gamma_{k}^{n-m} k\partial_y \stf_{k}^{(E)}\chi_{n}}_{L^2} \norm{q^{m}\Gamma_{k}^{m} \overline{\omega_k }e^{W} \chi_{n-1} }_{L^\infty}\enorm{\bhqn  \chi_n}
	\\&\lesssim
	\brak{t}^{3+2\ss} t^{-1} \bold{a}_{n+1}^2 \sum_{k}\sum_{m\le n/2} \binom{n}{m} \norm{q^{n-m}\Gamma_{k}^{n-m} k\partial_y \stf_{k}^{(E)}\chi_{n}}_{L^2} \enorm{\bhqn  \chi_n}
	\\&\quad\times
	\norm{q^{m}\Gamma_{k}^{m} \overline{\omega_k }e^{W} \chi_{n-1} }_{L^2}^{1/2}\norm{\partial_y\paren{q^{m}\Gamma_{k}^{m} \overline{\omega_k }e^{W} \chi_{n-1}} }_{L^2}^{1/2}.
\end{align*}
In analogue to~\eqref{com:y:D}, we have
\begin{align*}
	&\norm{q^{n-m}\Gamma_{k}^{n-m}\partial_y \stf_{k}^{(E)}\chi_{n-1}}_{L^2}
		\nn&\qquad \lesssim
	\norm{\partial_y \paren{q^{n-m}\Gamma_{k}^{n-m} \stf_{k}^{(E)}}\chi_{n-1}}_{L^2}
	\nn&\qquad\quad	+ 
	kt\sum_{\ell=0}^{n-m-1}\binom{n-m}{\ell}\norm{q^{n-m-\ell}\bd^{n-m-\ell}(v_y-1)q^{\ell}\Gamma_k^{\ell}\stf_{k}^{(E)}\chi_{n-1}}_{L^2}
	\nn&\qquad\quad
	+\sum_{\ell=1}^{n}\binom{n-m}{\ell}\norm{\bd^{n-m-\ell+1}(v_y-1)q^{\ell-1}\Gamma_k^{\ell}\stf_{k}^{(E)}\chi_{n-1}}_{L^2}
	+ 	
	(n-m)\norm{q^{n-m-1}\Gamma_k^{n-m}\stf_{k}^{(E)}\chi_{n-1}}_{L^2}
	\nn \\&\qquad
	=V^{HL, E}_1+V^{HL, E}_2+V^{HL, E}_3+V^{HL, E}_4.
\end{align*}
From here on, the estimates of $N_{1} ^{HL, E}$ and $N_{1} ^{LH, E}$ are completely parallel and in a similar fashion. Hence, we 
only sample the third piece out of the four to give the readers a flavor and omit further details. 
Denote
\begin{align*}
	N_{1,v3} ^{HL, E}&:=\brak{t}^{3+2\ss} t^{-1} \bold{a}_{n+1}^2 \sum_{k} \sum_{m\le n/2}\binom{n}{m} V^{HL, E}_3
	  \enorm{\bhqn  \chi_n}
	\norm{q^{m}\Gamma_{k}^{m} \overline{\omega_k }e^{W} \chi_{n-1} }_{L^2}^{1/2}
	\\&\qquad\times
	\norm{\partial_y\paren{q^{m}\Gamma_{k}^{m} \overline{\omega_k }e^{W} \chi_{n-1}} }_{L^2}^{1/2}.
\end{align*}
Then by~\eqref{jell:1} and \eqref{jell:2}, analogously to~\eqref{est:LH:v3}, we deduce
\begin{align*}
	\sum_{n}& \theta_{n}^2 (n+1)^{2\sss-2} N_{1,v3} ^{HL, E}
	\\
	&\lesssim
	\sum_{n} \sum_{k} \sum_{m\le n/2} \sum_{\ell=1}^{n-m} \bold{a}_{n+1}\binom{n}{m} \bold{a}_{1,m}^{-1} 
	\bold{a}_{\alpha}^{-1}\bold{a}_{\beta}^{-1}
    \bold{a}_{1,m} k\enorm{ q^{m}\Gamma_{k}^{m} \overline{\omega_k }e^{W} \chi_{m}\widetilde\chi_{ 1}} ^{1/2}
	\\&\quad\times 
	\norm{\partial_y\paren{q^m\Gamma_k^{m} \overline{\omega_k }e^{W} \chi_{n-1}} }_{L^2}^{1/2}
	\brak{t}^{3/2+\ss} (n+2)^{\sss-1}\bold{a}_{n+1}\enorm{\bhqn  \chi_n}
	\\&\quad\times
	\brak{t}^{3/2+\ss} t^{-1} (n+2)^{\sss-1} \bold{a}_{\alpha}\binom{n-m}{\ell}
	\norm{\bd^{n-m-\ell+1}(v_y-1)\chi_{n-m-\ell+1}}_{L^2}
	\\&\quad\times
	\bold{a}_{\beta}  \norm{q^{\ell-1}\Gamma_k^{\ell}\stf_{k}^{(E)}\chi_{\ell}}_{L^2}^{1/2}
	\norm{\partial_y \paren{q^{\ell-1}\Gamma_k^{\ell}\stf_{k}^{(E)}\chi_{\ell}}}_{L^2}^{1/2}
	\\&
	\lesssim
	\paren{ \mathcal{CK}^{\gm}}^{1/4} \paren{\cd^{\gm}}^{1/4} 
	\norm{\overline{H}}_{Y_{0,-s}} \norm{{H}}_{Y_{0,0}} 
\paren{\sum_{j=1, 2}\sqrt{\mathcal{J}_{ell}^{(i)}}}
    \\&
    \lesssim
    \eps^3 \paren{ \mathcal{CK}^{\gm} + \cd^{\gm} 
    	 + 
  \brak{t}^{-3-2s^{-1}}
}
\end{align*}
where we choose $\alpha=n-m-\ell + 2, \beta = \ell-2$ if $n-m-\ell \ge \ell$, and $\alpha=n-m-\ell-1, \beta = \ell+1$ otherwise. 
Next we consider the term
\begin{align*}
	 N_{2}^E =\frac{1}{t} \sum_{k}\sum_{m=0}^{n} \binom{n}{m}\Big \langle  q^{n-m}\Gamma_{k}^{n-m}ik \stf_{k}^{(E)} q^{m}\Gamma_{k}^{m} \partial_y \overline{\omega_k} , \brak{t}^{3+2\ss}  \bold{a}_{n+1}^2  \bhqn  e^{W} \chi_n^2 \Big \rangle 
\end{align*}
in a similar manner and only show details for one sub-term
\begin{align*}
		N_{2,w2} ^{LH, E}:=\brak{t}^{3+2\ss} t^{-1} \bold{a}_{n+1}^2 \sum_{k} \sum_{m\ge n/2}^{n} 
		\norm{q^{n-m}\Gamma_{k}^{n-m}k \stf_{k}^{(E)}}_{L^\infty}\Omega^{LH, E}_2
        \enorm{\bhqn  e^{W/2} \chi_n}
\end{align*}
where
\begin{align*}
     \Omega^{LH, E}_2 = 	
	kt\sum_{\ell=0}^{m-1}\binom{m}{\ell}\norm{\bd^{m-\ell}(v_y-1)q^{\ell}\Gamma_k^{\ell}\omega_{k}e^{W/2}\chi_{n-1}}_{L^2}.
\end{align*}
By Lemma~\ref{comb:boun:vari:2}, the elliptic estimate~\eqref{jell:1}-\eqref{jell:2},
Lemma~\ref{con:no:k}, and $
\brak{t}^2\nu^{-1} \lesssim e^W\ \mbox{for}\ t\lesssim \nu^{-1/3},$ we get
\begin{align*}
	\sum_{n}& \theta_{n}^2 (n+1)^{2\sss-2} N_{2,w2} ^{LH, E}\nn
	&\lesssim
	\sum_{n} \sum_{k} \sum_{m\ge n/2}^{n}\sum_{\ell=0}^{m-1} \bold{a}_{n+1} \binom{n}{m} \bold{a}_{1,n-m}^{-1} 
	\bold{a}_{m-\ell}^{-1}\bold{a}_{\ell}^{-1}
	(n+2)^{\sss-1} \bold{a}_{1,n-m} k\enorm{ q^{n-m}\Gamma_{k}^{n-m}  \stf_k^{(E)} \chi_{n-m+1}}^{1/2} 
	\nn&\quad\times 
		\enorm{\partial_y \paren{q^{n-m}\Gamma_{k}^{n-m}  \stf_k^{(E)}  \chi_{n-m+1}}} ^{1/2}
	\brak{t}^{3/2+\ss} (n+2)^{\sss-1}\bold{a}_{n+1}\enorm{\bhqn  e^{W/2} \chi_n}
	\nn&\quad\times
	kt\brak{t}^{3/2+\ss} t^{-1} \bold{a}_{m-\ell}\binom{m}{\ell}\norm{\bd^{m-\ell}(v_y-1)e^{W/2}\chi_{m-\ell}}_{L^2}
	\nn&\quad\times
	\bold{a}_{\ell}\norm{q^{\ell}\Gamma_k^{\ell}\overline\omega_{k}\chi_{\ell+1}}_{L^2}^{1/2} \norm{\partial_y\paren{q^{\ell}\Gamma_k^{\ell}\overline\omega_{k}\chi_{\ell+1}}}_{L^2}^{1/2}
	\nn&
	\lesssim
\paren{\sum_{j=1, 2}\sqrt{\mathcal{J}_{ell}^{(i)}}}
     	\norm{\overline{H}}_{Y_{0,-s}} \norm{H}_{Y_{0,0}}\paren{\sqrt{\cd^{\gm}} + 
     	\sqrt{\mathcal{CK}^{\gm}}}
     \nn&
	\lesssim \eps^3 \paren{\cd^{\gm} + \mathcal{CK}^{\gm} + \brak{t}^{-3-2s^{-1}}}
\end{align*}
completing the treatment of the exterior part. 
\\{\bf Interior part:} For the interior part, we need slightly different argument, since there is no localization available but instead, fast decay of the 
stream function. We note
\begin{align*}
	N^I &
	= -\frac{1}{t} \sum_{k} \Big \langle  
	q^n\Gamma_{0}^n \paren{ \Gamma_k \stf_{k}^{(I)}\overline{ ik \omega_k } }, \brak{t}^{3+2\ss}  \bold{a}_{n+1}^2  \bhqn  e^{W}\chi_n^2 \Big \rangle 
	\\&\quad
	+\frac{1}{t} \sum_{k} \Big \langle   q^n\Gamma_{0}^n \paren{ik \stf_{k}^{(I)} \Gamma_k \overline{\omega_k}},  \brak{t}^{3+2\ss}  \bold{a}_{n+1}^2  \bhqn  e^{W}\chi_n^2 \Big \rangle 
	\\&
	=  \frac{1}{t} \sum_{k}\sum_{m=0}^{n}\binom{n}{m}\Big \langle  q^{n-m}\Gamma_{k}^{n-m}\Gamma_k \stf_{k}^{(I)} q^{m}\Gamma_{k}^{m} ik \overline{\omega_k } , \brak{t}^{3+2\ss}  \bold{a}_{n+1}^2  \bhqn  e^{W}\chi_n^2 \Big \rangle 
	\\&\quad
	+\frac{1}{t} \sum_{k}\sum_{m=0}^{n}\binom{n}{m}\Big \langle  q^{n-m}\Gamma_{k}^{n-m}ik \stf_{k}^{(I)} q^{m}\Gamma_{k}^{m} \Gamma_k \overline{\omega_k} , \brak{t}^{3+2\ss}  \bold{a}_{n+1}^2  \bhqn  e^{W}\chi_n^2 \Big \rangle 
	\\&
	= N_{1}^I + N_{2}^I.
\end{align*}
As in the exterior case, we divide $N_{1}^I $ into two pieces:
\begin{align*}
	N_{1}^{I} &=  \brak{t}^{3+2\ss} t^{-1} \bold{a}_{n+1}^2 \sum_{k}\paren{\sum_{m\le n/2} + \sum_{m\ge n/2}^{n}}
	\binom{n}{m} \Big \langle  q^{n-m}\Gamma_{k}^{n-m}\Gamma_k \stf_{k}^{(I)} q^{m}\Gamma_{k}^{m} ik \overline{\omega_k } ,   \bhqn  e^{W}\chi_n^2 \Big \rangle 
	\\& = N_{1} ^{HL, I} + N_{1} ^{LH, I}.
\end{align*}
For the second piece, by~\eqref{Sob:emb}, we have
\begin{align*}
	N_{1} ^{LH, I} &\lesssim \brak{t}^{3+2\ss} t^{-1} \bold{a}_{n+1}^2 \sum_{k} \sum_{m\ge n/2}^{n}
	\binom{n}{m}  \norm{ q^{n-m} \Gamma_k^{n-m+1}\stf_{k}^{(I)} \chi_{n-m+1}}_{L^2} ^{1/2}
	\\&\quad\times
	\norm{\Gamma_k \paren{\Gamma_k^{n-m+1} \stf_{k}^{(I)}}\chi_{n-m+1}}_{L^2}^{1/2}
	\enorm{ q^{m}\Gamma_{k}^{m} ik \overline{\omega_k }e^{W} \chi_n}  \enorm{\bhqn  \chi_n} 
	\\&\quad+
	\brak{t}^{3+2\ss} t^{-1} \bold{a}_{n+1}^2 \sum_{k} \sum_{m\ge n/2}^{n} \binom{n}{m}  
	(n-m)^{(1+\sigma)/2} \norm{ q^{n-m}\Gamma_{k}^{n-m}\Gamma_k \stf_{k}^{(I)}\chi_{n-m}}_{L^2}
		\\&\quad\times
	\enorm{ q^{m}\Gamma_{k}^{m} ik \overline{\omega_k }e^{W} \chi_n}  \enorm{\bhqn  \chi_n} .
\end{align*}
Multiplying it by $(n+1)^{2\sss-2}$ and summing in $n$ gives
\begin{align*}
	\sum_{n}& \theta_{n}^2 (n+1)^{2\sss-2} N_{1} ^{LH, I}\\
	&\lesssim
	\sum_{n\ge0}\sum_{m\ge n/2}^{n}\sum_{k}  \bold{a}_{n+1} \binom{n}{m} \bold{a}_{n-m}^{-1} \bold{a}_{1,m}^{-1} 
	\\&\quad \times \brak{t}^{3/2+\ss}  \bold{a}_{n-m} \norm{ \Gamma_k^{n-m+1}\stf_{k}^{(I)} \chi_{n-m+1}}_{L^2} ^{1/2}
	\norm{\Gamma_k \paren{\Gamma_k^{n-m+1} \stf_{k}^{(I)}}\chi_{n-m+1}}_{L^2}^{1/2}
	\\&\quad \times
	(n+2)^{\sss-3/2}\bold{a}_{1,m}t^{-1/2} \enorm{ k q^{m}\Gamma_{k}^{m} \overline{\omega_k }e^{W} \chi_m}  
	\brak{t}^{3/2+\ss} t^{-1/2}(n+2)^{\sss-1/2}\bold{a}_{n+1}\enorm{\bhqn  \chi_n}
	\\&\quad + 
	\sum_{n\ge0}\sum_{m\ge n/2}^{n}\sum_{k}  \bold{a}_{n+1} \binom{n}{m} \bold{a}_{n-m}^{-1} \bold{a}_{1,m}^{-1}
		\\&\quad\times 
		\brak{t}^{3/2+\ss}
	\bold{a}_{n-m} 
	 (n-m)^{(1+\sigma)/2} \norm{ q^{n-m}\Gamma_{k}^{n-m}\Gamma_k \stf_{k}^{(I)}\chi_{n-m}}_{L^2}
	\\&\quad \times
	(n+2)^{\sss-3/2}\bold{a}_{1,m} t^{-1/2} \enorm{ k q^{m}\Gamma_{k}^{m} \overline{\omega_k }e^{W} \chi_m}  
	\brak{t}^{3/2+\ss} t^{-1/2}(n+2)^{\sss-1/2}\bold{a}_{n+1}\enorm{\bhqn  \chi_n} .
\end{align*}
By Corollary~\ref{comb:boun:vari:1}, the elliptic estimate~\eqref{eell:out}, and Lemma~\ref{con:no:k},
the above quantity could be easily bounded by 
\begin{align*}
	\sum_{n}& \theta_{n}^2 (n+1)^{2\sss-2}  N_{1} ^{LH, I} 
	\lesssim 
	\sqrt{\mathcal{E}_{ell}^{(I, out)}} 
	\sqrt{\mathcal{CK}^{\gm}} \sqrt{\mathcal{CK}_{\overline{H}}^{\gm}}
	\lesssim \eps 
	\sqrt{\mathcal{CK}^{\gm}} \sqrt{\mathcal{CK}_{\overline{H}}^{\gm}} .
\end{align*}
Next, for the $HL$ part, we similarly deduce
\begin{align*}
	\sum_{n}& \theta_{n}^2 (n+1)^{2\sss-2} N_{1} ^{HL, I}\\
	&\lesssim
	\sum_{n\ge0}\sum_{m\le n/2}\sum_{k}  \bold{a}_{n+1} \binom{n}{m} \bold{a}_{n-m+2}^{-1} \bold{a}_{1,m-2}^{-1}  (n+2)^{\sss-3/2}
	\\&\quad \times \brak{t}^{3/2+\ss}  \bold{a}_{n-m+2} \norm{ \Gamma_k^{n-m+1}\stf_{k}^{(I)} \chi_{n-m+1}}_{L^2} ^{1/2}
	\norm{\Gamma_k\paren{\Gamma_k^{n-m+1} \stf_{k}^{(I)}}\chi_{n-m+1}}_{L^2}^{1/2}
	\\&\quad \times
	\bold{a}_{1,m-2}t^{-1/2} \enorm{ k q^{m}\Gamma_{k}^{m} \overline{\omega_k }e^{W} \chi_m}  
	\brak{t}^{3/2+\ss} t^{-1/2}(n+2)^{\sss-1/2}\bold{a}_{n+1}\enorm{\bhqn  \chi_n}
	\\&\quad + 
	\sum_{n\ge0}\sum_{m\ge n/2}^{n}\sum_{k}  \bold{a}_{n+1} \binom{n}{m} \bold{a}_{n-m+1}^{-1} \bold{a}_{1,m-1}^{-1}
	\brak{t}^{3/2+\ss}
	\bold{a}_{n-m+1} (n-m)^{(1+\sigma)/2}
	\\&\quad \times
	 \norm{ q^{n-m}\Gamma_{k}^{n-m}\Gamma_k \stf_{k}^{(I)}\chi_{n-m}}_{L^2}
	(n+2)^{\sss-3/2}\bold{a}_{1,m-1} t^{-1/2} \enorm{ k q^{m}\Gamma_{k}^{m} \overline{\omega_k }e^{W} \chi_m}  
	\\&\quad \times 
	\brak{t}^{3/2+\ss} t^{-1/2}(n+2)^{\sss-1/2}\bold{a}_{n+1}\enorm{\bhqn  \chi_n},
\end{align*}
from where we again arrive at
\begin{align*}
	\sum_{n}& \theta_{n}^2 (n+1)^{2\sss-2}  N_{1} ^{HL, I} 
	\lesssim \sqrt{\mathcal{E}_{ell}^{(I, out)}} 
	\sqrt{\mathcal{CK}^{\gm}} \sqrt{\mathcal{CK}_{\overline{H}}^{\gm}}
	\lesssim \eps
	\sqrt{\mathcal{CK}^{\gm}} \sqrt{\mathcal{CK}_{\overline{H}}^{\gm}}.
\end{align*}
For the second piece $N_{2}^I$ we have
\begin{align*}
		N_{2}^I &= 
		\frac{1}{t} \sum_{k}\sum_{m=0}^{n}\binom{n}{m}\Big \langle  q^{n-m}\Gamma_{k}^{n-m}ik \stf_{k}^{(I)} q^{m}\Gamma_{k}^{m} \Gamma_k \overline{\omega_k} , 
		\brak{t}^{3+2\ss}  \bold{a}_{n+1}^2  \bhqn  e^{W}\chi_n^2 \Big \rangle 
	\\& =
	\frac{1}{t} \sum_{k}    	\paren{\sum_{m\le n/2} + \sum_{m\ge n/2}^{n}}  \binom{n}{m}\Big \langle  q^{n-m}\Gamma_{k}^{n-m}ik \stf_{k}^{(I)} q^{m}\Gamma_{k}^{m} \Gamma_k \overline{\omega_k} , 
	\brak{t}^{3+2\ss}  \bold{a}_{n+1}^2  \bhqn  e^{W}\chi_n^2 \Big \rangle 
	\\& = N_{2}^{HL, I} +N_{2}^{LH, I} 
\end{align*}
of which in the above only the second term needs new estimates, which we show below:
\begin{align*}
	N_{2}^{LH, I}  =  	\frac{1}{t} \sum_{k}    	\sum_{n\ge m\ge n/2}  \binom{n}{m}\Big \langle  q^{n-m}\Gamma_{k}^{n-m}ik \stf_{k}^{(I)} q^{m}\Gamma_{k}^{m} \Gamma_k \overline{\omega_k} , 
	\brak{t}^{3+2\ss}  \bold{a}_{n+1}^2  \bhqn  e^{W}\chi_n^2 \Big \rangle .
\end{align*}
When $m<n$, the  estimates are relatively straightforward 
\begin{align*}
	\sum_{n}& \theta_{n}^2 (n+1)^{2\sss-2}	\frac{1}{t} \sum_{k}	\sum_{n > m\ge n/2}  \binom{n}{m}\Big \langle  q^{n-m}\Gamma_{k}^{n-m}ik \stf_{k}^{(I)} q^{m}\Gamma_{k}^{m} \Gamma_k \overline{\omega_k} , 
	\brak{t}^{3+2\ss}  \bold{a}_{n+1}^2  \bhqn  e^{W}\chi_n^2 \Big \rangle 
	\\&
	\lesssim \sum_{n} \theta_{n}^2 (n+1)^{2\sss-2}
   \frac{1}{t} \brak{t}^{3+2\ss} \sum_{k}  \sum_{n > m\ge n/2}  \binom{n}{m}   \bold{a}_{n+1}^2  
    \norm{q^{n-m-1}\Gamma_k^{n-m-1}ik \Gamma_k \stf_{k}^{(I)}\chi_{n-m}}_{L^\infty} 
    \\&\qquad \times \enorm{q^{m+1}\Gamma_k^{m+1}  \overline{\omega_k} e^{W} \chi_n}
   \enorm{\bhqn  \chi_n }
      \\&\lesssim 
      \sqrt{\mathcal{E}_{ell}^{(I, out)}} \sqrt{\mathcal{E}^{(\gamma)}} \sqrt{\mathcal{E}_{H}^{(\gamma)}}
   \lesssim \frac{\eps^3}{\brak{t}^{90}}.
\end{align*}
While $m=n$
\begin{align*}
	\sum_{n}& \theta_{n}^2 (n+1)^{2\sss-2} \frac{1}{t} \sum_{k}    \Big \langle  ik \stf_{k}^{(I)} q^n \Gamma_k^{n} \Gamma_k \overline{\omega_k} , 
	\brak{t}^{3+2\ss}  \bold{a}_{n+1}^2  \bhqn  \chi_n^2 \Big \rangle 
	\\&
	\lesssim \sum_{n} \theta_{n}^2 (n+1)^{2\sss-2}
	\frac{1}{t} \brak{t}^{3+2\ss} \sum_{k}  \sum_{n > m\ge n/2}  \binom{n}{m}   \bold{a}_{n+1}^2  
	\norm{ik \stf_{k}^{(I)}\chi_{n-1}}_{L^\infty} 
	\\&\qquad \times \enorm{q^n\Gamma_k^{n} \Gamma_k \overline{\omega_k} e^{W} \chi_n}
	\enorm{\bhqn  \chi_n }
	\\&
	\lesssim 
	\sqrt{\mathcal{E}_{ell}^{(I, out)}} \sqrt{\mathcal{D}^{(\gamma)}} \sqrt{\mathcal{E}_{H}^{(\gamma)}}
	\lesssim \frac{\eps^2}{\brak{t}^{90}}\sqrt{\cd^{\gm}} \lesssim 
        \frac{\eps^3}{\brak{t}^{180}} + \eps \cd^{\gm}
\end{align*}
concluding the proof for $n\ge2$.\\
{\bf (2). Case $n=0$:} At last we consider the case $n=0$, where we note that
\begin{align*}
	&N^{(0)}_{\overline{H}} 
	= 
	\Big \langle \frac{1}{t} \paren{  \nabla^{\perp} \stf_{\neq 0} \cdot \nabla \omega }_0, \brak{t}^{3+2\ss}  \bold{a}_{1}^2  \overline{H} e^{W}  \Big \rangle .
\end{align*}
The main difference between the high frequency part with the low ones is that the localization property is not 
guaranteed over all the domain of the integral, but only near the boundary. Therefore, we may further divide the term $N_1$ into two parts: the inner and outer part,
\begin{align*}
N^{(0)}_{\overline{H}}  &= 
	\Big \langle \frac{1}{t} \paren{  \nabla^{\perp} \stf_{\neq 0} \cdot \nabla \omega }_0, \brak{t}^{3+2\ss}  \bold{a}_{1}^2  \overline{H} e^{W} \chi_2^2 \Big \rangle 
      + 
		\Big \langle \frac{1}{t} \paren{  \nabla^{\perp} \stf_{\neq 0} \cdot \nabla \omega }_0, \brak{t}^{3+2\ss}  \bold{a}_{1}^2  \overline{H} e^{W} (1- \chi_2^2 )\Big \rangle 
	\\&=: N^{E, 0} + N^{I, 0}.
\end{align*}
For the exterior part, the treatment is similar to the case $n\ge 2$ and we only focus on the bound of $N_1^{I,0}.$ Noting
\begin{align*}
	N^{I, 0}  &= \Big \langle \frac{1}{t} \paren{  \nabla^{\perp} \stf_{\neq 0} \cdot \nabla \omega }_0, \brak{t}^{3+2\ss}  \bold{a}_{1}^2  \overline{H} e^{W} (1- \chi_2^2 )\Big \rangle 
	\\&
	= 
	-\sum_{k} \Big \langle \frac{1}{t}   \Gamma_k \stf_{k}  \overline {ik  \omega_k} , \brak{t}^{3+2\ss}  \bold{a}_{1}^2  \overline{H} e^{W} (1- \chi_2^2 )\Big \rangle 
	+
	\sum_{k} \Big \langle \frac{1}{t}  ik \stf_{k}  \Gamma_k \omega_k , \brak{t}^{3+2\ss}  \bold{a}_{1}^2  \overline{H} e^{W} (1- \chi_2^2 )\Big \rangle 
	\\&=
	N_{1}^{I, 0} + N_{2}^{I, 0} 
\end{align*}
of which we only show details for the first term since the estimate of the second one is completely parallel.
We proceed as
\begin{align*}
	|N_{1}^{I, 0}| & \le \bold{a}_{1}^2 \brak{t}
	\sum_{k} \brak{t}^{2\ss-r} \norm{\Gamma_k \stf_{k}(1-\chi_{ 3})}_{L^\infty} \norm{k \omega_k e^W\sqrt{1- \chi_2^2}}_{L^2}
	   \brak{t}^{1+r}\enorm{\overline{H} \sqrt{1- \chi_2^2}}
	   \\& \lesssim 
	   \bold{a}_{1}^2\brak{t}
	   \sum_{k} \brak{t}^{2\ss-r} \enorm{\Gamma_k \stf_{k}(1-\chi_{ 3})}^{1/2}  \enorm{\Gamma_k\paren{\Gamma_k \stf_{k}(1-\chi_{ 3})}}^{1/2}  \norm{k \omega_k e^W\sqrt{1- \chi_2^2}}_{L^2}
	   \\&\qquad \times 
	   \brak{t}^{1+r}\enorm{\overline{H} \sqrt{1- \chi_2^2}}
\end{align*}
where $r$ is the radius of Gevrey regularity in the bulk flow. 
By the elliptic estimate~\eqref{eell:sob} and \eqref{fell:e}, 
one easily gets
\begin{align*}
	\sum_{k} \enorm{\Gamma_k \stf_{k}(1-\chi_{ 3})} ^2
	&\le \sum_{k}\enorm{\Gamma_k \stf_{k}^{(I)}(1-\chi_{ 3})}^2 + \sum_{k} \enorm{\Gamma_k \stf_{k}^{(E)}(1-\chi_{ 3})} ^2
	\\& \lesssim  \mathcal{E}_{ell}^{(I, full)} + \brak{t}^2\mathcal{J}_{ell}^{(1)}
	{\lesssim \frac{\eps^2}{\brak{t}^4} + \eps^2e^{-\nu^{-1/10}}}
	\\
	\sum_{k} \enorm{\Gamma_k\paren{\Gamma_k \stf_{k}(1-\chi_{ 3})}} ^2& \lesssim {\frac{\eps^2}{\brak{t}^4}}
\end{align*}
for $t\lesssim \nu^{-1/3-\delta}$. Therefore, using H\"older's inequality and the bootstrap assumptions, we obtain
\begin{align*}
	|N_{1}^{I, 0}| &\lesssim \sqrt{\mathcal{E}_{ell}^{(I, full)} +\brak{t}^2 \mathcal{J}_{ell}^{(1)}}  \sqrt{\mathcal{E}_{cloud}} \sqrt{\mathcal{E}^{(\overline h)}_{\mathrm{Int, Coord}}}  \frac{\brak{t}^{2\ss}}{\brak{t}^{1+r}} 
	  \lesssim 
	  \frac{\eps^3}{\brak{t}^{3+r-2\ss}}.
\end{align*}
We turn to $N_{2}^{I, 0} $ next. Similarly we have
\begin{align*}
	|N_{2}^{I, 0}| & \le \frac{1}{t}\bold{a}_{1}^2  
	\sum_{k} \brak{t}^{3+2\ss} \norm{ k \stf_{k}(1-\chi_{ 3})}_{L^\infty} \norm{ \Gamma_k \omega_k e^W\chi^I_{\omega}}_{L^2}
    \enorm{\overline{H} (1- \chi_2^2 ) }
	\\& \lesssim \frac{1}{t}
	\bold{a}_{1}^2  
	\sum_{k} \brak{t}^{3+2\ss} \enorm{\Gamma_k \stf_{k}(1-\chi_{ 3})}^{1/2}  \enorm{\Gamma_k\paren{k \stf_{k}(1-\chi_{ 3})}}^{1/2}  \norm{\Gamma_k \omega_k e^W\chi^I_{\omega}}_{L^2}
	\\&\quad \times \enorm{\overline{H} (1- \chi_2^2 )}
	\\&
	\lesssim \frac{\brak{t}^{2\ss}}{\brak{t}^{1+r}} \sqrt{\mathcal{E}_{ell}^{(I, full)}}  \sqrt{\mathcal{E}_{cloud}} \sqrt{\mathcal{E}^{(\overline h)}_{\mathrm{Int, Coord}}}
	\lesssim 
	\frac{\eps^3}{\brak{t}^{3+r-2\ss}}.
\end{align*}
{\bf (3). Case $n=1$:} We note 
\begin{align*}
	&N^{(1)}_{\overline{H}} 
	= 
	\Big \langle q\Gamma_0 \frac{1}{t} \paren{  \nabla^{\perp} \stf_{\neq 0} \cdot \nabla \omega }_0, \brak{t}^{3+2\ss}  \bold{a}_{2}^2  \bhqnn  e^{W}\chi_1^2 \Big \rangle .
\end{align*}
Like in the case $n=0$, we split the term $N^{(1)}_{\overline{H}}$ into interior and exterior part, focusing on the first one
\begin{align*}
	N_{\overline H}^{(I, 1)} &= 
		\Big \langle q\Gamma_0 \frac{1}{t} \paren{  \nabla^{\perp} \stf_{\neq 0} \cdot \nabla \omega }_0, 
		\brak{t}^{3+2\ss}  \bold{a}_{2}^2  \bhqnn  e^{W} \chi_1^2 (1-\chi_2^2)\Big \rangle 
		\\& = \sum_{k}
		-\Big \langle q\Gamma_0 \paren{\frac{1}{t}  \Gamma_k \stf_{k} \overline { ik \omega_k} }, 
		\brak{t}^{3+2\ss}  \bold{a}_{2}^2  \bhqnn  e^{W} \chi_1^2 (1-\chi_2^2)\Big \rangle 
		\\&\qquad + \sum_{k}
		\Big \langle q\Gamma_0 \frac{1}{t} \paren{ ik \stf_{k} \Gamma_k \omega_k }, 
		\brak{t}^{3+2\ss}  \bold{a}_{2}^2  \bhqnn  e^{W} \chi_1^2 (1-\chi_2^2)\Big \rangle 
		\\&
		= 	N_{1}^{(I, 1)} + N_{2}^{(I, 1)} .
\end{align*}
For the first piece $N_{1}^{(I, 1)}$, we use Leibniz's rule to obtain
\begin{align*}
	N_{1}^{(I, 1)}  = &
	-\frac{1}{t} 	\sum_{k}\Big \langle q\Gamma_0   \Gamma_k \stf_{k} \overline {ik \omega_k }, 
	\brak{t}^{3+2\ss}  \bold{a}_{2}^2  \bhqnn  e^{W} \chi_1^2 (1-\chi_2^2)\Big \rangle 
	\\&\quad
	-\frac{1}{t} 	\sum_{k}\Big \langle  \Gamma_k \stf_{k} \overline { ik q\Gamma_0  \omega_k} , 
	\brak{t}^{3+2\ss}  \bold{a}_{2}^2  \bhqnn  e^{W} \chi_1^2 (1-\chi_2^2)\Big \rangle 
	\\=&	N_{11}^{(I, 1)} + 	N_{12}^{(I, 1)} .
\end{align*}
Using $\nu^{-100} \lesssim e^{W/2}$, $\Gamma_k = \Gamma_0 + ikt$, by H\"older's inequality and Sobolev embedding, we arrive at
\begin{align*}
	|N_{11}^{(I, 1)}| &\le 	\frac{1}{t} 	\brak{t}^{3+2\ss}  \bold{a}_{2}^2
	\sum_{k}     \norm{q\Gamma_0   \Gamma_k \stf_{k}(1-\chi_2^2) }_{L^\infty} 
	\enorm{ik \omega_k e^{W} \chi_1}
    \enorm{ \bhqnn  \chi_1} 	
    \\& \lesssim \nu^{100}\brak{t}^{2\ss} \paren{\sqrt{\mathcal{E}_{ell}^{(I, full)}} + \sqrt{ \brak{t}^2  \mathcal{J}_{ell}^{(1)}}} \mathcal{E}^{(\gamma)} \sqrt{\mathcal{E}_{\overline H}^{(\gamma)}} 
    \lesssim \nu^{90} \ep^3.
\end{align*}
In the same manner, we also have
\begin{align*}
		|N_{12}^{(I, 1)}| &\le 	\frac{1}{t} 	\brak{t}^{3+2\ss}  \bold{a}_{2}^2
	\sum_{k}     \norm{  \Gamma_k \stf_{k}(1-\chi_2^2) }_{L^\infty} 
	\enorm{ik q\Gamma_0  \omega_k e^{W} \chi_1}
	\enorm{ \bhqnn  \chi_1} 	
	\\& \lesssim \nu^{90} \ep^3.
\end{align*}
The above estimates similarly work for $	N_{2}^{(I, 1)}$ and we only show one term where the derivative falls on $\omega_{k}$
\begin{align*}
	\sum_{k} &
	\Big \langle \frac{1}{t} ik \stf_{k} q\Gamma_0 \Gamma_k \omega_k , 
	\brak{t}^{3+2\ss}  \bold{a}_{2}^2  \bhqnn  e^{W}\chi_1^2 (1-\chi_2^2)\Big \rangle 
	\\&
	\lesssim
	\sum_{k} 
	\frac{1}{t}\brak{t}^{2+\ss}  \bold{a}_{2}^2   \norm{k \stf_{k} \sqrt{(1-\chi_2^2)}}_{L^\infty} \enorm{q\Gamma_0 \Gamma_k \omega_k e^{W} \chi_1 \sqrt{(1-\chi_2^2)}}
	   \brak{t}^{1+\ss} \enorm{\mathring{\overline{H}}_1  \chi_1 (1-\chi_3^2)}
	  \\&
\lesssim \nu^{90} \ep^3,
\end{align*}
where we used that $e^{W/2}$ is huge near the boundary, concluding the proof of this lemma.
\end{proof}
\subsubsection{Viscous commutator}
There are several commutator terms to be addressed. We first focus on the viscous commutator
\begin{align*}
	C^{(n)}_{\overline{H},visc} &= \brak{t}^{3+2\ss} \bold{a}_{n+1}^2\Re\brak{  \widetilde{\mathcal{C}^{(n)}_{visc}}, \bhqn  e^{W} \chi_n^2}
	\nn&
	=
	\nu \brak{t}^{3+2\ss} \bold{a}_{n+1}^2\Re\brak{   q^n\sum_{m = 1}^n \binom{n}{m}  \Gamma_0^m v_y^2 \Gamma_0^{2 + n-m} \overline{H}, \bhqn  e^{W} \chi_n^2}.
\end{align*}
\begin{lemma}
	\label{visc:comm:barH}
	It follows
	\begin{align}
	\label{visc:comm:esti:barH}
	&\sum_{n} \theta_{n}^2 (n+1)^{2\sss-2}  C^{(n)}_{\overline{H},visc}
	\lesssim
       \sqrt{\mathcal{E}_{\overline H}^{(\alpha)}}
	\sqrt{\cd_{\overline H}^{\gm}} \paren{\sqrt{\mathcal{CK}_{\overline{H}}^{\gm}} + \sqrt{\cd_{\overline H}^{\gm}}}
	\lesssim
	\eps
	\sqrt{\cd_{\overline H}^{\gm}} \paren{\sqrt{\mathcal{CK}_{\overline{H}}^{\gm}} + \sqrt{\cd_{\overline H}^{\gm}}}.
\end{align}
\begin{proof}
The proof follows almost line by line as that of Lemma~5.6 in \cite{BHIW24b} except a slight difference in 
	the binomial coefficients and we only show one term here to give the readers a
	flavor. 
{	As in Lemma~5.6 in \cite{BHIW24b}, we arrive at}
	\begin{align}
		\label{visc:comm:barH:eq01}
		C^{(n)}_{\overline{H},visc}
		&=
		\nu \brak{t}^{3+2\ss}\sum_{l = 1}^n  \bold{a}_{n+1}^2 \binom{n}{l}
		\Re\brak{   q^n   \Gamma_0^l \paren{v_y^2-1} \Gamma_0^{2 + n-l} \overline{H},
		 \bhqn  e^{W} \chi_n^2}
		\nn&=
		-\nu \brak{t}^{3+2\ss}\sum_{l = 1}^n  \bold{a}_{n+1}^2 \binom{n}{l}
		\Re\brak{   q^n   \Gamma_0^l \paren{v_y^2-1} \Gamma_0^{1 + n-l} \overline{H},
		  \Gamma_0 	\bhqn  e^{W} \chi_n^2}
		\nn&\quad
		-\nu \brak{t}^{3+2\ss}\sum_{l = 1}^n  \bold{a}_{n+1}^2 \binom{n}{l}
		\Re\brak{   q^n   \Gamma_0^l \paren{v_y^2-1} \Gamma_0^{1 + n-l} \overline{H},
		\bhqn  2\Gamma_0W e^{W}\chi_n^2}
		\nn&\quad 
		-\nu \brak{t}^{3+2\ss}\sum_{l = 1}^n  \bold{a}_{n+1}^2 \binom{n}{l}
		\Re\brak{   \Gamma_0\paren{q^l   \Gamma_0^l \paren{v_y^2-1}} q^{n-l}\Gamma_0^{1 + n-l} \overline{H},
			\bhqn  e^{W} \chi_n^2}
		\nn&\quad
		-\nu \brak{t}^{3+2\ss}\sum_{l = 1}^n  \bold{a}_{n+1}^2 \binom{n}{l}
		\Re\brak{   q^l   \Gamma_0^l \paren{v_y^2-1} \Gamma_0 q^{n-l}\Gamma_0^{1 + n-l} \overline{H},
			\bhqn  e^{W} \chi_n^2}
		\nn&\quad
		-\nu \brak{t}^{3+2\ss}\sum_{l = 1}^n  \bold{a}_{n+1}^2 \binom{n}{l}
		\Re\brak{   q^n   \Gamma_0^l \paren{v_y^2-1} \Gamma_0^{1 + n-l} \overline{H},
			\bhqn 2	\Gamma_0 \chi_n e^{W} \chi_n}
		\nn&\quad
		-\nu \brak{t}^{3+2\ss}\sum_{l = 1}^n  \bold{a}_{n+1}^2 \binom{n}{l}
		\Re\brak{   q^n   \Gamma_0^l \paren{v_y^2-1} \Gamma_0^{1 + n-l} \overline{H},
			\bhqn  e^{W} \partial_y\paren{\frac{1}{v_y}} \chi_n^2}
		\nn&= 	V_{\overline H, 1}^{(n)}+V_{\overline H, 2}^{(n)}+V_{\overline H, 3}^{(n)}+V_{\overline H, 4}^{(n)}+V_{\overline H, 5}^{(n)}+V_{\overline H, 6}^{(n)}.
	\end{align}
Note in this lemma, we automatically have $n\ge1$ and the localization property is available. 
We proceed to rewrite the term involving $V_{\overline H, 1}^{(n)}$ as
\begin{align*}
	\sum_{n} \theta_{n}^2 (n+1)^{2\sss-2}  V_{\overline H, 1}^{(n)} &= 
	\sum_{n}  \paren{\sum_{l\le n/2} + \sum_{n/2<l\le n} } 
	(n+1)^{2\sss-2}\nu \brak{t}^{3+2\ss} \bold{a}_{n+1}^2 \binom{n}{l}
	\nn&\qquad\times
	\Re\brak{   q^n   \Gamma_0^l \paren{v_y^2-1} \Gamma_0^{2 + n-l} \overline{H},
		\bhqn  e^{W} \chi_n^2}
	\nn&=
	\sum_{n} \theta_{n}^2 (n+1)^{2\sss-2}  V_{\overline H, 1}^{LH}	+ 	\sum_{n} \theta_{n}^2 (n+1)^{2\sss-2}  V_{\overline H, 1}^{HL}
\end{align*}
where we only give details of the treatment of the second piece. By H\"older's inequality, one gets
\begin{align*}
		\sum_{n} \theta_{n}^2 (n+1)^{2\sss-2}  V_{\overline H, 1}^{HL}
		&\lesssim
		\sum_{n}  \sum_{n/2<l\le n}  
		(n+1)^{2\sss-2}\nu \brak{t}^{3+2\ss} \bold{a}_{n+1}^2 \binom{n}{l}
		   \enorm{q^{l-1}\Gamma_0^l \paren{v_y^2-1} e^{W/2}\chi_{l-1}\widetilde\chi_1}
		 		\nn&\qquad\times 
		\norm{q^{n-l+1}\Gamma_0^{1 + n-l} \overline{H} \chi_{n-l+1}}_{L^\infty}
			\enorm{\frac{\partial_y}{v_y}\bhqn  e^{W/2} \chi_n}
\end{align*}
where $\widetilde\chi_1$ is a fattened version of $\chi_1$ and the localization property still holds in the support of it. 
By Sobolev embedding and Poincar\'e's inequality, it follows
\begin{align*}
	&\norm{q^{n-l+1}\Gamma_0^{1 + n-l} \overline{H} \chi_{n-l+1}}_{L^\infty}
\nn&\qquad\qquad
\lesssim
	(1+(n-l)^{1+\sigma})\norm{q^{n-l+1}\Gamma_0^{1 + n-l} \overline{H} \chi_{n-l}\widetilde\chi_1}_{L^2}
		\nn&\qquad\qquad\qquad+
	\norm{\partial_y\paren{q^{n-l+1}\Gamma_0^{1 + n-l} \overline{H}} \chi_{n-l+1}}_{L^2}
\nn&\qquad\qquad
\lesssim
(1+(n-l)^{1+\sigma})\norm{ \ztp{\overline{H}}_{n-l+1} e^{W/2} \chi_{n-l+1}}_{L^2}
+\nu^{1/2}
\norm{\partial_y\ztp{\overline{H}}_{n-l+1} e^{W/2}\chi_{n-l+1}}_{L^2},
\end{align*}
where we also used that
\begin{align*}
	\partial_{y} \chi_{m+1} \lesssim m^{1+\sigma} \chi_{m},\ \mbox{for}\ m\ge0.
\end{align*}
Using $\abs{v_y}\gtrsim 1$, we get
\begin{align*}
	\enorm{\frac{\partial_y}{v_y}\bhqn  e^{W/2} \chi_n} 
	\lesssim \enorm{\partial_y\bhqn  e^{W/2} \chi_n}.
\end{align*}
Therefore, we further obtain
\begin{align*}
		&\sum_{n} \theta_{n}^2 (n+1)^{2\sss-2}  V_{\overline H, 1}^{HL}
	\lesssim
	\sum_{n}  \sum_{n/2<l\le n}  
	(n+1)^{2\sss-2}\nu \brak{t}^{3+2\ss} \bold{a}_{n+1}^2 \binom{n}{l} \paren{\bb_{l-1}\bb_{n-l}}^{-1} (n-l)^{-\sss+1}
	\bb_{l-1}
		\nn&\qquad\times \enorm{q^{l-1}\Gamma_0^l \paren{v_y^2-1} e^{W/2}\chi_n}
\bb_{n-l}(n-l)^{\sss+\sigma}\norm{ \ztp{\overline{H}}_{n-l+1} e^{W/2} \chi_{n-l+1}}_{L^2}
	\enorm{\partial_y\bhqn  e^{W/2} \chi_n}
	\nn	&\qquad+
	\sum_{n}  \sum_{n/2<l\le n}  
(n+1)^{2\sss-2}\nu \brak{t}^{3+2\ss} \bold{a}_{n+1}^2 \binom{n}{l} \paren{\bb_{l-1}\bb_{n-l}}^{-1} (n-l)^{-\sss+1}
\bb_{l-1}
\nn&\qquad\times \enorm{q^{l-1}\Gamma_0^l \paren{v_y^2-1} e^{W/2}\chi_n}
\bb_{n-l}(n-l)^{\sss+\sigma}	\nu^{1/2}\norm{\partial_y\ztp{\overline{H}}_{n-l+1} e^{W/2}\chi_{n-l+1}}_{L^2}
\enorm{\partial_y\bhqn  e^{W/2} \chi_n}.
\end{align*}
By Lemma~\ref{comb:boun}, H\"older's inequality, and Young's inequality, it follows
\begin{align*}
	\sum_{n} \theta_{n}^2 (n+1)^{2\sss-2}  V_{\overline H, 1}^{HL} \lesssim &
	\nu^{100}	\sqrt{\cd_{\overline H}^{\gm}} \brak{t}^{3/2+\ss}\paren{\norm{\overline{H}}_{Y_{0,-s}} + \nu^{1/2}\norm{\overline{H}}_{Y_{1,-s}} }
	\\&\quad\times
	\paren{\sum_{l\ge1} \enorm{\bold{a}_{\ell} q^{\ell-1}\Gamma_0^{\ell} \paren{v_y^2-1} e^{W/2} \chi_{l}}^2}^{1/2}.
\end{align*}
A similar argument as  {Lemma~5.1 in~\cite{BHIW24b}}, we deduce
\begin{align*}
	\paren{\sum_{l\ge1} \enorm{\bold{a}_{\ell} q^{\ell-1}\Gamma_0^{\ell} \paren{v_y^2-1} e^{W/2} \chi_{l}}^2}^{1/2}
	\lesssim
	\sum_{m} \bb_m \enorm{\partial_y \bd^{m}\paren{v_y^2-1} e^{W/2}\chi_n}
	\lesssim \norm{v_y^2-1}_{Y_{1, 0}}.
\end{align*}
On the other hand, for the $LH$ piece, we proceed as
\begin{align*}
	\sum_{n} \theta_{n}^2 (n+1)^{2\sss-2}  V_{\overline H, 1}^{LH}
	&\lesssim
	\sum_{n}  \sum_{n/2<l\le n}  
	(n+1)^{2\sss-2}\nu \brak{t}^{3+2\ss} \bold{a}_{n+1}^2 \binom{n}{l}
	\norm{q^{l}\Gamma_0^l \paren{v_y^2-1} e^{W/2}\chi_n}_{L^\infty}
	\nn&\qquad\times 
	\enorm{q^{n-l}\Gamma_0^{1 + n-l} \overline{H} \chi_{n-l+1}}
	\enorm{\frac{\partial_y}{v_y}\bhqn  e^{W/2} \chi_n},
\end{align*}
 {from where using a similar argument as in Lemma~5.6 in \cite{BHIW24b}, we obtain}
\begin{align*}
	\sum_{n} \theta_{n}^2 (n+1)^{2\sss-2}  V_{\overline H, 1}^{LH} &\lesssim
	\nu^{100}
		\sqrt{\cd_{\overline H}^{\gm}} 
      \nu^{1/2}\norm{v_y^2-1}_{Y_{1, 0}} 
	\norm{\overline{H}}_{Y_{1,-s}} 
	\\&\lesssim
	\nu^{100}
	\sqrt{\cd_{\overline H}^{\gm}} 
	\nu^{1/2}\norm{H}_{Y_{1, 0}} \norm{H}_{\overline Y_{1, 0}} 
	\norm{\overline{H}}_{Y_{1,-s}} 
	\\&\lesssim
	\nu^{100}
	\sqrt{\cd_{\overline H}^{\gm}} 
	\nu^{1/2} \mathcal{E}_{H}^{(\alpha)} 
	\brak{t}^{-3/2-s^{-1}}\sqrt{\mathcal{E}_{\overline H}^{(\alpha)}}
		\\&\lesssim
	\eps \sqrt{\cd_{\overline H}^{\gm}} \sqrt{\cd_{ H}^{\gm}} 
\end{align*}
and finish the treatment of $V_{\overline H, 1}^{(n)}$, where we also use Lemma~\ref{pro:1}, Poincar\'e's inequality, and the bootstrap assumptions. 
 {The rest of the proof is parallel to that of Lemma 5.6 in~\cite{BHIW24b} and 
we omit further details for the sake of brevity.}
\end{proof}
\end{lemma}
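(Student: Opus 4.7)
The plan is to follow closely the structure of Lemma~5.6 in the companion paper \cite{BHIW24b}, adapted to the slightly different binomial/weight structure appearing in the present $\overline{H}$-equation. The starting point is to first note that, since the $m=0$ contribution in $\widetilde{\mathcal{C}^{(n)}_{visc}}$ corresponds to $\nu v_y^2 \Gamma_0^{2+n} \overline{H}$ and can be absorbed into the dissipation $\mathcal{D}_{\overline{H}}^{(\gamma)}$ after integration by parts, one can replace $v_y^2$ by $v_y^2-1$ inside the sum. Then I would integrate by parts one $\Gamma_0$-derivative on the top factor $\Gamma_0^{2+n-l}\overline{H}$, distributing the resulting derivative via the Leibniz rule onto the six possible targets (the other $\overline{H}$ factor, the weight $e^W$, the $\chi_n^2$ cutoff, the $(v_y^2-1)$ factor, the co-normal weight $q^{n-l}$ after rewriting $q^n = q^l q^{n-l}$, and the Jacobian $1/v_y$ coming from $\Gamma_0 = v_y^{-1}\partial_y$). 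This produces exactly the six pieces $V^{(n)}_{\overline{H},1},\ldots,V^{(n)}_{\overline{H},6}$ of the decomposition~\eqref{visc:comm:barH:eq01}.

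For each $V^{(n)}_{\overline{H},j}$, I would split the inner summation over $l$ into a high-low part ($n/2 < l \le n$) and a low-high part ($l \le n/2$), so that on each piece the derivative-heavy factor sits in $L^2$ and the derivative-light factor sits in $L^\infty$. The $L^\infty$ factors are handled through Sobolev embedding with vector fields, paying $(n-l)^{1+\sigma}$ when a derivative lands on a cutoff $\chi_{n-l+1}$; these losses are absorbed into the $(n+1)^{2s^{-1}-2}$ weight using the tail factor $\theta_n$ along with the combinatorial bounds of Lemma~\ref{comb:boun} / Corollary~\ref{comb:boun:vari:1}. The Jacobian $1/v_y$ is pointwise $\sim 1$ by the bootstraps, and $\partial_y \chi_{m+1}\lesssim m^{1+\sigma}\chi_m$ supplies the necessary sliding-cutoff estimate. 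The sample term $V^{(n)}_{\overline{H},1}$ treated in the excerpt already shows the template: after splitting, Hölder's inequality produces a product of $\sqrt{\mathcal{D}_{\overline{H}}^{(\gamma)}}$ (from $\partial_y \bhqn$), an $\overline{H}$-norm (from the Sobolev embedding on $\ztp{\overline{H}}_{n-l+1}$), and a $\sum_\ell \|\mathbf{a}_\ell q^{\ell-1}\Gamma_0^\ell(v_y^2-1)\cdots\|^2$ sum that is controlled by $\|v_y^2-1\|_{Y_{1,0}}$ via the argument from Lemma~5.1 of \cite{BHIW24b}.

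Once the six pieces are in hand, the remaining step is to convert $\|v_y^2 - 1\|_{Y_{1,0}}$ into coordinate-system quantities: expanding $v_y^2-1 = H(H+2)$, a product estimate in $Y_{1,0}$ gives $\|v_y^2-1\|_{Y_{1,0}} \lesssim \|H\|_{Y_{1,0}} \|H\|_{\overline{Y}_{1,0}}$, and the bootstrap hypothesis on $\mathcal{E}_{H}^{(\alpha)}$ together with $\|H\|_{\overline{Y}_{1,0}}\lesssim 1$ (by Poincaré and boundedness of $v_y$) produces the factor $\eps$. Similarly the $\overline{H}$-factor collapses via $\|\overline{H}\|_{Y_{1,-s}} = \brak{t}^{-3/2-s^{-1}}\sqrt{\mathcal{E}_{\overline{H}}^{(\alpha)}}\lesssim \eps \brak{t}^{-3/2-s^{-1}}$, which combined with the prefactor $\nu^{100}$ (actually a large polynomial decay in $\brak{t}$ by the localization $\nu^{-100}\lesssim e^{W}$ on the relevant support) is more than enough to yield a bound by $\eps \sqrt{\mathcal{D}_{\overline{H}}^{(\gamma)}}(\sqrt{\mathcal{CK}_{\overline{H}}^{(\gamma)}}+\sqrt{\mathcal{D}_{\overline{H}}^{(\gamma)}})$.

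The main obstacle, in my view, is the bookkeeping when the derivative from the integration by parts falls on $q^n$ or on $\chi_n^2$ rather than on $\bhqn$ itself: in those pieces ($V^{(n)}_{\overline{H},3}$, $V^{(n)}_{\overline{H},5}$) one loses the factor $\sqrt{\nu}\partial_y$ that naturally pairs with $\mathcal{D}_{\overline{H}}^{(\gamma)}$, and must recover it either by rewriting $q' n/q$ as a CK-weight (using $\mathcal{CK}_{\overline{H}}^{(\gamma)}$) or by trading against the $(n+1)^{\sigma}$-budget supplied by $\theta_n^2(n+1)^{2s^{-1}-2}$. This is the same delicate trade-off that governed Lemma~5.6 in \cite{BHIW24b}, and I would import that analysis essentially verbatim, only adjusting the power of $\brak{t}$ and the explicit binomial weight $\binom{n}{m}$ that differ here.
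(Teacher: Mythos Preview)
Your proposal is correct and follows essentially the same approach as the paper: the same integration-by-parts of one $\Gamma_0$ derivative producing the six-term decomposition $V^{(n)}_{\overline{H},1},\ldots,V^{(n)}_{\overline{H},6}$, the same HL/LH splitting with Sobolev embedding, the same combinatorial bounds, and the same product-rule reduction of $\|v_y^2-1\|_{Y_{1,0}}$ to $H$-norms via the localization $\nu^{-100}\lesssim e^W$. One minor remark: your justification for replacing $v_y^2$ by $v_y^2-1$ is slightly off --- there is no $m=0$ term in the sum (it starts at $m=1$), and the replacement is valid simply because $\Gamma_0^l(1)=0$ for $l\ge 1$.
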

\subsubsection{Transport commutator}
For the transport commutator part, we note
\begin{align}
	C^{(n)}_{\overline{H},trans} &= 
	\brak{t}^{3+2\ss} \bold{a}_{n+1}^2\Re \brak{\widetilde{\mathcal{C}^{(n)}_{trans}} , \bhqn  e^{W} \chi_n^2}
	\nn&
	=
	\brak{t}^{3+2\ss} \bold{a}_{n+1}^2
	\Re \brak{q^n\sum_{m = 1}^n \binom{n}{m} \Gamma_0^m G \Gamma_0^{n-m+1} \overline{H}, \bhqn  e^{W} \chi_n^2}.
\end{align}
 {By an argument similar to Lemma 5.9 in~\cite{BHIW24b}, we obtain the following lemma. }
\begin{lemma}
	\label{tran:comm:barH}
       It holds
\begin{align}
	\label{tran:comm:esti:barH}
	\sum_{n} \theta_{n}^2 (n+1)^{2\sss-2}  C^{(n)}_{\overline{H},trans}
	\lesssim \nu^{90}\frac{\paren{\mathcal{E}_{\overline H}^{(\gamma)}}^2}{\brak{t}^2}  + \mathcal{E}_{\overline H}^{(\gamma)} 
	\paren{ \cd_{\overline H}^{\gm} + \cd_{H}^{\gm}}
	\lesssim \frac{\eps^4}{\brak{t}^2} + 
	\eps \paren{ \cd_{\overline H}^{\gm} + \cd_{H}^{\gm}}.
\end{align}
\end{lemma}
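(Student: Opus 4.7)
The plan is to parallel the treatment of the viscous commutator in Lemma~\ref{visc:comm:barH}, modulo two essential differences: here there is no overall factor of $\nu$, but only $n+1$ total $\Gamma_0$-derivatives (as opposed to $n+2$ in the viscous case), and one of the two factors is a coordinate-system unknown $G$ rather than $v_y^2-1$. I split the inner sum
\begin{align*}
C^{(n)}_{\overline H, \mathrm{trans}} = \brak{t}^{3+2\ss}\bold{a}_{n+1}^2\,\Re\sum_{m=1}^{n}\binom{n}{m}\Big\langle q^n\Gamma_0^m G\,\Gamma_0^{n-m+1}\overline H,\ \bhqn e^W\chi_n^2\Big\rangle
\end{align*}
into a high-low regime $m>n/2$ (most derivatives on $G$) and a low-high regime $m\le n/2$ (most derivatives on $\overline H$), and in each regime put the factor with the smaller number of $\Gamma_0$-derivatives into $L^\infty$ by Sobolev embedding, in the same spirit as the viscous estimate.

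In the high-low piece, using the commutator identities of Lemma~\ref{lem:com:BA} I redistribute the $q^n$ weight as $q^n=q^m\cdot q^{n-m}$ modulo lower-order terms, put $q^{n-m}\Gamma_0^{n-m+1}\overline H$ in $L^\infty$ via the Sobolev bound
\begin{align*}
\|q^{n-m}\Gamma_0^{n-m+1}\overline H\,\chi_{n-m+1}\|_{L^\infty}\lesssim (1+(n-m)^{1+\sigma})\|\ztp{\overline H}_{n-m+1}e^{W/2}\chi_{n-m+1}\|_{L^2}^{1/2}\|\nu^{1/2}\p_y\ztp{\overline H}_{n-m+1}e^{W/2}\chi_{n-m+1}\|_{L^2}^{1/2}\nu^{-1/4},
\end{align*}
exactly as in Lemma~\ref{visc:comm:barH}, absorbing the stray $\nu^{-1/4}$ via $\nu^{-100}\lesssim e^{W/2}$ on $\supp\chi_1$ from \eqref{weig:huge}. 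The $L^2$ factor $q^m\Gamma_0^m G\,e^{W/2}\chi_m$ is recognized as $\gqn|_{m}$ times the cutoff, controllable by $\sqrt{\mathcal{E}_G^{(\gamma)}}\lesssim \sqrt{\cd_H^{(\gamma)}}+$ lower-order via the relation $H=-P_0\omega-t\overline H$ and the definition of $\mathcal{E}_G^{(\gamma)}$ (for the boundary-adjacent, $n=0$ piece of $G$, the $\nu$-weighted Sobolev norm $\mathcal{E}_{\mathrm{sob}}$ supplies the missing pointwise estimate, and it is this step that produces the $\nu^{90}\langle t\rangle^{-2}(\mathcal{E}_{\overline H}^{(\gamma)})^2$ term through the enormous weight $e^W\gtrsim\nu^{-100}$ in the interior transition region).

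In the low-high piece, I put $q^m\Gamma_0^m G$ in $L^\infty$ by Sobolev embedding, which costs at most two extra $\Gamma_0$ or $\p_y$ derivatives and is controlled by $\mathcal{E}_G^{(\gamma)}+\mathcal{E}_G^{(\alpha)}$. The large-index factor $q^{n-m}\Gamma_0^{n-m+1}\overline H$ carries one more $\Gamma_0$-derivative than $\ztp{\overline H}_{n-m}$; I rewrite it as
\begin{align*}
q^{n-m}\Gamma_0^{n-m+1}\overline H = \Gamma_0\bigl(\ztp{\overline H}_{n-m}\bigr) - [\Gamma_0,q^{n-m}]\Gamma_0^{n-m}\overline H
\end{align*}
by \eqref{cm_pv_qn}, absorb $\Gamma_0=v_y^{-1}\p_y$ into the dissipation via $\nu^{1/2}\p_y\ztp{\overline H}_{n-m}$ (giving $\sqrt{\cd_{\overline H}^{(\gamma)}}$), and bound the lower-order commutator remainder by a telescoping sum that reduces to $\sqrt{\mathcal{E}_{\overline H}^{(\gamma)}}$. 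The final Cauchy--Schwarz pairs the $L^2$ factor on the right with $\bhqn e^{W/2}\chi_n$, producing the structural bound $\sqrt{\mathcal{E}_G^{(\gamma)}+\mathcal{E}_G^{(\alpha)}}\,\sqrt{\mathcal{E}_{\overline H}^{(\gamma)}}\,\sqrt{\cd_{\overline H}^{(\gamma)}}$, which by the bootstrap smallness of $\mathcal{E}_G$ and the identity $H=-P_0\omega-t\overline H$ is absorbed into $\sqrt{\mathcal{E}_{\overline H}^{(\gamma)}}(\sqrt{\cd_{\overline H}^{(\gamma)}}+\sqrt{\cd_H^{(\gamma)}})$.

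The binomial coefficient $\binom{n}{m}$, the $(n+1)^{2\ss-2}$ weight, and the ratios $\bold{a}_{n+1}/(\bold{a}_m\bold{a}_{n-m+1})$ are handled by the combinatorial lemmas (Corollary \ref{comb:boun:vari:1} and Lemma \ref{comb:boun:vari:2}) already invoked in the nonlinear and viscous commutator lemmas, after which the double sum collapses by Fubini and the argument in Lemma 5.1 of \cite{BHIW24b}. The main obstacle I anticipate is the derivative discrepancy: we have $n+1$ total $\Gamma_0$-derivatives but only $n$ co-normal weights $q^n$, so one derivative on $\overline H$ must be paid against the dissipation $\cd_{\overline H}^{(\gamma)}$ rather than against $\mathcal{E}_{\overline H}^{(\gamma)}$; this is precisely why the bound has $\cd_{\overline H}^{(\gamma)}$ on the right-hand side (as opposed to a pure $\mathcal{CK}$ term). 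Once the combinatorics and this trade-off are set up, the final Young's inequality and bootstrap assumption $\mathcal{E}_{\overline H}^{(\gamma)}\lesssim \eps^2$ yield the claimed $\eps^4\langle t\rangle^{-2}+\eps(\cd_{\overline H}^{(\gamma)}+\cd_H^{(\gamma)})$ bound, completing Proposition~\ref{gamm:esti}.
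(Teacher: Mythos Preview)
Your overall plan---HL/LH split, Sobolev on the low-derivative factor, combinatorics from Corollary~\ref{comb:boun:vari:1}---matches the paper's architecture, but you are missing the algebraic identity that drives the argument. Since $\partial_y G=\overline H$ (see \eqref{GbarH}), one has $\Gamma_0 G = v_y^{-1}\partial_y G=\overline H/v_y$, hence $\Gamma_0^m G=\Gamma_0^{m-1}(\overline H/v_y)$ for every $m\ge 1$. The paper uses this at the outset to rewrite the commutator as a sum of terms $\Gamma_0^l(\overline H/v_y)\,\Gamma_0^{n-l}\overline H$, i.e.\ bilinear in $\overline H$ with coefficient $1/v_y$. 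This drops the total $\Gamma_0$-count from $n+1$ to $n$, so the $q^n$ weight now balances and no derivative has to be paid into $\cd_{\overline H}^{(\gamma)}$ on the high-index factor. The low-index factor $\Gamma_0^l(\overline H/v_y)$ is then handled by the product rule Lemma~\ref{pro:-s} together with the geometric series \eqref{1onvy} for $1/v_y$; the $\cd_H^{(\gamma)}$ in the final bound comes from the $\overline Y_{1,0}$-norm of $1/v_y$ (i.e.\ derivatives of $H$), not from the relation $H=-P_0\omega-t\overline H$ you invoke.

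Without this conversion, two of your steps fail. First, the claim ``$\sqrt{\mathcal{E}_G^{(\gamma)}}\lesssim\sqrt{\cd_H^{(\gamma)}}$'' is false---an energy is not dominated by a dissipation (the correct relation, which \emph{is} the identity above in disguise, is $\mathcal{E}_G^{(\gamma)}-\theta_0^2\mathcal{E}_{G,0}^{(\gamma)}\lesssim \mathcal{E}_{\overline H}^{(\gamma)}+\mathcal{E}_{\overline H}^{(\alpha)}$). Second, your low-high step leaves a stray $\nu^{-1/2}$ when you push the extra $\Gamma_0$ on the high-index $\overline H$ into $\cd_{\overline H}^{(\gamma)}$, and you never say how to recover it; the paper never faces this because the substitution already reduced the derivative count. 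Your explanation of the $\nu^{90}\langle t\rangle^{-2}(\mathcal{E}_{\overline H}^{(\gamma)})^2$ term is also off: it has nothing to do with an $n=0$ piece of $G$ (there is none, since $m\ge 1$ throughout), but arises when the unweighted $L^\infty$ of the low-index $\overline H/v_y$ factor on $\supp\widetilde\chi_1$ is upgraded to a weighted $Y$-norm via $e^{-W/2}\lesssim\nu^{100}$. Once you make the $\Gamma_0 G=\overline H/v_y$ substitution, the rest of your outline essentially coincides with the paper's proof (which defers the HL details to Lemma~5.9 of \cite{BHIW24b}).
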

\begin{proof}
	As before, we split the sum into two pieces
		\begin{align*}
C^{(n)}_{\overline{H},trans} &
=\paren{\sum_{l\le n/2} + \sum_{n/2<l\le n-1} }\binom{n}{l}
\brak{t}^{3+2\ss} \bold{a}_{n+1}^2
\Re \brak{q^n \Gamma_0^l \frac{\overline H}{v_y} \Gamma_0^{n-l} \overline{H}, \bhqn  e^{W} \chi_n^2}
	\nn&	=
		T_{\overline H,1}^{\gm} +T_{\overline H,2}^{\gm}.
	\end{align*}
 {By exact the same argument as in the proof of Lemma~5.9 in~\cite{BHIW24b}, we have}
	\begin{align*}
	\sum_{n} & \theta_{n}^2 (n+1)^{2\sss-2} T_{\overline H,1}^{\gm}\\
	&\lesssim
	\sum_{n\ge0}
	\sum_{l\le n/2} | \bold{a}_{n+1}|^2 \brak{t}^{3+2\ss}
	\binom{n}{l}  (n+1)^{2\sss-2}
	\nnorm{  \overline \hqn  e^{W}\chi_n}_{L^2} \bold{a}_{n-l+1}^{-1}\bb_{l+1}^{-1}(l+1)^{2-2\sss}
	\nn&\quad
	\times
	 \bold{a}_{n-l+1}(n-l+1)^{\sss-1}\enorm{ \ztp{\overline{H}}_{n-l} e^{W}\chi_n}
	\bb_{l+1}(l+1)^{2\sss-2}\paren{ l^{1+\sigma} + 1}\norm{\bd^{l} \frac{\overline{H}}{v_y}\chi_{(l-1)_+}\widetilde\chi_1}_{L^2}
	\nn&\quad+
\sum_{n\ge0}
\sum_{l\le n/2} | \bold{a}_{n+1}|^2 \brak{t}^{3+2\ss}
\binom{n}{l}  (n+1)^{2\sss-2}
	\nnorm{ \bhqn e^{W}\chi_n}_{L^2} \bold{a}_{n-l+1}^{-1}\bb_{l+1}^{-1}(l+1)^{2-2\sss}
	\nn&\quad
	\times
	 \bold{a}_{n-l+1}(n-l+1)^{\sss-1}\enorm{ \ztp{\overline{H}}_{n-l} e^{W}\chi_n}
	\bb_{l+1}(l+1)^{2\sss-2}\nu^{1/2} \norm{\partial_y\paren{ \bd^{l} \frac{\overline{H}}{v_y}} \chi_{l}\widetilde\chi_1}_{L^2}
	\\&\lesssim \nu^{100}
	\brak{t}^{3+2\ss}
	\norm{\overline{H}}_{Y_{0,-s}}^2
	\paren{\norm{\frac{\overline{H}}{v_y}}_{Y_{0,-s}} 
		+\nu^{1/2}\norm{\frac{\overline{H}}{v_y}}_{Y_{1,-s}}}.
\end{align*}
Using Lemma~\ref{pro:-s} and the identity
\begin{align}
	\label{1onvy}
	\frac{1}{v_y} = 	\frac{1}{1 + H} = \sum_{l=0}^{\infty} (-1)^l H^l,
\end{align}
 we further arrive at
	\begin{align*}
	\sum_{n} & \theta_{n}^2 (n+1)^{2\sss-2} T_{\overline H,1}^{\gm}
	\\
	&\lesssim \nu^{100}
	\brak{t}^{3+2\ss}
	\norm{\overline{H}}_{Y_{0,-s}}^2
     \left(\norm{\overline{H}}_{Y_{0,-s}} \paren{\norm{\frac{1}{v_y}}_{\overline Y_{1, 0}} + \enorm{\frac{1}{v_y} }} + \paren{\norm{\overline{H}}_{Y_{1,-s}} +\enorm{\overline H}} \norm{\frac{1}{v_y}}_{\overline Y_{0, 0}}\right.
	\\&\quad \left.
	+\nu^{1/2}\norm{\overline{H}}_{Y_{1,-s}} \paren{\norm{\frac{1}{v_y}}_{\overline Y_{1, 0}} + \enorm{\frac{1}{v_y} }} + \paren{\norm{\overline{H}}_{Y_{1,-s}} +\enorm{\overline H}} \norm{\frac{1}{v_y}}_{\overline Y_{1, 0}}\right)
	\\
	&\lesssim \nu^{100}
	\brak{t}^{3+2\ss}
	\norm{\overline{H}}_{Y_{0,-s}}^2
	\left( \norm{\overline{H}}_{Y_{0,-s}} \paren{\norm{\frac{1}{v_y}}_{\overline Y_{1, 0}} + \enorm{\frac{1}{v_y} }} + \paren{\norm{\overline{H}}_{Y_{1,-s}} +\enorm{\overline H}} \norm{\frac{1}{v_y}}_{\overline Y_{0, 0}}\right.
	\\&\quad \left.
	+\nu^{1/2}\norm{\overline{H}}_{Y_{1,-s}} \paren{\norm{\frac{1}{v_y}}_{\overline Y_{1, 0}} + \enorm{\frac{1}{v_y} }} + \nu^{1/2} \paren{\norm{\overline{H}}_{Y_{1,-s}} +\enorm{\overline H}} \norm{\frac{1}{v_y}}_{\overline Y_{1, 0}} \right)
	\\
	&\lesssim \nu^{100}
	\brak{t}^{3+2\ss}
	\norm{\overline{H}}_{Y_{0,-s}}^2
	\left( \norm{\overline{H}}_{Y_{0,-s}} \paren{\frac{1}{1-\norm{H}_{\overline Y_{1. 0}}-\enorm{H}} 
		+ 1 + \norm{H}_{L^\infty}} \right.
	\\&\quad+ \left. \paren{\norm{\overline{H}}_{Y_{1,-s}} +\enorm{\overline H}}\paren{1+\frac{\norm{H}_{Y_{0, 0}}}{1-\norm{H}_{\overline Y_{1, 0}} - \enorm{H}}}\right.
	\\&\quad \left.
	+\nu^{1/2}\norm{\overline{H}}_{Y_{1,-s}} \paren{\frac{\norm{H}_{\overline Y_{1. 0}}}{1-\norm{H}_{\overline Y_{1. 0}}}  + 1 + \norm{H}_{L^\infty}} 
	\right. \\&
	\quad \left. + \nu^{1/2} \paren{\norm{\overline{H}}_{Y_{1,-s}} +\enorm{\overline H}} \frac{\norm{H}_{\overline Y_{1. 0}}}{1-\norm{H}_{\overline Y_{1. 0}}}   \right)
	\\&\lesssim
	\nu^{90}\frac{\paren{\mathcal{E}_{\overline H}^{(\gamma)}}^2}{\brak{t}^2}  + \mathcal{E}_{\overline H}^{(\gamma)} 
	\paren{ \cd_{\overline H}^{\gm} + \cd_{H}^{\gm}}
	\lesssim \frac{\eps^4}{\brak{t}^2} + 
	\eps \paren{ \cd_{\overline H}^{\gm} + \cd_{H}^{\gm}}
\end{align*}
where we also used Remark~\ref{Y:norm}, H\"older's inequality, and the bootstrap assumptions.
The estimates of the term $T_{\overline H,2}^{\gm}$ follows  {similarly as the second part of Lemma~5.9 in \cite{BHIW24b} and} we omit further details to conclude the proof.
\end{proof}
\subsubsection{Inductive terms}
We recall that
\begin{align*}
	C^{(n)}_{\overline{H},q}&=- \brak{t}^{3+2\ss} \bold{a}_{n+1}^2\Re \brak{ \widetilde{\mathcal{C}^{(n)}_{q}}, 
		\bhqn  e^{W} \chi_n^2}
	\\&=
	 \nu\brak{t}^{3+2\ss} \bold{a}_{n+1}^2\Re \brak{|q'|^2 \frac{n(n-1)}{q^2}  \bhqn  , 
		\bhqn  e^{W} \chi_n^2}
	+
	 \nu\brak{t}^{3+2\ss} \bold{a}_{n+1}^2\Re \brak{ 2 q'  \frac{n}{q} \pa_y \bhqn, 
		\bhqn  e^{W} \chi_n^2}
	\\&\quad
	+
	 \nu\brak{t}^{3+2\ss} \bold{a}_{n+1}^2\Re \brak{ q'' \frac{n}{q} \bhqn , 
		\bhqn  e^{W} \chi_n^2}.
\end{align*}
Note that this term is actually a linear term, and
 we obtain
\begin{lemma}
	We have
	\label{C:n:q}
	\begin{align*}
	\sum_{n} \theta_{n}^2 (n+1)^{2\sss-2} 	C^{(n)}_{\overline{H},q} \lesssim \eps_1 \cd_{\overline H}^{\gm}
	\end{align*}
	for some small universal constant $\eps_1>0$.
\end{lemma}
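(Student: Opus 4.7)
The strategy is to integrate by parts in the middle term (the one containing $\partial_y\bhqn$) to remove the derivative on $\bhqn$, then exploit an exact algebraic cancellation with the other two pieces of $\widetilde{\mathcal{C}^{(n)}_q}$, and finally apply a Hardy-type inequality using the vanishing of $\bhqn$ at $y=\pm1$ together with the inductive ICC framework of \cite{BHIW24b}. Concretely, writing $2\Re\bigl(\partial_y\bhqn\cdot\overline{\bhqn}\bigr)=\partial_y|\bhqn|^2$ and integrating by parts (no boundary contribution because $\bhqn\bigl|_{y=\pm1}=q^n(\pm1)\Gamma_0^n\overline H(\pm1)=0$), the only surviving derivative of $(q'n/q)$ is $q''n/q-n(q')^2/q^2$. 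Combining with $A_1$ and $A_3$, the $q''(n/q)|\bhqn|^2$ contributions cancel exactly, and $\tfrac{n(n-1)(q')^2}{q^2}+\tfrac{n(q')^2}{q^2}=\tfrac{n^2(q')^2}{q^2}$, so that
\[
C^{(n)}_{\overline{H},q}=\nu\brak{t}^{3+2\ss}\bold{a}_{n+1}^2\!\int\!\tfrac{n^2(q')^2}{q^2}|\bhqn|^2 e^W\chi_n^2-\nu\brak{t}^{3+2\ss}\bold{a}_{n+1}^2\!\int\! q'\tfrac{n}{q}|\bhqn|^2e^W\bigl[W_y\chi_n^2+2\chi_n\chi_n'\bigr].
\]

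For the two lower-order residual terms, I would handle the $W_y$ contribution by Cauchy--Schwarz together with \eqref{wdot:est:a} (which gives $\nu|\partial_y W|^2\lesssim -\partial_t W/K$); the resulting quantity is controlled by $\sqrt{K^{-1}\mathcal{CK}^{(1)}_{\overline H,n}}\sqrt{\mathcal{D}_{\overline H,n}^{(\gamma)}}$ after invoking the Hardy bound $\|\tfrac{n}{q}\bhqn\chi_n\|_{L^2}\lesssim n\|\partial_y\bhqn\chi_n\|_{L^2}+n\|\bhqn\chi_{n-1}\|_{L^2}$, which is legitimate since $\bhqn$ vanishes at the boundary and $q\sim(1-|y|)$ there. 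The cut-off commutator term with $\chi_n'$ is supported in a narrow band where $\chi_{n-1}=1$, so we fall back one index and pick up a factor $n^{1+\sigma}$ from $|\chi_n'|\lesssim n^{1+\sigma}\chi_{n-1}$; this is a standard Gevrey shedding and is absorbed by the dissipation at level $n-1$ via the weight ratio $\bold{a}_{n+1}/\bold{a}_n\sim n^{-s}/\lambda$ from the Gevrey coefficients.

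The main term $\nu n^2\int(q')^2|\bhqn|^2/q^2\,e^W\chi_n^2$ is treated by the differential identity $\tfrac{nq'}{q}\bhqn=\partial_y\bhqn-v_y q^n\Gamma_0^{n+1}\overline H$, which follows from the product rule and $v_y\Gamma_0=\partial_y$. Squaring and using Young's inequality with a small parameter $\eta$, one piece is exactly $(1+\eta)\nu|\partial_y\bhqn|^2 e^W\chi_n^2$, which integrates to $(1+\eta)\mathcal{D}_{\overline H,n}^{(\gamma)}$ after inserting the weights $\theta_n^2(n+1)^{2\ss-2}\bold{a}_{n+1}^2\brak{t}^{3+2\ss}$; the other piece is of the form $C_\eta\,\nu\int |q^n\Gamma_0^{n+1}\overline H|^2 e^W\chi_n^2$, which after reindexing $n\mapsto n+1$ and a second Hardy inequality on $\ztp{\overline H}_{n+1}$ gets transferred to $\mathcal{D}_{\overline H,n+1}^{(\gamma)}$. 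The ratio of weights $\theta_n^2(n+1)^{2\ss-2}\bold{a}_{n+1}^2$ vs.\ $\theta_{n+1}^2(n+2)^{2\ss-2}\bold{a}_{n+2}^2$ yields a gain $(\lambda_0)^{2s}(n+2)^{-2s}\le \lambda_0^{2s}$, so choosing $\lambda_0$ sufficiently small produces the small universal constant $\eps_1>0$. This is precisely the ICC-type induction of \cite{BHIW24b}, and closes the estimate.

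\textbf{Main obstacle.} The genuine difficulty is the factor $n^2$ that appears in the surviving term $\nu n^2(q')^2/q^2$: naive Hardy gives a bound $\lesssim n^2\mathcal{D}_{\overline H,n}^{(\gamma)}$, which is too large. The essential mechanism for gaining back the factor $n^2$ (and even more, so that $\eps_1$ is small) is the Gevrey-regularity gap between consecutive levels, i.e.\ the identity $\bold{a}_{n+1}^2/\bold{a}_n^2\sim\lambda_0^{2s}/n^{2s}$ with $s>1$, which provides the quantitative margin needed to absorb the transferred term into $\eps_1\mathcal{D}_{\overline H}^{(\gamma)}$. Equivalently stated, the smallness $\eps_1$ is a consequence of the choice $\lambda_0\ll1$ made in Proposition~\ref{prop:boot}, not of $\eps$ or $\nu$. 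A secondary technical point is ensuring that the Hardy inequality can be applied after multiplying by $\chi_n$: one must verify that $\bhqn e^{W/2}\chi_n$ still vanishes at the boundary and that the additional derivative terms $\chi_n'$ and $W_y$ produced by the product rule are dominated by $\mathcal{CK}^{(\iota;1)}_{\overline H,n}$ and $\mathcal{CK}^{(\iota;2)}_{\overline H,n}$, respectively, which follows from the pointwise bounds on $W$ established in \eqref{wdot:est:a}--\eqref{wdot:est:b}.
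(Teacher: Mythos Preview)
Your integration-by-parts computation is correct: after combining the three pieces of $\widetilde{\mathcal{C}^{(n)}_q}$, the principal contribution is indeed $\nu\brak{t}^{3+2\ss}\bold{a}_{n+1}^2\int\tfrac{n^2(q')^2}{q^2}|\bhqn|^2e^W\chi_n^2$. However, your treatment of this principal term contains two fatal errors.

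First, the identity $\tfrac{nq'}{q}\bhqn=\partial_y\bhqn-v_yq^n\Gamma_0^{n+1}\overline H$ followed by Young's inequality yields a term $(1+\eta)\nu|\partial_y\bhqn|^2e^W\chi_n^2$ with coefficient \emph{strictly larger than one}. After inserting the weights and summing in $n$, this becomes $(1+\eta)\mathcal{D}_{\overline H}^{(\gamma)}$, which cannot possibly be bounded by $\eps_1\mathcal{D}_{\overline H}^{(\gamma)}$ for small $\eps_1$. There is no mechanism in your argument to shrink this coefficient.

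Second, your weight-ratio computation is inverted. Since $\bold{a}_{n+1}/\bold{a}_{n+2}=(n+2)^s\lambda^{-s}\varphi^{-1}$, the ratio
\[
\frac{\theta_n^2(n+1)^{2s-2}\bold{a}_{n+1}^2}{\theta_{n+1}^2(n+2)^{2s-2}\bold{a}_{n+2}^2}
\]
is of order $(n+2)^{2s}\lambda^{-2s}\brak{t}^2$ (and for $n<n_*$ also carries a factor $\delta_{\text{Drop}}^{-2}$), which is \emph{large}. Transferring the second piece $C_\eta\nu\|q^{-1}\ztp{\overline H}_{n+1}\|^2$ up to level $n+1$ therefore incurs a loss, not the gain $\lambda_0^{2s}(n+2)^{-2s}$ you claim.

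The paper's approach runs the recursion in the opposite direction. Writing $\tfrac{1}{q}\bhqn=q^{n-1}\Gamma_0^n\overline H=\tfrac{1}{v_y}\bigl[\partial_y\ztp{\overline H}_{n-1}-\tfrac{(n-1)q'}{q}\ztp{\overline H}_{n-1}\bigr]$ and iterating, one expresses $\tfrac{n}{q}\bhqn$ in terms of $\partial_y\ztp{\overline H}_m$ at \emph{strictly lower} levels $m\le n-1$; this is the ICC scheme (Lemma~5.1 in \cite{BHIW24b}), which yields the bound $\sum_{0\le m\le n-1}C^{n-m}(n!/m!)^{-\sigma}D_{\overline H,m}^{(\gamma)}$. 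Going downward does give a weight gain since $\bold{a}_{n+1}<\bold{a}_{m+1}$, and the smallness $\eps_1$ comes from the $\theta_n$-sequence---namely $\theta_n/\theta_{n-1}=\delta_{\text{Drop}}$ for $n\le n_*$ together with the factorial decay $(n!/m!)^{-\sigma}$ for $n>n_*$---\emph{not} from the Gevrey radius $\lambda_0$.
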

\begin{proof}
	By a similar argument as  {Lemma~5.1 in~\cite{BHIW24b}}, we deduce that
		\begin{align*}
		\sum_{n} &\theta_{n}^2 (n+1)^{2\sss-2} 	C^{(n)}_{\overline{H},q} \le \sum_{n} \theta_{n}^2\sum_{0\le m\le n-1} C^{n-m}
		\paren{\frac{n!}{m!}}^{-\sigma} D_{\overline H,m}^{\gm} 
		\\& \le C \min\left\{\frac{1}{n_{*}^{\sigma/2}}, \delta_{\text{Drop}}^2\right\} \cd_{\overline H}^{\gm} \le \eps_1 \cd_{\overline H}^{\gm}
	\end{align*}
provided $n_{*} $ is large enough and $\delta_{\text{Drop}}$ is sufficiently small, 
where we used Fubini's theorem 
and 
\begin{align*}
	\frac{\theta_{n+1}}{\theta_{n}} = \delta_{\text{Drop}}
\end{align*}
for $n\le n_{*}$,
 concluding the proof.
\end{proof}
\subsubsection{Easy terms}
Next we bound the easy terms at once.
\begin{lemma}
	\label{easy:term}
	It holds
	\begin{align*}
		&\sum_{n} \theta_{n}^2 (n+1)^{2\sss-2}  \bold{a}_{n+1}^2  \brak{t}^{3+2\ss}  \nu 
		\langle \partial_y\bhqn , \bhqn   e^{W}  \pa_y \{\chi_n^2\} \rangle 
		\lesssim
		\eps_1 \cd_{\overline H}^{\gm};
		\nn&
		\sum_{n} \theta_{n}^2 (n+1)^{2\sss-2}  \bold{a}_{n+1}^2   \brak{t}^{3+2\ss} \nu \langle \partial_y \bhqn , 
		\bhqn  \pa_y^2 \{ e^{W}  \} \chi_n^2  \rangle
		\lesssim \eps_1 \sqrt{\mathcal{CK}_{\overline{H}}^{\gm}}\sqrt{\cd_{\overline H}^{\gm}};
		\nn&
		\sum_{n} \theta_{n}^2 (n+1)^{2\sss-2} \paren{(3/2+\ss)\brak{t}^{1+2\ss}t - \frac{2}{t}\brak{t}^{3+2\ss}} \bold{a}_{n+1}^2 \| \bhqn  e^{W/2} \chi_{n} \|_{L^2}^2
	\le	\frac{1}{2} \mathcal{CK}_{\overline{H}}^{\gm}.
	\end{align*} 
for some small $\eps_1>0$.
\end{lemma}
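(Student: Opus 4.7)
The plan is to treat the three bounds independently, since each exploits a distinct structural feature: the third is a direct weighted comparison, the second leverages the key pointwise inequality \eqref{wdot:est:a} together with a large parameter in the weight, and the first (the main obstacle) requires an ICC-style index-cascade argument as in Lemma~\ref{C:n:q}. In all three cases the smallness of $\epsilon_1$ will come from a structural parameter ($n_*$, $K$, or $\delta_{\mathrm{Drop}}$) and not from the solution's smallness.

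For the third bound, since $\bold{a}_{n+1} = B_{0,n+1}\varphi^{n+2}$ and $\dot\varphi/\varphi \sim -1/\brak{t}$, one has $-\dot{\bold{a}}_{n+1}/\bold{a}_{n+1} \gtrsim (n+2)/\brak{t}$, giving
\[
\mathcal{CK}_{\overline{H}, n}^{(\gamma;2)} \gtrsim (n+2)(n+1)^{2\sss-2}\bold{a}_{n+1}^2 \brak{t}^{2+2\sss}\|\bhqn e^{W/2}\chi_n\|_{L^2}^2.
\]
The prefactor $(3/2+\ss)\brak{t}^{1+2\ss}t - (2/t)\brak{t}^{3+2\ss}$ is bounded by $C\brak{t}^{2+2\sss}$ for $t\ge 1$, so term-by-term the $n$-th summand of the third quantity is $O(1/(n+2))$ times $\mathcal{CK}_{\overline{H},n}^{(\gamma;2)}$. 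Choosing $n_*$ sufficiently large in the definition of $\theta_n$ and summing yields the desired absorption by $\tfrac12 \mathcal{CK}_{\overline{H}}^{\gm}$.

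For the second bound, I interpret $\partial_y^2\{e^W\}$ as a typographical variant of $\partial_y\{e^W\}$ that actually appears in the preceding energy identity (the two differ by an integration by parts that only contributes to the first-bound type of term). Writing $\partial_y e^W = \partial_y W \cdot e^W$ and applying Cauchy--Schwarz,
\[
\nu\bigl|\langle\partial_y\bhqn,\bhqn\,\partial_y W\,e^W\chi_n^2\rangle\bigr| \le \|\sqrt{\nu}\partial_y\bhqn\,e^{W/2}\chi_n\|_{L^2}\|\sqrt{\nu}|\partial_y W|\bhqn e^{W/2}\chi_n\|_{L^2}.
\]
The key pointwise inequality \eqref{wdot:est:a} gives $\sqrt{\nu}|\partial_y W|\le \sqrt{8/K}\sqrt{-\partial_t W}$, so the second factor is dominated by $\sqrt{8/K}\|\bhqn\sqrt{-\partial_t W}e^{W/2}\chi_n\|_{L^2}$. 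Multiplying by the Gevrey weights $\theta_n^2(n+1)^{2\sss-2}\bold{a}_{n+1}^2\brak{t}^{3+2\ss}$ and applying Cauchy--Schwarz in the summation index $n$ then yields the bound by $\sqrt{8/K}\sqrt{\cd_{\overline{H}}^{\gm}}\sqrt{\mathcal{CK}_{\overline{H}}^{\gm}}$; choosing $K$ large in \eqref{defndW} makes $\epsilon_1 = \sqrt{8/K}$ as small as desired.

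The first bound is the principal obstacle. Using $\partial_y\chi_n^2 = 2\chi_n\chi_n'$ together with the cutoff property $|\chi_n'|\le Cn^{1+\sigma}\chi_{n-1}$ and the fact that $\mathrm{supp}(\chi_n')\subset\{\chi_{n-1}\equiv 1\}$, a direct Cauchy--Schwarz produces
\[
\nu\bigl|\langle\partial_y\bhqn,\bhqn\,e^W\partial_y\chi_n^2\rangle\bigr| \le Cn^{1+\sigma}\|\sqrt{\nu}\partial_y\bhqn\,e^{W/2}\chi_{n-1}\|_{L^2}\|\bhqn\,e^{W/2}\chi_{n-1}\|_{L^2}.
\]
Neither factor is directly controlled by $\cd_{\overline{H},n}^{\gm}$, which is keyed to the strictly smaller cutoff $\chi_n$; this cutoff mismatch is the reason a na\"ive argument fails and the heart of the obstacle. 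I would therefore proceed exactly as in the ICC-method of Lemma~\ref{C:n:q}, iterating the commutator identities \eqref{commutators} to expand $\bhqn = q\Gamma_0\ztp{\overline{H}}_{n-1} + \mathrm{l.o.t.}$ and trading factors of $q\Gamma_0$ for factors of $(n-m)q'/q$. This produces a telescoping cascade of lower-index quantities whose cutoff supports shrink to $\chi_m$; the accumulated weight ratios $\bold{a}_{n+1}/\bold{a}_m$ combine with the $C^{n-m}(n!/m!)^{-\sigma}$ factors from the cutoff cascade to yield a double sum of the form $\sum_{n}\theta_n^2\sum_{m\le n-1}C^{n-m}(n!/m!)^{-\sigma}\cd_{\overline{H},m}^{\gm}$. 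Applying Fubini and invoking the drop condition $\theta_{n+1}/\theta_n = \delta_{\mathrm{Drop}}$ for $n<n_*$ (together with the tail bound $1/n_*^{\sigma/2}$ for $n\ge n_*$) then gives the desired $\epsilon_1\cd_{\overline{H}}^{\gm}$. The main technical burden is the careful combinatorial bookkeeping of this ICC cascade, which mirrors Lemma~5.1 of \cite{BHIW24b}.
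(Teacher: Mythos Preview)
Your approach is essentially the same as the paper's, and the second bound is handled identically. Two points deserve correction, though.

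For the third bound, invoking $n_*$ is a red herring: the weights $\theta_n^2$ appear identically on both sides and cancel, so $n_*$ cannot help. The paper instead tracks the constant sharply via the elementary pointwise inequality
\[
(3/2+\ss)\brak{t}^{1+2\ss}t - \tfrac{2}{t}\brak{t}^{3+2\ss} \le \tfrac12\brak{t}^{1+2\ss}t,
\]
which is equivalent to $(\ss-1)t^2\le 2$ and holds since $\ss=1/s<1$. Combined with the exact lower bound $-\dot{\bold a}_{n+1}/\bold a_{n+1}\ge (n+2)t/\brak{t}^2$ (from the $\varphi$ contribution alone), this gives the $n$-th summand $\le \tfrac{1}{2(n+2)}\mathcal{CK}_{\overline H,n}^{(\gamma;2)}\le \tfrac14\mathcal{CK}_{\overline H,n}^{(\gamma;2)}$ directly, with no appeal to $n_*$.

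For the first bound, you throw away information by placing $\chi_{n-1}$ on \emph{both} factors. Since $\partial_y\chi_n^2=2\chi_n\chi_n'$, one copy of $\chi_n$ survives, and the paper keeps it on the $\partial_y\bhqn$ factor:
\[
\nu\bigl|\langle\partial_y\bhqn,\bhqn e^W\,2\chi_n\chi_n'\rangle\bigr|\lesssim n^{1+\sigma}\,\|\sqrt\nu\,\partial_y\bhqn e^{W/2}\chi_n\|_{L^2}\,\|\bhqn e^{W/2}\chi_{n-1}\|_{L^2}.
\]
The first factor is then directly the building block of $\cd_{\overline H,n}^{\gm}$, so only the second factor suffers the cutoff mismatch. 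The paper then writes $\bhqn=q^n v_y^{-1}\partial_y\Gamma_0^{n-1}\overline H$ and commutes $q^n$ past $\partial_y$ (via \eqref{comm:q:deri}), producing a ``nearest-neighbor'' piece $\|\partial_y(q\,\ztp{\overline H}_{n-1})\chi_{n-1}\|$ yielding $\sum_n\theta_n^2 n^{-\sigma_\ast}\sqrt{\cd_{\overline H,n}\cd_{\overline H,n-1}}$ plus a piece $n\|\ztp{\overline H}_{n-1}\chi_{n-1}\|$ that requires the cascade Lemma~6.20 of \cite{BHIW24b}. Your plan to cascade both factors would also close, but retaining the $\chi_n$ simplifies the argument considerably.
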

\begin{proof}
In view of the fact
\begin{align*}
	\abs{\partial_y \chi_n} \lesssim n^{1+\sigma}
\end{align*}
with $\sss> 1 + \sigma$, we have
\begin{align*}
	&\sum_{n} \theta_{n}^2 (n+1)^{2\sss-2}  \bold{a}_{n+1}^2  \brak{t}^{3+2\ss}  \nu
	\langle \partial_y\bhqn , \bhqn   e^{W}  \pa_y \{\chi_n^2\} \rangle 
	\nn&\qquad\le 
	\sum_{n} \theta_{n}^2 (n+1)^{2\sss-2}  \bold{a}_{n+1}^2 n^{1+\sigma} \brak{t}^{3+2\ss}  \nu
	\enorm{q^{n}\frac{\partial_y}{v_y}\Gamma_0^{n-1}\overline{H}  e^{W/2} \chi_{n-1}\tilde{\chi}_1}
	\enorm{\partial_y \bhqn   e^{W/2} \chi_n}.
\end{align*}
Using
\begin{equation}
	\label{comm:q:deri}
	[q^\ell, \partial_y] = - \ell q^{\ell-1}\partial_yq
\end{equation}
 { and Lemma~6.20 in \cite{BHIW24b}, we have the further bound}
\begin{align*}
	\sum_{n} &\theta_{n}^2 (n+1)^{2\sss-2}  \bold{a}_{n+1}^2 n^{1+\sigma} \brak{t}^{3+2\ss}  \nu
	\paren{ \enorm{\partial_yq\Gamma_0^{n-1}\overline{H}  e^{W/2} \chi_{n-1}\tilde{\chi}_1 }
	+ 	n \enorm{q^{n-1}\Gamma_{0}^{n-1}\overline{H}  e^{W/2} \chi_{n-1}\tilde{\chi}_1 } }
	\nn&\qquad \times \enorm{\partial_y\bhqn   e^{W/2} \chi_n}
		\nn&
	\le C \sum_{n\ge1} \theta_{n}^2  \frac{1}{n^{\sigma^*}} \sqrt{\cd_{\overline H, n}} \sqrt{\cd_{\overline H, n-1}}
	+  C \sum_{n\ge1}\theta_{n}^2   \sqrt{\sum_{\ell=0}^{n-1} (\mathfrak C\lambda)^{2(n-\ell)}\lf(\frac{\ell!}{n!}\rg)^{2\sigma}  
		\cd_{\overline H, \ell}}  \sqrt{\cd_{\overline H, n}}
	\nn&
	\le  C \min\left\{\frac{1}{n_{*}^{\sigma^*}}, \delta_{\text{Drop}}\right\} \cd_{\overline H}^{\gm}
	+ C \min\left\{\frac{1}{n_{*}^{\sigma/2}}, \delta_{\text{Drop}}\right\} \cd_{\overline H}^{\gm}
	 \le \eps_1 \cd_{\overline H}^{\gm}
\end{align*}
for $\delta_{\text{Drop}}$ sufficiently small and $n_*$ large, 
where we also used that 
$\sss > 1+\sigma +\sigma^*$ for some $\sigma^*>0$.
Note by~\eqref{defndW} 
\begin{align*}
	\abs{\partial_yW(t, y)}\lesssim \frac{(|y|-1/4-L\ep\arctan (t))_+}{K\nu(1+t)}.
\end{align*}
Therefore, we may get
\begin{align*}
	&\sum_{n} \theta_{n}^2 (n+1)^{2\sss-2}  \bold{a}_{n+1}^2   \brak{t}^{3+2\ss} \nu\langle \partial_y\bhqn , 
	\bhqn  \pa_y \{ e^{W}  \} \chi_n^2  \rangle
	\nn&\qquad\lesssim
	\sum_{n} \theta_{n}^2 (n+1)^{2\sss-2} \bold{a}_{n+1}^2   \brak{t}^{3+2\ss}  
	\enorm{\nu^{1/2}\frac{(|y|-1/4-L\ep\arctan (t))_+}{K\nu(1+t)}\bhqn  e^{W/2}  \chi_n}
	\nn&\qquad\quad\times
	\nu^{1/2}\enorm{\partial_y\bhqn  e^{W/2}  \chi_n} 
	\nn&\qquad\lesssim \eps_1 \sqrt{\mathcal{CK}_{\overline{H}}^{\gm}}\sqrt{\cd_{\overline H}^{\gm}}
\end{align*}
where we used that $1\gg \nu$ and $K$ is large.
For the last inequality, we simply note that 
\begin{align*}
	\paren{(3/2+\ss)\brak{t}^{1+2\ss}t - \frac{2}{t}\brak{t}^{3+2\ss}} \le \frac{1}{2} \brak{t}^{1+2\ss}t
\end{align*}
and hence
\begin{align*}
	&\paren{(3/2+\ss)\brak{t}^{1+2\ss}t - \frac{2}{t}\brak{t}^{3+2\ss}} \bold{a}_{n+1}^2 \| \bhqn  e^{W/2} \chi_{n} \|_{L^2}^2
	\le  \frac{1}{2} \mathcal{CK}_{\overline{H}}^{\gm},
\end{align*}
where we used the $\mathcal{CK}$ terms due to $d\bold{a}_n/dt$ in the last step,
concluding the proof.
\end{proof}

\subsection{$(\overline {H}, \alpha)$ Estimates}
In this subsection, we give estimates of the higher regularity part, i.e., $\mathcal{E}_{\overline{H},n}^{(\alpha)}$, which we call the $\alpha$
part. Since most of the estimates are similar to that of the $\gamma$ part, we only sketch the proof with the main focus on essentially 
different terms. 
\begin{proof}[Proof of~\eqref{albarH}:] 
	We apply a $y$ derivative to \eqref{eq:bar:H:L:n} and compute the (complex) inner product of it against $\brak{t}^{3+2\ss} \bold{a}_{n+1}^2\partial_y \bhqn  e^{W} \chi_n^2$ to get 
	\begin{align*}
		&  \frac{\nu^{-1/2}}{2}\frac{d}{dt} \mathcal{E}_{\overline{H},n}^{(\alpha)}- \bold{a}_{n+1}  \frac{d}{dt}\paren{ \bold{a}_{n+1}} \brak{t}^{3+2\ss}
		\| \partial_y\bhqn  e^{W/2} \chi_n\|_{L^2}^2
		+ \bold{a}_{n+1}^2  \brak{t}^{3+2\ss} \| \partial_y\bhqn  \sqrt{- W_t} e^{W/2} \chi_n \|_{L^2}^2 
		\\&\qquad\qquad+ \bold{a}_{n+1}^2  \brak{t}^{3+2\ss} \nu \|  \pa_y^2 \bhqn  e^{W/2} \chi_n \|_{L^2}^2 
		\\
		&\qquad = -\bold{a}_{n+1}^2  \brak{t}^{3+2\ss}  \nu \langle \partial_y^2\bhqn ,
		\partial_y \bhqn   e^{W}  \pa_y \{\chi_n^2\}  \rangle 
		- \bold{a}_{n+1}^2   \brak{t}^{3+2\ss} \nu \langle \partial_y^2 \bhqn , 
		\partial_y\bhqn  \pa_y \{ e^{W}  \} \chi_n^2  \rangle 
		\\&\qquad\qquad  +
		\paren{(3/2+\ss)\brak{t}^{1+2\ss}t - \frac{2}{t}\brak{t}^{3+2\ss}}\bold{a}_{n+1}^2
		\| \partial_y \bhqn  e^{W/2} \chi_{n} \|_{L^2}^2
		\\&\qquad\qquad-  \brak{t}^{3+2\ss} \bold{a}_{n+1}^2\Re \brak{\partial_y \widetilde{\mathcal{C}^{(n)}_{trans}} ,
			\partial_y\bhqn  e^{W} \chi_n^2}
		-   \brak{t}^{3+2\ss} \bold{a}_{n+1}^2\Re\brak{  \partial_y\widetilde{\mathcal{C}^{(n)}_{visc}},
			\partial_y\bhqn  e^{W} \chi_n^2}
		\\&\qquad\qquad-  \brak{t}^{3+2\ss} \bold{a}_{n+1}^2\Re \brak{ \partial_y\widetilde{\mathcal{C}^{(n)}_{q}}, 
			\partial_y\bhqn  e^{W} \chi_n^2}
		\\&\qquad\qquad-  \brak{t}^{3+2\ss} t^{-1} \bold{a}_{n+1}^2 
		\Re \brak{   \partial_y \paren{q^n \Gamma_0^n \paren{  \nabla^{\perp} \stf_{\neq 0} \cdot \nabla \omega }_0}, 
			\partial_y\bhqn  e^{W} \chi_n^2}
		\\
		&\qquad =  -\bold{a}_{n+1}^2  \brak{t}^{3+2\ss}   \nu \langle \partial_y^2\bhqn ,
		\partial_y\bhqn   e^{W}  \pa_y \{\chi_n^2\}  \rangle 
		- \bold{a}_{n+1}^2   \brak{t}^{3+2\ss}  \nu \langle \partial_y^2 \bhqn , 
		\partial_y\bhqn  \pa_y \{ e^{W}  \} \chi_n^2  \rangle 
		\\&\qquad\qquad  +
		\paren{(3/2+\ss)\brak{t}^{1+2\ss}t - \frac{2}{t}\brak{t}^{3+2\ss}} \brak{t}^2\bold{a}_{n+1}^2
		\| \partial_y \bhqn  e^{W/2} \chi_{n} \|_{L^2}^2
		\\&\qquad\qquad- C^{(\alpha; n)}_{\overline{H},trans} - C^{(\alpha; n)}_{\overline{H},visc} - C^{(\alpha; n)}_{\overline{H}, q} - N^{(\alpha; n)}_{\overline{H}}
		.
	\end{align*}
	Similar as~\eqref{CK:1} and \eqref{CK:2}, we obtain
	\begin{align}
		\label{CK:1:1}
		\nu^{-1/2}\paren{\mathcal{CK}_{\overline{H},n}^{(\alpha; 1)}}^2 \gtrsim  \nu^{-1/3+2\delta} \bold{a}_{n+1}^2 \brak{t}^{3+2\ss} 
		\| \partial_y\bhqn   e^{W/2} \chi_n \|_{L^2}^2
	\end{align}
	for $t\lesssim \nu^{-1/3-\delta}$ with
	\begin{align}
		\label{CK:2:1}
				\nu^{-1/2}\paren{\mathcal{CK}_{\overline{H},n}^{(\alpha; 2)}}^2 \gtrsim  \frac{n+1}{t} \bold{a}_{n+1}^2 \brak{t}^{3+2\ss}
		\| \partial_y\bhqn  e^{W/2} \chi_n \langle t \rangle^2\|_{L^2}^2.
	\end{align}
	Then the desired result is a consequence of lemmas~\ref{nonl:barH:1}--\ref{easy:term:1}.
\end{proof}
\subsubsection{Nonlinear contribution}
We first consider the nonlinear estimate. 
\begin{lemma}
	\label{nonl:barH:1}
	\begin{align*}
				\sum_{n} &\theta_{n}^2 (n+1)^{2\sss-2} \abs{N^{(\alpha; n)}_{\overline{ H}}}
				\\&\lesssim  \paren{\sum_{j=1, 2}\sqrt{\mathcal{J}_{ell}^{(i)}}+ \sqrt{\mathcal{E}_{ell}^{(I, out)}}} \left(
				\cd^{\gm}+\cd_{H}^{\gm} 
			+\cd_{\overline H}^{\gm} + \mathcal{CK}^{\gm} 
			+\mathcal{CK}_{H}^{\gm}+\mathcal{CK}_{\overline H}^{\gm} +  \mathcal{J}_{ell}^{(3)} \right)
		\\&\quad 
       +
		\sqrt{\mathcal{E}_{ell}^{(I, full)} + \brak{t}^2 \mathcal{J}_{ell}^{(2)}} \sqrt{\mathcal{E_\mathrm{Int}}} \brak{t}^{1/2-s^{-1}}   \sqrt{\mathcal{E}_{\overline H}^{(\alpha)}}  
	\\&\quad    + 
		\frac{\brak{t}^{2\ss}}{\brak{t}^{r}} \sqrt{\mathcal{E}_{ell}^{(I, full)}}  \sqrt{\mathcal{E}_{cloud}} \sqrt{\mathcal{E}^{(\overline h)}_{\mathrm{Int, Coord}}}
	\\&\quad 
+
	\nu^{100}\brak{t}^{2\ss} \paren{\sqrt{\mathcal{E}_{ell}^{(I, full)}} + \sqrt{\brak{t}^2 \mathcal{J}_{ell}^{(2)}}} \sum_{\iota\in\{\alpha,\gamma\}} \mathcal{E}^{(\iota)} 
    \sqrt{\mathcal{E}_{\overline H}^{(\alpha)}}
		\\&
		\lesssim \frac{\eps^3}{\brak{t}^{3+r-2\ss}} +  \eps \sum_{\iota\in\{\alpha,\gamma\}} \paren{\cd^{(\iota)}+\cd_{H}^{(\iota)} 
			+\cd_{\overline H}^{(\iota)}}
		+\eps\sum_{\iota\in\{\alpha,\gamma\}} \paren{\mathcal{CK}^{(\iota)}
			+\mathcal{CK}_{H}^{(\iota)}+\mathcal{CK}_{\overline H}^{(\iota)}}.
	\end{align*}
\end{lemma}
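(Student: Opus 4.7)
The plan is to adapt the argument from Lemma \ref{nonl:barH} with one extra $\partial_y$ derivative distributed across the trilinear product $\partial_y\{q^n\Gamma_0^n(\nabla^\perp\psi_{\neq}\cdot\nabla\omega)_0\}$. First I would commute $\partial_y$ through $q^n\Gamma_0^n$ using \eqref{cm_py_qn} and \eqref{cm_py_G_n} of Lemma \ref{lem:com:BA}; the commutator terms are lower order and can be absorbed into the main $\Gamma_0^{n+1}$ and $q'\cdot(n/q)$ contributions in the same manner as in the $\gamma$-case. After this, decompose $\psi = \psi^{(I)} + \psi^{(E)}$ and write $N^{(\alpha;n)}_{\overline{H}} = N^{E,\alpha} + N^{I,\alpha}$, just as we split $N^{(n)}_{\overline{H}}$ before.

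For each of $N^{E,\alpha}$ and $N^{I,\alpha}$, I would perform the standard high-low/low-high paraproduct split in the $\Gamma^n_k$-derivative allocation. The extra $\partial_y$ will land on one of the three factors: if it lands on the test function $\partial_y \bhqn$ one gains a full $\sqrt{\nu}$ from $\mathcal{D}^{(\alpha)}_{\overline H}$ or $\mathcal{CK}^{(\alpha;1)}_{\overline H}$ (recall \eqref{CK:1:1}); if it lands on $\omega$ one invokes the dissipative $\cd^{(\alpha)}$ or $\cd^{(\mu)}$; if it lands on the stream function $\psi^{(E)}$ one needs the higher derivative count $\mathcal{J}_{ell}^{(3)}$ replacing $\mathcal{J}_{ell}^{(2)}$ from the $\gamma$-estimate. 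The combinatorial bounds from Corollary \ref{comb:boun:vari:1} and Lemma \ref{comb:boun:vari:2} apply verbatim since the weight $(n+1)^{2\ss-2}$ and the shifted coefficient $\bold{a}_{n+1}$ are unchanged. The gain $\nu^{-100}\lesssim e^W$ on $\operatorname{supp}\chi_1$ (from \eqref{weig:huge}) and the $\brak{t}^2\nu^{-1}\lesssim e^W$ gain continue to turn stray factors of $\nu^{-1}$ into harmless $\nu^{90}\eps^3$ terms.

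For the interior piece $N^{I,\alpha}$, which uses the rapid-decay elliptic estimates \eqref{eell:out} and \eqref{eell:sob}, the analog of the $HL$ and $LH$ bounds from Lemma \ref{nonl:barH} goes through by simply inserting one extra $\partial_y$ on the appropriate factor; the exponent on $\brak{t}$ in the final bound shifts from $\brak{t}^{-1-r+2\ss}$ to $\brak{t}^{-r+2\ss}$ because the $\sqrt{\nu}\partial_y\overline{H}$ factor introduces a $\nu^{1/4}$ rather than a $\brak{t}^{-1}$ in the natural norm. The cases $n=0$ and $n=1$ are again handled separately using the cloud norm $\mathcal{E}_{\text{cloud}}$ together with $\mathcal{E}^{(I,full)}_{ell}+\brak{t}^2\mathcal{J}^{(2)}_{ell}$, since only there do we lack direct localization away from $\chi_1$.

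The main technical obstacle will be the term in $N^{E,\alpha}$ where $\partial_y$ lands on $\omega$ inside the quasilinear $LH$ piece with $m=n$ (the exterior analog of $N_2^{LH,I}$ in the $\gamma$-proof). There the extra derivative prevents naive absorption into $\cd^{(\gamma)}$ and forces one to split $\partial_y = v_y^{-1}(\Gamma_k - ikt)$ and then balance the resulting $\nu^{1/2}t\,\p_x\omega$ against $\cd^{(\mu)}$ on one hand and $(\mathcal{J}_{ell}^{(3)})^{1/2}$ on the other. Once this delicate redistribution is carried out — which is the $\alpha$-analog of the Hardy-type interpolation used in estimating $V^{LH,E}_1$ via \eqref{com:y:D} — the remaining pieces assemble, exactly as in Lemma \ref{nonl:barH}, into the stated upper bound with the small constant $\eps$ arising from the bootstrap hypotheses $\sqrt{\mathcal{J}^{(i)}_{ell}}\lesssim \eps$ and $\sqrt{\mathcal{E}_{\overline H}^{(\alpha)}}\lesssim\eps\brak{t}^{3/2+\ss}$.
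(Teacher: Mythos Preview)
Your proposal is essentially correct and follows the same overall strategy as the paper. There is, however, one structural difference worth flagging. You propose to commute the outer $\partial_y$ through $q^n\Gamma_0^n$ first via \eqref{cm_py_qn} and \eqref{cm_py_G_n}, producing a main term plus $H$-dependent commutators. The paper instead leaves $\partial_y$ outside, expands $\Gamma_0^n$ on the product via Leibniz (as in the $\gamma$-case), and then distributes $\partial_y$ by Leibniz across each binary factor $q^{n-m}\Gamma_k^{n-m}\partial_y\phi_k \cdot \overline{q^m\Gamma_k^m ik\omega_k}$, giving the pair $N_{11}^{\cdot}+N_{12}^{\cdot}$ directly. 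Both routes arrive at the same main terms, but the paper's ordering avoids the extra $[\partial_y,\Gamma_0^n]$ commutators entirely; your route is correct but less economical, and you should not dismiss those commutators quite so quickly since they must be paired with $\partial_y\bhqn$ rather than $\bhqn$.

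A minor confusion: you write that the extra $\partial_y$ may ``land on the test function $\partial_y\bhqn$''. The paper never integrates by parts here; $\partial_y$ stays on the nonlinear source and is split only between the $\phi$-factor and the $\omega$-factor. Your identification of the dangerous $m=n$ quasilinear piece and its resolution via $v_y^{-1}\partial_y=\Gamma_k-ikt$ is exactly what the paper does for $N_2^E$. The remaining structure (HL/LH split, combinatorial lemmas, use of $\mathcal{J}_{ell}^{(3)}$ in place of $\mathcal{J}_{ell}^{(2)}$, separate low-$n$ treatment with cloud norms) matches the paper's argument.
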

\begin{proof} {\bf (1). Case $n\ge 2$:}
We rewrite 	as in Lemma~\ref{nonl:barH}
	\begin{align*}
		N^{(\alpha; n)}_{\overline{H}} 
		&= 
		 \Big \langle \partial_y\paren{q^n\Gamma_{0}^n \frac{1}{t} \paren{  \nabla^{\perp} \stf_{\neq 0} \cdot \nabla \omega }_0}, \brak{t}^{3+2\ss}  \bold{a}_{n+1}^2  \partial_y\bhqn  e^{W} \chi_n^2 \Big \rangle 
		 \\& =
		 N^E+N^I.
	\end{align*}
{\bf Exterior part: } Applying the product rule, a similar calculation as in Lemma~\ref{nonl:barH} gives
	\begin{align*}
		N^{E} =-N_{1}^{E} + N_{2}^{E}
	\end{align*}
where
\begin{align*}
	N_{1}^{E}&= \frac{1}{t} \sum_{k}\sum_{m=0}^{n} \binom{n}{m} \Big \langle \partial_y \paren{q^{n-m}\Gamma_{k}^{n-m}\partial_y \stf_{k}^{(E)}  \overline{q^{m}\Gamma_{k}^{m} ik \omega_k }} , \brak{t}^{3+2\ss}  \bold{a}_{n+1}^2  \partial_y\bhqn  e^{W} \chi_n^2 \Big \rangle ,
	\\
	 N_{2}^{E}&=\frac{1}{t} \sum_{k}\sum_{m=0}^{n} \binom{n}{m} \Big \langle \partial_y\paren{ q^{n-m}\Gamma_{k}^{n-m}ik \stf_{k}^{(E)}  \overline{q^{m}\Gamma_{k}^{m} \partial_y \omega_k} }, \brak{t}^{3+2\ss}  \bold{a}_{n+1}^2  \partial_y\bhqn  e^{W} \chi_n^2 \Big \rangle. 
\end{align*}
Since the proof here is completely parallel to that of Lemma~\ref{nonl:barH}, we only show details for the first term $N_{1}^E$.
	We first divide the sum into two pieces
	\begin{align*}
		N_{1}^{E} &=  \brak{t}^{3+2\ss} t^{-1} \bold{a}_{n+1}^2 \sum_{k}\paren{\sum_{m\le n/2} + \sum_{m\ge n/2}^{n}}
		\binom{n}{m} \Big \langle  \partial_y\paren{q^{n-m}\Gamma_{k}^{n-m}\partial_y \stf_{k}^{(E)}  \overline{q^{m}\Gamma_{k}^{m} ik \omega_k }} ,  \partial_y \bhqn  e^{W} \chi_n^2 \Big \rangle 
		\\& = N_{1} ^{HL, E} + N_{1} ^{LH, E}.
	\end{align*}
	We then bound the $LH$ piece as
	\begin{align*}
		\abs{	N_{1} ^{LH, E}} \lesssim&
		\brak{t}^{3+2\ss} t^{-1} \bold{a}_{n+1}^2 \sum_{k} \sum_{m\ge n/2}^{n}\binom{n}{m} \norm{\partial_y \paren{q^{n-m}\Gamma_{k}^{n-m}\partial_y \stf_{k}^{(E)}}\chi_{n-1}}_{L^\infty}
		\nn&\quad \times\enorm{  \overline{q^{m}\Gamma_{k}^{m} ik \omega_k }e^{W} \chi_n}  \enorm{\partial_y\bhqn  \chi_n}
		\nn&\quad
		+ 
		\brak{t}^{3+2\ss} t^{-1} \bold{a}_{n+1}^2 \sum_{k} \sum_{m\ge n/2}^{n}\binom{n}{m} \norm{ q^{n-m}\Gamma_{k}^{n-m}\partial_y \stf_{k}^{(E)}\chi_{n-1}}_{L^\infty}
		\nn&\quad \times
		\enorm{ \partial_y \paren{\overline{q^{m}\Gamma_{k}^{m} ik \omega_k }}e^{W} \chi_n}  \enorm{\partial_y\bhqn  \chi_n}
		\nn&=
		N_{11} ^{LH, E} + N_{12} ^{LH, E}.
	\end{align*}
	By the commutator equality~\eqref{cm_py_G_n}, we deduce 
	\begin{align*}
		&\norm{\partial_y\paren{q^{n-m}\Gamma_{k}^{n-m}\partial_y \stf_{k}^{(E)}}\chi_{n-m}}_{L^\infty}
		\lesssim
		\norm{\partial_y^2 \paren{q^{n-m}\Gamma_{k}^{n-m} \stf_{k}^{(E)}}\chi_{n-m}}_{L^\infty}
		\nn \\&\qquad\quad + 
		kt\sum_{\ell=0}^{n-m-1}\binom{n-m}{\ell}\norm{\partial_y\paren{q^{n-m-\ell}\bd^{n-m-\ell}(v_y-1)q^{\ell}\Gamma_k^{\ell}\stf_{k}^{(E)}}\chi_{n-m}}_{L^\infty}
		\nn&\qquad\quad
		+\sum_{\ell=1}^{n}\binom{n-m}{\ell}\norm{\partial_y\paren{\bd^{n-m-\ell+1}(v_y-1)q^{\ell-1}\Gamma_k^{\ell}\stf_{k}^{(E)}}\chi_{n-m}}_{L^\infty}
		\\&\qquad\quad
		+ 	
		(n-m)\norm{\partial_y \paren{q^{n-m-1}\Gamma_k^{n-m}\stf_{k}^{(E)}}\chi_{n-m}}_{L^\infty}
		\nn&\qquad
		\lesssim
		\norm{\partial_y^2 \paren{q^{n-m}\Gamma_{k}^{n-m} \stf_{k}^{(E)}}\chi_{n-m}}_{L^2} ^{1/2}
				\nn \\&\qquad\quad\times
		\paren{	\norm{\partial_y\paren{\partial_y^2 \paren{q^{n-m}\Gamma_{k}^{n-m} \stf_{k}^{(E)}}\chi_{n-m}}}_{L^2}
			+ \norm{\partial_y^2 \paren{q^{n-m}\Gamma_{k}^{n-m} \stf_{k}^{(E)}}\partial_y\chi_{n-m}}_{L^2}}^{1/2}
		\\&\qquad\quad
		+ 
		kt\sum_{\ell=0}^{n-m-1}\binom{n-m}{\ell}\norm{\partial_y\paren{q^{n-m-\ell}\bd^{n-m-\ell}(v_y-1)q^{\ell}\Gamma_k^{\ell}\stf_{k}^{(E)}}\chi_{n-m}}_{L^\infty}
		\nn \\&\qquad\quad
		+\sum_{\ell=1}^{n}\binom{n-m}{\ell}\norm{\partial_y\paren{\bd^{n-m-\ell+1}(v_y-1)q^{\ell-1}\Gamma_k^{\ell}\stf_{k}^{(E)}}\chi_{n-m}}_{L^\infty}
		\\&\qquad\quad+
		(n-m)\norm{\partial_y\paren{q^{n-m-1}\Gamma_k^{n-m}\stf_{k}^{(E)}}\chi_{n-m}}_{L^\infty}
		\nn \\&\qquad
		=V^{LH, E}_1+V^{LH, E}_2+V^{LH, E}_3+V^{LH, E}_4.
	\end{align*}
	We define
	\begin{align*}
		N_{11,vi} ^{LH, E}:=\brak{t}^{3+2\ss} t^{-1} \bold{a}_{n+1}^2 \sum_{k} \sum_{m\ge n/2}^{n}\binom{n}{m} V^{LH, E}_i
		\enorm{  \overline{q^{m}\Gamma_{k}^{m} ik \omega_k }e^{W} \chi_n}  \enorm{\partial_y\bhqn  \chi_n}
	\end{align*}
	for $i=1,2,3,4$.
Thus by exactly the same argument as in Lemma~\ref{nonl:barH} we arrive at
	\begin{align*}
		\sum_{n\ge2}(n+1)^{2\sss-2} N_{11,v1} ^{LH, E} &\lesssim
		 \paren{\mathcal{J}_{ell}^{(2)}}^{1/4} 		 \paren{\mathcal{J}_{ell}^{(3)}}^{1/4} \sqrt{\cd^{\gm}} \paren{\mathcal{CK}_{\overline{H}}^{(\alpha)}}^{1/4}   \paren{\mathcal{E}_{\overline{H}}^{(\alpha)}}^{1/4} 
		 \\&
		 \lesssim \eps \paren{\cd^{\gm}}^{3/4} \paren{\mathcal{CK}_{\overline{H}}^{(\alpha)}}^{1/4} + \eps^3\brak{t}^{-2},
	\end{align*}
	where we also used H\"older's inequality and the bootstrap assumptions. 
	Next we deal with $N_{11,v2} ^{LH, E}$.
	We note that
	\begin{align*}
		V^{LH, E}_2&=	kt\sum_{\ell=0}^{n-m-1}\binom{n-m}{\ell}
		\norm{\partial_y\paren{q^{n-m-\ell}\bd^{n-m-\ell}(v_y-1)q^{\ell}\Gamma_k^{\ell}\stf_{k}^{(E)}}\chi_{n-1}}_{L^\infty}
		\\&\lesssim
		kt\sum_{\ell=0}^{n-m-1}\binom{n-m}{\ell}
		\norm{\partial_y\paren{q^{n-m-\ell}\bd^{n-m-\ell}(v_y-1)}\chi_{n-m-\ell+1}}_{L^\infty}\norm{q^{\ell}\Gamma_k^{\ell}\stf_{k}^{(E)}\chi_{\ell+1}}_{L^\infty}
		\\&\quad+
		kt\sum_{\ell=0}^{n-m-1}\binom{n-m}{\ell}
		\norm{q^{n-m-\ell}\bd^{n-m-\ell}(v_y-1)\chi_{n-m-\ell+1}}_{L^\infty}\norm{\partial_y\paren{q^{\ell}\Gamma_k^{\ell}\stf_{k}^{(E)}}\chi_{\ell+1}}_{L^\infty}
		\\&\lesssim
		kt\sum_{\ell=0}^{n-m-1}\binom{n-m}{\ell}\norm{\partial_y\paren{q^{n-m-\ell}\bd^{n-m-\ell}(v_y-1)}\chi_{n-m-\ell+1}}_{L^2}^{1/2}
				\\&\quad\times
		\norm{\partial_y\paren{\partial_y\paren{q^{n-m-\ell}\bd^{n-m-\ell}(v_y-1)}\chi_{n-m-\ell+1}}}_{L^2}^{1/2}
		\\&\quad\times
		\norm{q^{\ell}\Gamma_k^{\ell}\stf_{k}^{(E)}\chi_{\ell+1}}_{L^2}^{1/2} \norm{\partial_y\paren{q^{\ell}\Gamma_k^{\ell}\stf_{k}^{(E)}\chi_{\ell+1}}}_{L^2}^{1/2}
		\\&\quad+
		kt\sum_{\ell=0}^{n-m-1}\binom{n-m}{\ell}\norm{q^{n-m-\ell}\bd^{n-m-\ell}(v_y-1)\chi_{n-m-\ell+1}}_{L^2}^{1/2}
				\\&\quad\times
		\norm{\partial_y\paren{q^{n-m-\ell}\bd^{n-m-\ell}(v_y-1)\chi_{n-m-\ell+1}}}_{L^2}^{1/2}
		\\&\quad\times
		\norm{\partial_y\paren{q^{\ell}\Gamma_k^{\ell}\stf_{k}^{(E)}}\chi_{\ell+1}}_{L^2}^{1/2} \norm{\partial_y\paren{\partial_y\paren{q^{\ell}\Gamma_k^{\ell}\stf_{k}^{(E)}}\chi_{\ell+1}}}_{L^2}^{1/2}
		\\&=: V^{LH, E}_{2,1} + V^{LH, E}_{2,2}
	\end{align*}
	from where we further arrive at
	\begin{align*}
		N_{11,v2} ^{LH, E}&\lesssim\brak{t}^{3+2\ss} t^{-1} \bold{a}_{n+1}^2 \sum_{k} \sum_{m\ge n/2}^{n}\binom{n}{m}
		\enorm{  \overline{q^{m}\Gamma_{k}^{m} ik \omega_k }e^{W} \chi_n}  \enorm{\partial_y\bhqn  \chi_n}
		\paren{ V^{LH, E}_{2,1} + V^{LH, E}_{2,2}}
		\\&=: 	N_{11,v2,1} ^{LH, E} + N_{11,v2,2} ^{LH, E}.
	\end{align*}
The treatment of the two pieces are in similar manner and we only show details for
	\begin{align}
		\sum_{n\ge2}&(n+1)^{2\sss-2}N_{11,v2,1} ^{LH, E}\nn
		&\lesssim
		\sum_{n\ge2} \sum_{k} \sum_{m\ge n/2}^{n}\binom{n}{m} \bold{a}_{n+1} \bold{a}_{1,m}^{-1} 
		\bold{a}_{n-m-\ell}^{-1}\bold{a}_{\ell}^{-1}
		(n+2)^{\sss-1} \bold{a}_{1,m} k\enorm{ q^{m}\Gamma_{k}^{m} \overline{\omega_k }e^{W} \chi_{m}} 
		\nn&\quad\times 
		\brak{t}^{3/2+\ss} (n+2)^{\sss-1}\bold{a}_{n+1}\enorm{\partial_y\bhqn  \chi_n}
		\nn&\quad\times
		kt\brak{t}^{3/2+\ss} t^{-1}\sum_{\ell=0}^{n-m-1} \bold{a}_{n-m-\ell}\binom{n-m}{\ell}\norm{\partial_y\paren{q^{n-m-\ell}\bd^{n-m-\ell}(v_y-1)}\chi_{n-m-\ell+1}}_{L^2}^{1/2}
		\nn&\quad\times
		\nu^{1/2}\norm{\partial_y\paren{\partial_y\paren{q^{n-m-\ell}\bd^{n-m-\ell}(v_y-1)}\chi_{n-m-\ell+1}}}_{L^2}^{1/2}
		\bold{a}_{\ell}\norm{q^{\ell}\Gamma_k^{\ell}\stf_{k}^{(E)}\chi_{\ell+1}}_{L^2}^{1/2}  
		\nn&\quad\times
		\nu^{1/2}\nu^{-1}\norm{\partial_y\paren{q^{\ell}\Gamma_k^{\ell}\stf_{k}^{(E)}\chi_{\ell+1}}}_{L^2}^{1/2}.
	\end{align}
	Following line by line as in Lemma~\ref{nonl:barH}, we use Lemma~\ref{comb:boun}, 
   Young's inequality, and $
	\brak{t}^2\nu^{-1} \lesssim e^W\ \mbox{for}\ t\lesssim \nu^{-1/3-\delta},$ to obtain
	\begin{align*}
			\sum_{n\ge2}(n+1)^{2\sss-2}N_{11,v2,1} ^{LH, E} 
			&\lesssim \nu^{10}
			\sqrt{\cd^{\gm}} \norm{\overline{H}}_{Y_{1,-s}} 
				\norm{H}_{\overline{Y}_{1,0}}^{1/2}\paren{\cd_{H}^{(\alpha)}}^{1/4}
				\paren{\sum_{j=1, 2}\sqrt{\mathcal{J}_{ell}^{(i)}}}
			\\&
		\lesssim \eps \sqrt{\cd^{\gm}}  	\paren{\mathcal{CK}_{\overline{H}}^{(\alpha)}}^{1/4}\paren{\cd_{H}^{(\alpha)}}^{1/4},
	\end{align*}
	where we also used H\"older's inequality. The treatment of $N_{11,v2,2} ^{LH, E}$ is in the same philosophy. Moreover,
	proceeding step by step as in Lemma~\ref{nonl:barH} with slight modification, we could bound $N_{11,v3} ^{LH, E}$ and 
	$N_{11,v4} ^{LH, E}$. Therefore, we omit further 
	details to turn to $N_{12} ^{LH, E}$. Observe that
\begin{align*}
	\abs{	N_{12} ^{LH, E}} &\lesssim
	\brak{t}^{3+2\ss} t^{-1} \bold{a}_{n+1}^2 \sum_{k} \sum_{m\ge n/2}^{n}\binom{n}{m} 
	\norm{q^{n-m}\Gamma_{k}^{n-m}\partial_y \stf_{k}^{(E)}\chi_{n-1}}_{L^\infty}
	\nn&\quad\times
	\abs{k}\enorm{ \partial_y \paren{q^{m}\Gamma_{k}^{m}  \overline{\omega_k }}e^{W} \chi_n}  \enorm{\partial_y\bhqn  \chi_n}.
\end{align*}
Compared to the corresponding $	N_{1} ^{LH, E}$ term in Lemma~\ref{nonl:barH}, the only difference is one extra $y$ derivative 
in the $\omega$ involving term. Hence, we may apply exactly the same procedure to estimate $N_{12} ^{LH, E}$, except replacing 
$\mathcal{E}^{(\gamma)}$, $\mathcal{CK}^{\gm}$, and $\cd^{\gm}$ by $\mathcal{E}^{(\alpha)}$, $\mathcal{CK}_\omega^{(\alpha)}$, and $\cd_\omega^{(\alpha)}$ respectively.  
	Next we deal with the $HL$ part as
	\begin{align*}
		\abs{N_{11} ^{HL, E}} 
		&
		=  \brak{t}^{3+2\ss} t^{-1} \bold{a}_{n+1}^2 \sum_{k}\sum_{m\le n/2}\binom{n}{m} 
		\Big \langle \partial_y \paren{q^{n-m}\Gamma_{k}^{n-m}\partial_y \stf_{k}^{(E)}  \overline{q^{m}\Gamma_{k}^{m} ik \omega_k }} ,  
		\partial_y \bhqn  \chi_n^2 \Big \rangle 
		\\&\lesssim
		\brak{t}^{3+2\ss} t^{-1} \bold{a}_{n+1}^2 \sum_{k}\sum_{m\le n/2}\binom{n}{m}  
		\norm{\partial_y\paren{q^{n-m}\Gamma_{k}^{n-m} k\partial_y \stf_{k}^{(E)}}\chi_{n}}_{L^2} 
		\\&\quad\times
		\norm{q^{m}\Gamma_{k}^{m} \overline{\omega_k }e^{W} \chi_{n-1} }_{L^\infty}\enorm{\partial_y\bhqn  \chi_n}
		\\&\quad+
		\brak{t}^{3+2\ss} t^{-1} \bold{a}_{n+1}^2 \sum_{k}\sum_{m\le n/2}\binom{n}{m}  \norm{q^{n-m}\Gamma_{k}^{n-m} k\partial_y \stf_{k}^{(E)}\chi_{n}}_{L^2} 
		\\&\quad\times
		\norm{\partial_y \paren{q^{m}\Gamma_{k}^{m} \overline{\omega_k }}e^{W} \chi_{n-1} }_{L^\infty}\enorm{\partial_y\bhqn  \chi_n}
		\\&\lesssim
		\brak{t}^{3+2\ss} t^{-1} \bold{a}_{n+1}^2 \sum_{k}\sum_{m\le n/2}\binom{n}{m}  \norm{\partial_y \paren{q^{n-m}\Gamma_{k}^{n-m} k\partial_y \stf_{k}^{(E)}}\chi_{n}}_{L^2} \enorm{\partial_y\bhqn  \chi_n}
		\\&\quad\times
		\norm{q^{m}\Gamma_{k}^{m} \overline{\omega_k }e^{W} \chi_{m+1} }_{L^2}^{1/2}\norm{\partial_y\paren{q^{m}\Gamma_{k}^{m} \overline{\omega_k }e^{W} \chi_{m+1}} }_{L^2}^{1/2}
		\\&\quad+
		\brak{t}^{3+2\ss} t^{-1} \bold{a}_{n+1}^2 \sum_{k}\sum_{m\le n/2}\binom{n}{m}  \norm{q^{n-m}\Gamma_{k}^{n-m} k\partial_y \stf_{k}^{(E)}\chi_{n}}_{L^2} \enorm{\partial_y\bhqn  \chi_n}
		\\&\quad\times
		\norm{\partial_y \paren{q^{m}\Gamma_{k}^{m} \overline{\omega_k }}e^{W} \chi_{m+1} }_{L^2}^{1/2}
		\norm{\partial_y\paren{\partial_y \paren{q^{m}\Gamma_{k}^{m} \overline{\omega_k }}e^{W} \chi_{m+1}} }_{L^2}^{1/2}.
	\end{align*}
	From here on, the estimates of the ``High-Low" interaction are in a similar fashion and completely parallel with the corresponding part in Lemma~\ref{nonl:barH}, except some relevant parts upgrading to one derivative higher regularity, i.e.,  $\gamma$
	 terms replaced by $\alpha$ terms, of which further details are omitted.
The term $N_{2}^{E}$ could be dealt with essentially the same way as $N_{1}^{E}$ except some minor technicality differences. For instance, we may encounter a dangerous term  
where all $y$ derivatives fall on 
$\omega$, i.e., $\partial_y^3 \omega_{k, n}$. However, using 
\begin{align*}
	v_y^{-1}  \partial_{y} = \Gamma_k - ikt,
\end{align*}
we may easily convert this term into a situation treated above 
and we conclude the Exterior part without showing more details.
\\{\bf Interior part: } We turn to the interior part next. We note
\begin{align*}
	N^I &
	= -\frac{1}{t} \sum_{k} \Big \langle  
	\partial_y \paren{q^n\Gamma_{0}^n \paren{ \Gamma_k \stf_{k}^{(I)} \overline{ik \omega_k } }}, \brak{t}^{3+2\ss}  \bold{a}_{n+1}^2  \partial_y \bhqn  e^{W} \chi_n^2 \Big \rangle 
	\\&\quad
	+\frac{1}{t} \sum_{k} \Big \langle  \partial_y \paren{q^n\Gamma_{0}^n \paren{ik \stf_{k}^{(I)} \Gamma_k \overline{\omega_k}}}, 
	 \brak{t}^{3+2\ss}  \bold{a}_{n+1}^2  \partial_y \bhqn  e^{W} \chi_n^2 \Big \rangle 
	\\&
	=  \frac{1}{t} \sum_{k}\sum_{m=0}^{n}\binom{n}{m}\Big \langle \partial_y
	\paren{ q^{n-m}\Gamma_{k}^{n-m}\Gamma_k \stf_{k}^{(I)}  \overline{q^{m}\Gamma_{k}^{m} ik \omega_k }} , \brak{t}^{3+2\ss}  \bold{a}_{n+1}^2  \partial_y \bhqn  e^{W} \chi_n^2 \Big \rangle 
	\\&\quad
	+\frac{1}{t} \sum_{k}\sum_{m=0}^{n}\binom{n}{m}\Big \langle  \partial_y 
	\paren{q^{n-m}\Gamma_{k}^{n-m}ik \stf_{k}^{(I)} q^{m}\Gamma_{k}^{m} \Gamma_k \overline{\omega_k}} , 
	\brak{t}^{3+2\ss}  \bold{a}_{n+1}^2 \partial_y \bhqn  e^{W} \chi_n^2 \Big \rangle 
	\\&
	= N_{1}^I + N_{2}^I.
\end{align*}
As in the exterior case, we divide $N_{1}^I $ into two pieces:
\begin{align*}
	N_{1}^{I} &=  \brak{t}^{3+2\ss} t^{-1} \bold{a}_{n+1}^2 \sum_{k}\paren{\sum_{m\le n/2} + \sum_{m\ge n/2}^{n}}
	\binom{n}{m} \Big \langle \partial_y \paren{q^{n-m}\Gamma_{k}^{n-m}\Gamma_k \stf_{k}^{(I)}}  \overline{q^{m}\Gamma_{k}^{m} ik \omega_k } ,   \partial_y\bhqn  e^{W} \chi_n^2 \Big \rangle 
	\\&\quad +  \brak{t}^{3+2\ss} t^{-1} \bold{a}_{n+1}^2 \sum_{k}\paren{\sum_{m\le n/2} + \sum_{m\ge n/2}^{n}}
	\binom{n}{m} \Big \langle q^{n-m}\Gamma_{k}^{n-m}\Gamma_k \stf_{k}^{(I)} \partial_y  \paren{\overline{q^{m}\Gamma_{k}^{m} ik \omega_k} } ,   \partial_y\bhqn  e^{W} \chi_n^2 \Big \rangle
	\\& = N_{11} ^{HL, I} + N_{11} ^{LH, I} + N_{12} ^{HL, I} + N_{12} ^{LH, I}.
\end{align*}
Denote
\begin{align*}
	\norm{f}_{H^1_{\mu}} = \enorm{f \mu} + \enorm{\partial_{y} f \mu}.
\end{align*}
Then for the LH part in the first piece, we have
\begin{align*}
	N_{11} ^{LH, I} &\lesssim \brak{t}^{3+2\ss} t^{-1} \bold{a}_{n+1}^2 \sum_{k} \sum_{m\ge n/2}^{n}
	\binom{n}{m}     \norm{ q^{n-m}\Gamma_k^{n-m+1}\stf_{k}^{(I)}}_{H^1_{\chi_{n-m+1}}} ^{1/2}
	\\&\quad\times
	\norm{q^{n-m}\Gamma_k^{n-m+1} \stf_{k}^{(I)}}_{H^2_{\chi_{n-m+1}}}^{1/2}
	\enorm{  \overline{q^{m}\Gamma_{k}^{m} ik \omega_k }e^{W} \chi_n}  \enorm{\partial_{y} \bhqn  \chi_n} 
	\\&\quad+
	\brak{t}^{3+2\ss} t^{-1} \bold{a}_{n+1}^2 \sum_{k} \sum_{m\ge n/2}^{n} \binom{n}{m}   
	(n-m)^{(1+\sigma)/2} \norm{q^{n-m}\Gamma_k^{n-m+1} \stf_{k}^{(I)}}_{H^1_{\chi_{n-m+1}}}
	\\&\quad\times
	\enorm{  \overline{q^{m}\Gamma_{k}^{m} ik \omega_k }e^{W} \chi_n}  \enorm{\partial_{y} \bhqn  \chi_n} .
\end{align*}
Multiplying it by $(n+1)^{2\sss-2}$ and summing in $n$ gives
\begin{align*}
	\sum_{n\ge2}&(n+1)^{2\sss-2}N_{11} ^{LH, I}\\
	&\lesssim
	\sum_{n\ge0}\sum_{m\ge n/2}^{n}\sum_{k}  \bold{a}_{n+1} \binom{n}{m} \bold{a}_{n-m}^{-1} \bold{a}_{1,m}^{-1}  
    \brak{t}^{3/2+\ss}  \bold{a}_{n-m} 
    	\\&\quad \times   \norm{ q^{n-m}\Gamma_k^{n-m+1}\stf_{k}^{(I)}}_{H^1_{\chi_{n-m+1}}} ^{1/2}
    	\norm{q^{n-m}\Gamma_k^{n-m+1} \stf_{k}^{(I)}}_{H^2_{\chi_{n-m+1}}}^{1/2}
	\\&\quad \times
	(n+2)^{\sss-3/2}\bold{a}_{1,m}t^{-1/2} \enorm{ k q^{m}\Gamma_{k}^{m} \overline{\omega_k }e^{W} \chi_m}  
	\brak{t}^{3/2+\ss} t^{-1/2}(n+2)^{\sss-1/2}\bold{a}_{n+1}\enorm{\partial_{y} \bhqn  \chi_n}
	\\&\quad + 
	\sum_{n\ge0}\sum_{m\ge n/2}^{n}\sum_{k}  \bold{a}_{n+1} \binom{n}{m} \bold{a}_{n-m}^{-1} \bold{a}_{1,m}^{-1}
	\brak{t}^{3/2+\ss}
	\bold{a}_{n-m}   (n-m)^{(1+\sigma)/2} \norm{ q^{n-m}\Gamma_k^{n-m+1}\stf_{k}^{(I)}}_{H^1_{\chi_{n-m+1}}}
	\\&\quad \times
	(n+2)^{\sss-3/2}\bold{a}_{1,m} t^{-1/2} \enorm{ k q^{m}\Gamma_{k}^{m} \overline{\omega_k }e^{W} \chi_m}  
	\brak{t}^{3/2+\ss} t^{-1/2}(n+2)^{\sss-1/2}\bold{a}_{n+1}\enorm{\partial_{y} \bhqn  \chi_n} .
\end{align*}
By Corollary~\ref{comb:boun:vari:1}, the elliptic estimate~\eqref{eell:out}, and Lemma~\ref{con:no:k},
the above quantity could be easily bounded by 
\begin{align*}
	\sum_{n\ge2}&(n+1)^{2\sss-2} N_{11} ^{LH, I} 
	\lesssim 
	\nu ^{99}\sqrt{\mathcal{E}_{ell}^{(I, out)}} 
	\sqrt{\mathcal{CK}^{\gm}} \nu^{-1/2} \sqrt{\mathcal{CK}_{\overline{H}}^{(\alpha)}}
	\lesssim \eps 
	\sqrt{\mathcal{CK}^{\gm}} \sqrt{\mathcal{CK}_{\overline{H}}^{(\alpha)}}.
\end{align*}
Next, for the $HL$ part, we similarly deduce
\begin{align*}
	\sum_{n\ge2}&(n+1)^{2\sss-2}N_{11} ^{HL, I}\\
	&\lesssim
	\sum_{n\ge0}\sum_{m\le n/2}\sum_{k}  \bold{a}_{n+1} \binom{n}{m} \bold{a}_{n-m+2}^{-1} \bold{a}_{1,m-2}^{-1}   (n+2)^{\sss-3/2}
	\\&\quad \times \brak{t}^{3/2+\ss}  \bold{a}_{n-m+2} \norm{q^{n-m} \Gamma_k^{n-m+1}\stf_{k}^{(I)}}_{H^1_{\chi_{n-m+1}}} ^{1/2}
	\norm{q^{n-m}\Gamma_k^{n-m+1} \stf_{k}^{(I)}}_{H^2_{\chi_{n-m+1}}} ^{1/2}
	\\&\quad \times
	\bold{a}_{1,m-2}t^{-1/2} \enorm{ k q^{m}\Gamma_{k}^{m} \overline{\omega_k }e^{W} \chi_m}  
	\brak{t}^{3/2+\ss} t^{-1/2}(n+2)^{\sss-1/2}\bold{a}_{n+1}\enorm{\partial_{y} \bhqn  \chi_n}
	\\&\quad + 
	\sum_{n\ge0}\sum_{m\ge n/2}^{n}\sum_{k}  \bold{a}_{n+1} \binom{n}{m} \bold{a}_{n-m+1}^{-1} \bold{a}_{1,m-1}^{-1}
	\brak{t}^{3/2+\ss}
	\bold{a}_{n-m+1}   (n-m)^{(1+\sigma)/2} \norm{ q^{n-m}\Gamma_k^{n-m+1} \stf_{k}^{(I)}}_{H^1_{\chi_{n-m}}} 
	\\&\quad \times
	(n+2)^{\sss-3/2}\bold{a}_{1,m-1} t^{-1/2} \enorm{ k q^{m}\Gamma_{k}^{m} \overline{\omega_k }e^{W} \chi_m}  
	\brak{t}^{3/2+\ss} t^{-1/2}(n+2)^{\sss-1/2}\bold{a}_{n+1}\enorm{\partial_{y} \bhqn  \chi_n} .
\end{align*}
from where we again arrive at
\begin{align*}
	\sum_{n\ge2}&(n+1)^{2\sss-2} N_{11} ^{HL, I} 
	\lesssim \nu^{100} \sqrt{\mathcal{E}_{ell}^{(I, out)}} 
	\sqrt{\mathcal{CK}^{\gm}} \nu^{-1/2} \sqrt{\mathcal{CK}_{\overline{H}}^{(\alpha)}}
	\lesssim \eps
	\sqrt{\mathcal{CK}^{\gm}} \sqrt{\mathcal{CK}_{\overline{H}}^{(\alpha)}}.
\end{align*}
The treatment of $N_{12} ^{HL, I}$ and $N_{12} ^{LH, I}$ is the same as that of the interior part of Lemma~\ref{nonl:barH} except upgrading to one derivative higher regularity, 
\begin{align*}
	\sum_{n\ge2}&(n+1)^{2\sss-2} N_{12} ^{HL, I} 
	\lesssim \nu^{100} \sqrt{\mathcal{E}_{ell}^{(I, out)}}  \nu^{-1}
	\sqrt{\mathcal{CK}^{(\alpha)}}  \sqrt{\mathcal{CK}_{\overline{H}}^{(\alpha)}}
	\lesssim \eps 
	\sqrt{\mathcal{CK}^{(\alpha)}}  \sqrt{\mathcal{CK}_{\overline{H}}^{(\alpha)}}
	\\
	\sum_{n\ge2}&(n+1)^{2\sss-2} N_{12} ^{LH, I} 
	\lesssim \nu^{100}
  \sqrt{\mathcal{E}_{ell}^{(I, out)}}  \nu^{-1}
	\sqrt{\mathcal{CK}^{(\alpha)}} \sqrt{\mathcal{CK}_{\overline{H}}^{(\alpha)}}
	\lesssim \eps 
	\sqrt{\mathcal{CK}^{(\alpha)}} \sqrt{\mathcal{CK}_{\overline{H}}^{(\alpha)}}.
\end{align*}
 of which further details are omitted. 
\\{\bf (2). Case $n\le  1$:} We only show the estimate for $N^{(1)}_{\overline{H},1} $. As in Lemma~\ref{nonl:barH}, we divide it into
 the interior and exterior parts (note here the decomposition is different from the above situation in this lemma, where $\stf$ is divided into the interior and exterior parts)
\begin{align*}
	N^{(1)} &= N_1^{E, 1} + N_1^{I,1},
\end{align*}
where the estimate of $N_1^{E, 1}$ is standard using the localization property of both $\omega, \overline H$ and hence we only focus on the interior one
\begin{align*}
		N_1^{(I, 1)} &= \frac{1}{t} \brak{t}^{3+2\ss}  \bold{a}_{2}^2 
		\Big \langle \partial_{y} (q\Gamma_0) \paren{  \nabla^{\perp} \stf_{\neq 0} \cdot \nabla \omega }_0, 
		 \partial_{y} \mathring{\overline{H}}_1 e^{W} \chi_1^2 (1-\chi_2^2)\Big \rangle 
		\\& = 
		-\sum_{k} \frac{1}{t} \brak{t}^{3+2\ss}  \bold{a}_{2}^2  \Big \langle \partial_{y} (q\Gamma_0) \paren{ \Gamma_k \stf_{k} \overline {ik \omega_k }}, 
      \partial_{y}  \mathring{\overline{H}}_1 e^{W} \chi_1^2 (1-\chi_2^2)\Big \rangle 
		\\&\qquad + \sum_{k} \frac{1}{t} \brak{t}^{3+2\ss}  \bold{a}_{2}^2 
		\Big \langle \partial_{y} (q\Gamma_0)  \paren{ ik \stf_{k} \Gamma_k \omega_k }, 
       \partial_{y} \mathring{\overline{H}}_1 e^{W} \chi_1^2 (1-\chi_2^2)\Big \rangle 
		\\&
		= 	- N_{1}^{(I, 1)} + N_{2}^{(I, 1)} .
\end{align*}
Distributing the derivatives, we have the further decomposition
\begin{align*}
	N_{1}^{(I, 1)} =& 	-\sum_{k} \frac{1}{t} \brak{t}^{3+2\ss}  \bold{a}_{2}^2  \Big \langle \partial_{y} (q\Gamma_k) \Gamma_k \stf_{k} \overline{ik \omega_k}, 
	\partial_{y}  \mathring{\overline{H}}_1 e^{W} \chi_1^2 (1-\chi_2^2)\Big \rangle \\&
		-\sum_{k} \frac{1}{t} \brak{t}^{3+2\ss}  \bold{a}_{2}^2  \Big \langle \partial_{y}  \Gamma_k \stf_{k} \overline{ik (q\Gamma_k) \omega_k} , 
	\partial_{y}  \mathring{\overline{H}}_1 e^{W} \chi_1^2 (1-\chi_2^2)\Big \rangle 	
	\\&-\sum_{k} \frac{1}{t} \brak{t}^{3+2\ss}  \bold{a}_{2}^2  \Big \langle  (q\Gamma_k)  \Gamma_k \stf_{k}\overline{ ik \partial_{y}\omega_k }, 
	\partial_{y}  \mathring{\overline{H}}_1 e^{W} \chi_1^2 (1-\chi_2^2)\Big \rangle 	
	\\&-\sum_{k} \frac{1}{t} \brak{t}^{3+2\ss}  \bold{a}_{2}^2  \Big \langle  \Gamma_k \stf_{k} \overline{ik \partial_{y} (q\Gamma_k)\omega_k} , 
	\partial_{y}  \mathring{\overline{H}}_1 e^{W} \chi_1^2 (1-\chi_2^2)\Big \rangle 
	\\=& N_{11}^{(I, 1)} + N_{12}^{(I, 1)} +N_{13}^{(I, 1)} +N_{14}^{(I, 1)} .
\end{align*}
The first term is easily treated as 
\begin{align*}
		|N_{11}^{(I, 1)}| &\le 	\frac{1}{t} 	\brak{t}^{3+2\ss}  \bold{a}_{2}^2
		\sum_{k}     \norm{ \partial_{y} (q\Gamma_k)   \Gamma_k \stf_{k}\sqrt{(1-\chi_2^2)} }_{L^\infty} 
		\enorm{ik \omega_k e^{W} \chi_1}
		\enorm{\partial_{y} \mathring{\overline{H}}_1 e^{W} \chi_1}
		\\&\lesssim 	\frac{1}{t} 	\brak{t}^{3+2\ss}  \bold{a}_{2}^2
		 \sum_{k}     \norm{ \partial_{y} (q\Gamma_k)   \Gamma_k \stf_{k} \sqrt{(1-\chi_2^2)}  }_{L^2}^{1/2}  \norm{ \Gamma_k \partial_{y} (q\Gamma_k)   \Gamma_k \stf_{k} \sqrt{(1-\chi_2^2)}  }_{L^2}^{1/2} 
      \\&\qquad \times
		 \enorm{ik \omega_k e^{W} \chi_1}
		 \enorm{\partial_{y} \mathring{\overline{H}}_1  \chi_1  \sqrt{(1-\chi_2^2)} }.	
\end{align*}
Using $\bold{a}_2 \lesssim \bold{a}_1 \frac{1}{\brak{t}}$ and the elliptic estimate~\eqref{eell:sob}, we further arrive at
\begin{align*}
	|N_{11}^{(I, 1)}| 
	\lesssim \frac{\eps}{\brak{t}^{4+r-2\ss}} \bold{a}_{1}	\enorm{\partial_{x} \omega e^{W} \chi_1}
		\langle t \rangle^{1+\ss} \enorm{\partial_{y} \mathring{\overline{H}}_1  \sqrt{(1-\chi_2^2)}  \chi_1}  \lesssim \frac{\eps^3}{\brak{t}^{4+r-2\ss}}.
\end{align*}
The second term is treated as 
\begin{align*}
		|N_{12}^{(I, 1)}| &=\sum_{k} \frac{1}{t} \brak{t}^{3+2\ss}  \bold{a}_{2}^2  \Big|\Big \langle \partial_{y} \Gamma_k \stf_{k} \overline{ik (q\Gamma_k) \omega_k}, 
	\partial_{y}  \mathring{\overline{H}}_1e^{W} \chi_1^2 (1-\chi_2^2)\Big \rangle \Big|
	\\& \lesssim 	\frac{1}{t} 	\brak{t}^{3+2\ss}  \bold{a}_{2}^2
	\sum_{k}     \norm{ \partial_{y} ik  \Gamma_k \stf_{k}  \sqrt{(1-\chi_2^2)}  }_{L^\infty} 
	\enorm{ (q\Gamma_k)  \omega_k e^{W} \chi_1}
	\enorm{\partial_{y} \mathring{\overline{H}}_1  \sqrt{(1-\chi_2^2)}  \chi_1} 
	\\& \lesssim 	\frac{\eps}{\brak{t}^{4+r-2\ss}}
	\bold{a}_{1} \enorm{ (q\Gamma_k)  \omega e^{W} \chi_1}
	 	\langle t \rangle^{1+\ss} \bold{a}_{2}  \enorm{\partial_{y} \mathring{\overline{H}}_1  \sqrt{(1-\chi_2^2)}  \chi_1} 	
	 \\& \lesssim \frac{\eps^3}{\brak{t}^{4+r-2\ss}}.
\end{align*}
 Similar argument gives
 \begin{align*}
 		|N_{13}^{(I, 1)}| =&	
 		\sum_{k} \frac{1}{t} \brak{t}^{3+2\ss}  \bold{a}_{2}^2 \Big| \Big \langle  (q\Gamma_k)  \Gamma_k \stf_{k} \overline{ik \partial_{y}\omega_k }, 
 		\partial_{y}  \mathring{\overline{H}}_1 e^{W} \chi_1^2 (1-\chi_2^2)\Big \rangle 	\Big|
 		\\\lesssim &
 			 			\nu^{-1/2} \sum_{k} \frac{1}{t} \brak{t}^{3+2\ss}  \bold{a}_{2}^2 \norm{  |k|^{1/3}   ik \Gamma_k ^2 \stf_{k}  \sqrt{(1-\chi_2^2)}  }_{L^\infty} 
 			\nu^{1/3} |k|^{-1/3} \enorm{ \partial_{y}  \omega_k e^{W} \chi_1}
 			\\&\times
 			\enorm{\partial_{y} \mathring{\overline{H}}_1  \sqrt{(1-\chi_2^2)}  \chi_1} 
 		\\\lesssim & \nu^{-1/2}  \frac{\eps^3}{\brak{t}^{4+r-2\ss}}.
 \end{align*}
and 
 \begin{align*}
	|N_{14}^{(I, 1)}| =&	
	\sum_{k} \frac{1}{t} \brak{t}^{3+2\ss}  \bold{a}_{2}^2 \Big| \Big \langle  (q\Gamma_k)  \Gamma_k \stf_{k}\overline{ ik \partial_{y}\omega_k }, 
	\partial_{y}  \mathring{\overline{H}}_1 e^{W} \chi_1^2 (1-\chi_2^2)\Big \rangle 	\Big|
	\\\lesssim &
	\nu^{-1/2} \sum_{k} \frac{1}{t} \brak{t}^{3+2\ss}  \bold{a}_{2}^2 \norm{  |k|^{1/3} \Gamma_k \stf_{k}(1-\chi_2^2) }_{L^\infty} 
	\nu^{1/3} |k|^{-1/3} \enorm{ \partial_{y}  ik  (q\Gamma_0)  \omega_k e^{W} \chi_1}
	 			\\&\times
	\enorm{\partial_{y} \mathring{\overline{H}}_1  \sqrt{(1-\chi_2^2)}  \chi_1} 
	\\\lesssim & \nu^{-1/2}  \frac{\eps^3}{\brak{t}^{4+r-2\ss}}.
\end{align*}
Next we turn to $N_{2}^{(I, 1)} $,  which may be rewritten as
\begin{align*}
	N_{2}^{(I, 1)} =& 	-\sum_{k} \frac{1}{t} \brak{t}^{3+2\ss}  \bold{a}_{2}^2  \Big \langle \partial_{y} (q\Gamma_k) ik \stf_{k} \overline{\Gamma_k \omega_k}, 
	\partial_{y}  \mathring{\overline{H}}_1 e^{W} \chi_1^2 (1-\chi_2^2)\Big \rangle \\&
	-\sum_{k} \frac{1}{t} \brak{t}^{3+2\ss}  \bold{a}_{2}^2  \Big \langle \partial_{y}  ik \stf_{k} \overline{ (q\Gamma_k) \Gamma_k \omega_k} , 
	\partial_{y}  \mathring{\overline{H}}_1 e^{W} \chi_1^2 (1-\chi_2^2)\Big \rangle 	
	\\&-\sum_{k} \frac{1}{t} \brak{t}^{3+2\ss}  \bold{a}_{2}^2  \Big \langle  (q\Gamma_k)  ik \stf_{k}\overline{  \partial_{y}\Gamma_k \omega_k }, 
	\partial_{y}  \mathring{\overline{H}}_1 e^{W} \chi_1^2 (1-\chi_2^2)\Big \rangle 	
	\\&-\sum_{k} \frac{1}{t} \brak{t}^{3+2\ss}  \bold{a}_{2}^2  \Big \langle ik \stf_{k} \overline{ \partial_{y} (q\Gamma_k) \Gamma_k \omega_k} , 
	\partial_{y}  \mathring{\overline{H}}_1 e^{W} \chi_1^2 (1-\chi_2^2)\Big \rangle 
	\\=& N_{21}^{(I, 1)} + N_{22}^{(I, 1)} +N_{23}^{(I, 1)} +N_{24}^{(I, 1)} .
\end{align*}
By H\"older's inequality, one has
\begin{align*}
	|N_{21}^{(I, 1)}| &\le 	\frac{1}{t} 	\brak{t}^{3+2\ss}  \bold{a}_{2}^2
	\sum_{k}     \norm{ \partial_{y} (q\Gamma_k)  k \stf_{k} \sqrt{(1-\chi_2^2)}  }_{L^\infty} 
	\enorm{\Gamma_k \omega_k e^{W} \chi_1}
	\enorm{\partial_{y} \mathring{\overline{H}}_1  \sqrt{(1-\chi_2^2)}  \chi_1}
	\\&\lesssim 	\frac{1}{t} 	\brak{t}^{3+2\ss}  \bold{a}_{2}^2
	\sum_{k}     \norm{ k \partial_{y} (q\Gamma_k)  \stf_{k}(1-\chi_2^2) }_{L^2}^{1/2}  \norm{k \Gamma_k  \partial_{y} (q\Gamma_k)  \stf_{k}(1-\chi_2^2) }_{L^2}^{1/2} 
	\\&\qquad \times
	\enorm{\partial_{y} \omega_k e^{W} \chi_1}
	\enorm{\partial_{y} \mathring{\overline{H}}_1  \sqrt{(1-\chi_2^2)}  \chi_1}
	\\&\quad 
	+ \frac{1}{t} 	\brak{t}^{3+2\ss}  \bold{a}_{2}^2
	\sum_{k}  |k|t   \norm{ k \partial_{y} (q\Gamma_k)  \stf_{k}(1-\chi_2^2) }_{L^2}^{1/2}  \norm{ k \Gamma_k \partial_{y} (q\Gamma_k)   \stf_{k}(1-\chi_2^2) }_{L^2}^{1/2} 
	\\&\qquad \times
	\enorm{\omega_k e^{W} \chi_1}
	\enorm{\partial_{y} \mathring{\overline{H}}_1  \sqrt{(1-\chi_2^2)}  \chi_1}.	
\end{align*}
Using $\bold{a}_2\lesssim \bold{a}_0 \brak{t}^{-2}$, inviscid damping estimate~\eqref{eell:sob} and \eqref{fell:e}, and the bootstrap assumption, we deduce that
\begin{align*}
	|N_{21}^{(I, 1)}| &\le \nu^{-1/2}  \frac{\eps^3}{\brak{t}^{4+r-2\ss}}.
\end{align*}
The second term is already a bit tricky and we proceed as
\begin{align*}
	|N_{22}^{(I, 1)}| &\le 	\frac{1}{t} 	\brak{t}^{3+2\ss}  \bold{a}_{2}^2
	\sum_{k}     \norm{k \partial_{y}  \stf_{k}  \sqrt{(1-\chi_2^2)}  }_{L^\infty} 
	\enorm{\Gamma_k(q\Gamma_k)  \omega_k e^{W} \chi_1}
	\enorm{\partial_{y} \mathring{\overline{H}}_1  \sqrt{(1-\chi_2^2)}  \chi_1}
	\\&\quad + 	\frac{1}{t} 	\brak{t}^{3+2\ss}  \bold{a}_{2}^2
	\sum_{k}     \norm{k \partial_{y} \stf_{k}  \sqrt{(1-\chi_2^2)}  }_{L^\infty} 
	\enorm{\Gamma_k \omega_k e^{W} \chi_1}
	\enorm{\partial_{y} \mathring{\overline{H}}_1  \sqrt{(1-\chi_2^2)}  \chi_1}
\end{align*}
where we used \eqref{cm_pv_qn} for $n=1$.
Note the second term in the above formula can be treated as $N_{21}^{(I, 1)}$ and we only show how to 
handle the first one. We have
\begin{align*}
  	\frac{1}{t} 	\brak{t}^{3+2\ss}&  \bold{a}_{2}^2
	\sum_{k}     \norm{k \partial_{y}  \stf_{k}  \sqrt{(1-\chi_2^2)}  }_{L^\infty} 
	\enorm{\Gamma_k(q\Gamma_k)  \omega_k e^{W} \chi_1}
	\enorm{\partial_{y} \mathring{\overline{H}}_1  \sqrt{(1-\chi_2^2)} \chi_1}
	\\&\lesssim 	\frac{1}{t} 	\brak{t}^{3+2\ss}  \bold{a}_{2}
	\sum_{k}     \norm{ k \partial_{y} \stf_{k}  \sqrt{(1-\chi_2^2)} }_{L^2}^{1/2}  \norm{k \Gamma_k  \partial_{y}  \stf_{k}  \sqrt{(1-\chi_2^2)}  }_{L^2}^{1/2} 
	\\&\qquad \times
	\bold{a}_{2}\enorm{\Gamma_k (q\Gamma_k)\omega_k e^{W} \chi_1}
	\enorm{\partial_{y} \mathring{\overline{H}}_1  \sqrt{(1-\chi_2^2)}  \chi_1}
	\\&\lesssim \nu^{-1/2} \nu^{1/2}  \frac{\eps^3}{\brak{t}^{3+r-2\ss}} 
	\lesssim \nu^{-1/2}  \frac{\eps^3}{\brak{t}^{4+r-2\ss+\delta}}
\end{align*}
for some tiny parameter $\delta>0$, where we also used $t\lesssim \nu^{-1/3-\gamma}$.
Next we treat the most dangerous term $N_{24}^{(I, 1)}$ and the estimate for $N_{23}^{(I, 1)}$ is strictly simpler and essentially included in it. Noting
\begin{align*}
	|N_{24}^{(I, 1)}| & \le \sum_{k} \frac{1}{t} \brak{t}^{3+2\ss}  \bold{a}_{2}^2 \Big| \Big \langle ik \stf_{k} \overline{ \partial_{y} (q\Gamma_k) \Gamma_k \omega_k}, \partial_{y}  \mathring{\overline{H}}_1 e^{W} \chi_1^2 (1-\chi_2^2)\Big \rangle \Big|,
\end{align*}
we obtain
\begin{align*}
	 \sum_{k} \frac{1}{t}& \brak{t}^{3+2\ss}  \bold{a}_{2}^2 \Big| \Big \langle ik \stf_{k} \overline{ \partial_{y} (q\Gamma_k) \Gamma_k \omega_k}, \partial_{y}  \mathring{\overline{H}}_1 e^{W} \chi_1^2 (1-\chi_2^2)\Big \rangle \Big|\\
	&\le 	\frac{1}{t} 	\brak{t}^{3+2\ss}  \bold{a}_{2}^2
	\sum_{k}     \norm{ k \stf_{k}\sqrt{1-\chi_2^2} }_{L^\infty} 
	\enorm{ \partial_y (q\Gamma_k) \Gamma_k \omega_k e^{W} (1-\chi_3^2)\chi_2}
	\enorm{\partial_{y} \mathring{\overline{H}}_1  \sqrt{(1-\chi_2^2)}  \chi_2}
	\\&\lesssim  \frac{\eps^3}{\brak{t}^{4+r-2\ss}}.
\end{align*}
We remark here that due to the localization $\chi_n$, we are not able to use the $\mathcal{CK}$ terms to recover the derivative lose in the interior and we use the ``cloud" estimates instead. The treatment of $N^{(0)}_{\overline{H},1} $ is very similar to that of $N^{(1)}_{\overline{H},1} $ except we get a slower decay rate
\begin{align*}
	N^{(0)}_{\overline{H},1}  \lesssim \frac{\eps^3}{\brak{t}^{3+r-2\ss}}
\end{align*}
due to the fact that $\bold{a}_1 \sim t \bold{a}_2$, and we omit further details to conclude the proof.  
\end{proof}

\subsubsection{Viscous commutator}
Next we focus on the viscous commutator
\begin{align*}
	C^{(\alpha; n)}_{\overline{H}, visc} &= \brak{t}^{3+2\ss} \bold{a}_{n+1}^2\Re\brak{  \partial_y \widetilde{\mathcal{C}^{(n)}_{visc}}, \partial_y \bhqn  e^{W} \chi_n^2}
	\nn&
	=
	\nu \brak{t}^{3+2\ss} \bold{a}_{n+1}^2\Re\brak{ \partial_y  \paren{q^n\sum_{m = 1}^n \binom{n}{m}  \Gamma_0^m v_y^2 \Gamma_0^{2 + n-m} \overline{H}}, \partial_y\bhqn  e^{W} \chi_n^2}.
\end{align*}
\begin{lemma}
	\label{visc:comm:barH:1}
	It follows
	\begin{align}
		\label{visc:comm:esti:barH:1}
		&\sum_{n} \theta_{n}^2 (n+1)^{2\sss-2}  C^{(\alpha; n)}_{\overline{H}, visc}
		\lesssim \eps
		\sqrt{\cd_{\overline H}^{(\alpha)}} \paren{\sqrt{\mathcal{CK}_{\overline{H}}^{(\alpha)}} + \sqrt{\cd_{\overline H}^{(\alpha)}}}
		.
	\end{align}
	\begin{proof}
 {		As stated in Lemma~\ref{visc:comm:barH}, the proof is similar to that of Lemma 5.6 in \cite{BHIW24b} except a slight difference in }
		the binomial coefficients and we again only sample one term here to show the readers how it works.
Integration by parts gives
		\begin{align*}
			C^{(\alpha; n)}_{\overline{H}, visc}
			&=
			- \nu \brak{t}^{3+2\ss}\sum_{l = 1}^n  \bold{a}_{n+1}^2 \binom{n}{l}
			\Re\brak{   q^n   \Gamma_0^l \paren{v_y^2-1} \Gamma_0^{2 + n-l} \overline{H},
			\partial_y\paren{\partial_y	\bhqn  e^{W} \chi_n^2}}
			\nn&=
			-\nu \brak{t}^{3+2\ss}\sum_{l = 1}^n  \bold{a}_{n+1}^2 \binom{n}{l}
			\Re\brak{   q^n   \Gamma_0^l \paren{v_y^2-1} 
				\Gamma_0^{2+ n-l} \overline{H},
			\partial_y^2	\bhqn  e^{W} \chi_n^2}
			\nn&\quad
			+\nu \brak{t}^{3+2\ss}\sum_{l = 1}^n  \bold{a}_{n+1}^2 \binom{n}{l}
			\Re\brak{   q^n   \Gamma_0^l \paren{v_y^2-1} \Gamma_0^{2+ n-l} \overline{H},
			\partial_y	\bhqn  \partial_y \paren{e^{W}\chi_n^2}}
			\nn&= 	V_{\overline H, 1}^{(n)}+V_{\overline H, 2}^{(n)}.
		\end{align*}
		For the term involving $V_{\overline H, 1}^{(n)}$, we split the sum as
		\begin{align*}
			\sum_{n} \theta_{n}^2 (n+1)^{2\sss-2}  V_{\overline H, 1}^{(n)} &= 
			\sum_{n}  \paren{\sum_{l\le n/2} + \sum_{n/2<l\le n} } 
			(n+1)^{2\sss-2}\nu \brak{t}^{3+2\ss} \bold{a}_{n+1}^2 \binom{n}{l}
			\nn&\qquad\times
			\Re\brak{   q^n   \Gamma_0^l \paren{v_y^2-1} 
				\Gamma_0^{2+ n-l} \overline{H},
				\partial_y^2	\bhqn  e^{W} \chi_n^2}
			\nn&=
			\sum_{n} \theta_{n}^2 (n+1)^{2\sss-2}  V_{\overline H, 1}^{LH}	+ 	\sum_{n} \theta_{n}^2 (n+1)^{2\sss-2}  V_{\overline H, 1}^{HL}.
		\end{align*}
	 Since the ``High-Low" interaction is a bit tricky, we deal with it first. By H\"older's inequality, one easily obtain
		\begin{align*}
			\sum_{n} \theta_{n}^2 (n+1)^{2\sss-2}  V_{\overline H, 1}^{HL}
			&\lesssim
			\sum_{n}  \sum_{n/2<l\le n}  
			(n+1)^{2\sss-2}\nu \brak{t}^{3+2\ss} \bold{a}_{n+1}^2 \binom{n}{l}
			\enorm{q^{l-1}\Gamma_0^l \paren{v_y^2-1} e^{W/2}\chi_{l-1}\widetilde \chi_1}
			\nn&\qquad\times 
			\norm{q^{n-l+1}\Gamma_0^{2+ n-l} \overline{H} \chi_{n-l+1}}_{L^\infty}
			\enorm{\partial_y^2\bhqn  e^{W/2} \chi_n}
		\end{align*}
	where $\widetilde\chi_1$ is a fattened version of $\chi_{ 1}$ while $W$ is still huge in $\supp \widetilde\chi_{ 1}$.
		By Sobolev embedding, it follows
		\begin{align*}
			&\norm{q^{n-l+1}\Gamma_0^{2 + n-l} \overline{H} \chi_{n-l+1}}_{L^\infty}
			\lesssim
			(n-l)^{1+\sigma}\norm{q^{n-l+1}\Gamma_0^{2 + n-l} \overline{H} \chi_{n-l}\widetilde\chi_1}_{L^2}
			\nn&\qquad+ 	\norm{q^{n-l+1}\Gamma_0^{2 + n-l} \overline{H} \chi_{n-l+1}}_{L^2}^{1/2}
			\norm{\partial_y\paren{q^{n-l+1}\Gamma_0^{2 + n-l} \overline{H}} \chi_{n-l+1}}_{L^2}^{1/2}.
		\end{align*}
		Therefore, we further obtain
		\begin{align*}
			&\sum_{n} \theta_{n}^2 (n+1)^{2\sss-2}  V_{\overline H, 1}^{HL}
			\lesssim
			\sum_{n}  \sum_{n/2<l\le n}  
			(n+1)^{2\sss-2}\nu \brak{t}^{3+2\ss} \bold{a}_{n+1}^2 \binom{n}{l} \paren{\bb_{l-1}\bb_{n-l+2}}^{-1} (n-l)^{-\sss+1}
			\bb_{l-1}
			\nn&\qquad\times \enorm{q^{l-1}\Gamma_0^l \paren{v_y^2-1} e^{W}\chi_{l-1}\widetilde \chi_1}
			\bb_{n-l+2}(n-l+2)^{\sss+\sigma}\norm{q^{n-l+1}\Gamma_0^{2 + n-l} \overline{H} \chi_{n-l}\widetilde\chi_1}_{L^2}
			\nn	&\qquad\times			\enorm{\partial_y^2\bhqn  e^{W} \chi_n}
			\nn	&\qquad+
			\sum_{n}  \sum_{n/2<l\le n}  
			(n+1)^{2\sss-2}\nu \brak{t}^{3+2\ss} \bold{a}_{n+1}^2 \binom{n}{l} \paren{\bb_{l-1}\bb_{n-l+2}}^{-1} (n-l)^{-\sss+1}
			\bb_{l-1}
			\nn&\qquad\times \enorm{q^{l-1}\Gamma_0^l \paren{v_y^2-1} e^{W}\chi_{l-1}\widetilde \chi_1}
			\bb_{n-l+2}(n-l)^{\sss+\sigma}	\norm{q^{n-l+1}\Gamma_0^{2 + n-l} \overline{H} \chi_{n-l+1}}_{L^2}^{1/2}
			\nn	&\qquad\times		
			\norm{\partial_y\paren{q^{n-l+1}\Gamma_0^{2 + n-l} \overline{H}} \chi_{n-l+1}}_{L^2}^{1/2}
			\enorm{\partial_y^2\bhqn  e^{W} \chi_n}.
		\end{align*}
		By Lemma~\ref{comb:boun}, H\"older's inequality, and Young's inequality, it follows
		\begin{align*}
			\sum_{n} \theta_{n}^2 (n+1)^{2\sss-2} & V_{\overline H, 1}^{HL} \lesssim
			\nu^{1/2}	\sqrt{\cd_{\overline H}^{(\alpha)}} \langle t \rangle^{1+\ss}
			\paren{\paren{\sum_{l\ge0} \norm{\bb_{l+1}(l+1)^{\sss+\sigma} q^{l}\Gamma_0^{l+1} 
						\overline{H} \chi_{n-l+1}}_{L^2}^2}^{1/2}
			\right.\nn&\quad \left.
			+\paren{\sum_{l\ge0} \norm{\bb_{l+1}(l+1)^{\sss+\sigma} q^{l}\Gamma_0^{l+1} 
						\overline{H} \chi_{n-l+1}}_{L^2}^2}^{1/4}
		 			\right.\nn&\quad \left.\times
			\paren{\sum_{l\ge0} \norm{\bb_{l+1}(l+1)^{\sss+\sigma} \partial_y\paren{q^{l}\Gamma_0^{l+1} 
				 			\overline{H}} \chi_{n-l+1}}^2_{L^2}}^{1/4} }
			\nn&\quad \times
			\paren{\sum_{l\ge1} \enorm{\bold{a}_{\ell} q^{\ell-1}\Gamma_0^{\ell} \paren{v_y^2-1} e^{W} 
						\chi_{l-1}\widetilde \chi_1}^2}^{1/2}
			\nn&\lesssim \nu^{100}
			\sqrt{\cd_{\overline H}^{(\alpha)}} \paren{\nu^{1/2}\norm{\overline{H}}_{Y_{1,-s}} + \nu^{1/4}\norm{\overline{H}}_{Y_{1,-s}}^{1/2}\paren{\cd_{\overline H}^{(\alpha)}}^{1/4}}
			\norm{v_y^2-1}_{Y_{1, 0}},
		\end{align*}
	where an argument like~\eqref{dif:typ:1} is used. 
		On the other hand, for the $LH$ piece, we proceed as
		\begin{align*}
			\sum_{n} \theta_{n}^2 (n+1)^{2\sss-2}  V_{\overline H, 1}^{LH}
			&\lesssim
			\sum_{n}  \sum_{n/2<l\le n}  
			(n+1)^{2\sss-2}\nu \brak{t}^{3+2\ss} \bold{a}_{n+1}^2 \binom{n}{l}
			\norm{q^{l}\Gamma_0^l \paren{v_y^2-1} e^{W}\chi_n}_{L^2}
			\nn&\qquad\times 
			\norm{q^{n-l}\Gamma_0^{2 + n-l} \overline{H} \chi_{n-l+1}}_{L^\infty}
			\enorm{\partial_y^2\bhqn  e^{W} \chi_n},
		\end{align*}
		from where using a similar argument as in Lemma~\ref{visc:comm:barH}, we obtain
		\begin{align*}
			\sum_{n} \theta_{n}^2 (n+1)^{2\sss-2}  V_{\overline H, 1}^{LH} \lesssim
			\nu^{100}	\cd_{\overline H}^{(\alpha)} 
              \norm{v_y^2-1}_{Y_{0, 0}}^{1/2}\norm{v_y^2-1}_{Y_{1, 0}}^{1/2} 
		\end{align*}
		and finish the treatment of $V_{\overline H, 1}^{(n)}$ using Lemma \ref{pro:0} and \ref{pro:1}. While the term involving $V_{\overline H, 2}^{(n)}$ is relatively easy, the proof
 {		of which is essentially covered in the treatment of viscous commutator of the nonlinear term in Subsections 5.3 in \cite{BHIW24b}.}
	\end{proof}
\end{lemma}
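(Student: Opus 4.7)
The plan is to follow the same architecture used for the $\gamma$-level viscous commutator in Lemma~\ref{visc:comm:barH}, adapting each step to account for the extra $\partial_y$ that accompanies the $\alpha$-level energy and for the regularity-discrepancy encoded in the $Y_{1,-s}$ norms. First I would integrate by parts in the inner product defining $C^{(\alpha;n)}_{\overline H, visc}$, moving one $\partial_y$ off the commutator $\partial_y \widetilde{\mathcal C^{(n)}_{visc}}$ onto $\partial_y\mathring{\overline H}_n e^W\chi_n^2$. This produces two families of terms: a \emph{principal} family where the derivative lands on $\partial_y\mathring{\overline H}_n$ (yielding $\partial_y^2\mathring{\overline H}_n$, which is exactly the $\cd^{(\alpha)}_{\overline H}$-compatible quantity), and a \emph{weight} family where the derivative hits $e^W\chi_n^2$ (which is treated analogously to the easy terms of Lemma~\ref{easy:term:1}, generating either $\mathcal{CK}^{(\alpha)}_{\overline H}$ through $\partial_y W$ or a gain from $\partial_y\chi_n$ controlled by the $\theta_n$-drop as in Lemma~\ref{C:n:q}). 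I would focus the main work on the principal family $V^{(n)}_{\overline H,1}$.

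Next I would split the binomial sum into the high-low ($l\le n/2$) and low-high ($l>n/2$) regimes. In each regime, the $v_y^2-1$ factor and the $\Gamma_0^{2+n-l}\overline H$ factor exchange roles between $L^2$ and $L^\infty$. The key is that placing $\Gamma_0^{l}(v_y^2-1)$ in $L^2$ gives access to $\norm{H}_{Y_{1,0}}$ (via the identity $v_y^2-1 = 2H+H^2$ and the product/composition estimates of Lemma~\ref{pro:1}), while placing $\Gamma_0^{2+n-l}\overline H$ in $L^\infty$ forces me to use Sobolev embedding $\norm{f\chi_{n-l+1}}_{L^\infty}\lesssim \norm{f\chi_{n-l+1}}_{L^2}^{1/2}\norm{\partial_y(f\chi_{n-l+1})}_{L^2}^{1/2}$, where the second factor is bounded by $\nu^{1/2}\norm{\partial_y\ztp{\overline H}_{n-l+1}\chi_{n-l+1}}_{L^2}$ (i.e., a piece of $\cd^{(\alpha)}_{\overline H}$) after absorbing the $\partial_y$ commuting through $q^{n-l+1}$ and $v_y^{-1}$. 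Collecting powers of $(n-l)$ from the chain rule on $\chi$ (which cost $(n-l)^{1+\sigma}$) against the combinatorial coefficient $\binom{n}{l}$ and the Gevrey weights $\bold{a}_{n+1}/(\bold{a}_{l-1}\bold{a}_{n-l+2})$, I would invoke Corollary~\ref{comb:boun:vari:1} and Lemma~\ref{comb:boun} to sum in $n,l$ cleanly.

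After the combinatorial reduction, the HL piece yields a product of the form
\begin{align*}
\nu^{100}\sqrt{\cd^{(\alpha)}_{\overline H}}\Big(\nu^{1/2}\norm{\overline H}_{Y_{1,-s}}+\nu^{1/4}\norm{\overline H}_{Y_{1,-s}}^{1/2}(\cd^{(\alpha)}_{\overline H})^{1/4}\Big)\norm{v_y^2-1}_{Y_{1,0}},
\end{align*}
which, using $\norm{v_y^2-1}_{Y_{1,0}}\lesssim \norm{H}_{\overline Y_{1,0}}$, the bootstrap control $\norm{\overline H}_{Y_{1,-s}}\lesssim \langle t\rangle^{-3/2-s^{-1}}\sqrt{\mathcal E^{(\alpha)}_{\overline H}}\lesssim \eps\langle t\rangle^{-3/2-s^{-1}}$, and absorption of the large factor of $\nu^{100}$, fits inside the target $\eps\sqrt{\cd^{(\alpha)}_{\overline H}}(\sqrt{\mathcal{CK}^{(\alpha)}_{\overline H}}+\sqrt{\cd^{(\alpha)}_{\overline H}})$. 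The LH piece is symmetric but easier: here $\Gamma_0^l(v_y^2-1)$ goes into $L^\infty$ and $\Gamma_0^{2+n-l}\overline H$ stays in $L^2$, directly feeding into $\cd^{(\alpha)}_{\overline H}$.

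The main obstacle I anticipate is the bookkeeping when the derivative inside $\partial_y\widetilde{\mathcal C^{(n)}_{visc}}$ lands on the low-frequency factor in the LH regime, creating a term with $\Gamma_0^{2+n-l}\overline H$ in $L^\infty$ and no obvious $\nu^{1/2}$ to pair with. The remedy is to commute $\partial_y$ through $v_y^{-1}$ and $q^{n-l+1}$ using Lemma~\ref{lem:com:BA} (equations \eqref{cm_py_qn}, \eqref{cm_pv_qn}), trading the bare $\partial_y$ for a $\Gamma_0$-derivative plus lower-order inductive terms; the $\Gamma_0$-derivative is then absorbed into the already-present power $\Gamma_0^{2+n-l}$, effectively reducing to the principal case, while the inductive terms are controlled exactly as in Lemma~\ref{C:n:q} via the $\delta_{\text{Drop}}$ smallness and Fubini. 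The weight family contribution from $\partial_y e^W$ will be the other delicate point, requiring the pointwise bound $|\partial_y W|\lesssim \frac{(|y|-1/4-L\eps\arctan t)_+}{K\nu(1+t)}$ to produce an honest $\sqrt{\mathcal{CK}^{(\alpha)}_{\overline H}}$, with the smallness $K^{-1}$ used to absorb the resulting constants, precisely mirroring the proof of Lemma~\ref{easy:term:1}.
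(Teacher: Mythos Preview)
Your proposal is correct and follows essentially the same architecture as the paper's proof: integrate by parts to move the outer $\partial_y$ onto $\partial_y\bhqn e^W\chi_n^2$, split into the principal term (giving $\partial_y^2\bhqn$) and the weight term, then decompose the principal term into HL/LH regimes and apply Sobolev embedding, the combinatorial lemmas, and the bootstrap bounds exactly as you describe. One minor remark: the ``main obstacle'' you anticipate---a stray $\partial_y$ landing on the low-frequency factor inside the commutator---does not actually arise once the integration by parts is performed, since that step removes the outer $\partial_y$ entirely from $\widetilde{\mathcal C^{(n)}_{visc}}$; the paper's proof accordingly does not need the commutator gymnastics you sketch there, and your principal-term analysis already suffices.
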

\subsubsection{Transport commutator}
For the transport commutator part, we note
\begin{align*}
	C^{(\alpha; n)}_{\overline{H}, trans} &= 
	\brak{t}^{3+2\ss} \bold{a}_{n+1}^2\Re \brak{\partial_y \widetilde{\mathcal{C}^{(n)}_{trans}} , \partial_y\bhqn  e^{W} \chi_n^2}
	\nn&
	=
	\brak{t}^{3+2\ss} \bold{a}_{n+1}^2
	\Re \brak{\partial_y \paren{q^n\sum_{m = 1}^n \binom{n}{m} \Gamma_0^m G \Gamma_0^{n-m+1} 
			\overline{H}}, \partial_y \bhqn  e^{W} \chi_n^2}.
\end{align*}
 {By an argument similar to Lemma 5.9 in \cite{BHIW24b}, we obtain the following lemma.}
\begin{lemma}
	\label{tran:comm:barH:1}
	It holds
	\begin{align}
		\label{tran:comm:esti:barH:1}
		\sum_{n} \theta_{n}^2 (n+1)^{2\sss-2}  C^{(\alpha; n)}_{\overline{H}, trans}
		\lesssim
		\eps \paren{\cd_{\overline H}^{(\alpha)} + \cd_{H}^{(\alpha)} }.
	\end{align}
\end{lemma}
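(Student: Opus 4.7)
The plan is to adapt the argument for Lemma \ref{tran:comm:barH} with the modifications dictated by the extra $y$-derivative inherent in the $\alpha$-level norm. First I would rewrite
\begin{align*}
C^{(\alpha; n)}_{\overline{H}, trans} = \paren{\sum_{l\le n/2} + \sum_{n/2 < l \le n-1}} \binom{n}{l} \brak{t}^{3+2\ss} \bold{a}_{n+1}^2 \Re \brak{\partial_y \paren{q^n \Gamma_0^l G\, \Gamma_0^{n-l+1}\overline{H}}, \partial_y \bhqn e^{W}\chi_n^2} =: T_{\overline H, 1}^{(\alpha)} + T_{\overline H, 2}^{(\alpha)},
\end{align*}
splitting Low-High against High-Low, and then distribute the outer $\partial_y$ via the product rule (or, when it is convenient, integrate it by parts onto $\partial_y\bhqn e^W\chi_n^2$ producing either $\partial_y^2\bhqn$, $\partial_y W \partial_y \bhqn$, or $\chi_n' \partial_y \bhqn$). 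The first yields the viscous dissipation $\mathcal{D}_{\overline H}^{(\alpha)}$ after absorbing the $\nu^{1/2}$ weight present in $\bold{a}_{n+1}$-weighted $H^2_y$ norms; the second is absorbed by $\mathcal{CK}_{\overline H}^{(\alpha;1)}$ using $\nu|\partial_y W|^2 \lesssim -\partial_t W$; the third is a benign $\chi_{n-1}$-localized term handled as in Lemma~\ref{easy:term:1}.

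Next, for the HL piece ($l > n/2$) I would put $\Gamma_0^l G$ in $L^2$ (after integration by parts) and the factor $\Gamma_0^{n-l+1}\overline H$ in $L^\infty$ via the Sobolev inequality, which now requires the $H^2_y$-control
\begin{align*}
\|q^{n-l+1}\Gamma_0^{n-l+1}\overline H \chi_{n-l+1}\|_{L^\infty} \lesssim (n-l)^{1+\sigma} \|\ztp{\overline H}_{n-l+1} e^{W/2}\chi_{n-l+1}\|_{L^2} + \nu^{1/4}\|\partial_y \ztp{\overline H}_{n-l+1} e^{W/2}\chi_{n-l+1}\|^{1/2}_{L^2}(\ldots),
\end{align*}
the point being to trade one extra derivative for a $\nu^{1/4}$ factor so as to enter $\mathcal{E}_{\overline H}^{(\alpha)}$ and $\mathcal{D}_{\overline H}^{(\alpha)}$. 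For the LH piece ($l\le n/2$) I reverse the roles, putting $\Gamma_0^l G$ in $L^\infty$ (Sobolev in $H^2_y$) and $\Gamma_0^{n-l+1}\overline H$ in $L^2$. In both cases, Lemma~\ref{comb:boun} (or Corollary~\ref{comb:boun:vari:1}) absorbs the binomial coefficients, and after a careful application of H\"older and Young's inequalities, the output is $\sqrt{\mathcal{D}_{\overline H}^{(\alpha)}}$ times an expression bilinear in the $Y_{\alpha,\beta}$-norms of $\overline H$, $H$, and $G$.

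Once everything is written in terms of the $Y$-norms of $\overline H$, $G$, and $H$, I would use $G = \overline H/v_y$ and the geometric series $1/v_y = \sum_{l\ge 0}(-1)^l H^l$ (cf.~\eqref{1onvy}) together with Lemmas~\ref{pro:-s} and \ref{pro:1} to reduce $\|G\|_{Y_{\alpha,\beta}}$ bounds to the product of $\|\overline H\|_{Y_{\alpha,-s}}$ with a convergent geometric factor in $\|H\|_{\overline Y_{1,0}}$, controlled under the bootstrap by $\eps$. Finally, using the bootstrap hypotheses (which give $\mathcal{E}_{\overline H}^{(\alpha)}, \mathcal{E}_H^{(\alpha)} \lesssim \eps^2$ and the identification of $\brak{t}^{3+2\ss}\|\overline H\|_{Y_{1,-s}}^2 \sim \mathcal{E}_{\overline H}^{(\alpha)}$), the output is absorbed as $\eps(\mathcal{D}_{\overline H}^{(\alpha)} + \mathcal{D}_H^{(\alpha)})$.

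The main obstacle I anticipate is the LH regime when $l=n$ (so that $G$ itself, with no $\Gamma_0$-derivative, multiplies $\Gamma_0 \overline H$): here the $Y_{0,0}$-type quantity $\|G\|_{L^\infty}$ is forced to carry the endpoint Sobolev bound, and the extra $\partial_y$ from the $\alpha$ norm can fall on $G$, producing a term of the form $\partial_y G\cdot \partial_y \Gamma_0\overline H$ that is not directly controlled by $\mathcal{D}_{\overline H}^{(\alpha)}$ alone. To close this, I will split the $\partial_y G$ into its $P_0\omega$-piece (handled by $\mathcal{E}_H^{(\alpha)}$ via $\overline H = -P_0\omega/t - H/t$) and a small commutator; the $\nu^{1/2}$-factor built into $\mathcal{D}_H^{(\alpha)}$ compensates the missing decay and yields the required $\eps \mathcal{D}_H^{(\alpha)}$ contribution.
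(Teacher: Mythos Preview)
Your overall plan is correct and matches the paper's: integrate by parts to move the outer $\partial_y$ onto $\partial_y\bhqn e^W\chi_n^2$ (producing $\partial_y^2\bhqn$, $\partial_yW\,\partial_y\bhqn$, and $\chi_n'\partial_y\bhqn$), then split HL/LH, use Sobolev on the low-derivative factor, and close with the $Y_{\alpha,\beta}$ product lemmas and the geometric series for $1/v_y$. The paper does exactly this, with one clean simplification you omit: it rewrites $\Gamma_0^m G=\Gamma_0^{m-1}(\overline H/v_y)$ at the very first step (since $\Gamma_0 G=\overline H/v_y$), so the bilinear factor becomes $q^n\Gamma_0^{l}(\overline H/v_y)\,\Gamma_0^{n-l}\overline H$ with $l=m-1\in\{0,\dots,n-1\}$. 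This avoids ever handling $G$ directly and feeds straight into Lemmas~\ref{pro:-s}, \ref{pro:1:-s}.

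Your ``main obstacle'' paragraph is confused and should be dropped. First, your own splitting already has upper limit $l\le n-1$, so the $l=n$ case you worry about is not in the sum (and in the paper's reindexed form the range is $0\le l\le n-1$). Second, your description is inverted: in your indexing $l=n$ means $\Gamma_0^n G\cdot\Gamma_0\overline H$, i.e.\ \emph{all} derivatives on $G$, not none. There is no endpoint where bare $G$ multiplies $\Gamma_0\overline H$; the commutator sum starts at $m=1$, so $G$ always carries at least one $\Gamma_0$, which becomes $\overline H/v_y$. Consequently the fix you propose (splitting $\partial_y G$ via $\overline H=-P_0\omega/t-H/t$) is unnecessary. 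Once you use $\Gamma_0^mG=\Gamma_0^{m-1}(\overline H/v_y)$, the argument closes exactly as in Lemma~\ref{tran:comm:barH}, with the $\nu^{100}$ gained from $\nu^{-100}\lesssim e^W$ on $\mathrm{supp}\,\chi_n$ supplying the $\eps$-smallness and the $\partial_y^2\bhqn$ factor yielding $\sqrt{\mathcal D_{\overline H}^{(\alpha)}}$.
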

\begin{proof}
	After integration by parts, we split the sum into two pieces
	\begin{align*}
		C^{(\alpha;  n)}_{\overline{H}, trans} 
		=&-\paren{\sum_{l\le n/2} + \sum_{n/2<l\le n-1} }\binom{n}{l}
		\brak{t}^{3+2\ss} \bold{a}_{n+1}^2
		\Re \brak{q^n \Gamma_0^l \frac{\overline H}{v_y} \Gamma_0^{n-l} \overline{H}, \partial_y^2\bhqn  e^{W} \chi_n^2}
		\nn&	
		-\paren{\sum_{l\le n/2} + \sum_{n/2<l\le n-1} }\binom{n}{l}
		\brak{t}^{3+2\ss} \bold{a}_{n+1}^2
		\Re \brak{q^n \Gamma_0^l \frac{\overline H}{v_y} \Gamma_0^{n-l} \overline{H}, \partial_y\bhqn  \partial_y \paren{e^{W} \chi_n^2}}
		\nn=&
		T_{\overline H,1}^{(\alpha)} +T_{\overline H,2}^{(\alpha)} +T_{\overline H,3}^{(\alpha)} +T_{\overline H,4}^{(\alpha)}.
	\end{align*}
 {	By the same argument as in the proof of Lemma 5.9 in \cite{BHIW24b}, we obtain}
	\begin{align*}
		\sum_{n} & \theta_{n}^2 (n+1)^{2\sss-2} T_{\overline H,1}^{\gm}\\
		&\lesssim
		\sum_{n\ge1}
		\sum_{l\le n/2} | \bold{a}_{n+1}|^2 \brak{t}^{3+2\ss}
		\binom{n}{l}  (n+1)^{2\sss-2}
		\nnorm{  \partial_y^2 \bhqn  e^{W}\chi_n}_{L^2} \bold{a}_{n-l+1}^{-1}\bb_{l+1}^{-1}(l+1)^{2-2\sss}
		\nn&\quad
		\times
		 \bold{a}_{n-l+1}(n-l+1)^{\sss-1}\enorm{ \ztp{\overline{H}}_{n-l} e^{W}\chi_{n-l}\widetilde\chi_{ 1}}
		\bb_{l+1}(l+1)^{2\sss-2}\paren{ l^{1+\sigma} + 1}\norm{\bd^{l} \frac{\overline{H}}{v_y}\chi_{(l-1)_+}\widetilde\chi_1}_{L^2}
		\nn&\quad+
		\sum_{n\ge0}
		\sum_{l\le n/2} | \bold{a}_{n+1}|^2 \brak{t}^{3+2\ss}
		\binom{n}{l}  (n+1)^{2\sss-2}
		\nnorm{ \partial_y^2\bhqn e^{W}\chi_n}_{L^2} \bold{a}_{n-l+1}^{-1}\bb_{l+1}^{-1}(l+1)^{2-2\sss}
		\nn&\quad
		\times
		 \bold{a}_{n-l+1}(n-l+1)^{\sss-1}\enorm{ \ztp{\overline{H}}_{n-l} e^{W}\chi_{n-l}\widetilde\chi_{ 1}}
		\bb_{l+1}(l+1)^{2\sss-2}\nu^{1/2} \norm{\partial_y\paren{ \bd^{l} \frac{\overline{H}}{v_y}} \chi_{ l}\widetilde\chi_1}_{L^2}
		\\&\lesssim \nu^{100}
	   \sqrt{\cd_{\overline H}^{(\alpha)}}	\langle t \rangle^{1+\ss}
		\norm{\overline{H}}_{Y_{0,-s}}
		\paren{\norm{\frac{\overline{H}}{v_y}}_{Y_{0,-s}}+\nu^{1/2}\norm{\frac{\overline{H}}{v_y}}_{Y_{1,-s}}}
	\end{align*}
where we used $\nu^{-100}\lesssim e^W$.
 {	The estimates of the term $T_{\overline H,2}^{(\alpha)}$ follows similarly as the second part in Lemma 5.9 in \cite{BHIW24b}.}
	While the terms $T_{\overline H,3}^{(\alpha)}$ with $T_{\overline H,4}^{(\alpha)}$ are relatively easy, and again more details are omitted, concluding the proof.
\end{proof}
\subsubsection{Inductive terms}
We recall that
\begin{align*}
	C^{(\alpha; n)}_{\overline{H}, q}&=- \brak{t}^{3+2\ss} \bold{a}_{n+1}^2\Re \brak{ \partial_{y} \widetilde{\mathcal{C}^{(n)}_{q}}, 
		\partial_{y} \bhqn  e^{W} \chi_n^2}.
\end{align*}
Noting that 
\begin{align}
	\label{py:C:q}
        -\nu^{-1}\partial_{y} \widetilde{\mathcal{C}^{(n)}_{q}} = &
         \frac{n(n-1)(n-2) - n^2(n-1)}{q^2} (q')^3 \overline \hqn   + \frac{2n(n-1)-n}{q^2}q'q'' \overline \hqn   
        \nn&\quad 
        +
        \frac{n}{q} q'''\overline \hqn    + \frac{n(n-1)-2n}{q^2}(q')^2\partial_{y} \overline \hqn 
        + \frac{3n}{q^2} q''\partial_{y}\overline \hqn 
         +  \frac{n}{q} q'\partial_{y}^2\overline \hqn ,
\end{align}
we may further write
\begin{align*}
	C^{(\alpha; n)}_{\overline{H}, q}&= \sum_{i=1}^{6} 	C^{(\alpha; n)}_{\overline{H}, q, i}
\end{align*}
where $C^{(\alpha; n)}_{\overline{H}, q, i}$ is corresponding to the $i$-th term on the right of~\eqref{py:C:q}.

\begin{lemma}
	We have
	\label{C:n:q:1}
	\begin{align*}
		\sum_{n} \theta_{n}^2 (n+1)^{2\sss-2} 	C^{(\alpha; n)}_{\overline{H},q} \lesssim \eps_1  \nu^{-1/2}\cd_{\overline H}^{(\alpha)}
	\end{align*}
where $\eps_1$ is a small parameter depending on $n_*$ and $\delta_{\text{Drop}}$. 
\end{lemma}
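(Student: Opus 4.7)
The plan is to treat the six subterms $C^{(\alpha;n)}_{\overline{H},q,i}$, $i=1,\dots,6$, obtained from the expansion \eqref{py:C:q}, by pairing each with the test function $\partial_y \bhqn\, e^{W}\chi_n^2$ and estimating via Cauchy--Schwarz. Each resulting inner product will be controlled by a product of the form ``(shifted dissipation/energy of $\overline{H}_m$ with $m<n$)'' times ``$\sqrt{\mathcal{D}^{(\alpha)}_{\overline H,n}}$'', after which the induction/ghost-weight argument from Lemma~5.1 in \cite{BHIW24b} (used already in Lemma \ref{C:n:q}) converts the double sum into a small multiple of $\nu^{-1/2}\mathcal{D}_{\overline H}^{(\alpha)}$ itself.

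First I would group the terms by how many $y$-derivatives fall on $\bhqn$. Terms $C^{(\alpha;n)}_{\overline{H},q,1}$, $C^{(\alpha;n)}_{\overline{H},q,2}$, $C^{(\alpha;n)}_{\overline{H},q,3}$ are zero-derivative in $\bhqn$ (but multiplied by at most $n^2/q^2$); terms $C^{(\alpha;n)}_{\overline{H},q,4}$, $C^{(\alpha;n)}_{\overline{H},q,5}$ are first-derivative (at most $n/q^2$); and the critical term $C^{(\alpha;n)}_{\overline{H},q,6}=\nu\brak{t}^{3+2\ss}\bold{a}_{n+1}^2\,\Re\langle \tfrac{n}{q}q'\partial_y^2\bhqn,\partial_y\bhqn e^W\chi_n^2\rangle$ carries $\partial_y^2\bhqn$ and must, after integration by parts in $y$, be absorbed into $\mathcal{D}_{\overline H,n}^{(\alpha)}$ up to boundary contributions (which vanish because $q'(\pm 1)$ is finite and $\chi_n^2$ is supported away from $\{y=\pm1\}$ for $n\geq 1$, and for $n=0$ the factor $n$ kills the term). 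The nontrivial part of this absorption is the commutator $[\partial_y, \tfrac{n}{q}q']$ which produces lower-order terms already covered by types 4 and 5.

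The main obstacle---and it is the reason the lemma buys the small constant $\eps_1$ rather than an absolute constant---is controlling the prefactor $n^2/q^2$ appearing in the first and fourth contributions. Following the strategy of Lemma \ref{C:n:q} and of Lemma~5.1 in \cite{BHIW24b}, I would bound each such factor by $C^{n-m}(n!/m!)^{-\sigma}$ times the dissipation functional of $\overline{H}_m$ for some $m<n$, using the fact that $q(y)$ is comparable to $\mathrm{dist}(y,\partial\Omega)$ near the boundary and that the Gevrey coefficients $\bold{a}_{n+1}/\bold{a}_{m+1}$ absorb the combinatorial loss. This gives, schematically,
\begin{align*}
\sum_{n}\theta_n^2(n+1)^{2s-2}\,C^{(\alpha;n)}_{\overline{H},q,i}
\;\leq\; \nu^{-1/2}\sum_{n}\theta_n^2\sum_{0\leq m\leq n-1} C^{n-m}\Bigl(\tfrac{n!}{m!}\Bigr)^{-\sigma} D^{(\alpha)}_{\overline H,m}, \qquad i=1,\dots,5,
\end{align*}
the factor $\nu^{-1/2}$ coming from $\nu\cdot \nu^{-1/2}\cdot\nu^{-1}$ in the Cauchy--Schwarz pairing (one $\nu^{-1/2}$ from the gap between $\nu^{1/2}\partial_y^2\bhqn$ in $\mathcal{D}^{(\alpha)}$ and the pure $\partial_y\bhqn$ weight $\nu^{1/4}$ in $\mathcal{E}^{(\alpha)}$).

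Finally I would apply Fubini to swap the order of summation and use, as in Lemma \ref{C:n:q}, the two parameters $n_\ast$ and $\delta_{\text{Drop}}$: for $n\leq n_\ast$ the ratio $\theta_{n+1}/\theta_n=\delta_{\text{Drop}}$ makes each inductive step contribute a small factor, while for $n>n_\ast$ the geometric decay in $(n-m)$ combined with the $1/n_\ast^{\sigma/2}$ loss from the combinatorial summation yields the uniform estimate
\begin{align*}
\sum_{n}\theta_n^2(n+1)^{2s-2}\,C^{(\alpha;n)}_{\overline{H},q}\;\leq\; C\min\!\Big\{\tfrac{1}{n_\ast^{\sigma/2}},\,\delta_{\text{Drop}}^2\Big\}\,\nu^{-1/2}\mathcal{D}_{\overline H}^{(\alpha)}
\;\leq\; \eps_1\,\nu^{-1/2}\mathcal{D}_{\overline H}^{(\alpha)},
\end{align*}
provided $n_\ast$ is chosen large and $\delta_{\text{Drop}}$ small depending on universal constants, together with the $\partial_y^2$-term from $C^{(\alpha;n)}_{\overline{H},q,6}$ being absorbed directly into $\mathcal{D}^{(\alpha)}_{\overline H,n}$. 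This closes the estimate; further routine details are handled as in Lemma \ref{C:n:q} and Lemma~5.1 of \cite{BHIW24b}.
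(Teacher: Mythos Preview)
Your overall strategy matches the paper's—reduce to the ICC scheme from Lemma~5.1 of \cite{BHIW24b} and conclude via the $\theta_n$ summation—but you make a concrete factual error that breaks your handling of $C^{(\alpha;n)}_{\overline H,q,6}$. You write that the boundary terms from integration by parts vanish ``because $q'(\pm 1)$ is finite and $\chi_n^2$ is supported away from $\{y=\pm1\}$.'' This is backwards: by the definition \eqref{chi}, the cutoffs $\chi_n$ equal $1$ near $y=\pm 1$ and vanish in the \emph{interior} $(-x_n,x_n)$. So $\chi_n^2$ does not localize away from the boundary, and your stated reason for the vanishing of boundary terms is wrong. If you insist on integrating by parts, the correct mechanism is the conormal structure: $\bhqn=q^n\Gamma_0^n\overline H$ and its $y$-derivative carry enough factors of $q$ (which vanishes at $y=\pm1$) to kill the boundary contribution for $n\geq 2$, with a separate low-$n$ check.

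More to the point, integration by parts on term~6 is unnecessary. The paper treats all six pieces of \eqref{py:C:q} uniformly: apply Cauchy--Schwarz directly,
\[
\nu\,\bigl|\langle \tfrac{nq'}{q}\partial_y^2\bhqn,\;\partial_y\bhqn\, e^W\chi_n^2\rangle\bigr|
\;\leq\;\nu\,\|\partial_y^2\bhqn\, e^{W/2}\chi_n\|_{L^2}\,\Bigl\|\tfrac{n}{q}\partial_y\bhqn\, e^{W/2}\chi_n\Bigr\|_{L^2},
\]
recognize the first factor as $\nu^{-1/4}(\mathcal D^{(\alpha)}_{\overline H,n})^{1/2}$, and feed the second factor (together with the $n^2/q^2$ and $n/q$ factors in terms~1--5) into the same inductive shift from Lemma~5.1 of \cite{BHIW24b}, exactly as in Lemma~\ref{C:n:q}. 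This yields the displayed bound
\[
\sum_n \theta_n^2(n+1)^{2s-2}C^{(\alpha;n)}_{\overline H,q}
\;\leq\;\sum_n\theta_n^2\sum_{0\leq m\leq n-1}C^{n-m}\Bigl(\tfrac{n!}{m!}\Bigr)^{-\sigma}\nu^{-1/2}D^{(\alpha)}_{\overline H,m}
\;\leq\;C\min\bigl\{n_\ast^{-\sigma/2},\,\delta_{\text{Drop}}^2\bigr\}\,\nu^{-1/2}\mathcal D^{(\alpha)}_{\overline H}
\]
in one step. Your final display is correct; only your route to it for the $\partial_y^2$ piece needs to be replaced by this direct argument.
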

\begin{proof}
	As in the proof of Lemma~\ref{C:n:q}, we deduce that
	\begin{align*}
		\sum_{n} &\theta_{n}^2 (n+1)^{2\sss-2} 	C^{(\alpha; n)}_{\overline{H},q} \le \sum_{n} \theta_{n}^2\sum_{0\le m\le n-1} C^{n-m}
		\paren{\frac{n!}{m!}}^{-\sigma} \nu^{-1/2} D_{\overline H,m}^{(\alpha)} 
		\\& \le C \min\left\{\frac{1}{n_{*}^{\sigma/2}}, \delta_{\text{Drop}}^2\right\} \nu^{-1/2} \cd_{\overline H}^{(\alpha)} \le \eps_1 \nu^{-1/2} \cd_{\overline H}^{(\alpha)}
	\end{align*}
	provided $n_{*} $ is large enough and $\delta_{\text{Drop}}$ is sufficiently small.
\end{proof}

\subsubsection{Easy terms}
Next we bound the easy terms for the $\alpha$ part.
\begin{lemma}
	\label{easy:term:1}
	It holds
	\begin{align*}
		&\sum_{n} \theta_{n}^2 (n+1)^{2\sss-2}  \bold{a}_{n+1}^2  \brak{t}^{3+2\ss}  \nu 
		\langle \partial_y^2\bhqn , \partial_y\bhqn   e^{W}  \pa_y \{\chi_n^2\} \rangle 
		\lesssim
		\eps_1 \nu^{-1/2}	\cd_{\overline H}^{(\alpha)} ;
		\nn&
		\sum_{n} \theta_{n}^2 (n+1)^{2\sss-2}  \bold{a}_{n+1}^2   \brak{t}^{3+2\ss} \nu \langle \partial_y^2 \bhqn , 
		\partial_y \bhqn  \pa_y \{ e^{W}  \} \chi_n^2  \rangle
		\lesssim \eps_1 \nu^{-1/2}\sqrt{\mathcal{CK}_{\overline{H}}^{(\alpha)}}\sqrt{\cd_{\overline H} ^{(\alpha)}};
				\nn&
		\paren{(3/2+\ss)\brak{t}^{1+2\ss}t - \frac{2}{t}\brak{t}^{3+2\ss}} \bold{a}_{n+1}^2 \| \partial_y \bhqn  e^{W/2} \chi_{n} \|_{L^2}^2
		\le 
		\frac{1}{2} \nu^{-1/2} \mathcal{CK}_{\overline{H}}^{(\alpha)}.
	\end{align*}
	for some small $\eps_1>0$.
\end{lemma}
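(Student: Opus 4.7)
The three bounds are essentially the $\alpha$-level analogues of the three estimates established in Lemma~\ref{easy:term}, with one additional $\partial_y$ landing on $\bhqn$ and the overall $\nu^{-1/2}$ factor absorbed into the $\alpha$-dissipation/$\mathcal{CK}$ functionals (cf. Definition~\ref{Y:norm}). None of the three requires any nontrivial new analytic idea: each reduces, by Cauchy--Schwarz in the bulk integral, to a pointwise control on the derivative that hits the cutoff, the weight, or the Gevrey coefficient, followed by the same ICC-style inductive absorption used before. My plan is to treat the three bounds independently, in order, and at each step identify which available quantity ($\cd_{\overline H}^{(\alpha)}$, $\mathcal{CK}_{\overline H}^{(\alpha)}$, or $\dot{\bold{a}}_{n+1}$-CK terms) absorbs the resulting loss.

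For the first bound, I will use $|\partial_y \chi_n^2| \lesssim n^{1+\sigma}\chi_n \chi_{n-1}$ and distribute the factors by Cauchy--Schwarz as $\nu\|\partial_y^2 \bhqn e^{W/2}\chi_n\|_{L^2}$ paired with $\nu^0 \|\partial_y \bhqn e^{W/2}\chi_{n-1}\|_{L^2}$ (the full $\nu$ sits on the maximal-regularity factor). The first factor feeds directly into $\cd_{\overline H,n}^{(\alpha)}$, while the second, after applying the inductive bound of Lemma~6.20 of~\cite{BHIW24b}, is controlled by $\sum_{\ell=0}^{n-1}(\mathfrak C\lambda)^{2(n-\ell)}(\ell!/n!)^{2\sigma}\,\cd_{\overline H,\ell}^{(\alpha)}$. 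Because $\sss > 1+\sigma+\sigma^*$, summing in $n$ and using Fubini as in Lemma~\ref{easy:term} yields a factor $\min\{n_*^{-\sigma/2},\delta_{\mathrm{Drop}}\}\,\nu^{-1/2}\cd_{\overline H}^{(\alpha)}$, which we absorb into $\eps_1 \nu^{-1/2}\cd_{\overline H}^{(\alpha)}$ by choosing $n_*$ large and $\delta_{\mathrm{Drop}}$ small.

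For the second bound I will use the pointwise control $|\partial_y W|\lesssim \frac{(|y|-1/4-L\eps\arctan t)_+}{K\nu(1+t)}$ so that $\nu|\partial_y W|\lesssim \nu^{1/2}\sqrt{-\partial_t W}$ with constant $\lesssim 1/\sqrt{K}$, up to the smallness in $K$ that feeds the $\eps_1$ in the statement. Then Cauchy--Schwarz factors the integrand as the $\mathcal{CK}_{\overline H,n}^{(\alpha;1)}$-piece (from the $\sqrt{-\partial_t W}$) times the dissipation piece ($\nu^{1/2}\partial_y^2\bhqn e^{W/2}\chi_n$ in $L^2$), and summing in $n$ gives $\eps_1 \nu^{-1/2}\sqrt{\mathcal{CK}_{\overline H}^{(\alpha)}}\sqrt{\cd_{\overline H}^{(\alpha)}}$ as claimed.

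For the third bound the computation is purely algebraic: since $(3/2+\sss)\brak{t}^{1+2\sss}t-2\brak{t}^{3+2\sss}/t \le \tfrac12 \brak{t}^{1+2\sss}t$ for $t\ge 1$ (and trivially for $t\le 1$), I multiply through by $(n+1)^{2\sss-2}\theta_n^2$ and recognize the right-hand side as one half of the $\mathcal{CK}_{\overline H,n}^{(\alpha;2)}$ defined in \eqref{CK2:defi} rescaled by $\nu^{-1/2}$ (the $\dot{\bold{a}}_{n+1}/\bold{a}_{n+1}$ contribution). This gives the clean bound $\tfrac12 \nu^{-1/2}\mathcal{CK}_{\overline H}^{(\alpha)}$. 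The only point requiring any care across the three bounds is keeping track of the $\nu^{-1/2}$ normalization, since the $\alpha$-norms are defined with the prefactor $\nu^{1/4}$; once this bookkeeping is set up at the start, each of the three inequalities follows by inspection from the corresponding step in Lemma~\ref{easy:term}.
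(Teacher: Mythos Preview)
Your proposal is correct and follows essentially the same approach as the paper's proof. The paper proceeds identically: for the first bound it uses $|\partial_y\chi_n|\lesssim n^{1+\sigma}$, Cauchy--Schwarz, and then the commutator relation $[q^\ell,\partial_y]=-\ell q^{\ell-1}q'$ together with Lemma~6.20 of the companion paper to reduce $\|\partial_y\bhqn e^{W/2}\chi_{n-1}\|$ to lower-index dissipation terms, absorbing via $\min\{n_*^{-\sigma^*},\delta_{\mathrm{Drop}}\}$ and $\min\{n_*^{-\sigma/2},\delta_{\mathrm{Drop}}\}$; for the second bound it uses the same $|\partial_y W|$ control and largeness of $K$; and for the third it simply refers back to the $\gamma$-level argument, which is exactly the algebraic inequality you wrote down.
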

\begin{proof}
	Noting that
	\begin{align*}
		\abs{\partial_y \chi_n} \lesssim n^{1+\sigma},
	\end{align*}
	with $\sss> 1 + \sigma$ we have
	\begin{align*}
		&\sum_{n} \theta_{n}^2 (n+1)^{2\sss-2}  \bold{a}_{n+1}^2  \brak{t}^{3+2\ss}  \nu
		\langle \partial_y^2\bhqn , \partial_y\bhqn   e^{W/2}  \pa_y \{\chi_n^2\} \rangle 
		\nn&\qquad\lesssim
		\sum_{n} \theta_{n}^2 (n+1)^{2\sss-2}  \bold{a}_{n+1}^2 n^{1+\sigma} \brak{t}^{3+2\ss}  \nu
		\enorm{\partial_y\paren{q^{n}\frac{\partial_y}{v_y}\Gamma_0^{n-1}\overline{H}}  e^{W/2} \chi_{n-1}}
		\enorm{\partial_y^2 \bhqn   e^{W/2} \chi_n}.
	\end{align*}
 {	Similar as in Lemma~\ref{easy:term}, using~\eqref{comm:q:deri}, Lemma 6.20 in \cite{BHIW24b},} $q\in C^\infty[-1, 1]$, and the fact $v_y \sim 1$, $|\partial_{y}^2v| \lesssim 1$ (see Corollary~\ref{v:3:deri:l}), the above quantity may be further bounded by 
	\begin{align*}
		\sum_{n} & \theta_{n}^2 (n+1)^{2\sss-2}  \bold{a}_{n+1}^2 n^{1+\sigma} \brak{t}^{3+2\ss}  \nu
		\paren{ \enorm{\partial_y^2(q^{n-1}\Gamma_0^{n-1})\overline{H}  e^{W/2} \chi_{n-1}}
			+ 	n \enorm{\partial_y(q^{n-1}\Gamma_0^{n-1})\overline{H}  e^{W/2} \chi_{n-1}}}
		\nn&\qquad\times
		\enorm{\partial_y^2\bhqn   e^{W/2} \chi_n}\nn&
			\le C  \nu^{-1/2} \sum_{n\ge1} \theta_{n}^2  \frac{1}{n^{\sigma^*}} \sqrt{\cd_{\overline H, n}^{(\alpha)}} \sqrt{\cd_{\overline H, n-1}^{(\alpha)}}
		+  C \nu^{-1/2} \sum_{n\ge1}\theta_{n}^2   \sqrt{\sum_{\ell=0}^{n-1} (\mathfrak C\lambda)^{2(n-\ell)}\lf(\frac{\ell!}{n!}\rg)^{2\sigma}  
			\cd_{\overline H, \ell}^{(\alpha)}}  \sqrt{\cd_{\overline H, n}^{(\alpha)}}
		\nn&
		\le  C \nu^{-1/2} \min\left\{\frac{1}{n_{*}^{\sigma^*}}, \delta_{\text{Drop}}\right\} \cd_{\overline H}^{\gm}
		+ C  \nu^{-1/2}\min\left\{\frac{1}{n_{*}^{\sigma/2}}, \delta_{\text{Drop}}\right\} \cd_{\overline H}^{\gm}
		\le \eps_1  \nu^{-1/2} \cd_{\overline H}^{\gm}
	\end{align*}
	where we also used that $\sss > 1+\sigma +\sigma^*$ for some $\sigma^*>0$.
	Note by~\eqref{defndW} 
	\begin{align*}
		\abs{\partial_yW(t, y)}\lesssim \frac{(|y|-1/4-L\ep\arctan (t))_+}{K\nu(1+t)}.
	\end{align*}
	Therefore, we may get
	\begin{align*}
		&\sum_{n} \theta_{n}^2 (n+1)^{2\sss-2}  \bold{a}_{n+1}^2   \brak{t}^{3+2\ss} \nu\langle \partial_y^2\bhqn , 
		\partial_y\bhqn  \pa_y \{ e^{W}  \} \chi_n^2  \rangle
		\nn&\qquad\lesssim
		\sum_{n} \theta_{n}^2 (n+1)^{2\sss-2} \bold{a}_{n+1}^2   \brak{t}^{3+2\ss}  
		\enorm{\nu^{1/2}\frac{(|y|-1/4-L\ep\arctan (t))_+}{K\nu(1+t)}\partial_y\bhqn  e^{W/2}  \chi_n}
		\nn&\qquad\qquad \times		
		\nu^{1/2}\enorm{\partial_y^2\bhqn  e^{W/2}  \chi_n} 
		\nn&\qquad\lesssim \eps_1 \nu^{-1/2} \sqrt{\mathcal{CK}_{\overline{H}}^{(\alpha)}}\sqrt{\cd_{\overline H}^{(\alpha)}}
	\end{align*}
	provided $K$ is sufficiently large. The third inequality of this lemma
	follows exactly the same as that for the $\gamma$ part, which is omitted, concluding the proof.
\end{proof}

\subsection{$(G, \gamma)$ Estimates}

We recall the following Neumann problem for $G$:
\begin{subequations}
\begin{align} \label{G:eq:2}
	&\pa_t G + \frac2t G - \nu \pa_y^2 G = -\frac{1}{t} \left( u_{\neq} \cdot \nabla u^x \right)_0    \\
	&\pa_y G|_{y = \pm 1} = 0.
\end{align}
\end{subequations}
We recall the relations
\begin{align} \label{GbarH}
\pa_y G = \overline{H} \Rightarrow \gqn  = q \paren{\frac{\overline{H}}{v_y}}_{n-1}.
\end{align}
Our main purpose in this subsection is to obtain the \textit{a-priori} estimate \eqref{apG}. 
When $n\ge1$, using \eqref{1onvy}, the bootstrap assumption about $H$, localization property of $H$ and Lemma~\ref{pro:-s},  we obtain
\begin{align*}
	\mathcal{E}_{G}^{(\gamma)} - 	\theta_0^2\mathcal{E}_{G, 0}^{(\gamma)} 
	\lesssim
	\brak{t}^{3-2\ss}\norm{\frac{\overline{H}}{v_y}}_{Y_{0, -s}}^2
	\lesssim 
	(1+\eps) \paren{	\mathcal{E}_{\overline H}^{(\gamma)} + 	\mathcal{E}_{\overline H}^{(\alpha)}}. 
\end{align*}
Therefore, the only nontrivial case is $n = 0$, to deal with which we have the following lemma.
\begin{lemma}
	It holds
	\begin{align}
		\label{G:0:esti}
			\frac{1}{2}\frac{d}{dt}\mathcal{E}_{G, 0} + \mathcal{CK}_{G, 0} + \cd_{G, 0} \le \frac{\eps^3}{t^2}.
	\end{align}
\end{lemma}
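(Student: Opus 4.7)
My plan is to run a weighted $L^2$ energy estimate on the PDE \eqref{G:eq:2} satisfied by $G$, using $\brak{t}^{4-2K\eps_1}G\chi_0^2$ as a test function. Because $G$ satisfies the Neumann condition $\pa_y G|_{y=\pm 1}=0$, the viscous term integrates by parts without boundary contributions and produces the dissipation $\cd_{G,0} = \nu\brak{t}^{4-2K\eps_1}\|\pa_y G\chi_0\|_{L^2}^2$ (the commutator in $\chi_0$ being innocuous). Combining the temporal weight derivative of $\brak{t}^{4-2K\eps_1}$ with the $\frac2t G$ term on the left produces exactly $\mathcal{CK}_{G,0}$ as defined in \eqref{CK1:defi}--\eqref{CK2:defi}, so that LHS of the claimed inequality is reproduced and all that remains is to bound the nonlinear forcing contribution by $\eps^3/t^2$.

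The key structural observation is the divergence-free identity
\[
(u_{\neq}\cdot\nabla u^x)_0 = P_0\,\nabla\cdot(u^x u_{\neq}) = \pa_y P_0\paren{u^x_{\neq}u^y_{\neq}},
\]
where the last equality uses $P_0(u^x_0 u^y_{\neq})=0$. One integration by parts in $y$, using $u^y|_{y=\pm 1}=0$ to kill boundary terms, turns the nonlinear pairing into
\[
\frac{\brak{t}^{4-2K\eps_1}}{t}\,\brak{P_0(u^x_{\neq}u^y_{\neq}),\,\pa_y G\,\chi_0^2} + \text{commutator with }\chi_0,
\]
which couples favorably with the dissipation through the identity $\pa_y G=\overline{H}$. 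The standard inviscid damping consequences of the interior bootstrap \eqref{boot:Intf} (following \cite{BM13,BMV14}) give $\|u^x_{\neq}\|_{L^\infty}\lesssim \eps\brak{t}^{-1}$ and $\|u^y_{\neq}\|_{L^2}\lesssim\eps\brak{t}^{-2}$, hence $\|P_0(u^x_{\neq}u^y_{\neq})\|_{L^2}\lesssim \eps^2\brak{t}^{-3}$. Applying Cauchy--Schwarz and Young's inequality then aims to split the contribution into $\delta\cd_{G,0}$ (absorbed on the left) plus an acceptable residual.

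The main obstacle is ensuring that the residual from Young's inequality falls within the $\eps^3/t^2$ budget uniformly on $t\in(0,\nu^{-1/3-\zeta})$. A direct Young split against the dissipation produces a remainder of size $\eps^4/(\nu t^2\brak{t}^{2+2K\eps_1})$, which on the critical time interval can be as large as $\eps^4/\nu$ and is therefore insufficient unless $\eps\lesssim \nu$. To circumvent this, I would instead combine the dissipation with the bootstrap control \eqref{boot:H} of $\overline{H}=\pa_y G$ through $\mathcal{E}_{\overline{H},0}^{(\gamma)}\lesssim\eps^2$, which yields a $\nu$-independent a priori bound on $\|\pa_y G\chi_0 e^{W/2}\|_{L^2}$; splitting the Young inequality in a regime-dependent manner (dissipation at small $t$, the bootstrap bound where it is tighter) will turn the residual into $\eps^3/t^2$ up to a term absorbable by $\mathcal{CK}_{G,0}$ itself (whose large-$t$ positive part provides $K\eps_1 t\brak{t}^{2-2K\eps_1}\|G\chi_0\|^2$). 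The delicate step of this argument is verifying that the weights in $\mathcal{E}_{\overline{H},0}^{(\gamma)}$, $\mathcal{CK}_{G,0}$ and the Gevrey radius $\bold{a}_1\sim\brak{t}^{-2}$ line up so that the exponent accounting works out for every $s\in(1,\tfrac{5}{4})$ and $r\in(\tfrac12,\tfrac{51}{100})$ in the range \eqref{pgiL1}; this exponent-matching is the essential technical point and the place where the smallness of $\zeta$ is used.
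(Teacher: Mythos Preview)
Your energy-identity setup matches the paper: test \eqref{G:eq:2} against $\brak{t}^{4-2K\eps_1}G\chi_0^2$ (recall $\chi_0\equiv 1$), integrate the viscous term by parts using the Neumann condition, and read off $\mathcal{CK}_{G,0}$ from the $\tfrac{2}{t}$-damping combined with the time weight. The divergence appears in your treatment of the forcing, and there the approach has a genuine gap.

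After your integration by parts in $y$, the forcing pairs against $\pa_y G=\overline H$. The best $\nu$-uniform bound the bootstrap provides on $\|\overline H\|_{L^2}$ comes from combining the interior control $\mathcal{E}_{\mathrm{Int,Coord}}^{(\bar h)}$ (which gives $\|\chi^I\overline h\|_{L^2}\lesssim\eps\brak{t}^{-1-r}$) with the exterior weighted estimate $\mathcal{E}_{\overline H,0}^{(\gamma)}$; the latter contributes exponentially small terms once $e^{W/2}$ is removed, so effectively $\|\overline H\|_{L^2}\lesssim\eps\brak{t}^{-1-r}$ with $r$ close to $\tfrac12$. Pairing $\|P_0(u_{\neq}^x u_{\neq}^y)\|_{L^2}\lesssim\eps^2\brak{t}^{-3}$ against this through the weight $\brak{t}^{4-2K\eps_1}/t$ yields at best $\eps^3\brak{t}^{-(1+r)-2K\eps_1}$, which is $\brak{t}^{1-r}\approx\brak{t}^{1/2}$ short of the target $\eps^3/t^2$. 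Your proposed regime-dependent Young split does not close this: the dissipation-based branch produces a residual $\eps^4/(\nu t^2\brak{t}^{2+2K\eps_1})$, which at $t\sim 1$ is $\eps^4/\nu$ and forces $\eps\lesssim\nu$; and $\mathcal{CK}_{G,0}$, being proportional to $\|G\|_{L^2}^2$, cannot absorb a pure $\eps^3/\brak{t}^{3/2}$ remainder.

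The paper does \emph{not} integrate by parts. It rewrites the forcing via the adapted vector field identity
\[
\nabla^\perp\stf_{\neq 0}\cdot\nabla u^x \;=\; v_y\bigl(\Gamma\stf_{\neq 0}\,\pa_x u^x-\pa_x\stf_{\neq 0}\,\Gamma u^x\bigr),
\]
then decomposes $\stf=\phi^{(I)}+\phi^{(E)}$ and $u^x$ accordingly into four pieces $N_1,\dots,N_4$. The point is that $\Gamma\stf_{\neq}$ and $\pa_x\stf_{\neq}$ carry the full $t^{-2}$ inviscid damping rate via the elliptic bounds \eqref{eell:out}--\eqref{eell:sob} and $\mathcal{J}_{ell}$, whereas $\pa_y\stf_{\neq}$ would lose a factor of $t$ through the $ikt$ in $\Gamma_k=\overline{\pa_v}+ikt$. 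This yields $\brak{t}^{1-K\eps_1}\|N_i\|_{L^2}\lesssim\eps^2/t^2$, and the pairing is against $G$ itself, whose bootstrap bound $\brak{t}^{2-K\eps_1}\|G\|_{L^2}\lesssim\eps$ is exactly strong enough to produce $\eps^3/t^2$ (after absorbing an $\eps\,\mathcal{CK}_{G,0}$ term). The missing ingredient in your proposal is precisely this $\Gamma$-identity, which avoids transferring a $y$-derivative onto $G$.
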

\begin{proof} 
	Noting $\chi_0=1$, we compute the inner-product of \eqref{G:eq:2} with $\overline{G} \langle t \rangle^{4-2K\eps_1}\chi_0^2$ 
	to get
\begin{align*}
	\frac{1}{2}\frac{d}{dt} \mathcal{E}_{G, 0}& + \paren{4\langle t \rangle^{4-2K\eps_1}t^{-1} - (4-2K\eps_1)t\brak{t}^{2-2\eps_1}}\enorm{G\chi_0}^2 +
	\nu \langle t \rangle^{4-2K\eps_1}\enorm{\partial_y G\chi_0}^2
	\nn&=
	- \langle t \rangle^{4-2K\eps_1}t^{-1} \R\int  \langle \nabla^\perp \stf_{\neq 0} \cdot \nabla u^x \rangle \overline G  \chi_0^2 \,dy
\end{align*}
For the nonlinear term, we notice
\begin{align*}
	- \frac{1}{t} \langle \nabla^\perp \stf_{\neq 0} \cdot \nabla u^x \rangle_x 
	=
	 \frac{1}{t} \langle \Gamma \stf_{\neq 0}\partial_x u^x\rangle_x - \langle\partial_x \stf_{\neq 0} \Gamma u^x \rangle_x.
\end{align*}
For the first piece, we write
\begin{align*}
	\langle \Gamma \stf_{\neq 0}\partial_x u^x\rangle_x &=
	\langle \Gamma \stf_{\neq 0}^E\partial_x \paren{u^x}^E\rangle_x + \langle \Gamma \stf_{\neq 0}^I\partial_x \paren{u^x}^E\rangle_x
	+ \langle \Gamma \stf_{\neq 0}^E\partial_x \paren{u^x}^I\rangle_x + \langle \Gamma \stf_{\neq 0}^I\partial_x \paren{u^x}^I\rangle_x
	\\& =\sum_{i=1}^{4} N_i
\end{align*}
 {For the first term $N_1$, by Lemma~6.12 in \cite{BHIW24b}, we have}
\begin{align*}
	\langle t \rangle^{1-K\eps_1}\enorm{N_1} &\le \langle t \rangle^{1-K\eps_1}\sum_{k\neq 0}\enorm{\Gamma \stf_{k}^{(I)} k \paren{u^x}_k^E}
	\le \langle t \rangle^{1-K\eps_1}\sum_{k\neq 0}\norm{\Gamma \stf_{k}^{(E)}}_{L^\infty}\enorm{ k \paren{u^x}_k^E}
	\\ &\le \langle t \rangle^{1-K\eps_1}\sum_{k\neq 0}\enorm{\partial_{y}\Gamma \stf_{k}^{(E)}}^{1/2}\enorm{\Gamma \stf_{k}^{(E)}}^{1/2}
	\enorm{ k \paren{u^x}_k^E} \le \eps^2 \frac{1}{t^2} .
\end{align*}
 {Still using Lemma~6.12 in \cite{BHIW24b}, we note}
\begin{align*}
	\langle t \rangle^{1-K\eps_1}\enorm{N_2} &\le \langle t \rangle^{1-K\eps_1}\sum_{k\neq 0}\enorm{\Gamma \stf_{k}^{(E)} k \paren{u^x}_k^E}
	\le \langle t \rangle^{1-K\eps_1}\sum_{k\neq 0}\norm{\Gamma \stf_{k}^{(I)}}_{L^\infty}\enorm{ k \paren{u^x}_k^E}
	\\ &\le \langle t \rangle^{1-K\eps_1}\sum_{k\neq 0}\enorm{\partial_{y} \Gamma \stf_{k}^{(I)}}^{1/2}\enorm{\Gamma \stf_{k}^{(I)}}^{1/2}
	\enorm{ k \paren{u^x}_k^E} 
	\\&\le  \frac{1}{t^2} \paren{\enorm{\omega^I} + \enorm{\Gamma \omega^I}} \enorm{\omega^E e^W} \lesssim \frac{\eps^2}{t^2}
\end{align*}
where we also used the inviscid damping estimate of the interior part.
Similarly, for the third piece, we obtain
\begin{align*}
	\langle t \rangle^{1-K\eps_1}\enorm{N_3}  \lesssim \frac{\eps^2}{t^2}.
\end{align*}
The last piece $N_4$ is a bit tricky and we proceed as
\begin{align*}
	\langle t \rangle^{1-K\eps_1}\enorm{N_4} 
	&\le \langle t \rangle^{1-K\eps_1}\sum_{k\neq 0}\enorm{\Gamma \stf_{k}^{(I)} k \paren{u^x}_k^I}
	\le \langle t \rangle^{1-K\eps_1}\sum_{k\neq 0}\norm{\Gamma \stf_{k}^{(I)}}_{L^\infty}\enorm{ k \paren{u^x}_k^I}
	\\ &\le \langle t \rangle^{1-K\eps_1}\sum_{k\neq 0}\enorm{\Gamma^2 \stf_{k}^{(I)}}^{1/2}\enorm{\Gamma \stf_{k}^{(I)}}^{1/2}
	\enorm{ k \paren{u^x}_k^I} 
	\\&\le  \frac{1}{t} \paren{\enorm{\omega^I} + \enorm{\Gamma \omega^I}}\sum_{k\neq 0}
	\enorm{\Gamma^2 \stf_{k}^{(I)}}^{1/2}\enorm{\Gamma \stf_{k}^{(I)}}^{1/2} \lesssim \frac{\eps^2}{t^2}
\end{align*}
where we used \eqref{eell:out}.
Collecting the above estimate on the nonlinear term, we arrive at
\begin{align*}
	&|\langle  \frac{1}{t} \langle \Gamma \stf_{\neq 0}\partial_x u^x\rangle, 
	\overline{G} \langle t \rangle^{4-2K\eps_1}\chi_{0}^2 \rangle|
	 \lesssim   
	\frac{\eps}{t^2}
	 \| G \langle t \rangle^{2-\eps} \|_{L^2}
	 \lesssim
	 \eps \mathcal{CK}_{G, 0} + 
	 \frac{\eps^2}{t^3}.
\end{align*}
The second piece $\langle\partial_x \stf_{\neq 0} \Gamma u^x \rangle_x$ is treated similarly and we omit further details to conclude the proof.
\end{proof}

\subsection{$(G, \alpha)$ Estimates}

We recall the relations
\begin{align*} 
	\pa_y G = \overline{H}  \Rightarrow \partial_{y} \gqn  = \partial_{y} \paren{q \paren{\frac{\overline H}{v_y}}_{n-1}} = \partial_{y}q \paren{\frac{\overline H}{v_y}}_{n-1} + q \partial_{y} \paren{\frac{\overline H}{v_y}}_{n-1}.
\end{align*}
Therefore, the $\alpha$ estimate of $G$ is essentially done in the previous subsections. In fact, using \eqref{1onvy}, the bootstrap assumption about $H$, localization property of $H$ and Lemma~\ref{pro:-s},  we obtain
\begin{align*}
	\mathcal{E}_{G}^{(\alpha)} - 	\theta_0^2\mathcal{E}_{G, 0}^{(\alpha)} 
	\lesssim
	\brak{t}^{3-2\ss} \paren{\norm{\frac{\overline{H}}{v_y}}_{Y_{0, -s}}^2 + \norm{\frac{\overline{H}}{v_y}}_{Y_{1, -s}}^2}
	\lesssim 
	(1+\eps) \paren{	\mathcal{E}_{\overline H}^{(\gamma)} + 	\mathcal{E}_{\overline H}^{(\alpha)}}. 
\end{align*} To complete the proof, we only need to show the case $n=0$ in detail, which is the result of the lemma below.
\begin{lemma}
	It holds
	\begin{align}
		\label{G:0:esti:1}
		\frac{1}{2}\frac{d}{dt}\mathcal{E}_{\overline{H}, 0}  + \mathcal{CK}_{\overline H, 0}
		+ \cd_{\overline H, 0} \le \frac{\eps^3}{t^2}.
	\end{align}
\end{lemma}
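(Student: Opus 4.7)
Despite the notation $\mathcal{E}_{\overline H,0}$ in the statement, the context and the analogy with the preceding $(G,\gamma)$ lemma make it clear that the quantity to control is $\mathcal{E}_{G,0}^{(\alpha)}=\nu^{1/2}\langle t\rangle^{4-2K\eps_1}\|\partial_y G\,\chi_0\|_{L^2}^2$. Since $\partial_y G=\overline H$ and $G$ satisfies the Neumann boundary condition $\partial_y G|_{y=\pm 1}=0$, we have $\overline H|_{y=\pm 1}=0$, so the natural approach is to pass from \eqref{G:eq:2} to the $\overline H$-equation \eqref{eq:bar:H:L}, and test it against $\nu^{1/2}\langle t\rangle^{4-2K\eps_1}\overline H\,\chi_0^2$. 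The plan is thus a faithful $\alpha$-analogue of the previous lemma, where each estimate on $G$ is upgraded by one $y$-derivative (or, equivalently, an $L^2$ bound on $\overline H$).

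First I would compute the time derivative, isolating $\tfrac12\frac{d}{dt}\mathcal{E}_{G,0}^{(\alpha)}$ and extracting the CK contribution from the weight $\langle t\rangle^{4-2K\eps_1}$ and the $\frac{2}{t}\overline H$ friction term, yielding (as in the $\gamma$ case) a term proportional to $(4\langle t\rangle^{4-2K\eps_1}t^{-1}-(4-2K\eps_1)t\langle t\rangle^{2-2\eps_1})\|\overline H\chi_0\|_{L^2}^2$, whose positive part bounds $\mathcal{CK}_{G,0}^{(\alpha)}$. The viscous term produces $\cd_{G,0}^{(\alpha)}=\nu\cdot\nu^{1/2}\langle t\rangle^{4-2K\eps_1}\|\partial_y\overline H\,\chi_0\|_{L^2}^2$ after integration by parts, with no boundary contribution because $\overline H|_{y=\pm 1}=0$, and the $\chi_0$-commutator is harmless and absorbed.

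The main work lies in the nonlinear term $-\tfrac{1}{t}(u_{\neq}\cdot\nabla\omega)_0$ tested against $\nu^{1/2}\langle t\rangle^{4-2K\eps_1}\overline H\,\chi_0^2$. Exactly as in the $(G,\gamma)$ lemma I would rewrite
\[
-\tfrac{1}{t}\langle\nabla^\perp\psi_{\neq 0}\cdot\nabla u^x\rangle_x=\tfrac{1}{t}\langle\Gamma\psi_{\neq 0}\,\partial_x u^x\rangle_x-\tfrac{1}{t}\langle\partial_x\psi_{\neq 0}\,\Gamma u^x\rangle_x,
\]
decompose $\psi_{\neq 0}=\psi^{(I)}+\psi^{(E)}$, and split into the four $(I/E)\times(I/E)$ products. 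Each piece is controlled by Cauchy--Schwarz followed by $L^\infty\times L^2$ (putting the stream function in $L^\infty$ via a Sobolev interpolation $\|\Gamma\psi_k\|_{L^\infty}\lesssim\|\Gamma\psi_k\|_{L^2}^{1/2}\|\partial_y\Gamma\psi_k\|_{L^2}^{1/2}$), and then by the elliptic estimates \eqref{eell:out}, \eqref{eell:sob}, \eqref{fell:e}, together with the bootstrap bounds on $\mathcal{E}^{(\gamma)}$ and $\mathcal{E}_{\mathrm{Int}}$. The output is an $\eps^2/t^2$-factor which, after using the $\overline H$-factor in $L^2$ and Young's inequality, absorbs a small fraction of $\mathcal{CK}_{G,0}^{(\alpha)}$ and leaves $\eps^3/t^2$.

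The only genuinely new feature compared with the $\gamma$ case is the factor $\nu^{1/2}$ in $\mathcal{E}_{G,0}^{(\alpha)}$: the test function is intrinsically small in $\nu^{1/4}$, so after Cauchy--Schwarz the nonlinearity is measured in the $\mathcal{CK}_{G,0}^{(\alpha)}$-norm rather than in $\mathcal{E}_{G,0}^{(\alpha)}$, and we need the elliptic side to give back enough time decay. The main obstacle I anticipate is the $(I,I)$ interaction, where $\psi^{(I)}$ is neither small in $\nu$ nor localized; here the quantitative inviscid damping estimate \eqref{eell:out} provides the needed $\langle t\rangle^{-2}$ (via $\|\Gamma\psi_k^{(I)}\|_{L^\infty}\lesssim \langle t\rangle^{-1}\mathcal{E}_{\mathrm{Int}}^{\mathrm{low},1/2}$ after one interpolation in $k,\Gamma$), and coupling with $\|k\,u^{x,I}_k\|_{L^2}\lesssim t^{-1}\mathcal{E}_{\mathrm{Int}}^{\mathrm{low},1/2}$ yields exactly the $\eps^2/t^2$ prefactor needed to close against $\mathcal{CK}_{G,0}^{(\alpha)}$. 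The remaining $(I,E)$, $(E,I)$, $(E,E)$ terms are handled as in the $\gamma$-case and only require inserting the spatial weight $e^W$, which is huge near the boundary and provides the required $\nu^{100}$ smallness.
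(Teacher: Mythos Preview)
Your overall strategy is right: test the $\overline H$ equation at the $L^2$ level with weight $\langle t\rangle^{4-2K\eps_1}$, extract the CK and dissipation terms, and estimate the nonlinearity by the four-fold $(I/E)\times(I/E)$ decomposition. However, you have misidentified the forcing. The $\overline H$ equation \eqref{eq:bar:H:L} has right-hand side $-\tfrac{1}{t}(u_{\neq}\cdot\nabla\omega)_0$, not $-\tfrac{1}{t}(u_{\neq}\cdot\nabla u^x)_0$; you have copied the $G$-forcing rather than the $\overline H$-forcing. This matters precisely at the $(I,I)$ interaction: your claimed extra factor $\|k\,u^{x,I}_k\|_{L^2}\lesssim t^{-1}\eps$ is unavailable, because the quantity appearing is $\|k\,\omega_k^I\|$, and the vorticity profile does not decay.

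The paper handles this by putting $\Gamma_k\psi_k^{(I)}$ in $L^2$ and $k\omega_k^I$ in $L^\infty$ via the $\Gamma_k$-Sobolev embedding \eqref{Sob:emb}, obtaining only
\[
\langle t\rangle^{1-K\eps_1}\|N_4\|_{L^2}\lesssim \langle t\rangle^{1-K\eps_1}\sum_{k\neq 0}\|\Gamma_k\psi_k^{(I)}\|_{L^2}\,\|k\omega_k^I\|_{L^2}^{1/2}\|k\Gamma_k\omega_k^I\|_{L^2}^{1/2}\lesssim \frac{\eps^2}{t^{1+K\eps_1}},
\]
one full power of $t$ worse than your claimed $\eps^2/t^2$. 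The estimate still closes, but only because the weight carries the extra $K\eps_1$ slack: after pairing with $\|\overline H\langle t\rangle^{2-K\eps_1}\chi_0\|_{L^2}$ and Young's inequality, one absorbs $\eps\,\mathcal{CK}_{\overline H,0}$ and is left with an integrable remainder. So your plan works once the forcing is corrected, but the $(I,I)$ step is more delicate than you indicate and relies on the $K\eps_1$ in the time weight rather than on velocity decay.

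A minor point: there is no $\nu^{1/2}$ prefactor in $\mathcal{E}_{G,0}^{(\alpha)}$ (check the definition: the $\nu^{i_\iota/2}$ appears for $\overline H$ and $H$ but not for $G$), so your paragraph about the ``intrinsically small in $\nu^{1/4}$'' test function is spurious.
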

\begin{proof} We compute the inner-product of \eqref{eq:bar:H:L} with $\overline{H} \langle t \rangle^{4-2K\eps_1}\chi_0^2$ 
	to get
	\begin{align*}
		\frac{1}{2}\frac{d}{dt} \mathcal{E}_{\overline{H}, 0}& + \paren{4\langle t \rangle^{4-2K\eps_1}t^{-1} - (4-2K\eps_1)t\brak{t}^{2-2\eps_1}}\enorm{\overline{H}\chi_0}^2 +
		\nu \langle t \rangle^{4-2K\eps_1}\enorm{\partial_y \overline{H}\chi_0}^2
		\nn&=
		- \langle t \rangle^{4-2K\eps_1}t^{-1}v_y \R\int  \langle \nabla^\perp \stf_{\neq 0} \cdot \nabla \omega \rangle \overline{H}  \chi_0^2 \,dy.
	\end{align*}
The only difference between the equations of $\overline{H}$ and $G$ is the nonlinearity, which we treat as
	\begin{align*}
      \langle \nabla^\perp \stf_{\neq 0} \cdot \nabla \omega \rangle 
		=
	    \langle \Gamma \stf_{\neq 0}\partial_x \omega\rangle 
		- \langle\partial_x \stf_{\neq 0} \Gamma \omega \rangle.
	\end{align*}
Like in the previous lemma, we divide the first term in the nonlinearity into four pieces:
\begin{align*}
	 \langle \Gamma \stf_{\neq 0}\partial_x \omega\rangle 
	= \sum_{i=1}^{4} N_i
\end{align*}
where the treatment of the first three pieces are exactly the same as in the previous lemma. Hence we only focus on the last term
\begin{align*}
	\langle t \rangle^{1-K\eps_1}\enorm{N_4} 
	&\le \langle t \rangle^{1-K\eps_1}\sum_{k\neq 0}\enorm{\Gamma_k \stf_{k}^{(I)} k \omega^I_k}
	\le \langle t \rangle^{1-K\eps_1}\sum_{k\neq 0}\norm{\Gamma_k \stf_{k}^{(I)}}_{L^2}\norm{ k \omega^I_k}_{L^\infty}
	\\ &\le \langle t \rangle^{1-K\eps_1}\sum_{k\neq 0}\enorm{\Gamma \stf_{k}^{(I)}}
\enorm{ k \omega^I_k}^{1/2} \enorm{ k \Gamma_k\omega^I_k}^{1/2} 
	\\& \lesssim \frac{\eps^2}{t^{1+\eps}}.
\end{align*}
And hence we have
	\begin{align*}
		|\langle  \frac{1}{t} \langle \Gamma \stf_{\neq 0}\partial_x \omega\rangle, 
		\overline{H} \langle t \rangle^{4-2K\eps_1}\chi_{0}^2 \rangle|
		&\lesssim   
		 \frac{\eps^2}{t^{1+\eps}}
		\| \overline{H} \langle t \rangle^{2-\eps} \chi_{0}\|_{L^2}
		\lesssim \eps \mathcal{CK}_{\overline H, 0}+ \frac{\eps^2}{t^{1+\eps}}.
	\end{align*}
  The second piece of the nonlinear term is treated similarly and we omit further 
	details to conclude the proof.
\end{proof}

\subsection{$(H, \gamma)$ Estimates}

	We recall $H$ satisfies the following system
\begin{align}
	&\pa_t H - \nu \pa_y^2 H = \overline{H}, \label{eq:H}\\
	&H(t, \pm1) = 0 \label{eq:H:bou}.
\end{align}
The main purpose for this section is to prove~\eqref{apH}, for which it suffices to show the following.
\begin{lemma}
	\label{H:esti}
	It follows
	\begin{align*}
	   \frac{1}{2}\frac{d}{dt} \mathcal{E}_{H}^{\gm} + \mathcal{CK}_{H}^{\gm} + \cd_{H}^{\gm} \lesssim  \mathcal{CK}_{\overline{H}}^{\gm} .
	\end{align*}
\end{lemma}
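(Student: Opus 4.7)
The plan is to commute $q^n\Gamma_0^n$ into the linear heat equation \eqref{eq:H} and perform the standard weighted $L^2$ energy estimate against $\bold{a}_n^2 \hqn e^W \chi_n^2$, then sum against $\theta_n^2$. Commuting gives
\begin{align*}
\pa_t \hqn - \nu\pa_y^2 \hqn = \bhqn + q^n[\pa_t,\Gamma_0^n]H + \nu[q^n\Gamma_0^n,\pa_y^2]H,
\end{align*}
so the energy identity reads (schematically)
\begin{align*}
\tfrac{1}{2}\tfrac{d}{dt}\mathcal{E}_{H,n}^{(\gamma)} + \mathcal{CK}_{H,n}^{(\gamma)} + \mathcal{D}_{H,n}^{(\gamma)} = \langle \bhqn, \bold{a}_n^2 \hqn e^W\chi_n^2\rangle + \text{Comm} + \text{boundary/cutoff errors},
\end{align*}
where $\mathcal{D}_{H,n}^{(\gamma)}$ comes from integration by parts on $\nu\pa_y^2$ (legal since $H|_{y=\pm1}=0$ and $\hqn|_{y=\pm1}=0$ as $q$ vanishes at $\pm 1$ for $n\ge1$, while $\chi_0$ is compactly supported inside the bulk). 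The cutoff and weight errors $\nu\langle\pa_y\hqn,\hqn e^W\pa_y\{\chi_n^2\}\rangle$ and $\nu\langle\pa_y\hqn,\hqn\pa_y e^W\chi_n^2\rangle$ are controlled exactly as in Lemma~\ref{easy:term}, giving small multiples of $\mathcal{CK}_{H}^{(\gamma)}$ and $\mathcal{D}_{H}^{(\gamma)}$ that absorb.

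The heart of the matter is the forcing contribution. By Cauchy--Schwarz,
\begin{align*}
|\langle \bhqn,\bold{a}_n^2\hqn e^W \chi_n^2\rangle| \le \bold{a}_n^2\|\bhqn e^{W/2}\chi_n\|_{L^2}\,\|\hqn e^{W/2}\chi_n\|_{L^2}.
\end{align*}
The decisive algebraic identity is $\bold{a}_n/\bold{a}_{n+1}\sim \varphi(t)^{-1} n^{-s}\sim \brak{t}\, n^{-s}$, together with $-\dot{\bold{a}}_n/\bold{a}_n\sim (n+1)/\brak{t}$ and the $\brak{t}^{3+2\ss}(n+1)^{2\ss-2}$ prefactor in $\mathcal{CK}_{\overline{H},n}^{(\gamma;2)}$. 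These combine to give
\begin{align*}
\bold{a}_n\|\bhqn e^{W/2}\chi_n\|_{L^2}\ \lesssim\ \brak{t}^{-\ss}(n+1)^{-(s+\ss-1/2)}\sqrt{\mathcal{CK}_{\overline{H},n}^{(\gamma;2)}},\qquad \bold{a}_n\|\hqn e^{W/2}\chi_n\|_{L^2}\ \lesssim\ \brak{t}^{1/2}(n+1)^{-1/2}\sqrt{\mathcal{CK}_{H,n}^{(\gamma)}},
\end{align*}
so the product is $\lesssim \brak{t}^{1/2-\ss}(n+1)^{-(s+\ss)}\sqrt{\mathcal{CK}_{\overline H,n}^{(\gamma)}}\sqrt{\mathcal{CK}_{H,n}^{(\gamma)}}$. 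Summing in $n$ against $\theta_n^2$ and applying Young's inequality, one piece absorbs a (small) fraction of $\mathcal{CK}_{H}^{(\gamma)}$ on the left, leaving exactly $\mathcal{CK}_{\overline{H}}^{(\gamma)}$ on the right as claimed. The special case $n=0$ is even simpler since $\mathcal{CK}_{H,0}^{(\gamma)}$ already contains $\brak{t}^{4-2K\eps}$ weights.

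The remaining inputs are the two commutators. The viscous commutator $\nu[q^n\Gamma_0^n,\pa_y^2]H$ has exactly the structure treated in Lemma~\ref{visc:comm:barH}: the $v_y^2-1$ contributions expand via the quasiproduct/ICC framework into sums that are bounded, by Lemmas \ref{comb:boun}, \ref{comb:boun:vari:1}, and \ref{pro:-s}, by $\eps\sqrt{\mathcal{D}_H^{(\gamma)}}\sqrt{\mathcal{D}_H^{(\gamma)}+\mathcal{CK}_H^{(\gamma)}}$, absorbed on the left. The purely inductive part $\widetilde{C^{(n)}_q}$ is controlled exactly as in Lemma~\ref{C:n:q}, giving $\eps_1\mathcal{D}_H^{(\gamma)}$. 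The time commutator $q^n[\pa_t,\Gamma_0^n]H$ unpacks using $\pa_t v_y^{-1}=-v_y^{-2}\pa_y(G+\nu\pa_y^2 v)=-v_y^{-2}(\overline H+\nu\pa_y^2 H+\nu\pa_y\overline H\cdot 0)$; each factor lands within the coordinate-system bootstrap bounds on $(H,\overline H, v_y)$, yielding a term structurally identical to the forcing $\bhqn$ times $\eps$, and hence dominated by $\eps\,\mathcal{CK}_{\overline H}^{(\gamma)}+\eps\,\mathcal{CK}_H^{(\gamma)}$ after the same Cauchy--Schwarz trade.

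The main obstacle I anticipate is \emph{bookkeeping the $\brak{t}$ and $n$ powers} so that the forcing closes against $\mathcal{CK}_{\overline H}^{(\gamma)}$ with no leftover energy term on the right. The fractional derivative gap ($\overline H$ has $\ss$ fewer derivatives than $H$ but enjoys a $\brak{t}^{3/2+\ss}$ enhancement) is exactly balanced by the $\bold{a}_n$--versus--$\bold{a}_{n+1}$ shift and the $(n+1)^{2\ss-2}$ factor; any sign error in matching these exponents would force an $\mathcal{E}_{\overline H}^{(\gamma)}$ term on the right and break the lemma. Aside from this, every other piece is a direct transcription of the $\overline H$ analysis.
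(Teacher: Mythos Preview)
Your approach is essentially the paper's: test the commuted equation against $\bold{a}_n^2\hqn e^W\chi_n^2$, absorb the cutoff/weight errors via the Lemma~\ref{easy:term} mechanism, and close the forcing term by playing the $\bold{a}_n$-vs-$\bold{a}_{n+1}$ shift against the $(n+1)^{2\sss-2}\brak{t}^{3+2\ss}$ prefactor in $\mathcal{CK}_{\overline H}^{(\gamma)}$. You are actually more explicit than the paper, which silently drops the transport, viscous, and $q$-commutators in its energy identity; your remark that these are structurally identical to $\widetilde{\mathcal{C}^{(n)}_{trans}}$, $\widetilde{\mathcal{C}^{(n)}_{visc}}$, $\widetilde{\mathcal{C}^{(n)}_q}$ from the $\overline H$ analysis (Lemmas~\ref{visc:comm:barH}, \ref{tran:comm:barH}, \ref{C:n:q}) and hence absorbable by $\eps\,\mathcal{D}_H^{(\gamma)}+\eps\,\mathcal{CK}_H^{(\gamma)}$ is correct and fills a gap the paper leaves implicit.

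Two bookkeeping slips, neither fatal. First, your claimed $(n+1)$-exponents are off: a direct computation using $\bold{a}_n/\bold{a}_{n+1}\sim (n+1)^s\brak{t}$ and $-\bold{a}_k\dot{\bold{a}}_k\sim \bold{a}_k^2(k+1)/\brak{t}$ gives
\[
\bold{a}_n\|\bhqn e^{W/2}\chi_n\|_{L^2}\ \sim\ (n+1)^{1/2}\brak{t}^{-1/s}\sqrt{\mathcal{CK}_{\overline H,n}^{(\gamma;2)}},
\]
not $(n+1)^{-(s+1/s-1/2)}$ as you wrote. The $(n+1)$ powers then \emph{exactly cancel} in the product, leaving $\bold{a}_n^2\|\bhqn\|\|\hqn\|\lesssim \brak{t}^{1/2-1/s}\sqrt{\mathcal{CK}_{\overline H,n}}\sqrt{\mathcal{CK}_{H,n}}$ with no $n$-dependence; since $s<2$ gives $1/2-1/s<0$, this still closes after Young. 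So you overclaimed decay in $n$, but the truth is already enough. Second, your $n=0$ comment is misplaced: the $\brak{t}^{4-2K\eps}$ weight lives in $\mathcal{E}_{G,0}^{(\gamma)}$, not $\mathcal{E}_{H,0}^{(\gamma)}$; the $n=0$ case for $H$ is handled by the same algebra as $n\ge1$ (the $n$-cancellation above already covers it).
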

\begin{proof}
	We compute the (complex) inner product of \eqref{eq:H} against $\bold{a}_{n}^2 \hqn  e^{W} \chi_n^2$ and sum in $n$ to get 
	\begin{align*}
		&  \frac{1}{2}\frac{d}{dt} \mathcal{E}_{H}^{\gm} 
		+ \paren{\mathcal{CK}_{{H}}^{\gm}}^2		+ \paren{\cd_{{H}}^{\gm}}^2\\
		&\qquad = -\sum_{n} \bold{a}_{n}^2  \nu \langle \partial_y\hqn ,
		\hqn   e^{W}  \pa_y \{\chi_n^2\}  \rangle 
		- \sum_{n}  \bold{a}_{n}^2   \nu \langle \partial_y \hqn , 
		\hqn  \pa_y \{ e^{W}  \} \chi_n^2  \rangle 
		\\&\qquad\quad- \sum_{n}  \bold{a}_{n}^2\Re \brak{\bhqn ,
			\hqn  e^{W} \chi_n^2}
		.
	\end{align*}
The CK terms are treated similarly as for the $\overline H$ term and 
exact the same argument as in Lemma~\ref{easy:term}, we obtain
\begin{align}
	&\sum_{n\ge1}  \bold{a}_{n}^2 \nu 
	\langle \partial_y \hqn , \hqn   e^{W}  \pa_y \{\chi_n^2\} \rangle 
	\lesssim
	\eps_1\paren{	\sqrt{\cd_{H}^{\gm}} + \sqrt{\mathcal{CK}_{H}^{\gm}}} \sqrt{\cd_{H}^{\gm}}, \label{H:com:1}
	\\&
	\sum_{n\ge1}  \bold{a}_{n}^2   \nu\langle \partial_y \hqn , 
	\hqn  \pa_y \{ e^{W}  \} \chi_n^2  \rangle
	\lesssim \eps_1 \sqrt{\mathcal{CK}_{H}^{\gm}}\sqrt{\cd_{H}^{\gm}}. \label{H:com:2}
\end{align}
At last, we focus on the source term and notice
\begin{align*}
	 \sum_{n\ge1} \bold{a}_{n}^2\Re \brak{\bhqn ,
		\hqn  e^{W} \chi_n^2} \lesssim
	\bold{a}_{n}^2 \enorm{\overline \hqn  e^{W} \chi_n}\enorm{\hqn  e^{W} \chi_n}
	\lesssim
  \sqrt{\mathcal{CK}_{H}^{\gm}}\sqrt{\mathcal{CK}_{\overline H}^{(\gm)}},
\end{align*}
concluding the proof, where we used that
\begin{align}
	\sqrt{- \bold{a}_{n+1} \frac{d}{dt}( \bold{a}_{n+1})} n^{\sss-1} \sqrt{- \bold{a}_{n} \frac{d}{dt}( \bold{a}_{n})}
	\gtrsim
     \bold{a}_{n}^2 
     \label{H:for}
\end{align}
in the support of the integrand in the last step,
concluding the proof.
\end{proof}

\subsection{$(H, \alpha)$ Estimates}

\begin{subequations}
	The main purpose for this section is to prove~\eqref{apH}, for which it suffices to show the following.
	\begin{lemma}
		\label{H:esti:1}
		It follows
	\begin{align*}
	\frac{1}{2}\frac{d}{dt} \mathcal{E}_{H}^{(\alpha)} + \mathcal{CK}_{H}^{(\alpha)} + \cd_{H}^{(\alpha)} \lesssim \mathcal{CK}_{\overline{H}}^{(\alpha)}.
    \end{align*}
	\end{lemma}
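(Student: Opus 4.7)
The plan is to mirror the proof of Lemma \ref{H:esti} (the $\gamma$-level statement), carrying along one additional $\partial_y$ derivative and the $\nu^{1/2}$ factors built into the $\alpha$-level functionals. Concretely, one applies $q^n \Gamma_0^n$ to \eqref{eq:H}, differentiates in $y$, and tests against $\nu^{1/2}\, \bold{a}_n^2\, \partial_y \hqn\, e^W \chi_n^2$, summing in $n$. This produces, schematically,
\begin{align*}
\frac{1}{2}\frac{d}{dt}\mathcal{E}_H^{(\alpha)} + \mathcal{CK}_H^{(\alpha)} + \mathcal{D}_H^{(\alpha)}
&= -\nu^{3/2}\!\sum_n \bold{a}_n^2 \langle \partial_y^2 \hqn, \partial_y \hqn\, e^W \partial_y\{\chi_n^2\}\rangle \\
&\quad -\nu^{3/2}\!\sum_n \bold{a}_n^2 \langle \partial_y^2 \hqn, \partial_y \hqn\, \partial_y\{e^W\} \chi_n^2\rangle \\
&\quad -\nu^{1/2}\!\sum_n \bold{a}_n^2 \Re\langle \partial_y \bhqn, \partial_y \hqn\, e^W \chi_n^2\rangle,
\end{align*}
after integration by parts in the dissipation term and the usual accounting of the time-derivative of $\bold{a}_n$ and $W$ (producing the CK pieces).

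First I would dispose of the two commutator lines exactly as in \eqref{H:com:1}--\eqref{H:com:2}, but with an additional $\nu^{1/2}$ redistribution: the bound $|\partial_y \chi_n| \lesssim n^{1+\sigma}\chi_{n-1}$ together with the inductive ICC-type argument already used for $C_{\overline H,q}^{(\alpha;n)}$ in Lemma \ref{C:n:q:1} yields
\begin{align*}
\Big|\nu^{3/2}\!\sum_n \bold{a}_n^2 \langle \partial_y^2 \hqn, \partial_y \hqn\, e^W \partial_y\{\chi_n^2\}\rangle\Big|
&\lesssim \eps_1\Big(\sqrt{\mathcal{D}_H^{(\alpha)}} + \sqrt{\mathcal{CK}_H^{(\alpha)}}\Big)\sqrt{\mathcal{D}_H^{(\alpha)}},
\end{align*}
with $\eps_1$ small provided $n_\ast$ is large and $\delta_{\text{Drop}}$ small. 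The $\partial_y W$ commutator is handled as in Lemma \ref{easy:term:1}, using $|\partial_y W| \lesssim (|y|-1/4-L\eps\arctan t)_+/(K\nu(1+t))$ and choosing $K$ large to absorb into $\mathcal{CK}_H^{(\alpha)}$ and $\mathcal{D}_H^{(\alpha)}$.

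The source term is the heart of the matter and is where the $\alpha$-version of the key algebraic identity \eqref{H:for} must be invoked. I would estimate
\begin{align*}
\Big|\nu^{1/2}\!\sum_n \bold{a}_n^2 \langle \partial_y \bhqn, \partial_y \hqn\, e^W \chi_n^2\rangle\Big|
&\lesssim \sum_n \bold{a}_n^2\, \nu^{1/2} \|\partial_y\bhqn\, e^{W/2}\chi_n\|_{L^2}\, \|\partial_y\hqn\, e^{W/2}\chi_n\|_{L^2},
\end{align*}
and then use the shifted-index relation $\bold{a}_n^2 \lesssim \sqrt{-\bold{a}_{n+1}\dot{\bold{a}}_{n+1}}\, n^{s-1}\, \sqrt{-\bold{a}_n \dot{\bold{a}}_n}$, valid on the relevant region, to recognize the right-hand factor as $\sqrt{\mathcal{CK}_H^{(\alpha)}}$ and the left-hand factor (with the extra $(n+1)^{s-1}$) as $\sqrt{\mathcal{CK}_{\overline H}^{(\alpha)}}$, via definitions \eqref{fei:fei:fei} and \eqref{fei:fei:fei:3}. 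Young's inequality then gives the bound $\lesssim \mathcal{CK}_{\overline H}^{(\alpha)} + \tfrac12 \mathcal{CK}_H^{(\alpha)}$, the latter being absorbed to the left.

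The main technical obstacle is purely bookkeeping: ensuring that the fractional weight $(n+1)^{2s-2}$ carried by $\mathcal{E}_{\overline H,n}^{(\alpha)}$ but \emph{not} by $\mathcal{E}_{H,n}^{(\alpha)}$ is correctly produced by the comparison \eqref{H:for}, and that the $\nu^{1/2}$ factors balance between $\mathcal{CK}_{\overline H}^{(\alpha)}$ (which carries $\nu^{1/4}$ in the definition of $\| \cdot \|_{Y_{1,-s}}$) and the source-term pairing. Other than this, the proof is a direct transcription of Lemma \ref{H:esti} with an extra derivative and is essentially routine; I would not expect any new estimate on the coordinate system to be required.
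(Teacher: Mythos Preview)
Your proposal is correct and follows exactly the approach of the paper, which states simply that ``the proof goes exactly the same as in Lemma~\ref{H:esti} except that the index $\gamma$ is replaced by $\alpha$ everywhere.'' Your concern about the $\nu^{1/2}$ bookkeeping is well-founded but resolves cleanly: both $\mathcal{CK}_H^{(\alpha)}$ and $\mathcal{CK}_{\overline H}^{(\alpha)}$ carry a factor $\nu^{1/2}$ in their definitions, so each square root contributes $\nu^{1/4}$ and the product matches the $\nu^{1/2}$ in the source pairing exactly, after which the algebraic identity \eqref{H:for} applies verbatim.
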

	\begin{proof}
		We apply $\partial_y$ to equation~\eqref{eq:H} and compute the (complex) inner against $ \bold{a}_{n}^2 \partial_y\hqn  e^{W} \chi_n^2$ and sum in $n$ to get 
		\begin{align*}
			& \nu^{-1/2}\paren{\frac{1}{2}\frac{d}{dt} \mathcal{E}_{H}^{(\alpha)} 
				+ \paren{\mathcal{CK}_{{H}}^{(\alpha)}}^2		+ \paren{\cd_{{H}}^{(\alpha)}}^2}\\
			&\qquad = -\sum_{n}\bold{a}_{n}^2  \nu \langle \partial_y^2\hqn ,
			\partial_y\hqn   e^{W}  \pa_y \{\chi_n^2\}  \rangle 
			- \sum_{n} \bold{a}_{n}^2   \nu \langle \partial_y^2 \hqn , 
			\partial_y\hqn  \pa_y \{ e^{W}  \} \chi_n^2  \rangle 
			\\&\qquad\qquad- \sum_{n} \bold{a}_{n}^2\Re \brak{\partial_y\bhqn,
			\partial_y	\hqn  e^{W} \chi_n^2}
			.
		\end{align*}
Then the proof goes exactly the same as in Lemma~\ref{H:esti} except that the index $\gamma$ is replaced by $\alpha$ everywhere and we conclude the lemma without showing more details.
	\end{proof}
\end{subequations}
\subsection{Sobolev estimate for $H$}
\begin{corollary}
	\label{v:3:deri:l}
	Let $H$ be a solution to~\eqref{eq:H}--\eqref{eq:H:bou}, then we have that
		\begin{align}
		\label{v:3:deri}
		\norm{ H}_{W^{3, \infty}} &\lesssim \eps. 
	\end{align}
\end{corollary}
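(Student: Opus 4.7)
The plan is to combine the bootstrapped exterior coordinate estimates $\mathcal{E}_H^{(\gamma)} + \mathcal{E}_H^{(\alpha)} \lesssim \eps^2$, provided by~\eqref{boot:H}, together with $1$-dimensional Sobolev embedding and the Dirichlet boundary condition $H|_{y = \pm 1} = 0$ from~\eqref{eq:H:bou}. First I would work in the bulk region $\Omega_{\mathrm{bulk}} := \{x_4 \le |y| \le 1 - \tfrac{1}{50}\}$, where both the cutoffs $\chi_n$ and the co-normal weight $q(y)$ are uniformly bounded below for $n \le 4$. On this set the bootstrap gives $\sum_{n \le 4} \bold{a}_n^2 \|\Gamma_0^n H\|_{L^2(\Omega_{\mathrm{bulk}})}^2 \lesssim \eps^2$. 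Since $|v_y| = |1 + H| \sim 1$ on $\Omega_{\mathrm{bulk}}$ (which follows, with room to spare, once one establishes $\|H\|_{L^\infty} \lesssim \eps$ at the $L^\infty$ level from the $n=0,1$ terms and 1D Sobolev embedding), the operator $\Gamma_0 = v_y^{-1}\partial_y$ converts to $\partial_y$ modulo lower-order terms that can be absorbed inductively. This yields $\|H\|_{H^4(\Omega_{\mathrm{bulk}})} \lesssim \eps$ and hence $\|H\|_{W^{3,\infty}(\Omega_{\mathrm{bulk}})} \lesssim \eps$.

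Next I would treat the deep interior $\{|y| < x_4\}$, where the cutoffs $\chi_n$ vanish and the exterior norms give no direct control. The natural device is the decomposition $H = -P_0\omega - t\overline{H}$ from~\eqref{defn:h:cap}, combined with the Duhamel representation $H(t,y) = \int_0^t e^{\nu(t-s)\partial_y^2}\overline H(s,y)\,ds$ (valid since $H(0) \equiv 0$). The same Sobolev-summed argument of Step~1 applied to $\mathcal{E}_{\overline H}^{(\gamma)} + \mathcal{E}_{\overline H}^{(\alpha)}$ gives $\|\overline H(s)\|_{W^{3,\infty}} \lesssim \eps\, \langle s\rangle^{-3/2 - s^{-1}}$ on $\Omega_{\mathrm{bulk}}$, and by propagating this bound inward using the heat kernel one obtains $\|H(t)\|_{W^{3,\infty}(\{|y| < x_4\})} \lesssim \eps$, since the time-integrable rate $\langle s\rangle^{-3/2 - s^{-1}}$ produces a uniformly bounded Duhamel integral.

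Finally, for the thin boundary layer $\{1 - \tfrac{1}{50} < |y| < 1\}$ the weight $q$ degenerates. I would exploit boundary compatibility: from $H|_{y=\pm1}=0$, $\overline H|_{y=\pm1}=0$, and~\eqref{eq:H} evaluated at $y=\pm1$, one has $\partial_y^2 H|_{y=\pm1}=0$; iterating by time-differentiating~\eqref{eq:H} and using $H(t=0)\equiv 0$ extracts the even-order analogues needed. A Taylor expansion in $y$ from the boundary, combined with the weighted bound $\|q^n\partial_y^n H\|_{L^2}\lesssim \eps$ for $n=3,4$ coming from $\mathcal{E}_H^{(\gamma)}$ (and a Hardy-type inequality exploiting the vanishing of $H$ and $\partial_y^2 H$ at $y=\pm1$), then provides pointwise control of $\partial_y^k H$ for $k \leq 3$ uniformly up to the boundary.

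The main obstacle will be the boundary layer analysis in Step~3: one must ensure that the compatibility orders extracted from iterating~\eqref{eq:H} are sufficient for the Taylor remainder to close against the $q$-weighted $L^2$ bound at the correct order, and that the argument is uniform in $\nu$ and $\langle t\rangle$ for $t \lesssim \nu^{-1/3-\zeta}$. The seams between the three regional arguments must also be matched via the cutoff hierarchy $\{\chi_n\}$ without loss of factors of $\nu^{-1}$ or $\langle t\rangle^{C}$.
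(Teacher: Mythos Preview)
Your proposal and the paper's proof take genuinely different routes, and your Step~2 contains a real gap.

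In Step~2 you propose controlling $H$ in the deep interior $\{|y|<x_4\}$ via the Duhamel formula $H(t)=\int_0^t e^{\nu(t-s)\partial_y^2}\overline H(s)\,ds$, using only the exterior bound $\|\overline H\|_{W^{3,\infty}(\Omega_{\mathrm{bulk}})}\lesssim\eps\langle s\rangle^{-3/2-s^{-1}}$ and ``propagating this bound inward using the heat kernel''. This does not work: the heat semigroup is nonlocal, so to bound $\partial_y^3 H(t,y)$ at an interior point you need control of $\overline H$ on the \emph{whole} channel, not just on $\Omega_{\mathrm{bulk}}$. The exterior norms $\mathcal{E}_{\overline H}^{(\gamma)}+\mathcal{E}_{\overline H}^{(\alpha)}$ give no high-derivative information on $\overline H$ inside $\{|y|<x_4\}$ because $\chi_n$ vanishes there for $n\ge 1$. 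Trying to recover the missing derivatives from heat smoothing costs $(\nu(t-s))^{-3/2}$, which is both nonintegrable near $s=t$ and not $\nu$-uniform. The right object for the interior is the bootstrapped interior coordinate norm $\mathcal{E}^{(h)}_{\mathrm{Int,Coord}}=\|A_R(\chi^I h)\|_{L^2}^2$ (see~\eqref{E_IntCh}), which directly controls $H$ in Gevrey on $\{|y|<3/4\}$; you never invoke it.

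The paper's argument is structurally different and avoids your Step~3 Taylor/Hardy machinery entirely. It first runs crude $L^2$ energy estimates on $\partial_y^k H$ for $k=2,3,4$ using the boundary compatibility $\partial_y^2H|_{\pm1}=\partial_y^4H|_{\pm1}=0$; these estimates lose large powers of $\nu$ (e.g.\ $\|\partial_y^2 H\|_{L^2}\lesssim\eps\nu^{-5/4}$). The $L^\infty$ bounds are then recovered, one derivative at a time, by the fundamental theorem of calculus with a three-region split of the integration path: a boundary layer of width $\nu^{1/2}$ (where the small measure partially compensates the $\nu$-loss), an exterior bulk $[-1+\nu^{1/2},-3/4]$ (where the weight $e^{W/2}\gtrsim\nu^{-10}$ absorbs any polynomial $\nu$-loss via the weighted norm $\mathcal{E}_H$), and the interior $[-3/4,0]$ (where $\mathcal{E}^{(h)}_{\mathrm{Int,Coord}}$ gives direct control). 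This is more robust than your region-by-region Gevrey approach and sidesteps the delicate $q$-weighted Hardy argument you sketch in Step~3.
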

\begin{proof}[Proof of Corollary~\ref{v:3:deri:l}]
Applying $\partial_{y}^2$ to equation~\eqref{eq:H}, multiplying it by $\partial_{y}^2H$, and integrating in space variables gives
\begin{align}
	\label{H:H2}
	\frac{1}{2} \frac{d}{dt} \enorm{\partial_{y}^2 H}^2 + \nu \enorm{\partial_{y}^3 H}^2 = \int \partial_{y}^2 H \partial_{y}^2 \overline H \,dy,
\end{align}
where we used that $\partial_{y}^2H|_{y=\pm 1}=0$ from equation~\eqref{eq:H}. 
Noting by H\"older's inequality and Poincar\'e's inequality 
\begin{align*}
	\int \partial_{y}^2 H \partial_{y}^2 \overline H \,dy \le \enorm{\partial_{y}^2 H}\enorm{\partial_{y}^2 \overline H} \le \frac\nu 2 \enorm{\partial_{y}^3 H}^2 + C\nu^{-1} \enorm{\partial_{y}^2 \overline H}^2.
\end{align*}
Since $e^{W/2} \ge 1$, 
noting 
\begin{align*}
	\int_{0}^{\nu^{-1/3-\delta}} \enorm{\partial_{y}^2 \overline H}^2\,dt \lesssim \eps^2 \nu^{-3/2}
\end{align*}
by Proposition~\ref{alph:esti} ($n=0$ level estimate), 
we further arrive at
\begin{align*}
	\enorm{\partial_{y}^2 H} \lesssim \eps  \nu^{-5/4}.
\end{align*}
Next we show that $H$ is $L^\infty$ bounded. Without loss of generality, we assume $y\in [-1, 0]$and the case $y\in[0,1]$ is similarly treated. Noting
\begin{align*}
	H = \int_{-1}^{y} \partial_{y} H \,d\bar{y} &= \paren{\int_{-1}^{-1+\nu^{1/2}} + \int_{-1+\nu^{1/2}}^{-1 + 1/4} +  \int_{-1 + 1/4}^{y} }    \partial_{y} H \,d\bar{y}
	\\& = I_1 + I_2 + I_3.
\end{align*}
The first piece is treated easily as
\begin{align*}
	I_1 \le \nu^{1/4} \enorm{\partial_{y} H} \lesssim \eps.
\end{align*}
For t he second integral, noting
\begin{equation*}
	\nu^{-10} \lesssim e^{W/2}\ \ \ \ \mbox{for} \ \ \ \ y\in[-1, -3/4],
\end{equation*}
we have 
\begin{align*}
	I_2 \lesssim \nu^{10} \enorm{q\partial_{y} H e^{W/2}} \lesssim \eps.
\end{align*}
For the third piece, we use H\"older's inequality and the interior regularity of $H$ to deduce
\begin{align*}
	I_3 \lesssim \norm{\partial_{y} H}_{L^2{([-3/4, 1])}} \lesssim \mathcal{E}^{(h)}_{\text{Int, Coord}} \lesssim \eps.
\end{align*}
Collecting the estimates for $I_1, I_2, I_3$, we arrive at
\begin{align*}
		\norm{ H}_{L^\infty} \lesssim \eps. 
\end{align*}
By the fundamental theorem of calculus, one may get
\begin{align*}
	\partial_{y} H = \partial_{y} H(0) - \int_{y}^{0} \partial_{y}^2H
\,d\bar{y}.
\end{align*}
The interior regularity of $H$ implies
\begin{align*}
	\partial_{y}H(0) \lesssim \eps.
\end{align*}
By exactly the same argument as for $\norm{H}_{L^\infty}$, adjusting the intervals of integral accordingly, we may prove 
\begin{align*}
	\int_{y}^{0} \partial_{y}^2H \lesssim \eps.
\end{align*}
And hence we get
\begin{align*}
	\norm{\partial_{y} H}_{L^\infty} \lesssim \eps. 
\end{align*}
Next we improve the $H^2$ estimate to get a bound on the $\norm{\partial_{y}^2H}_{L^\infty} $, for which we need an estimate of $\enorm{\partial_{y}^3H}$. Noting from the equation for $\overline H, H$, we have
	\begin{align*}
		\partial_{y}^2\overline H |_{y=\pm 1} = \partial_{y}^2 H |_{y=\pm 1} = 0.
	\end{align*}
	Applying $\partial_{y}^2$ to \eqref{eq:H} and testing it against $\partial_{y}^4 H$, we get
	\begin{align*}
			\frac{1}{2} \frac{d}{dt} \enorm{\partial_{y}^3 H}^2 + \nu \enorm{\partial_{y}^4 H}^2 = - \int \partial_{y}^4 H \partial_{y}^2 \overline H \,dy,
	\end{align*}
	from where it follows 
	\begin{align*}
		\frac{d}{dt} \enorm{\partial_{y}^3 H}^2 + \nu \enorm{\partial_{y}^4 H}^2 \le 2\nu^{-1}\enorm{\partial_{y}^2 \overline H}.
	\end{align*}
	Using $	\int_{0}^{\nu^{-1/3-\delta}} \enorm{\partial_{y}^2 \overline H}^2\,dt \lesssim \nu^{-3/2}$, we arrive at
	\begin{align*}
		\enorm{\partial_{y}^3 H} \lesssim \eps\nu^{-5/4},
	\end{align*}
	from where we deduce that 
		\begin{align*}
		\norm{\partial_{y}^2H}_{L^\infty} \lesssim \eps
	\end{align*}
	 by a similar argument as the $L^\infty$ estimate of $H$. 
Finally, we 	apply $\partial_{y}^4$ to \eqref{eq:H} and testing it against $\partial_{y}^4 H$, arriving at
	 	\begin{align*}
	 	\frac{1}{2} \frac{d}{dt} \enorm{\partial_{y}^4 H}^2 + \nu \enorm{\partial_{y}^5 H}^2 =  \int \partial_{y}^4 H \partial_{y}^4 \overline H \,dy = -\int \partial_{y}^5 H \partial_{y}^3 \overline H \,dy
	 \end{align*}
	 where we used the fact that 
	 \begin{align*}
	 	\partial_{y}^4 H |_{y=\pm 1} = 0
	 \end{align*}
	 due to~\eqref{eq:H}. For the forcing term, using H\"older's inequality gives
	 \begin{align*}
	 	-\int \partial_{y}^5 H \partial_{y}^3 \overline H \,dy \lesssim \frac{\nu}{2} \enorm{\partial_{y}^5 H}^2 + \frac{\nu^{-1}}{2} \enorm{\partial_{y}^3 \overline H}^2.
	 \end{align*}
	 Using~\eqref{defn:barh:cap} and \eqref{H:H2}, we obtain
	 \begin{align*}
	 	\int_{0}^{\nu^{-1/3-\zeta} } \enorm{\partial_{y}^3 \overline H}^2 \,dt \lesssim \eps \nu^{-7/2-\delta}
	 \end{align*}
	 for some $\delta>0.$
	 Therefore, it holds
	 \begin{align*}
	 	 \enorm{\partial_{y}^4 H} \lesssim \eps \nu^{-9/4-\delta/2}
	 \end{align*}
	 which is sufficient for $\dot{W}^{3, \infty}$ bound of $H$, concluding the proof.
	\end{proof}

\subsection{Properties of the Gevrey Spaces $Y_{\alpha,\beta}$}

\subsubsection{Binomial coefficients lemmas}
We need the following lemma about the binomial coefficients bound.

\begin{lemma}
	\label{comb:boun:vari:2}
	For $l\le n/2$ and $n\ge 5$, we have
	\begin{align}
		\label{comb:boun:vari:2:est}
	\brak{t}^{-2}	\bold{a}_{m,n+1} \binom{n}{l}\bold{a}_{m+1,l}^{-1}\binom{n-l}{j}\bold{a}_{0,j}^{-1}\bold{a}_{0,n-l-j}^{-1}
		\le \paren{\frac{1}{2}}^{l\paren{\sss-1}}.
	\end{align}
	Moreover, if $m\le c_0n$ for some $c_0>0$, then it holds
	\begin{align}
		\label{comb:boun:est:vari:2:refi}
	\brak{t}^{-2}	\bold{a}_{m,n+1} \binom{n}{l}\bold{a}_{m+1,l}^{-1}\binom{n-l}{j}\bold{a}_{0,j}^{-1}\bold{a}_{0,n-l-j}^{-1}
		\le \paren{\frac{1}{2}}^{l\paren{\sss-1}}
		\paren{\frac{c_0+1/2}{c_0+1}}^{m\sss}.
	\end{align}
	If $m\ge C_0n$, then we have
	\begin{align}
		\label{comb:boun:est:vari:2:refi1}
		\brak{t}^{-2} \bold{a}_{m,n+1}& \binom{n}{l}\bold{a}_{m+1,l}^{-1}\binom{n-l}{j}\bold{a}_{0,j}^{-1}\bold{a}_{0,n-l-j}^{-1} \le
		\paren{\frac{1}{2^l}}^{\sss-1}\paren{\frac{1}{C_0+1}}^{(n-l)\sss}.
	\end{align}
\end{lemma}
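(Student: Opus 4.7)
}
The plan is to reduce all three inequalities to a single master bound on the quantity
$Q_{l,m,n} := \binom{n}{l}\binom{m+n+1}{n-l}^{-s}$
and then dispatch each case by an elementary binomial manipulation. First, I will compute the product of the Gevrey weights directly from the definition $\bold{a}_{m,n} = (\lambda^{m+n}/(m+n)!)^{s}\varphi^{n+1}$, noting that the powers of $\lambda$ cancel exactly (the $\lambda$-exponents sum to $m+n+1-(m+l+1)-j-(n-l-j)=0$) and the $\varphi$-exponents sum to $-1$, so that
\begin{align*}
\bold{a}_{m,n+1}\bold{a}_{m+1,l}^{-1}\bold{a}_{0,j}^{-1}\bold{a}_{0,n-l-j}^{-1}
=\varphi^{-1}\left(\frac{(m+l+1)!\,j!\,(n-l-j)!}{(m+n+1)!}\right)^{s}.
\end{align*}
Since $\varphi^{-1}=(1+t^{2})^{1/2}\lesssim\brak{t}$, the prefactor $\brak{t}^{-2}$ immediately controls this. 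Next, using the identity $j!(n-l-j)!=(n-l)!/\binom{n-l}{j}$, I can rewrite the full left-hand side as $\binom{n}{l}\binom{n-l}{j}^{1-s}\binom{m+n+1}{n-l}^{-s}$ times a bounded factor; since $s>1$, the middle factor is $\le 1$, reducing everything to showing the desired bounds on $Q_{l,m,n}$.

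For \eqref{comb:boun:vari:2:est} I will use the monotonicity $\binom{m+n+1}{n-l}\ge\binom{n}{n-l}=\binom{n}{l}$, yielding $Q_{l,m,n}\le\binom{n}{l}^{1-s}$, and then invoke the elementary inequality $\binom{n}{l}\ge 2^{l}$ for $l\le n/2$ (proved by induction: the ratio $\binom{n}{l}/\binom{n-1}{l-1}=n/l\ge 2$), which gives $(1/2)^{l(s-1)}$; the hypothesis $n\ge 5$ covers the few small cases where $l\le n/2$ barely holds. For the third inequality \eqref{comb:boun:est:vari:2:refi1}, I will write $\binom{m+n+1}{n-l}^{-1}=\prod_{i=1}^{n-l}\frac{i}{m+l+i+1}$ and observe that the factor $\frac{l+i}{m+l+i+1}$ (which arises after dividing by $\binom{n}{l}^{-1}$) is monotone increasing in $i$, attaining maximum $n/(m+n+1)$ at $i=n-l$. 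Combined with $m\ge C_{0}n$ this gives $\frac{n}{m+n+1}\le\frac{1}{C_{0}+1}$ and the claimed estimate follows.

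The heart of the argument — and the only place where nontrivial arithmetic is needed — is \eqref{comb:boun:est:vari:2:refi}. Here I will use the complementary identity
\begin{align*}
\frac{\binom{m+n+1}{n-l}}{\binom{n+1}{l+1}}
=\prod_{i=2}^{m+1}\frac{n+i}{l+i}\ge\left(\frac{n+m+1}{l+m+1}\right)^{m},
\end{align*}
where the final inequality uses that $\frac{n+i}{l+i}$ is decreasing in $i$ (since $n>l$). Combining with $\binom{n+1}{l+1}\ge\binom{n}{l}$ yields
\begin{align*}
Q_{l,m,n}\le\binom{n}{l}^{1-s}\left(\frac{l+m+1}{n+m+1}\right)^{ms}.
\end{align*}
The remaining task is the arithmetic verification that $\frac{l+m+1}{n+m+1}\le\frac{c_{0}+1/2}{c_{0}+1}$ whenever $l\le n/2$ and $m\le c_{0}n$: after multiplying out, this reduces to $m\le c_{0}n+n/2-1$, which holds for $n\ge 2$. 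Together with $\binom{n}{l}^{1-s}\le 2^{-l(s-1)}$, this gives the desired estimate. The main technical obstacle is precisely this step (d): the constants $c_{0}+1/2$ and $c_{0}+1$ are sharp in the sense that a naive application of $1-x\le e^{-x}$ gives too weak a bound, so one must work with the binomial ratio directly rather than with an exponential approximation.
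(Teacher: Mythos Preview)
Your reduction to the master quantity $Q_{l,m,n}=\binom{n}{l}\binom{m+n+1}{n-l}^{-s}$ is exactly what the paper does (they write it in the equivalent product form $\binom{n}{l}^{1-s}\bigl(\prod_{i=1}^{m+1}\frac{l+i}{n+i}\bigr)^{s}$), and your treatment of \eqref{comb:boun:vari:2:est} and \eqref{comb:boun:est:vari:2:refi1} is correct and matches the paper's argument.

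There is a genuine gap in your handling of \eqref{comb:boun:est:vari:2:refi}. Your arithmetic claim that $\frac{l+m+1}{n+m+1}\le\frac{c_0+1/2}{c_0+1}$ for $l\le n/2$, $m\le c_0 n$ is false at the boundary: cross-multiplying and using $l=n/2$ reduces to $m\le c_0 n-1$, not $m\le c_0 n+n/2-1$, so the inequality fails when $m=c_0 n$. (Equivalently, $\frac{an+1}{bn+1}>\frac{a}{b}$ whenever $a<b$.) The fix is to avoid bounding all $m$ factors by the largest one. Working instead with the paper's form
\[
Q_{l,m,n}=\binom{n}{l}^{1-s}\Bigl(\prod_{i=1}^{m+1}\frac{l+i}{n+i}\Bigr)^{s},
\]
one checks directly that for $l\le n/2$ the inequality $\frac{l+i}{n+i}\le\frac{n/2+i}{n+i}\le\frac{c_0+1/2}{c_0+1}$ is equivalent to $i\le c_0 n$, hence holds for each $i\in\{1,\dots,m\}$; the remaining factor at $i=m+1$ is simply bounded by $1$. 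This gives $\prod_{i=1}^{m+1}\frac{l+i}{n+i}\le\bigl(\frac{c_0+1/2}{c_0+1}\bigr)^{m}$ exactly, and the rest of your argument goes through unchanged.
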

\begin{proof}
	By the definition of $\bold{a}_{m,n}$, we unfold the expression to be bounded in~\eqref{comb:boun:vari:2:est} as
	\begin{align*}
		\brak{t}^{-2} &\bold{a}_{m,n+1} \binom{n}{l}\bold{a}_{m+1,l}^{-1}\binom{n-l}{j}\bold{a}_{0,j}^{-1}\bold{a}_{0,n-l-j}^{-1}
		\\&=
		\Big( \frac{(\ss)^{m+n+1}}{(m+n+1)!} \Big)^{\sss}\paren{\frac{1}{C\brk{t}}}^{m+n+1}
		\frac{n!}{l!(n-l)!}
		\Big( \frac{(\ss)^{m+l+1}}{(m+l+1)!} \Big)^{-\sss}\paren{\frac{1}{C\brk{t}}}^{-(m+l+1)}
		\\&\quad\times \frac{(n-l)!}{j!(n-l-j)!}
		\Big( \frac{(\ss)^{j}}{j!} \Big)^{-\sss}\paren{\frac{1}{C\brk{t}}}^{-j}
		\Big( \frac{(\ss)^{n-l-j}}{(n-l-j)!} \Big)^{-\sss}\paren{\frac{1}{C\brk{t}}}^{-(n-l-j)}
		\\&=
		\Big( \frac{(m+l+1)!j!(n-l-j)!}{(m+n+1)!} \Big)^{\sss}	\frac{n!}{l!j!(n-l-j)!}	
		\\&\le
		\Big( \frac{(m+l+1)!}{(n-l+1)(n-l+2)\cdots (m+n+1)} \Big)^{\sss}	\frac{(n-l+1)\times\cdots \times n}{l!}	
		\\&=
		\Big( \frac{l!}{(n-l+1)(n-l+2)\cdots n} \Big)^{\sss-1}
		\Big( \frac{(l+1)\cdots (m+l+1)}{(n+1)(n + 2)\cdots (m+n+1)} \Big)^{\sss},
	\end{align*}
	from where, an easy algebraic computation gives
	\begin{align*}
\brak{t}^{-2}		\bold{a}_{m,n+1}& \binom{n}{l}\bold{a}_{m+1,l}^{-1}\binom{n-l}{j}\bold{a}_{0,j}^{-1}\bold{a}_{0,n-l-j}^{-1} \le \paren{\frac{1}{2^l}}^{\sss-1}.
	\end{align*}
	Actually, for $m\le c_0 n$, we have
	\begin{align*}
		\brak{t}^{-2} \bold{a}_{m,n+1}& \binom{n}{l}\bold{a}_{m+1,l}^{-1}\binom{n-l}{j}\bold{a}_{0,j}^{-1}\bold{a}_{0,n-l-j}^{-1} \le \paren{\frac{1}{2^l}}^{\sss-1}\paren{\frac{c_0+1/2}{c_0+1}}^{m\sss},
	\end{align*}
	proving~\eqref{comb:boun:est:vari:2:refi}. 
	While $m\ge C_0n$, then it is easy to obtain 
	\begin{align*}
		\brak{t}^{-2} \bold{a}_{m,n+1}& \binom{n}{l}\bold{a}_{m+1,l}^{-1}\binom{n-l}{j}\bold{a}_{0,j}^{-1}\bold{a}_{0,n-l-j}^{-1} \le
		\paren{\frac{1}{2^l}}^{\sss-1}\paren{\frac{n}{m+n}}^{(n-l)\sss}
		\\&
		\le \paren{\frac{1}{2^l}}^{\sss-1}\paren{\frac{1}{C_0+1}}^{(n-l)\sss},
	\end{align*}
	concluding the proof.
\end{proof}
\begin{remark}
	Note that in the above theorem, for $\bold{a}_{\alpha,\beta}$, it is essentially the sum $\alpha+\beta$ that matters. Hence, we are able to generalize it to a more general version:
	\begin{align}
		\label{comb:boun:rem}
		\brak{t}^{-2} \bold{a}_{m,n+1} \binom{n}{l}\bold{a}_{\alpha_1, \beta_1}^{-1}\binom{n-l}{j}\bold{a}_{\alpha_2, \beta_2}^{-1}\bold{a}_{\alpha_3, \beta_3}^{-1}
		\le \paren{\frac{1}{2}}^{l\paren{\sss-1}}.
	\end{align}
	with 
	\begin{align*}
		\alpha_1+\beta_1=m+l+1,\    \alpha_2+\beta_2=j,\     \alpha_3+\beta_3=n-l-j.
	\end{align*}
	Of course, the corresponding inequalities for \eqref{comb:boun:est:vari:2:refi} and \eqref{comb:boun:est:vari:2:refi1} also holds.
\end{remark}

\begin{remark}
	From Theorem~\ref{comb:boun:vari:2}, one may also obtain
	\begin{align}
		\label{comb:boun:rem:1}
		\brak{t}^{-2} \bold{a}_{m,n+1} \binom{n}{l}\bold{a}_{m+1,l}^{-1}\binom{n-l}{j}\bold{a}_{0,j}^{-1}\bold{a}_{0,n-l-j}^{-1} m^a
		\le C \paren{\frac{1}{2}}^{l\paren{\sss-1}} 
	\end{align}
	for a parameter $a\le n/4$, where the constant $C$ depends on $a$.
\end{remark}

Two direct corollaries are the following.
\begin{corollary}
	\label{comb:boun}
	For $l\le n/2$ and $n\ge 5$, we have
	\begin{align}
		\label{comb:boun:est}
	\brak{t}^{-1}	\bold{a}_{m,n} \binom{n}{l}\bold{a}_{m,l}^{-1}\bold{a}_{0,n-l}^{-1} \le \paren{\frac{1}{2}}^{l\paren{\sss-1}}.
	\end{align}
	Moreover, if $m\le c_0n$ for some $c_0>0$, then it holds
	\begin{align}
		\label{comb:boun:est:refi}
	\brak{t}^{-1}	\bold{a}_{m,n} \binom{n}{l}\bold{a}_{m,l}^{-1}\bold{a}_{0,n-l}^{-1} \le \paren{\frac{1}{2}}^{l\paren{\sss-1}}
		\paren{\frac{c_0+1/2}{c_0+1}}^{m\sss}.
	\end{align}
	If $m\ge C_0n$, then we have
	\begin{align}
		\label{comb:boun:est:refi1}
		\brak{t}^{-1} \bold{a}_{m,n} \binom{n}{l}\bold{a}_{m,l}^{-1}\bold{a}_{0,n-l}^{-1} \le
		\paren{\frac{1}{2^l}}^{\sss-1}\paren{\frac{1}{C_0+1}}^{(n-l)\sss}.
	\end{align}
\end{corollary}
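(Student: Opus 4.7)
The plan is to prove Corollary~\ref{comb:boun} by a direct computation that mirrors the proof of Lemma~\ref{comb:boun:vari:2}, noting that the corollary is structurally a simpler version (one binomial-weight factor instead of two). Recall that $\bold{a}_{m,n}=\bigl(\lambda^{m+n}/(m+n)!\bigr)^{s}\varphi^{n+1}$ and $\varphi\sim\brak{t}^{-1}$. The first step is to unfold all three $\bold{a}$-factors on the left of \eqref{comb:boun:est}; the powers of $\brak{t}$ collapse because
\begin{equation*}
-1+\bigl(-(n+1)\bigr)+(l+1)+\bigl((n-l)+1\bigr)=0,
\end{equation*}
so after a uniform-in-$t$ constant one is reduced to the purely combinatorial estimate
\begin{equation*}
\binom{n}{l}\left(\frac{(m+l)!\,(n-l)!}{(m+n)!}\right)^{s}\le\Bigl(\frac{1}{2}\Bigr)^{l(s-1)}.
\end{equation*}

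Using the identity $\binom{n}{l}=n!/(l!(n-l)!)$ and regrouping factorials exactly as in the proof of Lemma~\ref{comb:boun:vari:2}, rewrite the left-hand side as the product
\begin{equation*}
\left(\frac{l!}{(n-l+1)(n-l+2)\cdots n}\right)^{s-1}\cdot\left(\frac{(l+1)(l+2)\cdots(m+l)}{(n+1)(n+2)\cdots(m+n)}\right)^{s}.
\end{equation*}
The first factor equals $\binom{n}{l}^{-(s-1)}$, and since $l\le n/2$ and $n\ge 5$ one has $\binom{n}{l}\ge (n/l)^{l}\ge 2^{l}$, yielding the desired $2^{-l(s-1)}$. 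The second factor is at most $1$ because each ratio $(l+j)/(n+j)\le 1$ for $j=1,\dots,m$. This proves \eqref{comb:boun:est}.

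For the refined bounds \eqref{comb:boun:est:refi} and \eqref{comb:boun:est:refi1}, the only change is that the second factor above must be retained and estimated more carefully depending on the relative size of $m$ and $n$. In the regime $m\le c_{0}n$, each ratio $(l+j)/(n+j)$ can be bounded by $(c_{0}+1/2)/(c_{0}+1)$ using $l\le n/2\le n$ and $j\le m\le c_{0}n$, and with $m$ such factors this yields the extra gain $\bigl((c_{0}+1/2)/(c_{0}+1)\bigr)^{ms}$. In the regime $m\ge C_{0}n$, one instead uses that the product contains at least $n-l$ ratios bounded by $n/(m+n)\le 1/(C_{0}+1)$, producing the factor $\bigl(1/(C_{0}+1)\bigr)^{(n-l)s}$.

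I do not anticipate any real difficulty: this is elementary factorial bookkeeping of exactly the type carried out immediately above in Lemma~\ref{comb:boun:vari:2}. The only point requiring care is the cancellation of the $\brak{t}$-powers and the choice of regrouping that separates the ``$s-1$'' combinatorial gain from the ``$s$'' Gevrey gain; both are dictated by the template already present in the excerpt, so the corollary essentially follows by inspection.
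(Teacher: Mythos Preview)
Your proposal is correct and is exactly the intended argument: the paper provides no separate proof for this corollary, simply listing it as a ``direct corollary'' of Lemma~\ref{comb:boun:vari:2}, and your factorial regrouping reproduces that lemma's computation in the simpler one-binomial setting. The alternative rewriting of the second factor as $\prod_{j=1}^{n-l}\frac{l+j}{m+l+j}$ (obtained from $(m+l)!\,n!/\bigl(l!\,(m+n)!\bigr)$ by cancelling $(m+l)!$ against the tail of $(m+n)!$ and $l!$ against the head of $n!$) makes the $m\ge C_0 n$ case in \eqref{comb:boun:est:refi1} slightly cleaner than your phrasing, but the content is the same.
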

\begin{corollary}
	\label{comb:boun:vari:1}
	For $l\le n/2$ and $n\ge 5$, we have
	\begin{align}
		\label{comb:boun:vari:1:est}
		\brak{t}^{-1}\bold{a}_{m,n+1} \binom{n}{l}\bold{a}_{m+1,l}^{-1}\bold{a}_{0,n-l}^{-1} \le \paren{\frac{1}{2}}^{l\paren{\sss-1}}.
	\end{align}
	Moreover, if $m\le c_0n$ for some $c_0>0$, then it holds
	\begin{align}
		\label{comb:boun:est:vari:1:refi}
		\brak{t}^{-1}\bold{a}_{m,n+1} \binom{n}{l}\bold{a}_{m+1,l}^{-1}\bold{a}_{0,n-l}^{-1} \le \paren{\frac{1}{2}}^{l\paren{\sss-1}}
		\paren{\frac{c_0+1/2}{c_0+1}}^{m\sss}.
	\end{align}
	If $m\ge C_0n$, then we have
	\begin{align}
		\label{comb:boun:est:vari:1:refi1}
		\brak{t}^{-1}\bold{a}_{m,n+1} \binom{n}{l}\bold{a}_{m+1,l}^{-1}\bold{a}_{0,n-l}^{-1}
		\le
		\paren{\frac{1}{2^l}}^{\sss-1}\paren{\frac{1}{C_0+1}}^{(n-l)\sss}.
	\end{align}
\end{corollary}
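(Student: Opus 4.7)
} The corollary is an immediate specialization of Lemma \ref{comb:boun:vari:2} obtained by setting $j = 0$ and exploiting the fact that $\bold{a}_{0,0}^{-1} = \brak{t}$, which converts the $\brak{t}^{-2}$ prefactor of Lemma \ref{comb:boun:vari:2} into the $\brak{t}^{-1}$ prefactor appearing here. I will now sketch the precise steps.

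First, from the definitions \eqref{Bweight}, \eqref{varphi}, and \eqref{a:weight} I compute $B_{0,0}(t) = (\lambda^{0}/0!)^{s} = 1$ and therefore
\begin{equation*}
\bold{a}_{0,0}(t) \;=\; B_{0,0}(t)\,\varphi(t)^{1} \;=\; \varphi(t) \;=\; (1+t^{2})^{-1/2}.
\end{equation*}
Adopting the standard convention $\brak{t} = (1+t^{2})^{1/2}$, this gives the exact identity $\bold{a}_{0,0}(t)^{-1} = \brak{t}$.

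Next, I specialize inequality \eqref{comb:boun:vari:2:est} of Lemma \ref{comb:boun:vari:2} at $j = 0$. The combinatorial factor $\binom{n-l}{0}$ equals $1$ and $\bold{a}_{0,n-l-j}^{-1}$ becomes $\bold{a}_{0,n-l}^{-1}$, so Lemma \ref{comb:boun:vari:2} reads
\begin{equation*}
\brak{t}^{-2}\,\bold{a}_{m,n+1}\binom{n}{l}\bold{a}_{m+1,l}^{-1}\,\bold{a}_{0,0}^{-1}\,\bold{a}_{0,n-l}^{-1} \;\le\; \Big(\tfrac{1}{2}\Big)^{l(s-1)}.
\end{equation*}
Using $\bold{a}_{0,0}^{-1} = \brak{t}$ to absorb one power of $\brak{t}$ into the prefactor yields precisely \eqref{comb:boun:vari:1:est}. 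The two refined statements \eqref{comb:boun:est:vari:1:refi} (for $m \le c_{0} n$) and \eqref{comb:boun:est:vari:1:refi1} (for $m \ge C_{0} n$) are obtained by the same substitution $j = 0$ in \eqref{comb:boun:est:vari:2:refi} and \eqref{comb:boun:est:vari:2:refi1} respectively; no further argument is needed since the bound on the right-hand side of \eqref{comb:boun:vari:2:est} is unchanged by fixing $j$.

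Since the derivation reduces to a one-line substitution in a previously proven lemma combined with the explicit evaluation $\bold{a}_{0,0} = \varphi$, there is no genuine obstacle. The only bookkeeping step is to verify that the powers of $\varphi$ carried by the factors $\bold{a}_{m,n+1}$, $\bold{a}_{m+1,l}^{-1}$, $\bold{a}_{0,0}^{-1}$, $\bold{a}_{0,n-l}^{-1}$ balance correctly so that the single extra $\brak{t}$ arising from $\bold{a}_{0,0}^{-1}$ is exactly what is needed to pass from $\brak{t}^{-2}$ to $\brak{t}^{-1}$; the explicit computation $(n+2)-(l+1)-1-(n-l+1) = -1$ of the net $\varphi$ exponent confirms this, and Corollary \ref{comb:boun:vari:1} follows.
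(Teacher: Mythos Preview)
Your proposal is correct and takes essentially the same approach as the paper, which simply labels Corollary \ref{comb:boun:vari:1} (together with Corollary \ref{comb:boun}) as ``two direct corollaries'' of Lemma \ref{comb:boun:vari:2} without further argument. Your specialization $j=0$ together with the identification $\bold{a}_{0,0}^{-1}=\varphi^{-1}=\brak{t}$ is exactly the intended one-line reduction.
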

\subsubsection{Product rule}
We need the following product lemmas.
\begin{lemma}
	\label{pro:0}
	For $f, g\in Y_{0,0}$ with $f|_{y=\pm1}=g|_{y=\pm1}=0$, we have
	\begin{align*}
		\norm{fg}_{Y_{0,0}} \lesssim 
		\norm{ g}_{Y_{0,0}}
		\norm{f}_{\overline Y_{1,0}} + \norm{f}_{Y_{0,0}}
		\norm{g}_{\overline Y_{1,0}}.
	\end{align*}
	For $f, g$ which do not satisfy the homogeneous boundary condition $f|_{y=\pm1}=g|_{y=\pm1}=0$, it holds
		\begin{align*}
		\norm{fg}_{Y_{0,0}} \lesssim 
		\norm{ f}_{Y_{0,0}} \paren{\norm{ g}_{\overline Y_{1,0}}+ \enorm{g}}
		 + \norm{ g}_{Y_{0,0}}
		 \paren{\norm{ f}_{\overline Y_{1,0}} + \enorm{f} }.
	\end{align*}
\end{lemma}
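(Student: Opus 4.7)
The plan is to prove both bounds by a standard Gevrey paraproduct decomposition in the $\Gamma_0$-derivative scale, with the only difference between the two cases being the availability of Poincar\'e's inequality. Starting from $q^n = q^l q^{n-l}$ and Leibniz,
\[
(fg)_n = q^n \Gamma_0^n(fg) = \sum_{l=0}^n \binom{n}{l}\, f_l\, g_{n-l},
\]
I would insert this into the $Y_{0,0}$ norm, apply the triangle inequality, and split the inner sum into a low-high piece $(l \le n/2)$ and a high-low piece $(l > n/2)$ symmetric to it. Using the cutoff nesting $\chi_n \le \chi_l$ and $\chi_n \le \chi_{n-l}$ (a consequence of the monotonicity $x_m \nearrow$ in the definition \eqref{chi}), the low-high contribution is bounded by $\|f_l \chi_l\|_{L^\infty}\,\|g_{n-l}\, e^{W/2}\,\chi_{n-l}\|_{L^2}$, so that the weight $e^{W/2}$ sits on the high-regularity factor (which will end up as a $Y_{0,0}$-building block for $g$).

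The $L^\infty$ norm of $f_l \chi_l$ is handled by 1D Sobolev embedding, and here the two cases split. In the first case $f|_{y=\pm1}=0$, combined with $q|_{\pm1}=0$, forces $f_l|_{\pm1}=0$ for every $l \ge 0$, so Poincar\'e gives the clean interpolation $\|f_l\chi_l\|_{L^\infty}\lesssim \|f_l \chi_l\|_{L^2}^{1/2} \|\partial_y(f_l\chi_l)\|_{L^2}^{1/2}$. In the second case only the generic embedding $\|f_l\chi_l\|_{L^\infty}\lesssim \|f_l\chi_l\|_{L^2} + \|\partial_y(f_l\chi_l)\|_{L^2}$ is available, which is exactly the source of the extra $\enorm{f}$ summand in the second inequality. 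Expanding $\partial_y(f_l \chi_l)$ with $\partial_y q^l = l q^{l-1} q'$, $\partial_y \Gamma_0^l f = v_y \Gamma_0^{l+1} f$, and $|\partial_y \chi_l|\lesssim l^{1+\sigma}\chi_{l-1}$, this reduces to the building blocks $\|\partial_y f_l \chi_l\|_{L^2}$ of $\|f\|_{\overline Y_{1,0}}$ plus lower-order $(l-1)$-level terms that are controlled in the same way.

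After multiplying by the Gevrey weight $\bold{a}_n$, the central algebraic step is the combinatorial bound of Corollary \ref{comb:boun}, $\brak{t}^{-1}\bold{a}_n\binom{n}{l}\bold{a}_l^{-1}\bold{a}_{n-l}^{-1} \lesssim 2^{-l(s-1)}$, which absorbs the binomial coefficient and leaves a geometrically summable factor. A final Cauchy--Schwarz and Young's convolution inequality in the index $n$ turn the remaining double sum into the desired product of a $Y_{0,0}$-norm and a $\overline Y_{1,0}$-norm; the high-low piece is handled by the mirror image of the argument.

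The main obstacle I expect is not any single estimate but rather the bookkeeping required to place the weights correctly and to absorb the polynomial losses $l^{1+\sigma}$ produced by derivatives of the cutoff $\chi_l$ into the margin $\sss > 1 + \sigma + \sigma^*$ baked into the choice of parameters in \eqref{s:prime}. This is done by the same mechanism already used several times above (compare for example the treatment of $C^{(n)}_{\overline H,q}$ in Lemma \ref{C:n:q} and of the easy terms in Lemma \ref{easy:term}), so no new ideas are needed--the lemma will follow once these familiar manipulations are carried out.
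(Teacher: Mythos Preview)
Your proposal is correct and follows essentially the same line as the paper's proof: a Leibniz expansion $(fg)_n=\sum_l\binom{n}{l}f_l\,g_{n-l}$, a low-high/high-low split at $l=n/2$, placing the low-regularity factor in $L^\infty$ via Sobolev embedding (with or without Poincar\'e according to the boundary condition), and closing with the combinatorial bound of Corollary~\ref{comb:boun} together with Young's convolution inequality. The paper handles the cutoff-derivative loss $l^{1+\sigma}$ in exactly the way you anticipate, absorbing it into the Gevrey weight via the margin $\sss>1+\sigma$.
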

\begin{proof}
	We recall that
	\begin{align*}
		\norm{fg}_{Y_{0,0}}^2 &= \sum_{n \ge 0}   \| \bb_{n}\bd^{n} (fg) e^{W} \chi_n \|_{L^2_y}^2
		\le 
		\sum_{n \ge 0}\sum_{l=0}^{n}  \bb_{n}^2   \binom{n}{l}^2 \| \bd^{n-l} f \bd^{l} g e^{W} \chi_n \|_{L^2_y}^2
		\\& = P^{HL} + P^{LH}
	\end{align*}
	where
	\begin{align*}
		P^{HL} = \sum_{n \ge 0}\sum_{l\le n/2}   \bb_{n}^2  \binom{n}{l}^2 \| \bd^{n-l} f \bd^{l} g e^{W} \chi_n \|_{L^2_y}^2
	\end{align*}
	and
	\begin{align*}
		P^{LH} = \sum_{n \ge 0}\sum_{n/2<l\le n}   \bb_{n}^2  \binom{n}{l}^2 \| \bd^{n-l} f \bd^{l} g e^{W} \chi_n \|_{L^2_y}^2.
	\end{align*}
	Since the real difficult part is when $n$ is huge, we assume $n\ge 5$.
	By H\'older's inequality and Sobolev embedding, it follows easily
	\begin{align*}
		P^{HL} &\le  \sum_{n \ge 0}\sum_{l\le n/2}    \bb_{n}^2  \binom{n}{l}^2 \| \bd^{n-l} f  e^{W} \chi_n \|_{L^2_y}^2 
		\| \bd^{l} g \chi_{l+1} \|_{L^\infty_y}^2
		\\&\lesssim  \sum_{n \ge 0}\sum_{l\le n/2}    \bb_{n}^2  \binom{n}{l}^2 \| \bd^{n-l} f  e^{W} \chi_n \|_{L^2_y}^2 
		\paren{ \| \partial_y \bd^{l} g \chi_{l+1} \|_{L^2_y}^2 + (1+l)^2 \| \bd^{l} g \chi_{l} \|_{L^2_y}^2}
		\\&\lesssim  	\norm{ f}_{Y_{0,0}}^2
		\sum_{l \ge 0}    \bb_{l}^2  
		\paren{ \| \partial_y \bd^{l} g \chi_{l} \|_{L^2_y}^2 +   \| \bd^{l} g \chi_{l} \|_{L^2_y}^2}
		\lesssim  	\norm{ f}_{Y_{0,0}}^2 \norm{ g}_{\overline Y_{1,0}}^2
	\end{align*}
	where we used Lemma~\ref{comb:boun}, Young's inequality, and Poincar\'e's inequality in the last line.
	Similarly, for the $LH$ part, we have
	\begin{align*}
		P^{LH} 
		\lesssim  	\norm{ g}_{Y_{0,0}}^2
		\norm{ f}_{\overline Y_{1,0}}^2.
	\end{align*}
	On the other hand, 	without requiring the homogeneous boundary condition $f|_{y=\pm1}=g|_{y=\pm1}=0$, then we have
	\begin{align*}
		P^{HL}
		\lesssim  	\norm{ f}_{Y_{0,0}}^2 \paren{\norm{ g}_{\overline Y_{1,0}}^2+ \enorm{g}^2}
	\end{align*}
	and 
		\begin{align*}
		P^{LH} 
		\lesssim  	\norm{ g}_{Y_{0,0}}^2
		\paren{\norm{ f}_{\overline Y_{1,0}}^2 + \enorm{f}^2}
	\end{align*}
	concluding the proof.
\end{proof}

\begin{lemma}
	\label{pro:-s}
	For $f|_{y=\pm1}=g|_{y=\pm1}=0$, we have
	\begin{align*}
		\norm{fg}_{Y_{0,-s}} \lesssim 
		\norm{ f}_{Y_{0,-s}}
		\norm{g}_{\overline Y_{1,0}} + \norm{f}_{Y_{1,-s}}
		\norm{g}_{\overline Y_{0,0}}.
	\end{align*}
	Without the homogeneous boundary condition, it holds
	\begin{align*}
			\norm{fg}_{Y_{0,-s}} \lesssim 
		\norm{ f}_{Y_{0,-s}}
		\paren{\norm{g}_{\overline Y_{1,0}} + \enorm{g}} + \paren{\norm{f}_{Y_{1,-s}} + \enorm{f}}
		\norm{g}_{\overline Y_{0,0}}.
	\end{align*}
\end{lemma}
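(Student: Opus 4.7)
\smallskip

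\textbf{Plan.} The proof follows the same blueprint as Lemma~\ref{pro:0}. I would start from the Leibniz expansion
\[
\bd^{n}(fg)=\sum_{l=0}^{n}\binom{n}{l}\bd^{n-l}f\,\bd^{l}g,
\]
apply the triangle inequality inside the weighted $L^{2}$ norm defining $\|\cdot\|_{Y_{0,-s}}$, and split the sum at $l=n/2$ into a high--low piece $P^{HL}$ (where $\bd^{n-l}f$ carries the bulk of the derivatives) and a low--high piece $P^{LH}$ (where $\bd^{l}g$ does). For $P^{HL}$, I put $\bd^{n-l}f\,e^{W/2}\chi_{n}$ in $L^{2}$ and $\bd^{l}g\,\chi_{l+1}$ in $L^{\infty}$, using the sliding-cutoff inclusion $\chi_{n}\le \chi_{n-l}\chi_{l+1}$ and then the one-dimensional Sobolev embedding plus Poincar\'e (available thanks to $g|_{y=\pm1}=0$) to control $\|\bd^{l}g\,\chi_{l+1}\|_{L^{\infty}}$ by $\|\partial_{y}\bd^{l}g\,\chi_{l+1}\|_{L^{2}}+(l+1)^{1+\sigma}\|\bd^{l}g\,\chi_{l}\|_{L^{2}}$, exactly as in the proof of Lemma~\ref{pro:0}. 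The treatment of $P^{LH}$ is symmetric: $\bd^{l}g\,\chi_{l}$ goes in $L^{2}$ (no weight, matching $\overline{Y}_{0,0}$) and $\bd^{n-l}f\,e^{W/2}\chi_{n-l+1}$ goes in $L^{\infty}$ via Sobolev$+$Poincar\'e.

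\smallskip

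The key adjustments relative to Lemma~\ref{pro:0} come from the two extra features of the $(\,\cdot\,)_{-s}$ norms: the Gevrey coefficient is $\bold{a}_{n+1}$ rather than $\bold{a}_{n}$, and there is an extra polynomial weight $(n+1)^{\sss-1}$. The appropriate combinatorial estimate is therefore Corollary~\ref{comb:boun:vari:1} with $m=0$, which gives
\[
\brak{t}^{-1}\bold{a}_{n+1}\binom{n}{l}\bold{a}_{n-l+1}^{-1}\bold{a}_{l}^{-1}\le \Big(\tfrac{1}{2}\Big)^{l(\sss-1)},\qquad l\le n/2,
\]
and its obvious analogue for $l>n/2$. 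For the polynomial weight I would use the elementary inequality $(n+1)^{\sss-1}\lesssim \max((n-l+1)^{\sss-1},(l+1)^{\sss-1})$: in the HL regime this absorbs $(n+1)^{\sss-1}$ into the factor $(n-l+1)^{\sss-1}\bold{a}_{n-l+1}$ sitting on $f$, which is precisely the $\|f\|_{Y_{0,-s}}$ prefactor, while the $g$ factor is controlled by $\bold{a}_{l}\|\bd^{l}g\,\chi_{l}\|_{H^{1}}\lesssim \|g\|_{\overline{Y}_{1,0}}$. In the LH regime the roles reverse: the polynomial weight is absorbed on the $g$-side with the $\bold{a}_{l}$ coefficient (landing in $\overline{Y}_{0,0}$), while the Sobolev $L^{\infty}$ estimate on $\bd^{n-l}f\,e^{W/2}\chi_{n-l+1}$ together with the prefactor $(n-l+1)^{\sss-1}\bold{a}_{n-l+1}$ lands on $\|f\|_{Y_{1,-s}}$. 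Summation in $n$ and Young/Cauchy--Schwarz in $l$, via the geometric factor $(1/2)^{l(\sss-1)}$ from Corollary~\ref{comb:boun:vari:1}, close the estimate.

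\smallskip

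For the version without homogeneous boundary conditions, Poincar\'e's inequality is not available on the $L^{\infty}$ factor, so I would replace $\|h\|_{L^{\infty}}\lesssim \|\partial_{y}h\|_{L^{2}}$ by the weaker $H^{1}\hookrightarrow L^{\infty}$ bound $\|h\|_{L^{\infty}}\lesssim \|h\|_{L^{2}}+\|\partial_{y}h\|_{L^{2}}$. The additional $L^{2}$ contribution, after summation, collapses to the standard $L^{2}$ norms $\enorm{g}$ (in the HL case) and $\enorm{f}$ (in the LH case), producing the two extra terms appearing in the second assertion.

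\smallskip

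\textbf{Main obstacle.} The technical subtlety lies in the LH regime, where I must put $\bd^{n-l}f\,e^{W/2}\chi_{n-l+1}$ in $L^{\infty}$ while \emph{exactly} matching the $Y_{1,-s}$ norm (with its built-in $\nu^{1/4}$). Differentiating the weight through Leibniz produces a $\partial_{y}W$-factor of size $\nu^{-1/2}$ by \eqref{wdot:est:a}, which a priori threatens to destroy the claim. The right way to handle it, as in the viscous commutator estimates of Section~\ref{sec:coord:FEI}, is to note that the $\partial_{y}W$ term can be absorbed by the $L^{2}$ half of the Sobolev split (not the $H^{1}$ half), which costs only a polynomial in $\nu^{-1/2}$ that is then compensated by the global $\nu^{1/4}$ factor once one sums over $l$ and combines with the geometric gain $(1/2)^{l(\sss-1)}$. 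A secondary bookkeeping difficulty is the sliding cutoffs: the inclusion $\chi_{n}\le \chi_{n-l}\chi_{l+1}$ (or its LH analogue) is needed to ensure that $\partial_{y}\chi_{n-l+1}$ lands on a strictly lower cutoff $\chi_{n-l}$, paying only a harmless polynomial $(n-l+1)^{1+\sigma}$ which is absorbed since $\sss>1+\sigma$.
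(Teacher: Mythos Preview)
Your overall approach matches the paper's: Leibniz expansion, HL/LH split at $l=n/2$, Sobolev/Poincar\'e on the low-derivative factor, and a discrete Young inequality to close. The paper streamlines the combinatorics considerably: rather than invoking Corollary~\ref{comb:boun:vari:1} and separately distributing the polynomial weight $(n+1)^{\sss-1}$ via your $\max$-argument, it simply notes that $n^{\sss-1}\bold{a}_{n+1}\lesssim\bold{a}_n$, which collapses the $Y_{0,-s}$ coefficient structure into that of $Y_{0,0}$ and lets the argument of Lemma~\ref{pro:0} run verbatim. Your path through the corollary is correct but unnecessarily laborious.

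Your ``main obstacle'' is mishandled. The proposed $\nu$-balancing cannot work: the $\nu^{1/4}$ prefactor in $\|f\|_{Y_{1,-s}}$ sits on the \emph{right-hand side} of the inequality and therefore makes that norm \emph{smaller}, not larger --- it cannot compensate a $\nu^{-1/2}$ loss on the left. The actual mechanism (implicit in Remark~\ref{bar:nor}, which asserts that $e^{W/2}$ plays no role here) is a sign argument: set $h=q^{n-l}\bd^{n-l}f\,e^{W/2}\chi_{n-l+1}$, note that $h(\pm1)=0$ (from $q^{n-l}(\pm1)=0$ when $n-l\ge1$, or from $f|_{y=\pm1}=0$ when $n-l=0$), and write $|h(y)|^{2}=-\int_{y}^{\pm1}\partial_{y'}|h|^{2}\,dy'$. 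Since $e^{W/2}\chi_{n-l+1}$ is monotone increasing toward each boundary on its support, the piece of $\partial_{y'}|h|^{2}$ coming from differentiating the weight has a favourable sign and may be discarded; what survives is exactly
\[
\|q^{n-l}\bd^{n-l}f\,e^{W/2}\chi\|_{L^{2}}\,\|\partial_{y}(q^{n-l}\bd^{n-l}f)\,e^{W/2}\chi\|_{L^{2}},
\]
matching the $Y_{0,-s}$ and $Y_{1,-s}$ summands as stated, with no $\partial_{y}W$ term ever appearing.
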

\begin{proof}
	As in the previous lemma, we have
	\begin{align*}
		\norm{fg}_{Y_{0,-s}}^2 &= \sum_{n \ge 0}n^{2\sss-2}   \| \bb_{n+1}\bd^{n} (fg) e^{W} \chi_n \|_{L^2_y}^2
		\\&\le 
		\sum_{n \ge 0}\sum_{l=0}^{n} n^{2\sss-2}    \bb_{n+1}^2  \binom{n}{l}^2 \| \bd^{n-l} f \bd^{l} g e^{W} \chi_n \|_{L^2_y}^2
		\\& = P^{HL} + P^{LH}
	\end{align*}
	where
	\begin{align*}
		P^{HL} = \sum_{n \ge 0}\sum_{l\le n/2}n^{2\sss-2}   \bb_{n+1}^2  \binom{n}{l}^2 \| \bd^{n-l} f \bd^{l} g e^{W} \chi_n \|_{L^2_y}^2
	\end{align*}
	and
	\begin{align*}
		P^{LH} = \sum_{n \ge 0}\sum_{n/2<l\le n}n^{2\sss-2}   \bb_{n+1}^2  \binom{n}{l}^2 \| \bd^{n-l} f \bd^{l} g e^{W} \chi_n \|_{L^2_y}^2.
	\end{align*}
	The rest follows exactly as in Lemma~\ref{pro:0} once we notice
	\begin{align*}
		n^{\sss-1}  \bb_{n+1} \lesssim   \bb_{n}
	\end{align*}
	and hence we	conclude the proof.
\end{proof}

\begin{lemma}
	\label{pro:1}
	Assume $f|_{y=\pm1}=g|_{y=\pm1}=0$, we have
	\begin{align*}
		\norm{fg}_{Y_{1,0}} \lesssim 
		\norm{f}_{Y_{1,0}}
		\norm{g}_{\overline Y_{1,0}}.
	\end{align*}
	Without the boundary condition requirement, it holds
	\begin{align*}
		\norm{fg}_{Y_{1,0}} \lesssim 
		\paren{\norm{f}_{Y_{1,0}} + \enorm{f}}
		\paren{\norm{g}_{\overline Y_{1,0}} + \enorm{g}}.
	\end{align*}
\end{lemma}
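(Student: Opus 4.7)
The plan is to adapt the Leibniz-based argument of Lemma \ref{pro:0} and Lemma \ref{pro:-s} to the higher-regularity norm $Y_{1,0}$, which carries an extra $y$-derivative and a factor of $\nu^{1/4}$. Starting from the definition
\begin{align*}
\norm{fg}_{Y_{1,0}}^2 = \nu^{1/2}\sum_{n\ge 0}\bold{a}_n^2\bigl\|\partial_y\bigl(q^n\Gamma_0^n(fg)\bigr)e^{W/2}\chi_n\bigr\|_{L^2_y}^2,
\end{align*}
I would first use Leibniz on $\Gamma_0^n$ and the product rule for $\partial_y$ to write $\partial_y(q^n\Gamma_0^n(fg))$ as a double sum over $l=0,\dots,n$ in which $\partial_y$ lands either on the $f$-factor $q^{n-l}\Gamma_0^{n-l}f$ or on the $g$-factor $q^l\Gamma_0^l g$. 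Squaring and applying the triangle inequality, the norm splits into a high-low portion $P^{HL}$ (where $l\le n/2$) and a low-high portion $P^{LH}$ (where $l> n/2$).

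Next I would estimate $P^{HL}$ by placing the low-index factor in $L^\infty$ via the Sobolev embedding
\begin{align*}
\|\bd^l g\,\chi_{l+1}\|_{L^\infty_y}^2\lesssim \|\partial_y\bd^l g\,\chi_{l+1}\|_{L^2_y}^2+(1+l)^2\|\bd^l g\,\chi_l\|_{L^2_y}^2,
\end{align*}
exactly as in Lemma \ref{pro:0}, while the $f$-factor (together with the extra $\partial_y$ when it lands on it) is kept in $L^2$ with weight $e^{W/2}$; the $\nu^{1/4}$-factor is attached to whichever of the two factors is being differentiated in $y$. The combinatorial weights $\bb_n^2\binom{n}{l}^2$ are then controlled by Corollary \ref{comb:boun}, which allows us to reindex and decouple the two sums into $\norm{f}_{Y_{1,0}}^2\norm{g}_{\overline Y_{1,0}}^2$, using Poincar\'e's inequality at the end. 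The piece $P^{LH}$ is handled symmetrically, interchanging the roles of $f$ and $g$. For the version of the estimate without the homogeneous boundary condition, the Sobolev embedding above is replaced by one producing the additive $\enorm{\cdot}$ contribution, which yields precisely the extra $L^2$-term on the right-hand side.

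The main bookkeeping obstacle is to track the destinations of the extra $\partial_y$: it may land on the cutoff $\chi_n$ (producing $n^{1+\sigma}\chi_{n-1}$ absorbed via $\sss>1+\sigma$), on the weight $e^{W/2}$ (controlled pointwise on the support of $\chi_n$ because of the form of $W$), on $q^{n-l}$ or $q^l$ (yielding the inductive commutator terms of the form $(n/q)q'$ absorbed by the $\sigma$-decay in Corollary \ref{comb:boun}), or on $\Gamma_0^{n-l}f$/$\Gamma_0^l g$, which is the dominant case and converts $\bd^{n-l}f$ into $\partial_y\bd^{n-l}f$ — precisely the derivative encoded in $Y_{1,0}$ as opposed to $\overline Y_{1,0}$. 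Once this dispatch is carried out cleanly, the argument parallels Lemmas \ref{pro:0}--\ref{pro:-s} line by line and the desired product inequality closes.
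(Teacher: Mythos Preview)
Your overall plan (Leibniz on $\Gamma_0^n$, product rule for the extra $\partial_y$, HL/LH split, Sobolev on the low-index factor, and Corollary~\ref{comb:boun} for the combinatorics) matches the paper's structure. Two points need correction, one minor and one substantive. First, in the definition of $\norm{\cdot}_{Y_{1,0}}$ the derivative $\partial_y$ acts only on $\hqn=q^n\Gamma_0^n(fg)$; the cutoff $\chi_n$ and the weight $e^{W/2}$ sit outside and are never differentiated. So the cases ``$\partial_y$ lands on $\chi_n$'' and ``$\partial_y$ lands on $e^{W/2}$'' simply do not occur.

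The substantive gap is in your claim that $P^{LH}$ is ``handled symmetrically.'' Consider $P_1^{LH}$, where $\partial_y$ has landed on the \emph{low-index} factor $\bd^{n-l}f$ (so $n-l<l$). If you now place this factor in $L^\infty$ and apply Sobolev, you are forced to control $\|\partial_y^2\bd^{n-l}f\|_{L^2}$, which carries one $\partial_y$ too many for the $Y_{1,0}$ norm (even after distributing the $\nu^{1/4}$). The paper avoids this via the identity $\partial_y=v_y\Gamma_0$: rewrite $\partial_y\bd^{n-l}f=v_y\bd^{n-l+1}f$, place $\bd^{n-l+1}f$ in $L^\infty$ (Sobolev then returns exactly one $\partial_y$), and let the freed $v_y$ act on the high factor, $v_y\bd^{l}g=\partial_y\bd^{l-1}g$, which is directly of $Y_{1,0}$/$\overline Y_{1,0}$ type at index $l-1$. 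The same transfer is needed in $P_2^{HL}$. This $v_y$-trick, together with an appeal to the companion-paper reindexing argument (their Lemma~5.1) to absorb the resulting index shift, is precisely what distinguishes the $Y_{1,0}$ product rule from the $Y_{0,0}$ one in Lemma~\ref{pro:0}, and it is missing from your outline.
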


\begin{proof}
	We first recall
	\begin{align*}
		\norm{fg}_{Y_{1,0}}^2 &= \sum_{n \ge 0}   \| \bb_{n}\partial_y\bd^{n} (fg) e^{W} \chi_n \|_{L^2_y}^2
		\le 
		\sum_{n \ge 0}\sum_{l=0}^{n}   \bb_{n}^2  \binom{n}{l}^2 \| \partial_y\paren{\bd^{n-l} f \bd^{l} g }e^{W} \chi_n \|_{L^2_y}^2
		\\& = \sum_{n \ge 0}\sum_{l=0}^{n}   \bb_{n}^2  \binom{n}{l}^2 \| \partial_y\bd^{n-l} f \bd^{l} ge^{W} \chi_n \|_{L^2_y}^2
		+
		\sum_{n \ge 0}\sum_{l=0}^{n}   \bb_{n}^2  \binom{n}{l}^2 \| \bd^{n-l} f \partial_y\bd^{l} ge^{W} \chi_n \|_{L^2_y}^2
		\\& =  P_1+P_2.
	\end{align*}
	$P_1$ and $P_2$ are treated similarly and we only show details for $P_1$. 
	Note that 
	\begin{align*}
		P_1 = P^{HL} + P^{LH}
	\end{align*}
	where
	\begin{align*}
		P^{HL} = \sum_{n \ge 0}\sum_{l\le n/2}   \bb_{n}^2  \binom{n}{l}^2 \| \partial_y\bd^{n-l} f \bd^{l} g e^{W} \chi_n \|_{L^2_y}^2
	\end{align*}
	and
	\begin{align*}
		P^{LH} = \sum_{n \ge 0}\sum_{n/2<l\le n}   \bb_{n}^2  \binom{n}{l}^2 \| \partial_y\bd^{n-l} f \bd^{l} g e^{W} \chi_n \|_{L^2_y}^2.
	\end{align*}
	Again we assume $n\ge 5$. The treatment of the term $P^{HL}$ is exactly the same as in Lemma~\ref{pro:0} and we next give the bound for $P^{LH}$.
	By the definition of $\bd$, H\'older's inequality, Poincar\'e's inequality, and Sobolev embedding, it follows easily
	\begin{align*}
		P^{LH} &\le  \sum_{n \ge 0}\sum_{n/2<l\le n}   \bb_{n}^2  \binom{n}{l}^2 \|  \bd^{n-l+1} f  e^{W} \chi_{n-l+2} \|_{L^\infty_y}^2 
		\| v_y q^{l-1}\Gamma_0^{l}  g \chi_{n} \|_{L^2_y}^2
		\\&\lesssim  \sum_{n \ge 0}\sum_{n/2<l\le n}   \bb_{n}^2  \binom{n}{l}^2
		\paren{ \|  \partial_y\bd^{n-l+1} f   \chi_{n-l+2} \|_{L^2_y}^2 + (n-l+2)^2\|  \bd^{n-l+1} f  \chi_{n-l+2} \|_{L^2_y}^2 }
		\\&\quad\times	\| v_y q^{l-1}\Gamma_0^{l}  g e^{W} \chi_{n} \|_{L^2_y}^2
		\\&\lesssim 	\norm{ f}_{Y_{1,0}}^2 \norm{ g}_{\overline Y_{1,0}}^2
	\end{align*}
	where we used Lemma~\ref{comb:boun}, Young's inequality, and an argument similar to  {Lemma~5.1 in~\cite{BHIW24b}} in the last line.
	The proof is completed.
\end{proof}
Similarly we have the lemma below.

\begin{lemma}
	\label{pro:1:-s}
	Suppose $f|_{y=\pm1}=g|_{y=\pm1}=0$, we have
	\begin{align*}
		\norm{fg}_{Y_{1,-s}} \lesssim 
		\norm{f}_{Y_{1,-s}}
		\norm{g}_{\overline Y_{1,0}}.
	\end{align*}
	Without boundary value requirement, it holds
	\begin{align*}
		\norm{fg}_{Y_{1,-s}} \lesssim 
		\paren{\norm{f}_{Y_{1,-s}} + \enorm{f}}
		\paren{\norm{g}_{\overline Y_{1,0}} + \enorm{g}}.
	\end{align*}
\end{lemma}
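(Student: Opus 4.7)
\textbf{Proof proposal for Lemma~\ref{pro:1:-s}.} The plan is to mirror the structure of Lemmas~\ref{pro:0}, \ref{pro:-s}, and \ref{pro:1}, adapting the argument to the extra factor $(n+1)^{2s-2}$ and the shifted Gevrey coefficient $\bold{a}_{n+1}$ that appear in the definition of the $Y_{1,-s}$ norm. First I would expand
\[
\norm{fg}_{Y_{1,-s}}^2 = \nu^{1/2}\sum_{n\ge 0}(n+1)^{2s-2}\bold{a}_{n+1}^2 \|\partial_y\bd^{n}(fg) e^{W/2}\chi_n\|_{L^2_y}^2,
\]
expand $\bd^n$ by Leibniz and put the extra $\partial_y$ on either $f$ or $g$, producing four sums $P_1^{HL}, P_1^{LH}, P_2^{HL}, P_2^{LH}$ indexed by whether the low-order factor satisfies $l\le n/2$ or $l>n/2$. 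The four pieces are handled identically after swapping the roles of $f$ and $g$, so the essential work is to bound, say, $P_1^{HL}$, where $\partial_y\bd^{n-l}f$ carries the bulk of the derivatives and is placed in $L^2$ with the weight $e^{W/2}$, while $\bd^l g$ is placed in $L^\infty$.

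The next step is to apply Sobolev embedding on the $L^\infty$ factor, exactly as in Lemma~\ref{pro:1}:
\[
\|\bd^l g\,\chi_{l+1}\|_{L^\infty_y}\lesssim \|\partial_y\bd^l g\,\chi_{l+1}\|_{L^2_y}+(l+1)\|\bd^l g\,\chi_l\|_{L^2_y},
\]
which (after Poincar\'e if $g|_{y=\pm 1}=0$, or otherwise together with the $L^2$ term of $g$) produces precisely the $\overline Y_{1,0}$ norm of $g$ at level $l$. At this stage the Gevrey coefficient in front of the summand is
\[
(n+1)^{s-1}\bold{a}_{n+1}\binom{n}{l}\bold{a}_{n-l+1}^{-1}(n-l+1)^{-(s-1)}\,\bold{a}_l^{-1},
\]
and the plan is to absorb the factor $(n+1)^{s-1}\bold{a}_{n+1}$ into $\bold{a}_n$ using the elementary relation $(n+1)^{s-1}\bold{a}_{n+1}\lesssim \bold{a}_n$ (the same observation underlying Lemma~\ref{pro:-s}), then apply Corollary~\ref{comb:boun} (for $l\le n/2$) or \ref{comb:boun:est} to obtain a decaying coefficient of the form $2^{-l(s-1)}$. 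Squaring, using Cauchy--Schwarz in $l$ against the geometric series $\sum_l 2^{-l(s-1)}$, and then re-indexing in $n-l$ and $l$ separately, produces the product
\[
P_1^{HL}\ \lesssim\ \norm{f}_{Y_{1,-s}}^2\,\norm{g}_{\overline Y_{1,0}}^2.
\]

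For the symmetric piece $P_1^{LH}$ with $l>n/2$, I would again put the higher-order factor (now on $g$) in $L^\infty$ via Sobolev embedding and place $\partial_y\bd^{n-l}f$ with weight $\bold{a}_{n-l+1}(n-l+1)^{s-1}$ into $L^2$; because $f$ still carries one $\partial_y$, this factor is controlled by $\norm{f}_{Y_{1,-s}}$ at level $n-l$, while the high factor on $g$ is estimated by $\norm{g}_{\overline Y_{1,0}}$ at level $l+1$. The combinatorial bound is again supplied by Corollary~\ref{comb:boun} (now using the $n-l\le n/2$ regime), and the argument closes identically. The treatment of $P_2^{HL}, P_2^{LH}$, where the outer $\partial_y$ falls on $g$, is completely parallel (derivatives on $g$ land in $\overline Y_{1,0}$, one factor of $\bd$ less on $f$ so it lands in $Y_{1,-s}$ after one combinatorial shift). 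Finally, for the statement without boundary conditions, Poincar\'e can no longer be invoked, so I would simply keep the boundary $L^2$ terms of $f$ and $g$ explicit in the Sobolev embedding, which accounts for the additional $\enorm{f}$ and $\enorm{g}$ contributions in the stated bound. The main technical obstacle I anticipate is the bookkeeping in the $LH$ regime, where the shifted index $n+1$ of $\bold{a}_{n+1}$ on the left and the need to place $f$ (not $g$) in the regularity-losing $Y_{1,-s}$ norm forces a careful choice of which index is shifted in the combinatorial estimate; this is why I would rely on the bound $(n+1)^{s-1}\bold{a}_{n+1}\lesssim \bold{a}_n$ as an absorption tool, just as in the proof of Lemma~\ref{pro:-s}.
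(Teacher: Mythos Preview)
Your proposal is correct and takes essentially the same approach as the paper, which simply states that the proof ``is essentially covered in the proof of Lemma~\ref{pro:0}--\ref{pro:1} and is omitted.'' Your sketch fills in exactly the details one would expect: the Leibniz expansion and $HL/LH$ split from Lemma~\ref{pro:1}, combined with the absorption $(n+1)^{s-1}\bold{a}_{n+1}\lesssim \bold{a}_n$ that underlies Lemma~\ref{pro:-s}, together with Corollary~\ref{comb:boun} for the combinatorial gain.
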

\begin{proof}
	The proof is essentially covered in the proof of Lemma~\ref{pro:0}--\ref{pro:1} and is omitted.
\end{proof}
\begin{remark}
	\label{bar:nor}
	We note that in the proof of the above product lemmas, the localization weight $e^{W/2}$ does not play a role. Hence we remark that all the product inequalities in this subsection hold for $\overline Y$ norms, i.e., replacing $\norm{\cdot}_{Y_{\alpha, \beta }}$ by $\norm{\cdot}_{\overline  Y_{\alpha, \beta }}$ everywhere. 
\end{remark}

\subsubsection{Convolution type lemmas}
\begin{lemma}
	\label{con:no:k}
	It holds
	\begin{align*}
		\sum_{n\ge 0}\sum_{0\le m\le n}\sum_{k\in\mathbb{Z}} f_{n-m,k}g_{m,k}h_n
		\le
		\sum_{n\ge 0} \paren{\sum_{k\in\mathbb{Z}}|f_{n,k}|^2}^{1/2} 
		\paren{\sum_{n\ge 0}\sum_{k\in\mathbb{Z}}|g_{n,k}|^2}^{1/2}
		\paren{\sum_{n\ge 0}|h_{n}|^2}^{1/2}.
	\end{align*}
\end{lemma}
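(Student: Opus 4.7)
The plan is to regard the inequality as an $\ell^p$-norm estimate on discrete convolutions. Writing $F_n := \bigl(\sum_k |f_{n,k}|^2\bigr)^{1/2}$ and $G_n := \bigl(\sum_k |g_{n,k}|^2\bigr)^{1/2}$, the right-hand side is exactly $\|F\|_{\ell^1_n}\,\|G\|_{\ell^2_n}\,\|h\|_{\ell^2_n}$, and the left-hand side involves a convolution in the index $n$ once the $k$ sum is handled.

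First I would apply the Cauchy--Schwarz inequality in the variable $k$ for each fixed pair $(m,n)$ to obtain
\begin{equation*}
\sum_{k\in\mathbb{Z}} f_{n-m,k}\,g_{m,k} \;\le\; F_{n-m}\,G_m.
\end{equation*}
Substituting this into the triple sum and rearranging yields
\begin{equation*}
\sum_{n\ge 0}\sum_{0\le m\le n}\sum_{k\in\mathbb{Z}} f_{n-m,k}\,g_{m,k}\,h_n \;\le\; \sum_{n\ge 0} (F\ast G)_n\, |h_n|,
\end{equation*}
where $(F\ast G)_n := \sum_{m=0}^{n} F_{n-m} G_m$ is the one-sided discrete convolution.

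Next I would apply Cauchy--Schwarz in $n$ to separate $h$ from the convolution:
\begin{equation*}
\sum_{n\ge 0} (F\ast G)_n\, |h_n| \;\le\; \|F\ast G\|_{\ell^2_n}\,\|h\|_{\ell^2_n}.
\end{equation*}
Finally I would invoke Young's convolution inequality on $\mathbb{N}$, specifically the embedding $\ell^1 \ast \ell^2 \hookrightarrow \ell^2$, giving $\|F\ast G\|_{\ell^2_n} \le \|F\|_{\ell^1_n}\,\|G\|_{\ell^2_n}$. Unwinding the definitions of $F_n$ and $G_n$ reproduces the claimed three-factor bound.

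There is essentially no obstacle: this is a standard $\ell^1 \ast \ell^2 \hookrightarrow \ell^2$ Young estimate combined with two applications of Cauchy--Schwarz (one in $k$, one in $n$). The only subtlety worth a sentence in the write-up is that no cross-term involving the joint $(n,k)$ index appears on the $g$ side because we bundle the full $\ell^2_{n,k}$ norm of $g$ from the start (equivalently, $\|G\|_{\ell^2_n} = \|g\|_{\ell^2_{n,k}}$), which is why the estimate is not symmetric in $f$ and $g$.
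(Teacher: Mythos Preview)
Your proof is correct and follows essentially the same approach as the paper: Cauchy--Schwarz in $k$ to collapse to $F_{n-m}G_m$, Cauchy--Schwarz in $n$ to separate $h$, and then Young's convolution inequality $\ell^1 \ast \ell^2 \hookrightarrow \ell^2$ on the remaining factor. The paper phrases the last step slightly more implicitly (writing out the squared convolution and bounding it directly), but the content is identical.
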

\begin{proof}
	The inequality is a result of H\'older's and Young's inequality:
	\begin{align*}
		\sum_{n\ge 0}\sum_{0\le m\le n}\sum_{k\in\mathbb{Z}} f_{n-m,k}g_{m,k}h_n
		&\le \sum_{n\ge 0}\sum_{0\le m\le n}\paren{\sum_{k\in\mathbb{Z}}|f_{n,k}|^2}^{1/2}
		\paren{\sum_{k\in\mathbb{Z}}|g_{m,k}|^2}^{1/2}h_n
		\\&\le 
		\paren{\sum_{n\ge 0}\paren{\sum_{0\le m\le n}\paren{\sum_{k\in\mathbb{Z}}|f_{n,k}|^2}^{1/2}
				\paren{\sum_{k\in\mathbb{Z}}|g_{m,k}|^2}^{1/2}}^2}^{1/2}\paren{\sum_{n\ge 0}|h_{n}|^2}^{1/2}
		\\&\le
		\sum_{n\ge 0} \paren{\sum_{k\in\mathbb{Z}}|f_{n,k}|^2}^{1/2} 
		\paren{\sum_{n\ge 0}\sum_{k\in\mathbb{Z}}|g_{n,k}|^2}^{1/2}
		\paren{\sum_{n\ge 0}|h_{n}|^2}^{1/2}.
	\end{align*}
\end{proof}
We also need the following version of the convolution type lemma.
\begin{lemma}
	\label{con:k}
	It holds
	\begin{align*}
		\sum_{n\ge 0}\sum_{0\le m\le n}\sum_{k\in\mathbb{Z}}\sum_{l\in\mathbb{Z}} f_{n-m,k-l}g_{m,l}h_{n,k}
		\le
		\paren{\sum_{n\ge 0}\sum_{k\in\mathbb{Z}}|f_{n,k}|}
		\paren{\sum_{n\ge 0}\sum_{k\in\mathbb{Z}}|g_{n,k}|^2}^{1/2}
		\paren{\sum_{n\ge 0}\sum_{k\in\mathbb{Z}}|h_{n,k}|^2}^{1/2}.
	\end{align*}
\end{lemma}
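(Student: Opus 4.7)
\textbf{Proof proposal for Lemma~\ref{con:k}.} The plan is to recognize the left-hand side as a pairing of a two-dimensional convolution $F * G$ against $H$ on the lattice $\mathbb{N}_0 \times \mathbb{Z}$, and then to apply Cauchy--Schwarz together with Young's inequality for convolutions in the borderline form $\ell^1 * \ell^2 \to \ell^2$. The asymmetry in the exponents on the right-hand side (an $\ell^1$ norm for $f$, $\ell^2$ norms for $g$ and $h$) is precisely the signature of this Young pairing, so the structure forces the proof strategy.

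First I would pass to absolute values, writing $F(n,k) := |f_{n,k}|$ extended to be zero for $n < 0$, and analogously $G, H$, so that the summation constraint $0 \le m \le n$ is automatically encoded by the support of $F$. The quadruple sum then becomes
\begin{align*}
\sum_{n \ge 0}\sum_{k \in \mathbb{Z}} \Big( \sum_{m \in \mathbb{Z}}\sum_{l \in \mathbb{Z}} F(n-m, k-l)\, G(m,l) \Big) H(n,k) = \sum_{n \ge 0}\sum_{k \in \mathbb{Z}} (F * G)(n,k)\, H(n,k),
\end{align*}
where $*$ denotes discrete convolution on $\mathbb{Z}^2$.

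Next I would apply Cauchy--Schwarz in the $(n,k)$ variable to separate $H$ from the convolution, obtaining an upper bound $\|F * G\|_{\ell^2(\mathbb{Z}^2)} \, \|H\|_{\ell^2(\mathbb{N}_0 \times \mathbb{Z})}$. The final step is to invoke Young's convolution inequality in the endpoint form $\|F * G\|_{\ell^2} \le \|F\|_{\ell^1} \|G\|_{\ell^2}$; this is standard and follows from Minkowski's integral inequality applied directly to the convolution sum. Combining the three inequalities yields exactly the claimed bound, since $\|F\|_{\ell^1} = \sum_{n,k} |f_{n,k}|$, $\|G\|_{\ell^2} = (\sum_{n,k} |g_{n,k}|^2)^{1/2}$, and $\|H\|_{\ell^2} = (\sum_{n,k} |h_{n,k}|^2)^{1/2}$.

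There is essentially no obstacle here: the only subtlety is bookkeeping the index constraint $0 \le m \le n$, which is dispatched by zero-extension as above, and verifying that the Young inequality applies on $\mathbb{Z}^2$, which is immediate. The entire argument is parallel to (and simpler than) the scalar two-index version in Lemma~\ref{con:no:k}, with the $k$-variable now summed in $\ell^1$ rather than in $\ell^\infty$.
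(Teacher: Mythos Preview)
Your proposal is correct and follows essentially the same approach as the paper: Cauchy--Schwarz in $(n,k)$ to peel off $h$, followed by Young's convolution inequality $\ell^1 * \ell^2 \to \ell^2$ on the discrete lattice to bound $\|f*g\|_{\ell^2}$. The only cosmetic difference is that you make the zero-extension of the $n$-index explicit, whereas the paper leaves this implicit.
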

\begin{proof}
	Again by H\'older's and Young's inequality:
	\begin{align*}
		\sum_{n\ge 0}&\sum_{0\le m\le n}\sum_{k,l\in\mathbb{Z}} f_{n-m,k-l}g_{m,l}h_{n,k}
		\le 
		\paren{\sum_{n}\sum_{k}\paren{\sum_{ m}\sum_{l}
				|f_{n-m,k-l}g_{m,l}|}^2}^{1/2}
		\paren{\sum_{n\ge 0}\sum_{k\in\mathbb{Z}}|h_{n,k}|^2}^{1/2}
		\\&\le 
		\paren{\sum_{n\ge 0}\sum_{k\in\mathbb{Z}}|f_{n,k}|}
		\paren{\sum_{n\ge 0}\sum_{k\in\mathbb{Z}}|g_{n,k}|^2}^{1/2}
		\paren{\sum_{n\ge 0}\sum_{k\in\mathbb{Z}}|h_{n,k}|^2}^{1/2}.
	\end{align*}
\end{proof}

\subsubsection{Sobolev embedding}
In this section, we establish the Sobolev embedding in term of $\Gamma_k$ derivative. Indeed, we have the following simple lemma.
\begin{lemma}
	With $\Gamma_k = \partial_{y} + v_yikt$, we have inequality
	\begin{align}
		\label{Sob:emb}
		\norm{f_k}_{L^\infty_y} \lesssim \enorm{f_k}^{1/2}\enorm{\Gamma_k f_k}^{1/2}.
	\end{align}
\end{lemma}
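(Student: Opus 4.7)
\textbf{Proof plan for the Sobolev embedding \eqref{Sob:emb}.} The approach is the standard 1D Agmon--Gagliardo--Nirenberg argument, adapted to the vector field $\Gamma_k = \partial_y + v_y ikt$ by exploiting the fact that the zeroth-order piece $v_y ikt$ is purely imaginary and therefore drops out when one differentiates $|f_k|^2$. Concretely, the plan is to first establish the identity
\begin{equation*}
\partial_y |f_k(y)|^2 \;=\; 2\operatorname{Re}\bigl(\overline{f_k}\,\partial_y f_k\bigr) \;=\; 2\operatorname{Re}\bigl(\overline{f_k}\,\Gamma_k f_k\bigr) - 2\operatorname{Re}\bigl(v_y ikt\,|f_k|^2\bigr) \;=\; 2\operatorname{Re}\bigl(\overline{f_k}\,\Gamma_k f_k\bigr),
\end{equation*}
since $v_y$, $k$, $t$ are real and $|f_k|^2$ is real, so the second term is purely imaginary and contributes nothing to the real part. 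This is the key step: without it, a naive estimate of $\partial_y f_k$ in terms of $\Gamma_k f_k$ would cost a factor of $|kt|$, which is much too wasteful for the applications in Sections \ref{sec:coord:FEI} and \ref{sec:Tri}.

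Given the identity, I would integrate from an arbitrary base point $y_0 \in [-1,1]$ to $y$ to obtain
\begin{equation*}
|f_k(y)|^2 \;=\; |f_k(y_0)|^2 + 2\operatorname{Re}\int_{y_0}^y \overline{f_k}(y')\,\Gamma_k f_k(y')\,\dee y',
\end{equation*}
and then apply Cauchy--Schwarz on the integral to bound it by $2\|f_k\|_{L^2_y}\|\Gamma_k f_k\|_{L^2_y}$. Averaging the resulting pointwise inequality over $y_0 \in [-1,1]$ turns the free term $|f_k(y_0)|^2$ into $\tfrac{1}{2}\|f_k\|_{L^2_y}^2$, yielding
\begin{equation*}
\|f_k\|_{L^\infty_y}^2 \;\lesssim\; \|f_k\|_{L^2_y}^2 + \|f_k\|_{L^2_y}\|\Gamma_k f_k\|_{L^2_y}.
\end{equation*}
The stated form \eqref{Sob:emb} then follows once the first term is absorbed; this absorption is the only mildly subtle point and is handled by Young's inequality together with the observation that in every application of \eqref{Sob:emb} in this paper the quantity on the right is already multiplied by $\bold{a}_{m,n}$-weights that carry enough decay to make the $\|f_k\|_{L^2}^2$ contribution harmless (equivalently, one may write the cleaner two-term bound and carry both terms through).

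The main (and essentially only) obstacle is the algebraic cancellation in the first step; once that is in hand, the rest is standard. No commutator estimates, no localization cutoffs, and no elliptic input are required, which is why this short lemma can be placed at the very end of the coordinate-system section and invoked freely in the nonlinear estimates (for instance in bounding $\|q^{n-m}\Gamma_k^{n-m}\partial_y \phi_k^{(E)}\chi_{n-m+1}\|_{L^\infty}$ during the treatment of $N_1^{LH,E}$ in the proof of Lemma \ref{nonl:barH}).
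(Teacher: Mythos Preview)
Your approach is correct but takes a slightly different route from the paper. The paper's proof is a one-liner via an integrating factor: set $g := f_k\, e^{i v k t}$ (with $v = v(t,y)$), observe that $|g| = |f_k|$ since the exponential is a pure phase, and apply the standard 1D Agmon inequality to $g$. Since $\partial_y g = (\partial_y f_k + v_y\, ikt\, f_k)\, e^{ivkt} = (\Gamma_k f_k)\, e^{ivkt}$, one gets $\|\partial_y g\|_{L^2} = \|\Gamma_k f_k\|_{L^2}$ and the result follows immediately. This trick encodes exactly the same cancellation you exploit---the zeroth-order part of $\Gamma_k$ is purely imaginary---but packages it without ever touching $|f_k|^2$ directly. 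Your hands-on computation with $\partial_y|f_k|^2 = 2\operatorname{Re}(\overline{f_k}\,\Gamma_k f_k)$ is equally valid and perhaps more transparent about \emph{why} the cancellation occurs; the paper's version is just shorter. Both proofs inherit the same lower-order $\|f_k\|_{L^2}^2$ term from the classical Agmon inequality on a bounded interval, and both leave it implicit (in the applications the embedded function always carries a cutoff $\chi_n$ that vanishes on an interval, so the clean product form holds).
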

\begin{proof}
	This lemma is a direct consequence of Sobolev embedding lemma. In fact note that 
	\begin{align*}
		\norm{f_k}_{L^\infty_y} =	\norm{f_k e^{vikt}}_{L^\infty_y}	
		\lesssim \norm{f_k e^{vikt}}_{L^2_y}^{1/2}	\norm{\partial_{y} \paren{f_k e^{vikt}}}_{L^2_y}^{1/2}
		= \enorm{f_k}^{1/2}\enorm{\Gamma_k f_k}^{1/2}
	\end{align*}	
	concluding the proof.
\end{proof}

\subsection{Proof of Proposition \ref{gamm:esti:llp}}

We are now ready to consolidate the bounds established in this section in order to prove Proposition \ref{gamm:esti:llp}.
\begin{proof}[Proof of Proposition \ref{gamm:esti:llp}]
It is a direct consequence of Proposition~\ref{gamm:esti} and Proposition~\ref{alph:esti} in this section.
\end{proof}
%

\section{Sobolev ``Cloud" Norms} \label{sec:CLOUD}
\subsection{Norms and Energy Functionals}

We set up our norms and energy functionals for our ``Cloud" norms. For this purpose, we first recall the definition of the interior cutoff, $\chi^I$, from \eqref{chi:I:def}, and notice that 
\begin{align} \label{shgy:1}
\text{supp}(\nabla^k \chi^I \subset \cap_{n = 1}^\infty \{ \chi_n = 1  \} = \{ \chi_E = 1 \}.
\end{align}
 We define 
\begin{align}
\mathcal{E}_{\text{cloud}}^{(m, n)} := &\| \varphi^{n+1} \p_x^m \Gamma^n \omega e^W \chi^I \|_{L^2}^2, \\
\mathcal{D}_{\text{cloud}}^{(m, n)} := &\|\varphi^{n+1} \sqrt{\nu} \nabla \p_x^m \Gamma^n \omega e^W \chi^I \|_{L^2}^2 \\
\mathcal{CK}_{\text{cloud, W}}^{(m, n)} := &\|\varphi^{n+1} \sqrt{-\dot{W}} \p_x^m \Gamma^n \omega e^W \chi^I \|_{L^2}^2 \\
\mathcal{CK}_{\text{cloud}, \varphi}^{(m, n)} := &\|\varphi^{n+1} \sqrt{-\dot{\varphi}} \p_x^m \Gamma^n \omega e^W \chi^I \|_{L^2}^2
\end{align} 
 
We now write the equation: 
\begin{align} \label{eq:sa:1}
\p_t \omega_{m,n} + (y + U^0(t, y)) \p_x \omega_{m,n} - \nu \Delta \omega_{m,n} = F_{m,n},
\end{align}
where the source term 
\begin{align}
F_{m,n} = \bold{T}^{(\gamma)}_{m,n} + \bold C_{\mathrm{trans},k}^{(n)}+\bold C_{\mathrm{visc},k}^{(n)},  
\end{align}
where the quadratic terms are defined as follows 
\begin{align} \label{wh:1}
 \bold{T}^{(\gamma)}_{m,n} := \sum_{n' = 0}^{n} \sum_{m' = 0}^{m} \mathbbm{1}_{m' + n' < m + n} \nabla^\perp \psi^{(\neq 0)}_{m - m',n - n'} \cdot \nabla \omega_{m', n'} + \nabla^\perp \psi^{(\neq 0)} \cdot \nabla \omega_{m,n},
\end{align}
and where the linear commutators $\bold C_{\mathrm{trans},k}^{(n)}$ and $\bold C_{\mathrm{visc},k}^{(n)}$ are defined in [Lemma 4.2; \cite{BHIW24b}] 

\subsection{Cloud Energy Identity}

We derive the following energy identity 
\begin{lemma} The following energy identity holds: 
\begin{align} \label{bhbhy:1}
\frac{\p_t}{2} \mathcal{E}_{\text{cloud}}^{(m, n)} + \mathcal{D}_{\text{cloud}}^{(m, n)} + \mathcal{CK}_{\text{cloud}}^{(m, n)} \lesssim \frac{\nu^{\frac13 - 2\zeta}}{\langle  t\rangle^{2}}  \mathcal{E}^{(\gamma)} + \sum_{i = 1}^3 \text{Err}^{(m, n)}_{\text{cloud}, i},
\end{align}
where we define the error terms on the right-hand side as follows 
\begin{subequations}
\begin{align}
\text{Err}^{(m, n)}_{\text{cloud}, 1} := & |\langle \bold{T}^{(\gamma)}_{m,n}, \omega_{m,n} e^{2W} \varphi^{2n+ 2} (\chi^I)^2 \rangle|, \\
\text{Err}^{(m, n)}_{\text{cloud}, 2} := & |\langle \bold C_{\mathrm{trans},k}^{(n)}, \omega_{m,n} e^{2W} \varphi^{2n + 2} (\chi^I)^2 \rangle|, \\
\text{Err}^{(m, n)}_{\text{cloud}, 3} := &\nu |\langle \partial_x^m\bold C_{\mathrm{visc},k}^{(n)}, \omega_{m,n} e^{2W} \varphi^{2n + 2} (\chi^I)^2 \rangle|. 
\end{align}
\end{subequations}
\end{lemma}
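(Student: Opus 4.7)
The plan is to derive the energy identity by multiplying \eqref{eq:sa:1} by the test function $\omega_{m,n} e^{2W}\varphi^{2n+2}(\chi^I)^2$, integrating over $\mathbb{T}\times[-1,1]$, and taking the real part. Each of the four left-hand side terms of \eqref{eq:sa:1} will contribute a distinct piece: the time derivative will yield $\tfrac{1}{2}\partial_t \mathcal{E}_{\mathrm{cloud}}^{(m,n)}$ together with two ``CK'' contributions arising from the time dependence of the exponent $2W$ and of the prefactor $\varphi^{2n+2}$; by the monotonicity property \eqref{W_prop} of $W$ and the decreasing nature of $\varphi$, these CK contributions are positive and reproduce $\mathcal{CK}_{\mathrm{cloud}}^{(m,n)} = \mathcal{CK}_{\mathrm{cloud},W}^{(m,n)} + \mathcal{CK}_{\mathrm{cloud},\varphi}^{(m,n)}$ on the left-hand side.

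The transport term is trivial to handle: since the weight $e^{2W}\varphi^{2n+2}(\chi^I)^2$ and the shear profile $y+U^0(t,y)$ are both independent of $x$, integration by parts in $x$ gives
\[
\mathrm{Re}\langle (y+U^0)\partial_x \omega_{m,n},\omega_{m,n}\,e^{2W}\varphi^{2n+2}(\chi^I)^2\rangle=0.
\]
The viscous term $-\nu\langle \Delta \omega_{m,n},\omega_{m,n}\,e^{2W}\varphi^{2n+2}(\chi^I)^2\rangle$ is integrated by parts once in each spatial variable, producing the principal dissipation $\mathcal{D}_{\mathrm{cloud}}^{(m,n)}$ together with three families of commutator-type error terms where a derivative lands on (a) $e^{2W}$, (b) $\varphi^{2n+2}$, or (c) $(\chi^I)^2$. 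The commutators of type (a) are the classical ones treated exactly as in \cite{BHIW24b}: apply Cauchy--Schwarz, distribute factors of $\sqrt{\nu}\nabla$ between $\mathcal{D}_{\mathrm{cloud}}^{(m,n)}$ and $\mathcal{CK}_{\mathrm{cloud},W}^{(m,n)}$, and use the pointwise bound $\nu|\partial_y W|^2\lesssim -\partial_t W$ from \eqref{wdot:est:a}. Type (b) terms are absent since $\varphi$ is independent of $y$.

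The main obstacle is the commutator family (c), where derivatives land on $\chi^I$. Here the support lies in $\{3/4\le |y|\le 31/40\}$, which by \eqref{shgy:1} (and \eqref{chi:prop:1}) sits in the region where \emph{all} the cutoffs $\chi_{n+m}$ equal one and where $q(y)\equiv 1$. These commutators therefore look like $\nu\int |\nabla \omega_{m,n}|\,|\omega_{m,n}|\,|\nabla\chi^I|\,e^{2W}\varphi^{2n+2}$, which I will bound via Cauchy--Schwarz by $\|\sqrt{\nu}\nabla \omega_{m,n}\chi_I e^W\|_{L^2}\cdot\sqrt{\nu}\|\omega_{m,n}\nabla\chi^I e^W\varphi^{n+1}\|_{L^2}$. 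Since the definition of $\mathcal{E}_{\mathrm{cloud}}$ restricts to $m+n\le 20$, the Gevrey coefficient $\bold{a}_{m,n}=B_{m,n}\varphi^{n+1}$ satisfies $\bold{a}_{m,n}^2\gtrsim \varphi^{2n+2}$ uniformly in $(m,n)$ in this range. Consequently the second factor embeds into a single summand of $\mathcal{E}^{(\gamma)}$, up to a $(m,n)$-dependent constant. The remaining $\sqrt{\nu}$ prefactor is converted into the stated algebraic rate using the bootstrap restriction $t\lesssim \nu^{-1/3-\zeta}$, which gives $\nu\lesssim \nu^{1/3-2\zeta}/\langle t\rangle^2$; after Young's inequality absorbing $\|\sqrt{\nu}\nabla \omega_{m,n}\chi^I e^W\|_{L^2}^2$ into $\mathcal{D}_{\mathrm{cloud}}^{(m,n)}$, one obtains the claimed bound $\nu^{1/3-2\zeta}\langle t\rangle^{-2}\mathcal{E}^{(\gamma)}$.

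Finally, the contribution of the right-hand side $F_{m,n}=\bold{T}^{(\gamma)}_{m,n}+\bold C_{\mathrm{trans},k}^{(n)}+\bold C_{\mathrm{visc},k}^{(n)}$ to the inner product is, by linearity and the triangle inequality, exactly $\text{Err}^{(m,n)}_{\mathrm{cloud},1}+\text{Err}^{(m,n)}_{\mathrm{cloud},2}+\text{Err}^{(m,n)}_{\mathrm{cloud},3}$, where in the last one the factor $\nu$ is inherited from the definition of $\bold C_{\mathrm{visc},k}^{(n)}$ in \cite{BHIW24b}. Collecting all of the above produces the identity \eqref{bhbhy:1}.
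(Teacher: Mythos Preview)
Your proof is correct and follows essentially the same approach as the paper's: multiply \eqref{eq:sa:1} by $\omega_{m,n}e^{2W}\varphi^{2n+2}(\chi^I)^2$, handle the weight-commutator with $e^{2W}$ via \eqref{wdot:est:a} (the paper refers to the identical computation \eqref{I_21}), and treat the cutoff-commutator with $\chi^I$ by using that on $\mathrm{supp}(\partial_y\chi^I)$ all $\chi_{m+n}\equiv 1$ and $q\equiv 1$, so the second factor embeds into $\mathcal{E}^{(\gamma)}$. The only cosmetic difference is that the paper writes the leftover factor as $\sqrt{\nu}=\tfrac{\sqrt{\nu}t}{t}\lesssim \nu^{1/6-\zeta}\langle t\rangle^{-1}$ before Young's inequality, whereas you apply Young first and then use $\nu\lesssim \nu^{1/3-2\zeta}\langle t\rangle^{-2}$; both yield the same $\nu^{1/3-2\zeta}\langle t\rangle^{-2}\mathcal{E}^{(\gamma)}$ term.
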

\begin{proof} We apply the multiplier $\omega_{m,n} e^{2W} \varphi^{2n+2} (\chi^I)^2$ to equation \eqref{eq:sa:1}, which produces then the following identity 
\begin{align}
&\langle F_{m,n}, \omega_{m,n} (\chi^I)^2 e^{2W} \varphi^{2n+2} \rangle \\
= & \langle \p_t \omega_{m,n} + (y + U^0(t, y)) \p_x \omega_{m,n} - \nu \Delta \omega_{m,n}, \omega_{m,n} (\chi^I)^2 e^{2W} \varphi^{2n + 2} \rangle \\
= & \frac{\p_t}{2} \| \omega_{m,n} e^W \chi^I \varphi^{n+1} \|_{L^2}^2 + (1 + n) \| \sqrt{-\frac{\dot{\varphi}}{\varphi}}  \omega_{m,n} e^W \chi^I \varphi^{n+1} \|_{L^2}^2 \\
& +  \| \sqrt{-\dot{W}}  \omega_{m,n} e^W \chi^I \varphi^{n+1} \|_{L^2}^2 + \| \sqrt{\nu} \nabla \omega_{m,n} e^W \chi^I \varphi^{n + 1} \|_{L^2}^2 + E_{D,1} + E_{D,2}, 
\end{align}
where we define the commutator terms above via 
\begin{align}
E_{D,1} := &\langle \nu \p_y \omega_{m,n}, \omega_{m,n} \p_y \{ e^W \} (\chi^I)^2 \varphi^{2(n+ 1)} \rangle \\
E_{D,2} := &  \langle \nu \p_y \omega_{m,n}, \omega_{m,n}  e^W \p_y\{ (\chi^I)^2\} \varphi^{2(n+ 1)} \rangle.
\end{align}
We estimate the error term $E_{D,2}$ as follows, upon using \eqref{shgy:1}
\begin{align} \n
|E_{D,2}| \lesssim & \| \sqrt{\nu} \p_y \omega_{m,n} e^W \chi^{(n+m)}_{\text{cloud}} \varphi^{n+m} \|_{L^2} \| \sqrt{\nu} \omega_{m,n} \chi_{E} \varphi^{2(n+1)} e^W \|_{L^2} \\ \n
\lesssim &  \sqrt{\mathcal{D}_{\text{cloud}}^{(n+m)}} \frac{1}{\langle t \rangle^{1}} (\sup_{t \le \nu^{-\frac13 - \zeta}} |\sqrt{\nu} t |)  \|  \omega_{m,n} \chi_E \varphi^{(n+1)} e^W \|_{L^2} \\  \n
\lesssim & \frac{\nu^{\frac16 - \zeta}}{\langle t \rangle^{1}} \sqrt{ \mathcal{D}_{\text{cloud}}^{(n+m)}} \sqrt{ \mathcal{E}^{(\gamma)}(t)}. 
\end{align}
We estimate the error term $E_{D,1}$ using an identical calculation to \eqref{I_21}, which results in the bound $|E_{D,1}| \le \frac14 |\mathcal{D}_{\text{cloud}}^{(m, n)}|^2 + \frac{1}{8}| \mathcal{CK}_{\text{cloud}}^{(m, n)}|^2$. These contributions are then absorbed into the left-hand side of \eqref{bhbhy:1}.
\end{proof}

\subsection{Cloud Error Terms Bounds}

We now proceed to control the various terms appearing on the right-hand side of \eqref{bhbhy:1}. 
\begin{lemma} For any $m, n$ satisfying $0 \le m + n \le 20$, the following bound holds 
\begin{align} \label{merced:1}
|\text{Err}^{(m, n)}_{\text{cloud}, 1}| \lesssim \frac{1}{\langle t \rangle^2} (\sqrt{\slashed{\mathcal{E}}_{ell}} + \sqrt{ \mathcal{J}_{ell}^{(2)}(t)} )\mathcal{E}_{\text{cloud}}(t) + \frac{1}{\langle t \rangle^2}  (\sqrt{\slashed{\mathcal{E}}_{ell}} + \sqrt{ \mathcal{J}_{ell}^{(2)}(t)} )\mathcal{E}^{(\gamma)} + \frac{1}{L} \mathcal{CK}^{(\gamma, W)}.
\end{align}
\end{lemma}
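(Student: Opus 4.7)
The plan is to exploit the divergence-free structure of $\nabla^\perp\psi$ by integrating by parts in the quasilinear piece of $\bold{T}^{(\gamma)}_{m,n}$ from \eqref{wh:1} (the last term, where all derivatives land on $\omega_{m,n}$), and to treat the genuinely bilinear remainder ($m'+n' < m+n$) as a finite sum of products. Since the cloud norm is restricted to $m+n \leq 20$, we do not need the sequential $X^0_2$ machinery of Section \ref{sec:q}: the weights $\bold{a}_{m,n}$, $\theta_n$, and $\varphi^{n+1}$ are all comparable to $1$, and the cutoffs $\chi_{m+n}$ for $m+n \leq 20$ are mutually equivalent and equal to one throughout the transition strip $\{3/4 < |y| < 31/40\}$ (by the choice of $c_\sigma$ in \eqref{chi}). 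The stream function is split as $\psi = \phi^{(I)} + \phi^{(E)}$ via \eqref{ell:I}--\eqref{ell:E}, so that $\phi^{(I)}$ is controlled by $\slashed{\mathcal{E}}_{ell}^{(I)}$ via \eqref{eell:out}--\eqref{eell:sob} (furnishing the $\langle t\rangle^{-2}$ inviscid damping decay) and $\phi^{(E)}$ is controlled by $\mathcal{J}_{ell}^{(2)}$ via \eqref{jell:2}.

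For the quasilinear term $\nabla^\perp \psi^{(\neq)} \cdot \nabla \omega_{m,n}$, one integration by parts, using $\nabla\cdot\nabla^\perp\psi = 0$, produces the commutator
\begin{align*}
-\langle \partial_y\psi^{(\neq)} \, \partial_x\chi^I \, \chi^I, \omega_{m,n}^2 e^{2W} \varphi^{2(n+1)}\rangle
- \langle \partial_x\psi^{(\neq)} \, \partial_y W, \omega_{m,n}^2 e^{2W} (\chi^I)^2 \varphi^{2(n+1)}\rangle.
\end{align*}
The first commutator is supported in $\{3/4 < |y| < 31/40\}$, where $\chi_{m+n} \equiv 1$ for every $m+n \leq 20$; thus $\omega_{m,n}^2 e^{2W}\varphi^{2(n+1)}$ is pointwise dominated (up to universal constants) by an $(m,n)$-slice of $\mathcal{E}^{(\gamma)}$, and bounding $\|\partial_y\psi^{(\neq)}\|_{L^\infty}$ via Sobolev embedding from the elliptic functionals produces the $\mathcal{E}^{(\gamma)}$ contribution in \eqref{merced:1}. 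The second commutator is the source of the $\frac{1}{L}\mathcal{CK}^{(\gamma,W)}$ piece: applying \eqref{wdot:est:b} converts $|\partial_y W|$ to $\frac{C\langle t\rangle^2}{L\eps}(-\partial_t W)$, whereupon the factor $\langle t\rangle^2/\eps$ is absorbed by $\|\partial_x\psi^{(\neq)}\|_{L^\infty} \lesssim \eps\langle t\rangle^{-2}$ (which follows from the elliptic estimates together with the bootstrap $\sqrt{\slashed{\mathcal{E}}_{ell}^{(I)}} + \sqrt{\mathcal{J}_{ell}^{(2)}} \lesssim \eps$), leaving $\frac{C}{L}$ times the $(m,n)$-slice of $\mathcal{CK}^{(\gamma,W)}$. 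The required support inclusion $\mathrm{supp}\{(\chi^I)^2(-\partial_t W)\} \subset \mathrm{supp}\{\chi_{m+n}\}$ holds because $1/4 + L\eps\arctan t < y_{m+n}$ uniformly for $m+n \leq 20$ on the time interval $[0,\nu^{-1/3-\zeta}]$, provided $L\eps$ is chosen small.

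For the bilinear remainder we split each term according to whether $(n-n')+(m-m') \leq (n+m)/2$ (``low-high'') or the opposite (``high-low''). In each regime Cauchy--Schwarz with the lower-derivative factor placed in $L^\infty$ (using Sobolev embedding in the $(\partial_x,\Gamma)$-scale as in Lemma \ref{lem:AphiItriv}, combined with the elliptic bounds of Proposition \ref{pro:ell:intro}) and the higher-derivative factor in $L^2$ reduces matters to $\|\psi^{(\neq)}\|_{W^{1,\infty}} \cdot \|\omega\|_{L^2}^2$-type estimates; distributing whether the $L^2$ factor lives on the bulk of $\mathrm{supp}\,\chi^I$ (bounded by $\mathcal{E}_{\mathrm{cloud}}$) or on a transition strip arising from expanding a differentiated cutoff (bounded, since $\chi_{m+n} \equiv 1$ there, by $\mathcal{E}^{(\gamma)}$) produces the first two terms of \eqref{merced:1}. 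The main obstacle is the $\partial_y W$ step in the quasilinear piece: extracting the clean constant $\frac{1}{L}$ requires invoking \eqref{wdot:est:b} in precisely the right form and simultaneously verifying the support inclusion so that the cloud CK is dominated by the exterior CK---a bookkeeping step that must be performed for both the $\phi^{(I)}$ and the $\phi^{(E)}$ components of $\psi$, whose $L^\infty$ bounds draw on different elliptic quantities.
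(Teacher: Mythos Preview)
Your strategy matches the paper's: separate the bilinear remainder from the quasilinear piece and integrate the latter by parts. A minor correction: since $\chi^I$ and $W$ depend only on $y$, the commutator from integration by parts carries $\partial_x\psi\,\partial_y\chi^I$, not $\partial_y\psi\,\partial_x\chi^I$ (the latter is identically zero); your subsequent discussion treats the correct term anyway. For the bilinear remainder your HL/LH split is more than needed---the paper just rewrites via \eqref{id:gamma:re}, puts the $\psi$ factor in $L^\infty$ (elliptic), and puts both $\omega$ factors in $L^2$ of the cloud norm using the closure $m'+n'+1\leq m+n\leq 20$; no cutoff is differentiated in this term, and the $\mathcal{E}^{(\gamma)}$ contribution in \eqref{merced:1} comes only from the $\partial_y\chi^I$ commutator in the quasilinear piece.

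The genuine gap is your support inclusion $\mathrm{supp}\{(\chi^I)^2(-\partial_tW)\}\subset\mathrm{supp}\{\chi_{m+n}\}$: it is false. One has $-\partial_tW>0$ wherever $|y|>1/4+L\eps\arctan t\approx 1/4$, while $\chi_{m+n}\equiv 0$ for $|y|<x_{m+n}\geq x_1=3/8$, so the slab $1/4\lesssim|y|<3/8$ violates the inclusion (your stated condition $1/4+L\eps\arctan t<y_{m+n}$ is actually the \emph{opposite} of what would be required). What \eqref{wdot:est:b} combined with $\|\partial_x\psi_{\neq}\|_{L^\infty}\lesssim\eps/\langle t\rangle^2$ actually yields is $\tfrac{C}{L}\|\sqrt{-\partial_tW}\,\omega_{m,n}e^W\varphi^{n+1}\chi^I\|_{L^2}^2$, i.e., $\tfrac{C}{L}$ times the cloud CK rather than the exterior $\mathcal{CK}^{(\gamma,W)}$. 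The paper's own proof slips here as well (its ``$\chi_E$'' in the $\text{Err}_{2,1}$ estimate should read $\chi^I$). This is harmless for closing the bootstrap: the cloud CK sits on the left of \eqref{bhbhy:1} with coefficient $1$, so a $C/L$ multiple is absorbed there for $L$ large.
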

\begin{proof} We use the decomposition shown in \eqref{wh:1} to obtain 
\begin{align} \n
&|\langle \bold{T}^{(\gamma)}_{m,n}, \omega_{m,n} e^{2W} \varphi^{2(n+1)} (\chi^I)^2 \rangle| \\ \n
\le & \langle \sum_{n' = 0}^{n} \sum_{m' = 0}^{m} \mathbbm{1}_{m' + n' < m + n} \nabla^\perp \psi^{(\neq 0)}_{m - m',n - n'} \cdot \nabla \omega_{m', n'} , \omega_{m,n} e^{2W} \varphi^{2(n+1)}(\chi^I)^2 \rangle \\ \n
& + \langle \nabla^\perp \psi^{(\neq 0)} \cdot \nabla \omega_{m,n}, \omega_{m,n} e^{2W} \varphi^{2(n+1)} (\chi^I)^2 \rangle \\ \n
= & \langle \sum_{n' = 0}^{n} \sum_{m' = 0}^{m} \mathbbm{1}_{m' + n' < m + n} v_y (- \psi^{(m-m' + 1, n - n')} \omega^{m', n' + 1)} + \psi^{m-m', n-n' + 1} \omega^{m' + 1, n'}  ) \\ 
& , \omega_{m,n} e^{2W} \varphi^{2(n+1)} (\chi^I)^2 \rangle + \langle \nabla^\perp \psi^{(\neq 0)} \cdot \nabla \omega_{m,n}, \omega_{m,n} e^{2W} \varphi^{2(n+1)} |(\chi^I)^2 \rangle \\ \n
 = & \text{Err}_1 + \text{Err}_2. 
\end{align}
For $\text{Err}_1$, we crucially use the ``closure" property of the set $m + n \le 20$ as follows: 
\begin{align*}
|\text{Err}_1| \lesssim & \| \psi^{21, 21} q^{200} \|_{L^\infty} \Big( \sum_{m' + n' \le 20} \| \omega^{(m', n')} \varphi^{1 + n'} e^W \chi^I \|_{L^2}\Big) \| \omega^{(m, n)} e^W \chi_{n + m} \varphi^{n +1} \|_{L^2} \\
\lesssim & \frac{1}{\langle t \rangle}(\sqrt{ \slashed{\mathcal{E}}_{ell}} + \sqrt{\mathcal{J}_{ell}^{(2)}}) \mathcal{CK}_{\text{cloud}}
\end{align*}
Above, we have used the support of $\chi^I$ to insert factors of $q^{200}$ into the $\psi$ term. We have also used that $\varphi^{2n + 2} \le \varphi^{2 + n'} \varphi^{1 + n} \varphi^{-1}$. 

For $\text{Err}_2$, we integrate by parts as follows:
\begin{align*}
\text{Err}_2 = & - \frac12 \langle \p_x \psi_{\neq 0} \omega_{m,n}, \omega_{m,n} \p_y \{ e^{2W} \} \varphi^{2(n+m)} (\chi^I)^2 \rangle \\
 & - \frac12 \langle \p_x \psi_{\neq 0} \omega_{m,n}, \omega_{m,n} e^{2W}  \varphi^{2(n+m)} \p_y \{ (\chi^I)^2 \} \rangle = \text{Err}_{2, 1} + \text{Err}_{2, 2},
\end{align*}
and we subsequently estimate 
\begin{align} \n
|\text{Err}_{2,1}| \lesssim & \| \p_x \psi_{\neq 0} \|_{L^\infty} \| \omega_{m,n} \sqrt{|\p_y W|} e^W \varphi^{m + n} \chi_E \|_{L^2}^2 \lesssim \frac{1}{\langle t \rangle^2} \mathcal{E}_{ell}(t)| \| \omega_{m,n} \sqrt{|\p_y W|} e^W \varphi^{m + n} \chi_E \|_{L^2}^2 \\ \label{cloud:ibp:1}
\le & \frac{C}{L} \| \sqrt{-\dot{W}} \omega_{m,n} e^W \varphi^{m + n} \chi_E \|_{L^2}^2 \le \frac{C}{L} \mathcal{CK}^{(\gamma, W)} \\ \label{cloud:ibp:2}
|\text{Err}_{2,2}| \lesssim & \| \p_x \psi_{\neq 0} \|_{L^\infty} \| \omega_{m,n} e^W \varphi^{m + n} \chi_{E} \|_{L^2}^2 \lesssim \frac{1}{\langle t \rangle^2} (\sqrt{ \slashed{\mathcal{E}}_{ell}(t)} + \sqrt{\mathcal{J}^{(2)}_{ell}}) \mathcal{E}^{(\gamma)},
\end{align}
where for the term $\text{Err}_{2,1}$, we have invoked the pointwise bound \eqref{wdot:est:b}. 
\end{proof}
\begin{lemma} For any $m, n$ satisfying $0 \le m + n \le 20$, the following bound holds 
\begin{align} \label{merced:2}
|\text{Err}^{(m, n)}_{\text{cloud}, 2}| \lesssim \frac{\eps}{\brak{t}^{1+s^{-1}}} \mathcal{E}_{cloud}.
\end{align}
\end{lemma}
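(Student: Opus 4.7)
The strategy is to exploit three favorable features of this particular error term: first, that the indices $(m,n)$ range only over the bounded set $m+n \le 20$, so no Gevrey-scale combinatorics are needed; second, that $\chi^I$ localizes us away from the boundary, so the co-normal weights $q^n$ are irrelevant (bounded above and below by universal constants); and third, that the transport commutator $\bold{C}^{(n)}_{\text{trans},k}$ derived in the companion paper \cite{BHIW24b} has the explicit schematic structure
\[
\bold{C}^{(n)}_{\text{trans},k} \;=\; \sum_{m'=1}^{n} \binom{n}{m'} \bigl(\Gamma_0^{m'-1} \overline{H}\bigr)\, \Gamma_k^{\,n-m'+1} \omega_k \;+\;\text{lower-order analogues},
\]
i.e.\ a finite sum of products in which one factor is a $\Gamma_0$-derivative of a coordinate-system quantity (here $\overline{H}$, which is $\partial_y G$) and the other factor is at most $n+1$ $\Gamma$-derivatives of $\omega_k$.

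The plan is then to estimate by Cauchy--Schwarz after localizing with $\chi^I$. For each of the finitely many index pairs with $m+m'+n \le 21$ I would bound
\[
\bigl|\bigl\langle (\Gamma_0^{m'-1}\overline{H})\,\Gamma_k^{\,n-m'+1}\omega_k\,\partial_x^m,\; \omega_{m,n}\,e^{2W}\varphi^{2n+2}(\chi^I)^2\bigr\rangle\bigr|
\;\lesssim\; \bigl\|\Gamma_0^{m'-1}\overline{H}\,\chi^I_e\bigr\|_{L^\infty}\, \mathcal{E}_{\text{cloud}}^{(m,n)}\cdot(\cdots),
\]
where the product of the two $\omega$-factors, paired appropriately with the weight $e^{2W}\varphi^{2n+2}(\chi^I)^2$, is absorbed into $\mathcal{E}_{\text{cloud}}$ after shifting one $\Gamma$ across: concretely, I will write $\Gamma^{n-m'+1}\omega_k$ as a linear combination of $\omega_{m',n-m'+1}$-type quantities with lower $(m,n)$ indices still satisfying $m+n\le 20$, so that both $L^2$ factors are controlled by $\mathcal{E}_{\text{cloud}}$ (using $\varphi^{2(n+1)}\le \varphi^{n'+1}\varphi^{n''+1}$ for the splitting). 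The $L^\infty$ bound on the coordinate factor then comes from Sobolev embedding combined with the interior coordinate bootstrap \eqref{boot:IntH} and the identity $\overline{H} = \partial_y G$: since $\Gamma_0^{m'-1}\overline{H}$ on $\supp\chi^I_e$ has at most $21$ derivatives falling on $\overline{H}$, one gets $\|\Gamma_0^{m'-1}\overline{H}\chi^I_e\|_{L^\infty}\lesssim \eps\,\brak{t}^{-1-s^{-1}}$ from the $\mathcal{E}_{\overline H,n}^{(\alpha)}$ functional (the $\brak{t}^{3+2s^{-1}}$ weight converted to $\brak{t}^{-3/2-s^{-1}}$ and combined with Sobolev loss, tempered by the decay of $\mathbf{a}_{n+1}\sim \brak{t}^{-n-1}$).

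The hard part, I anticipate, is tracking the exact temporal decay rate $\brak{t}^{-1-s^{-1}}$. The bootstrap assumptions \eqref{boot:IntH}, \eqref{boot:H}, together with Corollary \ref{v:3:deri:l}, provide pointwise control of $H$, but for $G$ and $\overline H$ one must carefully translate the $Y_{1,-s}$-norm decay into an $L^\infty$ bound via Sobolev embedding in the $\Gamma$-framework (Lemma \ref{Sob:emb} combined with the definition of $\|\cdot\|_{Y_{1,-s}}$). A related subtlety is that the lower-order analogues in $\bold{C}^{(n)}_{\text{trans},k}$ also involve the function $G$ itself (not just $\overline H$), which by \eqref{boot:IntH} decays only like $\brak{t}^{-2+K\eps}$; one checks that after accounting for the extra $\Gamma$-derivative on $\omega$ this is actually faster than $\brak{t}^{-1-s^{-1}}$ since $s^{-1}<1$, so those contributions are sub-leading. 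All remaining terms (lower-order in $n-m'$ and in the Leibniz expansion) are handled by the same Cauchy--Schwarz scheme, and summing the finitely many $(m,n)$ with $m+n\le 20$ costs only a universal constant, which yields \eqref{merced:2}.
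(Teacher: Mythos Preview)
Your overall strategy matches the paper's: Hölder places the coordinate factor in $L^\infty$ (via Sobolev embedding), while the two vorticity factors land in $L^2$ and are absorbed by $\mathcal{E}_{\text{cloud}}$ after splitting $\varphi^{2n+2}=\varphi^{l-1}\varphi^{n-l+2}\varphi^{n+1}$. Two points need correction.

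First, the transport commutator is cleaner than you describe. On each $k$-mode it is exactly
\[
\partial_x^m \bold{C}^{(n)}_{\mathrm{trans},k}=\sum_{l=1}^{n}\binom{n}{l}\,\Gamma_0^{\,l-1}\!\Bigl(\tfrac{\overline H}{v_y}\Bigr)\,\omega_{m,n-l+1},
\]
with no separate ``lower-order analogues'' involving $G$; your own identity $\Gamma_0 G=\overline H/v_y$ is precisely what collapses the $G$-form of the commutator into this $\overline H/v_y$-form, so the last paragraph of your plan is unnecessary.

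Second, and more substantively, your source for the $L^\infty$ decay is problematic. You propose to use the exterior $\alpha$-functional $\mathcal{E}_{\overline H,n}^{(\alpha)}$, but (i) that functional carries a prefactor $\nu^{1/2}$, so extracting $\|\partial_y(\cdots)\|_{L^2}$ from it costs $\nu^{-1/4}$ and the Sobolev product $\|\cdot\|^{1/2}\|\partial_y\cdot\|^{1/2}$ would leave a stray $\nu^{-1/8}$; and (ii) its cutoff $\chi_n$ vanishes on $|y|<3/8$ for $n\ge 1$, so it cannot by itself control $\Gamma_0^{l-1}(\overline H/v_y)$ on the interior part of $\supp\tilde\chi^I$. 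The paper avoids both issues by staying at the $\gamma$ level: it writes $\partial_y = v_y\Gamma_0$ (bounded factor on $\supp\tilde\chi^I$) so that the Sobolev embedding uses one more $\Gamma_0$ rather than an $\alpha$-norm, and then bounds
\[
\varphi^{l-1}\bigl\|\Gamma_0^{l-1}\bigl(\tfrac{\overline H}{v_y}\bigr)\tilde\chi^I\bigr\|_{L^\infty}\;\lesssim\;\frac{1}{\brak{t}^{1+s^{-1}}}\sqrt{\mathcal{E}^{(\bar h)}_{\mathrm{Int,Coord}}+\mathcal{E}_{\overline H}},
\]
combining the interior coordinate norm (which covers $|y|\lesssim 3/4$) with the exterior one. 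You do cite \eqref{boot:IntH} in passing, so you are close; but the argument must actually invoke $\mathcal{E}^{(\bar h)}_{\mathrm{Int,Coord}}$ rather than $\mathcal{E}_{\overline H}^{(\alpha)}$ for the decay.
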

\begin{proof} 
Recall that 
\begin{align*}
	\text{Err}^{(m, n)}_{\text{cloud}, 2} = & |\langle \partial_{x}^m \bold C_{\mathrm{trans},k}^{(n)}, \omega_{m,n} e^{2W} \varphi^{2n + 2} (\chi^I)^2 \rangle|
	\\=& \abs{\left\langle \sum_{l = 1}^n \binom{n}{l} (\frac{\pa_y}{v_y})^{l-1}
	\paren{\frac{\overline{H}}{v_y}}  \omega_{m, n-l+1}, \omega_{m,n} e^{2W} \varphi^{2n + 2} (\chi^I)^2 \right\rangle}.
\end{align*}
Using H\"older's inequality and Sobolev embedding, we arrive at
\begin{align*}
	\text{Err}^{(m, n)}_{\text{cloud}, 2} \lesssim &  \sum_{l = 1}^n  \norm{\paren{\frac{\pa_y}{v_y}}^{l-1} \paren{\frac{\overline{H}}{v_y}} \varphi^{l-1} \tilde \chi^I}_{L^\infty_y} 	\enorm{\omega_{m, n-l+1} e^{W} \varphi^{n -l+ 2} \chi^I}
		\\&\quad \times 
	\enorm{ \omega_{m,n} e^{W} \varphi^{n + 1} \chi^I}
	\\ \lesssim &  \sum_{l = 1}^n  \enorm{\paren{\frac{\pa_y}{v_y}}^{l-1} \paren{\frac{\overline{H}}{v_y}} \varphi^{l-1} \tilde \chi^I}^{1/2} 	
	\enorm{\partial_{y}\paren{\paren{\frac{\pa_y}{v_y}}^{l-1} \paren{\frac{\overline{H}}{v_y}} \varphi^{l-1} \tilde \chi^I}}^{1/2}
		\\&\quad \times 
	\enorm{\omega_{m, n-l+1} e^{W} \varphi^{n -l+ 2} \chi^I}
	\enorm{ \omega_{m,n} e^{W} \varphi^{n + 1} \chi^I}
		\\ \lesssim &  \sum_{l = 1}^n \frac{1}{\brak{t}^{1+s^{-1}}}\sqrt{{\mathcal{E}^{(\overline h)}_{\text{Int,Coord}} + \mathcal{E}_{\overline H}}}
	\enorm{\omega_{m, n-l+1} e^{W} \varphi^{n -l+ 2} \chi^I}
	\enorm{ \omega_{m,n} e^{W} \varphi^{n + 1} \chi^I}
	\\\lesssim & \frac{\eps}{\brak{t}^{1+s^{-1}}} \mathcal{E}_{cloud}
\end{align*}
where $\tilde \chi^I$ is again a fattened version of $\chi^I$ such that 
$\supp \tilde \chi^I \subset \{\chi^{ I}=1\}$, concluding the proof.
\end{proof}
\begin{lemma} For any $m, n$ satisfying $0 \le m + n \le 20$, the following bound holds 
\begin{align} \label{merced:3}
|\text{Err}^{(m, n)}_{\text{cloud}, 3}| \lesssim \eps \frac{1}{\brak{t}^{2+}}  \mathcal{E}_{\text{cloud}} + \eps  \mathcal{D}_{\text{cloud}}.
\end{align}
\end{lemma}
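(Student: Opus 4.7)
The plan is to first expand $\partial_x^m \bold{C}^{(n)}_{\mathrm{visc},k}$ explicitly. Since $\bold{C}^{(n)}_{\mathrm{visc},k}$ arises from $[\nu\Delta,\Gamma_t^n]$ (cf.\ the identities \eqref{cm_pyy_G_n} in Lemma \ref{lem:com:BA}), it has the schematic form $\nu \sum_{l=1}^{n}\binom{n}{l}\Gamma_0^{l}(v_y^2-1)\,\Gamma_t^{n-l+2}\omega$ plus lower-order corrections involving $v''$ and derivatives of $v_y$. Because we are localized in the regime $0\le m+n\le 20$, every binomial coefficient is bounded by an absolute constant, and every Gevrey weight $\mathbf{b}_{m,n}$ is comparable to $1$. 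All localizing cutoffs may be replaced by $\chi^I$ (or a slight fattening $\tilde\chi^I$ which is $\equiv 1$ on $\mathrm{supp}(\chi^I)$) at no cost, and, crucially, $q\equiv 1$ on $\mathrm{supp}(\tilde\chi^I)$ so co-normal weights play no role.

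Next I would perform one integration by parts in $y$ (or $x$) so that one factor of $\sqrt{\nu}\nabla$ is transferred either onto the coefficient $\Gamma_0^{l}(v_y^2-1)$ or onto the test function $\omega_{m,n}e^{2W}\varphi^{2n+2}(\chi^I)^2$. When it lands on $\omega_{m,n}$ this directly yields a $\sqrt{\mathcal{D}_{\mathrm{cloud}}}$ factor; when it lands on the weight $e^{2W}$ we invoke \eqref{wdot:est:a} to convert the $\sqrt{\nu}|\partial_y W|$ into a $\sqrt{-\partial_t W}$, producing $\mathcal{CK}^{(W)}_{\mathrm{cloud}}$; when it lands on $\chi^I$ or $\varphi^{2n+2}$ we obtain harmless factors bounded by $\langle t\rangle^{-1}$ times $\sqrt{\mathcal{E}_{\mathrm{cloud}}}$. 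In each of these subcases one of the three factors in the trilinear expression carries a $\sqrt{\nu}\nabla$ pairing up with $\sqrt{\mathcal{D}_{\mathrm{cloud}}}$ or $\sqrt{\mathcal{CK}_{\mathrm{cloud}}^{(W)}}$, and the remaining $\omega$-factor gives $\sqrt{\mathcal{E}_{\mathrm{cloud}}}$ (or an already-controlled cloud-level quantity after shifting an index, using the ``closure'' property of $\{m+n\le 20\}$ exactly as in the proof of \eqref{merced:1}).

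For the coordinate-system coefficient $\Gamma_0^{l}(v_y^2-1)\tilde\chi^I$ and its $\partial_y$, I would place it in $L^\infty$ via Sobolev embedding in $(x,y)$. Using $v_y-1=H$ together with the pointwise Sobolev bound $\|H\|_{W^{3,\infty}}\lesssim \eps$ from Corollary \ref{v:3:deri:l}, and Lemmas \ref{pro:0}--\ref{pro:1:-s} to absorb the product structure $v_y^2-1=2H+H^2$, one obtains
\[
\big\|\Gamma_0^{l}(v_y^2-1)\tilde\chi^I\big\|_{L^\infty}+\big\|\partial_y\Gamma_0^{l}(v_y^2-1)\tilde\chi^I\big\|_{L^\infty}\lesssim \eps,\qquad 1\le l\le n\le 20,
\]
with the extra temporal decay $\langle t\rangle^{-1}$ whenever at least one $\Gamma_0$ is converted to $t\partial_x$ and paired with $\varphi$. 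Combining these $L^\infty$ bounds on the coefficient with the $L^2$ bounds from the previous paragraph and distributing $\sqrt{\nu}$ between the two $L^2$ factors yields exactly
\[
|\mathrm{Err}^{(m,n)}_{\mathrm{cloud},3}|\lesssim \eps\,\sqrt{\mathcal{E}_{\mathrm{cloud}}}\sqrt{\mathcal{D}_{\mathrm{cloud}}}+\eps\,\frac{1}{\langle t\rangle^{2+}}\mathcal{E}_{\mathrm{cloud}},
\]
and one closes by Young's inequality.

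The main obstacle I anticipate is the bookkeeping for the terms in which the $\sqrt{\nu}\nabla$ derivative refuses to land anywhere that immediately generates a $\sqrt{\mathcal D_{\mathrm{cloud}}}$: namely the sub-terms where all $\nu^{1/2}$-derivatives hit the coefficient $\Gamma_0^{l}(v_y^2-1)$. There one must instead use the smallness of $\nu$ (as in the treatment of the analogous $V^{(n)}_{\overline{H},1}$ term in Lemma \ref{visc:comm:barH}) combined with the pointwise bound $\sqrt{\nu}\langle t\rangle\lesssim \nu^{1/6-\zeta/2}$ valid on $t\le \nu^{-1/3-\zeta}$, to absorb the remaining factor into $\eps\langle t\rangle^{-2-}\mathcal{E}_{\mathrm{cloud}}$ rather than into $\eps\,\mathcal{D}_{\mathrm{cloud}}$. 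Once this single delicate sub-term is handled, all other sub-terms follow the paradigm above and the lemma is complete.
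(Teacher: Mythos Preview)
Your overall plan would work, but it is more elaborate than what is needed, and there is one genuine gap in the coefficient estimate.

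\textbf{On the route taken.} The paper does \emph{not} integrate by parts. Instead it writes the viscous commutator as
\[
\nu\sum_{l=1}^n\binom{n}{l}\Big(\tfrac{\partial_y}{v_y}\Big)^l(v_y^2-1)\,\Big(\tfrac{\partial_y}{v_y}\Big)^2\partial_x^m\Gamma^{n-l}\omega
\]
(note: $\pav^2\Gamma^{n-l}$, not $\Gamma^{n-l+2}$; these differ for $k\neq 0$), and then converts \emph{one} of the two $\pav$ factors via $\pav=\Gamma-t\partial_x$. This turns $\pav^2\Gamma^{n-l}\omega$ into $\pav\,\omega_{m,n-l+1}-t\,\pav\,\omega_{m+1,n-l}$, i.e.\ an index shift plus a single surviving $\pav$. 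The surviving $\sqrt{\nu}\,\pav$ on the vorticity factor is exactly $\sqrt{\mathcal{D}_{\mathrm{cloud}}}$; the coefficient sits in $L^\infty$; the test function gives $\sqrt{\mathcal{E}_{\mathrm{cloud}}}$; and the leftover $\sqrt{\nu}$ (or $\sqrt{\nu}\,t\lesssim 1$ in the second piece) supplies the $\langle t\rangle^{-1+}$ decay. No commutators with $e^{2W}$ or $\chi^I$ ever appear, and the ``delicate sub-term'' you anticipate simply does not arise. Your integration-by-parts route generates exactly those extra commutator terms for no gain.

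\textbf{On the gap.} Your $L^\infty$ bound on the coefficient $\Gamma_0^l(v_y^2-1)\tilde\chi^I$ for $1\le l\le n\le 20$ cannot come from Corollary \ref{v:3:deri:l}: that result gives only $\|H\|_{W^{3,\infty}}\lesssim\eps$, while you need up to $\sim 21$ derivatives. Lemmas \ref{pro:0}--\ref{pro:1:-s} do not help either, since the $Y_{\alpha,\beta}$ norms carry the exterior weight $e^{W/2}\chi_n$ and give no information on $\mathrm{supp}(\tilde\chi^I)$. The correct source is the \emph{interior} coordinate Gevrey energy $\mathcal{E}^{(h)}_{\mathrm{Int,Coord}}$ (together with $\mathcal{E}_H$ on the overlap), which by Sobolev embedding in $v$ controls arbitrarily many derivatives of $h=v_y-1$ on $\tilde\chi^I$. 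This is precisely what the paper invokes, obtaining $\|(\pav)^l(v_y^2-1)\tilde\chi^I\|_{L^\infty}\lesssim \sqrt{\mathcal{E}^{(h)}_{\mathrm{Int,Coord}}+\mathcal{E}_H}\lesssim\eps$.
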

\begin{proof} 
	Recall that
	\begin{align*}
		\text{Err}^{(m, n)}_{\text{cloud}, 3} = &\nu \left|\left\langle \partial_x^m\bold C_{\mathrm{visc}}^{(n)}, \omega_{m,n} e^{2W} \varphi^{2n + 2} (\chi^I)^2 \right\rangle\right|
		\\ = &
        \nu \left|\left\langle \sum_{l = 1}^n \binom{n}{l}  \paren{\frac{\pa_y}{v_y}}^l \paren{v_y^2-1} \paren{\frac{\pa_y}{v_y}}^2 \partial_x^m \Gamma^{n-l} \omega, \omega_{m,n} e^{2W} \varphi^{2n + 2} (\chi^I)^2 \right\rangle\right|. 
	\end{align*}
By the definition of $\Gamma$
and
$t\lesssim \nu^{-1/3-\delta}$, we easily see that
\begin{align*}
	\text{Err}^{(m, n)}_{\text{cloud}, 3} \lesssim &
	\nu \left|\left\langle \sum_{l= 1}^n \binom{n}{l}  \paren{\frac{\pa_y}{v_y}}^l \paren{v_y^2-1} \paren{\frac{\pa_y}{v_y}}\partial_x^m \Gamma^{n-l+1} \omega, \omega_{m,n} e^{2W} \varphi^{2n + 2} (\chi^I)^2 \right\rangle\right|
	\\& 
	+ \nu t\left|\left\langle \sum_{l = 1}^n \binom{n}{l}  \paren{\frac{\pa_y}{v_y}}^l \paren{v_y^2-1} \paren{\frac{\pa_y}{v_y}} \partial_x^{m+1}\Gamma^{n-l} \omega, \omega_{m,n} e^{2W} \varphi^{2n + 2} (\chi^I)^2 \right\rangle\right|
	\\\lesssim &
	\nu \sum_{l = 1}^n \binom{n}{l}  \norm{\paren{\frac{\pa_y}{v_y}}^l \paren{v_y^2-1} \tilde \chi^I}_{L^\infty} 
	\enorm{ \omega_{m,n} e^{W} \varphi^{n + 1} \chi^I}\\
	&\times
	\paren{\enorm{\paren{\frac{\pa_y}{v_y}}\partial_x^m \Gamma^{n-l+1} e^{W}\omega\varphi^{n + 1} \chi^I} + \enorm{\paren{\frac{\pa_y}{v_y}}\partial_x^{m+1} \Gamma^{n-l} \omega e^{W} \varphi^{n + 1} \chi^I}}
	\\\lesssim &
	\sqrt{\nu} \sqrt{{\mathcal{E}^{( h)}_{\text{Int,Coord}} + \mathcal{E}_{ H}}}\sqrt{\mathcal{E}_{\text{cloud}}} \sqrt{ \mathcal{D}_{\text{cloud}}}
	\lesssim \eps \frac{1}{\brak{t}^{1+}} \sqrt{ \mathcal{E}_{\text{cloud}}} \sqrt{ \mathcal{D}_{\text{cloud}}}
	\lesssim \eps \frac{1}{\brak{t}^{2+}}  \mathcal{E}_{\text{cloud}} + \eps  \mathcal{D}_{\text{cloud}}
\end{align*}
where $\tilde \chi^I$ is defined in the previous lemma, concluding the proof.
\end{proof}

\subsection{Proof of Proposition \ref{pro:cloud:intro}}

We are now ready to prove the main proposition of this section. 
\begin{proof}[Proof of Proposition \ref{pro:cloud:intro}] We bring together bounds \eqref{bhbhy:1}, \eqref{merced:1}, \eqref{merced:2}, and \eqref{merced:3} which immediately yields the desired result. 
\end{proof}


\section{Sobolev Boundary Norms} \label{sec:SOB:BDRY}
The main proposition of this section is Proposition \ref{pro:int:sob}.

\subsection{Setup of Equations, Norms, and Lift Functions}

We take the Fourier transform in $x$ of equation \eqref{M1a} to arrive at
\begin{align*}
	\pa_t \omega_k + (y + U^x_0(t, y))(ik)\omega_k  =\nu \de \omega_k-\paren{\nabla^{\perp}\stm_{\neq}\cdot\nabla \omega}_k.
\end{align*}
Introducing the notations 
\begin{align}
f_{k,n} := \p_y^n \omega_k, \qquad \bar{U}(t, y) := y + U^x_0(t, y) 
\end{align}
and applying $\p_y^n$, we further obtain 
\begin{align} \label{M1a:fin}
&\pa_t f_{k,n} + ik \bar{U} f_{k,n} - \nu \Delta_k f_{k,n} = A_{k,n} + B_{k,n} + C_{k,n}  \qquad (k, y) \in \mathbb{Z} \times (-1,1).
\end{align}
We will denote the one-dimensional quantities 
\begin{align}
\alpha_{n, \pm}(t) := \p_y^n \omega(t, \pm 1). 
\end{align}
We define the error terms 
\begin{align} \label{Akn}
A_{k,n} := &- \sum_{n' = 0}^{n-1} \binom{n}{n'} \p_y^{n-n'} \bar{U} (ik) f_{k,n'} \\ \label{Bkn}
B_{k,n} := &-  \sum_{n' = 0}^{n-1} \binom{n}{n'} (\nabla^\perp \p_y^{n-n'} \psi_{\neq 0} \cdot \nabla \p_y^{n'} \omega)_k \\ \label{Ckn}
C_{k,n} := & -( \nabla^\perp  \psi_{\neq 0} \cdot \nabla \p_y^{n} \omega)_k.
\end{align}
We evaluate \eqref{M1a:fin} at $y = \pm 1$. For $n = 0, 1, 2, 3, 4$, we have the following boundary conditions: 
\begin{align}
f_{k,0}|_{y = \pm 1} = 0, \\
\p_y f_{k,1}|_{y = \pm 1} = 0, \\
f_{k,2}|_{y = \pm 1} = 0, \\
\p_y f_{k,3}|_{y = \pm 1} = \alpha_{k, 4, \pm}(t), \\
f_{k,4}|_{y = \pm 1} = \alpha_{k,4,\pm}(t),
\end{align}
where the function $\alpha_{k,4,\pm}(t)$ is defined as follows 
\begin{align}
\alpha_{k,4,\pm}(t) := - \frac{2}{\nu} \bar{U}' (ik) f_{k,1}|_{y = \pm 1} - \frac{2}{\nu} (u^{\neq 0}_y \omega_{xy})_k - \frac{1}{\nu} (v_{yy} \omega_y)_{k}.
\end{align}

We now introduce a lift function to homogenize the boundary condition $\alpha_{k,4,\pm}(t)$. Correspondingly, we define 
\begin{align} \label{small:fh}
h_{k}(t, y) := \nu \sum_{\iota \in \pm} \alpha_{k,4,\iota}(t) \varphi_{\iota}(\frac{y}{\nu}), \qquad \varphi_{\iota}'(\iota 1) = 1.  
\end{align}
We subsequently consider
\begin{align} \label{bigFH}
F_{k,3} := f_{k,3} - h_{k}(t, y),
\end{align}
which satisfies the equation 
\begin{align} \label{M1a:fin:b}
&\pa_t F_{k,3} + ik \bar{U} F_{k,3} - \nu \Delta_k F_{k,3} = A_{k,3} + B_{k,3} + C_{k,3} + H_{k,3} \qquad (k, y) \in \mathbb{Z} \times (-1,1), \\ \label{M1b:fin:b}
&\p_y^{ \iota_n }F_{k,3}|_{y = \pm 1} = 0, 
\end{align}
where the contributed source term $H_{k,3}$ is defined as follows 
\begin{align} \label{def:big:H}
H_{k,3} := - \p_t h_{k,3} - i k \bar{U} h_{k,3} - \nu \Delta_k h_{k,3}.
\end{align}

\subsection{Control of $\mathcal{E}_{sob, n}, \mathcal{D}_{sob,n}$}

First, we have 
\begin{lemma}[$\mathcal{E}_{sob, 0}, \mathcal{D}_{sob,0}$] There exists a universal constant $\delta_0 > 0$ so that the following bound holds
\begin{align} \label{Ebd0}
\frac{\p_t}{2} \mathcal{E}_{sob,0} + \delta_0  \mathcal{CK}^{(W)}_{sob,0} + \mathcal{D}_{sob,0} \le & 0. 
\end{align}
\end{lemma}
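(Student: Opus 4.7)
The plan is to test the $n=0$ instance of equation \eqref{M1a:fin} against $\overline{\omega_k}\,e^{2W}$ in $L^2_y$, take real parts, and sum over $k\in\mathbb{Z}$; equivalently, one multiplies the physical-space vorticity equation \eqref{M1a} by $\omega\,e^{2W}$ and integrates over $\mathbb{T}\times(-1,1)$. The case $n=0$ is the simplest among the $\mathcal{E}_{sob,n}$ inequalities because the commutator source terms $A_{k,0}$ and $B_{k,0}$ defined in \eqref{Akn}--\eqref{Bkn} are empty sums, so the only forcing one must control is the undifferentiated quadratic nonlinearity $C_{k,0}=-(\nabla^\perp\psi_{\neq}\cdot\nabla\omega)_k$. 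Moreover, the Dirichlet-type boundary condition $\omega|_{y=\pm1}=0$ from \eqref{eq1c} kills every boundary trace produced by integration by parts in $y$, which is what allows the cleaner $\le 0$ right-hand side here as opposed to the $n\geq1$ case in Proposition~\ref{pro:int:sob}.

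Carrying out the energy identity, the time derivative produces $\tfrac12\partial_t\mathcal{E}_{sob,0}$ together with the favourable term $\sum_k\int|\omega_k|^2(-\partial_t W)e^{2W}=\mathcal{CK}_{sob,0}$ (since $\partial_t W\le 0$ by \eqref{W_prop}). The streaming term $ik(y+U^x_0)\,\omega_k$ contributes a purely imaginary integrand and drops out after taking real parts. The diffusion, after one integration by parts in $y$ (legitimate because $\omega_k$ vanishes on $y=\pm1$), gives the full dissipation $\mathcal{D}_{sob,0}$ together with a commutator of the form $-2\nu\,\mathrm{Re}\,\sum_k\int \overline{\omega_k}\,\partial_y\omega_k\,\partial_y W\,e^{2W}$. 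For the nonlinear term, one passes back to physical space (using Parseval to convert the $k$-sum) and integrates by parts once using $\nabla\cdot\nabla^\perp\psi_{\neq}=0$ together with $\omega|_{y=\pm1}=0$, reducing it to the single remainder $\int \omega^2\,\nabla^\perp\psi_{\neq}\cdot\nabla W\,e^{2W}$.

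Both remainders must then be absorbed into $\mathcal{D}_{sob,0}$ and $\mathcal{CK}_{sob,0}^{(W)}$ using the pointwise weight inequalities \eqref{wdot:est:a} and \eqref{wdot:est:b}. Cauchy--Schwarz on the viscous commutator and \eqref{wdot:est:a} bound it by $\tfrac12\mathcal{D}_{sob,0}+\tfrac{C}{K}\mathcal{CK}_{sob,0}^{(W)}$. For the nonlinear remainder, since $W=W(t,y)$ the only relevant velocity component in $\nabla^\perp\psi_{\neq}\cdot\nabla W$ is $u^y_{\neq}=\partial_x\psi_{\neq}$, and one needs a bound of the form $\|u^y_{\neq}\|_{L^\infty_{x,y}}\lesssim\eps\langle t\rangle^{-2}$. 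Given this, \eqref{wdot:est:b} yields a bound of $(C/L)\mathcal{CK}_{sob,0}^{(W)}$ for the nonlinear contribution, and choosing $K,L$ sufficiently large and $\delta_0<1-C/K-C/L$ closes the estimate.

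The only non-routine step is producing this $O(\eps\langle t\rangle^{-2})$ pointwise estimate on $u^y_{\neq}=\partial_x\psi_{\neq}$: this is precisely the inviscid damping of the $y$-component of the velocity, which under the bootstrap hypotheses follows by decomposing $\psi_{\neq}$ into its interior and exterior pieces \eqref{ell:I}--\eqref{ell:E}, combining the elliptic estimates \eqref{eell:sob} and \eqref{fell:e} from Proposition~\ref{pro:ell:intro} with a Sobolev embedding applied to the Gevrey norms $\mathcal{J}_{ell}^{(j)}$, and controlling the resulting interior/exterior contributions respectively by $\mathcal{E}_{\text{Int}}^{\text{low}}$ and by $\mathcal{E}^{(\gamma)}$. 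This is the main place where the bootstrap hypotheses \eqref{boot:Intf} and \eqref{boot:ExtVort} enter the argument; once they are in hand, assembling \eqref{Ebd0} is book-keeping.
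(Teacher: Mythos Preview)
Your proposal is correct and follows essentially the same route as the paper: test the $n=0$ equation against $\overline{\omega_k}e^{2W}$, note that $A_{k,0}=B_{k,0}=0$, absorb the viscous commutator via \eqref{wdot:est:a}, and handle the transport nonlinearity by integrating by parts with the divergence-free condition to obtain $\int \omega^2\,\partial_x\psi_{\neq}\,\partial_yW\,e^{2W}$, which is then controlled using the $\eps\langle t\rangle^{-2}$ inviscid-damping bound on $\partial_x\psi_{\neq}$ together with \eqref{wdot:est:b}. This last step is exactly what the paper points to when it invokes ``an essentially identical integration by parts to \eqref{cloud:ibp:1}'', and your unpacking of where the $L^\infty$ bound on $u^y_{\neq}$ comes from (via the elliptic estimates and bootstrap hypotheses) is accurate.
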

\begin{proof} We take the (real) inner product of equation \eqref{M1a:fin} ($n = 0$) with the quantity $\overline{f}_{k,0} e^{2W}$ which produces the identity $\text{LHS} = \text{RHS}$, where we define 
\begin{align*}
\text{LHS} := & \langle \pa_t f_{k,0} + ik \bar{U} f_{k,0} - \nu \Delta_k f_{k,0} , \overline{f}_{k,0} e^{2W}  \rangle\\
\text{RHS} := & \langle A_{k,0} + B_{k,0} + C_{k,0} , \overline{f}_{k,0} e^{2W} \rangle .
\end{align*}
We integrate by parts to get 
\begin{align}
\text{LHS}= &\frac{\p_t}{2} \| f_{k,0} e^{W} \|_{L^2}^2 - \Re\langle f_{k,0}, \nu \pa_y^2 \overline{f}_{k,3} e^{2W} \rangle +  \nu  |k|^2 \|f_{k,0} e^W \|_{L^2}^2  -  \Re \langle f_{k,0}, f_{k,0} \pa_t (e^{2W} )\rangle \\ \n
= & \frac{\p_t}{2} \| f_{k,0} e^{W} \|_{L^2}^2 + \nu \| \nabla_k f_{k,0} e^W  \|_{L^2}^2+ 2\|   f_{k,0}\sqrt{-\pa_t W}    e^{W} \|_{L^2}^2  - \Re \langle  f_{k,0},    \nu \pa_y \overline{f}_{k,0}  \pa_y ( e^{2W} ) \rangle.
\end{align}  
The final term above can be estimated with \eqref{W_prop} as follows
\begin{align}
|\Re \langle  f_{k,0},    \nu \pa_y \overline{f}_{k,0}  \pa_y ( e^{2W} ) \rangle| \leq &\nu\|| \pa_y W |\ f_{k,0} e^W \|_2\|\pa_yf_{k,0}e^W\|_2\\
\leq&\frac{1}{4}  \nu \|\pa_yf_{k,0}e^W\|_2^2+\frac{1}{8}\lf\|   f_{k,0}  \sqrt{-\pa_t W}    e^{W} \rg\|_2^2.\label{I_21}
\end{align}
Here we have chosen $K$ large enough in \eqref{W_prop}. 

Proceeding to the terms on the RHS, we first of all notice that $A_{k,0} = B_{k,0} = 0$. It remains only to treat the quantity $|\langle C_{k,0} , \overline{f}_{k,0} e^{2W} \rangle|$. For this, an essentially identical integration by parts to \eqref{cloud:ibp:1} proves that 
\begin{align*}
|\langle C_{k,0} , \overline{f}_{k,0} e^{2W} \rangle| \le \frac{C}{L} \mathcal{CK}_{sob, 0}^{(W)}.
\end{align*}

\end{proof}

\begin{lemma}[$\mathcal{E}_{sob, j}, \mathcal{D}_{sob,j}$, where $j = 1, 3$] The following bounds are valid: 
\begin{align} \label{twp:1}
\frac{\p_t}{2} \mathcal{E}_{sob,1}  + \mathcal{CK}_{sob,1}^{(W)} + \mathcal{D}_{sob,1} \lesssim &\| \nu^{\frac12} C_{k,0} e^W \|_{L^2}^2, \\ \label{twp:2}
\frac{\p_t}{2} \mathcal{E}_{sob,3} + \mathcal{CK}_{sob,3}^{(W)} + \mathcal{D}_{sob,3} \lesssim &\sum_{\mathcal{K} \in \{ A, B, C\}} \| \nu^{\frac52} \mathcal{K}_{k,2} e^W \|_{L^2}^2.
\end{align}
\end{lemma}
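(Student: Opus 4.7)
Both inequalities follow from a single unified energy identity: for $j\in\{1,3\}$, I would test the equation \eqref{M1a:fin} for $f_{k,j-1}$ against $-\nu^{2j}\Delta_k f_{k,j-1}\,e^{2W}$ and take the real part. The crucial structural input is that $f_{k,0}$ (for $j=1$) and $f_{k,2}$ (for $j=3$) both satisfy homogeneous Dirichlet conditions $f_{k,j-1}|_{y=\pm1}=0$; this kills every boundary term in the two successive integrations by parts in $y$ below, including $\p_t f_{k,j-1}|_{\pm 1}=0$ derived from the boundary value. The power $\nu^{2j}$ is calibrated so that the resulting left-hand side, summed in $k$, is exactly $\tfrac{\p_t}{2}\mathcal E_{sob,j}+\mathcal{CK}_{sob,j}^{(W)}+\mathcal D_{sob,j}$.

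After integrating by parts, the time-derivative contribution produces $\tfrac{\p_t}{2}\|\nu^j\nabla_k f_{k,j-1}\,e^W\|_{L^2}^2$ together with the $\mathcal{CK}_{sob,j}^{(W)}$ integral (from differentiating $e^{2W}$ in time), modulo a residual commutator $2\nu^{2j}\,\mathrm{Re}\!\int\p_t f_{k,j-1}\,\overline{\p_y f_{k,j-1}}\,W_y\,e^{2W}$; here I would substitute $\p_t f_{k,j-1}$ from \eqref{M1a:fin} and absorb using \eqref{wdot:est:a}--\eqref{wdot:est:b}. The viscous term yields $\nu^{2j+1}\|\Delta_k f_{k,j-1}\,e^W\|_{L^2}^2$, which by IBP and $f_{k,j-1}|_{\pm1}=0$ equals $\|\p_y^2 f\|^2+2k^2\|\p_y f\|^2+k^4\|f\|^2$ (weighted by $e^{2W}$), and so is $\mathcal D_{sob,j,k}$ up to $W_y$-commutators absorbed by \eqref{wdot:est:a} once $K$ is large. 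For the nonlinear source $\mathcal K_{k,j-1}\in\{A,B,C\}_{k,j-1}$, Cauchy--Schwarz and Young give
\[
\bigl|\nu^{2j}\langle\mathcal K_{k,j-1},-\Delta_k f_{k,j-1}\,e^{2W}\rangle\bigr|\le\tfrac12\bigl\|\nu^{(2j-1)/2}\mathcal K_{k,j-1}e^W\bigr\|_{L^2}^{2}+\tfrac12\mathcal D_{sob,j,k},
\]
the second piece being absorbed. When $j=1$ only $C_{k,0}$ survives (since $A_{k,0}=B_{k,0}=0$), yielding \eqref{twp:1}; when $j=3$ all three of $A_{k,2},B_{k,2},C_{k,2}$ contribute, yielding \eqref{twp:2}.

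The technical heart of the proof --- and the main obstacle --- is the transport term $ik\bar U f_{k,j-1}$. After IBP in $y$, its $-k^2$-piece of $-\Delta_k$ and its $\bar U\,|\p_y f|^2$ quadratic piece both drop because they are purely imaginary integrands, leaving two commutator remainders: a $\bar U_y$-commutator $\nu^{2j}\,\mathrm{Re}\!\int ik\bar U_y f_{k,j-1}\overline{\p_y f_{k,j-1}}\,e^{2W}$ and a $W_y$-commutator $2\nu^{2j}\,\mathrm{Re}\!\int ik\bar U f_{k,j-1}\overline{\p_y f_{k,j-1}}\,W_y\,e^{2W}$. The $W_y$-commutator is routine via \eqref{wdot:est:a} and absorbs into $\mathcal D_{sob,j}+\mathcal{CK}_{sob,j}^{(W)}$. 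The $\bar U_y$-piece is delicate since $\bar U_y=1-\omega_0$ is not small; my plan is to IBP once more on $\p_y f$ (again clean because $f_{k,j-1}|_{\pm1}=0$), rewriting it as $\bar U_{yy}|f_{k,j-1}|^2$ plus $\bar U_y|f_{k,j-1}|^2 W_y$ terms. The first uses bootstrap smallness of $\bar U_{yy}=-\p_y\omega_0$ (controlled through $\mathcal E_{sob}$), while the second is another $W_y$-weight; Poincar\'e (via $f_{k,j-1}|_{\pm1}=0$) and \eqref{wdot:est:a} then absorb everything into $\mathcal D_{sob,j}$ and $\mathcal{CK}_{sob,j}^{(W)}$, leaving no extraneous terms beyond the advertised $\|\nu^{(2j-1)/2}\mathcal K_{k,j-1}\|^2$ on the right. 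This last bookkeeping --- ensuring the transport commutators contribute nothing outside the source-term RHS --- is where the estimate is tightest and where the homogeneous Dirichlet structure is exploited most heavily.
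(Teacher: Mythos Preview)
Your overall strategy---testing \eqref{M1a:fin} for $f_{k,j-1}$ against $-\nu^{2j}\Delta_k\overline{f_{k,j-1}}\,e^{2W}$ and integrating by parts, using the homogeneous Dirichlet condition $f_{k,j-1}|_{\pm1}=0$---is exactly the paper's approach, and your treatment of the time-derivative commutator (substituting $\p_t f$ from the equation and absorbing via \eqref{wdot:est:a}) is also correct.

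The genuine gap is in your handling of the $\bar U_y$-commutator
\[
\text{Err}_{LHS}^{(3)}=\nu^{2j}\,\mathrm{Re}\!\int ik\,\bar U_y\, f_{k,j-1}\,\overline{\p_y f_{k,j-1}}\,e^{2W}.
\]
Your proposed integration by parts is tautological: integrating $\p_y$ back onto $f\,\bar U_y\,e^{2W}$ produces the terms $ik\bar U_{yy}|f|^2$ and $2ik\bar U_y W_y|f|^2$, both of which are \emph{purely imaginary} and therefore vanish under $\mathrm{Re}$, while the remaining cross term is just $-\overline{I}$, whose real part equals $\mathrm{Re}(I)$ again. So the identity reads $\mathrm{Re}(I)=\mathrm{Re}(I)$ and nothing is gained; there is no $\bar U_{yy}|f|^2$ contribution to estimate.

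The paper's resolution is simpler and does not require any smallness of $\bar U_y$ or $\bar U_{yy}$. One applies Cauchy--Schwarz directly and exploits the over-weighted power of $\nu$: writing $\nu^{2j}=\nu\cdot\nu^{j-1/2}\cdot\nu^{j-1/2}$,
\[
|\text{Err}_{LHS}^{(3)}|\le \nu\,\|\bar U_y\|_{L^\infty}\,\|\nu^{j-1/2}|k|f_{k,j-1}e^W\|_{L^2}\,\|\nu^{j-1/2}\p_y f_{k,j-1}e^W\|_{L^2}\lesssim \nu\,\mathcal D_{sob,j-1},
\]
since both norms on the right are dominated by $\mathcal D_{sob,j-1}^{1/2}$. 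This is a lower-order dissipation term with small coefficient $\nu$; it is not literally contained in the right-hand side of the lemma as stated, but is absorbed once all the $\mathcal D_{sob,i}$ are summed in Proposition~\ref{pro:int:sob} (cf.\ the $\nu\,\mathcal D_{sob}$ on the right of \eqref{hazee:a}).
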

\begin{proof} We prove only \eqref{twp:2}, as \eqref{twp:1} is obtained in a similar manner. We take inner product of \eqref{M1a:fin} ($n= 2$) against the quantity $- \nu^6 \Delta_k \overline{f}_{k,2}e^{2W}$.  This produces the identity (where $\langle \cdot, \cdot \rangle$ corresponds to a real inner product) $\text{LHS} = \text{RHS}$, where  
\begin{align} 
\text{LHS} := & \Re \langle \pa_t f_{k,2} + ik \bar{U} f_{k,2} - \nu \Delta_k f_{k,2}  , - \nu^6 \Delta_k \overline{f}_{k,2} e^{2W} \rangle \\
\text{RHS} := & \Re \langle A_{k,2} + B_{k,2} + C_{k,2}, - \nu^6 \Delta_k \overline{f}_{k,2} e^{2W}\rangle .
\end{align}
Integration by parts produces the following identity 
\begin{align*}
\text{LHS} = & \frac{\p_t}{2} \| \nu^3 \nabla_k f_{k,2} e^{W} \|^2 - \frac12 \Re \langle \nu^6 \nabla_k f_{k,2}, \nabla_k \overline{f}_{k,2} \p_t \{ e^{2W} \} \rangle \\ \n
& + \Re \langle \nu^6 \p_t f_{k,2}, \p_y f_{k,2} \p_y \{ e^{2W} \} \rangle -2 \nu^7 \langle |k|^2 f_{k,2}, \p_y f_{k,2} \p_y \{ e^{2W} \} \rangle\\
& +\nu^6 \| \nu^{\frac12} \nabla_k f_{k,3} e^{W} \|^2 + \nu^6 \| \nu^{\frac12} \nabla_k |k| f_{k,2} e^{W} \|^2 + \nu^6\Re \langle ik \p_y \bar{U} f_{k,2}, \p_y \overline{f}_{k,2} e^{2W} \rangle \\
&+ \nu^6\Re \langle ik \bar{U} f_{k,2}, \p_y \overline{f}_{k,2} \p_y \{ e^{2W} \}  \rangle \\
= &  \frac{\p_t}{2} \| \nu^3 \nabla_k f_{k,2} e^{W} \|^2 + \| \nu^{3 + \frac12} \nabla_k f_{k,3} e^{W} \|^2 + \| \nu^{3 + \frac12} \nabla_k |k| f_{k,2} e^{W} \|^2 \\
&+ \frac12 \Re \langle \nu^4 \p_y f_{k,2}, \p_y \overline{f}_{k,2} - \p_t \{ e^{2W} \} \rangle + \text{Err}_{LHS},
\end{align*}
where we define 
\begin{align*}
\text{Err}_{LHS} := & \Re \langle \nu^6 \p_t f_{k,2}, \p_y f_{k,2} \p_y \{ e^{2W} \} \rangle-2 \nu^7 \langle |k|^2 f_{k,2}, \p_y f_{k,2} \p_y \{ e^{2W} \} \rangle \\
& + \nu^6\Re \langle ik \p_y \bar{U} f_{k,2}, \p_y \overline{f}_{k,2} e^{2W} \rangle + \nu^6\Re \langle ik \bar{U} f_{k,2}, \p_y \overline{f}_{k,2} \p_y \{ e^{2W} \}  \rangle =  \sum_{i = 1}^4 \text{Err}_{LHS}^{(i)}.  
\end{align*}
We now need to estimate each of the terms appearing above. First, to estimate $\text{Err}_{LHS}^{(1)}$, we use the equation \eqref{M1a:fin} (with $n = 2$) to generate 
\begin{align} \n
\text{Err}_{LHS}^{(1)} = &  \Re \langle \nu^6 \p_t f_{k,2}, \p_y f_{k,2} \p_y \{ e^{2W} \} \rangle =2 \Re \langle \nu^6 \p_t f_{k,2}, \p_y f_{k,2} \p_y W e^{2W} \} \rangle \\ \label{cku:1}
\lesssim & \| \nu^{\frac{5}{2}} \p_t f_{k,2} e^W \|_{L^2} \| \nu^{3} \p_y f_{k,2} (\sqrt{\nu} \p_y W) e^W \|_{L^2} \\ \label{cku:2}
\le & \frac{C}{K} \| \nu^{\frac52} \p_t f_{k,2} e^W \|_{L^2} \| \nu^{3} \p_y f_{k,2} \sqrt{- \p_t W} e^W \|_{L^2} \\ \label{cku:3}
\le & \frac{C}{K} \| \nu^{\frac52} \p_t f_{k,2} e^W \|_{L^2} \sqrt{ \mathcal{CK}_{sob, 3}^{(W)}}
\end{align}
where $K >> 1$ is a large parameter. Above, to go from \eqref{cku:1} to \eqref{cku:2}, we have used \eqref{wdot:est:a}. To conclude the bound on this term, we need to provide an estimate on the first term on the right-hand side of \eqref{cku:2}. For this purpose, we use the equation \eqref{M1a:fin} with $n = 2$, as follows: 
\begin{align*}
 \| \nu^{\frac52} \p_t f_{k,2} e^W \|_{L^2} \lesssim & \|  \nu^{\frac52} \bar{U} k f_{k,2}  e^W \|_{L^2} +    \| \nu^{\frac52} \nu \p_y f_{k,3}  e^W \|_{L^2} +  \| \nu^{\frac52} \nu |k|^2 f_{k,2}  e^W \|_{L^2}  \\
 &+  \| \nu^{\frac52} A_{k,2} e^W \|_{L^2} +  \| \nu^{\frac52} B_{k,2} e^W \|_{L^2} +  \| \nu^{\frac52} C_{k,2} e^W \|_{L^2} \\
 \lesssim & \sqrt{\mathcal{D}_{sob,2}} + \sqrt{ \mathcal{D}_{sob,3}} +  \| \nu^{\frac52} A_{k,2} e^W \|_{L^2} +  \| \nu^{\frac52} B_{k,2} e^W \|_{L^2} +  \| \nu^{\frac52} C_{k,2} e^W \|_{L^2}.
 \end{align*}
 This conclude the bound on $\text{Err}_{LHS}^{(1)}$. We now turn to the bounds of $\text{Err}_{LHS}^{(2)}$ and $\text{Err}_{LHS}^{(4)}$, for which we have 
 \begin{align*}
 |\text{Err}_{LHS}^{(2)}| \lesssim &  \| \nu^{\frac72}|k|^2 f_{k,2}  e^W \|_{L^2} \| \nu^{3} \p_y f_{k,2} (\sqrt{\nu} \p_y W) e^W \|_{L^2} \le  \frac{C}{K} \sqrt{ \mathcal{D}_{sob,3} \mathcal{CK}_{sob, 3}^{(W)}}, \\
 |\text{Err}_{LHS}^{(4)}| \lesssim & \| U \|_{L^\infty}  \| \nu^{\frac52}|k| f_{k,2}  e^W \|_{L^2} \| \nu^{3} \p_y f_{k,2} (\sqrt{\nu} \p_y W) e^W \|_{L^2} \le  \frac{C}{K} \sqrt{ \mathcal{D}_{sob,3} \mathcal{CK}_{sob, 3}^{(W)}}.
 \end{align*}
Next, we have 
\begin{align*}
|\text{Err}_{LHS}^{(3)}| \lesssim & \nu^6|\Re \langle ik \p_y \bar{U} f_{k,2}, \p_y \overline{f}_{k,2} e^{2W} \rangle| \lesssim \nu \| \p_y \bar{U} \|_{\infty} \| \nu^{\frac52}|k|  f_{k,2} e^W \|_{L^2} \| \nu^{\frac52} \p_y f_{k,2} e^W \|_{L^2} \\
\lesssim & \nu \mathcal{D}_{sob,2}^{\frac12} \mathcal{D}_{sob,2}^{\frac12},
\end{align*} 
concluding the proof.
\end{proof}

We now perform our estimate on the left-hand side of \eqref{M1a:fin}. 

\begin{lemma}[$\mathcal{E}_{sob, j}, \mathcal{D}_{sob,j}$, $j = 2, 4$] The following bounds are valid: 
\begin{align}\label{Ebd2}
\frac{\p_t}{2} \mathcal{E}_{sob,2}+ \mathcal{CK}_{sob, 2}^{(W)} + \mathcal{D}_{sob,2} \lesssim &\sum_{\mathcal{K} \in \{ A, B, C\}} \| \nu^{\frac32} \mathcal{K}_{k,1} e^W \|_{L^2}^2, \\ \label{enhy:1}
\frac{\p_t}{2} \mathcal{E}_{sob,4} + \mathcal{CK}_{sob, 4}^{(W)} + \mathcal{D}_{sob,4} \lesssim & \sum_{\mathcal{K} \in \{ A, B, C, H\}} \| \nu^{\frac72} \mathcal{K}_{k,3} e^W \|_{L^2}^2.
\end{align}
\end{lemma}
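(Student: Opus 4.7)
\medskip

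\noindent\textbf{Plan of Proof.} Both bounds should be obtained by the same energy-method template as the proof of the $\mathcal{E}_{sob,3}$ estimate just completed, i.e.\ by testing the Fourier-in-$x$ PDE \eqref{M1a:fin} at the appropriate value of $n$ against a multiplier of the form $-\nu^{2(n+1)}\Delta_k \overline{f}_{k,n} e^{2W}$, or its lifted analogue in the boundary-inhomogeneous case. The product $\nu^5\|\Delta_k f_{k,n}\|^2$ that emerges from pairing this multiplier against $-\nu\Delta_k f_{k,n}$ will then be split, after integration by parts in $y$, into the two pieces $\|\nu^{n+1/2}\nabla_k f_{k,n+1}e^W\|^2 + \|\nu^{n+1/2}|k|\nabla_k f_{k,n} e^W\|^2$ that together reconstruct the dissipation functional $\mathcal{D}_{sob,n+1}$. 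The time-derivative term produces $\tfrac12\partial_t\mathcal{E}_{sob,n+1}$; the temporal derivative of the weight produces $\mathcal{CK}_{sob,n+1}^{(W)}$ plus lower-order error terms handled via \eqref{wdot:est:a}--\eqref{wdot:est:b} exactly as in the bounds \eqref{cku:1}--\eqref{cku:3} for $\mathrm{Err}_{LHS}^{(j)}$. The sources $A_{k,n}, B_{k,n}, C_{k,n}$ (and $H_{k,n}$ in the lifted case) end up on the right-hand side via Cauchy--Schwarz and Young, with the $\nu$-powers allocated to absorb the $\sqrt{\mathcal{D}_{sob,n+1}}$ factor cleanly.

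\medskip

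\noindent\textbf{The case $n=1$.} For \eqref{Ebd2} I would simply pair \eqref{M1a:fin} at $n=1$ with the multiplier $-\nu^4\Delta_k\overline{f}_{k,1}e^{2W}$. The relevant boundary condition here is the Neumann one, $\partial_y f_{k,1}|_{y=\pm 1}=0$, which is actually \emph{friendlier} than the Dirichlet case of $f_{k,2}$: the boundary contributions $[\partial_y f_{k,1}\,\overline{f}_{k,1}\,e^{2W}]_{-1}^1$ that appear when integrating the $-2k^2\langle\partial_y^2 f_{k,1},\overline{f}_{k,1}e^{2W}\rangle$ cross term by parts vanish outright, and no compatibility issue arises. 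All the commutator terms $\mathrm{Err}_{LHS}^{(j)}$, $j=1,\dots,4$, are estimated verbatim as in the $n=2$ argument using \eqref{wdot:est:a}, the equation itself to bound $\|\nu^{3/2}\partial_t f_{k,1}e^W\|$ in terms of $\sqrt{\mathcal{D}_{sob,1}}+\sqrt{\mathcal{D}_{sob,2}}$ plus source terms, and the bootstrap hypothesis on $U^x_0$. The resulting sources are bounded by $\sum_{\mathcal K}\|\nu^{3/2}\mathcal K_{k,1}e^W\|^2$, which is the advertised right-hand side.

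\medskip

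\noindent\textbf{The case $n=3$.} The substantive difficulty for \eqref{enhy:1} is the inhomogeneous Neumann condition $\partial_y f_{k,3}|_{y=\pm 1}=\alpha_{k,4,\pm}(t)$. Here I would use the lift function $h_k(t,y)$ from \eqref{small:fh} and perform the estimate instead on $F_{k,3}=f_{k,3}-h_k$ defined in \eqref{bigFH}, which by design solves the \emph{homogeneous}-Neumann problem \eqref{M1a:fin:b}--\eqref{M1b:fin:b} with the additional source $H_{k,3}$ from \eqref{def:big:H}. Testing against $-\nu^8\Delta_k\overline{F}_{k,3}e^{2W}$ and running exactly the same integration-by-parts scheme as in the $\mathcal{E}_{sob,3}$ lemma gives $\frac12\partial_t\|\nu^4\nabla_k F_{k,3}e^W\|^2+\mathcal{CK}$-contribution$+\,\mathcal{D}$-contribution on $F_{k,3}$, bounded by $\sum_{\mathcal{K}\in\{A,B,C,H\}}\|\nu^{7/2}\mathcal{K}_{k,3}e^W\|^2$; crucially, $H_{k,3}$ appears automatically on the RHS as one of the listed sources. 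Finally, one removes the lift: since $\varphi_\pm(y/\nu)$ is supported in a strip of width $O(\nu)$ with $O(1)$-derivative, one verifies $\|\nabla_k h_k e^W\|_{L^2}^2\lesssim \nu^{-1}\bigl(\nu|\alpha_{k,4,\pm}|\bigr)^2\sim \nu|\alpha_{k,4,\pm}|^2$, so that $\nu^8\|\nabla h_k e^W\|^2$ is dominated by $\nu^{-1}\mathcal{D}_{\mathrm{Trace,Large},0}$-type quantities plus terms controllable by the boundary functionals; the difference $\mathcal{E}_{sob,4}-\|\nu^4\nabla F_{k,3} e^W\|^2$ and the commutators $\partial_t h_k$, $\nu\Delta_k h_k$ appearing inside $H_{k,3}$ are therefore innocuous.

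\medskip

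\noindent\textbf{Main obstacle.} The principal bookkeeping point will be confirming that all losses arising from the lift in the $n=3$ case, in particular the $\partial_t\alpha_{k,4,\pm}$ and $\nu\partial_y^2 h_k$ pieces hidden inside $H_{k,3}$, remain of the advertised form $\|\nu^{7/2}H_{k,3}e^W\|^2$ after all the $\nu$-weights are tracked (the factor $\nu\partial_y^2h_k\sim \nu^{-1}\alpha_{k,4,\pm}\chi_{\{|y\mp 1|\lesssim \nu\}}$ looks singular but its $\nu^{7/2}$-weighted $L^2$ norm is in fact $\sim\nu^{5/2}|\alpha_{k,4,\pm}|$, which matches the scaling of $\mathcal{D}_{\mathrm{Trace,Large}}$). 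Everything else is a careful replay of the previous $n=2$ energy identity, and I would write it out by diff'ing against that argument rather than reproducing the whole computation.
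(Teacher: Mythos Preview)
Your proposal is correct and follows essentially the same approach as the paper: for $j=2$ one tests the $f_{k,1}$ equation (with its homogeneous Neumann condition) against $-\nu^4\Delta_k\overline{f}_{k,1}e^{2W}$, and for $j=4$ one tests the lifted equation \eqref{M1a:fin:b} for $F_{k,3}$ against $-\nu^8\Delta_k\overline{F}_{k,3}e^{2W}$, then pulls back to $f_{k,3}$ at the end. One bookkeeping slip: in your pullback discussion, $\nu^8\|\nabla_k h_k e^W\|^2 \sim \nu^9|\alpha_{k,4}|^2$ is actually $\nu^{+3}\mathcal{D}_{\mathrm{Trace,Large},0}$, not $\nu^{-1}$, so the bound is even more favorable than you wrote; the paper records the pullback errors as $\nu^2\mathcal{E}_{\mathrm{Trace},j}$ and $\nu^2\mathcal{D}_{\mathrm{Trace,Large},j}$ terms, which feed into the coupled inequality \eqref{hazee:a} rather than being absorbed within the present lemma.
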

\begin{proof} We take the inner product of \eqref{M1a:fin:b} with $\nu^8 \Delta_k \overline{F}_{k,3} e^{2W}$ and integrate by parts. 

\vspace{2 mm} 

\noindent \textit{Energy Identity:} This produces the following identity 
\begin{align} \n
\text{LHS} = & \frac{\p_t}{2} \| \nu^4 \nabla_k F_{k,3} e^{W} \|^2 - \frac12 \Re \langle \nu^8 \nabla_k F_{k,3}, \nabla_k \overline{F}_{k,3} \p_t \{ e^{2W} \} \rangle \\ \n
& + \Re \langle \nu^8 \p_t F_{k,3}, \p_y F_{k,3} \p_y \{ e^{2W} \} \rangle -2 \nu^9 \langle |k|^2 F_{k,3}, \p_y F_{k,3} \p_y \{ e^{2W} \} \rangle\\ \n
& +\nu^8 \| \nu^{\frac12} \nabla_k \p_y F_{k,3} e^{W} \|^2 + \nu^8 \| \nu^{\frac12} \nabla_k |k| F_{k,3} e^{W} \|^2 + \nu^8\Re \langle ik \p_y \bar{U} F_{k,3}, \p_y \overline{F}_{k,3} e^{2W} \rangle \\ \n
&+ \nu^8\Re \langle ik \bar{U} F_{k,3}, \p_y \overline{F}_{k,3} \p_y \{ e^{2W} \}  \rangle \\ \n
= &  \frac{\p_t}{2} \| \nu^4 \nabla_k F_{k,3} e^{W} \|^2 + \| \nu^{4 + \frac12} \nabla_k \p_y F_{k,3} e^{W} \|^2 + \| \nu^{4 + \frac12} \nabla_k |k| F_{k,3} e^{W} \|^2 \\ \label{deekay:1}
&+ \frac12 \Re \langle \nu^8 \nabla_k F_{k,3}, \nabla_k \overline{F}_{k,3} - \p_t \{ e^{2W} \} \rangle + \text{Err}_{LHS},
\end{align}
where we define 
\begin{align*}
\text{Err}_{LHS} := & \Re \langle \nu^8 \p_t F_{k,3}, \p_y F_{k,3} \p_y \{ e^{2W} \} \rangle-2 \nu^9 \langle |k|^2 F_{k,3}, \p_y F_{k,3} \p_y \{ e^{2W} \} \rangle \\
& + \nu^8\Re \langle ik \p_y \bar{U} F_{k,3}, \p_y \overline{F}_{k,3} e^{2W} \rangle + \nu^8\Re \langle ik \bar{U} F_{k,3}, \p_y \overline{F}_{k,3} \p_y \{ e^{2W} \}  \rangle =  \sum_{i = 1}^4 \text{Err}_{LHS}^{(i)}.  
\end{align*}

\vspace{2 mm} 

\noindent \textit{Estimation of Error Terms:} To estimate $\text{Err}_{LHS}^{(1)}$, we follow similarly to \eqref{cku:3}, which gives 
\begin{align} \label{nd:1}
|\text{Err}_{LHS}^{(1)}| \le & \frac{C}{K} \| \nu^{\frac72} \p_t F_{k,3} e^W \|_{L^2} \sqrt{ \mathcal{CK}_{sob, 4}^{(W)}}.
\end{align}
We subsequently use \eqref{M1a:fin:b} to estimate
\begin{align*}
\| \nu^{\frac72} \p_t F_{k,3} e^W \|_{L^2} \lesssim &\underbrace{ \| \nu^{\frac72} |k| F_{k,3} e^W \|_{L^2}}_{O_1} +  \underbrace{ \| \nu^{\frac72} \nu \Delta_k F_{k,3} e^W \|_{L^2} }_{O_2} + \underbrace{ \sum_{\mathcal{K} \in \{ A, B, C, H\}} \| \nu^{\frac72} \mathcal{K}_{k,3} e^W \|_{L^2}}_{O_3}.
\end{align*}
We first estimate $O_1$ using \eqref{bigFH} and \eqref{small:fh} as follows
\begin{align*}
O_1 \le  & \| \nu^{\frac72} |k| f_{k,3} e^W \|_{L^2} +  \| \nu^{\frac72} |k| h_{k,3} e^W \|_{L^2} \\
\lesssim & \| \nu^{\frac72} |k| f_{k,3} e^W \|_{L^2} +  \| \nu^{\frac92} |k| \alpha_{k,4} \varphi(\frac{\cdot}{\nu}) e^W \|_{L^2} \\
\lesssim & \nu \mathcal{D}_{sob,2}^{\frac12} + \nu \mathcal{D}_{\text{Trace, Large,1}}^{\frac12}
\end{align*}
Next we clearly have $\mathcal{O}_2$ is bounded by the diffusive term in $\text{LHS}$, and will thus be absorbed upon choosing $K$ large enough. Similarly, we have $\mathcal{O}_3$ is appearing on the right-hand side of the estimate. Therefore, we get upon inserting into \eqref{nd:1}, 
\begin{align*}
|\text{Err}_{LHS}^{(1)}| \le & \frac{C}{K} (O_1 + O_2 + O_3)\sqrt{ \mathcal{CK}_{sob, 4}^{(W)}} \\
\le & \frac{C}{K} (\nu \mathcal{D}_{sob,2}^{\frac12} + \nu \mathcal{D}_{\text{Trace, Large,1}}^{\frac12} + O_2 + O_3)\sqrt{ \mathcal{CK}_{sob, 4}^{(W)}} \\
\le & \nu \mathcal{D}_{sob,2} + \nu \mathcal{D}_{\text{Trace, Large, 1}} + \frac{1}{\sqrt{K}} \mathcal{CK}_{sob,4}^{(W)} + \frac{1}{\sqrt{K}}( \| \nu^{4 + \frac12} \Delta F_{k,3} e^{W} \|^2) \\
& +  \sum_{\mathcal{K} \in \{ A, B, C, H\}} \| \nu^{\frac72} \mathcal{K}_{k,3} e^W \|_{L^2}^2.
\end{align*}
The term $\text{Err}_{LHS}^{(2)}$ is identical to $O_2$ above and $\text{Err}_{LHS}^{(4)}$ to $O_1$ above. This just leaves $\text{Err}_{LHS}^{(3)}$, which we estimate as follows:
\begin{align*}
\text{Err}_{LHS}^{(3)} \lesssim \| \nu^4 \nabla_k F_{k,3} e^W \|_{L^2}^2 \lesssim & \nu \mathcal{D}_{sob,3} + \| \nu^5 |k| \alpha_{k,4}  \varphi(\frac{\cdot}{\nu}) e^W \|_{L^2}^2 +  \| \nu^4 \alpha_{k,4}  \varphi'(\frac{\cdot}{\nu}) e^W \|_{L^2}^2 \\
\lesssim & \nu \mathcal{D}_{sob,3} +  \nu^3 \mathcal{D}_{\text{Trace, Large, 1}} + \nu^3 \mathcal{D}_{\text{Trace, Large, 0}}. 
\end{align*}

\vspace{2 mm} 

\noindent \textit{Pullback of Functionals:}  To complete the proof, we need to pullback the bounds on $F_{k,3}$ into those on $f_{k,3}$ using the expression \eqref{bigFH}. Indeed, we begin with the energetic term from \eqref{deekay:1}, for which we have 
\begin{align*}
 \| \nu^4 \nabla_k F_{k,3} e^{W} \|^2 \gtrsim & \mathcal{E}_{sob,4} -  \| \nu^4 \nabla_k h_{k,3} e^{W} \|^2 \\
 \gtrsim &  \mathcal{E}_{sob,4} -  \| \nu^5 |k| \alpha_{k,4} \varphi(\frac{\cdot}{\nu}) e^{W} \|^2 -  \| \nu^4 \alpha_{k,4} \varphi'(\frac{\cdot}{\nu}) e^{W} \|^2 \\
 \gtrsim & \mathcal{E}_{sob, 4} - \nu^2 \mathcal{E}_{\text{Trace},2} - \nu^2 \mathcal{E}_{\text{Trace}, 1}.
\end{align*}
We estimate the diffusion term as follows:
\begin{align*}
\| \nu^{\frac92} \Delta F_{k,3} e^W \|_{L^2}^2 \gtrsim & \| \nu^{\frac92} \Delta f_{k,3} e^W \|_{L^2}^2 - \| \nu^{\frac{11}{2}} |k|^2 \alpha_{k,4} \varphi(\frac{\cdot}{\nu}) e^W \|_{L^2}^2 - \| \nu^{\frac{7}{2}}  \alpha_{k,4} \varphi''(\frac{\cdot}{\nu}) e^W \|_{L^2}^2 \\
\gtrsim & \mathcal{D}_{sob,4} - \nu^2 \mathcal{D}_{\text{Trace, Large, 2}} - \nu^2 \mathcal{D}_{\text{Trace, Large, 0}}. 
\end{align*}
Finally, we need to estimate the $CK$ term as follows:
\begin{align*}
\Re \langle \nu^6 \p_y F_{k,3}, \p_y \overline{F}_{k,3} - \p_t \{ e^{2W} \} \rangle \gtrsim &  \mathcal{CK}_{sob,4} - \nu^2 \mathcal{CK}_{\text{Trace, 2}} - \nu^2 \mathcal{CK}_{\text{Trace, 0}}
\end{align*}
finishing the proof.
\end{proof}

\subsection{Bounds on $\mathcal{J}_{sob}$}

It turns out to be convenient to decompose the stream function, $\psi$, into $\psi = \psi_{sob}^{(E)} + \psi_{sob}^{(I)}$, where 
\begin{align}
&\Delta \psi^{(E)}_{sob} = \omega \chi_2, \\
&\psi^{(E)}_{sob}|_{y = \pm 1} = 0, \\
&\omega|_{y = \pm 1} = 0,
\end{align}
and
\begin{align}
&\Delta \psi^{(I)}_{sob} = \omega (1 - \chi_2), \\
&\psi^{(I)}_{sob}|_{y = \pm 1} = 0, \\
&\omega|_{y = \pm 1} = 0.
\end{align}
We will here define the functionals $\mathcal{J}_{sob}$:
\begin{align} \label{mvp:1}
\mathcal{J}_{sob} := & \mathcal{J}_{sob}^{(i,i)} + \mathcal{J}_{sob}^{(o,o)} + \mathcal{J}^{(i,o)}_{sob} \\ \label{mvp:2}
\mathcal{J}_{sob}^{(i,i)} := & \sum_{m + n \le 10} \| \p_x^m (\nu^{\frac12} \p_y)^{n} \psi_{sob}^{(I)} \|_{H^2}^2, \\ \label{mvp:3}
\mathcal{J}_{sob}^{(o,o)}:=& \frac{1}{\nu^{100}}( \| \psi_{sob}^{(E)} \|_{H^2}^2 + \sum_{i = 1}^4 \| \nabla \p_y^{i-1} \psi_{sob}^{(E)}  \|_{H^2}^2 ) \\
\mathcal{J}^{(i,o)}_{sob} :=& \| \psi_{sob}^{(I)} \chi_{9} \|_{H^{50}}^2
\end{align}

\begin{lemma} The following estimate holds:
\begin{align}
\mathcal{J}_{sob} \lesssim \mathcal{E}_{sob} + \mathcal{E}_{\mathrm{Int}}. 
\end{align}
\end{lemma}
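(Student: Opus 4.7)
The plan is to decompose $\mathcal{J}_{sob}$ into its three constituents and handle each using a different mechanism tailored to the spatial regime where the data lives. The global structure reflects the stream function splitting: $\psi_{sob}^{(E)}$ is driven by the boundary-localized vorticity $\omega \chi_2$, while $\psi_{sob}^{(I)}$ is driven by the interior vorticity $\omega(1-\chi_2)$, whose support is contained in $\{\chi^I = 1\}$. This support property is what allows us to convert back and forth between $\mathcal{E}_{sob}$ and $\mathcal{E}_{\mathrm{Int}}$ cleanly.

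For $\mathcal{J}_{sob}^{(o,o)}$, standard elliptic regularity in the channel gives $\|\psi_{sob}^{(E)}\|_{H^{j+2}} \lesssim \|\omega\chi_2\|_{H^j}$ for $j\leq 4$; the boundary compatibility conditions for $j \leq 4$ hold because $\omega|_{y=\pm 1}=0$, $\chi_2$ is smooth, and the higher compatibility conditions can be enforced by commuting $\p_y^j$ through the equation and using the definitions of $\alpha_{j,\pm}$ already built into $\mathcal{E}_{sob}$. The $\nu^{-100}$ prefactor is absorbed by observing that on $\mathrm{supp}(\chi_2)$ we have $|y| \geq x_2 > 1/4$, and hence for $t \leq \nu^{-1/3-\zeta}$, $W(t,y) \gtrsim \nu^{-2/3+\zeta}$, so $e^{W} \gtrsim e^{\nu^{-2/3+\zeta}} \gg \nu^{-100}$. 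Consequently $\nu^{-100} \|\omega\chi_2\|_{H^j}^2 \lesssim \|\omega \chi_2 e^W\|_{H^j}^2$, which is controlled by $\mathcal{E}_{sob}$ after distributing the $\nu^{1/2}\p_y$ factors in agreement with the $\mathcal{E}_{sob,n}$ scaling and paying the commutators $[\p_y^j, e^W]$ using $\sqrt{\nu}|\p_y W| \lesssim \sqrt{-\p_t W}$ from \eqref{wdot:est:a}.

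For $\mathcal{J}_{sob}^{(i,i)}$, elliptic regularity for the Dirichlet Poisson problem with right-hand side $\omega(1-\chi_2)$ (which is smooth and compactly supported away from $\p\Omega$, trivializing compatibility) gives $\|\psi_{sob}^{(I)}\|_{H^{m+n+2}} \lesssim \|\omega(1-\chi_2)\|_{H^{m+n}}$ for $m+n\leq 10$. Since $\chi^I = 1$ on $\mathrm{supp}(1-\chi_2)$, one has $\omega(1-\chi_2) = (\chi^I \omega)(1-\chi_2)$, so the data is pulled back via the change of coordinates $f^I(t,z(t,x,y),v(t,y)) = \chi^I \omega$. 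By Lemma \ref{lem:BasicA}, $\brak{k,\eta}^{12}\mathfrak{A}(t,k,\eta)^{-1} \lesssim 1$, and combined with the $L^\infty$ control on $v_y, v_y^{-1}$ from the bootstraps on $H$, we obtain $\|\chi^I \omega\|_{H^{m+n}} \lesssim \|\mathfrak{A}(\chi^I f)\|_{L^2} = \mathcal{E}_{\mathrm{Int}}^{1/2}$. The anisotropic weight $(\nu^{1/2}\p_y)^n$ contributes only powers of $\nu^{n/2} \leq 1$ and is therefore harmless.

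For $\mathcal{J}^{(i,o)}_{sob}$, the support of $\chi_9$ lies in $\{|y| > x_9\}$ with $x_9 > y_2$, hence is disjoint from $\mathrm{supp}(1-\chi_2) \subset \{|y|\leq y_2\}$. Thus $\psi_{sob}^{(I)}$ is harmonic on a neighborhood of $\mathrm{supp}(\chi_9)$, and interior elliptic regularity (or directly the Biot--Savart kernel, whose smoothing is infinite-order across a positive gap) yields $\|\chi_9 \psi_{sob}^{(I)}\|_{H^{50}} \lesssim \|\psi_{sob}^{(I)}\|_{L^2} \lesssim \|\omega(1-\chi_2)\|_{L^2}$, which is again dominated by $\mathcal{E}_{\mathrm{Int}}^{1/2}$ by the same Plancherel argument as above. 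I expect the main obstacle to be the bookkeeping in the first part: verifying that the elliptic higher-order compatibility conditions are encoded by the trace functionals $\mathcal{E}_{\mathrm{Trace},i}$ so that no boundary mass is left over when trading the flat $\|\cdot\|_{H^{j+2}}$ for the $\nu$-scaled $\mathcal{E}_{sob,n}$. Once this bookkeeping is done, the three estimates combine to give the stated bound.
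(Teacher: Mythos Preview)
Your treatment of $\mathcal{J}_{sob}^{(o,o)}$ and $\mathcal{J}_{sob}^{(i,o)}$ is correct and matches the paper's approach. However, the argument for $\mathcal{J}_{sob}^{(i,i)}$ contains a genuine gap.

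You assert $\|\chi^I\omega\|_{H^{m+n}} \lesssim \mathcal{E}_{\mathrm{Int}}^{1/2}$ uniformly in time and then dismiss the $\nu^{n/2}$ prefactors as ``harmless''. This is false: the coordinate change $(x,y)\to(z,v)$ has $\p_y = v_y(\p_v - t\p_z)$, so on the Fourier side each $\p_y$ contributes a factor $|\eta - kt|$, which grows like $|k|t$. Consequently $\|\chi^I\omega\|_{H^{j}}$ grows like $t^{j}$ against $\|\brak{\nabla}^j(\chi^I f)\|_{L^2}$ and is \emph{not} dominated by $\mathcal{E}_{\mathrm{Int}}^{1/2}$ uniformly on $[0,\nu^{-1/3-\zeta}]$. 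The $\nu^{n/2}$ prefactors are precisely the mechanism that absorbs this growth via $(\nu^{1/2}t)^n \lesssim \nu^{(1/6-\zeta)n} \lesssim 1$; you must therefore retain the anisotropic structure $\p_x^m(\nu^{1/2}\p_y)^n$ rather than pass to the isotropic $H^{m+n}$ norm and discard the $\nu$'s.

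The paper makes this absorption explicit: after elliptic regularity it applies Fa\`a di Bruno to write $\p_y^j\omega$ as a combination of $\p_v^\ell\Omega$ with coefficients built from derivatives of $v$ (bounded on $\mathrm{supp}(1-\chi_2)$ by the interior coordinate estimates), then expands $\p_v = (\p_v + t\p_x) - t\p_x$ binomially to recover $\Gamma$-derivatives, and checks that the resulting $t^m$ factors are killed by the $\nu$-weights. Your argument can be repaired along the same lines, or equivalently by commuting $\p_x^m(\nu^{1/2}\p_y)^n$ through $\Delta$ \emph{before} invoking the $H^2$ elliptic estimate, so that you bound $\|\p_x^m(\nu^{1/2}\p_y)^n(\omega(1-\chi_2))\|_{L^2}$ directly and never break the pairing between the $\nu^{1/2}$ factors and the $\p_y$ derivatives.
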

\begin{proof}The argument for \eqref{mvp:2} -- \eqref{mvp:3} is standard elliptic regularity, and we omit repeating many of the details. To prove \eqref{mvp:1}, we apply similar trick as in the proof of  \eqref{ext_dif}. Thanks to the Fa\`a di Bruno's formula, the $y$ derivative of the function $\omega(t,x,y)=\Omega(t,x,v(y))$ can be expressed as follows ($j\leq 10$)
\begin{align*}\pa_{y}^{j} \omega(y)=\pa_y^j(\Omega(v(y)))
=&\sum_{(m_1,m_2,\cdots,m_i)\in S_{j}}\frac{{j}!}{m_1!m_2!\cdots m_{j}!}(\pa_{v}^{(m_1+m_2+\cdots+m_{j})}\Omega)\bigg|_{v=v(t,y)}\prod_{\ell=1}^{j}\lf(\frac{\pa_{y}^{\ell }v(t,y)}{\ell!}\rg)^{m_\ell}.
\end{align*}
Since we consider finite regularity $j\leq 10$, and assume that $\|v_y^{-1}\|_{L^\infty}\leq C$ and the coordinate $h$ \eqref{E_IntCh} is well bounded \eqref{boot:IntH} on the support of $1-\chi_2$, we have that the combinatorial constants are bounded and the $\|(1-\chi_2)v_y\|_{H_y^{10}}$ is bounded $\lesssim \ep$. As a result, we have that 
\begin{align*}
\sum_{j=0}^{10}&\|(1-\chi_2)(\nu^{1/3+}\pa_y)^j\omega(t,x,y)\|_{L_{x,y}^2}^2\\
\lesssim& \sum_{j=0}^{10}\|\chi^I(\nu^{1/3+}\pa_v)^j\Omega(t,x,v)\|_{L_{x,v}^2}^2\lesssim\sum_{j=0}^{10}\|\chi^I(\nu^{1/3+}(\pa_v+t\pa_x -t\pa_x))^j\Omega(t,x,v)\|_{L_{x,v}^2}^2\\
\lesssim& \sum_{m+n=0}^{10}\nu^{\frac{1+}{3}(m+n)}t^m\|\chi^I \pa_x^m(\pa_v+t\pa_x)^n\Omega(t,x,v)\|_{L_{x,v}^2}^2\lesssim \mathcal E_{\rm Int}^{\rm low}.
\end{align*}
\end{proof}

The following corollary will be how we use these elliptic bounds in the future.
\begin{corollary} The stream function, $\psi$, satisfies the following bounds 
\begin{align}
\| \psi \|_{L^\infty}^2 + \sum_{i = 1}^4 \nu^{i} \| \nabla \p_y^{i-1} \psi  \|_{L^\infty}^2 \lesssim \mathcal{J}_{sob}.
\end{align}
\end{corollary}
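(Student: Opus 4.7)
The plan is to decompose $\psi = \psi_{sob}^{(I)} + \psi_{sob}^{(E)}$ and bound each piece by Sobolev embedding $H^2(\mathbb{T}\times[-1,1])\hookrightarrow L^\infty$, then carefully reshuffle the factors of $\nu$ so that the derivatives $\p_y^{i-1}$ on the interior piece become $(\sqrt{\nu}\,\p_y)^{i-1}$, matching the definition of $\mathcal{J}_{sob}^{(i,i)}$ in \eqref{mvp:2}. The exterior piece will carry a large $\nu$ penalty coming from the $\nu^{-100}$ in \eqref{mvp:3}, but this is absorbed because we only need at most $\nu^4$ on the left-hand side.

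First I would handle the $L^\infty$ bound on $\psi$ itself by writing $\|\psi\|_{L^\infty}\lesssim \|\psi_{sob}^{(I)}\|_{H^2}+\|\psi_{sob}^{(E)}\|_{H^2}$. The first is the $m=n=0$ term in $\mathcal{J}_{sob}^{(i,i)}$, and the second satisfies $\|\psi_{sob}^{(E)}\|_{H^2}^2 = \nu^{100}\cdot \nu^{-100}\|\psi_{sob}^{(E)}\|_{H^2}^2\le \nu^{100}\mathcal{J}_{sob}^{(o,o)}\le \mathcal{J}_{sob}$. For the higher-order terms, fix $1\le i\le 4$ and split $\nabla = (\p_x,\p_y)$ so that
\begin{align*}
\nu^{i}\|\nabla \p_y^{i-1}\psi\|_{L^\infty}^2 \lesssim \nu\,\|(\sqrt{\nu}\,\p_y)^{i-1}\p_x \psi\|_{L^\infty}^2 + \|(\sqrt{\nu}\,\p_y)^{i}\psi\|_{L^\infty}^2.
\end{align*}
Applying the 2D embedding $H^2\hookrightarrow L^\infty$ to each of the interior contributions converts these to $\|(\sqrt{\nu}\,\p_y)^{i-1}\p_x \psi_{sob}^{(I)}\|_{H^2}^2$ and $\|(\sqrt{\nu}\,\p_y)^{i}\psi_{sob}^{(I)}\|_{H^2}^2$, both of which are captured (with room to spare, since $i\le 4$) by the $m+n\le 10$ range in the definition \eqref{mvp:2} of $\mathcal{J}_{sob}^{(i,i)}$.

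For the exterior contribution, the Sobolev embedding gives $\nu^{i}\|\nabla\p_y^{i-1}\psi_{sob}^{(E)}\|_{L^\infty}^2\lesssim \nu^{i}\|\nabla\p_y^{i-1}\psi_{sob}^{(E)}\|_{H^2}^2$, which is exactly one of the summands in the definition of $\mathcal{J}_{sob}^{(o,o)}$ after being rescaled by $\nu^{-100}$; since $\nu^{i+100}\le 1$, we obtain a bound by $\mathcal{J}_{sob}$. Combining the interior and exterior bounds for each $i$ and summing yields the claim.

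The key technical point---and the only place any care is needed---is the bookkeeping of $\nu$-weights when distributing $\nabla\p_y^{i-1} = \p_x\p_y^{i-1}$ versus $\p_y^i$, so as to exactly reconstitute factors of $(\sqrt{\nu}\,\p_y)$ that appear in $\mathcal{J}_{sob}^{(i,i)}$. The restriction $i\le 4\le 10$ ensures we stay well inside the admissible range $m+n\le 10$ of the interior norm, and the $\nu^{-100}$ factor in $\mathcal{J}_{sob}^{(o,o)}$ absorbs any positive power of $\nu$ one could wish for on the exterior side. No estimate on $\mathcal{J}_{sob}^{(i,o)}$ is needed for this corollary. Given the earlier lemma bounding $\mathcal{J}_{sob}$ in terms of $\mathcal{E}_{sob}+\mathcal{E}_{\text{Int}}$, this corollary is essentially a direct consequence of 2D Sobolev embedding, so I anticipate no genuine obstacle.
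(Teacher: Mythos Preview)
Your proposal is correct and follows essentially the same approach as the paper: decompose $\psi = \psi_{sob}^{(I)} + \psi_{sob}^{(E)}$, apply the Sobolev embedding $H^2 \hookrightarrow L^\infty$, and match the $\nu$-weights against the definitions of $\mathcal{J}_{sob}^{(i,i)}$ and $\mathcal{J}_{sob}^{(o,o)}$. The paper's proof is a one-line sketch of exactly this argument; you have simply written out the $\nu$-bookkeeping that the paper leaves implicit.
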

\begin{proof} We simply decompose $\psi = \psi^{(I)}_{sob} + \psi^{(E)}_{sob}$, apply estimates \eqref{mvp:1} and \eqref{mvp:2}, and perform the usual $H^2 \hookrightarrow L^\infty$ Sobolev embedding. 
\end{proof}

\subsection{Bounds on $A_n, B_n, C_n, H_n$}

We need to estimate the quantities $\sum_{i = 1}^3 \| \nu^{\frac12 + i} A_{k,i} e^W \|_{L^2}^2$. 
\begin{lemma} The following bound is valid:
\begin{align}
\sum_{i = 1}^3 \| \nu^{\frac12 + i} A_{k,i} e^W \|_{L^2}^2 \lesssim & \nu(1 + \mathcal{J}_{sob}) \mathcal{D}_{sob}.
\end{align}
\end{lemma}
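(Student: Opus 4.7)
The plan is to treat each $(i,n')$ pair in the double sum $A_{k,i} = -\sum_{n'=0}^{i-1}\binom{i}{n'} \p_y^{i-n'}\bar U\,(ik) f_{k,n'}$ separately, using $L^\infty$ bounds on the coefficient $\p_y^{i-n'}\bar U$ and $L^2$ bounds on $(ik)f_{k,n'}$ through the Sobolev dissipation functionals. Since $\bar U = y + U^x_0$, we have $\p_y\bar U = 1 + \p_y U^x_0$ and $\p_y^j\bar U = \p_y^{j-1}\omega_0$ for $j\ge 2$ (using the zero-mode identity $\p_y U^x_0 = \omega_0$, up to sign). Thus the constant "$1$" contribution from $\p_y\bar U$ yields a universal coefficient (the "$1$" in $1+\mathcal J_{sob}$), while the remaining contribution involves $\omega_0,\p_y\omega_0,\p_y^2\omega_0$ which we will control pointwise through $\mathcal J_{sob}$.

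\textbf{Step 1 (coefficient bounds via $\mathcal J_{sob}$).} Using $\omega_0 = -\p_y^2\psi_0$ (and analogously for higher derivatives), a standard Sobolev embedding $H^{1}_y \hookrightarrow L^\infty_y$ combined with the corollary preceding this lemma yields estimates of the schematic form
\begin{equation*}
\|1+\omega_0\|_{L^\infty}^2 + \nu\|\p_y\omega_0\|_{L^\infty}^2 + \nu^2\|\p_y^2\omega_0\|_{L^\infty}^2 \lesssim 1 + \mathcal J_{sob}.
\end{equation*}
The $\nu$ weights here match exactly the weights appearing in the corollary's statement $\sum_{i=1}^4 \nu^{i}\|\nabla\p_y^{i-1}\psi\|_{L^\infty}^2 \lesssim \mathcal J_{sob}$.

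\textbf{Step 2 ($L^2$ bounds on $(ik)f_{k,n'}$).} For the Sobolev level $n' = 0$, the quantity $\|\sqrt{\nu}|k|f_{k,0} e^W\|_{L^2}^2$ is exactly $\mathcal D_{sob,0}$. For $n'\ge 1$, since $\mathcal D_{sob,n'} = \nu^{2n'}\sum_k \|\sqrt{\nu}\nabla_k^2\omega_{k,n'-1}e^W\|_{L^2}^2$, we extract $|k|f_{k,n'} = |k|\p_y(\p_y^{n'-1}\omega_k)$, which is bounded by $\mathcal D_{sob,n'}/\nu^{2n'+1}$.

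\textbf{Step 3 (assembly).} For each $n' < i\le 3$, H\"older's inequality gives
\begin{equation*}
\|\nu^{\frac12+i}\p_y^{i-n'}\bar U\cdot(ik)f_{k,n'}e^W\|_{L^2}^2 \lesssim \nu^{2(i-n')}\|\p_y^{i-n'}\bar U\|_{L^\infty}^2 \cdot \|\nu^{\frac12+n'}|k|f_{k,n'}e^W\|_{L^2}^2.
\end{equation*}
Matching the $\nu$ weights from Step 1 ($\nu^{i-n'-1}$ needed to absorb $\|\p_y^{i-n'}\bar U\|_\infty$ into $1+\mathcal J_{sob}$, when $i-n'\ge 2$) against the extra factor $\nu^{2(i-n')}\ge \nu^{i-n'+1}$ available, we obtain at least one extra power of $\nu$ for every term, yielding
\begin{equation*}
\|\nu^{\frac12+i}\p_y^{i-n'}\bar U\cdot (ik)f_{k,n'}e^W\|_{L^2}^2 \lesssim \nu(1+\mathcal J_{sob})\,\mathcal D_{sob,n'}.
\end{equation*}
Summing over the finitely many $(i,n')$ with $0\le n'< i\le 3$ and over $k$ gives the claimed bound.

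\textbf{Main obstacle.} The delicate part is ensuring that the $\nu$ bookkeeping closes for the most sensitive pair $(i,n') = (3,0)$, where one needs to control $\|\p_y^3\bar U\|_{L^\infty} = \|\p_y^2\omega_0\|_{L^\infty}$, corresponding to $\p_y^4\psi_0$ in $L^\infty$, while simultaneously paying the price of $|k|f_{k,0}$. The $\nu^{3}$ weight on this term has to exactly accommodate both the $\nu^{-2}$ implicit in the $L^\infty$ bound for $\p_y^2\omega_0$ (from the corollary, which gives $\nu^2\|\p_y^2\omega_0\|_{L^\infty}^2 \lesssim \mathcal J_{sob}$) and the dissipation scaling $\mathcal D_{sob,0} = \|\sqrt\nu|k|f_{k,0}\|_{L^2}^2$. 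One verifies $\nu^{7}\cdot\nu^{-2}\cdot\nu^{-1} = \nu^4$, which is more than sufficient. No other pair is tighter, so this checking completes the argument.
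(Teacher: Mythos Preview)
Your approach is exactly the paper's: split $A_{k,i}$ term-by-term, pull $\|\p_y^{i-n'}\bar U\|_{L^\infty}$ out, and absorb $|k|f_{k,n'}$ into $\mathcal D_{sob,n'}$. The paper's own proof stops at the intermediate bounds $\nu^{i-n'}\|\p_y^{i-n'}\bar U\|_{L^\infty}\,\mathcal D_{sob,n'}^{1/2}$ and never explicitly closes to $(1+\mathcal J_{sob})$, so your Steps 1--3 are filling in what the paper leaves implicit.

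There is, however, a concrete $\nu$-bookkeeping error in Step 1. You claim $\|\omega_0\|_{L^\infty}^2 + \nu\|\p_y\omega_0\|_{L^\infty}^2 + \nu^2\|\p_y^2\omega_0\|_{L^\infty}^2 \lesssim 1+\mathcal J_{sob}$ and say the weights ``match exactly'' the corollary. They do not: since $\p_y^{j-1}\omega_0 = -\p_y^{j+1}\psi_0$, the corollary's $i=j+1$ term gives $\nu^{j+1}\|\p_y^{j-1}\omega_0\|_{L^\infty}^2 \lesssim \mathcal J_{sob}$, which is two powers of $\nu$ weaker than what you write. Carrying the corollary's actual weights through Step 3, the worst case $j=i-n'=1$ yields $\nu^{2}\|\p_y\bar U\|_{L^\infty}^2 \lesssim \nu^2 + \mathcal J_{sob}$, hence only $(\nu+\mathcal J_{sob})\mathcal D_{sob}$ rather than $\nu(1+\mathcal J_{sob})\mathcal D_{sob}$. (Your ``main obstacle'' paragraph repeats the same miscount: the corollary gives $\nu^4\|\p_y^2\omega_0\|_{L^\infty}^2\lesssim\mathcal J_{sob}$, not $\nu^2$.)

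To recover the extra $\nu$ and match the lemma as stated, exploit that $\bar U$ depends only on $y$: apply the 1D embedding $H^1_y\hookrightarrow L^\infty_y$ (which you mention but never actually use) \emph{directly} to the definition $\mathcal J_{sob}^{(i,i)}=\sum_{m+n\le 10}\|\p_x^m(\nu^{1/2}\p_y)^n\psi_{sob}^{(I)}\|_{H^2}^2$ rather than to the corollary. This saves one power of $\nu$ and yields $\nu^{j}\|\p_y^{j-1}\omega_0\|_{L^\infty}^2\lesssim \mathcal J_{sob}$, after which Step 3 closes exactly to $\nu(1+\mathcal J_{sob})\mathcal D_{sob}$. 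Either form of the bound is adequate for the paper's purposes, since the weaker $(\nu+\mathcal J_{sob})\mathcal D_{sob}$ is what is invoked later when assembling the Sobolev energy estimate.
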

\begin{proof} We start with the case of $i = 1$. Here, we have $A_{k,1} = \p_y \bar{U} ik f_{k,0}$, after which we get 
\begin{align*}
\| \nu^{\frac32} \p_y \bar{U} ik f_{k,0} e^W\|_{L^2} \lesssim \nu \| \p_y \bar{U} \|_{L^\infty} \| \sqrt{\nu} |k| f_{k,0} e^W\|_{L^2} \lesssim \nu \| \p_y \bar{U} \|_{L^\infty} \mathcal{D}_{sob,0}^{\frac12}.
\end{align*}
Next we move to $A_{k,2}$. We estimate 
\begin{align*}
\| \nu^{\frac52} \p_y \bar{U} ik f_{k,1} e^W\|_{L^2} \lesssim &\nu \| \p_y \bar{U} \|_{L^\infty} \| \nu^{\frac32} |k| f_{k,1} e^W\|_{L^2} \lesssim \nu \| \p_y \bar{U} \|_{L^\infty} \mathcal{D}_{sob,1}^{\frac12}  \\
\| \nu^{\frac52} \p_y^2 \bar{U} ik f_{k,0} e^W\|_{L^2} \lesssim & \nu^2 \| \p_y^2 \bar{U} \|_{L^\infty} \| \sqrt{\nu} |k| f_{k,0} e^W\|_{L^2} \lesssim \nu^2 \| \p_y^2 \bar{U} \|_{L^\infty} \mathcal{D}_{sob,0}^{\frac12}.
\end{align*}
Finally, we move to $A_{k,3}$ and get
\begin{align*}
\| \nu^{\frac72} \p_y \bar{U} ik f_{k,2} e^W\|_{L^2} \lesssim &  \nu \| \p_y \bar{U} \|_{L^\infty} \| \nu^{\frac52} |k| f_{k,2} e^W\|_{L^2} \lesssim  \nu \| \p_y \bar{U} \|_{L^\infty}\mathcal{D}_{sob,2}^{\frac12} \\
\| \nu^{\frac72} \p_y^2 \bar{U} ik f_{k,1} e^W\|_{L^2} \lesssim &  \nu^2 \| \p_y^2 \bar{U} \|_{L^\infty} \| \nu^{\frac32} |k| f_{k,1} e^W\|_{L^2} \lesssim \nu^2 \| \p_y^2 \bar{U} \|_{L^\infty} \mathcal{D}_{sob,1}^{\frac12}\\
\| \nu^{\frac72} \p_y^3 \bar{U} ik f_{k,0} e^W\|_{L^2} \lesssim &  \nu^3 \| \p_y^3 \bar{U} \|_{L^\infty} \| \sqrt{\nu} |k| f_{k,0} e^W\|_{L^2} \lesssim \nu^3 \| \p_y^3 \bar{U} \|_{L^\infty} \mathcal{D}_{sob,0}^{\frac12}.
\end{align*}
\end{proof}


The following lemma gives a bound of the quantities $\sum_{i = 1}^3 \| \nu^{\frac12 + i} B_{k,i} e^W \|_{L^2}^2$. 
\begin{lemma}The following estimates hold:
\begin{align}
\sum_{i = 1}^3 \| \nu^{\frac12 + i} B_{i} e^W \|_{L^2} \lesssim & \mathcal{J}_{sob}^{\frac12} \mathcal{D}_{sob}^{\frac12}.
\end{align}
\end{lemma}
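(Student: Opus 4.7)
The plan is to expand each $B_{k,i}$ by the Leibniz rule, then handle each resulting bilinear term by placing one factor in $L^\infty$ (absorbed into $\mathcal{J}_{sob}^{1/2}$ via the corollary giving $\nu^{i}\|\nabla \partial_y^{i-1}\psi\|_{L^\infty}^2 \lesssim \mathcal{J}_{sob}$) and the other in $L^2$ (absorbed into $\mathcal{D}_{sob}^{1/2}$ via the definition $\mathcal{D}_{sob,n}^{1/2}$ controls $\|\nu^{n+1/2}\nabla_k f_{k,n-1}e^W\|_{L^2}$ for $n\ge 1$, and $\|\nu^{1/2}\nabla_k f_{k,0}e^W\|_{L^2}$ for $n=0$). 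The key bookkeeping is to verify that the choice of which factor goes into which space produces a total weight of exactly the $\nu^{1/2+i}$ that sits in front of $B_{k,i}$, using no more derivatives on either factor than the functional framework allows.

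More precisely, for $B_{k,1}$, the single term $\nabla^\perp\partial_y\psi_{\neq}\cdot\nabla\omega$ I will split as $\|\nabla\partial_y\psi\|_{L^\infty}\|\nu^{3/2}\nabla\omega\, e^W\|_{L^2}$, which by the corollary and by $\|\nu^{3/2}\nabla\omega\, e^W\|_{L^2}\lesssim \nu\,\mathcal{D}_{sob,0}^{1/2}$ gives $\lesssim \nu^{1/2}\mathcal{J}_{sob}^{1/2}\mathcal{D}_{sob}^{1/2}$. For $B_{k,2}$ and $B_{k,3}$, I proceed analogously, splitting each term $\nabla^\perp\partial_y^{i-n'}\psi_{\neq}\cdot\nabla\partial_y^{n'}\omega$ with $0\le n'\le i-1$, placing the stream-function factor in $L^\infty$ when $i-n'\le 4$ (so that the corollary applies) and the vorticity factor $\nabla f_{k,n'}$ in $L^2$ against the weight $\nu^{n'+1/2}e^W$ from $\mathcal{D}_{sob,n'}$. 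The remaining powers of $\nu$ to be distributed between the two factors always leave enough to match $\nu^{1/2+i}$ and even give a small excess of $\nu^{1/2}$, consistent with the claim.

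The only mildly delicate case is the term in $B_{k,3}$ with $n'=0$, namely $\nabla^\perp\partial_y^3\psi_{\neq}\cdot\nabla\omega$: here I must place $\partial_y^3\psi$ together with one $\nabla$, i.e.\ $\nabla\partial_y^3\psi$, in $L^\infty$, which is exactly at the top of the range allowed by the corollary ($i=4$, giving $\nu^2\|\nabla\partial_y^3\psi\|_{L^\infty}^2\lesssim \mathcal{J}_{sob}$), and pair it with $\|\nu^{5/2}\nabla\omega\, e^W\|_{L^2}\lesssim \nu^2\mathcal{D}_{sob,0}^{1/2}$. The total is $\nu^{-1}\mathcal{J}_{sob}^{1/2}\cdot\nu^2\mathcal{D}_{sob,0}^{1/2}=\nu\,\mathcal{J}_{sob}^{1/2}\mathcal{D}_{sob}^{1/2}$, again with room to spare.

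The main obstacle is just this combinatorial bookkeeping of $\nu$-powers versus derivative counts, together with confirming that we never demand more than four $y$-derivatives on $\psi$ in $L^\infty$ (which is the ceiling imposed by $\mathcal{J}_{sob}$) and never more than two $\nabla$-derivatives on $\omega$ in $L^2$ (which is the ceiling imposed by $\mathcal{D}_{sob}$). Since the highest derivative count appearing in any single $B_{k,i}$ term is at most $i+1\le 4$ across both factors combined, the constraint is respected in every case, and a uniform $\nu^{1/2}$ excess is produced which is discarded at the end to yield the stated bound.
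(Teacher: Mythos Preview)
Your approach is correct and matches the paper's proof: expand each $B_i$ via Leibniz, place the stream-function factor in $L^\infty$ (bounded via the corollary on $\mathcal{J}_{sob}$) and the vorticity factor in weighted $L^2$ (bounded by $\mathcal{D}_{sob,n'}^{1/2}$), then distribute the $\nu$-powers. The paper carries this out term by term in exactly the same way.

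Two small arithmetic slips to correct in your write-up. First, the corollary reads $\nu^{i}\|\nabla\partial_y^{i-1}\psi\|_{L^\infty}^2\lesssim\mathcal{J}_{sob}$, so for $i=4$ the weight is $\nu^4$, not $\nu^2$; this gives $\|\nabla\partial_y^3\psi\|_{L^\infty}\lesssim\nu^{-2}\mathcal{J}_{sob}^{1/2}$. Second, the ``uniform $\nu^{1/2}$ excess'' is not uniform: for $B_1$ the bookkeeping gives exactly $\mathcal{J}_{sob}^{1/2}\mathcal{D}_{sob,0}^{1/2}$ with no excess (you have $\nu^{3/2}=\nu\cdot\nu^{1/2}$, with $\nu\|\nabla\partial_y\psi\|_{L^\infty}\lesssim\mathcal{J}_{sob}^{1/2}$ from $i=2$ and $\|\nu^{1/2}\nabla\omega\,e^W\|_{L^2}=\mathcal{D}_{sob,0}^{1/2}$). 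The excess varies from $0$ to $\nu$ depending on the term. None of this affects the final bound, which closes exactly as you claim.
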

\begin{proof} 
Noting
\begin{align*}
	B_1 = \p_y^2 \psi \omega_x - \p_{xy} \psi \omega_y,
\end{align*}
we have 
\begin{align*}
\| \nu^{\frac32} B_1 e^W \|_{L^2} \lesssim \nu \| \psi \|_{W^{2,\infty}} \| \sqrt{\nu} \nabla \omega e^W \|_{L^2} \lesssim \nu \mathcal{J}_{sob}^{\frac12} \mathcal{D}_{sob,0}^{\frac12}
\end{align*}
proving the inequality in the case $i = 1$. For the case $n =2$, we have 
\begin{align} \n
	B_{2} = & - \p_y^3 \psi \p_x \omega + \p_y^2 \p_x \psi \p_y \omega - 2 \p_y^2 \psi \p_{xy} \omega + 2 \p_{xy} \psi \p_y^2 \omega,
\end{align}
for which we obtain the successive bounds: 
\begin{align*}
	\|  \nu^{\frac52} B_{2,1} e^W \|_{L^2} \lesssim & \sqrt{\nu} \| \nu^{\frac32} \p_y^3 \psi \|_{L^\infty} \| \sqrt{\nu} \p_x \omega e^W \|_{L^2} \lesssim \mathcal{J}_{sob}^{\frac12} \mathcal{D}_{sob,0}^{\frac12} \\
	\|  \nu^{\frac52} B_{2,2} e^W \|_{L^2} \lesssim & \sqrt{\nu} \| \nu^{\frac32} \p_y^2 \p_x \psi \|_{L^\infty} \| \sqrt{\nu} \p_y \omega e^W \|_{L^2} \lesssim \mathcal{J}_{sob}^{\frac12} \mathcal{D}_{sob,0}^{\frac12} \\
	\|  \nu^{\frac52} B_{2,3} e^W \|_{L^2} \lesssim & \| \nu \p_y^2 \psi \|_{L^\infty} \| \nu^{\frac32} \p_{xy} \omega e^W \|_{L^2} \lesssim \mathcal{J}_{sob}^{\frac12} \mathcal{D}_{sob,1}^{\frac12} \\
	\|  \nu^{\frac52} B_{2,4} e^W \|_{L^2} \lesssim & \| \nu \p_{xy} \psi \|_{L^\infty} \| \nu^{\frac32} \p_{yy} \omega e^W \|_{L^2} \lesssim \mathcal{J}_{sob}^{\frac12} \mathcal{D}_{sob,1}^{\frac12},
\end{align*}
proving the second inequality. In view of 
\begin{align}
	B_3 = & - \p_y^4 \psi \omega_x - 3 \psi_{yyy} \omega_{xy} - 2 \psi_{yy} \omega_{xyy} + \psi_{yyyx} \omega_y + 3 \psi_{xyy} \omega_{yy} + 2 \psi_{xy} \omega_{yyy},
\end{align}
we estimate these terms successively as follows: 
\begin{align*}
	\| \nu^{\frac72} B_{3,1} e^W \|_{L^2} \lesssim & \nu \| \nu^2 \p_y^4 \psi \|_{L^\infty} \| \sqrt{\nu} \omega_x  e^W \|_{L^2} \lesssim \mathcal{J}_{sob}^{\frac12} \mathcal{D}_{sob,0}^{\frac12}, \\
	\| \nu^{\frac72} B_{3,2} e^W \|_{L^2} \lesssim & \nu^{\frac12} \| \nu^{\frac32} \p_y^3 \psi \|_{L^\infty} \| \nu^{\frac32} \omega_{xy}  e^W \|_{L^2} \lesssim \mathcal{J}_{sob}^{\frac12} \mathcal{D}_{sob,1}^{\frac12}, \\
	\| \nu^{\frac72} B_{3,3} e^W \|_{L^2} \lesssim &  \| \nu \p_y^2 \psi \|_{L^\infty} \| \nu^{\frac52} \omega_{xyy}  e^W \|_{L^2} \lesssim \mathcal{J}_{sob}^{\frac12} \mathcal{D}_{sob,2}^{\frac12}, \\
	\| \nu^{\frac72} B_{3,4} e^W \|_{L^2} \lesssim & \nu \| \nu^2  \psi_{yyyx} \|_{L^\infty} \| \nu^{\frac12} \omega_{y}  e^W \|_{L^2} \lesssim \mathcal{J}_{sob}^{\frac12} \mathcal{D}_{sob,0}^{\frac12}, \\
	\| \nu^{\frac72} B_{3,5} e^W \|_{L^2} \lesssim & \nu^{\frac12} \|  \nu^{\frac32} \psi_{yyx} \|_{L^\infty} \| \nu^{\frac32} \omega_{yy}  e^W \|_{L^2} \lesssim \mathcal{J}_{sob}^{\frac12} \mathcal{D}_{sob,1}^{\frac12}, \\
	\| \nu^{\frac72} B_{3,6} e^W \|_{L^2} \lesssim & \| \nu \psi_{yx} \|_{L^\infty} \| \nu^{\frac52} \omega_{yyy}  e^W \|_{L^2} \lesssim \mathcal{J}_{sob}^{\frac12} \mathcal{D}_{sob,2}^{\frac12},
\end{align*}
which combined together give the third inequality, concluding the proof.
\end{proof}


\begin{lemma} The following bound is valid:
\begin{align}
\sum_{i = 0}^3 \| \nu^{\frac12 + i} C_{i} e^W \|_{L^2} \lesssim \mathcal{J}_{sob}^{\frac12} \mathcal{D}_{sob}^{\frac12}.
\end{align}
\end{lemma}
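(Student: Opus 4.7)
The plan is to estimate each term $\|\nu^{1/2+i} C_i e^W\|_{L^2}$ independently via a straightforward H\"older split, placing the velocity factor $\nabla^\perp\psi_{\neq 0}$ into $L^\infty$ and absorbing the remaining high-derivative vorticity factor into a piece of $\mathcal{D}_{sob}^{1/2}$. Writing out
\[
C_i = -\nabla^\perp\psi_{\neq 0}\cdot\nabla\partial_y^i\omega = \partial_y\psi_{\neq 0}\,\partial_x\partial_y^i\omega - \partial_x\psi_{\neq 0}\,\partial_y^{i+1}\omega,
\]
I would apply H\"older to obtain
\[
\|\nu^{1/2+i} C_i e^W\|_{L^2} \le \|\nabla\psi_{\neq 0}\|_{L^\infty}\,\|\nu^{1/2+i}\nabla\partial_y^i\omega\,e^W\|_{L^2}
\]
for each $i = 0,1,2,3$.

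The vorticity factor matches the Sobolev dissipation functionals cleanly: for $i=0$ one has $\|\nu^{1/2}\nabla\omega\,e^W\|_{L^2}^2 = \mathcal{D}_{sob,0}$ directly from \eqref{hn:1}, and for $i \ge 1$ I would re-group $\nabla\partial_y^i \omega = \nabla^2\partial_y^{i-1}\omega$, identifying $\|\nu^{1/2+i}\nabla^2\partial_y^{i-1}\omega\,e^W\|_{L^2}^2 = \mathcal{D}_{sob,i}$ once the factor $\nu^{2i}$ in the definition is balanced against $\nu^{1+2i}$ inside the $L^2$ norm. Summing over $i = 0,1,2,3$ yields the overall bound $\|\nabla\psi_{\neq 0}\|_{L^\infty}\,\mathcal{D}_{sob}^{1/2}$, so everything reduces to establishing $\|\nabla\psi_{\neq 0}\|_{L^\infty}\lesssim \mathcal{J}_{sob}^{1/2}$.

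The main obstacle is precisely this $L^\infty$ estimate: a direct application of the preceding corollary gives only $\|\nabla\psi\|_{L^\infty} \lesssim \nu^{-1/2}\mathcal{J}_{sob}^{1/2}$, which is insufficient by a factor of $\nu^{-1/2}$. To sharpen it I would decompose $\psi_{\neq 0} = \psi_{sob}^{(I)} + \psi_{sob}^{(E)}$ as in the definition of $\mathcal{J}_{sob}$. The exterior piece carries the prefactor $\nu^{-100}$ in $\mathcal{J}_{sob}^{(o,o)}$, so $\|\nabla\psi_{sob}^{(E)}\|_{L^\infty} \lesssim \nu^{50}\mathcal{J}_{sob}^{1/2}$ is harmlessly small. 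For the interior piece, the $\partial_x\psi_{sob}^{(I)}$ factor is clean since $\|\partial_x\psi_{sob}^{(I)}\|_{L^\infty}\lesssim \|\partial_x\psi_{sob}^{(I)}\|_{H^2}\lesssim (\mathcal{J}_{sob}^{(i,i)})^{1/2}$ with no $\nu$ cost, coming from the $m=1,n=0$ term in \eqref{mvp:2}. For the $\partial_y\psi_{sob}^{(I)}$ factor I would exploit the elliptic equation $\Delta\psi_{sob}^{(I)} = \omega(1-\chi_2)$ whose source is compactly supported away from $\{y=\pm 1\}$: by combining standard Biot-Savart smoothing with the boundary-localized auxiliary functional $\mathcal{J}_{sob}^{(i,o)} = \|\psi_{sob}^{(I)}\chi_9\|_{H^{50}}^2$, one should recover $\|\partial_y\psi_{sob}^{(I)}\|_{L^\infty} \lesssim \mathcal{J}_{sob}^{1/2}$ without the $\nu^{-1/2}$ loss inherent in a blunt Sobolev embedding of $\mathcal{J}_{sob}^{(i,i)}$ alone. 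Once that refined $L^\infty$ bound is in hand, the four H\"older estimates sum to the claim.
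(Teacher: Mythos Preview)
Your approach is exactly the paper's: H\"older with $\nabla\psi_{\neq 0}$ in $L^\infty$ and $\nu^{1/2+i}\nabla\partial_y^i\omega\,e^W$ in $L^2$, the latter giving $\mathcal{D}_{sob,i}^{1/2}$.

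The ``main obstacle'' you flag is not an obstacle at all, and your proposed workaround via Biot--Savart smoothing and $\mathcal{J}_{sob}^{(i,o)}$ is unnecessary. The Corollary as stated is simply suboptimal for the lowest-order term: the $\nu$-free bound $\|\nabla\psi_{sob}^{(I)}\|_{L^\infty}\lesssim(\mathcal{J}_{sob}^{(i,i)})^{1/2}$ follows directly from the $n=0$ summands in \eqref{mvp:2}. Indeed, $\|\partial_x\psi_{sob}^{(I)}\|_{L^\infty}\lesssim\|\partial_x\psi_{sob}^{(I)}\|_{H^2}$ is the $m=1,n=0$ term, and for $\partial_y\psi_{sob}^{(I)}$ one uses the mixed embedding $H^1_xH^1_y\hookrightarrow L^\infty$: the required norms $\|\partial_y\psi\|_{L^2},\|\partial_{xy}\psi\|_{L^2},\|\partial_{yy}\psi\|_{L^2}$ sit inside $\|\psi_{sob}^{(I)}\|_{H^2}$ ($m=0,n=0$), and $\|\partial_{xyy}\psi\|_{L^2}$ sits inside $\|\partial_x\psi_{sob}^{(I)}\|_{H^2}$ ($m=1,n=0$). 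No $\nu^{1/2}\partial_y$ term is touched. The paper simply writes $\|\nabla\psi\|_{L^\infty}\lesssim\mathcal{J}_{sob}^{1/2}$ and moves on.
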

\begin{proof} A straightforward estimate gives
\begin{align*}
\sum_{i = 0}^3 \| \nu^{\frac12 + i} C_{i} e^W \|_{L^2} \lesssim \|  \nabla \psi \|_{L^\infty}( \sum_{i = 0}^3 \| \nu^{\frac12 + i} \nabla \p_y^i \omega \|_{L^2}  ) \lesssim \mathcal{J}_{sob}^{\frac12} \sum_{i = 0}^3 \mathcal{D}_{sob,i}^{\frac12}
\end{align*}
concluding the proof. 

\end{proof}

Finally, we provide our bounds on $H_3$:
\begin{lemma} It holds
\begin{align}
\| \nu^{\frac72} H_{k,3} e^W \|_{L^2_{xy}}^2 \lesssim & \nu^2 \mathcal{D}_{\text{Trace}}(t).
\end{align}
\end{lemma}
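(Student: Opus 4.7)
My plan is to unfold the definition \eqref{def:big:H} of $H_{k,3}$ term-by-term, using the explicit form of the lift function \eqref{small:fh}, and then to identify each resulting term with one of the "large" trace dissipation functionals \eqref{zero:changes:10}--\eqref{zero:changes:large:1} (which were designed precisely for this accounting). The key scaling observation, coming from the fact that $\varphi_\iota(y/\nu)$ is a Gevrey bump concentrated in a layer of thickness $O(\nu)$ near $y=\pm 1$, is
\begin{equation*}
\bigl\|\varphi_\iota(\cdot/\nu)\bigr\|_{L^2_y}\ \lesssim\ \nu^{1/2},\qquad
\bigl\|\partial_y^{\,j}[\varphi_\iota(\cdot/\nu)]\bigr\|_{L^2_y}\ \lesssim\ \nu^{\frac12-j},
\end{equation*}
so that $h_k$, $\partial_y h_k$, $\partial_y^2 h_k$ carry factors of $\nu,\,1,\,\nu^{-1}$ respectively (up to the one-dimensional trace $\alpha_{k,4,\iota}(t)$). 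Since the weight $e^W$ depends only on $y$ and is comparable to $e^{W|_{y=\pm 1}}$ within the layer of thickness $\nu$ (this follows from the explicit form of $W$ in \eqref{defndW}, because $W$ varies on the slow scale $\nu^{-1/2}$ in $y$), the weight just passes through to the boundary values $\alpha_{k,4,\iota}(t)$.

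The bulk of the work is then to check three estimates separately. First, for the time-derivative contribution,
\begin{equation*}
\|\nu^{7/2}\partial_t h_k\, e^W\|_{L^2_{xy}}^2 \ \lesssim\ \nu^{9}\|\partial_t\alpha_{k,4,\iota}\,e^W\|_{L^2_x}^2 \cdot \nu\ =\ \nu^{2}\cdot\|\nu^{4}\partial_t\alpha_{4}\,e^W\|_{L^2_x}^2\ \lesssim\ \nu^{2}\,\mathcal{D}_{\text{Trace,Large},2}.
\end{equation*}
Second, for the transport contribution, bounding $\bar U\in L^\infty$ (which is available from Corollary \ref{v:3:deri:l} and the bootstrap),
\begin{equation*}
\|\nu^{7/2}ik\bar U h_k\,e^W\|_{L^2_{xy}}^2\ \lesssim\ \nu^{10}\|k\,\alpha_{k,4,\iota}\,e^W\|_{L^2_x}^2\ \lesssim\ \nu^{2}\,\mathcal{D}_{\text{Trace,Large},1}.
\end{equation*}
Third, for the viscous contribution we split $\nu\Delta_k h_k = \nu\partial_y^2 h_k - \nu k^2 h_k$; using $\nu\partial_y^2 h_k=\sum_\iota \alpha_{k,4,\iota}\varphi_\iota''(y/\nu)$ and $\nu k^2 h_k=\nu^2 k^2\sum_\iota\alpha_{k,4,\iota}\varphi_\iota(y/\nu)$, the two halves give respectively
\begin{equation*}
\|\nu^{7/2}\nu\partial_y^2 h_k\, e^W\|_{L^2_{xy}}^2\ \lesssim\ \nu^{2}\,\mathcal{D}_{\text{Trace,Large},0},\qquad
\|\nu^{7/2}\nu k^2 h_k\,e^W\|_{L^2_{xy}}^2\ \lesssim\ \nu^{2}\,\mathcal{D}_{\text{Trace,Large},2},
\end{equation*}
where in the second one we use the $\nu^5\partial_{xx}\alpha_4$ piece of $\mathcal{D}_{\text{Trace,Large},2}$ defined in \eqref{zero:changes:large:1}. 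Summing the three contributions yields the claimed bound.

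The estimate is essentially bookkeeping once one has chosen the correct lift function: the powers of $\nu$ on the trace functionals \eqref{zero:changes:10}--\eqref{zero:changes:large:1} have been calibrated precisely so that this inequality closes with the prefactor $\nu^2$ (and not something larger). I do not expect any step to be a genuine obstacle, but the most delicate point is the weight handling: one must justify that $e^{W(t,y)}$ restricted to the boundary layer $\{|y-\iota|\lesssim\nu\}$ is comparable to $e^{W(t,\iota)}$, which is the weight appearing in the trace functionals. This is an immediate consequence of the formula \eqref{defndW} together with $\nu\ll 1$ and $L\eps\arctan t \lesssim \eps$, since then $W(t,y)-W(t,\iota)$ is $O(1)$ (in fact $o(1)$) across the layer.
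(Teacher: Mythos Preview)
Your proposal is correct and follows essentially the same approach as the paper: expand $H_{k,3}$ via \eqref{def:big:H}, insert the explicit lift \eqref{small:fh}, use the scaling $\|\partial_y^j\varphi_\iota(\cdot/\nu)\|_{L^2_y}\lesssim \nu^{1/2-j}$, and match each piece to the trace functionals \eqref{zero:changes:10}--\eqref{zero:changes:large:1}. One small simplification: for the weight you do not need comparability across the layer---the paper just uses that $W$ is monotone increasing in $|y|$, so $e^{W(t,y)}\le e^{W(t,\pm1)}$ on the support of $\varphi_\iota(\cdot/\nu)$, which is all that is required for the upper bound.
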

\begin{proof} Based on the definition \eqref{def:big:H}, we have
\begin{align*}
\|  \nu^{\frac72} H_{k,3} e^W \|_{L^2} \lesssim & \| \nu^{\frac72} \p_t h_{k,3} e^W \|_{L^2} +  \| \nu^{\frac72}  k \bar{U} h_{k,3} e^W \|_{L^2} + \| \nu^{\frac92} \Delta_k h_{k,3} e^W \|_{L^2} \\ \n
\lesssim & \| \nu^{\frac92} \p_t \alpha_4 \varphi(\frac{y}{\nu}) e^W \|_{L^2} + \| \nu^{\frac92} \p_x \alpha_4 \varphi(\frac{y}{\nu}) e^W \|_{L^2} + \| \nu^{\frac{11}{2}} \p_{xx} \alpha_4 \varphi(\frac{y}{\nu}) e^W \|_{L^2} \\
&+ \| \nu^{\frac{7}{2}}  \alpha_4 \varphi''(\frac{y}{\nu}) e^W \|_{L^2} \\
\lesssim & \| \nu^{\frac92} \p_t \alpha_4 \varphi(\frac{y}{\nu}) e^W|_{y = \pm 1} \|_{L^2} + \| \nu^{\frac92} \p_x \alpha_4 \varphi(\frac{y}{\nu}) e^W|_{y = \pm 1} \|_{L^2} + \| \nu^{\frac{11}{2}} \p_{xx} \alpha_4 \varphi(\frac{y}{\nu}) e^W|_{y = \pm 1} \|_{L^2} \\
&+ \| \nu^{\frac{7}{2}}  \alpha_4 \varphi''(\frac{y}{\nu}) e^W|_{y = \pm 1} \|_{L^2} \\
\lesssim & \nu \mathcal{D}_{\text{Trace, Large},2}^{\frac12} + \nu \mathcal{D}_{\text{Trace, Large},1}^{\frac12} +  \nu \mathcal{D}_{\text{Trace, Large},2}^{\frac12} +  \nu \mathcal{D}_{\text{Trace, Large},0}^{\frac12} \\
\lesssim & \nu  \mathcal{D}_{\text{Trace}}(t)^{\frac12}.
\end{align*}
Above, we have used that the weight function $W$ is increasing towards the boundaries $y = \pm 1$. 
\end{proof}

\subsection{Bounds on Trace Functionals, $\alpha_n(t)$}

Due to its appearance on the right-hand side of \eqref{enhy:1}, we need to prove a bound on $\| \nu^{\frac72} H_3 e^W \|_{L^2_t L^2_{xy}}$. We proceed in three steps: first we control $\alpha_1(t, x)$, then we control $\alpha_4(t, x)$, and then we use these bounds to control $H_3$. 

To provide bounds on $\alpha_1(t, x)$, we recall the equation 
\begin{align} \label{eq:alpha:1}
\p_t \alpha_1 - \nu \p_x^2 \alpha_1 = &\nu \alpha_3 - \psi_y(t, x, 0) \p_x \alpha_1 + \psi_{xy}(t, x, 0) \alpha_1, \\ \label{eq:alpha:2}
\p_t \p_x \alpha_1 - \nu \p_x^2 \p_x \alpha_1 =& \nu \p_x \alpha_3 - \psi_y(t, x,0) \p_x^2 \alpha_1 + \psi_{xxy}(t,x,0) \alpha_1 . 
\end{align} 
We will also need the equation: 
\begin{align} \label{sr:a}
\alpha_{4}(t) = & \frac{2}{\nu} \bar{U}' \p_x \alpha_1, \\  \label{sr:b}
\p_t \alpha_{4}(t) = & \frac{2}{\nu} \bar{U}' \p_x \p_t \alpha_1.
\end{align}
It will turn out to be convenient to introduce the following notation: 
\begin{align}
\mathcal{S}_{\le n}(t) := \sum_{n' = 0}^n (\mathcal{D}_{sob,n'}(t) + \mathcal{CK}_{sob,n'}(t)). 
\end{align}

We then have
\begin{lemma} The trace quantity $\alpha_1(t, x)$ satisfies 
\begin{align}
\frac{\p_t}{2} \mathcal{E}_{\text{Trace},0}(t) +   \mathcal{CK}_{\text{Trace},0}(t) + \mathcal{D}_{\text{Trace},0}(t) \lesssim (1+  \mathcal{J}_{sob}^{\frac12}) \mathcal{S}_{\le 3}.
\end{align}
\end{lemma}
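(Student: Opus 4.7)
The plan is to test the scalar equation~\eqref{eq:alpha:1} against $\nu^3 \overline{\alpha_1}\, e^{2W|_{y=\pm 1}}$ in $L^2_x$, sum over the two boundaries, and take the real part. Because $W|_{y=\pm 1}$ depends only on $t$, the LHS yields cleanly
\[
\tfrac{\p_t}{2}\mathcal{E}_{\text{Trace},0} + \mathcal{CK}_{\text{Trace},0} + \mathcal{D}_{\text{Trace},0}
\]
after a single integration by parts in $x$ on the $-\nu\p_x^2\alpha_1$ term (no $\p_x W$ correction appears, and the CK piece comes from $\p_t e^{2W|_{\pm 1}}$). The entire argument then reduces to estimating the three RHS source terms from~\eqref{eq:alpha:1} by $(1+\mathcal{J}_{sob}^{1/2})\mathcal{S}_{\le 3}$.

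For the transport piece $-\psi_y|_{\pm 1}\p_x\alpha_1$, I would integrate by parts in $x$ (again exploiting that $e^{2W|_{\pm 1}}$ is $x$-constant) and combine with the zero-order $\psi_{xy}|_{\pm 1}\alpha_1$ contribution to reduce both to an expression of the form $\nu^3\int \psi_{xy}|_{\pm 1}\,|\alpha_1|^2 e^{2W|_{\pm 1}}\,dx$, which is $\lesssim \|\psi_{xy}\|_{L^\infty_{x,y}}\mathcal{E}_{\text{Trace},0}$. The elliptic bound from the preceding corollary then gives $\|\psi_{xy}\|_{L^\infty}\lesssim \mathcal{J}_{sob}^{1/2}$, producing the $\mathcal{J}_{sob}^{1/2}$ prefactor.

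The delicate contribution is $\nu^4 \langle\alpha_3,\alpha_1 e^{2W|_{\pm 1}}\rangle_{L^2_x}$. By Cauchy--Schwarz it is bounded by $\nu^{5/2}\|\alpha_3 e^{W|_{\pm 1}}\|_{L^2_x}\cdot \mathcal{E}_{\text{Trace},0}^{1/2}$, and the trace factor will be estimated by a lift: choose a smooth cutoff $\eta(y)$ with $\eta(\pm 1)=1$ supported in a layer of width $\sim K\nu(1+t)$ (so that $e^{W|_{\pm 1}-W(y)}$ stays bounded on its support), and use
\[
(\p_y^3\omega\, e^W)\big|_{y=\pm 1} = \int_0^{\pm 1}\p_y\bigl(\p_y^3\omega\, e^W\eta\bigr)\,dy.
\]
Expanding the $y$-derivative, one applies Cauchy--Schwarz in $L^2_{xy}$ with $\nu^{5/2}\|\p_y^3\omega\, e^W\|\le \mathcal{D}_{sob,2}^{1/2}$, $\nu^{7/2}\|\p_y^4\omega\, e^W\|\le \mathcal{D}_{sob,3}^{1/2}$, and the pointwise inequality $\sqrt{\nu}|\p_y W|\lesssim \sqrt{-\p_t W/K}$ from~\eqref{wdot:est:a} to treat the $\p_y W$ contribution via $\mathcal{CK}_{sob,3}^{1/2}$, arriving at $\nu^{5/2}\|\alpha_3 e^{W|_{\pm 1}}\|_{L^2_x}\lesssim \nu^{-1/2}\mathcal{S}_{\le 3}^{1/2}$.

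The apparent $\nu^{-1/2}$ loss is compensated by a standalone trace inequality $\mathcal{E}_{\text{Trace},0}\lesssim \nu\mathcal{S}_{\le 3}$, proved by applying the same fundamental-theorem identity to $f = \p_y\omega\, e^W$ and controlling the three resulting quantities $\|\p_y\omega\, e^W\|,\,\|\p_y^2\omega\, e^W\|,\,\||\p_y W|\p_y\omega\, e^W\|$ by $\mathcal{D}_{sob,0}^{1/2}$, $\mathcal{D}_{sob,1}^{1/2}$, and $\mathcal{CK}_{sob,1}^{1/2}$ respectively (again via~\eqref{wdot:est:a}). Combining, the $\alpha_3$ contribution becomes $\nu^{-1/2}\mathcal{S}_{\le 3}^{1/2}\cdot \nu^{1/2}\mathcal{S}_{\le 3}^{1/2} = \mathcal{S}_{\le 3}$, while the $\psi$-contributions are $\lesssim \nu\mathcal{J}_{sob}^{1/2}\mathcal{S}_{\le 3}$, and summing them yields the stated bound. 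The main obstacle is the coordination of $\nu$-powers in the $\alpha_3$ trace estimate: the boundary and bulk weights are compatible only on a layer of width $\sim K\nu(1+t)$, forcing the $\nu^{-1/2}$ loss in the pointwise trace bound, which is only resolved once one recognizes that the standalone bound for $\mathcal{E}_{\text{Trace},0}$ itself contributes a canceling $\nu^{1/2}$.
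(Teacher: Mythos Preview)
Your proposal is correct and follows essentially the same strategy as the paper: test~\eqref{eq:alpha:1} against $\nu^3\alpha_1 e^{2W}$ and reduce the resulting boundary traces to the bulk quantities in $\mathcal{S}_{\le 3}$ via the fundamental theorem of calculus. The organization differs in two minor ways. For the $\nu^4\langle\alpha_3,\alpha_1 e^{2W}\rangle$ contribution you first apply Cauchy--Schwarz and then trace each factor separately, discovering a $\nu^{-1/2}/\nu^{+1/2}$ cancellation, whereas the paper traces both factors at once and lands directly on $\mathcal{D}_{sob,3}^{1/4}\mathcal{D}_{sob,2}^{1/4}\mathcal{D}_{sob,1}^{1/4}\mathcal{D}_{sob,0}^{1/4}$; these are equivalent. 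For the $\psi_{xy}$ contribution you extract $\|\psi_{xy}\|_{L^\infty}$ and then separately bound $\mathcal{E}_{\mathrm{Trace},0}\lesssim\nu\mathcal{S}_{\le 3}$, while the paper lifts the entire product $\psi_{xy}|\alpha_1|^2 e^{2W}$ to a bulk integral and applies mixed-norm H\"older; both routes close.

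One simplification: the cutoff $\eta$ localized to a layer of width $K\nu(1+t)$ is unnecessary. Since $(\p_y^j\omega\, e^W)|_{y=\pm 1}=\alpha_j\, e^{W|_{\pm 1}}$ exactly, you can apply the standard trace inequality $\|g|_{\pm 1}\|_{L^2_x}^2\lesssim\|g\|_{L^2_{xy}}\|\p_y g\|_{L^2_{xy}}$ directly to $g=\p_y^j\omega\, e^W$ with no weight mismatch to manage; the $\nu^{-1/2}$ you see in the $\alpha_3$ trace comes purely from the $\nu$-counting in $\mathcal{D}_{sob,2}$ versus $\mathcal{D}_{sob,3}$, not from any layer localization.
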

\begin{proof} We apply the multiplier $\nu^3 \alpha_1 e^{2W}$ to \eqref{eq:alpha:1}, which produces the identity
\begin{align} \n
\frac{\p_t}{2} \mathcal{E}_{\text{Trace},0}(t) +   \mathcal{CK}_{\text{Trace},0}(t) + \mathcal{D}_{\text{Trace},0}(t) = & \langle \nu^4 \alpha_3, \alpha_1 e^{2W} \rangle + \frac32 \nu^3 \langle \widetilde{\psi}_{xy} \alpha_1, \alpha_1 e^{2W} \rangle \\
= & \text{Err}^{(T)}_{1} + \text{Err}^{(T)}_2.
\end{align}
We now use trace inequalities to estimate the terms above. First, we have.
\begin{align*}
|\text{Err}^{(T)}_{1}| \lesssim & \| \nu^{3 + \frac12} \p_y^4 \omega e^W \|_{L^2}^{\frac12} \| \nu^{2 + \frac12} \p_y^3 \omega e^W \|_{L^2}^{\frac12} \| \nu^{1 + \frac12} \p_y^2 \omega e^W \|_{L^2}^{\frac12} \|  \nu^{\frac12}\p_y \omega e^W \|_{L^2}^{\frac12} \\
& + \| \nu^{3} \p_y^3 \omega (\sqrt{\nu} \p_y W) e^W \|_{L^2}^{\frac12} \| \nu^{2 + \frac12} \p_y^3 \omega e^W \|_{L^2}^{\frac12} \| \nu^{1 + \frac12} \p_y^2 \omega e^W \|_{L^2}^{\frac12} \| \nu^{\frac12} \p_y \omega e^W \|_{L^2}^{\frac12} \\
& +  \| \nu^{3 + \frac12} \p_y^4 \omega e^W \|_{L^2}^{\frac12} \| \nu^{2 + \frac12} \p_y^3 \omega e^W \|_{L^2}^{\frac12} \| \nu \p_y \omega (\sqrt{\nu}\p_y W) e^W \|_{L^2}^{\frac12} \| \sqrt{\nu} \p_y \omega e^W \|_{L^2}^{\frac12} \\
& + \|\nu^3 \p_y^3 \omega (\sqrt{\nu} \p_y W) e^W \|_{L^2}^{\frac12} \| \nu^{2 + \frac12} \p_y^3 \omega e^W \|_{L^2}^{\frac12} \|\nu \p_y \omega (\sqrt{\nu}\p_y W) e^W \|_{L^2}^{\frac12} \| \sqrt{\nu} \p_y \omega e^W \|_{L^2}^{\frac12} \\
\lesssim & \mathcal{D}_{sob,3}^{\frac14}\mathcal{D}_{sob,2}^{\frac14} \mathcal{D}_{sob,1}^{\frac14} \mathcal{D}_{sob,0}^{\frac14} +  \mathcal{CK}_{sob,3}^{\frac14}\mathcal{D}_{sob,2}^{\frac14} \mathcal{D}_{sob,1}^{\frac14} \mathcal{D}_{sob,0}^{\frac14} +  \mathcal{D}_{sob,3}^{\frac14}\mathcal{D}_{sob,2}^{\frac14} \mathcal{CK}_{sob,1}^{\frac14} \mathcal{D}_{sob,0}^{\frac14} \\
& +  \mathcal{CK}_{sob,3}^{\frac14}\mathcal{D}_{sob,2}^{\frac14} \mathcal{CK}_{sob,1}^{\frac14} \mathcal{D}_{sob,0}^{\frac14} \\
\lesssim & \mathcal{S}_{\le 3}(t).
\end{align*}
Second, we have 
\begin{align*}
|\text{Err}^{(T)}_2| \lesssim & \nu^3 |\langle \psi_{xyy} \omega_y, \omega_y e^{2W} \rangle| +  |\langle \psi_{xy} \omega_{yy}, \omega_y e^{2W} \rangle| +  |\langle \psi_{xy} \omega_y, \omega_{yy} e^{2W} \rangle| \\
&+ \nu^3 |\langle \psi_{xy} \omega_y, \omega_y 2\p_y\{W\} e^{2W} \rangle|  \\
\lesssim & \nu^3 \| \psi_{xyy} \|_{L^2_x L^\infty_y} \| \omega_y e^W \|_{L^\infty_x L^2_y} \| \omega_y e^W \|_{L^2} + \nu^3 \| \psi_{xy} \|_{L^\infty_x L^2_y} \| \omega_{yy} e^W \|_{L^2_x L^\infty_y} \| \omega_y e^W \|_{L^2} \\
& + \nu^3 \| \psi_{xy} \|_{L^\infty_x L^2_y} \| \omega_y W_y e^W  \|_{L^2} \| \omega_y e^W \|_{L^2_x L^\infty_y} \\
\lesssim &\| \nu \psi_{xyy} \|_{L^2}^{\frac12}  \| \nu^2 \psi_{xyyy} \|_{L^2}^{\frac12} \| \nu^{\frac12} \omega_y e^W \|_{L^2}^{\frac12} \| \nu^{1 + \frac12} \omega_{xy} e^W \|_{L^2}^{\frac12} \| \nu^{\frac12} \omega_y e^W \|_{L^2} \\
& +  \| \psi_{xy} \|_{L^2}^{\frac12} \| \nu \psi_{xxy} \|_{L^2}^{\frac12} \| \nu^{1 + \frac12} \omega_{yy} e^W \|_{L^2}^{\frac12} \| \nu^{2 + \frac12} \omega_{yyy} e^W \|_{L^2}^{\frac12} \| \nu^{\frac12} \omega_y e^W \|_{L^2} \\
& +   \| \psi_{xy} \|_{L^2}^{\frac12} \| \nu \psi_{xxy} \|_{L^2}^{\frac12} \| \nu^{1 + \frac12} \omega_{yy} e^W \|_{L^2}^{\frac12} \| \nu^2 \omega_{yy} (\nu^{\frac12} W_y) e^W \|_{L^2}^{\frac12} \| \nu^{\frac12} \omega_y e^W \|_{L^2} \\
& +   \| \psi_{xy} \|_{L^2}^{\frac12} \| \nu \psi_{xxy} \|_{L^2}^{\frac12} \| \nu \omega_y (\nu^{\frac12} W_y) e^W  \|_{L^2} \| \nu^{\frac12} \omega_y e^W \|_{L^2}^{\frac12}\| \nu^{1 + \frac12} \omega_{yy} e^W \|_{L^2}^{\frac12} \\
& +  \| \psi_{xy} \|_{L^2}^{\frac12} \| \nu \psi_{xxy} \|_{L^2}^{\frac12} \| \nu \omega_y (\sqrt{\nu} W_y) e^W  \|_{L^2}  \| \nu^{\frac12} \omega_y e^W \|_{L^2}^{\frac12}\| \nu \omega_{y} (\nu^{\frac12} W_y) e^W \|_{L^2}^{\frac12} \\
\lesssim & \mathcal{J}_{sob}^{\frac12} \mathcal{S}_{\le 3}(t).
\end{align*}
\end{proof}

\begin{lemma}The trace quantity $\alpha_1(t, x)$ satisfies 
\begin{align} \label{chris:day:1}
\frac{\p_t}{2} \mathcal{E}_{\text{Trace},1}(t) +   \mathcal{CK}_{\text{Trace},1}(t) + \mathcal{D}_{\text{Trace},1}(t) \lesssim & (1+(\mathcal{E}_{ell}^{(I, out)})^{\frac12} +  \mathcal{J}_{sob}^{\frac12}) \mathcal{S}_{\le 3}, \\ \label{chris:day:2}
\mathcal{D}_{\text{Trace, Max},1}(t) \lesssim & (1+ (\mathcal{E}_{ell}^{(I, out)})^{\frac12} +  \mathcal{J}_{sob}^{\frac12}) \mathcal{S}_{\le 3}.
\end{align}
\end{lemma}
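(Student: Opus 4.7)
The argument parallels the preceding lemma for $\alpha_1$ but is based on the PDE \eqref{eq:alpha:2} for $\p_x \alpha_1$. Specifically, I will apply the multiplier $\nu^5\,\p_x\alpha_1\, e^{2W}$ to \eqref{eq:alpha:2}, integrate in $x$, and integrate by parts in $x$ on the diffusion term. This produces the identity
\begin{align*}
\tfrac{\p_t}{2}\mathcal{E}_{\text{Trace},1}(t)+\mathcal{CK}_{\text{Trace},1}(t)+\mathcal{D}_{\text{Trace,Max},1}(t)=\mathrm{Err}_W+\mathrm{RHS}_1+\mathrm{RHS}_2+\mathrm{RHS}_3,
\end{align*}
where $\mathrm{RHS}_i$ ($i=1,2,3$) are paired against the three source terms in \eqref{eq:alpha:2} and $\mathrm{Err}_W$ is the commutator arising from $-\tfrac12\nu^5|\p_x\alpha_1|^2\p_t(e^{2W})$. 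The crucial observation is that, unlike for $\mathcal{E}_{\text{Trace},0}$, the diffusion now delivers the \emph{maximal regularity} quantity $\mathcal{D}_{\text{Trace,Max},1}$ on the left-hand side rather than only a lower-order dissipation, so \eqref{chris:day:2} will come as a byproduct of the energy estimate after time integration. The term $\mathrm{Err}_W$ is absorbed into $\mathcal{CK}_{\text{Trace},1}$ and $\mathcal{D}_{\text{Trace,Max},1}$ by choosing $K$ large, exactly as in the treatment of $\text{Err}_{LHS}^{(1)}$ earlier.

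For the three source-term contributions I will use the trace/interpolation bound $\|f\|_{L^\infty_y}^2\lesssim \|f\|_{L^2_y}\|\p_y f\|_{L^2_y}$ to control quantities evaluated at $y=\pm 1$. The first term, $\mathrm{RHS}_1=\langle\nu^6\p_x\alpha_3,\p_x\alpha_1\,e^{2W}\rangle$, is the direct analogue of $\mathrm{Err}^{(T)}_1$ in the previous lemma, except now carrying one extra $\p_x$. Distributing the $\nu$ powers across the trace interpolation exactly as in the previous proof, every factor becomes one of $\mathcal{D}_{sob,n}^{1/2}$ or $\mathcal{CK}_{sob,n}^{1/2}$ for $n\le 3$, so this piece is bounded by $\mathcal{S}_{\le 3}$. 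The second term, $\mathrm{RHS}_2=\langle \nu^5\psi_y|_{\partial}\p_x^2\alpha_1,\p_x\alpha_1\,e^{2W}\rangle$, I will split with Cauchy--Schwarz into $\mathcal{D}_{\text{Trace,Max},1}^{1/2}\,\mathcal{E}_{\text{Trace},1}^{1/2}\,\|\psi_y|_\partial\|_{L^\infty_x}$, absorbing the $\mathcal{D}_{\text{Trace,Max},1}^{1/2}$ factor into the left-hand side via Young's inequality and estimating $\|\psi_y|_\partial\|_{L^\infty_x}$ through the decomposition $\psi=\psi^{(I)}_{sob}+\psi^{(E)}_{sob}$ and $H^2\hookrightarrow L^\infty$, giving the bound by $\mathcal{J}_{sob}^{1/2}+(\mathcal{E}_{ell}^{(I,out)})^{1/2}$. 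The third term, $\mathrm{RHS}_3=\langle \nu^5\psi_{xxy}|_{\partial}\,\alpha_1,\p_x\alpha_1\,e^{2W}\rangle$, is handled analogously, placing $\psi_{xxy}|_\partial$ in $L^\infty_x$; because $\alpha_1$ has only one $\p_x$-derivative one can avoid the maximal-regularity term entirely and just pair with $\mathcal{E}_{\text{Trace},1}^{1/2}$ and a trace-interpolation estimate on $\alpha_1$ giving another $\mathcal{S}_{\le 3}^{1/2}$ factor.

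For the second inequality \eqref{chris:day:2} I will simply observe that once \eqref{chris:day:1} is established, $\mathcal{D}_{\text{Trace,Max},1}$ appears already in the LHS of the integrated energy identity and is therefore controlled by the same RHS. Alternatively, one can read $\nu\p_x^3\alpha_1$ directly from \eqref{eq:alpha:2} and apply $L^2_x$ Cauchy--Schwarz, which yields $\mathcal{D}_{\text{Trace,Max},1}^{1/2}\lesssim \mathcal{D}_{\text{Trace},1}^{1/2}+\text{(source terms)}$; combined with the bound on $\mathcal{D}_{\text{Trace},1}$ just obtained this gives \eqref{chris:day:2}.

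\textbf{Main obstacle.} The subtle step is the control of the boundary traces $\psi_y|_{y=\pm1}$ and $\psi_{xxy}|_{y=\pm1}$ by $(\mathcal{E}_{ell}^{(I,out)})^{1/2}+\mathcal{J}_{sob}^{1/2}$ without picking up negative powers of $\nu$. One must use the $\psi^{(I)}_{sob}$/$\psi^{(E)}_{sob}$ decomposition carefully: on $\psi^{(E)}_{sob}$ the $\nu^{-100}$ prefactor in $\mathcal{J}_{sob}^{(o,o)}$ is compensated by the exponential smallness of the vorticity near the boundary via the $e^{W}$ weight, while on the interior piece the bounds are of Sobolev type and must be translated through the change-of-variables so that no $(\sqrt{\nu}\p_y)$ derivative is wasted on $\psi_y|_\partial$. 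Balancing these two regimes, together with absorbing $\mathcal{D}_{\text{Trace,Max},1}$-type terms from $\mathrm{RHS}_2$ into the LHS via Young's inequality, is the nontrivial part of the argument.
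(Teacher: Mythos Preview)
Your multiplier strategy differs from the paper's. The paper proves \eqref{chris:day:1} by multiplying the \emph{undifferentiated} equation \eqref{eq:alpha:1} by $\nu^4\,\p_t\alpha_1\,e^{2W}$; the square $\|\nu^2\p_t\alpha_1 e^W\|^2=\mathcal{D}_{\text{Trace},1}$ appears directly, while the diffusion term after one $\p_x$-integration by parts produces $\tfrac{\p_t}{2}\mathcal{E}_{\text{Trace},1}+\mathcal{CK}_{\text{Trace},1}$. After absorbing $\tfrac12\mathcal{D}_{\text{Trace},1}$ via Young, the source contributions are $\|\nu^3\alpha_3 e^W\|_{L^2_x}^2$, $\|\nu^2\psi_y\p_x\alpha_1 e^W\|_{L^2_x}^2$, $\|\nu^2\psi_{xy}\alpha_1 e^W\|_{L^2_x}^2$, each of which only needs $\mathcal{S}_{\le 3}$. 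Then \eqref{chris:day:2} follows by reading $\nu\p_x^2\alpha_1$ from \eqref{eq:alpha:1} and reusing these same bounds. Your route---the standard energy multiplier $\nu^5\p_x\alpha_1 e^{2W}$ on \eqref{eq:alpha:2}---produces $\mathcal{D}_{\text{Trace,Max},1}$ on the left instead, and is in principle viable, but your source-term analysis as written has gaps.

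The most concrete issue is $\mathrm{RHS}_1=\langle\nu^6\p_x\alpha_3,\p_x\alpha_1 e^{2W}\rangle$. Your claim that trace interpolation ``exactly as in the previous proof'' keeps every factor at level $n\le 3$ is false: the extra $\p_x$ on $\alpha_3$ forces the interpolation to produce $\|\p_x\p_y^4\omega\,e^W\|^{1/2}\sim\nu^{-9/4}\mathcal{D}_{sob,4}^{1/4}$, so the direct estimate is $\lesssim\mathcal{D}_{sob,4}^{1/4}\mathcal{D}_{sob,3}^{1/4}\mathcal{D}_{sob,2}^{1/4}\mathcal{D}_{sob,1}^{1/4}$, i.e.\ $\mathcal{S}_{\le 4}$ rather than $\mathcal{S}_{\le 3}$. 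This can be repaired---integrate by parts in $x$ first to get $\langle\nu^6\alpha_3,\p_x^2\alpha_1 e^{2W}\rangle$, then bound by $\|\nu^3\alpha_3 e^W\|\,\mathcal{D}_{\text{Trace,Max},1}^{1/2}\le \tfrac12\mathcal{D}_{\text{Trace,Max},1}+C\mathcal{S}_{\le 3}$---but that maneuver was not indicated. There is also a $\nu$-power miscount in your handling of $\mathrm{RHS}_2$: the split $\mathcal{D}_{\text{Trace,Max},1}^{1/2}\mathcal{E}_{\text{Trace},1}^{1/2}$ carries $\nu^{11/2}$ while $\mathrm{RHS}_2$ has only $\nu^5$, so you are missing a factor $\nu^{-1/2}$. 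Again this is salvageable (e.g.\ observe $\mathcal{E}_{\text{Trace},1}\lesssim\nu\,\mathcal{S}_{\le 2}$ via trace, or integrate by parts in $x$ to reduce to $\|\psi_{xy}|_\partial\|_{L^\infty_x}\mathcal{E}_{\text{Trace},1}$), but not as stated. The paper's $\p_t\alpha_1$-multiplier sidesteps both issues because after Cauchy--Schwarz only $\|\nu^3\alpha_3 e^W\|^2$ appears, with no extra $\p_x$ to cost a derivative level.
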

\begin{proof}[Proof of \eqref{chris:day:1}] We first prove the energy inequality, \eqref{chris:day:1}. To do so, we multiply \eqref{eq:alpha:1} by $\nu^4 \p_t \alpha_1(t, x) e^{2W}$, integrate over $x \in \mathbb{T}$, and subsequently use Cauchy-Schwartz and Young's inequality for products which produces the bound: 
\begin{align} \n
\p_t  \mathcal{E}_{\text{Trace},1}(t) + \mathcal{D}_{\text{Trace},1}(t) +  \mathcal{CK}_{\text{Trace},1}(t) \le& | \langle \nu \alpha_3 - \widetilde{\psi}_y \p_x \alpha_1 + \psi_{xy}(t, x, 0) \alpha_1, \nu^4 \p_t \alpha_1 e^{2W} \rangle| \\ \n
\le & \frac12 \mathcal{D}_{\text{Trace},1}(t) +C \| \nu^3 \alpha_3 e^W \|_{L^2_x}^2 +C \| \nu^2 \widetilde{\psi}_y \p_x \alpha_1 e^W \|_{L^2_x}^2 \\ \n
&+C  \| \nu^2 \widetilde{\psi}_{xy}  \alpha_1 e^W \|_{L^2_x}^2 \\
=:&\frac12 \mathcal{D}_{\text{Trace},1}(t) + C \sum_{i = 1}^3 \text{Err}^{(T)}_i(t). 
\end{align}
We now successively estimate the error terms appearing above. First, we have 
\begin{align*}
|\text{Err}^{(T)}_1(t)| \lesssim & \nu^6 \| \p_y^3 \omega e^W \|_{L^2} \| \p_y^4 \omega e^W \|_{L^2} + \nu^6 \| \p_y^3 \omega e^W \|_{L^2} \| \p_y^3 \omega e^W (W_y) \|_{L^2} \\
\lesssim & \| \nu^{2 + \frac12} \p_y^3 \omega e^W \|_{L^2} \| \nu^{3 + \frac12} \p_y^4 \omega e^W \|_{L^2} + \| \nu^{2 + \frac12} \p_y^3 \omega e^W \|_{L^2} \| \nu^3 \p_y^3 \omega e^W (\sqrt{\nu} W_y) \|_{L^2} \\
\lesssim & \mathcal{D}_{sob,2}^{\frac12} \mathcal{D}_{sob,3}^{\frac12} + \mathcal{D}_{sob,2}^{\frac12} \mathcal{CK}_{sob,3}^{\frac12} \\
\lesssim & \mathcal{S}_{\le 3}(t). 
\end{align*}
Next, we have by noticing that $\chi_{10} = 1$ at $y = \pm 1$,  
\begin{align*}
|\text{Err}^{(T)}_2(t)| = &  \| \nu^2 \widetilde{\psi}_y \p_x \alpha_1 e^W \chi_{10} \|_{L^2_x}^2 \\
\lesssim & \nu^4 \| \psi_y \omega_{xy} e^W \chi_{10} \|_{L^2}( \| \psi_{yy} \omega_{xy} e^W \chi_{10} \|_{L^2} +\| \psi_{y} \omega_{xyy} e^W \chi_{10} \|_{L^2} + \| \psi_{y} \omega_{xy} e^W W_y \chi_{10} \|_{L^2} \\
&+ \| \psi_{y} \omega_{xy} e^W \chi_{10}' \|_{L^2}  ) \\
\lesssim & \nu^4 (\| \psi_y \chi_{10} \|_{L^\infty} + \| \psi_{yy} \chi_{10} \|_{L^\infty}) \| \omega_{xy} e^W  \|_{L^2}( \| \omega_{xy} e^W \|_{L^2} +\| \omega_{xyy} e^W  \|_{L^2} + \| \omega_{xy} e^W W_y  \|_{L^2}   ) \\
\lesssim & [(\mathcal{E}_{ell}^{(I, out)})^{\frac12} + (\mathcal{J}_{sob})^{\frac12}] \| \nu^{1 + \frac12} \omega_{xy} e^W \|_{L^2}( \| \nu^{1 + \frac12} \omega_{xy} e^W \|_{L^2} +\| \nu^{2 + \frac12} \omega_{xyy} e^W  \|_{L^2} \\
&+ \| \nu^2 \omega_{xy} e^W (\nu^{\frac12} W_y)  \|_{L^2}  ) \\
\lesssim & [(\mathcal{E}_{ell}^{(I, out)})^{\frac12} + (\mathcal{J}_{sob})^{\frac12}] \mathcal{D}_{sob,1}^{\frac12}( \mathcal{D}_{sob,1}^{\frac12}+ \mathcal{D}_{sob,2}^{\frac12} + \mathcal{CK}_{sob,2}^{\frac12} ) \\
\lesssim & [(\mathcal{E}_{ell}^{(I, out)})^{\frac12} + (\mathcal{J}_{sob})^{\frac12}]\mathcal{S}_{\le 3}(t).
\end{align*}
The last term is treated in essentially an identical manner, and we omit it. 
\end{proof}
\begin{proof}[Proof of \eqref{chris:day:2}] We use equation \eqref{eq:alpha:1} by $\nu^2$ to get 
\begin{align*}
\mathcal{D}_{\text{Trace, Max},1}(t) \lesssim \mathcal{D}_{\text{Trace},1}(t) +  \| \nu^3 \alpha_3 e^W \|_{L^2_x}^2 + \| \nu^2 \widetilde{\psi}_y \p_x \alpha_1 e^W \|_{L^2_x}^2 +  \| \nu^2 \widetilde{\psi}_{xy}  \alpha_1 e^W \|_{L^2_x}^2,
\end{align*}
and since all of the terms on the right-hand side above have been already estimated, the proof is concluded. 
\end{proof}

\begin{lemma}The trace quantity $\alpha_1(t, x)$ satisfies 
\begin{align} \label{beed:beed:1}
\frac{\p_t}{2} \mathcal{E}_{\text{Trace},2}(t) +   \mathcal{CK}_{\text{Trace},2}(t) + \mathcal{D}_{\text{Trace},2}(t) \lesssim & \mathcal{S}_{\le 3}^{\frac12} \mathcal{S}_{\le 4}^{\frac12} + (\frac{\mathcal{E}_{ell}^{(I, out)}}{\langle t \rangle^{100}}  +\nu^{100} \mathcal{J}_{sob})(\mathcal{D}_{\text{Trace, Max},1} + \nu^{3}\mathcal{E}_{\text{Trace},0}), \\ \label{beed:beed:2}
\mathcal{D}_{\text{Trace, Max},2}(t) \lesssim &  \mathcal{S}_{\le 3}^{\frac12} \mathcal{S}_{\le 4}^{\frac12} + (\frac{\mathcal{E}_{ell}^{(I, out)}}{\langle t \rangle^{100}}  + \nu^{100}\mathcal{J}_{sob})(\mathcal{D}_{\text{Trace, Max},1} + \nu^{3}\mathcal{E}_{\text{Trace},0}).
\end{align}
\end{lemma}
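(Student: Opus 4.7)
The plan is to run a parabolic energy estimate on the one-dimensional equation obtained by differentiating \eqref{eq:alpha:1} once in $x$, namely \eqref{eq:alpha:2}. Testing \eqref{eq:alpha:2} against the multiplier $\nu^{6}\,\p_{t}\p_{x}\alpha_{1}\,e^{2W}$ and integrating over $x\in\mathbb{T}$ (noting that $W$ restricted to $y=\pm1$ is $x$-independent, so the usual commutator $W_x$ terms vanish) produces the identity
\begin{align*}
\frac{\p_{t}}{2}\mathcal{E}_{\mathrm{Trace},2}(t)+\mathcal{CK}_{\mathrm{Trace},2}(t)+\mathcal{D}_{\mathrm{Trace},2}(t)
=\Big\langle \nu\p_{x}\alpha_{3}-\widetilde{\psi}_{y}\p_{x}^{2}\alpha_{1}+\widetilde{\psi}_{xxy}\alpha_{1},\,\nu^{6}\p_{t}\p_{x}\alpha_{1}\,e^{2W}\Big\rangle.
\end{align*}
Cauchy--Schwarz and Young's inequality absorb $\frac{1}{2}\mathcal{D}_{\mathrm{Trace},2}$ into the LHS and reduce the task to bounding $\|\nu^{3}(\cdot)\,e^{W}\|_{L^{2}_{x}}^{2}$ applied to each of the three source terms on the right, exactly as in the treatment of \eqref{chris:day:1}. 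The estimate \eqref{beed:beed:2} of $\mathcal{D}_{\mathrm{Trace,Max},2}=\|\nu^{4}\p_{x}^{3}\alpha_{1}\,e^{W}\|_{L^{2}_{x}}^{2}$ is then obtained by solving algebraically for $\nu\p_{x}^{3}\alpha_{1}$ in equation \eqref{eq:alpha:2} and using the same source-term bounds together with $\mathcal{D}_{\mathrm{Trace},2}$ itself.

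The term $\|\nu^{4}\p_{x}\alpha_{3}\,e^{W}\|_{L^{2}_{x}}^{2}=\|\nu^{4}\p_{x}\p_{y}^{3}\omega\,e^{W}|_{y=\pm1}\|_{L^{2}_{x}}^{2}$ is handled by a trace inequality in $y$ analogous to the one used in bounding $\mathrm{Err}^{(T)}_{1}$ in the preceding lemmas: one splits $|f(\pm1)|^{2}\lesssim \|f\|_{L^{2}_{y}}\cdot(\|\p_{y}f\|_{L^{2}_{y}}+\|fW_{y}\|_{L^{2}_{y}})$, producing after the weight bookkeeping
\begin{align*}
\|\nu^{4}\p_{x}\alpha_{3}\,e^{W}\|_{L^{2}_{x}}^{2}\lesssim \mathcal{D}_{sob,3}^{\,1/2}\bigl(\mathcal{D}_{sob,4}^{\,1/2}+\mathcal{CK}_{sob,4}^{\,1/2}\bigr)\lesssim \mathcal{S}_{\le 3}^{\,1/2}\mathcal{S}_{\le 4}^{\,1/2},
\end{align*}
using \eqref{wdot:est:a} to trade $\sqrt{\nu}W_{y}$ for $\sqrt{-\p_{t}W}$. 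This is the term responsible for the first piece on the right-hand sides of \eqref{beed:beed:1}--\eqref{beed:beed:2}.

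The two genuinely nonlinear source terms $\widetilde{\psi}_{y}\p_{x}^{2}\alpha_{1}$ and $\widetilde{\psi}_{xxy}\alpha_{1}$ are the main obstacle. For each I would first insert the cutoff $\chi_{10}$ (which is identically $1$ at $y=\pm1$ and supported away from the interior initial data), then apply the splitting $\psi=\psi_{sob}^{(I)}+\psi_{sob}^{(E)}$ from the $\mathcal{J}_{sob}$ setup. On the exterior piece $\psi_{sob}^{(E)}$ one pays a factor $\nu^{100}$ from the prefactor appearing in \eqref{mvp:3}, giving a contribution controlled by $\nu^{100}\mathcal{J}_{sob}$. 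On the interior piece $\psi_{sob}^{(I)}$ one uses that in the region where $\chi_{10}\neq 0$ the stream function $\psi_{sob}^{(I)}\chi_{9}$ enjoys the strong inviscid-damping bound $\|\psi_{sob}^{(I)}\chi_{9}\|_{H^{50}}^{2}\lesssim \langle t\rangle^{-100}\,\mathcal{E}_{ell}^{(I,\,out)}$ coming from \eqref{eell:out}, which supplies the prefactor $\frac{\mathcal{E}_{ell}^{(I,out)}}{\langle t\rangle^{100}}$. With $\psi$ in $L^{\infty}_{y}$ we then convert the remaining $\p_{x}^{2}\alpha_{1}$ and $\alpha_{1}$ factors via trace inequality in $y$: for $\p_{x}^{2}\alpha_{1}=\p_{x}^{2}\p_{y}\omega|_{y=\pm1}$ the natural bound is $\lesssim \mathcal{D}_{\mathrm{Trace,Max},1}^{\,1/2}$ (coming from \eqref{chris:day:2}), while the $\alpha_{1}$ factor is controlled by $\nu^{3/2}\mathcal{E}_{\mathrm{Trace},0}^{\,1/2}$. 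This structure is what produces the product $(\mathcal{D}_{\mathrm{Trace,Max},1}+\nu^{3}\mathcal{E}_{\mathrm{Trace},0})$ in the stated bounds.

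The hard part is the bookkeeping of weights: one must make every factor of $\nu^{\text{something}}$ absorb either into $\mathcal{D}_{sob}/\mathcal{CK}_{sob}$ (with sharp weight $\nu^{1/2}\nabla_{k}$ or $\nu^{1/2}\p_{y}^{j}$ with the correct $j$) or into $\mathcal{D}_{\mathrm{Trace,Max},1}$ and $\mathcal{E}_{\mathrm{Trace},0}$, without leaving any negative power of $\nu$ uncompensated, and must do so uniformly in $k$ so that summation over $k\in\mathbb Z$ closes. Once this accounting is carried through, \eqref{beed:beed:1} follows directly from the energy identity above, and \eqref{beed:beed:2} follows by solving \eqref{eq:alpha:2} for $\nu\p_{x}^{3}\alpha_{1}$ and re-using the exact same nonlinear bounds together with the just-proved control on $\mathcal{D}_{\mathrm{Trace},2}$.
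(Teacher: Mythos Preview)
Your proposal is correct and follows essentially the same approach as the paper: test \eqref{eq:alpha:2} against $\nu^{6}\p_{t}\p_{x}\alpha_{1}\,e^{2W}$, absorb half of $\mathcal{D}_{\mathrm{Trace},2}$ by Young's inequality, and then bound the three source terms $\|\nu^{4}\p_{x}\alpha_{3}\,e^{W}\|_{L^{2}_{x}}^{2}$, $\|\nu^{3}\widetilde{\psi}_{y}\p_{x}^{2}\alpha_{1}\,e^{W}\|_{L^{2}_{x}}^{2}$, $\|\nu^{3}\widetilde{\psi}_{xxy}\alpha_{1}\,e^{W}\|_{L^{2}_{x}}^{2}$ separately, with \eqref{beed:beed:2} following by solving \eqref{eq:alpha:2} for $\nu\p_{x}^{3}\alpha_{1}$. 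One small point: for the two nonlinear source terms the paper is more direct than you indicate---since $\p_{x}^{2}\alpha_{1}$ and $\alpha_{1}$ are themselves the trace quantities appearing in the definitions of $\mathcal{D}_{\mathrm{Trace,Max},1}$ and $\mathcal{E}_{\mathrm{Trace},0}$, no additional trace inequality in $y$ is needed there; one simply pulls $\|\widetilde{\psi}_{y}\|_{L^{\infty}_{x}}$ or $\|\widetilde{\psi}_{xxy}\|_{L^{\infty}_{x}}$ out and recognizes the remaining factor as the relevant functional.
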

\begin{proof}[Proof of \eqref{beed:beed:1}] We take inner product of \eqref{eq:alpha:2} with $\nu^6 \p_t \p_x \alpha_1 e^{2W}$: 
\begin{align*}
&\p_t  \mathcal{E}_{\text{Trace},2}(t) + \mathcal{D}_{\text{Trace},2}(t) +  \mathcal{CK}_{\text{Trace},2}(t) \\
\le& | \langle \nu \p_x \alpha_3 - \psi_y(t, x,0) \p_x^2 \alpha_1 + \psi_{xxy}(t,x,0) \alpha_1 , \nu^6 \p_t \p_x \alpha_1 e^{2W}\rangle | \\
\le & \frac12 \mathcal{D}_{\text{Trace},2}(t) + C \| \nu^4 \p_x \alpha_3 e^W \|_{L^2_x}^2 +C \| \nu^3 \widetilde{\psi}_y \p_x^2 \alpha_1 e^W \|_{L^2_x}^2 + C \| \nu^3 \widetilde{\psi}_{xxy} \alpha_1 e^W \|_{L^2_x}^2 \\
=: &  \frac12 \mathcal{D}_{\text{Trace},2}(t) + C \sum_{i = 1}^3 \text{Err}^{(T)}_i(t). 
\end{align*}
We will now estimate the error terms appearing above. We first have 
\begin{align*}
|\text{Err}^{(T)}_1|  \lesssim & \| \nu^{3 + \frac12} \p_x \p_y^3 \omega e^W \|_{L^2} \| \nu^{4 + \frac12} \p_x \p_y^4 \omega e^W \|_{L^2} +  \| \nu^{3 + \frac12} \p_x \p_y^3 \omega e^W \|_{L^2} \| \nu^4 \p_x \p_y^3 \omega e^W (\sqrt{\nu} W_y) \|_{L^2} \\
\lesssim & \mathcal{D}_{sob,3}^{\frac12} \mathcal{D}_{sob,4}^{\frac12} +  \mathcal{D}_{sob,3}^{\frac12}  \mathcal{CK}_{sob,4}^{\frac12}.
\end{align*}
Next, we have  
\begin{align*}
|\text{Err}^{(T)}_2(t)| \lesssim &  \| \widetilde{\psi}_y \|_{L^\infty_x}^2 \| \nu^3 \p_x^2 \alpha_1 e^W \|_{L^2_x}^2 \lesssim  \| \widetilde{\psi}_y \|_{L^\infty_x}^2 \mathcal{D}_{\text{Trace, Max},1} \\
\lesssim & (\frac{\mathcal{E}_{ell}^{(I, out)}}{\langle t \rangle^{100}}  + \mathcal{J}_{sob})\mathcal{D}_{\text{Trace, Max},1}. 
\end{align*}
Last, we have 
\begin{align*}
|\text{Err}^{(T)}_3(t)| \lesssim & \nu^{3}  \|  \widetilde{\psi}_{xxy} \|_{L^\infty_x}^2 \mathcal{E}_{\text{Trace},0} \lesssim \nu^3 (\frac{\mathcal{E}_{ell}^{(I, out)}}{\langle t \rangle^{100}}  + \mathcal{J}_{sob}) \mathcal{E}_{\text{Trace},0}
\end{align*}
\end{proof}
\begin{proof}[Proof of \eqref{beed:beed:2}] Using equation \eqref{eq:alpha:2}, we get 
\begin{align*}
\mathcal{D}_{\text{Trace,Max},2} \lesssim \mathcal{D}_{\text{Trace},2} +   \| \nu^4 \p_x \alpha_3 e^W \|_{L^2_x}^2 + \| \nu^3 \widetilde{\psi}_y \p_x^2 \alpha_1 e^W \|_{L^2_x}^2 +  \| \nu^3 \widetilde{\psi}_{xxy} \alpha_1 e^W \|_{L^2_x}^2.
\end{align*}
Upon noting that all quantities on the right-hand side have been estimated in the previous estimate, our claim follows.  

\end{proof}

We are now ready to establish:
%
\begin{lemma}[$\alpha_4(t, x)$ bounds] The trace quantity $\alpha_4(t)$ satisfies the following bounds:  
\begin{align} \label{invoke:me:1}
\mathcal{D}_{\text{Trace, Large},0}(t) \lesssim &(1+  \mathcal{J}_{sob}^{\frac12}) \mathcal{S}_{\le 3}, \\  \label{invoke:me:2}
\mathcal{D}_{\text{Trace, Large},1}(t) \lesssim &  (1+ (\mathcal{E}_{ell}^{(I, out)})^{\frac12} +  \mathcal{J}_{sob}^{\frac12}) \mathcal{S}_{\le 3},\\  \label{invoke:me:3}
\mathcal{D}_{\text{Trace, Large},2}(t) \lesssim &  \mathcal{S}_{\le 3}^{\frac12} \mathcal{S}_{\le 4}^{\frac12} + (\frac{\mathcal{E}_{ell}^{(I, out)}}{\langle t \rangle^{100}}  + \nu^{100} \mathcal{J}_{sob})(\mathcal{D}_{\text{Trace, Max},1} + \nu^{3}\mathcal{E}_{\text{Trace},0}).
\end{align}
\end{lemma}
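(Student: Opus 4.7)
The plan is to invoke the explicit identities \eqref{sr:a} and \eqref{sr:b}, which express $\alpha_4$ and $\p_t \alpha_4$ directly in terms of $\p_x \alpha_1$ and $\p_t \p_x \alpha_1$, in order to reduce each of \eqref{invoke:me:1}--\eqref{invoke:me:3} to bounds on the $\alpha_1$-based trace functionals already established above. The unifying observation is that multiplication by the single factor of $1/\nu$ appearing in $\alpha_4 = \frac{2}{\nu} \bar{U}' \p_x \alpha_1$ exactly accounts for the one-power gap in $\nu$ between the corresponding ``Large'' and ``Max'' trace weights, so no additional regularity on $\alpha_1$ beyond what is already controlled will be needed.

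Concretely, for \eqref{invoke:me:1} I would apply \eqref{sr:a} to rewrite $\nu^3 \alpha_4 = 2\nu^2 \bar{U}' \p_x \alpha_1$, giving $\mathcal{D}_{\text{Trace, Large},0} \lesssim \|\bar{U}'\|_{L^\infty}^2 \mathcal{D}_{\text{Trace},0}$, and then invoke the first trace lemma. For \eqref{invoke:me:2}, since $\bar{U}(t,y) = y + U^x_0(t,y)$ is $x$-independent, differentiating \eqref{sr:a} in $x$ yields $\p_x \alpha_4 = \frac{2}{\nu} \bar{U}' \p_x^2 \alpha_1$, so that $\mathcal{D}_{\text{Trace, Large},1} \lesssim \|\bar{U}'\|_{L^\infty}^2 \mathcal{D}_{\text{Trace, Max},1}$ and \eqref{chris:day:2} closes the bound. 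For \eqref{invoke:me:3}, the two contributions are handled separately: the time-derivative piece uses \eqref{sr:b} to produce $\|\nu^4 \p_t \alpha_4 e^W\|_{L^2_x}^2 \lesssim \|\bar{U}'\|_{L^\infty}^2 \mathcal{D}_{\text{Trace},2}$, which is controlled by \eqref{beed:beed:1}, while the spatial-derivative piece uses the twice-$x$-differentiated \eqref{sr:a} to give $\|\nu^5 \p_{xx} \alpha_4 e^W\|_{L^2_x}^2 \lesssim \|\bar{U}'\|_{L^\infty}^2 \mathcal{D}_{\text{Trace, Max},2}$, which is controlled by \eqref{beed:beed:2}.

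The main obstacle, and the one place where the proof does more than just cite previous results, is twofold. First, one needs the uniform pointwise bound $\|\bar{U}'\|_{L^\infty} \lesssim 1$, which follows from $\bar{U}' = 1 - P_0 \omega$ together with the Sobolev embedding $H^4 \hookrightarrow L^\infty$ and the bootstrap estimate $\mathcal{E}_{sob} \leq 4\eps^2$ in \eqref{boot:sob}. Second, one must justify \eqref{sr:b}: since $\bar{U}'$ depends on $t$ through $P_0\omega$, an honest time differentiation of \eqref{sr:a} generates an additional term $\frac{2}{\nu}(\p_t \bar{U}')\p_x \alpha_1$, which must be shown to be harmless. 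This term is controlled using the evolution equation for the zero mode $P_0\omega$ (whose forcing involves only quadratic quantities of the non-zero modes), giving a contribution of the schematic form $\mathcal{J}_{sob}^{1/2} \mathcal{D}_{\text{Trace},0}^{1/2}$ that is absorbed into the $\mathcal{S}_{\le 3}$-type right-hand sides. The genuinely nonlinear pieces hidden in the original boundary expression for $\alpha_{k,4,\pm}$ (the $(u^{\neq 0}_y \omega_{xy})_k$ and $(v_{yy}\omega_y)_k$ contributions) are treated analogously to the error terms appearing in the preceding trace lemmas, via bilinear trace inequalities coupled with the streamfunction bounds furnished by $\mathcal{J}_{sob}$ and $\mathcal{E}_{ell}^{(I,\mathrm{out})}$, so their contributions already fit within the stated right-hand sides of \eqref{invoke:me:1}--\eqref{invoke:me:3} without requiring new functionals.
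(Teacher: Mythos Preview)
Your proposal is correct and follows essentially the same approach as the paper: reduce each $\mathcal{D}_{\text{Trace, Large},i}$ to the corresponding $\alpha_1$-based trace functional via \eqref{sr:a}--\eqref{sr:b} and the bound $\|\bar{U}'\|_{L^\infty}\lesssim 1$, then invoke the previously established trace lemmas. Your discussion of the $\partial_t\bar{U}'$ contribution and the nonlinear pieces in the full expression for $\alpha_{k,4,\pm}$ is more careful than the paper, which simply treats \eqref{sr:a}--\eqref{sr:b} as given identities and applies them directly without further comment.
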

\begin{proof} First, we have upon invoking the equation \eqref{sr:a}, the bounds
\begin{align*}
\mathcal{D}_{\text{Trace, Large},0}(t) \lesssim &  \| \bar{U}' \|_{L^\infty_x}^2 \| \nu^2 \p_x \alpha_1 e^W \|_{L^2_x}^2 \lesssim \mathcal{D}_{\text{Trace},0}(t), \\
\mathcal{D}_{\text{Trace, Large},1}(t) \lesssim &  \| \bar{U}' \|_{L^\infty_x}^2 \| \nu^3 \p_x^2 \alpha_1 e^W \|_{L^2_x}^2 \lesssim \mathcal{D}_{\text{Trace, Max},1}(t),
\end{align*}
and next, upon invoking the equation \eqref{sr:b}, the bound 
\begin{align*}
\mathcal{D}_{\text{Trace, Large},1}(t) \lesssim &  \| \bar{U}' \|_{L^\infty_x}^2( \| \nu^3 \p_x \p_t \alpha_1 e^W \|_{L^2_x}^2 +  \| \nu^4 \p_x^3 \alpha_1 e^W \|_{L^2_x}^2) \lesssim \mathcal{D}_{\text{Trace},2}(t) + \mathcal{D}_{\text{Max, Trace},2}(t).
\end{align*}
From here, the desired bounds follow from the previously established bounds on the diffusion terms appearing on the right-hand sides above. 
\end{proof}

We have thus proven the following
\begin{proposition} The one-dimensional traces satisfy the following energy-CK-dissipation estimate: 
\begin{align} \n
\frac{\p_t}{2} \mathcal{E}_{\text{Trace}}(t) +   \mathcal{CK}_{\text{Trace}}(t) + \mathcal{D}_{\text{Trace}}(t) \lesssim &(1 +  (\frac{\mathcal{E}_{ell}^{(I, out)}}{\langle t \rangle^{100}}  + \nu^{100} \mathcal{J}_{sob})) (1+ (\mathcal{E}_{ell}^{(I, out)})^{\frac12} +  \mathcal{J}_{sob}^{\frac12}) \mathcal{S}_{\le 3} \\
&+  \mathcal{S}_{\le 3}^{\frac12} \mathcal{S}_{\le 4}^{\frac12} + \nu^3  (\frac{\mathcal{E}_{ell}^{(I, out)}}{\langle t \rangle^{100}}  + \nu^{100} \mathcal{J}_{sob}) \mathcal{E}_{\text{Trace}}(t). 
\end{align}
\end{proposition}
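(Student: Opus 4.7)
The plan is to prove the proposition by directly combining the individual energy-CK-dissipation estimates established for each of the one-dimensional trace quantities $\alpha_1$ and $\alpha_4$ in the preceding lemmas, and then recognizing the sums in accordance with the definitions \eqref{zero:changes:1}--\eqref{line:90} of $\mathcal{E}_{\text{Trace}}$, $\mathcal{CK}_{\text{Trace}}$ and $\mathcal{D}_{\text{Trace}}$.

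First I would add together the three energy identities for $\mathcal{E}_{\text{Trace},i}$ with $i = 0, 1, 2$. These are obtained by testing the equations \eqref{eq:alpha:1}--\eqref{eq:alpha:2} against $\nu^3 \alpha_1 e^{2W}$, $\nu^4 \partial_t \alpha_1 e^{2W}$, and $\nu^6 \partial_t \partial_x \alpha_1 e^{2W}$ respectively, and the resulting bounds are precisely \eqref{chris:day:1}, \eqref{beed:beed:1} along with the $\mathcal{E}_{\text{Trace},0}$ estimate. Summing these yields a control of $\frac{\p_t}{2}\mathcal{E}_{\text{Trace}} + \mathcal{CK}_{\text{Trace}}$ plus the portion of $\mathcal{D}_{\text{Trace}}$ corresponding to $\sum_{i=0}^{2}\mathcal{D}_{\text{Trace},i}$, by the right-hand side of the proposition (modulo absorption into the left).

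Next I would bring in the ``maximum regularity" dissipation estimates \eqref{chris:day:2} and \eqref{beed:beed:2}, and the ``large" trace estimates \eqref{invoke:me:1}--\eqref{invoke:me:3}. Each of these is obtained by either reading off the dissipative term from the equation (in the max regularity case) or by using the algebraic identities \eqref{sr:a}--\eqref{sr:b} to reduce $\alpha_4$ and $\partial_t \alpha_4$ to derivatives of $\alpha_1$, then invoking the already-established $\alpha_1$ estimates. Summing these three pieces together with the previous energy identities produces exactly $\mathcal{D}_{\text{Trace}}(t)$ on the left and, on the right, a linear combination of terms of the form $(1 + (\mathcal{E}_{ell}^{(I,out)})^{1/2} + \mathcal{J}_{sob}^{1/2})\mathcal{S}_{\le 3}$ together with $\mathcal{S}_{\le 3}^{1/2}\mathcal{S}_{\le 4}^{1/2}$ and the term $\nu^3(\frac{\mathcal{E}_{ell}^{(I,out)}}{\langle t\rangle^{100}} + \nu^{100}\mathcal{J}_{sob})\mathcal{E}_{\text{Trace}}$. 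Combining the prefactors using the crude bound $(1+X)(1+Y) \lesssim 1 + XY + X + Y$ gives the stated compound factor $(1 + (\tfrac{\mathcal{E}_{ell}^{(I,out)}}{\langle t \rangle^{100}} + \nu^{100}\mathcal{J}_{sob}))(1 + (\mathcal{E}_{ell}^{(I,out)})^{1/2} + \mathcal{J}_{sob}^{1/2})$.

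The main obstacle in this consolidation is bookkeeping the coupling with $\mathcal{S}_{\le 4}$ arising in \eqref{beed:beed:2} (hence \eqref{invoke:me:3}): the estimate for $\alpha_4$ and $\partial_t \alpha_4$ genuinely sees the fourth-order Sobolev family $\mathcal{E}_{sob,4}, \mathcal{D}_{sob,4}, \mathcal{CK}_{sob,4}$ through the forcing term $H_{k,3}$ and the commutators $A_{k,3}, B_{k,3}, C_{k,3}$. However, since these only appear in the form $\mathcal{S}_{\le 3}^{1/2}\mathcal{S}_{\le 4}^{1/2}$, no further absorption is needed beyond what is already accounted for in the definition of the right-hand side of the proposition. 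A minor but necessary step throughout is to choose the weight constant $K$ (in the definition \eqref{defndW} of $W$) large enough so that all commutators with $\partial_y W$ produced along the way may be absorbed into $\mathcal{CK}_{\text{Trace}}$ via \eqref{wdot:est:a}--\eqref{wdot:est:b}; this was already carried out in the lemmas being summed. No additional analysis is required.
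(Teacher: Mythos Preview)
Your proposal is correct and matches the paper's approach: the proposition is stated immediately after the lemmas with the words ``We have thus proven the following,'' meaning it is obtained exactly by summing the individual estimates for $\mathcal{E}_{\text{Trace},i}$, $\mathcal{D}_{\text{Trace, Max},i}$, and $\mathcal{D}_{\text{Trace, Large},i}$ that you cite, and then substituting \eqref{chris:day:2} into the $\mathcal{D}_{\text{Trace, Max},1}$ appearing on the right of \eqref{beed:beed:1}, \eqref{beed:beed:2}, \eqref{invoke:me:3} to produce the compound prefactor. One small correction to your commentary: the $\mathcal{S}_{\le 4}$ contribution in \eqref{beed:beed:1}--\eqref{beed:beed:2} does not come from $H_{k,3}$ or the bulk commutators $A_{k,3},B_{k,3},C_{k,3}$ (those belong to the $\mathcal{E}_{sob}$ estimate), but rather from the trace inequality applied to $\|\nu^4 \partial_x\alpha_3 e^W\|_{L^2_x}^2$, which brings in $\partial_y^4\omega$; this does not affect the consolidation argument since you are only citing the lemmas as black boxes.
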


\subsection{Proof of Proposition \ref{pro:int:sob}}

\begin{proof}[Proof of Proposition \ref{pro:int:sob}] We have proven the following estimates: 
\begin{subequations}
\begin{align}\label{jayz:a}
&\frac{\p_t}{2} \mathcal{E}_{sob} + \mathcal{CK}_{sob}^{(W)} + \mathcal{D}_{sob} \lesssim \| \nu^{\frac12} C_{k,0} e^W \|_{L^2}^2 +\sum_{i = 1}^3 \sum_{\mathcal{K} \in \{ A, B, C\}} \| \nu^{i + \frac12} \mathcal{K}_{k,i} e^W \|_{L^2}^2   +  \| \nu^{\frac72} \mathcal{H}_{k,3} e^W \|_{L^2}^2,  \\ \n
&\frac{\p_t}{2} \mathcal{E}_{\text{Trace}}(t) +   \mathcal{CK}_{\text{Trace}}(t) + \mathcal{D}_{\text{Trace}}(t) \lesssim (1 +  (\frac{\mathcal{E}_{ell}^{(I, out)}}{\langle t \rangle^{100}}  + \nu^{100} \mathcal{J}_{sob})) (1+ (\mathcal{E}_{ell}^{(I, out)})^{\frac12} +  \mathcal{J}_{sob}^{\frac12}) \mathcal{S}_{\le 3} \\ \label{jayz:b}
&\qquad +  \mathcal{S}_{\le 3}^{\frac12} \mathcal{S}_{\le 4}^{\frac12} + \nu^3  (\frac{\mathcal{E}_{ell}^{(I, out)}}{\langle t \rangle^{100}}  + \nu^{100} \mathcal{J}_{sob}) \mathcal{E}_{\text{Trace}}(t), \\ \label{jayz:c}
&\mathcal{J}_{sob} \lesssim  \mathcal{E}_{sob},\\ \label{jayz:d}
&\| \nu^{\frac12} C_{k,0} e^W \|_{L^2}^2 +\sum_{i = 1}^3 \sum_{\mathcal{K} \in \{ A, B, C\}} \| \nu^{i + \frac12} \mathcal{K}_{k,i} e^W \|_{L^2}^2  \lesssim (\nu + \mathcal{J}_{sob}) \mathcal{D}_{sob}, \\ \label{jayz:e}
&\| \nu^{\frac72} \mathcal{H}_{k,3} e^W \|_{L^2}^2 \lesssim  \nu^2 \mathcal{D}_{\text{Trace}}.
\end{align}
\end{subequations}
We insert the bounds \eqref{jayz:d} -- \eqref{jayz:e} into \eqref{jayz:a} -- \eqref{jayz:c} to produce our main estimates 
\begin{subequations}
\begin{align} \label{hayz:a}
\frac{\p_t}{2} \mathcal{E}_{sob} + \mathcal{CK}_{sob}^{(W)} + \mathcal{D}_{sob} \lesssim &\nu^2 \mathcal{D}_{\text{Trace}} +(\nu + \mathcal{J}_{sob}) \mathcal{D}_{sob} ,  \\ \n
\frac{\p_t}{2} \mathcal{E}_{\text{Trace}}(t) +   \mathcal{CK}_{\text{Trace}}(t) + \mathcal{D}_{\text{Trace}}(t) \lesssim &(1 +  (\frac{\mathcal{E}_{ell}^{(I, out)}}{\langle t \rangle^{100}}  + \nu^{100} \mathcal{J}_{sob})) (1+ (\mathcal{E}_{ell}^{(I, out)})^{\frac12} +  \mathcal{J}_{sob}^{\frac12}) \mathcal{S}_{\le 3} \\ \label{hayz:b}
&+  \mathcal{S}_{\le 3}^{\frac12} \mathcal{S}_{\le 4}^{\frac12} + \nu^3  (\frac{\mathcal{E}_{ell}^{(I, out)}}{\langle t \rangle^{100}}  + \nu^{100} \mathcal{J}_{sob}) \mathcal{E}_{\text{Trace}}(t), \\ \label{hayz:c}
\mathcal{J}_{sob} \lesssim & \mathcal{E}_{sob}.
\end{align}
\end{subequations}
These bounds evidently close to provide the result of the proposition.
\end{proof}

\section{Inviscid Limit}  \label{sec:IL}
In this section we indicate how to prove the inviscid limit stated in Theorem \ref{thm:main} part (iii). 
The proof also applies on $\mathbb T \times \mathbb R$ (a result that the work \cite{BMV14} did not carry out). 
For a given $\omega_{in}$ satisfying the hypotheses of Theorem \ref{thm:main}, for $\nu \geq 0$ let $\set{\omega^\nu}_{\nu \geq 0}$ be the solutions studied in Theorem \ref{thm:main} (the case $\nu = 0$ being covered by \cite{HI20}).

The start to the inviscid limit begins by studying the PDE solved by $\tilde{\omega}^\nu := \omega^{\nu} - \omega^{(0)}$, which involves terms which are linear in $\tilde{\omega}^\nu$ (the Euler nonlinearity linearized around $\omega^{(0)}$) and a term which is nonlinear $\tilde{\omega}^\nu$. 
However, one sees immediately that in order to reach time-scales much longer than e.g. $\abs{\log \eps}^{-1}$, one needs to be able to treat these linear terms rather carefully, and if one wants to get time-scales which are basically independent of $\eps$, one will also need to carefully control the nonlinear terms as well.
In particular, it becomes clear that one will need to re-do a potentially significant portion of the kind of estimates we carried out in Proposition \ref{prop:boot}.
For this reason, we must carry out the inviscid limit in Gevrey spaces similar to $\mathcal{E}_{\text{Int}}$ and $\mathcal{E}^{(\gamma)}$. 
The difficulty with this is that the coordinate system/adapted vector fields themselves depend on $\nu$, which makes the inviscid limit a significantly more subtle proposition than one may guess at first. 

Below, we denote by
\begin{align}\label{Ga_t_nu_0}
\Gamma_{t;\nu} = \frac{1}{\pa_y v^{\nu}} \partial_y + t\partial_x,\quad \Gamma_{t;0}=\frac{1}{\pa_y v^0}\pa_y+t\pa_x, 
\end{align}
the vector field derivatives adapted to the viscous ($\nu>0$) and inviscid ($\nu=0$) problems. 
We will work in the \emph{inviscid} coordinate system, i.e. the coordinate system for $\nu=0$.
The main step is the following inviscid limit. 
\begin{theorem}[Long-time inviscid limit]
For $\nu \geq 0$ and let $\omega^\nu$ be the solutions considered in Theorem \ref{thm:main}. 
Then for any $\zeta > 0$, $t \lesssim \nu^{-1/(3+\zeta)}$, and all $\lambda_\ast < \frac{1}{4}\lambda$, there holds  
\begin{align}
\sum_{n,m} \left(\frac{\lambda^{n+m}_\ast}{((n+m)!)^{1/r}}\right)^2 \norm{ e^{W/10} \partial_x^{m} q^{2n} \Gamma_{t;0}^n(\omega^\nu - \omega^0)}_{L^2}^2 & \lesssim (\eps \nu t^{3+\zeta})^2, \label{ineq:InLim} \\
\sum_{0 \leq n+m \leq 4}\norm{e^{W/10}\partial_x^n (\nu^{-1}\partial_y)^m(\omega^\nu - \omega^0)}_{L^2} & \lesssim \eps \nu t^{3+\zeta}, \\
\sum_{0 \leq m \leq 4}\norm{e^{W/10}\partial_y^m P_0(\omega^\nu - \omega^0)}_{L^2} & \lesssim \eps \nu t^{2+\zeta}. 
\end{align}
\end{theorem}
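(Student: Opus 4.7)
The plan is to work in the \emph{inviscid} coordinate system $(z^{0},v^{0})$ adapted to $\omega^{(0)}$ and to perform an energy estimate on the difference $\tilde\omega := \omega^{\nu} - \omega^{0}$, viewed through this fixed coordinate system. First I would derive the PDE
\begin{equation*}
\partial_t \tilde\omega + (y + P_0 u_1^{(0)}) \partial_x \tilde\omega + u^{(0)}_{\neq} \cdot \nabla \tilde\omega + \tilde u \cdot \nabla \omega^{0} = \nu \Delta \omega^{\nu} - \tilde u \cdot \nabla \tilde\omega,
\end{equation*}
where $\tilde u := u^{\nu} - u^{0}$. The right-hand side contains the explicitly small viscous forcing $\nu \Delta \omega^{\nu}$, which by Theorem~\ref{thm:main}(i)--(ii) is of size $O(\nu)$ in the relevant norms and is the source of all of the $\nu$ smallness; integrated over $t \lesssim \nu^{-1/(3+\zeta)}$ it is what will produce the $\eps\nu t^{3+\zeta}$ on the right. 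The quadratic remainder $\tilde u \cdot \nabla \tilde\omega$ is handled by bootstrap. The dangerous terms are those on the left-hand side: the transport by $u^{(0)}$ (which is neither small in $\nu$ nor decaying), and in particular the linearized interaction $\tilde u \cdot \nabla \omega^{0}$.

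Next, I would set up two parallel families of norms mirroring Proposition~\ref{prop:boot}: an interior Gevrey norm on $\tilde f := f^{\nu} - f^{0}$ (the profile of $\tilde\omega$ in the $(z^{0},v^{0})$ variables) built from a Fourier multiplier of the same shape as $\mathfrak{A}$ in~\eqref{frak:A}, and an exterior pseudo-Gevrey norm on $\partial_x^m q^n \Gamma_{t;0}^n \tilde\omega$ weighted by $e^{W/10}$. The weaker weight $e^{W/10}$ (as opposed to $e^{W}$) and the strict inequality $\lambda_\ast < \lambda/4$ are both crucial: they leave enough margin to absorb the Gevrey-radius loss incurred when transferring bounds from the viscous coordinate system $(z^{\nu},v^{\nu})$ into the inviscid one, where the discrepancy is controlled by $\lVert H^{\nu} - H^{0}\rVert_{\text{Gevrey}}$, itself an object to be bootstrapped with the same size as $\tilde\omega$. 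Because $\omega^{\nu}$ does not a priori satisfy uniform Gevrey bounds in the inviscid coordinates, an auxiliary lemma (analogous to Lemma~\ref{lem:ExtToInt}) that trades part of the exterior weight and radius for Gevrey regularity in the new coordinates must be proven first, and this is what restricts the time scale to $t < \nu^{-1/(3+\zeta)}$.

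The decisive step is the treatment of the linear term $\tilde u \cdot \nabla \omega^{0}$, which would cause a Gronwall blow-up of order $e^{C\eps t}$ if handled naively and is the reason why the inviscid limit on $\mathbb T \times \mathbb R$ was not pursued in \cite{BMV14}. I would exploit the same algebraic structure of the 2D Euler equations that produced the $O(\brak{t}^{-2})$ convergence of the background shear $y + P_0 u_1$ in \cite{BM13,HI20}: writing $\tilde u \cdot \nabla \omega^{0}$ in the $(z^{0},v^{0})$ coordinates, the quasilinear part is exactly the one that becomes the transport operator on the profile side, and the remaining commutator terms can be paired with the adapted vector field $\Gamma_{t;0}$ in a way that exposes a time-integrable factor. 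Combined with the inviscid-damping decay of $\tilde u$ inherited (via the elliptic estimates of Proposition~\ref{pro:ell:intro}) from the exterior smallness of $\tilde\omega$, this converts the naive Gronwall into a polynomial-in-$t$ estimate.

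The hard part will be closing the bootstrap at the correct size $\eps\nu t^{3+\zeta}$ simultaneously in the interior Gevrey-$1/r$ norm and in the exterior pseudo-Gevrey norm, with the coordinate-discrepancy and nonlinear terms all being controlled. The trilinear estimates needed for $\tilde u \cdot \nabla \tilde\omega$ are a direct variant of the ``quasiproduct'' framework of Section~\ref{sec:q}: in every term at least one factor is $\tilde\omega$ or $H^{\nu} - H^{0}$, already containing the $\eps\nu t^{3+\zeta}$ smallness, while the other factor is controlled by the uniform bounds of Proposition~\ref{prop:boot}, so the bootstrap structure closes. The two pointwise statements in $L^{\infty}$ in the interior $\{|y|<1/2\}$ follow from~\eqref{ineq:InLim} by Sobolev embedding once the anisotropic $(\p_x, \sqrt{\nu}\,\p_y)$-derivative estimates on $\tilde\omega$ are upgraded via interpolation against the uniform bounds; the exterior bound $\|(\omega^{\nu}-\omega^{0})\mathbf{1}_{|y|\ge 1/2}\|_{L^{\infty}} \lesssim \eps e^{-\nu^{-1/8}}$ then comes for free from the very strong weight $e^{W/10}$, which is already of order $e^{-\nu^{-1/8}}$ on $|y| \geq 1/2$ for $t \lesssim \nu^{-1/(3+\zeta)}$ by~\eqref{defndW}.
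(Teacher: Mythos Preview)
Your overall architecture is right: work in the inviscid coordinates $(z,v^0)$, write the PDE for $\tilde f = f^\nu - f^0$, treat $\nu\Delta_t f^0$ as the source of all $\nu$-smallness, bootstrap, and split interior/exterior. But there are two concrete places where your plan diverges from what actually works in the paper, and the second is a real gap.

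\textbf{Exterior.} You propose to bootstrap an exterior pseudo-Gevrey norm on $\tilde\omega$ together with the coordinate discrepancy $H^\nu - H^0$. The paper does neither. In the exterior region $\omega^0 \equiv 0$, so $\tilde\omega = \omega^\nu$ there and the exterior bound follows \emph{directly} from the already-proven $\mathcal{E}^{(\gamma)}$ estimate in Proposition~\ref{prop:boot}; the only work is a Fa\`a di Bruno argument (Theorem~\ref{thm_bdy_lim}) that converts $\Gamma_{t;\nu}^n$ into $\Gamma_{t;0}^n = (\partial_y + t\partial_x)^n$ on the support of $\chi^E$. No exterior bootstrap, and no separate estimate on $H^\nu - H^0$, is used.

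\textbf{Interior mechanism.} Your description of how to beat the Gronwall from $\tilde u \cdot \nabla \omega^0$ (``algebraic structure \ldots\ paired with the adapted vector field $\Gamma_{t;0}$ \ldots\ exposes a time-integrable factor'') does not match the paper's device, and it is not clear your version would close. The paper's mechanism is to build the time growth \emph{into the energy itself}: define
\[
\mathcal{E}^{il}(t) = t^{-4-2\delta}\norm{\brak{\partial_v}^{-1} A^{il}\tilde f^I_0}_{L^2}^2 + t^{-6-2\delta}\norm{A^{il}\tilde f^I}_{L^2}^2,
\]
with \emph{different} polynomial weights on zero and nonzero modes. Differentiating produces a damping term $-\frac{c}{t}\mathcal{E}^{il}$ on the left, and the linear coupling terms $L_0$, $L_{\neq}$, $\mathcal S$ are each shown to be $O(\eps/t)\,\mathcal{E}^{il}$ plus $CK$-controllable pieces, so they are absorbed for $\eps\ll\delta$. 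The source $\nu\chi^I\Delta_t f^0$ then yields $\mathcal{E}^{il}\lesssim\nu^2\eps^2$, which unwinds to the claimed $\eps\nu t^{3+\delta}$ and $\eps\nu t^{2+\delta}$ rates. The one-power-of-$t$ gap between the zero and nonzero mode weights is exactly what makes the $L_0$ cross term (zero mode hit by $\tilde\psi_{\neq}\cdot\nabla f^0$) integrable. Your proposal does not identify this weighted-energy device, and without it the interior bootstrap will not close at the correct polynomial rate.
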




\subsection{Inviscid limit in the exterior}

We first show that in the exterior, we can pass to the inviscid limit in the pseudo-Gevrey regularity spaces adapted to $\Gamma_{t;0}$ (as well as standard $H^4$ regularity). 
Note that in the exterior, $\omega^{0}$ vanishes, so the main challenge here is to translate the existing estimates with $\Gamma_{t;\nu}$ into estimates in terms of $\Gamma_{t;0}$.

\begin{theorem}[Long-time inviscid limit]\label{thm_bdy_lim}
Let $\omega^\nu$ be the Navier-Stokes solution subject to viscosity $0<\nu\ll 1$ and $\omega^0$ be the Euler solution ($\nu=0$). Fix two arbitrary parameters $r\in(0,1),\, s_0\in(s,5/4)$, where $s$ is the exterior pseudo-Gevrey index specified in \eqref{pgiL1}. 
Then for all $t \leq \nu^{-1/3-\zeta}$, and for all $\lambda_\ast$ chosen small enough compared to universal constants and exterior Gevrey radius $\lambda$ \eqref{Gev:la}, there exists a Gevrey radius parameter $\lambda_1=C(r,s_0,K)\lambda_\ast^{\frac{1}{r s_0}}$ (with $K$ being defined in \eqref{defndW}) such that the following estimate holds 
\begin{align}\n
\sum_{ m+n=0 }^\infty &\frac{\lambda^{2(m+n)/r}_\ast}{((m+n)!)^{2/r}} \norm{ \chi^E(v^\nu(t,\cdot)) \mathbbm{1}_{\abs{y} \leq 7/8} e^{W/8} \partial_x^{m}  \Gamma_{t;0}^n(\omega^\nu - \omega^0)}_{L_{x,y}^2}^2\\
 &\hspace{-1cm}\lesssim \sum_{ m+n=0 }^\infty \frac{\lambda_1^{2s_0(m+n)} \varphi^{2n}}{((m+n)!)^{2s_0}} \norm{ \chi^E(v^\nu(t,\cdot)) e^{W/2} \partial_x^{m} q^{2n} \Gamma_{t;0}^n (\omega^\nu -\omega^0)}_{L_{x,y}^2}^2\n \\
 &\hspace{-1cm}\lesssim \exp\{-\nu^{-1/8}\}\mathcal{E}^{(\gamma)}\lesssim \exp\{-\nu^{-1/8}\}\ep^2.  \label{ineq:InLim}
\end{align}
We recall that $\chi^{E}=(1-\chi^I)$ \eqref{chi:I:def}, and $\mathcal{E}^{(\gamma)}$ is defined in \eqref{ef:a}. Here the Gevrey index $r$ can be chosen to be  strictly greater than $1/2$ if the solution $\omega^\nu$ is in the pseudo-Gevrey-$s$ regularity space ($s<5/4$) in the exterior region.  
 
Moreover, the inviscid limit holds in the Sobolev space in the exterior
\begin{align}
\sum_{0 \leq m+n \leq 4}\norm{e^{W/9}\ \chi^E\ \partial_x^n \partial_y^m\ (\omega^\nu - \omega^0)}_{L^2} & \lesssim \exp\{-\nu^{-1/8}\}\eps . \label{ineq:H4IL}
\end{align}
\end{theorem}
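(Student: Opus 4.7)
\medskip

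\noindent\textbf{Proof plan for Theorem \ref{thm_bdy_lim}.}
The overarching strategy rests on the observation that in the exterior region (where $\chi^E$ is active, i.e.\ $|y|\ge 3/4$) the weight $e^W$ is enormous: for $t \lesssim \nu^{-1/3-\zeta}$ and $|y|\ge 1/2$, the bound \eqref{defndW} yields $W \gtrsim \nu^{-1}(1+t)^{-1} \gtrsim \nu^{-2/3+\zeta}$, so $e^{W/8} \gg e^{\nu^{-1/8}}$. This will allow us to bound each of $\omega^\nu$ and $\omega^0$ individually in the exterior rather than needing to exploit cancellation in $\omega^\nu - \omega^0$. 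In particular, the estimate \eqref{ineq:InLim} is \emph{not} genuinely about the inviscid limit in the exterior — it follows purely from the fact that both solutions are extremely well-localized away from the boundary. The genuine inviscid-limit argument will be needed in the interior, which is treated separately later.

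\emph{Step 1: Pseudo-Gevrey regularity conversion.} The first inequality of \eqref{ineq:InLim} is a direct analogue of Lemma \ref{lem:ExtToInt} applied to the difference $\tilde\omega := \omega^\nu - \omega^0$. On the support of $\chi^E \cdot \mathbbm{1}_{|y|\le 7/8}$, the weights $q^{2n}$ are bounded above and below by universal constants, while the weight ratio $e^{W/2 - W/8} = e^{3W/8}$ is at least $\exp(\nu^{-1/8})$ times any power of $(m+n)!$. Repeating the factorial-trading computation of Lemma \ref{lem:ExtToInt} with the indices $(\mathfrak{r}, \tilde{\mathfrak{r}}) = (1/s_0, r)$ produces $\tilde\lambda = C(r,s_0,K) \lambda_1^{1/(rs_0)}$, and the resulting loss of finitely many powers of $(m+n)!$ is absorbed into an arbitrarily small fraction of the exterior weight. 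I would simply re-use Lemma \ref{lem:ExtToInt} by checking its hypotheses are met here (note it requires the lower Gevrey index to exceed $1/2$, which is ensured by our choice $s_0 < 5/4$, i.e.\ $1/s_0 > 4/5$).

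\emph{Step 2: Controlling the RHS by the bootstrap norms.} To prove the second inequality, I split $\tilde\omega = \omega^\nu + (-\omega^0)$ and bound each piece separately in the exterior norm that appears on the LHS of step~1's target. The bootstrap estimate \eqref{boot:ExtVort} gives $\mathcal{E}^{(\gamma)}[\omega^\nu] \lesssim \eps^2$, but measured with $\Gamma_{t;\nu}$ and weight $e^W$ (not $e^{W/2}$). The key conversions are: (a) the weight ratio $e^{W - W/2} = e^{W/2} \gg \exp(\nu^{-1/8})$ provides the claimed exponential smallness; (b) the Gevrey-$s_0$ norm (with $s_0 > s$) can be absorbed into the Gevrey-$s$ norm of $\mathcal{E}^{(\gamma)}$ after losing a small amount of radius, which also costs $\exp(\nu^{-1/8})$ at worst; (c) the switch from $\Gamma_{t;0}$ to $\Gamma_{t;\nu}$ is handled by iterating the identity
\begin{equation*}
  \Gamma_{t;0} - \Gamma_{t;\nu} = \frac{H^\nu - H^0}{(1+H^\nu)(1+H^0)}\,\partial_y,
\end{equation*}
and using that in the exterior $H^\nu, H^0$ are small (by the respective bootstraps on $\mathcal{E}_H^{(\gamma)}$ and the analogous quantity for the $\nu=0$ problem from \cite{HI20}), and that each $\partial_y$ produced in the commutator is controlled by a combination of $\Gamma_{t;0}$ and $t\partial_x$, losing at most $\brak{t}$ per derivative, easily absorbed by the exponential factor. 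The analogous bound for $\omega^0$ uses the Euler localization theory of \cite{HI20}, which gives a fully analogous exterior exponential decay statement.

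\emph{Step 3: The Sobolev estimate \eqref{ineq:H4IL}.} This is a direct consequence of the same mechanism but simpler. The Sobolev bootstrap $\mathcal{E}_{\mathrm{sob}} \lesssim \eps^2$ from \eqref{boot:sob}, combined with the companion $H^4$ bound for $\omega^0$ from \cite{HI20}, already controls $\|e^W \partial_x^n \partial_y^m \omega^\nu\|_{L^2}$ and $\|e^W \partial_x^n \partial_y^m \omega^0\|_{L^2}$ for $n+m \le 4$ (up to factors of $\nu$ on the $\partial_y$ derivatives, already built into the definition of $\mathcal{E}_{\mathrm{sob}}$). In the region where $\chi^E$ is active, the ratio $e^{W - W/9} \gg \exp(\nu^{-1/8})$ then produces the claimed smallness after bounding each of $\omega^\nu$ and $\omega^0$ separately and applying the triangle inequality.

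\emph{Main obstacle.} The hardest technical point is step~2(c): converting $\Gamma_{t;\nu}^n$ into $\Gamma_{t;0}^n$ at arbitrarily high order $n$ while preserving the Gevrey structure. A naive commutator expansion produces a sum with combinatorial coefficients that could in principle defeat the factorial gain. However, because this conversion is localized to the exterior (via $\chi^E$) where both coordinate systems are small perturbations of the identity, and because the enormous weight gap $e^{3W/8} \ge \exp(\nu^{-1/8})$ beats any polynomial-in-$(m+n)$ combinatorial loss, the induction closes. The only finesse needed is to apply the conversion before taking the supremum over $n$, so that the exponential prefactor $\exp(-\nu^{-1/8})$ is extracted cleanly.
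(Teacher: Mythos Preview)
Your Steps 1 and 3 are essentially correct and match the paper's approach. However, Step 2 has a genuine gap and misses two simplifications that the paper exploits.

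\textbf{Missed simplifications.} First, the Euler vorticity $\omega^0$ is transported by Euler and hence stays compactly supported in $|y|<1/2$; on $\mathrm{supp}(\chi^E)$ it is \emph{identically zero}, so there is nothing to estimate for $\omega^0$ in the exterior (no appeal to a ``localization theory'' from \cite{HI20} is needed). Second, and more importantly, because $\omega^0$ vanishes there, one has $v^0_y = 1$ on $\mathrm{supp}(\chi^E)$, so $\Gamma_{t;0} = \partial_y + t\partial_x$ \emph{exactly}. The paper uses this to reduce the conversion problem to: expand $(\partial_y + ikt)^n$ binomially, then convert each $\partial_y^{\mathfrak j}$ to $\partial_{v^\nu}$-derivatives via the Fa\`a di Bruno formula, and finally rewrite $\partial_{v^\nu} = \Gamma_{t;\nu} - ikt$. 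This is considerably cleaner than iterating the commutator $\Gamma_{t;0} - \Gamma_{t;\nu}$.

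\textbf{The main gap.} Your claim that the commutator expansion produces only \emph{polynomial}-in-$(m+n)$ combinatorial losses is incorrect, and your claim that the weight $e^{3W/8}$ absorbs them is also incorrect. The losses are \emph{exponential}: the Fa\`a di Bruno expansion generates factors $F_1 F_2^{s_0} F_3$ which the paper bounds by $C^{m+n}$ (using a multinomial-coefficient bound and a separate combinatorial lemma showing $\prod_\ell (\ell!)^{m_\ell}(\sum m_\ell)!/(\sum \ell m_\ell)! \le 1$), and the number of Fa\`a di Bruno terms (partitions of $\mathfrak j$) is itself $\lesssim 5^{\mathfrak j}$. Since $W$ does not depend on $(m,n)$, the weight cannot absorb a $C^{m+n}$ loss; what absorbs it is the \emph{Gevrey radius gap} --- choosing $\lambda_1$ small compared to $\lambda$ so that $(\mathfrak C \lambda_1/\lambda)^{s_0(m+n)}$ is summable. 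You allude to a radius loss in Step 2(b) but then misattribute the mechanism in your ``main obstacle'' paragraph. Without the explicit $C^{m+n}$ bound on the combinatorics (which requires real work), the argument does not close.
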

\begin{remark}
Here, $\lambda_\ast$ determines the space in which the vanishing viscosity limit holds. The parameters $s_0,\, \lambda_1$ will not be used elsewhere.  
\end{remark}

\begin{proof}
First, notice that \eqref{ineq:H4IL} follows from the estimates on $\mathcal{E}_{sob}$ in Proposition \ref{prop:boot} just using that the weight $e^{W/8}$ is large in the exterior region on the time interval $[0,\nu^{-1/3-\zeta}]$.
We next focus on \eqref{ineq:InLim}, which is significantly more complicated. We organize the proof in several steps. In {\bf Step \#~1}, we simplify the problem and introduce the necessary setups. In {\bf Step \#~2}, we discuss the related coordinate systems in the problem and use the Fa\`a di Bruno's formula to set up the main expression to estimate. We will see that the essential difficulties are controlling certain combinatorial objects. In {\bf Step \#~3}, we apply combinatorial arguments to bound these combinatorial objects. In {\bf Step \#~4}, we collect all the information and finish the proof.  

\noindent
\textbf{Step \# 1: Setting up.}
First, we identify the key difficulties in the proof. We observe that the first inequality in \eqref{ineq:InLim} is a variant of Lemma \ref{lem:ExtToInt}. By setting $\widetilde{\mathfrak{r}}=r,\quad \mathfrak{r}=1/s_0$ in the lemma, and introducing the $e^{W/8}$ weight in the proof, one can derive the result. We omit further details for the sake of brevity. Furthermore, the third inequality in \eqref{ineq:InLim} is a direct consequence of the bootstrap hypothesis \eqref{boot:ExtVort}. Hence, to prove the theorem, it remains to derive the second inequality in \eqref{ineq:InLim}. 

We further simplify our task by observing that the Euler vorticity $\omega^0$ is   compactly supported in the interior of the channel, i.e., $\text{support}\{\omega^0\} \subset (-1/2,1/2)$, which yields that
\begin{align*}
\sum_{m+n=0}^\infty \frac{\lambda_1^{2s_0(m+n)}\varphi^{2n}}{((m+n)!)^{2s_0}} \norm{\chi^E e^{W/2} \partial_x^{m} q^{2n} \Gamma_{t;0}^n(\omega^\nu - \omega^0)}_{L_{x,y}^2}^2 & \\
&  \hspace{-4cm}= \sum_{m+n=0}^\infty \frac{\lambda_1^{2s_0(m+n)}\varphi^{2n}}{((m+n)!)^{2s_0}} \norm{\chi^E e^{W/2} \partial_x^{m} q^{2n} \Gamma_{t;0}^n\omega^\nu }_{L_{x,y}^2}^2. 
\end{align*}
Here we highlight that since we only care about the estimate in the exterior region, i.e.,  the support of $\chi^E$, we have that the $\Gamma_{t;0}$-derivative simplifies, i.e.,
\begin{align*}
\Gamma_{t;0}f(t,x,y)=(\pa_y+t\pa_x)f(t,x,y),\quad \forall y\in \bigcup_{0\leq \nu\ll 1}\text{support}_y\{\chi^E(v^\nu(t,\cdot))\}\subset [-1,1]\backslash(-1/2,1/2).
\end{align*}
This is a consequence of the fact that $v^0=y+$constant in the region $(-1/2,1/2)^c$. For further discussions of this fact, we refer the readers to \cite{HI20}.   
Later on in the proof, we will consider the following coordinate system
\begin{align}\label{coord_x_v_nu}
(x,v^\nu(t,y)), \quad 0<\nu \ll 1 
\end{align}
with $v^\nu$ being the coordinate systems defined by \eqref{eq:v}, here now explicitly parameterized by $\nu$. This coordinate system is natural in the sense that most of the estimates ($\mathcal{E}^{(\gamma)}$, etc) developed in this paper have direct counterparts in this coordinate system (due to the choice of $\Gamma_{t;\nu}$). Within the proof of Theorem \ref{thm_bdy_lim}, we apply the following notation to represent the vorticity in $(x,v^\nu)$-coordinate,
\begin{align*}
\Omega^\nu(t,x,v^\nu(t,y))=\omega^\nu(t,x, y),\quad \Omega_k^\nu(t,v^\nu(t,y))=\omega_k^\nu(t,y).
\end{align*}
As above, the partial derivative $\pa_{v^\nu}$ has the following expression in the $(x,y)$-coordinate:
\begin{align} \pa_{v^\nu}\Omega_k^\nu=\overline{\pa}_{v^\nu}\omega_k^\nu=(\pa_{v^\nu} y)\pa_y\omega_k^\nu =\frac{1}{\pa_{y}v^\nu} \pa_y\omega_k^\nu.
\end{align}
Throughout the proof, we always use $\overline{\partial}$ to denote the representation of corresponding derivative in the $(x,y)$-coordinate.  

The final observations leading to simplification is that the definitions \eqref{chi:I:def}, \eqref{chi} and \eqref{defndW} yield the following pointwise estimate for all $t\in(0,\nu^{-1/3-\zeta}]$,
\begin{align*}
&\text{support}_y \{\chi^E(v^\nu(t,\cdot))\}\subset \bigcap_{n=0}^\infty \text{support}_y \{\chi_n\}, \quad\varphi^{-2} e^{W/2}\leq C e^{-\nu^{-1/8}}e^{W}\\
&\hspace{2cm}\Rightarrow \chi^E \varphi^{-1}e^{W/2}\leq Ce^{-\nu^{-1/8}}e^{W}\chi^E\chi_n.
\end{align*} 
This observation, together with the definition of $\mathcal{E}^{(\gamma)}$ \eqref{ef:a} yields that the following estimate guarantees the second inequality in \eqref{ineq:InLim} and hence implies the theorem:
\begin{align}
&\sum_{m+n=0}^\infty \frac{\lambda_1^{2s_0(m+n)}\varphi^{2n}}{((m+n)!)^{2s_0}} \norm{\chi^E (v^\nu(t,\cdot)) e^{W/2} \partial_{x}^{m} q^{2n} (\pa_y+t\pa_x)^n\omega^\nu}_{L_{x,y}^2}^2\n \\
&\lesssim \sum_{m+n=0}^\infty \frac{\lambda^{2s(m+n)}\varphi^{2n}}{((m+n)!)^{2s}} \norm{\chi^E {(v^\nu(t,\cdot))} e^{W/2} \partial_{x}^{m} q^{n} \Gamma_{t;\nu}^n \omega^\nu }_{L_{x,y}^2}^2\lesssim e^{-\nu^{-1/8}}\mathcal{E}^{(\gamma)},\quad \forall t \in\lf[0, \nu^{-1/3-\zeta}\rg].\label{ext_dif}
\end{align}
Here $\lambda$ is the radius of Gevrey regularity in exterior region and the $\Gamma_{t;\nu}$ vector field is defined in \eqref{Ga_t_nu_0}. Moreover, we highlight that $\Gamma_{t;\nu}$ is the $\Gamma$'s we applied throughout this paper. Hence, the estimate \eqref{ext_dif} can be viewed as a translation between the $(\pa_y+t\pa_x)$-derivative in the $(x,y)$-coordinate to the $(\pa_{v^\nu}+t\pa_x)$-derivative in the $(x,v^\nu)$-coordinate (with $\Gamma_{t;\nu}=\overline{\pa}_{v^\nu}+t\pa_x$ being its $(x,y)$-coordinate representation). The classical way to do this transition in Gevrey spaces is to use the Fa\`a di Bruno's formula which can easily switch $\pa_y$ to $\pa_{v^\nu}$ by paying some regularity on $v^\nu$. However, the extra $t\pa_x$ component complicates the business. With this comment, we conclude {\bf Step \# 1}.

\ifx
\jacob{Hmm I am a little confused...why do we want to put the $v^\nu$ in there? Is it clear that \eqref{ext_dif} yields the estimate we actually want? ie. why can we/why do we want to, commute these compositions past the derivatives?
Is this some trickery because directly estimating is too hard? I would have thought we need to estimate the following:}\siming{Answer: Because the solutions $\omega^\nu$ in the exterior regions are essentially measured in the $(x,v^\nu)$ coordinate systems. When we take the $\pav^n$ in the exterior regions, it is like the $\pa_{v^\nu}$ in the $(x,v^\nu)$ coordinate. I just try to highlight that here. }

\begin{align*}
\sum_k\sum_{m+n=0}^\infty \frac{\lambda_1^{2s_0(m+n)}}{(m+n)!^{2s}}\lf\|\varphi^{2n}e^{W/2}\chi^E\sum_{j_1=0}^nq^{2n-j_1} \binom{n}{j_1}(ik)^{m} \ q^{j_1} \left(\frac{\partial_y v^\nu}{ \partial_y v^0}\partial_{v^\nu}\right)^{j_1}
(ikt)^{n-j_1} \omega_k^{\nu} \rg\|_{L_y^2}^2 .
\end{align*}
and then expanding
\begin{align*}
\left(\frac{v^\nu_y}{ v^0_y} \partial_{v^\nu}\right)^{j_1} &= \sum_{\ell=0}^{j_1} \mathfrak{G}_{\ell,j_1} \partial_{v^\nu}^\ell  \\
&= \sum_{\ell=0}^{j_1} \mathfrak{G}_{\ell,j_1} \sum \frac{\ell!}{\ell'!(\ell-\ell')!} \Gamma_{t;\nu}^{\ell'} (ikt)^{\ell-\ell'}, 
\end{align*}
where the $\mathfrak{G}_{\ell,j_1}$ is something complicated based on iterated commutators. 
Oh yeah proceed by induction.
Suppose I want to commute $(aX)^j$ for a scalar field times a vector field. 
Suppose you already know how to re-write it as a polynomial in $a_{j;n} X^j$ for some coefficients for $(aX)^{n}$. Then
\begin{align*}
(aX)^{n+1} = \sum a_{j;n} X^j aX & = \sum a_{j;n}a X^{j+1} +  a_{j;n} [X^j,a] X \\
& = \sum_{j=0}^n a_{j;n}a X^{j+1} +  \sum_{p=0}^j a_{j;n} \frac{j!}{p! (j-p)!} \mathrm{ad}_X^{j-p}(a) X^{p+1} \\
& = \sum_{p=0}^n a_{p;n}a X^{p+1} +  \sum_{p=0}^n (\sum_{j=p}^n a_{j;n} \frac{j!}{p! (j-p)!} \mathrm{ad}_X^{j-p}(a)) X^{p+1} \\
\end{align*}
This technically gives a recursion relation
\begin{align*}
a_{p+1;n+1} = a a_{p;n} + \sum_{j=p}^n a_{j;n} \frac{j!}{p! (j-p)!} \mathrm{ad}_X^{j-p}(a).   
\end{align*}
That's a pretty awful formula.  
This means we would get something like
\begin{align*}
\mathfrak{G}_{\ell,j_1} = \frac{v^\nu_y}{ v^0_y} \mathfrak{G}_{\ell,j_1-1} + \sum_{j=p}^{j_1-1} \mathfrak{G}_{j;j_1-1} \frac{j!}{p! (j-p)!} \mathrm{ad}_{\partial_{v^\nu}}^{j-p}( \frac{v^\nu_y}{ v^0_y} ).   
\end{align*}
Hmm yeah that's a pretty bad nightmare too...
\fi

\noindent
{\bf Step \# 2: Change of coordinate.} To implement the switching between the derivatives, we apply the Fourier transform in $x$ and expand the expression on the left hand side of \eqref{ext_dif} as follows
\begin{align}
\n \eqref{ext_dif}_{\text{L.H.S}}=&\sum_{k\in \mathbb{Z}}\sum_{m+n=0}^\infty \frac{\lambda_1^{2s_0(m+n)}}{(m+n)!^{2s_0}}\varphi^{2n}\|e^{W/2}\chi^E(ik)^{m} q^{2n} (\pa_y+ikt )^n \omega_k^{\nu} \|_{L_y^2}^2 \\ 
=&\sum_{k\in \mathbb{Z}}\sum_{m+n=0}^\infty \frac{\lambda_1^{2s_0(m+n)}}{(m+n)!^{2s_0}}\bigg\|e^{W/2}\chi^E\underbrace{\lf(\varphi^{n}\sum_{\mf{j}=0}^nq^{2n-\mf{j}} \binom{n}{\mf{j}}(ik)^{m} \ q^{\mf{j}} \pa_y^{\mf{j}}(ikt)^{n-\mf{j}} \omega_k^{\nu}\rg)}_{=:\mathcal T}\bigg\|_{L_y^2}^2 . \label{Step_1}
\end{align}
Now we can see that the $\pa_y^\mf j$ derivatives are singled out and susceptible to the Fa\`a di Bruno's formula, which  we recall here:
\begin{align}
\frac{d^n}{dx^n}f(g(x))=&\sum_ {(m_1,m_2,\cdots, m_n)\in S_n}\frac{n!}{m_1!(1!)^{m_1}m_2! (2!)^{m_2}...m_n! (n!)^{m_n}} f^{(m_1+\cdots+m_n)}(g(x))\cdot \prod_{j=1}^n(g^{(j)}(x))^{m_j},\label{FaadiBruno}\\
\n &\quad S_n:=\left\{(m_1,m_2,\cdots,m_n)\in \mathbb{N}^n\bigg|\sum_{\ell=1}^n \ell m_\ell=n\right\}. 
\end{align} A direct application of the formula \eqref{FaadiBruno} on the coordinate systems $(x,y)$ and $(x,v^\nu)$ yields the following expression for $y\in\text{support}\ \chi^E(t,v^\nu(t,\cdot))$, 
\begin{align*} 
\pa_{y}^{\mf{j}} \omega_k^\nu(t,y)
&=\pa_{y}^{\mf{j}}(\Omega_k^\nu(t,v^\nu(t,y))) \\
&=\sum_{(m_1,m_2,\cdots,m_i)\in S_{\mf{j}}}\frac{{\mf{j}}!}{m_1!m_2!\cdots m_{\mf{j}}!}\lf(\pa_{v^\nu}^{(m_1+m_2+\cdots+m_{\mf{j}})}\Omega_k^\nu\rg)(v^\nu(t,y))\prod_{\ell=1}^{\mf{j}}\lf(\frac{\pa_{y}^{\ell }v^\nu(t,y)}{\ell!}\rg)^{m_\ell}\\
&=\sum_{(m_1,m_2,\cdots,m_i)\in S_{\mf{j}}}\frac{{\mf{j}}!}{m_1!m_2!\cdots m_{\mf{j}}!}\lf(\overline{\pa}_{v^\nu}^{(m_1+m_2+\cdots+m_{\mf{j}})}\omega_k^\nu\rg)(t,y)\prod_{\ell=1}^{\mf{j}}\lf(\frac{\pa_{y}^{\ell }v^\nu(t,y)}{\ell!}\rg)^{m_\ell}\\
&=\sum_{(m_1,m_2,\cdots,m_i)\in S_{\mf{j}}}\frac{{\mf{j}}!}{m_1!m_2!\cdots m_{\mf{j}}!}\lf((\Gamma_{t;\nu}-ikt)^{(m_1+m_2+\cdots+m_{\mf{j}})}\omega_k^\nu\rg)(t,y)\prod_{\ell=1}^{\mf{j}}\lf(\frac{\pa_{y}^{\ell }v^\nu(t,y)}{\ell!}\rg)^{m_\ell}.
\end{align*}
Here in the last line, we use the relation $\overline{\pa}_{v^{\nu}}\omega_k^\nu=({\pa_y v^\nu})^{-1}\pa_y\omega_k^\nu=(\Gamma_{t;\nu}-ikt)\omega_k^\nu$. 

By the above discussion, we have that the $\mathcal T$ in \eqref{Step_1} can be expanded as follows 
\begin{align*}\mathcal T
&=\varphi^{n}\sum_{\mf{j}=0}^n\binom{n}{\mf{j}}(ik)^{m} q^{2n} (ikt)^{n-\mf{j}} \sum_{(m_1,m_2,\cdots,m_i)\in S_{\mf{j}}}\frac{{\mf{j}}!}{m_1!m_2!\cdots m_{\mf{j}}!}\prod_{\ell=1}^{\mf{j}}\lf(\frac{\pa_{y}^{\ell }v^\nu}{\ell!}\rg)^{m_\ell}\\
&\hspace{8cm}\quad\times {(\Gamma_{t;\nu}-ikt)}^{(m_1+m_2+\cdots+m_{\mf{j}})}\omega_k^{\nu}\\
&=\varphi^{n}\sum_{\mf{j}=0}^n\binom{n}{\mf{j}}(ik)^{m} q^{2n} (ikt)^{n-\mf{j}} \sum_{(m_1,m_2,\cdots,m_{\mf{j}})\in S_{\mf{j}}}\frac{{\mf{j}}!}{m_1!m_2!\cdots m_{\mf{j}}!}\prod_{\ell=1}^{\mf{j}}\lf(\frac{\pa_{y}^{\ell}v^\nu}{\ell!}\rg)^{m_\ell}\\
&\quad\times{\sum_{p=0}^{m_1+\cdots+m_{\mf{j}}}\binom{m_1+\cdots+ m_{\mf{j}}}{p} (-ikt)^{(m_1+m_2+\cdots+m_{\mf{j}})-p}\ \ \Gamma_{t;\nu}^{p}\omega_k^{\nu}}\\
  &=\varphi^{n}\sum_{\mf{j}=0}^n\binom{n}{\mf{j}}(ik)^{m} q^{2n} (ikt)^{n-\mf{j}} \sum_{(m_1,m_2,\cdots,m_{\mf{j}})\in S_{\mf{j}}}{\frac{{\mf{j}}!(m_1+\cdots+ m_{\mf{j}})!}{m_1!m_2!\cdots m_{\mf{j}}!}}\prod_{\ell=1}^{\mf{j}}\lf(\frac{\pa_{y}^{\ell}v^\nu}{\ell!}\rg)^{m_\ell}\\
&\quad\times\sum_{p=0}^{m_1+\cdots+m_{\mf{j}}}\frac{1}{(m_1+\cdots+ m_{\mf{j}}-p)!p!} (-ikt)^{(m_1+m_2+\cdots+m_{\mf{j}})-p}\ \ \Gamma_{t;\nu}^{p}\omega_k^{\nu}.
\end{align*}
Now we have that 
\begin{align*}
\mathcal T&=\sum_{\mf{j}=0}^n\binom{n}{\mf{j}}(ik)^{m}  (ikt)^{n-\mf{j}} \sum_{\sum_{\ell=1}^{\mf{j}}\ell m_\ell=\mf{j}}\frac{{\mf{j}}!(m_1+\cdots+ m_{\mf{j}})!}{m_1!m_2!\cdots m_{\mf{j}}!}\prod_{\ell=1}^{\mf{j}}\lf(\frac{{\varphi^{\ell-1} q^\ell} \pa_{y }^{\ell }v^\nu(y )}{\ell!}\rg)^{m_\ell}\\
&\quad\times\sum_{p=0}^{m_1+\cdots+m_{\mf{j}}}\frac{1}{(m_1+\cdots+ m_{\mf{j}}-p)!p!} (-ikt)^{(m_1+m_2+\cdots+m_{\mf{j}})-p}\ \ ({\varphi^p q^p}\Gamma_{t;\nu}^{p}\omega_k^{\nu})\times {\underbrace{q^{2n-\mf{j}-p}}_{\leq 1}\varphi^{{n-p-\mf{j}+\sum_{\ell}m_\ell}}}.
\end{align*}
We observe that $\mf{j}\leq n,\ p\leq m_1+\cdots+m_{\mf{j}}\leq n$, so $n-p-\mf{j}+\sum_{\ell}m_\ell\geq 0.$ With the expansion above, we can carry out the estimate on each Fourier $k$-mode:
\begin{align}\n
 &\sum_{m+n=0}^\infty\frac{\lambda_1^{2s_0(m+n)}}{(m+n)!^{2{s_0}}}\varphi^{2n}\|e^{W/2}\chi^E |k|^m  q^{2n}\Gamma_{t;0}^n \omega_k^{\nu}\|_2^2\\ \n
&\leq \sum_{m+n\geq 0}\lf(\frac{\lambda_1^{s_0(m+n)}}{(m+n)!^{{s_0}}}\sum_{\mf{j}=0}^n\binom{n}{\mf{j}}|k|^{m+n-\mf{j}}\sum_{\sum_{\ell=1}^{\mf{j}} \ell m_\ell={\mf{j}}}\frac{{\mf{j}}!(\sum_{\ell}m_\ell)!}{m_1! m_2!\cdots m_{\mf{j}}!}\prod _{\ell=1}^{\mf{j}}\left\|\frac{{ \varphi^{{\ell-1}} q^\ell}\pa_{y }^{\ell} v^\nu}{\ell!}\right\|_{L^\infty({\rm supp}\{\chi^E\})}^{m_\ell} \rg.\\ \n
&\quad\lf.\times\sum_{p=0}^{m_1+\cdots+m_{\mf{j}}}\frac{|k|^{(m_1+m_2+\cdots+m_{\mf{j}})-p}}{(m_1+\cdots+ m_{\mf{j}}-p)!p!} \ \|e^{W/2} \chi^E {\varphi^p q^p}\Gamma_{t;\nu}^{p}\omega_k^{\nu}\|_{L^2} \underbrace{ \varphi^{n-p-\mf{j}+\sum_\ell m_\ell}t^{n-\mf{j}+\sum_{\ell}m_\ell-p}}_{\leq1	}\rg)^2\\ \n
&\leq
\sum_{m+n\geq 0}\bigg({(\lambda_1  \lambda^{-1}\Lambda^{-1})^{s_0(m+n)}}\sum_{\mf{j}=0}^n\sum_{\sum_{\ell=1}^{\mf{j}} \ell m_\ell ={\mf{j}}}\sum_{p=0}^{\sum_{\ell=1}^{\mf{j}}m_\ell}{\underbrace{\frac{n!}{(n-\mf{j})!}\frac{1}{(\sum m_\ell -p)!p! \prod_\ell (\ell!)^{m_\ell}}}_{=:F_1}}\\ \n
&\hspace{0.2 cm}\times{\underbrace{\lf(\frac{\lf(m+n-\mf{j}+\sum_{\ell=1}^{\mf{j}}m_\ell\rg)! \prod_{\ell=1}^{\mf{j}}(\ell!)^{m_\ell}}{(m+n)!}\rg)^{{s_0}}}_{=:F_2^{{s_0}}}}\underbrace{\frac{(\sum_\ell  m_\ell )!}{m_1! m_2!\cdots m_{\mf{j}}!}  \prod_{\ell=1}^{\mf{j}} \lf(\frac{\|\varphi^{{\ell-1}} q^\ell \pa_{y }^{\ell}v^{\nu}\|_{L^\infty({\rm supp}\{\chi^E\})}\Lambda^{s_0\ell}}{(\ell !)^{{s_0}}}\rg)^{m_\ell }}_{=:F_3}\\
&\hspace{0.2 cm}\times  \frac{ \lambda^{s_0(m+n)}\|e^{W/2}\chi^E|k|^{m+n-\mf{j}+\sum_{\ell=1}^{\mf{j}}m_\ell-p}\varphi^p q^p\Gamma_{t;\nu}^{p}\omega_{k}^{\nu} \|_{L^2 }}{(m+n-\mf{j}+\sum_{\ell=1}^{\mf{j}}m_\ell)!^{{s_0}}}\bigg)^2.\label{F123}
\end{align}
Here we recall the radius of convergence $\lambda$ for the vorticity $\omega_k^\nu$ in the exterior region and define $\Lambda$ as the radius of convergence for $v^\nu$ in the $y $-coordinate.   
Here the second inequality is organized according to the multinomial formula
\begin{align}
(x_1+x_2+\cdots+x_m)^n=\sum_{\substack{k_1+k_2+\cdots+k_m=n;\\ k_1,k_2,\cdots, k_m\geq 0}}\frac{n!}{k_1!k_2!\cdot\cdot\cdot k_m!}\prod_{t=1}^m x_t^{k_t}.
\end{align}
With this expansion, we concludes 
{\bf Step \# 2}.

\noindent
{\bf Step \# 3: Controlling combinatorial factors $F_1, F_2, F_3$.} 
Now we observe the following computation fact of $F_2$ from \eqref{combn}
\begin{align*}
F_2&=\frac{\lf(m+n-(\mf{j}-\sum_{\ell=1}^{\mf{j}}m_\ell)\rg)! \prod_{\ell=1}^{\mf{j}}(\ell!)^{m_\ell}}{(m+n)!}\\ &\leq \frac{\lf(m+n-(\mf{j}-\sum_{\ell=1}^{\mf{j}}m_\ell)\rg)! \mf{j}!}{(m+n)!(\sum_\ell m_\ell)!}\\
&=\frac{\lf(m+n-(\mf{j}-\sum_{\ell=1}^{\mf{j}}m_\ell)\rg)! (\mf{j}-\sum_\ell m_\ell)!}{(m+n)!}\binom{\mf{j}}{\sum_\ell m_\ell}\\
&=\binom{m+n}{\mf{j}-\sum_\ell m_\ell}^{-1}\binom{\mf{j}}{\sum_\ell m_\ell}=\binom{m+n}{\mf{j}-\sum_\ell m_\ell}^{-1}\binom{\mf{j}}{\mf{j}-\sum_\ell m_\ell}\leq1.
\end{align*}
Hence we have that for ${s_0}>1$, 
\begin{align}\n
F_1 F_2^{{s_0}}\leq& F_1 F_2=\frac{n!}{(n-\mf{j})!\mf{j}!}\frac{\mf{j}!}{(\sum m_\ell -p)!p! \prod_\ell (\ell!)^{m_\ell}}\frac{\lf(m+n-(\mf{j}-\sum_{\ell=1}^{\mf{j}}m_\ell)\rg)! \prod_{\ell=1}^{\mf{j}}(\ell!)^{m_\ell}}{(m+n)!}\\
\leq&\frac{\lf(m+n-(\mf{j}-\sum_{\ell=1}^{\mf{j}}m_\ell)\rg)! }{(n-\mf{j})!(\sum m_\ell -p)!p!m!}=\binom{m+n-\mf{j}+\sum_{\ell=1}^{\mf{j}}m_\ell}{n-\mf{j},\sum m_\ell -p,p,m}\leq 4^{m+n}.\label{F12}
\end{align}
Here in the last line, we used the result that the multinomial coefficient $\binom{N}{N_1,N_2,N_3,N_4}\leq 4^N$.  

Finally, we estimate the $F_3$ factor. Through explicit estimate in the same spirit as \cite{Yamanaka89} (also see  \cite{J20}), we obtain that the following Gevrey inversion estimate holds for all $ \Lambda\in [1,2]$ and an associated $\wt \Lambda=\wt\Lambda(\Lambda,s_0)$, 
\begin{align}
 \sum_{\ell=1}^\infty \frac{\|\varphi^{\ell} q^\ell \pa_{y }^{\ell }v^{\nu} \|_{L^\infty({\rm supp}\{ \chi^E\})}\Lambda^{s_0\ell}}{(\ell !)^{{s_0}}} 
{\lesssim\sum_{\ell=0}^\infty \frac{\|\varphi^{\ell} q^\ell \pa_{v^\nu }^{\ell }y \|_{L^\infty({\rm supp}\{ \chi^E\})}\wt\Lambda^{s_0\ell}}{(\ell !)^{{s_0}}} }\lesssim  \lf(\sum_{\ell=0}^\infty \frac{\|\varphi^{\ell} q^\ell \overline{\pa}_{v^\nu}^{\ell }H\|_{L^2(\text{supp} \{\chi^E\})}^2\lambda^{2s\ell }}{(\ell !)^{2s}}\rg)^{1/2}.
\end{align}
Here we have used the assumption that $s<s_0.$ 
 Thanks to the multi-nomial formula and the Gevrey inversion formula,  
\ifx 
\footnote{\myb{HS: This is the bound that we will work on. According to Fei, on the support of $1-\chi_I$, the $v^0(t,y)=y+c$. This is the result from the Hao-Ionescu paper. To derive it, one will incorporate the fact that the vorticity is zero near the boundary and the solution $\psi$ is harmonic. So the estimate that we really need is $\sum_{\ell=1}^\infty \frac{\|\varphi^\ell q^\ell \pa_{y}^{\ell }v^{\nu}\|_{L^\infty(\text{supp}(1-\chi_I))}\lambda^{\ell/r}}{(\ell !)^{1 /r}}$. We have $\sum_{\ell=1}^\infty \frac{\|\varphi^\ell q^\ell \pa_{v^\nu}^{\ell }y\|_{H^1(\text{supp}(1-\chi_I))}\lambda^{\ell/r}}{(\ell !)^{1 /r}}\leq 1$ thanks to the $\mathcal{E}_H^\al$ and the weight within. Certain version of Gevrey inverse estimate should work?
}}\fi
\begin{align}F_3 \lesssim& \lf(\sum_{\ell=1}^\infty \frac{\|\varphi^{{\ell-1}} q^\ell \pa_{y }^{\ell }v^{\nu}\|_{L^\infty({\rm supp}\{ \chi^E\})}\Lambda^{s_0\ell}}{(\ell !)^{{s_0}}}\rg)^{\sum_{\ell} m_\ell}\lesssim \lf(\lan t\ran\sum_{\ell=1}^\infty \frac{\|\varphi^{\ell} q^\ell \pa_{y }^{\ell }v^{\nu} \|_{L^\infty({\rm supp}\{ \chi^E\})}\Lambda^{s_0\ell}}{(\ell !)^{{s_0}}}\rg)^{\sum_{\ell} m_\ell}\n \\
\lesssim& \lf( \sum_{n=1}^\infty \frac{\|\varphi^{\ell} q^\ell \pa_{v^\nu}^{\ell }H e^{W/2}\|_{L^2({\rm supp}\{ \chi^E\})}^2\lambda^{s\ell }}{(\ell !)^{2s}}\rg)^{\frac{1}{2}\sum_{\ell} m_\ell}\lesssim 2^{m+n-\mf{j}+\sum_{\ell=1}^{\mf{j}}m_\ell}\lesssim 2^{m+n}.\label{F_3}
\end{align}
Here we used the relation $s<s_0.$   
With this, we concludes 
{\bf Step \# 3}.

\noindent
{\bf Step \# 4: Conclusion.}
Combining all the estimates developed so far, we obtain that
\begin{align*}
&\sum_{m+n=0}^\infty \frac{\lambda_1^{2s_0(m+n)}}{(m+n)!^{2{s_0}}}\varphi^{2n}\|e^{W/2}\chi^E |k|^m  q^{2n}\Gamma_{t;0}^n \omega_k^{\nu}\|_2^2\\
&\lesssim\sum_{m+n\geq 0}\lf({\lf(\frac{8\lambda_1}{ \lambda}\rg)^{s_0(m+n)}}{\sum_{\mf{j}=0}^n\sum_{\sum_{\ell=1}^{\mf{j}} \ell m_\ell ={\mf{j}}}}\lf(\sum_{p=0}^{\sum_{\ell=1}^{\mf{j}}m_\ell} 
\frac{ (\lambda/2)^{s_0(m+n)}\|e^{W/2} \chi^E|k|^{m+n-\mf{j}+\sum_{\ell=1}^{\mf{j}}m_\ell-p}\varphi^p q^p\Gamma_{t;\nu}^{p}\omega_{k}^{\nu} \|_{L^2 }}{(m+n-\mf{j}+\sum_{\ell=1}^{\mf{j}}m_\ell)!^{s_0}}\rg)\rg)^2\\
&\lesssim\sum_{m+n\geq 0}\bigg(\lf(\frac{8\lambda_1}{ \lambda}\rg)^{s_0(m+n)}{\sum_{\mf{j}=0}^n\sum_{\sum_{\ell=1}^{\mf{j}} \ell m_\ell =\mf{j}}}\underbrace{ \lf(\sum_{m'+n'=0}^{\infty} 
\frac{ \lambda^{2s_0(m'+n')}\|e^{W/2}\chi^E|k|^{m'}\varphi^{n'} q^{n'}\Gamma_{t;\nu}^{n'}\omega_{k}^{\nu} \|_{L^2}^2}{(m'+n')!^{2{s_0}}}\rg)^{1/2}}_{\mathcal{I}}\bigg)^2.
\end{align*}
Here we observe that the $\mathcal{I}$ can be easily bounded by $\sqrt{\mathcal{E}^{(\gamma)}}$. However, we are summing a lot of copies of $\mathcal I$. Here we need to estimate the total number of tuples $(m_1,m_2,\cdots, m_\ell)$ such that $\sum_\ell \ell m_\ell = \mf{j}$. If this number is exponentially large in terms of $m+n$, we can pick $\lambda_1$ small enough to compete with it. The following simple combinatorial argument tells us that  the total number of tuples is  $\lesssim 5^{\mf{j}}$.\footnote{
The author learned this combinatorics argument from Ruth Luo.
} To begin with, we observe the total number of valid tuples is less than the number of tuples such that $\sum_{\ell=1}^{\mf{j}}m_\ell\leq \mf{j}$. Assume that the sum $\sum_{\ell=1}^{\mf{j}}m_\ell=\widetilde j(\leq \mf{j})$. We can imagine that there are $\widetilde{j}$ `stars' and we are trying to separate them with $\mf{j}-1$ `bars'. This is called the `star and bar' problem in combinatorics. To solve it, we observe that there are $\widetilde{j}\text{ (`star') }+(\mf{j}-1)\text{ (`bar') }$ objects in total. As long as we determine the positions for the $\mf{j}-1$  `bars' among these $\widetilde{j}+\mf{j}-1$ objects, the tuple $(m_1,m_2,\cdots, m_\ell)$ is determined and vice versa. Hence, the problem is equivalent to picking $\mf{j}-1$ objects from $\widetilde{j}+\mf{j}-1$ objects. 
Hence the total number of tuples is $\binom{\widetilde j+\mf{j}-1}{\mf{j}-1}\leq 2^{\mf{j}+\wt j}$. Since the number $\widetilde j$ is bounded by $\mf{j}$, we have at most $\mf{j} 4^{\mf{j}}\leq C 5^{\mf{j}}$ tuples. Hence,  there exists a constant $\mathfrak C$ such that the above sum is bounded by  
\begin{align*}
\sum_{m+n=0}^\infty &\frac{\lambda_1^{2s_0(m+n)}}{(m+n)!^{2{s_0}}}\varphi^{2n}\|e^{W/2}\chi^E |k|^m  q^{2n}\Gamma_{t;0}^n \omega_k^{\nu}\|_2^2\\
\lesssim &\sum_{m+n\geq 0}\bigg((\mathfrak C\lambda_1 \lambda^{-1})^{s_0(m+n)}\lf(\sum_{m'+n'=0}^{\infty} 
\frac{ \lambda^{2s_0(m'+n')}\|e^{W/2}\chi^E|k|^{m'}\varphi^{n'} q^{n'}\Gamma_{t;\nu}^{n'}\omega_{k}^{\nu} \|_{L^2}^2}{(m'+n')!^{2{s_0}}}\rg)^{1/2}\bigg)^2.
\end{align*}
Hence if  $\lambda_1=C\lambda_\ast^{\frac{1}{rs_0}}$ is small enough, we have
\begin{align*}
\sum_{m+n\geq 0}&\frac{\lambda_1^{2s_0(m+n)}}{(m+n)!^{2{s_0}}}\varphi^{2n}\|e^{W/2}\chi^E |k|^m  q^{2n}\Gamma_{t;0}^n \omega_k^{\nu}\|_2^2\lesssim\sum_{m+n\geq0}\frac{\lambda^{2s(m+n)} \|\chi^E|k|^{m}\varphi^n q^n\Gamma_{t;\nu}^{n}\omega_{k}^{\nu}e^{W/2} \|_{L^2 }^2}{(m+n)!^{2{s}}}.
\end{align*} 
This implies \eqref{ext_dif}. Hence the proof is finished.
\end{proof}

The following combinatorial lemma is needed when computing the Gevrey norm:
\begin{lemma}The following bound holds
\begin{align}\label{combn}
\frac{(1!)^{m_1}(2!)^{m_2}\cdots(n!)^{m_n}(\sum_{j=1}^n m_j)!}{(\sum_{j=1}^n j m_j)!}\leq 1.
\end{align}
Here $\sum_{j=1}^n jm_j=n$. 
\end{lemma}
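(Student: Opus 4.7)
The plan is to reduce the lemma to a classical multinomial inequality. Set $N := \sum_{j=1}^n m_j$ (the total number of parts) and list the block sizes as $(a_1,\ldots,a_N)$, where the value $j$ is repeated $m_j$ times. Then $\sum_{i=1}^N a_i = \sum_{j=1}^n j\,m_j = n$, each $a_i \geq 1$, and the product $\prod_{j=1}^n (j!)^{m_j}$ rewrites as $\prod_{i=1}^N a_i!$. Thus the claimed inequality becomes
\begin{equation*}
\frac{N!\, \prod_{i=1}^N a_i!}{n!} \;\leq\; 1, \quad\text{i.e.,}\quad \binom{n}{a_1,a_2,\ldots,a_N} \;\geq\; N!.
\end{equation*}

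To prove this multinomial inequality I plan to exhibit a direct injection from a set of cardinality $N!\prod_{i=1}^N a_i!$ into the symmetric group $S_n$ of cardinality $n!$. Concretely, fix once and for all any set partition $\{B_1,\ldots,B_N\}$ of $\{1,\ldots,n\}$ with $|B_i|=a_i$ (one exists since $\sum a_i = n$). To each pair consisting of (i) a permutation $\sigma \in S_N$ of the $N$ blocks and (ii) a choice of internal linear ordering on each $B_i$ (giving $a_i!$ choices per block), associate the permutation of $[n]$ obtained by concatenating the ordered blocks in the order $B_{\sigma(1)}, B_{\sigma(2)}, \ldots, B_{\sigma(N)}$. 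This assignment is injective: because the partition is fixed and the $B_i$ are disjoint subsets of distinct elements, the output sequence determines uniquely both the block order (by reading off which $B_i$ occupies each of the consecutive position-intervals of lengths $a_{\sigma(1)}, a_{\sigma(2)}, \ldots$) and the internal ordering within each block. The image sits inside $S_n$, so $N!\prod_i a_i! \leq n!$.

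Returning to the original indexing $\prod_i a_i! = \prod_j (j!)^{m_j}$ then yields exactly the inequality stated in the lemma, with equality attained precisely when the injection is a bijection, namely when either $N=n$ (all $m_j=0$ for $j\geq 2$) or $N=1$ (a single block of size $n$). I expect no real obstacle: the only point requiring a small amount of care is the injectivity check in the presence of repeated block sizes, where one must remember that the individual blocks $B_i$ remain distinguishable as sets of labels even though their cardinalities coincide, so that a nontrivial $\sigma$ always changes the resulting sequence.
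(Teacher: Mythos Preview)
Your proof is correct and takes a genuinely different route from the paper. The paper proceeds algebraically: it telescopes the left-hand side as a product $\prod_{k=1}^{n}\mathfrak{F}_k$, where
\[
\mathfrak{F}_k=\frac{(k!)^{m_k}\,(\sum_{i\leq k}m_i)!/(\sum_{i<k}m_i)!}{(\sum_{i\leq k}im_i)!/(\sum_{i<k}im_i)!},
\]
and then shows each $\mathfrak{F}_k\leq 1$ by term-by-term comparison of the integer factors in numerator and denominator. Your approach is purely combinatorial: you recognize the inequality as $\binom{n}{a_1,\ldots,a_N}\geq N!$ for positive block sizes summing to $n$, and exhibit an explicit injection $S_N\times\prod_i S_{a_i}\hookrightarrow S_n$. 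Your argument is shorter and more conceptual, making transparent why the inequality holds (there are at least $N!$ ways to reorder labeled blocks inside a permutation). The paper's telescoping, on the other hand, gives a factor-by-factor handle on how far below $1$ the expression sits, which can be useful if one later needs quantitative refinements.

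One small expository point: your sentence about recovering $\sigma$ by ``reading off which $B_i$ occupies each of the consecutive position-intervals of lengths $a_{\sigma(1)},a_{\sigma(2)},\ldots$'' is circular as written, since those lengths depend on the $\sigma$ you are trying to recover. The correct recovery (which your final paragraph shows you understand) is sequential: $\pi_1$ lies in a unique $B_i$ because $\{B_i\}$ is a fixed partition of $[n]$ into sets of labeled elements, so $\sigma(1)=i$ and the first $a_i$ entries give the ordering of $B_i$; then repeat from position $a_i+1$. This is a wording fix only, not a gap in the argument.
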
 
\begin{proof}
We start by estimating the left hand side of  \eqref{combn}. The expression can be reorganized  in the following way:
\begin{align}\n
\frac{(1!)^{m_1} m_1!}{m_1!}\times& \frac{(2!)^{m_2}(m_1+m_2)!/m_1!}{(m_1+2m_2)!/m_1!}\times \frac{(3!)^{m_3}(m_1+m_2+m_3)!/(m_1+m_2)!}{(m_1+2m_2+3m_3)!/(m_1+2m_2)!}\times \cdots\\
&\times \frac{(j!)^{m_j}(\sum_{i=1}^jm_i)!/(\sum_{i=1}^{j-1}m_i)!}{(\sum_{i=1}^j im_i)!/(\sum_{i=1}^{j-1}im_i)!}\cdots =:\prod_{j=1}^n \mathfrak{F}_j.\label{dfn_Fj}
\end{align}
Now we prove that the factors $\mathfrak F_j$ are less than $1$.
For general $\mathfrak F_k, k\in\{1,2,\cdots n\}$,
\begin{align}\n
\mathfrak F_k&=\frac{\prod_{j=1}^{m_k}( k! {(\sum_{i=1}^{k-1}m_i+j)})}{\prod_{j=1}^{m_k}\lf[{(\sum_{i=1}^{k-1}i m_i+k(j-1)+1)}{(\sum_{i=1}^{k-1}im_i+k(j-1)+2)}\cdots(\sum_{i=1}^{k-1}im_i+kj)\rg]}\\
&\leq\prod_{j=1}^{m_k} \frac{2
\times 3\times \cdots  \times k }{{(\sum_{i=1}^{k-1}im_i+k(j-1)+2)}\times(\sum_{i=1}^{k-1}im_i+k(j-1)+3)\times\cdots\times(\sum_{i=1}^{k-1}im_i+kj)}.\label{F_k_dcmp}
\end{align}
We further observe that 
\begin{align*}
\quad \ell\leq& \lf(\sum_{i=1}^{k-1}im_i+k(j-1)+\ell\rg),\quad j\geq 1,\,  k\geq 0.
\end{align*}
Hence all the factors in $\mathfrak F_k$ \eqref{F_k_dcmp} is less than $1$ and $\mathfrak F_k\leq 1$. This, when combined with \eqref{dfn_Fj}, yields the result \eqref{combn}. 

\ifx
\siming{ To motivate the ideas, we focus on the third factor
{\scriptsize\begin{align*} 
&F_3=\frac{\prod_{j=1}^{m_3} 3! {(m_1+m_2+j)}}{\prod_{j=1}^{m_3}{(m_1+2m_2+3(j-1)+1)}{(m_1+2m_2+3(j-1)+2)}{(m_1+2m_2+3j)}}\\
& =
\frac{(3\cdot 2\cdot 1) {(m_1+m_2+1)}(3\cdot 2\cdot 1) \cdots(3\cdot 2\cdot 1){(m_1+m_2+j)}\cdots}{{(m_1+2m_2+1)} {(m_1+2m_2+2)} {(m_1+2m_2+3)}\cdots{(m_1+2m_2+3(j-1)+1)}{(m_1+2m_2+3(j-1)+2)}{(m_1+2m_2+3j)}\cdots},\\
&\qquad\qquad\qquad j\in\{1,2,\cdots,m_3\}.
\end{align*} }
Since we have
\begin{align*}
\frac{m_1+m_2+j}{m_1+2m_2+3(j-1)+1}\leq 1,\quad j\geq 1,
\end{align*}
the factor $\mathfrak F_3$ can be estimated as follows
\begin{align*}
\mathfrak F_3\leq \frac{\prod_{j=1}^{m_3} 3\cdot 2 }{\prod_{j=1}^{m_3}{(m_1+2m_2+3(j-1)+2)}{(m_1+2m_2+3j)}}.
\end{align*} 
Now we have that 
\begin{align*}
2\leq (m_1+2m_2+3(j-1)+2),\quad 3\leq (m_1+2m_2+3j), \quad j\geq1.
\end{align*}
As a result, $\mathfrak F_3\leq1.$}
\fi
\end{proof}

\ifx
\begin{lemma} \label{lem:ExtToInt_2}
Consider $s>1,\, r<1,\ t\in[0, \nu^{-1/3+\eta}]$. Then, for $t=\nu^{-1/3+\eta}$  
\begin{align} \sum_{m+n=0}^\infty& \frac{ \lambda_0^{2(m+n) }  }{ ((m+n)!)^{2/r}}\lf\|e^{W/8}\siming{q^{2n}}
|k|^m \Gamma_{t;0}^n f_k\rg\|_{L^2}^2 \lesssim_{s,r} \sum_{m+n=0}^\infty\frac{ \lambda^{2(m+n)} \varphi^{4n}}{((m+n)!)^{2s}}\|e^{W/4}\chi_{E}|k|^m q^{2n} \Gamma_{t;0}^n f_k \|_{L^2}^2.\label{glu_rl} 
\end{align} 
 Here $r>\frac{1}{2}$ if $s-1<\frac{3\eta}{2/3+\eta}$. Moreover, $\lambda_0$ depends on $\lambda_\ast. $ 
\end{lemma}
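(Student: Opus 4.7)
The plan is to imitate the proof of Lemma \ref{lem:ExtToInt} line by line, with the parameters tuned to the new configuration: time horizon $t\in[0,\nu^{-1/3+\eta}]$ (in fact \emph{shorter} than $\nu^{-1/3-\zeta}$, so that $e^{W}$ provides an even stronger Gaussian enhancement), weights $e^{W/8}$ on the left and $e^{W/4}$ on the right (so that the difference $e^{W/8}$ is available to absorb the regularity gap), and the extra factor $\varphi^{4n}$ on the right in place of $\varphi^{2n+2}$. The underlying mechanism is the same: on the support of $\chi_E$ the weight $e^{W}$ is Gaussian-large enough to convert a Gevrey-$s$ norm of the datum into a Gevrey-$1/r$ norm of the same datum, at the cost of shrinking the radius.

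First I would derive the pointwise weight lower bound. For $|y|\in\mathrm{supp}\,\chi_E$ (so that $|y|-\tfrac14-L\eps\arctan(t)\gtrsim 1$) and $t\le \nu^{-1/3+\eta}$, the definition \eqref{defndW} gives
\begin{align*}
\tfrac{1}{16}W(t,y)\ \ge\ \tfrac{c}{K\nu(1+t)}\ \ge\ \tfrac{c}{2K}\nu^{-2/3-\eta}.
\end{align*}
Expanding the Taylor series of $e^{W/16}$ and keeping the $N$-th term yields, for every $N\in\mathbb N$,
\begin{align*}
e^{W/8}\ \ge\ \tfrac{1}{N!}\Bigl(\tfrac{c\,\nu^{-2/3-\eta}}{2K}\Bigr)^{N}.
\end{align*}
Since $\varphi^{-4n}\le (1+t^{2})^{2n}\lesssim \nu^{-(2/3-\eta)\cdot 2n}$ and $e^{-2\delta\nu^{1/3}t}\le 1$ (because $\nu^{1/3}t=\nu^{\eta}\ll 1$), multiplying through gives a pointwise estimate of the form
\begin{align*}
e^{-W/8}\,\nu^{-\alpha(m+n)}\ \lesssim\ \frac{1}{N!\,\mathcal G^{N}\,2^{m+n}}\,\nu^{-\gamma_{1}N+\gamma_{2}(m+n)},
\end{align*}
with $\alpha,\gamma_{1},\gamma_{2}$ depending on $\eta$ only.

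Next, I would optimize $N=N(m,n):=\lfloor\theta(\eta,s,r)(m+n)\rfloor$, choosing $\theta$ so as to cancel the powers of $\nu$ (exactly the step that fixed $\tfrac{2/3+2\zeta}{2/3-\zeta}\le \tfrac{18}{17}$ in the proof of Lemma \ref{lem:ExtToInt}). This leaves
\begin{align*}
e^{W/4}\varphi^{4(n+1)}\ \gtrsim\ \frac{C^{-(m+n)}}{\bigl(\lfloor\theta(m+n)\rfloor\bigr)!}.
\end{align*}
Then log-convexity of the Gamma function together with $(2N)!\le 4^{N}(N!)^{2}$ converts $(\lfloor\theta(m+n)\rfloor)!\ \le\ 2^{c(m+n)}\bigl((m+n)!\bigr)^{\vartheta}$ for some $\vartheta=\vartheta(\theta)\in(1,2)$. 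Substituting back into the right-hand side of \eqref{glu_rl} and rescaling the radius gives the bound with effective index $\widetilde r$ satisfying $\tfrac{2}{\widetilde r}=2s\cdot\vartheta$ and radius $\lambda_{0}=C(s,r,\eta)\,\lambda^{\widetilde r/s}$. A direct arithmetic check, identical in spirit to the calculation $\widetilde{\mathfrak r}\ge \tfrac{68}{133}>\tfrac12$ at the end of Lemma \ref{lem:ExtToInt}, shows that $\widetilde r>\tfrac12$ precisely when $s-1<\tfrac{3\eta}{2/3+\eta}$.

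The main obstacle is not the structure of the argument, which is entirely parallel to Lemma \ref{lem:ExtToInt}, but the book-keeping of the exponents $\alpha,\gamma_{1},\gamma_{2},\theta,\vartheta$ so as to extract the sharp constraint $s-1<\tfrac{3\eta}{2/3+\eta}$. Once the optimization is performed, the estimate follows by summing over $(m,n,k)$ exactly as in the earlier lemma, and $\lambda_{0}$ depends on $\lambda_{\ast}$ through the identification $\lambda_{0}=C(s,r,\eta)\lambda^{\widetilde r/s}$, where $\lambda$ is the exterior radius fixed in \eqref{Gev:la}.
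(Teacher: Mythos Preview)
Your proposal is correct and follows essentially the same route as the paper's own proof: lower-bound $e^{W/4}\varphi^{4n}$ on $\mathrm{supp}\,\chi_E$ via the Taylor series of the Gaussian weight, optimize the Taylor index $N\approx\theta(m+n)$ to cancel the $\nu$-powers, then use log-convexity of $\Gamma$ to convert $\lfloor\theta(m+n)\rfloor!$ into a power of $(m+n)!$ and read off the effective Gevrey index. One book-keeping point worth correcting: the effective index relation is additive rather than multiplicative, namely $\tfrac{2}{\widetilde r}=2s+(2-\theta)$ with $\theta\approx\tfrac{6\eta}{2/3+\eta}$ (coming from $N\approx\tfrac{4/3-4\eta}{2/3+\eta}(m+n)$ and the interpolation $\lfloor N\rfloor!\le 2^{c(m+n)}((m+n)!)^{2-\theta}$), which is exactly what gives $\widetilde r>\tfrac12\Leftrightarrow s-1<\tfrac{3\eta}{2/3+\eta}$; your formula $\tfrac{2}{\widetilde r}=2s\cdot\vartheta$ does not produce this threshold.
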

\begin{proof}
On the time interval  $t\in[0,\nu^{-1/3+\eta}]$, we have the following estimate involving $e^{W}$ weight \eqref{defndW} and $\varphi$ \eqref{varphi}:
\begin{align*}
e^{W/4}\varphi^{4n}\geq &e^{W/4}\varphi^{4(m+n)}\geq \frac{1}{C}e^{W/8}e^{\frac{1}{80K\nu (1+t)}}(1+t^2)^{-2m-2n}\\
\geq & \frac{e^{W/8}}{C} e^{\frac{1}{100K\nu(1+t)}}\ \frac{1}{N!}\frac{1}{(\mathcal G' K)^N\nu^{N} (1+t)^{N}}\  \frac{1}{(1+t^2)^{ 2m+2n }}\\
\geq & \frac{e^{W/8}}{C}e^{\frac{1}{200K\nu^{2/3+\eta}}}\frac{1}{N!}\frac{1}{\mathcal{G}^N2^{m+n}}\nu^{-\lf(\frac{2}{3}+\eta\rg)N+\lf(\frac{4}{3}-4\eta\rg)(m+n) },
\end{align*}
where $\mathcal{G}>1$ is a constant depending on the parameter $K$ in the weight $W$.
By setting $N=\lf\lfloor\frac{4/3-4\eta}{2/3+\eta}(m+n)\rg\rfloor$, we have
\begin{align*}
e^{W/4}\varphi^{4(m+n)}
\geq\frac{e^{W/8}}{C\mathcal{G}^{\lf\lfloor\frac{4/3-4\eta}{2/3+\eta}(m+n)\rg\rfloor}2^{m+n}\lf\lfloor \frac{4/3-4\eta}{2/3+\eta}(m+n)\rg\rfloor!}. 
\end{align*}
Hence,\begin{align}\sum_{m+n=0}^\infty&\frac{\lambda^{2(m+n)}\varphi^{4n}}{((m+n)!)^{2s}}\|e^{W}\chi_E|k|^m q^n \Gamma_{t;0}^n f_k \|_{L^2}^2\n \\
\gtrsim& \sum_{ m+n=0}^\infty \frac{\lambda^{2(m+n)} }{\mathcal{G}^{\lf\lfloor\frac{4/3-4\eta}{2/3+\eta}(m+n)\rg\rfloor}2^{m+n}((m+n)!)^{2s}\lf\lfloor{\frac{4/3-4\eta}{2/3+\eta}(m+n)}\rg\rfloor!}\|\chi_{E}|k|^m q^{2n} \Gamma_{t;0}^n f_k\|_{L^2}^2. \label{Gmm_fnc}
\end{align}\myr{
The Gevrey index $r$ is roughly:
\begin{align}
\frac{1}{r}=s+\frac{2/3-2\eta}{2/3+\eta}\Rightarrow r=\frac{1}{2+(s-1-\frac{3\eta}{2/3+\eta})}.
\end{align}
So $r>1/2$ if 
\begin{align}\label{f}
s-  1<\frac{3\eta}{2/3+\eta}.
\end{align}}



To estimate the right hand side of \eqref{Gmm_fnc}, we use the Gamma function $\Gamma(n)=(n-1)!,\quad n\in \mathbb{N}\backslash \{0\}$ and the log convexity of the Gamma function:
\begin{align*}
\Gamma(\theta x_1+(1-\theta)x_2)\leq \Gamma(x_1)^{\theta}\Gamma(x_2)^{1-\theta},\quad \theta\in[0,1], x_1,x_2>0.
\end{align*}
We have that
\begin{align}
\lf\lfloor\frac{4/3-4\eta}{2/3+\eta}(m+n)\rg\rfloor=\theta(m+n)+2(1-\theta)(m+n)\quad\Rightarrow\quad \theta=\frac{1}{m+n}\lf\lceil\frac{6\eta}{2/3+\eta}(m+n)\rg\rceil.
\end{align}
\begin{align*}
\left\lfloor \frac{4/3-4\eta}{2/3+\eta}(m+n)\right\rfloor!=&\Gamma\lf(\lf\lfloor \frac{4/3-4\eta}{2/3+\eta}(m+n)\rg\rfloor+1\rg)\\
\leq& \Gamma(m+n+1)^{\theta}\Gamma(2(m+n)+1)^{1-\theta},\quad \theta = \frac{1}{m+n}\lf\lceil\frac{6\eta}{2/3+\eta}(m+n)\rg\rceil.
\end{align*}
Now we estimate the $\Gamma(2(m+n)+1)=(2(m+n))!$:
\begin{align*}
(2(m+n))!=\prod_{\ell_1=1}^{(m+n)}(2\ell_1)\prod_{\ell_2=0}^{(m+n-1)}(2\ell_2+1)\leq 2^{2(m+n)}((m+n)!)^2.
\end{align*}
Hence,
\begin{align*}
\left\lfloor \frac{4/3-4\eta}{2/3+\eta}(m+n)\right\rfloor!
\leq & 2^{2(m+n)(1-\theta)}((m+n)!)^{2-\theta}.
\end{align*}
Therefore, the right hand side of  \eqref{Gmm_fnc} has the following lower bound,
\begin{align*}
  \sum_{ m+n=0}^\infty &\frac{\lambda^{2(m+n)} }{\mathcal{K}^{2(m+n)}\lf((m+n)!\rg)^{2s+2-\theta}}\||k|^m q^{2n}\Gamma_{t;0}^nf_k\|_{L^2}^2.
\end{align*}
Hence the resulting Gevrey index $r$ is 
$$\frac{2}{r}=\lf(2s+2-\frac{1}{m+n}\lf\lceil\frac{6\eta}{2/3+\eta}(m+n)\rg\rceil\rg)\underbrace{\Rightarrow}_\eqref{f} r>1/2,!
$$ 
which completes the proof. 
\end{proof}

\fi

\subsection{Inviscid limit in the interior}

%
In what follows, denote
\begin{align*}
\mathcal{E}^{il}_{\mathrm{ext}} := \sum_{ m+n=0 }^\infty \frac{\lambda_1^{2s_0(m+n)} \varphi^{2n}}{((m+n)!)^{2s_0}} \norm{ \chi^E e^{W/2} \partial_x^{m} q^{2n} \Gamma_{t;0}^n (\omega^\nu -\omega^0)}_{L_{x,y}^2}^2\n
\end{align*}
which we know is a priori small from Theorem \ref{thm_bdy_lim}.

We next concern ourselves with estimating the vorticity in the interior. 
For the vorticity profile written in the $(z,v^0)$ coordinates (where we are denoting $v' := 1+h^0$) we have 
\begin{align*}
& \partial_t f^0 + g^0 \partial_v f^0 + v' \grad^\perp \psi_{\neq}^0 \cdot \grad f^0 = 0 \\
& \partial_t f^\nu + g^0 \partial_v f^\nu + v'\partial_v (\psi_{0}^0 - \psi_0^\nu) \partial_z f^\nu +  v' \grad^\perp \psi_{\neq}^\nu \cdot \grad f^\nu = \nu \Delta_t f^\nu,
\end{align*}
and hence for $\tilde{f} = f^\nu - f^0$ we have (denoting $\tilde{f}^I := \chi^I \tilde{f}$)  
\begin{align*}
& \partial_t \tilde{f}^I + g^0 \partial_v \tilde{f}^I + v' \chi^I \grad^\perp \tilde{\phi}_{\neq} \cdot \grad f^0 + v' \grad^\perp \psi^0_{\neq} \cdot \grad \tilde{f}^I + v' \grad^\perp \tilde{\psi}_{\neq} \cdot \grad \tilde{f}^I \\ 
& \quad  = \nu \Delta_t \tilde{f}^I + \nu \chi^I \Delta_t f^0 - v'\partial_v (\psi_{0}^0 - \psi_0^\nu) \partial_z (\tilde{f}^I + \chi^I f^0) + \mathcal{C}_f, 
\end{align*}
where
\begin{align*}
\mathcal{C}_f & = -g^0 \partial_v\chi^I \tilde{f} + v' \partial_z \psi^0_{\neq} \partial_v\chi^I \tilde{f} + v' \partial_z \tilde{\psi}_{\neq} \partial_v\chi^I \tilde{f} + \nu [\chi^I, \Delta_t] \tilde{f}. 
\end{align*}
An important detail is that we have good a priori estimates on $f^0$ in this coordinate system, but we are actually lacking good a priori estimates on $f^\nu$. 

We now re-define $A$ to be slightly weaker
\begin{align*}
A^{il}(t,k,\eta) & := e^{\tilde{\lambda}(t)\abs{\grad}^s}\left(\frac{e^{\mu \abs{\eta}^{1/2}} }{w(t,k,\eta)} + e^{\mu \abs{k}^{1/2}}\right), 
\end{align*}
where (assuming without loss of generality that we will halt the estimates at $\nu t^3 \leq K$), 
\begin{align*}
\tilde{\lambda}(t) = \left(\frac{1}{4} + \frac{1}{4\brak{t}^{\alpha}} - \frac{1}{8 K}\nu t^3\right)\lambda_\infty. 
\end{align*}
We then define a new ``interior'' energy  for some $\delta > 0$
\begin{align*}
\mathcal{E}^{il}(t) := t^{-4-2\delta} \norm{\brak{\partial_v}^{-1} A^{il} \tilde{f}^I_0}^2_{L^2} + t^{-6-2\delta} \norm{A^{il} \tilde{f}^I}_{L^2}^2. 
\end{align*}
\begin{lemma}
For $\nu t^{3+\delta} \leq 1$, there holds the uniform estimate
\begin{align*}
\mathcal{E}^{il}(t) \lesssim_\delta \nu^2 \eps^2
\end{align*}
yielding the a priori estimates
\begin{align*}
\norm{A^{il} \tilde{f}^I}_{L^2} & \lesssim \nu \eps t^{3+\delta} \\
\norm{A^{il}\tilde{U}^I_0}_{L^2} & \lesssim \nu \eps t^{2+\delta}.
\end{align*}
\end{lemma}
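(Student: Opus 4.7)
\medskip

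\noindent\textbf{Proof plan.} The strategy is a standard bootstrap energy estimate on $\mathcal{E}^{il}(t)$ in the $(z,v^0)$-coordinate system (adapted to the inviscid solution), capitalizing on the degenerate time weights $t^{-4-2\delta}$ and $t^{-6-2\delta}$ to absorb the unavoidable linear growth coming from the Euler-linearization term $v'\chi^I\grad^\perp\tilde{\phi}_{\neq}\cdot\grad f^0$. Differentiating $\mathcal{E}^{il}$ in time produces three types of favorable terms: (i) a $\mathcal{CK}_{\tilde\lambda}$-type term from $\dot{\tilde\lambda}\sim -\lambda_\infty(\brak{t}^{-1-\alpha}+\nu t^2)$, (ii) a coercive boundary contribution $(3+\delta)t^{-7-2\delta}\|A^{il}\tilde f^I\|_{L^2}^2 + (2+\delta)t^{-5-2\delta}\|\brak{\p_v}^{-1}A^{il}\tilde f^I_0\|_{L^2}^2$ from the $t^{-6-2\delta}$ and $t^{-4-2\delta}$ weights, and (iii) the Fourier-multiplier $\mathcal{CK}_w$ term. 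Together these dominate all error terms provided $\mathcal{E}^{il}(t)\lesssim_\delta \nu^2\eps^2$ is bootstrapped.

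\medskip

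\noindent\textbf{Key steps.} First, carry out the standard $\mathfrak{A}$-commutator analysis from Section~\ref{sec:interior} (transport, reaction, remainder paraproduct decomposition) for the terms $g^0\p_v\tilde f^I$ and $v'\grad^\perp\psi^0_{\neq}\cdot\grad\tilde f^I + v'\grad^\perp\tilde\psi_{\neq}\cdot\grad\tilde f^I$, using that $\psi^0$ and $g^0$ are controlled by the bootstraps in Proposition~\ref{prop:MainInterior}-\ref{prop:MainCoordInt} applied to $\omega^0$; the purely-$\tilde f$ quadratic term absorbs into $\mathcal{E}^{il}$. Second, estimate the explicit $\nu$-source $\nu\chi^I\Delta_t f^0$: an integration by parts in $(\p_v-t\p_z)$ combined with $\nu\|A^{il}\grad_L f^0\|_{L^2}\lesssim \nu\brak{t}\eps$ (available from $\mathcal{E}_{\mathrm{Int}}[\omega^0]\leq \eps^2$) gives a contribution bounded by $\nu\eps\,t^{-3-\delta}\sqrt{\mathcal{E}^{il}}$, which after Young's inequality is consumed by (ii). Third, the commutator $\mathcal{C}_f$ is supported where $\p_v\chi^I\neq 0$, so via the exterior-to-interior Lemma~\ref{lem:ExtToInt} (now applied to $\tilde f$ rather than $f$) and Theorem~\ref{thm_bdy_lim}, these terms are bounded by $e^{-\nu^{-1/9}}$ times an interior quantity, which is super-polynomially small in $\nu$. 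Fourth, the background-shear coupling $v'\p_v(\psi_0^0-\psi_0^\nu)\p_z\tilde f^I$ is controlled by noting $\tilde\psi_0$ solves an elliptic problem with source $\tilde\omega_0 = P_0\tilde f$ (in appropriate coordinates), and $P_0\tilde f$ is precisely controlled by the first summand $t^{-4-2\delta}\|\brak{\p_v}^{-1}A^{il}\tilde f^I_0\|_{L^2}^2$, closing a coupled inequality for the two summands of $\mathcal{E}^{il}$; similarly $v'\p_v(\psi_0^0-\psi_0^\nu)\p_z(\chi^I f^0)$ contributes $\sqrt{\mathcal{E}^{il}}\cdot \eps/\brak{t}^2$ after inviscid damping on $f^0$.

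\medskip

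\noindent\textbf{Main obstacle.} The hard step is the Euler-linearization term $v'\chi^I\grad^\perp\tilde\phi_{\neq}\cdot\grad f^0$, because $f^0$ carries no smallness in $\nu$ and is not decaying: a naive Gronwall estimate yields only $\sim e^{C\eps t}$, which is useless past $t\sim\abs{\log\eps}^{-1}$. This is precisely why $\mathcal{E}^{il}$ uses a \emph{decreasing} Gevrey radius $\tilde\lambda(t)$ and the aggressive time weight $t^{-6-2\delta}$: the Ionescu--Jia/BM-type reaction-term analysis of $\grad^\perp\tilde\phi_{\neq}\cdot\grad f^0$ produces a bound of the form $\brak{t}\cdot\sqrt{\mathcal{E}^{il}}\cdot\eps$ modulo terms absorbable by $\mathcal{CK}_{\tilde\lambda}+\mathcal{CK}_w$, where the $\brak{t}$ loss reflects the echo-chain growth; the weight $t^{-6-2\delta}$ converts this into a self-similar inequality $\frac{d}{dt}\mathcal{E}^{il}\lesssim \eps\,t^{-1}\mathcal{E}^{il} + \nu^2\eps^2 t^{-1-\delta}$ which, after bootstrapping on the time interval $t\leq \nu^{-1/(3+\delta)}$, yields $\mathcal{E}^{il}\lesssim_\delta \nu^2\eps^2$. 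The elliptic estimate relating $\tilde\phi^I$ to $\tilde f^I$ is the same as the one used for $\psi^{(I)}$ in Section~\ref{sec:interior}, now applied in the inviscid coordinate system; the exterior contribution to $\tilde\phi$ is handled by Theorem~\ref{thm_bdy_lim} and contributes only $e^{-\nu^{-1/8}}$-small terms. Finally, unpacking the definition of $\mathcal{E}^{il}$ converts the energy bound into the stated $L^2$-based estimates on $A^{il}\tilde f^I$ and $A^{il}\tilde U_0^I$ (the latter via the Biot--Savart law $\tilde U_0^I = -\p_v \tilde\psi_0^I$ and $\p_v^{-1}$ inversion in $v$).
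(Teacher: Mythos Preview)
Your overall architecture is the same as the paper's: bootstrap on $\mathcal{E}^{il}$, extract the coercive time-weight terms $(3+\delta)t^{-1}$ and $(2+\delta)t^{-1}$ from differentiating $t^{-6-2\delta}$ and $t^{-4-2\delta}$, and use the transport/reaction/remainder machinery of \cite{BM13,HI20} for the Euler-linearization terms. You also correctly identify $v'\chi^I\grad^\perp\tilde\phi_{\neq}\cdot\grad f^0$ as the bottleneck and the reason for the $t^{-6-2\delta}$ weight.

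There is one concrete computational slip. Your treatment of the viscous source $\nu\chi^I\Delta_t f^0$ by integration by parts in $\grad_L$ does not give the order you claim. After moving one $\grad_L$ onto $A^{il}\tilde f^I$ you get
\[
\nu t^{-6-2\delta}\norm{\grad_L A^{il}\tilde f^I}_{L^2}\cdot\brak{t}\eps,
\]
and Young against the dissipation yields a residual $\nu\eps^2 t^{-4-2\delta}$, which integrates to $\nu\eps^2$, not $\nu^2\eps^2$. The paper instead estimates directly (no integration by parts): $\norm{A^{il}\chi^I\Delta_t f^0}_{L^2}\lesssim t^2\norm{\chi^I_e f^0}_{\mathcal{G}^{2\tilde\lambda,r}}\lesssim t^2\eps$, then splits via Young against the \emph{coercive time-weight term} (your item (ii)), producing
\[
\frac{\eta}{100}t^{-7-2\delta}\norm{A^{il}\tilde f^I}_{L^2}^2 + C(\eta)\nu^2\eps^2 t^{-1-2\delta},
\]
which integrates to the correct $\nu^2\eps^2$. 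The point is that the two factors of $\nu$ must both come from the explicit $\nu$ in the source; one cannot trade one of them for the dissipation without losing the bound.

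A minor presentational issue: your final differential inequality $\frac{d}{dt}\mathcal{E}^{il}\lesssim\eps t^{-1}\mathcal{E}^{il}+\nu^2\eps^2 t^{-1-\delta}$ as written does not close (Gronwall gives $t^{C\eps}$, and the bootstrap gives $\eps^3\abs{\log\nu}$). What actually closes it is the coercive term (ii) on the left-hand side, which absorbs $\eps t^{-1}\mathcal{E}^{il}$ for $\eps\ll\delta$; you mention (ii) earlier but should display it explicitly in the final inequality. The paper organizes this the same way: every contribution of the form $\eps t^{-1}\mathcal{E}^{il}$ is absorbed by the $(6+2\delta)t^{-1}$ and $(4+2\delta)t^{-1}$ terms.
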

Without loss of generality we can assume $t \geq 1$ (see \cite{MasRou12} for short time inviscid limits with Navier boundary conditions and \cite{BM13} for discussions on the initial time layer in the ensuing nonlinear energy estimates).

We argue as a bootstrap, that is we assume that over $t \in [1,T_\ast]$ there holds 
\begin{align*}
\mathcal{E}^{il}(t) \leq 4K_0\nu^2 \eps^2, 
\end{align*}
for some suitable $K_0 \geq 1$ (independent of $T_\ast$). We then show that for $\eps$ sufficiently small, $\nu t^{3+\delta} \leq 1$, and $K_0$ sufficiently large (independent of $t,\nu$), there holds over the same time-interval
\begin{align*}
\mathcal{E}^{il}(t) \leq 2K_0\nu^2 \eps^2, 
\end{align*}
which by continuity in time (due to the qualitative a priori regularity) implies the desired result. 

We begin the proof as follows
\begin{align*}
\frac{d}{dt} \mathcal{E}^{il} & = -\frac{4 + 2\delta}{t} t^{-4-2\delta} \norm{\brak{\partial_v}^{-1} A^{il} \tilde{f}^I_0}^2 - \frac{6+2\delta}{t}t^{-6-2\delta} \norm{A^{il} \tilde{f}^I}_{L^2}^2 + CK_\lambda + CK_w \\ 
& \quad - \nu t^{-4-2\delta}\norm{\grad_L \brak{\partial_v}^{-1} A^{il} \tilde{f}^I_0}_{L^2}^2 - \nu t^{-6-2\delta}\norm{\grad_L A^{il} \tilde{f}^I}_{L^2}^2 \\ 
& \quad + \mathcal{D} - L_{\mathcal{S}} + L_{\neq} + NL_{\neq} + L_0 + NL_0 \\ 
& \quad + t^{-6-2\delta}\brak{A^{il}\tilde{f}^I, A^{il}\nu \chi^I \Delta_t f^0 } + t^{-4-2\delta}\brak{\brak{\partial_v}^{-1} A^{il}\tilde{f}^I_0, \brak{\partial_v}^{-1} A^{il }\nu \chi^I (v' \partial_v)^2 f^0_0}.  
\end{align*}
where
\begin{align*}
CK_\lambda & = t^{-6-2\delta} \norm{\sqrt{\frac{\partial_t w}{w}} \widetilde{A}^{il} \tilde{f}^I }_{L^2}^2 + t^{-4-2\delta} \norm{\sqrt{\frac{\partial_t w}{w}} \brak{\partial_v}^{-1} A^{il} \tilde{f}^I_0 }_{L^2}^2 \\
CK_\lambda & = t^{-6-2\delta} \dot{\tilde{\lambda}} \norm{\abs{\grad}^{r/2} A^{il} \tilde{f}^I }_{L^2}^2 + t^{-4-2\delta} \dot{\tilde{\lambda}} \norm{\abs{\partial_v}^{r/2} \brak{\partial_v}^{-1} A^{il} \tilde{f}^I_0 }_{L^2}^2 \\
\mathcal{D} & = \nu t^{-6-2\delta}\brak{A^{il} \tilde{f}^I, A^{il} ((\Delta_t - \Delta_L) \tilde{f}^I)} +  \nu t^{-4-2\delta}\brak{A^{il} \brak{\partial_v}^{-1}\tilde{f}^I, \brak{\partial_v}^{-1} A^{il} ((\Delta_t - \Delta_L) \tilde{f}^I_0)} \\
\mathcal{S} & =  t^{-6-2\delta} \brak{A^{il} \tilde{f}^I, A^{il}( \chi^I v'\partial_v\tilde{\psi}_0\partial_z f^0) } + t^{-6-2\delta} \brak{A^{il} \tilde{f}^I, A^{il}( v'\partial_v\tilde{\psi}_0 \partial_z \tilde{f}^I) } \\ 
L_{\neq} & = t^{-6-2\delta}\brak{A^{il} \tilde{f}^I, A^{il} \left( v' \chi^I \grad^\perp \tilde{\psi}_{\neq} \cdot \grad f^0 + v' \grad^\perp \psi^0_{\neq} \cdot \grad \tilde{f}^I\right) } \\
NL_{\neq} & = t^{-6-2\delta}\brak{A^{il} \tilde{f}^I, A^{il} \left( v' \grad^\perp \tilde{\psi}_{\neq} \cdot \grad \tilde{f}^I\right)} \\
L_0 & = t^{-4-2\delta}\brak{\brak{\partial_v}^{-1} A^{il} \tilde{f}^I_0, \brak{\partial_v}^{-1} A^{il} P_0 \left( v' \chi^I \grad^\perp \tilde{\psi}_{\neq} \cdot \grad f^0 + v' \grad^\perp \psi^0_{\neq} \cdot \grad \tilde{f}^I\right) } \\
NL_0 & = t^{-4-2\delta}\brak{\brak{\partial_v}^{-1} A^{il} \tilde{f}^I, \brak{\partial_v}^{-1} A^{il} P_0 \left( v' \grad^\perp \tilde{\psi}_{\neq} \cdot \grad \tilde{f}^I\right)} \\
\mathcal{C} & = t^{-4-2\delta}\brak{ \brak{\partial_v}^{-1} A^{il} \tilde{f}^I, \brak{\partial_v}^{-1} A^{il} P_0\mathcal{C}_f } + t^{-6-2\delta}\brak{ A^{il} \tilde{f}^I, A^{il}\mathcal{C}_f }.
\end{align*}
The first baseline growth estimates will come from the source terms. Due to the a priori estimates available from \cite{HI20}, for any $\eta > 0$, there is a constant $C(\eta)$ such that 
\begin{align*}
t^{-6-2\delta}\brak{A^{il}\tilde{f}^I, \nu A^{il} \chi^I \Delta_t f^0 } & \lesssim t^{-6-2\delta} \norm{A^{il} \tilde{f}^I}_{L^2} \nu t^2 \norm{\chi_e^I f^0}_{\mathcal{G}^{2\tilde{\lambda},r}} \\ 
& \leq \frac{\eta}{100} t^{-7-2\delta} \norm{A^{il} \tilde{f}^I}_{L^2}^2 + \frac{1}{t^{1+2\delta}}C(\eta) \nu^2 \eps^2 \\ 
t^{-4-2\delta}\brak{\brak{\partial_v}^{-1} A^{il}\tilde{f}^I_0 , \nu \brak{\partial_v}^{-1} A^{il} \chi^I (v' \partial_v)^2 f^0_0} & \lesssim \nu t^{-4-2\delta} \norm{\brak{\partial_v}^{-1} A^{il}\tilde{f}^I_0}_{L^2} \norm{\chi_e^I f^0}_{\mathcal{G}^{2\tilde{\lambda},r}} \\
& \leq \frac{\eta}{100} t^{-5-2\delta} \norm{\brak{\partial_v}^{-1} A^{il}\tilde{f}^I_0}_{L^2}^2 + \frac{C(\eta)}{t^{3+2\delta}} \nu^2 \eps^2. 
\end{align*}
Note that the second estimate formally suggests an error estimate of the form $\norm{\brak{\partial_v}^{-1}A^{il} \tilde{f}^I_0}_{L^2} \lesssim \nu \eps t $ might be available, however, it turns out that the linear terms coupling $\tilde{f}_0$ and $\tilde{f}_{\neq}$ seem to dictate a faster growth for the $\tilde{f}^I_0$ estimate. 

Consider next the linear term in $\mathcal{S}$, which involves the `shear' part of the velocity field. 
As in the methods of \cite{BM13,HI20}, we need to treat the fact that $\tilde{U}$ is measured without resonant regularity corrections and this introduces losses into the product rule which must be absorbed by the $CK$ terms along with the loss of a power of $t$: 
\begin{align*}
t^{-6-2\delta} \abs{\brak{A^{il} \tilde{f}^I , A^{il}( \chi^I v'\partial_v\tilde{\psi}_0 \partial_z f^0) }} & \lesssim t^{-2-\delta}\norm{\chi^I_e f^0}_{\mathcal{G}^{2\tilde\lambda,r}} (CK_w + CK_\lambda)^{1/2} \\ & \quad\quad  \times\left( \norm{ \sqrt{\frac{\partial_t w}{w}} A^{il} \partial_v\tilde{\psi}_0 }_{L^2}^2 - \dot{\tilde{\lambda}}\norm{ \abs{\grad}^{r/2} A^{il} \partial_v\tilde{\psi}_0 }_{L^2}^2\right)^{1/2} \\ & \quad + t^{-6-2\delta}\norm{\chi^I_e f^0}_{\mathcal{G}^{2\tilde\lambda,r}} \norm{A^{il} \tilde{f}^I}_{L^2} \norm{ A^{il} \partial_v\tilde{\psi}_0 }_{L^2}. 
\end{align*}
By an elliptic regularity estimate similar to those done in \cite{HI20} and Lemma \ref{lem:phiE}, we obtain 
\begin{align*}
t^{-6-2\delta} \abs{\brak{A^{il} \tilde{f}^I , A^{il}( \chi^I v'\partial_v\tilde{\psi}_0 \partial_z f^0) }} & \lesssim \eps CK^{1/2} (CK + \nu^{1000} \mathcal{E}^{il}_{\mathrm{ext}})^{1/2} \\ & \quad + \frac{\eps}{t^{7+2\delta}}\norm{A^{il} \tilde{f}^I}_{L^2}^2 + \frac{\eps }{t^{5+2\delta}} \left( \norm{\brak{\partial_v}^{-1} A^{il} \tilde{f}^I_0}_{L^2}^2 + \nu^{1000} \mathcal{E}^{il}_{\mathrm{ext}} \right), 
\end{align*}
which are then absorbed by the negative-definite terms in the energy estimate for $\eps$ sufficiently small or simply integrated. 
Note in the above we also used the a priori estimate $\norm{\chi_e^I h^0}_{\mathcal{G}^{2\tilde\lambda,r}} \lesssim \eps$. 

For the nonlinear variation of this term we need to introduce a commutator
\begin{align}
t^{-6-2\delta} \brak{A^{il} \tilde{f}^I, A^{il}( v'\partial_v\tilde{\psi}_0 \partial_z \tilde{f}^I) } & = t^{-6-2\delta} \brak{A^{il} \tilde{f}^I, [A^{il}, \chi_e^I v'\partial_v\tilde{\psi}_0 \partial_z] \tilde{f}^I) }. \label{eq:CommAil} 
\end{align}
The commutator in \eqref{eq:CommAil} needs to be split into `transport, reaction, and remainder' terms as in \cite{BM13}.
Note that since $\tilde{U}_0$ is zero-frequency in $z$, the main difficulty in the `transport' contribution does not appear.
The main loss that could appear is in the `reaction' contribution, when the leading factor of $A^{il}\tilde{f}$ is at `resonant' regularity (a loss because $\partial_v \psi_0$ is always in `non-resonant' regularity). At this point a power of $t$ is lost.
Reasoning as in \cite{BM13,HI20} therefore yields the following estimate for some $c \in (0,1)$, 
\begin{align*}
t^{-6-2\delta} \brak{A^{il} \tilde{f}^I, [A^{il}, \chi_e^I v'\partial_v \tilde{\psi}_0 \partial_z] \tilde{f}^I) } & \lesssim t^{-6-2\delta} \norm{\chi_e^I \tilde{U}_0}_{\mathcal{G}^{c\tilde{\lambda},s}} \norm{\abs{\grad}^{r/2} A^{il} \tilde{f}^I}_{L^2}^2 \\
& \quad + \norm{\tilde{f}}_{\mathcal{G}^{c\tilde{\lambda},s}} (CK_w + CK_\lambda + \nu^{1000} \mathcal{E}_{\mathrm{ext}}^{il}) \\ 
& \quad + t^{-6-2\delta} \norm{\chi_e^I \tilde{f}}_{\mathcal{G}^{c\tilde{\lambda},s}} \norm{A^{il}\tilde{f}}_{L^2} \norm{A^{il}\partial_v \tilde{\psi}_0}_{L^2} \\
& \lesssim (\eps \nu t^{3+\delta}(CK_w + CK_\lambda + \nu^{1000} \mathcal{E}_{\mathrm{ext}}^{il}) \\  
& \quad + t^{-6-2\delta} (\eps \nu t^{3+\delta}) \norm{A^{il}\tilde{f}}_{L^2} \left( \norm{\brak{\partial_v}^{-1}A^{il}\tilde{f}_0}_{L^2} + \nu^{1000}(\mathcal{E}_{\mathrm{ext}}^{il})^{1/2} \right), 
\end{align*}
where in the last line we used the bootstrap hypotheses. 

There are two other types of linear terms, $L_{\neq}$ and $L_0$.
One of the crucial steps for making the self-consistent argument work is $L_0$, given by
\begin{align*}
L_0 & = t^{-4-2\delta}\brak{\brak{\partial_v}^{-1} A^{il} \tilde{f}^I_0, \brak{\partial_v}^{-1} A^{il} P_0 \left( v' \chi_I \grad^\perp \tilde{\psi}_{\neq} \cdot \grad f^0 + v' \grad^\perp \psi^0_{\neq} \cdot \grad \tilde{f}^I\right) } \\
& =: L_{0,1} + L_{0,2}. 
\end{align*}
Turn first to the estimate of $L_{0,1}$. 
First, we have that 
\begin{align*}
L_{0,1} & = t^{-4-2\delta} \brak{\brak{\partial_v}^{-1} A^{il}\tilde{f}^I_0, \brak{\partial_v}^{-1} A^{il} (1 + h^0)\left(\chi^I \grad^\perp \tilde{\psi}_{\neq} \cdot \grad f^0 \right)_0} \\ 
& = L_{0,1}^{'} + L_{0,1}^{h}.
\end{align*}
As in \cite{BM13,HI20}, the latter term $L_{0,1}^h$ is easily dealt with provided $L_{0,1}'$ can be treated (especially since $h^0$ has a significant amount of extra regularity available), and so we only deal with $L_{0,1}'$. 
A key detail here is that $\tilde{f}^I_0$ is always in non-resonant regularity.  
Subdividing in frequency,
\begin{align*}
L_{0,1}' & = t^{-4-2\delta} \mathrm{Re} \int_{\mathbb R} \int_{\mathbb R} \left(\mathbf{1}_{\abs{\eta-\xi} < \frac{\abs{\xi}}{2}} + \mathbf{1}_{\abs{\xi} < \frac{\abs{\eta-\xi}}{2}} + \mathbf{1}_{\abs{\eta-\xi} \approx \abs{\xi} } \right)  \brak{\eta}^{-1} A^{il} \widehat{\tilde{f}^I_0}(t,\eta) \\ & \qquad\qquad\qquad \times \brak{\eta}^{-1} A^{il}(t,\eta)i\eta \left( \widehat{\chi^I \grad^\perp \tilde\psi_{k}}(t,\xi) \widehat{\chi^I_e f^0}(-k,\eta-\xi)\right) \dee \eta \dee \eta \\
& =: L_{0,1}^{HL}  + L_{0,1}^{LH} + L_{0,1}^{R}. 
\end{align*}
From the methods of \cite{HI20}, it is straightforward to obtain estimates on the latter two terms due to the extra available regularity on $f^0$, leading to 
\begin{align*}
L_{U,1}^{LH} + L_{U,1}^{R} & \lesssim \frac{1}{\brak{t}^{6+2\delta}} \left(\norm{A^{il} \tilde{f}^I}_{L^2}  + \nu^{1000}(\mathcal{E}^{il}_{\mathrm{ext}})^{1/2} \right) \norm{A^{il} \tilde{U}}_{L^2}\norm{P_{\neq}f^0}_{\mathcal{G}^{2\tilde{\lambda},r}} \lesssim \frac{\eps}{t} \mathcal{E}^{il},
\end{align*}
which is absorbed by the negative definite term associated with the time-growth for $\eps \ll \delta$.
Note we used that the estimates on $\tilde{f}_0$ and $\tilde{f}_{\neq}$ are separated by at most one power of $t$. 

Turning to $L_{U,1}^{HL}$, we observe that that when the $z$-frequency of $\partial_z\tilde{\psi}_{\neq}$ is not too large, the situation is somewhat similar to the main `reaction' terms in \cite{BM13,HI20}, wherein we need to gain powers of $t$ from a combination of the design of $A$ (especially the $w$) where the ellipticity of $\Delta_t \tilde{\psi}$ fails (i.e. the critical time), while we need to gain optimal powers of $t$ from the ellipticity when it is available.
By carrying out essentially the same argument as in \cite{BM13,HI20}, we obtain a gain in $t^{-1}$ near the critical time in exchange for introducing $CK_w$ factors, while away from the critical time, we gain $t^{-2}$ making it possible to integrate using the negative-definite term associated with the time-growth (recall $CK_\lambda$ terms are also introduced due to errors and overlap regions in the frequency decompositions as in \cite{BM13,HI20}).
Ultimately, this yields
\begin{align*}
L_{U,1}^{HL} \lesssim \norm{f^0}_{\mathcal{G}^{2\tilde{\lambda},r}}\left((CK_w + CK_\lambda) + \frac{1}{\brak{t}}\mathcal{E}^{il} + \nu^{1000} \mathcal{E}^{il}_{\mathrm{ext}}\right);
\end{align*}
where the $\mathcal{E}^{il}_{ext}$ contribution arises through the Biot-Savart law. 
Next, it is straightforward to obtain the following estimate (note the regularity imbalance), 
\begin{align*}
L_{0,1} & = t^{-4-2\delta}\brak{\brak{\partial_v}^{-1} A^{il} \tilde{f}^I_0, \brak{\partial_v}^{-1} A^{il} P_0 \left( \chi^I v' \grad^\perp \psi^0_{\neq} \cdot \grad \tilde{f}^I\right) } \\
& \lesssim \frac{\eps}{t^{6+2\delta}}\norm{\brak{\partial_v}^{-1} A^{il} \tilde{f}^I_0}_{L^2} \norm{A^{il} \tilde{f}^I}_{L^2} \lesssim \frac{\eps}{t} \mathcal{E}^{il},
\end{align*}
which is then absorbed by the negative-definite term in the energy. 

Turn next to $L_f$. 
\begin{align*}
L_f & = t^{-6-2\delta} \brak{A^{il}\tilde{f}^I, A^{il} \chi^I v' \grad^\perp \tilde{\psi}_{\neq} \cdot \grad f^0} + t^{-6-2\delta} \brak{A^{il} \tilde{f}^I, A^{il} \chi^I v'\grad^\perp \psi^0_{\neq} \cdot \grad \tilde{f}}. 
\end{align*}
The first term is treated essentially like the `reaction' term in the nonlinear arguments of \cite{HI20,BM13}, whereas the latter term is treated as in the `transport' terms; we omit the treatment for brevity and simply provide the answer 
\begin{align*}
L_{f} \lesssim \eps(CK_w + CK_\lambda) + \nu^{1000}\mathcal{E}^{il}_{\mathrm{ext}} + \eps t^{-8-2\delta}\norm{A \tilde{f}}_{L^2}^2. 
\end{align*} 
Next, turn to the commutators $\mathcal{C}_f$. These are treated easily using Lemmas \ref{lem:ExtToInt} and \ref{lem:FourierToPhysical} (and the strong localization of the vorticity inherent in $\mathcal{E}^{il}_{\mathrm{ext}}$). 
First, let us consider $\mathcal{C}_0$, which gives the following (using the product rule in Lemma \ref{lem:BasicA})
\begin{align*}
t^{-4-2\delta}\brak{ \brak{\partial_v}^{-1} A^{il} \tilde{f}^I_0,  \brak{\partial_v}^{-1} A^{il}\left(v'\partial_z \psi_{\neq}^0 (\partial_v \chi^I) \tilde{f}^0 \right)_0} & \lesssim
\\
& \hspace{-7cm} t^{-4-2\delta}\norm{\brak{\partial_v}^{-1} A^{il} \tilde{f}^I_0}_{L^2} \norm{\chi^I_e \abs{\chi_I'}^{1/2} \partial_z \psi_{\neq}^0 }_{\mathcal{G}^{2\tilde{\lambda},r}} \norm{A^{il} \chi_e^I \abs{\chi_I'}^{1/2} \tilde{f}_{\neq} }_{L^2} (1 + \norm{\chi_e^I h^0}_{\mathcal{G}^{2\tilde{\lambda},r}}). 
\end{align*}
Since $\omega^0$ is supported away from the support of $\abs{\partial_v \chi^I}^{1/2}$, one has for all $N$,
\begin{align*}
\norm{\chi^I_e \abs{\chi_I'}^{1/2} \partial_z \psi_{\neq}^0 }_{\mathcal{G}^{2\tilde{\lambda},r}} \lesssim_N \frac{1}{\brak{t}^N} \eps. 
\end{align*}
While Lemmas \ref{lem:ExtToInt} and \ref{lem:FourierToPhysical} gives
\begin{align}
\norm{\brak{\partial_v}^{-1} A^{il} \chi_e^I \abs{\partial_v\chi^I}^{1/2} \tilde{f}_{\neq} }_{L^2} \lesssim t^{3+\delta} (\mathcal{E}^{il})^{1/2} + \nu^{1000}(\mathcal{E}^{il}_{\mathrm{ext}})^{1/2}. \label{ineq:TildeUneqLoc}
\end{align}
Hence, due to the additional inviscid damping, the contributions are easily integrated.
For the nonlinear version of this term, note that it is the second component of the velocity field that is arising (and hence the inviscid damping is faster and there is one less power of $t$ lost at the critical times), 
\begin{align*}
t^{-4-2\delta}\brak{\brak{\partial_v}^{-1} A^{il} \tilde{f}^I_0, \brak{\partial_v}^{-1} A^{il}\left(\partial_z\tilde{\psi}_{\neq} (\partial_v \chi^I) \tilde{f}_{\neq} \right)_0} & \lesssim \\ & \hspace{-6cm} t^{-4-2\delta} \norm{\brak{\partial_v}^{-1} A^{il} \tilde{f}^I_0}_{L^2} \norm{\brak{\partial_v}^{-1} A^{il} \chi_e^I \abs{\partial_v \chi^I}^{1/2} \partial_z \tilde{\psi}_{\neq}}_{L^2} \\  & \hspace{-6cm} \quad \times \norm{A^{il} \chi_e^I \abs{\partial_v \chi^I}^{1/2} \tilde{f}_{\neq}}_{L^2} (1 + \norm{\chi_e^I h^0}_{\mathcal{G}^{2\tilde{\lambda},r}}).  
\end{align*}
By a straightforward variation of Lemma \ref{lem:phiE} and Lemma \ref{lem:ExtToInt} (along with the elliptic estimates in \cite{HI20}) we have for any $N$, 
\begin{align*}
\norm{A^{il} \chi_e^I \abs{\partial_v \chi^I}^{1/2} \partial_z\tilde{\psi}_{\neq}}_{L^2} \lesssim \frac{1}{\brak{t}^N} (\mathcal{E}^{il})^{1/2} + \nu^{1000} (\mathcal{E}^{il}_{\mathrm{ext}})^{1/2}. 
\end{align*}
For the other factor, we may use \eqref{ineq:TildeUneqLoc} again.
The resulting terms are then integrated.
For the next term in the commutator we use that $A^{il}$ satisfies an algebra property when applied to functions with all the same frequency in $z$
\begin{align*}
t^{-4-2\delta}\brak{\brak{\partial_v}^{-1} A^{il} \tilde{f}^I_0, \brak{\partial_v}^{-1} A^{il}(g^0 (\partial_v\chi^I) \tilde{f}^I_0) } & \lesssim t^{-4-2\delta}\norm{\brak{\partial_v}^{-1} A^{il} \tilde{f}^I_0}\norm{\chi_e^I g^0}_{\mathcal{G}^{2\tilde{\lambda},r}} \\ & \quad \times  \norm{\brak{\partial_v}^{-1} A^{il} \chi_e^I \abs{\chi_I'}^{1/2} \tilde{f}_{0} }_{L^2} \\
& \lesssim \frac{\eps}{t^{2 - K\eps}} (\mathcal{E}^{il} + \nu^{1000}\mathcal{E}^{il}_{\mathrm{ext}}). 
\end{align*}
The corresponding term in the $\tilde{f}_{\neq}$ equation is treated similarly. 
It remains to treat the commutators with $\Delta_t$, for example the term (using all of the a priori estimates available on $v'$), 
\begin{align*}
\nu t^{-6-2\delta}\brak{ A^{il} \tilde{f}^I, A^{il} [\chi^I,\Delta_t] \tilde{f}^I) } & \lesssim \nu t^{-6-2\delta}\norm{A^{il} \tilde{f}^I} \left( \norm{ A^{il} \chi_e^I \abs{\chi_I''}^{1/2} \tilde{f}_{0} }_{L^2} + \norm{ A^{il} \chi_e^I \abs{\chi_I'}^{1/2} \grad_L \tilde{f}_{0} }_{L^2} \right) \\
& \lesssim \nu \mathcal{E}^{il} + \mathcal{D} + \nu^{1000} \mathcal{E}_{\mathrm{ext}}^{il};
\end{align*}
this completes the treatment of the commutator terms. 
Only the fully nonlinear terms $NL_{\neq}$ remain, and these are treated as in \cite{HI20,BM13} noting that the a priori estimates provide uniform estimates over $\nu t^{3 + \delta} \leq 1$.  

This completes the proof of the estimate $\mathcal{E}^{il} \lesssim \nu^2 \eps^2$, which together with Theorem \ref{thm_bdy_lim}, completes the proof of Part (iii) in Theorem \ref{thm:main}.


%
%
%
%
%
%




%
%
%
%
%
%


\vspace{2 mm}

\noindent \textbf{Acknowledgments:} JB was supported by NSF Award DMS-2108633. JB would also like to thank Ryan Arbon for helpful discussions. The work of  SH is supported in part by NSF grants DMS 2006660, DMS 2304392, DMS 2006372. SH would like to thank Ruth Luo for suggestions on combinatorics. The work of SI is supported by NSF DMS-2306528 and a UC Davis Society of Hellman Fellowship award. The work of FW is supported by the National Natural Science Foundation of China (No. 12101396, 12161141004, and 12331008).

\addcontentsline{toc}{section}{References}
\bibliographystyle{abbrv}
\bibliography{bibliography}

\end{document}